\documentclass[english]{smfbook}
\usepackage{sabbah_lisbonne0901}

\begin{document}
\frontmatter
\title
{Introduction to Stokes~structures
\\
\vskip.5\baselineskip\smaller \smaller Lecture Notes (Lisboa, January 2009)
}

\author
{Claude Sabbah}
\address{UMR 7640 du CNRS\\
Centre de Math\'ematiques Laurent Schwartz\\
\'Ecole polytechnique\\
F--91128 Palaiseau cedex\\
France}
\email{sabbah@math.polytechnique.fr}
\urladdr{http://www.math.polytechnique.fr/~sabbah}
\date{\today}
\thanks{This research was supported by the grant ANR-08-BLAN-0317-01 of the Agence nationale de la recherche.}

\subjclass{34M40, 32C38, 35A27}
\keywords{Meromorphic connection, real blowing-up, Riemann-Hilbert correspondence, Stokes filtration, Stokes-perverse sheaf}

\begin{abstract}
The purpose of these lectures is to introduce the notion of a Stokes-perverse sheaf as a receptacle for the Riemann-Hilbert correspondence for holonomic $\cD$-modules. They develop the original idea of P\ptbl Deligne in dimension one, and make it enter the frame of perverse sheaves. They also give a first step for a general definition in higher dimension, and make explicit particular cases of the Riemann-Hilbert correspondence, relying on recent results of T\ptbl Mochizuki.
\end{abstract}
\maketitle

\chapterspace{-2}
\tableofcontents
\mainmatter

\chapterspace{-2}
\chapter*{Introduction}

The classical theory of linear differential equation of one complex variable near a singular point distinguishes between a regular and an irregular singularity by checking the vanishing of the irregularity number, which characterizes a regular singularity (Fuchs criterion). The behaviour of the solutions of the equation (moderate growth near the singularity) also characterizes a regular singularity, and this leads to the local Riemann-Hilbert correspondence, characterizing a regular singularity by ``monodromy data''.

On a Riemann surface~$X$, the Riemann-Hilbert correspondence for meromorphic connections with regular singularities on a discrete set~$D$ (first case), or more generally for regular holonomic $\cD$-modules with singularities at~$D$ (second case), induces an equivalence of the corresponding category with the category of ``monodromy data'', which can be presented

$\bbullet$ either quiver-theoretically as the data of local monodromies and connection matrices (first case), together with the so-called canonical and variation morphisms (second case),

$\bbullet$ or sheaf-theoretically as the category of locally constant sheaves of finite dimensional $\CC$-vector spaces on $X^*=X\moins D$ (first case) or perverse sheaves with singularities at~$D$ (second case).

While the first presentation is suited to describing moduli spaces, for instance, the second one is suited to sheaf theoretic operations on such objects. Each of these objects can be defined over subfields $\kk$ of $\CC$, giving rise to a $\kk$-structure on the meromorphic connection with regular singularities, or regular holonomic $\cD$-module.

When the irregularity number is nonzero, finer numerical invariants are introduced, encoded in the Newton polygon of the equation at the singular point. Moreover, such a Riemann-Hilbert correspondence with both aspects also exists. The first one is the most popular, with Stokes data, consisting of Stokes matrices, instead of local monodromy data. An extensive literature exists on this subject, for which classical references are \cite{Wasow65,Sibuya90} and a more recent one is \cite{S-vdP01}. The second aspect, initiated by P\ptbl Deligne \cite{Deligne78b} (case of meromorphic connections) and \cite{Deligne84cc} (holonomic $\cD$-modules), has also been developed by B\ptbl Malgrange \cite{Malgrange83b,Malgrange91} and D\ptbl Babbitt \& V.S\ptbl Varadarajan~\cite{B-V89}. Moreover, the Poincaré duality has been expressed by integrals on ``rapid decay cycles'' by various authors \cite{K-M-M99,B-E04b}.

In higher dimensions, such a dichotomy (regular/irregular singularity) also exists for meromorphic bundles with flat connection (\resp holonomic $\cD$-modules). The work of P\ptbl Deligne \cite{Deligne70} has provided a notion of meromorphic connection with regular singularities and a Riemann-Hilbert correspondence has been obtained by P\ptbl Deligne in such a case, and by M\ptbl Kashiwara on the one hand, and Z\ptbl Mebkhout on the other hand in the case of holonomic $\cD$-modules with regular singularities. The target category for this correspondence is that of $\CC$-perverse sheaves. Moreover, the Fuchs criterion has been generalized by Z\ptbl Mebkhout: the irregularity number is now replaced by the irregularity complex, which is also a perverse sheaf.

When the irregularity perverse sheaf is not zero, it can be refined, giving rise to Newton polygons on strata of a stratification adapted to the characteristic variety of the holonomic $\cD$-module (\cf \cite{L-M99}).

These lectures will be mainly concerned with the second aspect of the the Riemann-Hilbert correspondence for meromorphic connections or holonomic $\cD$-modules, and the main keyword will be the Stokes phenomenon in higher dimension. Their purpose is to develop the original idea of P\ptbl Deligne and B\ptbl Malgrange, and make it enter the frame of perverse sheaves, so that it can be extended to arbitrary dimensions. This has been motivated by recent beautiful results of T\ptbl Mochizuki \cite{Mochizuki08,Mochizuki07b}, who has rediscovered it and shown the powerfulness of this point of view in higher dimension.

This approach is intended to provide a global understanding of the Stokes phenomenon. While in dimension one the polar divisor of a meromorphic connection consists of isolated points and the Stokes phenomenon describes the behaviour of solutions in various sectorial domains around these points, in dimension $\geq2$ the divisor is no more discrete and the sectorial domains extend in some way all along the divisor. Moreover, questions like pull-back and push-forward by holomorphic maps lead to single out the sheaf-theoretic approach to the Stokes phenomenon. Above the usual complex geometry of the underlying complex manifold with its divisor lives a ``wild complex geometry'' governing the Stokes phenomenon.

One of the sought applications of this sheaf-theoretic approach, named Stokes-perverse sheaf, is to answer a question that S\ptbl Bloch asked me some years ago: to define a sheaf-theoretical Fourier transform over $\QQ$ (say) taking into account the Stokes data. Note that the unpublished manuscript \cite{B-B-D-E05} gave an answer to this question (\cf also the recent work \cite{Mochizuki10} of T\ptbl Mochizuki). The need of such an extension to dimension bigger than one also shows up in \cite[p\ptbl 116]{Deligne8406b}\footnote{\label{on-aimerait}Deligne writes: ``On aimerait dire (mais ceci nous obligerait à quitter la dimension~$1$)...''.}.

One of the main problems in the ``perverse'' approach is to understand on which spaces the sheaves are to be defined. In dimension one, Deligne replaces first a Riemann surface by its real oriented blow-up space at the singularities of the meromorphic connection, getting a surface with boundary, and endows the extended local system of horizontal sections of the connection with a ``Stokes filtration'' on the boundary. This is a filtration indexed by an ordered local system. We propose to regard such objects as sheaves on the étale space of the ordered local system (using the notion of étale space as in \cite{Godement64}). In order to obtain a perfect correspondence with holonomic $\cD$-modules, Deligne fills the boundary with discs together with perverse sheaves on them, corresponding to the formal part of the meromorphic connection. The gluing at the boundary between the Stokes-filtered local system and the perverse sheaf is defined through grading the Stokes filtration.

The road is therefore a priori well-paved and the program can be clearly drafted:
\begin{enumerate}
\item
To define the notion of Stokes-constructible sheaf on a manifold, and a $t$\nobreakdash-structure in its derived category, in order to recover the category of Stokes-perverse sheaves on a complex manifold as the heart of this $t$-structure.
\item
To exhibit a Riemann-Hilbert correspondence $\RH$ between holonomic $\cD$\nobreakdash-modules and Stokes-perverse sheaves, and to prove that it is an equivalence of categories.
\item
To define the direct image functor in the derived category of Stokes-constructible sheaves and prove the compatibility of $\RH$ when taking direct images of holonomic $\cD$-modules.
\end{enumerate}

An answer to the latter question would give a way to compute Stokes data of the asymptotic behaviour of integrals of multivalued functions which satisfy themselves a holonomic system of differential equations.

While we realize the first two points of the program in dimension one, by making a little more explicit the contents of \cite{Deligne78b,Deligne84cc}, we do not go to the end in dimension bigger than one, as we only treat the Stokes-perverse counterpart of meromorphic connections, not holonomic $\cD$-modules. The reason is that some new phenomena appear, which were invisible in dimension one.

In order to make them visible, let us consider a complex manifold $X$ endowed with a divisor~$D$. In dimension one, the topological space to be considered is the oriented real blow-up space $\wt X$ of $X$ along~$D$, and meromorphic connections on $X$ with poles on~$D$ are in one-to-one correspondence with local systems on $X\moins D$ whose extension to $\wt X$ is equipped with a Stokes filtration along $\partial\wt X$. If $\dim X\geq2$, in order to remain in the realm of local systems, a simplification of the underlying geometric situation seems unavoidable in general, so that we treat the case of a divisor with normal crossings (with all components smooth), and a generic assumption has also to be made on the connection, called \emph{goodness}. Variants of this genericity condition have occurred in asymptotic analysis (\eg in \cite{Majima84}) or when considering the extension of the Levelt-Turrittin theorem to many variables (\eg in \cite{Malgrange92}). We define the notion of good stratified $\ccI$-covering of $\partial\wt D$. To any good meromorphic connection and to any good Stokes-filtered local system are associated in a natural way such a good stratified $\ccI$-covering, and the categories to be put into Riemann-Hilbert correspondence are those subcategories of objects having an associated stratified $\ccI$-covering contained in a fixed good one.

This approach remains non intrinsic, that is, while the category of meromorphic connections with poles along an arbitrary divisor is well-defined, we are able to define a Stokes-topological counterpart only with the goodness property. This is an obstacle to define intrinsically a category of Stokes-perverse sheaves. This should be overcome together with the fact that such a category should be stable by direct images, as defined in \Chaptersname\ref{chap:Ifil} for pre-$\ccI$-filtrations. Nevertheless, this makes it difficult to use this sheaf-theoretic Stokes theory to obtain certain properties of a meromorphic connection when the polar divisor has arbitrary singularities, or when it has normal crossings but the goodness assumption is not fulfilled. For instance, while the perversity of the irregularity sheaf (a result due to Z\ptbl Mebkhout) is easy in the good case along a divisor with normal crossings, we do not have an analogous proof without these assumptions.

Therefore, our approach still remains non-complete with respect to the program above, but already gives strong evidence of its feasibility.

Compared to the approach of T\ptbl Mochizuki in \cite{Mochizuki07b, Mochizuki08}, which is nicely surveyed in \cite{Mochizuki09}, we regard a Stokes-filtered object as an abstract ``topological'' object, while Mochizuki introduces the Stokes filtration as a filtration of a flat vector bundle. In the recent preprint \cite{Mochizuki10}, T\ptbl Mochizuki has developed the notion of a Betti structure on a holonomic $\cD$-module and proved many functorial properties. Viewing the Betti structure as living inside a pre-existing object (a holonomic $\cD$-module) makes it a little easier to analyze its functorial properties, since the functorial properties of holonomic $\cD$-modules are already understood. On the other hand, this gives a strong evidence of the existence of a category of Stokes-perverse sheaves with good functorial properties.

\Subsubsection*{Contents \chaptersname by \chaptername}

In \Chaptersname \ref{chap:Ifil}, we develop the notion of Stokes filtration in a general framework under the names of pre-$\ccI$-filtration and $\ccI$-filtration, with respect to an ordered sheaf of abelian group $\ccI$. The sheaf $\ccI$ for the Stokes filtration in dimension one consists of polar parts of multivalued meromorphic functions of one variable, as originally introduced by Deligne. Its étale space is Hausdorff, which makes the understanding of a filtration simpler with respect to taking the associated graded sheaf. This \chaptersname may be skipped in a first reading, or may serve as a reference for various notions considered starting from \Chaptersname \ref{chap:Stokesone-pervers}.

Part one, starting at \Chaptersname \ref{chap:Stokesone}, is mainly concerned with dimension one, although \Chaptersname \ref{chap:Laplace} anticipates some results in dimension two, according to the footnote on Page \pageref{on-aimerait}.

In \Chaptersname \ref{chap:Stokesone}, we essentially redo more concretely the same work as in \Chaptersname \ref{chap:Ifil}, in the context of Stokes-filtered local systems on a circle. We prove abelianity of the category in \Chaptersname \ref{chap:abelian}, a fact which follows from the Riemann-Hilbert correspondence, but is proved here directly over any base field $\kk$. In doing so, we introduce the level structure, which was a basic tool in the higher dimensional analogue developed by T\ptbl Mochizuki \cite{Mochizuki08}, and which was previously considered together with the notion of multisummability \cite{B-B-R-S91,M-R92,LR-P97,S-vdP01}.

In \Chaptername s \ref{chap:Stokesone-pervers} and \ref{chap:RH} we develop the notion of a Stokes-perverse sheaf, mainly by following P\ptbl Deligne \cite{Deligne78b,Deligne84cc} and B\ptbl Malgrange \cite[Chap\ptbl IV.3]{Malgrange91}, and prove the Riemann-Hilbert correspondence. We make explicit the behaviour with respect to duality, at least at the level of Stokes-filtered local systems, and the main tools are explained in \Chaptersname \ref{chap:Stokesone-pervers}.

\Chaptersname \ref{chap:holdist} gives two analytic applications of the Riemann-Hilbert correspondence in dimension one. Firstly, the Hermitian dual of a holonomic $\cD$-module (\ie the conjugate module of the module of distribution solutions of the original one) on a Riemann surface is shown to be holonomic. Secondly, the local structure of distributions solutions of a holonomic system is analyzed.

\Chaptersname \ref{chap:Laplace} presents another application, with a hint of the theory in dimension two, by computing the Stokes filtration of the Laplace transform of a regular holonomic $\cD$-module on the affine line. We introduce the topological Laplace transformation, and we make precise the relation with duality, of $\cD$-modules on the one hand, Poincaré-Verdier on the other hand, and their relations. For this \chaptername, we use tools in dimension $2$ which are fully developed in the next \chaptername.

In Part two we start analyzing the Stokes filtration in dimension $\geq2$. \Chaptersname\ref{chap:realbl} defines the real blow-up space along a family of divisors and the relations between various real blow-up spaces. We pay attention to the global existence of these spaces. The basic sheaf on such real blow-up spaces is the sheaf of holomorphic functions with moderate growth along the divisor. It leads to the moderate de~Rham complex of a meromorphic connection. We give some examples of such de~Rham complexes, showing how non-goodness can produce higher dimensional cohomology sheaves.

\Chaptersname\ref{chap:StokesfilteredNCD} takes up \Chaptername s \ref{chap:Stokesone} and \ref{chap:abelian} and introduces the goodness assumption. The construction of the sheaf $\ccI$ is given with some care, to make it global along the divisor.

The first approach to the Riemann-Hilbert correspondence in dimension $\geq2$ is given in \Chaptersname\ref{chap:RHgoodsmooth}, along a smooth divisor. It can be regarded as obtained by putting a (good) parameter in \Chaptersname\ref{chap:RH}. The main new argument is the local constancy of the \emph{Stokes sheaf} (Stokes matrices can be chosen locally constant with respect to the parameter).

\Chaptersname \ref{chap:goodformal} analyzes the formal properties of good meromorphic connections, following T\ptbl Mochizuki \cite{Mochizuki08}. In \Chaptersname\ref{chap:RHgoodnc} we give a proof of the analogue in higher dimension of the Hukuhara-Turrittin theorem, which asymptotically lifts a formal decomposition of the connection. We mainly follow T\ptbl Mochizuki's proof, for which a short account has already been given by M\ptbl Hien in \cite[Appendix]{Hien09}. We then consider the general case of the Riemann-Hilbert correspondence for good meromorphic connections, and we take this opportunity to answer a question of Kashiwara by proving that the Hermitian dual of a holonomic $\cD$-module is holonomic (\cf \Chaptersname\ref{chap:holdist} in dimension one).\enlargethispage{\baselineskip}%

In \Chaptersname\ref{chap:pipes}, we address the question of push-forward and we make explicit a calculation of the Stokes filtration of an exponentially twisted Gauss-Manin system (such a system has already been analyzed by C\ptbl Roucairol \cite{Roucairol06a,Roucairol06b,Roucairol07}). However, the method is dependent on the simple geometric situation, so can hardly be extended directly to the general case, a proof of which has been recently given by T\ptbl Mochizuki.

Lastly, \Chaptername s~\ref{chap:irregnearby} and \ref{chap:nearby} are concerned with the nearby cycle functor. In \Chaptersname\ref{chap:irregnearby} we first recall the definition of the moderate nearby cycle functor for holonomic $\cD$-modules via the Kashiwara-Malgrange $V$-filtration, and we review the definition of the irregular nearby cycle functor, due to Deligne. We give a new proof of the preservation of holonomy in a local analytic setting (the proof of Deligne \cite{Deligne83b} concerns the algebraic setting) when the ambient manifold has dimension two.

In \Chaptersname\ref{chap:nearby}, we give a definition of the nearby cycle functor relative to a holomorphic function for a Stokes-filtered local system, and compare it with the notion of moderate nearby cycles of a holonomic $\cD$-module of \Chaptersname\ref{chap:irregnearby} through the Riemann-Hilbert correspondence. We restrict our study to the case of a meromorphic connection with poles along a divisor with normal crossings and a holomorphic function whose zero set is equal to this divisor.

\subsubsection*{Acknowledgements}
I thank A\ptbl Beilinson and the university of Chicago where part of this work was achieved, and T\ptbl Monteiro Fernandes and O\ptbl Neto in Lisbon (University of Lisbon, CAUL and CMAF) for giving me the opportunity to lecture on it in January 2009, as well as the audience of these lectures (whose content corresponds approximately to the present first seven lectures) for many interesting questions and remarks. Many discussions with S\ptbl Bloch,
H\ptbl Esnault, C\ptbl Hertling, M\ptbl Hien, T\ptbl Mochizuki and G\ptbl Morando have been very stimulating and helpful. I thank especially T\ptbl Mochizuki for letting me know his ongoing work on the subject, which strengthens the approach given here. Discussions with him have always been very enlightening. Two referees have provided many interesting suggestions for improving the manuscript and to correct some errors. Needless to say, this work owes much to P\ptbl Deligne and B\ptbl Malgrange, and to D\ptbl Bertrand who asked me to help him when editing the volume \cite{D-M-R07}, giving me the opportunity of being more familiar with its contents.

\chapter{$\ccI$-filtrations}\label{chap:Ifil}

\begin{sommaire}
This \chaptersname introduces the general framework for the study of the Stokes phenomenon in a sheaf-theoretic way. The underlying topological spaces are étale spaces of sheaves of ordered abelian groups~$\ccI$. The general notion of pre-$\ccI$-filtration is introduced as a convenient abelian category to work in. The notion of~$\ccI$-filtration is first considered when the étale space of~$\ccI$ is Hausdorff. We will soon restrict to $\ccI$-filtrations of locally constant sheaves of $\kk$-vector spaces and we will extend the definition to the case where $\ccI$ satisfies the stratified Hausdorff property. Most of the notions introduced in this \chaptersname will be taken up as a more concrete approach to Stokes filtrations in \Chaptersname \ref{chap:Stokesone}, and this \chaptersname may be skipped in a first reading. It contains nevertheless many guiding principles for \Chaptername s \ref{chap:Stokesone}, \ref{chap:abelian} and \ref{chap:StokesfilteredNCD}.
\end{sommaire}

\subsection{Introduction}
In \cite{Deligne83b}, Deligne introduced the notion of a sheaf filtered by a local system of ordered sets in order to express in a sheaf-theoretic way the Stokes phenomenon of a linear differential equation of one complex variable.

This preliminary lecture clarifies the general setup of a sheaf filtered by a sheaf of ordered abelian groups. The approach considered here consists in replacing the sheaf of ordered abelian groups with the associated ordered set, that is, the étale space associated to this sheaf.

Similarly to the case of filtered objects of an abelian category (filtered vector spaces, say) with indices in an ordered set, the category is not abelian in general, and the question of strictness (strictness of morphisms) soon arises in various questions. Working in an ambient abelian category is thus useful. This is the category of pre-$\ccI$-filtrations.

Various operations can be defined for these filtered objects, but do not lead very far because of the lack of strictness, in general. The notion of a $\ccI$-filtration, that is, a pre-$\ccI$-filtration which is locally graded, will be the right one for Stokes structures and will satisfy the strictness property under suitable conditions.

In this general setup, the operation of grading the filtration is not straightforward, and a new problem arises, namely whether the étale space of $\ccI$ is Hausdorff or not. Although this question does not show up with the Stokes filtration of a differential equation of one complex variable, because $\ccIet$ is obviously Hausdorff in such a case, the extension to higher dimensions needs a framework with non-Hausdorff étale spaces. The notion of a stratified Hausdorff étale space will be sufficient for this purpose. It is introduced in \S\ref{subsec:Ifilstrat}. The support of the graded sheaf in this setting is now a subset of the étale space of the sheaf $\ccI$, which is a ``stratified covering'' of its image in the base space. For a differential equation of one complex variable, it consists of the multivalued polar parts which show up in the exponentials occurring in the fundamental solution of the equation. In higher dimensions, it will take into account the global multivaluedness of such polar parts.

\subsection{\'Etale spaces of sheaves}\label{subsec:espaceetales}
In the following,~$Y$ will be a locally compact and locally connected topological space (\eg $Y=S^1$) and~$\ccI$ a sheaf of abelian groups on~$Y$. Let \index{$I$@$\ccI$, $\ccIet$}$\mu:\ccIet\to Y$ be the \index{etale space@étale space}étale space associated to~$\ccI$ (\cf\cite[\S II.1.2, p\ptbl110]{Godement64}). Recall that, for any \hbox{$y\in Y$}, the fibre $\mu^{-1}(y)$ is equal to the germ $\ccI_y$ with its discrete topology and any germ $\varphi_y\in\ccI_y$ has a fundamental system of open neighbourhoods in~$\ccIet$ consisting of the sets $\{\varphi_z\mid z\in U\}$, where~$U$ is any open neighbourhood of~$y$ in~$Y$ on which the germ~$\varphi_y$ extends as a section $\varphi\in\ccI(U)$. Then $\mu$ is a local homeomorphism, in particular it is an open map and $\ccIet$ is locally connected (because~$Y$ is~so).

At some places in this \chaptername, we will assume that $\ccIet$ is \emphb{Hausdorff}. This means that, for any open set $U\subset Y$ and any $\varphi,\psi\in\Gamma(U,\ccI)$, if $\varphi_y\neq\psi_y$ for some $y\in U$, then $\varphi_z\neq\psi_z$ for any $z$ in some neighbourhood of~$y$, that is, the maximal open subset $U(\varphi,\psi)$ on which $\varphi\equiv\psi$ satisfies $\ov{U(\varphi,\psi)}\not\ni y$. This is equivalent to saying that $U(\varphi,\psi)$ is also closed, that is, is empty or a connected component of~$U$. In such a case, $\ccIet$ is then locally compact.

\skpt
\begin{exemples}\label{exem:separe}\ligne
\begin{enumerate}
\item\label{exem:separe1}
If~$\ccI$ is a locally constant sheaf, then $\mu:\ccIet\to Y$ is a covering map and $\ccIet$ is Hausdorff.
\item\label{exem:separe1b}
If $\ccJ$ is a subsheaf of~$\ccI$ and if $\ccIet$ is Hausdorff, then so is $\ccJet$ (as $\ccJet$ is open in~$\ccIet$).
\item\label{exem:separe2}
Let $X$ be a complex manifold and let~$D$ be a reduced divisor in~$X$. Let $\cO_X(*D)$ be the sheaf of meromorphic functions on $X$ with poles along~$D$ at most. Then if~$D$ is locally irreducible, the sheaf $\ccI=\cO_X(*D)/\cO_X$, regarded as a sheaf on~$D$, has a \emphb{Hausdorff étale space}: indeed, if a germ $\varphi_x\in\cO_{X,x}(*D)$ ($x\in D$), defined on some open set $U\subset X$ such that $(D\moins\Sing(D))\cap U$ is connected, is holomorphic on some nonempty open set of $D\cap U$, it is holomorphic on $(D\moins\Sing(D))\cap U$, and thus all over $D\cap U$ by Hartogs.

\item\label{exem:separe3}
If $f:Y'\to Y$ is continuous and $Y'$ is Hausdorff, then $f^{-1}\ccI$ satisfies the Hausdorff property when~$\ccI$ does so. If moreover $f$ is proper, then the converse holds, that is,~$\ccI$ satisfies the Hausdorff property when~$f^{-1}\ccI$ does so.
\item\label{exem:separe4}
Let $i:Y\hto Y'$ be a closed immersion (with $Y'$ Hausdorff). If the sheaf~$\ccI$ on~$Y$ satisfies the Hausdorff property, then in general $i_*\ccI$ on $Y'$ does not.
\end{enumerate}
\end{exemples}

\begin{exemple}[Geometry in étale spaces]\label{exem:geometale}
Let $Y$ be a complex analytic manifold and let $\ccF$ be a locally free $\cO_Y$-module of finite rank. Let $p:F\to Y$ be the associated holomorphic bundle. If $\mu:\ccF^\et\to Y$ denotes the étale space of $\ccF$, then there is a natural commutative diagram
\[
\xymatrix{
\ccF^\et\ar[rr]^-{\ev}\ar[rd]_\mu&&F\ar[ld]^p\\&Y&}
\]
where the evaluation map $\ev$ associates to any germ $s_y$ of holomorphic section of $p:F\to Y$ at $y$ its value $s_y(y)$. The map $\ev$ is continuous.

We will also deal with the following situation. Assume that we are given a closed subset $\Sigma$ of $\ccF^\et$ such that $\mu:\Sigma\to Y$ is a finite covering. Then $\Sigma$ is naturally equipped with the structure of a complex manifold. The image $\Sigma'=\ev(\Sigma)$ is an analytic subset of $F$, locally equal to the finite union of local holomorphic sections of $p:F\to Y$, the map $p:\Sigma'\to Y$ is finite and the map $\ev:\Sigma\to\Sigma'$ is the normalization.
\end{exemple}

\subsection{\'Etale spaces of sheaves of ordered abelian groups}\label{subsec:espaceetalesord}
Assume now that~$\ccI$ is a \emphb{sheaf of ordered abelian groups}, \ie a sheaf with values in the category of ordered abelian groups. Hence, for every open set~$U$ of~$Y$, $\ccI(U)$ is an ordered abelian group, with order denoted by $\leqU$, and the restriction maps $(\ccI(U),\leqU\nobreak)\to(\ccI(V),\leqV)$, for $V\subset U$, are morphisms of ordered sets. For every $y\in Y$, the germ $\ccI_y$ is then ordered: for $\varphi,\psi\in\ccI_y$, we have \index{$AAAORD$@$\leqU$, $\leU$, $\leqy$, $\ley$}$\varphi\leqy\psi$ iff there exists an open neighbourhood $U\ni y$ such that $\varphi,\psi\in\ccI(U)$ and $\varphi\leqU\psi$. For every open set $V\subset U$ we then have $\varphi_{|V}\leqV\psi_{|V}$.

For $\varphi,\psi\in\Gamma(U,\ccI)$, we then have $\varphi\leqU\psi$ iff $\varphi-\psi\leqU0$. Giving an order compatible with addition on a sheaf of abelian groups is equivalent to giving a subsheaf $\ccI_{\leq0}\subset\ccI$ of abelian groups such that $-\ccI_{\leq0}\cap\ccI_{\leq0}=0$. Then, for $\varphi,\psi\in\Gamma(U,\ccI)$, we have $\varphi\leqU\psi$ iff $\varphi-\psi\in\Gamma(U,\ccI_{\leq0})$.

We also set ${\leU}={}({\leqU}\text{ and }{\neq})$. For every $y\in Y$ and for $\varphi,\psi\in\ccI_y$, we have $\varphi\ley\psi$ iff for any sufficiently small open neighbourhood $V\ni y$ such that $\varphi,\psi\in\ccI(V)$, $\varphi_{|V}\leV\psi_{|V}$, that is, $\varphi_{|V}\leqV\psi_{|V}$ and $\varphi_{|V}\neq\psi_{|V}$. Equivalently, ${\ley}={}({\leqy}\text{ and }{\neq})$.

\begin{notation}\label{not:Ypsiphi}
Let~$U$ be an open subset of~$Y$ and let $\varphi,\psi\in\Gamma(U,\ccI)$. We denote by $U_{\psi\leq\varphi}$ the subset of~$U$ defined by $y\in U_{\psi\leq\varphi}$ iff $\psi_y\leqy\varphi_y$. Then $U_{\psi\leq\varphi}$ is open in~$U$ (hence in~$Y$). The boundary of $U_{\psi\leq\varphi}$ is denoted by \index{$STDIR$@$\St(\psi,\varphi)$}$\St(\psi,\varphi)$, with the convention that $\St(\varphi,\varphi)=\emptyset$.

Let $(\ccIet\times_Y\ccIet)_\leq\subset\ccIet\times_Y\ccIet$ (\resp $(\ccIet\times_Y\nobreak\ccIet)_<$) be the subset consisting of those $(\psi,\varphi)\in\ccI_y\times\ccI_y$ such that $\psi\leqy\varphi$ (\resp $\psi\ley\varphi$) ($y\in Y$). These are open subsets (with the Hausdorff assumption in the $<$ case). We denote by $j_\leq,j_<$ the corresponding inclusions. Let us note that the diagonal inclusion $\delta:\ccIet\hto\ccIet\times_Y\ccIet$ is open, and also closed if we assume that $\ccIet$ is Hausdorff, and $(\ccIet\times_Y\ccIet)_\leq=(\ccIet\times_Y\ccIet)_<\cup\delta(\ccIet)$. The two projections $p_1,p_2:\ccIet\times_Y\ccIet\to\ccIet$ also are local homeomorphisms (both are the étale space corresponding to the sheaf $\mu^{-1}\ccI$).

We regard $U_{\psi\leq\varphi}$ as the pull-back of $(\ccIet\times_Y\ccIet)_\leq$ by the section $(\psi,\varphi):U\to\ccIet\times_Y\ccIet$. Similarly, $U_{\psi<\varphi}$ denotes the subset defined by $y\in U_{\psi<\varphi}$ iff $\psi_y\ley\varphi_y$. If we assume that $\ccIet$ is Hausdorff, $U_{\psi<\varphi}$ is open in~$U$ (because its complementary set in $U_{\psi\leq\varphi}$, which is the pull-back by $(\psi,\varphi)$ of the diagonal, is then closed in $U_{\psi\leq\varphi}$). If~$U$ is connected, then either $\varphi\equiv\psi$ on~$U$ and $U_{\psi<\varphi}=\emptyset$, or $\psi<\varphi$ everywhere on~$U$ and $U_{\psi<\varphi}=U_{\psi\leq\varphi}$.
\end{notation}

\begin{exemple}[Main example in dimension one]\label{exem:Stokes}
Let~$X$ be the open disc centered at~$0$ and radius $1$ in $\CC$, with coordinate $x$. Let~$X^*$ be the punctured disc $X\moins\{0\}$ and let us denote by $\varpi:\wt X\to X$ the \emphb{real oriented blowing-up} of~$X$ at the origin, so that \index{$XWTD$@$\wt X(D)$}$\wt X=S^1\times[0,1)$ (polar coordinates $(x/|x|,|x|$). In the following, we denote by~$S^1$ the boundary $S^1\times\{0\}$ of~$X$ and we forget about~$X$. We still denote by $\varpi$ the constant map $S^1\to\{0\}$.

Set $\cO=\CC\{x\}$, $\cO(*0)=\CC\lpb x\rpb$, and let~$\ccI$ be the constant sheaf $\varpi^{-1}\ccP$ on~$S^1$, with \index{$P$@$\ccP$}$\ccP=\cO(*0)/\cO$ (polar parts of Laurent expansions). For any connected open set~$U$ of~$S^1$ and any $\varphi,\psi\in\ccP=\Gamma(U,\ccI)$, we define $\psi\leqU\varphi$ if $e^{\psi-\varphi}$ has \emphb{moderate growth} on some neighbourhood of~$U$ in $S^1\times[0,1)$, that is, for any compact set $K\subset U$, there exists $C_K>0$ and $N_K\in\NN$ such that, for some open neighbourhood $\nb(K)$ of $K$ in $S^1\times[0,1)$ and for some representatives of $\varphi,\psi$ in $\cO(*0)$, the inequality $|e^{\psi-\varphi}|\leq C_K|x|^{N_K}$ holds on $\nb(K)\cap S^1\times(0,1)$ (this can also be expressed in polar coordinates $x=re^{i\theta}$).

Let us check that this is indeed an order on $\Gamma(U,\ccI)$. The only point to check, due to the additivity property, is that, for $\eta\in\ccP$, $\eta\leqU0$ and $0\leqU\eta$ imply $\eta=0$. If $\eta\neq0$, let us write $\eta=u_n(x)x^{-n}$ with $n\geq1$ and $u_n(0)\neq0$. For a given $\theta\in U$, we have
\bgroup\numstareq
\begin{equation}\label{eq:orderone}\index{$AAAORDth$@$\leqtheta$, $\letheta$}
\eta\leqtheta0\ssi \eta=0\text{ or }\arg u_n(0)-n\theta\in(\pi/2,3\pi/2)\mod2\pi.
\end{equation}
\egroup
It clearly follows that the relation $\leqtheta$ is an order relation on $\ccI_\theta$, and since for $\eta\in\Gamma(U,\ccI)$ we have $\eta\leqU0$ if and only if $\eta\leqtheta0$ for any $\theta\in U$, the relation $\leqU$ is an order relation on $\Gamma(U,\ccI)$.

It clearly follows from \eqref{eq:orderone} that $\psi\leqtheta\varphi$ iff $\reel(\psi-\nobreak\varphi)\leq\nobreak0$ in some neighbourhood of $(\theta,0)\in S^1\times[0,1)$. We also have
\bgroup\numstarstareq
\begin{equation}\label{eq:orderonestrict}\index{$AAAORDth$@$\leqtheta$, $\letheta$}
\eta\letheta0\ssi \eta\neq0\text{ and }\arg u_n(0)-n\theta\in(\pi/2,3\pi/2)\mod2\pi.
\end{equation}
\egroup

Given $\varphi\neq\psi\in\ccP$, there exists at most a finite subset of~$S^1$ where neither $\psi\letheta\varphi$ nor $\varphi\letheta\psi$: if $\varphi-\psi=u_n(x)x^{-n}$ as above, it consists of the $\theta$ such that $\arg u_n(0)-n\theta=\pm\pi/2\bmod2\pi$. It is called the set of \emphb{Stokes directions} of $(\varphi,\psi)$. It is the boundary $\St(\varphi,\psi)$ of $S^1_{\psi\leq\varphi}$.
\end{exemple}

\begin{definitio}[Exhaustivity]\label{def:exhaustI}
We say that a sheaf~$\ccI$ satisfies the \emphb{exhaustivity property} if, for any $y\in Y$ and any finite set $\Phi_y\subset\ccI_y$, there exists $\varphi_y,\psi_y\in\ccI_y$ such that $\psi_y\ley \Phi_y$ and $\Phi_y\leq\varphi_y$.
\end{definitio}

\begin{exemple*}[\ref{exem:Stokes} continued]
Let us remark that the constant sheaf~$\ccI=\varpi^{-1}\ccP$ of Example \ref{exem:Stokes} above satisfies the exhaustivity property: this is seen by choosing $\psi_\theta$ and $\varphi_\theta$ with a pole of order bigger than the orders of the pole of the elements of $\Phi_\theta$, and suitable dominant coefficients.
\end{exemple*}

\subsection{The category of pre-$\ccI$-filtrations}\label{subsec:catpreIfilt}
This category will be the ambient category where objects are defined. Its main advantage is to be abelian. We assume that~$\ccI$ is as in \S\ref{subsec:espaceetalesord}. Recall that we have a commutative diagram of étale maps
\begin{equation}\label{eq:p1p2mu}
\begin{array}{c}
\xymatrix{
\ccIet\times_Y\ccIet\ar[r]^-{p_1}\ar[d]_{p_2}&\ccIet\ar[d]^\mu\\
\ccIet\ar[r]^-\mu&Y
}
\end{array}
\end{equation}
and an open subset $(\ccIet\times_Y\ccIet)_\leq\subset\ccIet\times_Y\ccIet$. We denote by \index{$BETALEQ$@$\beta_\leq$}$\beta_\leq$ the functor on the category of sheaves on $\ccIet\times_Y\ccIet$ which restricts to $(\ccIet\times_Y\ccIet)_\leq$ and extends by zero. In \cite[Prop\ptbl2.3.6]{K-S90}, it is denoted by putting $(\ccIet\times_Y\ccIet)_\leq$ as an index. This is an exact functor, and there is a natural morphism of functors $\beta_\leq\to\id$.

Let $\cF$ be a sheaf of $\kk$-vector spaces on $Y$. We then have $p_1^{-1}\mu^{-1}\cF=p_2^{-1}\mu^{-1}\cF$ and therefore a functorial morphism $\beta_\leq p_1^{-1}\mu^{-1}\cF\to p_2^{-1}\mu^{-1}\cF$. We call $\mu^{-1}\cF$ (together with this morphism) the \index{pre-I-filtration@pre-$\ccI$-filtration!constant --}\emphb{constant pre-$\ccI$-filtration} on $\cF$.

\skpt
\begin{definitio}[Pre-$\ccI$-filtration]\label{def:preIfilt}\ligne
\begin{enumerate}
\item\label{def:preIfilt1}
By a \index{pre-I-filtration@pre-$\ccI$-filtration}\emph{pre-$\ccI$-filtration} we will mean a sheaf \index{$FLEQ$@$\cF_\leq$}$\cF_\leq$ of $\kk$-vector spaces on $\ccIet$ equipped with a morphism $\beta_\leq p_1^{-1}\cF_\leq\to p_2^{-1}\cF_\leq$ such that, when restricted to the diagonal, $\beta_\leq p_1^{-1}\cF_\leq\to p_2^{-1}\cF_\leq$ is the identity.
\item\label{def:preIfilt2}
A morphism $\lambda:\cF_\leq\to\cF'_\leq$ of pre-$\ccI$-filtrations is a morphism of the corresponding sheaves, with the obvious compatibility relation: the following diagram should commute:
\[
\begin{array}{c}
\xymatrix{
\beta_\leq p_1^{-1}\cF_\leq\ar[r]\ar[d]_{\beta_\leq p_1^{-1}\lambda}& p_2^{-1}\cF_\leq\ar[d]^{p_2^{-1}\lambda}\\
\beta_\leq p_1^{-1}\cF'_\leq\ar[r]& p_2^{-1}\cF'_\leq
}
\end{array}
\]
\end{enumerate}
\end{definitio}

We therefore define in this way a category, denoted by \index{$ModkIet$@$\Mod(\kk_{\ccIet,\leq})$}$\Mod(\kk_{\ccIet,\leq})$. It comes with a ``forgetting'' functor to the category $\Mod(\kk_{\ccIet})$ of sheaves of $\kk$-vector spaces on $\ccIet$. We will usually denote an object of both categories by $\cF_\leq$, hoping that this does not lead to any confusion.

Given a sheaf $\cF_\leq$ on $\ccIet$, we denote its germ at $\varphi_y\in\ccI_y$ by $\cF_{\leq\varphi_y}$. The supplementary data of $\beta_\leq p_1^{-1}\cF_\leq\to p_2^{-1}\cF_\leq$ induces, for each pair of germs $\varphi_y,\psi_y\in\ccI_y$ such that $\varphi_y\leqy\psi_y$ a morphism $\cF_{\leq\varphi_y}\to\cF_{\leq\psi_y}$, justifying the name of ``filtration'', although we do not impose injectivity in order to get an abelian category. The condition \ref{def:preIfilt}\eqref{def:preIfilt2} on morphisms is that they are compatible with the ``filtration''. Given a section $\varphi\in\ccI(U)$ on some open set $U\subset Y$, we also denote by $\cF_{\leq\varphi}$ the pull-back $\varphi^{-1}\cF_\leq$ on~$U$. This is a sheaf on~$U$, whose germ at $y\in U$ is the germ $\cF_{\leq\varphi_y}$ of $\cF_\leq$ at $\varphi_y\in\ccIet$.\enlargethispage{\baselineskip}%

\begin{remarque}\label{rem:sheafonccI}
The space $\ccIet$ has an open covering formed by the $\varphi(U)$, where $U$ runs among open subsets of $Y$ and, for each $U$, $\varphi$ belongs to $\Gamma(U,\ccI)$. Giving a sheaf on $\ccIet$ is then equivalent to giving a sheaf $\cF_{\leq\varphi}$ on $U$ for each pair $(U,\varphi)$, together with compatible isomorphisms $(\cF_{\leq\varphi})_{|V}\isom\cF_{\leq(\varphi_{|V})}$ for $V\subset U$. The supplementary data of a morphism $\beta_\leq p_1^{-1}\cF_\leq\to p_2^{-1}\cF_\leq$ is equivalent to giving, for each pair $\varphi,\psi\in\Gamma(U,\ccI)$, of a morphism $\beta_{\varphi\leq\psi}\cF_{\leq\varphi}\to\cF_{\leq\psi}$ (where $\beta_{\varphi\leq\psi}$ is the functor composed of the restriction to $U_{\varphi\leq\psi}$ and the extension by zero), and such morphisms should commute with the isomorphisms above for each $V\subset U$.
\end{remarque}

\begin{definitio}[Twist]\label{def:twist}\index{twist}
Let $\cF_\leq$ be a~pre-$\ccI$-filtration and let $\eta\in\Gamma(Y,\ccI)$ be a global section of~$\ccI$. The \index{pre-I-filtration@pre-$\ccI$-filtration!twisted}\emphb{twisted pre-$\ccI$-filtration} \index{$FLEQETA$@$\cF[\eta]_\leq$}$\cF[\eta]_\leq$ is defined by $\cF[\eta]_{\leq\varphi}=\cF_{\leq\varphi-\eta}$ for any local section~$\varphi$ of~$\ccI$. It is a~pre-$\ccI$-filtration. A morphism $\lambda:\cF_\leq\to\cG_\leq$ induces a morphism $\cF[\eta]_\leq\to\cG[\eta]_\leq$, so that $[\eta]$ is a functor from $\Mod(\kk_{\ccIet,\leq})$ to itself, with inverse functor $[-\eta]$.
\end{definitio}

\begin{lemme}\label{lem:preIfiltab}
The category \index{$ModkIet$@$\Mod(\kk_{\ccIet,\leq})$}$\Mod(\kk_{\ccIet,\leq})$ of pre-$\ccI$-filtrations is abelian and a sequence is exact if and only if the sequence of the underlying sheaves on $\ccIet$ is exact.
\end{lemme}

\begin{proof}
This follows from the fact that $\beta_\leq p_1^{-1}$ and $p_2^{-1}$ are exact functors.
\end{proof}

\begin{remarque}[Tensor product]\label{rem:tensprod}\index{tensor product}
We will not try to define at this level of generality an internal tensor product of the category $\Mod(\kk_{\ccIet,\leq})$. However, we can easily define a tensor product operation $\Mod(\kk_Y)\times\Mod(\kk_{\ccIet,\leq})\to\Mod(\kk_{\ccIet,\leq})$. To $\cF'$ in $\Mod(\kk_Y)$ and $\cF_\leq$ in $\Mod(\kk_{\ccIet,\leq})$, we associate $\mu^{-1}\cF'\otimes_{\kk}\cF_\leq$, where $\mu^{-1}\cF'$ is the constant pre-$\ccI$-filtration on $\cF'$. The associated morphism is obtained as
\begin{multline*}
\beta_\leq p_1^{-1}(\mu^{-1}\cF'\otimes_{\kk}\cF_\leq)=p_1^{-1}\mu^{-1}\cF'\otimes_{\kk}\beta_\leq p_1^{-1}\cF_\leq\\
\to p_1^{-1}\mu^{-1}\cF'\otimes_{\kk}p_2^{-1}\cF_\leq=p_2^{-1}\mu^{-1}\cF'\otimes_{\kk}p_2^{-1}\cF_\leq=p_2^{-1}(\mu^{-1}\cF'\otimes_{\kk}\cF_\leq).
\end{multline*}
\end{remarque}

\subsubsection*{Push-forward}
Our purpose is now to define the index{push-forward (direct image)!of pre-$\ccI$-filtrations}\emph{push-forward of a pre-$\ccI$-filtration} by a continuous map $f:Y'\to Y$. We first consider the case of a cartesian diagram
\begin{equation}\label{eq:diagramf}
\begin{array}{c}
\xymatrix{
\wt\ccIet\ar[r]^-{\wt f}\ar[d]_{\wt\mu}&\ccIet\ar[d]^\mu\\
Y'\ar[r]^-f&Y
}
\end{array}
\end{equation}
with $\wt\ccI\defin f^{-1}\ccI$ equipped with the pull-back order, so that $(\wt\ccIet\times_{Y'}\wt\ccIet)_\leq$ is the pull-back of the open set $(\ccIet\times_Y\ccIet)_\leq\subset \ccIet\times_Y\ccIet$ by $\wt f\times\wt f$. As a consequence,
\begin{equation}\label{eq:wtfbeta}
\begin{split}
(\wt f\times\wt f)_*\circ\wt\beta_\leq&=\beta_\leq\circ(\wt f\times\wt f)_*\\
(\wt f\times\wt f)^{-1}\circ\beta_\leq&=\wt\beta_\leq\circ(\wt f\times\wt f)^{-1}.
\end{split}
\end{equation}

\begin{lemme}\label{lem:adjunct}
There are natural functors
\begin{align*}
\wt f_*:\Mod(\kk_{\wt\ccIet,\leq})&\to\Mod(\kk_{\ccIet,\leq})\\
\wt f^{-1}:\Mod(\kk_{\ccIet,\leq})&\to\Mod(\kk_{\wt\ccIet,\leq})
\end{align*}
and natural morphisms of functors
\begin{align*}
\wt f^{-1}\wt f_*&\to\id\quad\text{in }\Mod(\kk_{\wt\ccIet,\leq})\\
\id&\to\wt f_*\wt f^{-1}\quad\text{in }\Mod(\kk_{\ccIet,\leq}).
\end{align*}
Moreover,
\[
\Hom_{\kk_{\ccIet,\leq}}(\cF_\leq,\wt f_*\wt\cG_\leq)\simeq \Hom_{\kk_{\wt\ccIet,\leq}}(\wt f^{-1}\cF_\leq,\wt\cG_\leq).
\]
\end{lemme}

\begin{proof}
At the level of sheaves the result is standard. One has only to check the compatibility with the morphisms $\beta_\leq p_1^{-1}\to p_2^{-1}$ and $\wt\beta_\leq\wt p_1^{-1}\to\wt p_2^{-1}$. It follows from \eqref{eq:wtfbeta}. For instance, the \emphb{adjunction} morphisms are obtained from the similar ones for $\wt f\times\wt f$.
\end{proof}

\begin{corollaire}\label{cor:inj}
The category $\Mod(\kk_{\ccIet,\leq})$ has enough injectives.
\end{corollaire}

\begin{proof}
Let $Y_{\disc}$ be $Y$ equipped with its discrete topology and $\alpha:Y_{\disc}\to Y$ be the canonical morphism. Let us consider the diagram \eqref{eq:diagramf} for $\alpha$. Since $\wt\mu$ is a local homeomorphism, $\wt{\ccIet}$ is nothing but $\ccIet_{\disc}$ and $\wt\alpha$ is the corresponding canonical morphism. Using the adjunction for $\wt\alpha$, one shows as in \cite[Prop\ptbl2.4.3]{K-S90} that it is enough to check the corollary for $Y_\disc$, that is, stalkwise for $\ccIet$. Assume then that we are given vector spaces $\cF_{\leq\varphi_y}$ for any $\varphi_y\in\ccIet$, together with morphisms $\cF_{\leq\varphi_y}\to\cF_{\leq\psi_y}$ whenever $\varphi_y\leqy\psi_y$. Choose a monomorphism $\cF_{\leq\varphi_y}\hto\cI_{\leq\varphi_y}$ into an injective object in the category of $\kk$-vector spaces, for any $\varphi_y\in\ccIet$. By injectivity of $\cI_{\leq\psi_y}$, the composed morphism $\cF_{\leq\varphi_y}\to\cI_{\leq\psi_y}$ extends to a morphism $\cI_{\leq\varphi_y}\to\cI_{\leq\psi_y}$, providing a monomorphism $\cF_\leq\hto\cI_\leq$ in $\Mod(\kk_{\ccIet_{\disc},\leq})$.
\end{proof}

Arguing as in \cite[Prop\ptbl 2.4.6(vii)]{K-S90}, one checks that injectives are flabby (\ie the underlying sheaf is flabby). Going back to the setting of Lemma \ref{lem:adjunct}, we deduce:

\begin{corollaire}\label{cor:flabby}
The subcategory of $\Mod(\kk_{\wt\ccIet,\leq})$ whose objects have a flabby underlying sheaf is injective with respect to $\wt f_*$ (\cf\cite[Def\ptbl1.8.2]{K-S90}).\qed
\end{corollaire}

Notice also that the construction of Godement (\cite[\S II.4.3]{Godement64}) can be extended to the present setting: using the notation of the proof of Corollary \ref{cor:inj}, the adjunction $\id\to\wt\alpha_*\wt\alpha^{-1}$ gives an injective morphism $\wt\cF_\leq\to\wt\alpha_*\wt\alpha^{-1}\wt\cF_\leq$ and the latter has a flabby underlying sheaf.

We will denote by $D^+(\kk_{\ccIet,\leq})$ (\resp \index{$DbkIet$@$D^\rb(\kk_{\ccIet,\leq})$}$D^\rb(\kk_{\ccIet,\leq})$) the left-bounded (\resp bounded) derived category of $\Mod(\kk_{\ccIet,\leq})$. We have ``forgetting'' functors to $D^+(\kk_{\ccIet})$, \resp $D^\rb(\kk_{\ccIet})$.

\begin{remarque}[Tensor product]\label{rem:tensprodder}\index{tensor product}
According to Remark \ref{rem:tensprod}, we have a tensor product bifuntor $D^\rb(\kk_Y)\times D^\rb(\kk_{\ccIet,\leq})\to D^\rb(\kk_{\ccIet,\leq})$.
\end{remarque}

\begin{corollaire}\label{cor:Rf}
The derived functor $\bR\wt f_*:D^+(\kk_{\wt\ccIet,\leq})\to D^+(\kk_{\ccIet,\leq})$ is well-defined and compatible with the similar functor $D^+(\kk_{\wt\ccIet})\to D^+(\kk_{\ccIet})$, and if $f$ has finite cohomological dimension, so does $\bR\wt f_*:D^\rb(\kk_{\wt\ccIet,\leq})\to D^\rb(\kk_{\ccIet,\leq})$.\qed
\end{corollaire}

In general, given $\ccI'$ on $Y'$,~$\ccI$ on $Y$ and $f:Y'\to Y$, we wish to define the push-forward of an object of $\Mod(\kk_{\ccIpet,\leq})$ as an object of $\Mod(\kk_{\ccIet,\leq})$, and similarly at the level of derived categories. In order to do so, we need to assume the existence of a morphism $q_f:\wt\ccI\defin f^{-1}\ccI\to\ccI'$. We now consider a diagram
\begin{equation}\label{eq:diagramfqf}
\begin{array}{c}
\xymatrix{
\ccIpet\ar[d]_{\mu'}&\wt\ccIet\ar[l]_-{q_f}\ar[r]^-{\wt f}\ar[ld]^(.3){\hspace*{-1mm}\wt\mu}&\ccIet\ar[d]^\mu\\
Y'\ar[rr]^-f&&Y
}
\end{array}
\end{equation}
and we will also make the natural assumption that $q_f$ is compatible with the order, that is, given any two germs $\varphi_y,\psi_y\in\ccI_y$ ($y\in Y$), defining germs $\varphi_y,\psi_y\in(\wt f^{-1}\ccI)_{y'}$ at $y'\in f^{-1}(y)$, we have $\varphi_y\leqy\psi_y\implique q_f(\varphi_y)\leqyp q_f(\psi_y)$ for any such $y'$. This is equivalent to asking that $(\wt\ccIet\times_{Y'}\wt\ccIet)_\leq$ is contained in the pull-back of $(\ccIpet\times_{Y'}\ccIpet)_\leq$ by $q_f\times q_f$. Under this assumption, we get a morphism of functors
\begin{equation}\label{eq:betaq}
\wt\beta_\leq(q_f\times q_f)^{-1}\to(q_f\times q_f)^{-1}\beta'_\leq
\end{equation}
Therefore, for any object $\cF'_\leq$ of $\Mod(\kk_{\ccIpet,\leq})$ with corresponding morphism $\beta'_\leq p^{\prime-1}_1\cF'_\leq\to p^{\prime-1}_2\cF'_\leq$, the sheaf $q_f^{-1}\cF'_\leq$ comes equipped with a morphism $\wt \beta_\leq\wt p^{-1}_1q_f^{-1}\cF'_\leq\to \wt p^{-1}_2q_f^{-1}\cF'_\leq$, according to \eqref{eq:betaq}, and $\wt f_*q_f^{-1}\cF'_\leq$ with a morphism $\beta_\leq p^{-1}_1\wt f_*q_f^{-1}\cF'_\leq\to p^{-1}_2\wt f_*q_f^{-1}\cF'_\leq$, according to \eqref{eq:wtfbeta}.

\begin{definitio}\label{def:imdirpreIfilt}
Under these assumptions, the \index{push-forward (direct image)}push-forward by $f$ is defined by $\wt f_*q_f^{-1}$.
\end{definitio}

The functor $q_f^{-1}$ is exact, while $\wt f_*$ is a priori only left exact. We will now extend the push-forward functor to the derived categories. For the sake of simplicity, we assume that $f$ has finite cohomological dimension.

\begin{definitio}[Push-forward]\label{def:RimdirpreIfilt}
Under these assumptions, the \index{push-forward (direct image)!of pre-$\ccI$-filtrations}push-forward functor \index{$Fsubplus$@$f_+$ (push-forward)}$f_+:D^\rb(\kk_{\ccIpet,\leq})\to D^\rb(\kk_{\ccIet,\leq})$ is defined as the composed functor $\bR \wt f_* q_f^{-1}$.
\end{definitio}

\subsubsection*{The functor $\protect\cHom(\iota^{-1}\protect\cbbullet,\mu^{-1}\protect\cG)$}
This functor will be useful for defining duality in \S\ref{subsec:dualStperv}. We denote by $\iota$ the involution $\varphi\mto-\varphi$ on $\ccIet$ and by $\tau$ the permutation $(\varphi,\psi)\mto(\psi,\varphi)$ on $\ccIet\times_Y\ccIet$. We have $p_2=p_1\circ\tau$. Let~$\cG$ be a sheaf on $Y$. Recall that $\mu^{-1}\cG$ is the constant pre-$\ccI$-filtration on $\cG$. We will define a contravariant functor $\cHom(\iota^{-1}\cbbullet,\mu^{-1}\cG)$ from $\Mod(\kk_{\ccIet,\leq})$ to itself, over the corresponding functor on $\Mod(\kk_{\ccIet})$.

Given an object $\cF_\leq$ of $\Mod(\kk_{\ccIet,\leq})$, we apply $\tau^{-1}$ to the corresponding morphism $\beta_\leq p_1^{-1}\cF_\leq\to p_2^{-1}\cF_\leq$ to get a morphism $\beta_\geq p_2^{-1}\cF_\leq\to p_1^{-1}\cF_\leq$ (with obvious notation~$\beta_\geq$). Applying then $(\iota\times_Y\iota)^{-1}$, we get a morphism $\beta_\leq p_2^{-1}\iota^{-1}\cF_\leq\to p_1^{-1}\iota^{-1}\cF_\leq$. Since $p_2^{-1}\mu^{-1}\cG=p_1^{-1}\mu^{-1}\cG$, we obtain, since $p_1$ and $p_2$ are local homeomorphisms, a morphism
\begin{multline*}
p_1^{-1}\cHom_{\kk}(\iota^{-1}\cF_\leq,\mu^{-1}\cG)=\cHom_{\kk}(p_1^{-1}\iota^{-1}\cF_\leq,p_1^{-1}\mu^{-1}\cG)\\
\to\cHom_{\kk}(\beta_\leq p_2^{-1}\iota^{-1}\cF_\leq,p_2^{-1}\mu^{-1}\cG)=j_{\leq,*}j_\leq^{-1} p_2^{-1}\cHom_{\kk}(\iota^{-1}\cF_\leq,\mu^{-1}\cG),
\end{multline*}
where $j_\leq$ is the open inclusion $(\ccIet\times_Y\ccIet)_\leq\hto\ccIet\times_Y\ccIet$ (\cf \cite[(2.3.18)]{K-S90} for the second equality). Since for any two sheaves $\cA$ and $\cB$ and any open set $Z$, the natural morphism $\Hom(\cA_Z,\cB)\to\Hom(j_Z^{-1}\cA,j_Z^{-1}\cB)=\Hom(\cA,j_{Z,*}j_Z^{-1}\cB)$ is an isomorphism (\cf \loccit), we deduce a morphism
\[
\beta_\leq p_1^{-1}\cHom_{\kk}(\iota^{-1}\cF_\leq,\mu^{-1}\cG)\to p_2^{-1}\cHom_{\kk}(\iota^{-1}\cF_\leq,\mu^{-1}\cG),
\]
as wanted. That it is the identity on the diagonal is clear. The construction is also clearly functorial (in a contravariant way). By Lemma \ref{lem:preIfiltab}, this functor is exact if $\cG$ is injective in $\Mod(\kk_Y)$.

\begin{corollaire}\label{cor:Rhom}
For any object $\cG$ of $D^+(\kk_Y)$ there is a well-defined functor
\[
\bR\cHom(\iota^{-1}\cbbullet,\mu^{-1}\cG):D^{b,\textup{op}}(\kk_{\ccIet,\leq})\to D^+(\kk_{\ccIet,\leq}),
\]
and for any $\varphi\in\Gamma(U,\ccI)$, we have $\varphi^{-1}\bR\cHom(\iota^{-1}\cF_\leq,\mu^{-1}\cG)=\bR\cHom(\cF_{\leq-\varphi},\cG)$.\qed
\end{corollaire}

\subsection{Traces for étale maps}\label{subsec:traces}
Let us recall the definition of the \emphb{trace map} on sheaves of $\kk$-vector spaces, where $\kk$ is a field (more generally, a ring). Let $q:Z\to Z'$ be an étale map between topological spaces. For any $z'\in Z'$, the fibre $q^{-1}(z')$ is discrete with the induced topology. Let $\cF'$ be a sheaf on $Z'$ and let $\wt\cF$ be a subsheaf of $q^{-1}\cF'$. We denote by \index{$TRq$@$\Tr_q$}$\Tr_q(\wt\cF,\cF')$ the subsheaf of $\cF'$ defined by the following condition: for any open subset~$U'$ of $Z'$, $\Gamma(U', \Tr_q(\wt\cF,\cF'))\subset\Gamma(U',\cF')$ consists of those sections $f\in\Gamma(U',\cF')$ such that, for any $z'\in U'$, the germ $f_{z'}$ belongs to $\sum_{z\in q^{-1}(z')}\wt\cF_z$ (where we canonically identify $(q^{-1}\cF')_z$ with $\cF'_{q(z)}$, so that $\wt\cF_z\subset\cF'_{q(z)}$ and the sum is taken in $\cF'_{q(z)}$). This clearly defines a subsheaf of~$\cF'$.

\begin{lemme}\label{lem:trace}
The germ $\Tr_q(\wt\cF,\cF')_{z'}$ is equal to $\sum_{z\in q^{-1}(z')}\wt\cF_z$ (the sum is taken in $\cF'_{z'}$).
\end{lemme}

\begin{proof}
Indeed, the inclusion $\subset$ is clear. For the converse, let $f'_{z'}\in\sum_{z\in q^{-1}(z')}\wt\cF_z$. Then there exists $n\in\NN$, $z_1,\dots,z_n\in q^{-1}(z)$ and $f_{z_i}\in\wt\cF_{z_i}$ ($i=1,\dots,n$) such that $f'_{z'}=\sum_if_{z_i}$. As $q$ is étale, there exist open neighbourhoods $V'_{z'}$ of $z'$ and~$V_i$ of~$z_i$ such that $q_i:V_i\to V'_{z'}$ is an homeomorphism for each $i$ and there exist representatives $f'\in\nobreak\Gamma(V'_{z'},\cF')$ and $f_i\in\Gamma(V_i,\wt\cF)$ such that $f'=\sum_if_i$ (by identifying $(q^{-1}\cF')_{|V_i}$ to $\cF'_{|V'_{z'}}$ via $q_i$). This shows that $f'\in\Gamma(V'_{z'},\Tr_q(\wt\cF,\cF'))$ and therefore $f'_{z'}\in\Tr_q(\wt\cF,\cF')_{z'}$.
\end{proof}

If $Z$ is locally compact, then the functor $q_!$ is defined (\cf \cite[\S2.5]{K-S90}), and $\Tr_q(\wt\cF,\cF')$ is the image of the composed morphism $q_!\wt\cF\to q_!q^{-1}\cF'\to\cF'$.

There is a related notion of \emphb{saturation with respect to an order}, an operation that we denote by \index{$TRLEQ$@$\Tr_\leq$}$\Tr_\leq$. Let $Y,\ccI,\mu$ be as in \S\ref{subsec:espaceetalesord} and let $\cF$ be a sheaf of $\kk$\nobreakdash-vector spaces on~$Y$ (where $\kk$ is a field). If $\cF'$ is a subsheaf of $\mu^{-1}\cF$, the subsheaf $\Tr_\leq(\cF',\mu^{-1}\cF)$ of $\mu^{-1}\cF$ is defined by the following condition: for any open subset~$V$ of $\ccIet$, $\Gamma(V,\Tr_\leq(\cF',\mu^{-1}\cF))$ consists of those sections $f\in\Gamma(V,\mu^{-1}\cF)$ such that, for any $\varphi_y\in V$, the germ of $f$ at $\varphi_y$ belongs to $\sum_{\psi_y\leqy\varphi_y}\cF'_{\psi_y}$.

\begin{lemme}\label{lem:traceleq}
The germ $\Tr_\leq(\cF',\mu^{-1}\cF)_{\varphi_y}$ is equal to $\sum_{\psi_y\leqy\varphi_y}\cF'_{\psi_y}$ (the sum is taken in~$\cF_y$, due to the canonical identification of $(\mu^{-1}\cF)_{\psi_y}$ with $\cF_y$).
\end{lemme}

\begin{proof}
The inclusion $\subset$ is clear. Let $f_{\varphi_y}\in\sum_{\psi_y\leqy\varphi_y}\cF'_{\psi_y}$. There exists a finite subset $\Psi_y\in\mu^{-1}(y)$ such that $\psi_y\leq\varphi_y$ for any $\psi_y\in\Psi_y$ and $f_{\varphi_y}=\sum_{\psi_y\in\Psi_y}f'_{\psi_y}$, with $f'_{\psi_y}\in\cF'_{\psi_y}$, and the sum is taken in $\cF_y$. Choose an open neighbourhood $U$ of~$y$ in $Y$ such that the germs $\varphi_y$ and $\psi_y\in\Psi_y$ are defined on $U$. In particular, $\mu:\psi(U)\to U$ is a homeomorphism for any $\psi$. Up to shrinking $U$, we can also assume that each germ~$f'_{\psi_y}$ is defined on $\psi(U)$ as a section $f'_\psi\in\Gamma(\psi(U),\cF')$. Via $\mu$, we consider $\cF'_{|\psi(U)}$ as a subsheaf of $\cF_{|U}$ and we set $f_\varphi=\sum_{\psi_y\in\Psi_y}f'_\psi\in\Gamma(U,\cF)=\Gamma(\varphi(U),\mu^{-1}(\cF))$.

If $U$ is small enough, for any $z\in U$ and any $\psi_y\in\Psi_y$, we have $\psi_z\leqz\varphi_z$, by the openness of $\leq$ and because $\Psi_y$ is finite. This implies that the germ of $f_\varphi$ at any such~$z$ belongs to $\sum_{\eta_z\leqy\varphi_z}\cF'_{\eta_z}$, as was to be shown.
\end{proof}

\begin{remarque}\label{rem:traceleqfoncteur}
The correspondence $(\cF,\cF')\mto(\cF,\Tr_\leq(\cF',\mu^{-1}\cF))$ from the category of pairs $(\cF,\cF')$ with $\cF'\subset\mu^{-1}\cF$ (and morphisms being morphisms $\lambda:\cF_1\to\cF_2$ such that $\mu^{-1}\lambda$ sends $\cF'_1$ to $\cF'_2$) to the category of pre-$\ccI$-filtered sheaves (see below) is functorial. Indeed, if $\mu^{-1}\lambda(\cF'_1)\subset\cF'_2$, then $\mu^{-1}\lambda\big[\Tr_\leq(\cF'_2,\mu^{-1}\cF_2))\big]\subset\Tr_\leq(\cF'_2,\mu^{-1}\cF_2))$.
\end{remarque}

\subsection{Pre-$\ccI$-filtrations of sheaves}\label{subsec:preIfilt}
We assume that~$\ccI$ is as in \S\ref{subsec:espaceetalesord}. As above, $\cF$ denotes a sheaf of $\kk$-vector spaces on $Y$.

\begin{definitio}[Pre-$\ccI$-filtration of a sheaf]\label{def:pre-I-fil}
We say that a subsheaf $\cF_\leq$ of $\mu^{-1}\cF$ is a \index{pre-I-filtration@pre-$\ccI$-filtration!of a sheaf}\emph{pre-$\ccI$-filtration} of $\cF$ if, for any $y\in Y$ and any $\varphi_y,\psi_y\in\mu^{-1}(y)$, $\psi_y\leqy\varphi_y$ implies $\cF_{\leq\psi_y}\subset\cF_{\leq\varphi_y}$ or, equivalently, if $\cF_\leq=\Tr_\leq(\cF_\leq,\mu^{-1}\cF)$. We will denote by $(\cF,\cF_\bbullet)$ the data of a sheaf and a pre-$\ccI$-filtration on it.
\end{definitio}

In other words, a pre-$\ccI$-filtration of $\cF$ is an object $\cF_\leq$ of $\Mod(\kk_{\ccIet,\leq})$ together with an inclusion into the constant pre-$\ccI$-filtration $\mu^{-1}\cF$: indeed, for any $\varphi_y\leqy\psi_y$, the morphism $\cF_{\leq\varphi_y}\to\cF_{\leq\psi_y}$ given by the pre-$\ccI$-filtration is then an inclusion, since both $\cF_{\leq\varphi_y}\to\cF_y$ and $\cF_{\leq\psi_y}\to\cF_y$ are inclusions.

\begin{definitio}\label{def:morphismpreIfiltered}
Let $(\cF,\cF_\bbullet),(\cF',\cF'_\bbullet)$ be pre-$\ccI$-filtered sheaves and let $f:\cF\to\cF'$ be a morphism of sheaves. We say that it is a \index{morphism!of pre-$\ccI$-filtered sheaves}\emph{morphism of pre-$\ccI$-filtered sheaves} if $\mu^{-1}f$ sends $\cF_\leq$ to $\cF'_\leq$. We say that $f$ is \index{morphism!strict}\index{morphism!strict}\index{strict morphism}\emph{strict} if $\mu^{-1}f(\cF_\leq)=\cF'_\leq\cap\mu^{-1}(f(\cF))$.
\end{definitio}

We therefore get the (a priori non abelian) category of pre-$\ccI$-filtered sheaves of $\kk$-vector spaces. Clearly, $\mu^{-1}f$ induces a morphism $\cF_\leq\to\cF'_\leq$ in the category $\Mod(\kk_{\ccIet,\leq})$. The notion of twist (Definition \ref{def:twist}) applies to pre-$\ccI$-filtered sheaves.

\begin{definitio}[Exhaustivity]\label{def:exhaust}
We say that a pre-$\ccI$-filtration $\cF_\leq$ of $\cF$ is \index{exhaustive!pre-$\ccI$-filtration}\index{pre-I-filtration@pre-$\ccI$-filtration!exhaustive --}\emph{exhaustive} if for any $y\in Y$ there exists $\varphi_y,\psi_y\in\ccI_y$ such that $\cF_{\leq\varphi_y}=\cF_y$ and $\cF_{\leq\psi_y}=0$.
\end{definitio}

We now introduce some operations on pre-$\ccI$-filtered sheaves.

\subsubsection*{Extending the sheaf~$\ccI$}
Let $q:\wt\ccI\to\ccI'$ be a morphism of sheaves of ordered abelian groups on $Y'$. We also denote by $q:\wt\ccIet\to\ccIpet$ the corresponding map between étale spaces, so that we have a commutative diagram of maps:
\begin{equation}\label{eq:diagetale}
\begin{array}{c}
\xymatrix{
\wt\ccIet\ar@<-.5ex>[r]^-{q}\ar[rd]_(.37){\wt\mu}&\ccIpet\ar[d]^{\mu'}\\
&Y'
}
\end{array}
\end{equation}
Let $(\cF',\wt\cF_\bbullet)$ be a pre-$\wt\ccI$-filtered sheaf. We will define a pre-$\ccI'$-filtered sheaf $q_*(\cF',\wt\cF_\bbullet)=(\cF',(q\wt\cF)_\bbullet)$. When we consider the diagram \eqref{eq:diagetale}, we can introduce an ordered trace map as in Lemma \ref{lem:traceleq}.

\begin{lemme}\label{lem:traceqleq}
Let $(\cF',\wt\cF_\bbullet)$ be a pre-$\wt\ccI$-filtered sheaf. Then there is a unique subsheaf \index{$TRLEQQ$@$\Tr_{\leq q}$}$\Tr_{\leq q}(\wt\cF_\leq,\mu^{\prime-1}\cF')$ of $\mu^{\prime-1}\cF'$ defined by the property that, for any $y'\in Y'$ and any $\varphi'_{y'}\in\mu^{\prime-1}(y')$,
\[
\Tr_{\leq q}(\wt\cF_\leq,\mu^{\prime-1}\cF')_{\varphi'_{y'}}=\sum_{\substack{\psi_{y'}\in\wt\ccI_{y'}\\q(\psi_{y'})\leqyp\varphi'_{y'}}}\hspace*{-5mm}\wt\cF_{\leq\psi_{y'}}
\]
(where the sum is taken in $\cF'_{y'}$) is a pre-$\ccI'$-filtration of $\cF'$.
\end{lemme}

\begin{proof}
We note that $\Tr_{\leq q}(\wt\cF_\leq,\mu^{\prime-1}\cF')=\Tr_\leq\big(\Tr_q(\wt\cF_\leq,\mu^{\prime-1}\cF'),\mu^{\prime-1}\cF'\big)$, so the lemma is a consequence of Lemmas \ref{lem:trace} and \ref{lem:traceleq}.
\end{proof}

\begin{definitio}
Given a pre-$\wt\ccI$-filtration $\wt\cF_\leq$ of $\cF'$, we set
$$
(q\wt\cF)_\leq\defin\Tr_{\leq q}(\wt\cF_\leq,\mu^{\prime-1}\cF').
$$
\end{definitio}

\subsubsection*{Restricting the sheaf~$\ccI$}
In the previous situation, let $(\cF',\cF'_\bbullet)$ be a pre-$\ccI'$-filtered sheaf on~$Y'$. Then $\cF'$ becomes pre-$\wt\ccI$-filtered by $q^{-1}\cF'_\leq$.

\subsubsection*{Pull-back}
Let $f:Y'\to Y$ be a continuous map between locally compact spaces. Assume we are sheaves~$\ccI$ on $Y$ and $\ccI'$ on $Y'$, and a morphism $q_f:\wt\ccI\defin f^{-1}\ccI\to\ccI'$ compatible with the order, where $\wt\ccI$ is equipped with the pull-back order (\cf\eqref{eq:diagramf}). Let $(\cF,\cF_\leq)$ be a~pre-$\ccI$-filtered sheaf on $Y$. We will define a pre-$\ccI'$-filtered sheaf $f^+(\cF,\cF_\leq)=(f^{-1}\cF,f^+\cF_\leq)$ on $Y'$.

We have a continuous map $\wt f:\wt\ccIet\to\ccIet$ over $f:Y'\to Y$. Then $f^{-1}(\cF,\cF_\leq)\defin (f^{-1}\cF,\wt f^{-1}\cF_\leq)$ is a pre-$\wt\ccI$-filtered sheaf of $\kk$-vector spaces on $Y'$.

\begin{definitio}[Pull-back]\label{def:pullbackpreI}\index{pull-back (inverse image)!of pre-$\ccI$-filtered sheaves}
The pull-back \index{$Fsupplus$@$f^+$ (pull-back)}$f^+(\cF,\cF_\leq)$ is the sheaf $\cF'=f^{-1}\cF$ equipped with the pre-$\ccI'$-filtration $f^+\cF_\leq\!\defin\!(q_ff^{-1}\cF)_\leq\!=\!\Tr_{\leq q_f}(\wt f{}^{-1}\cF_\leq,\mu'^{-1}f^{-1}\cF)$.
\end{definitio}

\subsubsection*{Push-forward}
The push-forward of pre-$\ccI$-filtrations (Definition \ref{def:RimdirpreIfilt}) is a priori not defined at the level of pre-$\ccI$-filtered sheaves, since the condition that $\cF_\leq\to\mu^{-1}\cF$ is injective may be not satisfied after the push-forward of Definition \ref{def:RimdirpreIfilt}.

\subsubsection*{Grading}
Let us now assume that $\ccIet$ is Hausdorff. The relation~$\ley$ is then also open and we can work with it as with $\leqy$.

\begin{definitio}[Grading a~pre-$\ccI$-filtration (the Hausdorff case)]\label{def:grading}
Assume that $\ccIet$ is Hausdorff. Given a pre-$\ccI$-filtered sheaf $(\cF,\cF_\leq)$, there is, due to the openness of $<$, a unique subsheaf $\cF_<$ of $\cF_\leq$ such that, for any $y\in Y$ and any $\varphi_y\in\ccI_y$, $\cF_{<\varphi_y}=\sum_{\psi_y\ley\varphi_y}\cF_{\leq\psi_y}$. We denote by \index{pre-I-filtration@pre-$\ccI$-filtration!graded --}$\gr\cF$ the quotient sheaf $\cF_\leq/\cF_<$.
\end{definitio}

Notice that $\cF_<$ is also a pre-$\ccI$-filtration of $\cF$. Any morphism $(\cF,\cF_\bbullet)\to(\cF',\cF'_\bbullet)$ sends $\cF_<$ to $\cF'_<$, hence defines a morphism $\gr f:\gr\cF\to\gr\cF'$.

\subsection{$\ccI$-filtrations of sheaves (the Hausdorff case)}\label{subsec:Ifilt}
In order to define the notion of a~$\ccI$-filtration, we will make the assumption that $\ccIet$ is Hausdorff. A more general definition will be given in \S\ref{subsec:Ifilstrat}. This assumption is now implicitly understood. The point is that, if~$\ccIet$ is Hausdorff, it is locally compact (because $Y$ is so), and we make use of the direct image with proper supports $\mu_!$ in the usual way.

\begin{exemple}[Graded~$\ccI$-filtrations]\label{exem:trivIfilt}
Let $\cG$ be a sheaf of $\kk$\nobreakdash-vector spaces on $\ccIet$. We note that, for any $y\in Y$,
\bgroup\numstareq
\begin{equation}\label{eq:trivIfilt}
i^{-1}_y\mu_!\cG=\mu_!i^{-1}_{\mu^{-1}(y)}\cG=\bigoplus_{\varphi_y\in\mu^{-1}(y)}\cG_{\varphi_y}.
\end{equation}
\egroup

Similarly, we have a commutative diagram of étale maps
\[
\xymatrix{
\ccIet\times_Y\ccIet\ar[r]^-{p_1}\ar[d]_{p_2}&\ccIet\ar[d]^\mu\\
\ccIet\ar[r]^-\mu&Y
}
\]
and a base change isomorphism $p_{2,!}p_1^{-1}\cG=\mu^{-1}\mu_!\cG$ (\cf\eg\cite[Prop\ptbl2.5.11]{K-S90}). We denote by $(p_1^{-1}\cG)_\leq$ the extension by~$0$ of the restriction of $p_1^{-1}\cG$ to $(\ccIet\times_Y\ccIet)_\leq$, that is, $(p_1^{-1}\cG)_\leq=j_{\leq,!}j_\leq^{-1}p_1^{-1}\cG$, and by $(\mu_!\cG)_\leq$ the subsheaf $p_{2,!}[(p_1^{-1}\cG)_\leq]$ of $p_{2,!}p_1^{-1}\cG=\mu^{-1}\mu_!\cG$.

For any $\varphi_y\in\mu^{-1}(y)$, one has $(\mu_!\cG)_{\leq\varphi_y}=\bigoplus_{\psi_y\leqy\varphi_y}\cG_{\psi_y}$, and $(\mu_!\cG)_\leq$ is a pre-$\ccI$-filtration on $\mu_!\cG$, that we call the \index{I-filtration@$\ccI$-filtration!graded --}\emph{graded~$\ccI$-filtration on $\mu_!\cG$}.

Note that, if~$\ccI$ satisfies the exhaustivity property (Definition \ref{def:exhaustI}) and if $\mu$ is proper on $\Supp\cG$ then, as the sum in \eqref{eq:trivIfilt} is finite, any graded~$\ccI$-filtration is exhaustive.
\end{exemple}

\begin{definitio}[$\ccI$-filtrations]\label{def:Ifilt}
Let $\cF_\leq$ be a pre-$\ccI$-filtration of $\cF$. We say that it is a~\index{I-filtration@$\ccI$-filtration}$\ccI$-filtration of $\cF$ if, \emph{locally on~$Y$}, $\cF\simeq\mu_!\cG$ and the inclusion $\cF_\leq\subset\mu^{-1}(\cF)$ is isomorphic to the corresponding inclusion for the graded~$\ccI$-filtration on $\mu_!\cG$ for some sheaf $\cG$ (a~priori only locally (w.r.t\ptbl $Y$) defined on~$\ccIet$).
\end{definitio}

The category of~$\ccI$-filtered sheaves is the full subcategory of that of pre-$\ccI$-filtered sheaves whose objects are~sheaves with a $\ccI$-filtration. In other words, a morphism between sheaves underlying~$\ccI$-filtered sheaves is a morphism in the category of~$\ccI$\nobreakdash-filtered sheaves iff it is compatible with order. Note that it is \emph{not} required that~$\mu^{-1}f$ is locally decomposed in the local graded models $\mu_!\cG,\mu_!\cG'$ (\ie $\mu^{-1}f$ is ``filtered'', but possibly not ``graded'').

\begin{exemple}[Grading a graded~$\ccI$-filtration]
For the graded~$\ccI$-filtration on $\mu_!\cG$, we have $(\mu_!\cG)_{<\varphi_y}=\bigoplus_{\psi_y\ley\varphi_y}\cG_{\psi_y}$ and $\gr(\mu_!\cG)=\cG$, so $\cG$ can be recovered from $(\mu_!\cG)_\leq$.
\end{exemple}

Therefore, a~$\ccI$-filtration $(\cF,\cF_\bbullet)$ is, locally (on $Y$), isomorphic to the graded~$\ccI$\nobreakdash-filtra\-tion on $\mu_!\gr\cF$. Notice that the model graded~$\ccI$-filtration $\mu_!\gr\cF$ is now defined globally on~$Y$. In particular, for any $y\in Y$ and any $\varphi_y\in\ccI_y$, we have
\begin{equation}\label{eq:decI}
\begin{split}
\cF_{<\varphi_y,y}&\simeq\bigoplus_{\substack{\psi_y\in\ccI_y\\\psi_y\ley\varphi_y}}\gr_{\psi_y}\cF_y,\\
\cF_{\leq\varphi_y,y}&\simeq\bigoplus_{\substack{\psi_y\in\ccI_y\\\psi_y\leqy\varphi_y}}\gr_{\psi_y}\cF_y=\cF_{<\varphi_y,y}\oplus\gr_{\varphi_y}\cF_y,\\
\cF_y&\simeq\bigoplus_{\psi_y\in\ccI_y}\gr_{\psi_y}\cF_y,
\end{split}
\end{equation}
in a way compatible with the inclusion $\cF_{<\varphi_y,y}\subset\cF_{\leq\varphi_y,y}\subset\cF_y$. Recall also that a morphism may not be graded with respect to such a gradation.

\begin{remarque}\label{rem:exhaust}
If~$\ccI$ satisfies the exhaustivity property (Definition \ref{def:exhaustI}) and if $\mu$ is proper on $\Supp\gr\cF$, then the~$\ccI$-filtration is exhaustive.
\end{remarque}

\begin{lemme}[Stability by pull-back]\label{lem:stabpullback}
In the setting of Definition \ref{def:pullbackpreI}, if $\cF_\leq$ is a $\ccI$-filtration of $\cF$, then $f^+(\cF,\cF_\leq)$ is $\ccI'$-filtered sheaf on $Y'$.
\end{lemme}

\begin{proof}
Since the assertion is local, it is enough to start from a graded $\ccI$-filtration. Let us set $\cF'_\leq=f^+\cF_\leq$ and $\cF_\leq=(\mu_!\cG)_\leq$. Then, for $y'\in Y'$ and $y=f(y')$, we have
\[
\cF'_{\leqyp\varphi'_{y'}}=\sum_{\psi_y\leqyp\varphi'_{y'}}\cF_{\leq\psi_y}=\sum_{\psi_y\leqyp\varphi'_{y'}}\bigoplus_{\varphi_y\leqy\psi_y}\cG_{\varphi_y}=\bigoplus_{\varphi_y\leqyp\varphi'_{y'}}\cG_{\varphi_y}.\qedhere
\]
\end{proof}

\begin{remarque}[co-$\ccI$-filtration]\label{rem:coIfilt}
There is a dual notion of~$\ccI$-filtration, that one can call a \emphb{co-$\ccI$-filtration}. It consists of a pair $(\cF,\cF_<)$, where $\cF_<$ is a pre-$\ccI$-filtration, and an injective morphism $\cF_<\hto\mu^{-1}\cF$, which is locally (on $Y$) isomorphic to a pair $(\mu_!\cG,(\mu_!\cG)_<)$. From a~$\ccI$-filtration $\cF_\leq$, one defines a co-$\ccI$-filtration by using $\cF_<$ of Definition \ref{def:grading}.

Conversely, assume that $\ccI$ satisfies the following property:
\bgroup\numstareq
\begin{equation}\label{eq:coIfilt}
\forall y\in Y,\;\forall\varphi_y,\psi_y,\quad\psi_y\leqy\varphi_y\ssi\forall\eta_y,\;(\varphi_y\ley\eta_y\Rightarrow\psi_y\ley\eta_y).
\end{equation}
\egroup
Given a co-$\ccI$-filtration $\cF_<$ of $\cF$, we define $\cF_\leq$ by the formula
\[
\cF_{\leq\varphi_y}=\bigcap_{\substack{\eta_y\in\ccI_y\\\varphi_y\ley\eta_y}}\cF_{<\eta_y}.
\]
Then $\cF_\leq$ is a~$\ccI$-filtration of $\cF$ (this is checked on $\mu_!\cG$).
\end{remarque}

\subsubsection*{Classification of~$\ccI$-filtered sheaves}
Let $(\cF,\cF_\bbullet)$ and $(\cF',\cF'_\bbullet)$ be two pre-$\ccI$\nobreakdash-filtered sheaves. Then $\cHom_{\kk}(\cF,\cF')$ is equipped with a natural pre-$\ccI$-filtration: for any $\varphi\in\Gamma(U,\ccI$), $\cHom_{\kk}(\cF,\cF')_{\leq\varphi}$ is the subsheaf of $\cHom_{\kk}(\cF,\cF')_{|U}$ such that, for any open set $V\subset\nobreak U$, $f\in\Hom_{\kk}(\cF_{|V},\cF'_{|V})$ belongs to $\Hom_{\kk}(\cF_{|V},\cF'_{|V})_{\leq\varphi}$ iff $f(\cF_{\leq\eta|V})\subset\cF'_{\leq\eta+\varphi|V}$ for any $\eta\in\Gamma(V,\ccI)$. In case $(\cF,\cF_\bbullet)$ and $(\cF',\cF'_\bbullet)$ are~$\ccI$\nobreakdash-filtered sheaves, then so is $\cHom_{\kk}(\cF,\cF')$ since the local decomposition of $\cF,\cF'$ induces a local decomposition of $\cHom_{\kk}(\cF,\cF')$.

We denote by \index{$AUTNEG$@$\cAut_{\kk}^{<0}(\cF)$}$\cAut_{\kk}^{<0}(\cF)$ the subsheaf $\id+\cHom_{\kk}(\cF,\cF)_{<0}$ of $\cHom_{\kk}(\cF,\cF)_{\leq0}$.

\begin{proposition}\label{prop:classifhausdorff}
The set of isomorphism classes of~$\ccI$-filtration $(\cF,\cF_\bbullet)$ with $\gr\cF=\cG$ is equal to $H^1\big(Y,\cAut_{\kk}^{<0}(\mu_!\cG)\big)$, where $\mu_!\cG$ is equipped with the graded $\ccI$\nobreakdash-filtration.
\end{proposition}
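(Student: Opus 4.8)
The plan is to recognize this as a standard descent/classification statement: an $\ccI$-filtration with fixed graded piece $\cG$ is, locally on $Y$, isomorphic to the graded model $(\mu_!\cG)_\leq$ (this is exactly Definition \ref{def:Ifilt} combined with the remark that $\gr(\mu_!\cG)=\cG$), so the whole structure is a form of the graded model in the sense of nonabelian $H^1$. Thus I would set up an open cover $\{U_i\}$ of $Y$ together with isomorphisms of $\ccI$-filtered sheaves $\lambda_i\colon (\cF,\cF_\bbullet)|_{U_i}\isom (\mu_!\cG)_\leq|_{U_i}$, and then the transition automorphisms $g_{ij}=\lambda_i\lambda_j^{-1}$ are automorphisms of the graded model $(\mu_!\cG)_\leq$ over $U_{ij}$. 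The first key point to check is that such a $g_{ij}$, being an automorphism \emph{of the $\ccI$-filtered sheaf} (i.e.\ filtered but not necessarily graded), is precisely a section of $\cAut_{\kk}^{<0}(\mu_!\cG)$: the filtration-compatibility forces $g_{ij}=\id+h_{ij}$ with $h_{ij}\in\cHom_\kk(\mu_!\cG,\mu_!\cG)_{\leq 0}$, and the requirement that $g_{ij}$ induce the identity on $\gr$ (which is automatic here since $\gr\cF=\cG$ is fixed and $\lambda_i$ are taken compatibly with the identification on the graded level) forces $h_{ij}\in\cHom_\kk(\mu_!\cG,\mu_!\cG)_{<0}$, using the decomposition \eqref{eq:decI} to see that "$\leq 0$ and trivial on $\gr_0$" means "$<0$".

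Next I would verify that $\cAut_{\kk}^{<0}(\mu_!\cG)$ is genuinely a sheaf of (in general non-abelian) groups: since $\cHom_\kk(\mu_!\cG,\mu_!\cG)_{<0}$ is closed under composition and, by the local decomposition, any $h$ with $h(\cF_{\leq\eta})\subset\cF_{<\eta}$ is (stalkwise) nilpotent on each finite-dimensional layer — more precisely the filtration by order is locally finite on $\Supp\cG$ when $\mu$ is proper there, or one argues stalkwise — so $\id+h$ is invertible with inverse again of the form $\id+h'$, $h'\in\cHom_\kk(\mu_!\cG,\mu_!\cG)_{<0}$. (If $\mu$ is not proper on $\Supp\cG$ one still gets invertibility stalkwise because for a fixed germ only finitely many $\psi_y<\varphi_y$ contribute, by the discreteness of $\mu^{-1}(y)$ and the fact that $h$ is a genuine sheaf morphism near $y$.) With this, the cocycle $(g_{ij})$ defines a class in $H^1(Y,\cAut_{\kk}^{<0}(\mu_!\cG))$, well-defined independently of the choices $\{U_i\},\lambda_i$ by the usual refinement argument, and two $\ccI$-filtrations are isomorphic over $Y$ (by an isomorphism compatible with the given identification $\gr\cF=\cG$) iff their cocycles differ by a coboundary — this is the standard equivalence between descent data and $H^1$ with coefficients in the automorphism sheaf of the local model.

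Conversely, given a class represented by a cocycle $(g_{ij})$ with values in $\cAut_{\kk}^{<0}(\mu_!\cG)$ over a cover $\{U_i\}$, I would glue the local models $(\mu_!\cG)_\leq|_{U_i}$ along the $g_{ij}$: since each $g_{ij}$ is an isomorphism of pre-$\ccI$-filtered sheaves (automorphisms of $\mu_!\cG$ sending $\cF_{\leq\eta}$ to $\cF_{\leq\eta}$, a fortiori compatible with the $\beta_\leq$-morphisms), the cocycle condition lets one glue to an object $(\cF,\cF_\bbullet)$ of $\Mod(\kk_{\ccIet,\leq})$ together with an inclusion $\cF_\leq\hto\mu^{-1}\cF$ (the inclusions glue because the $g_{ij}$ are compatible with the inclusions into $\mu^{-1}\mu_!\cG$, being $\id+(\text{something into }\cF_{\leq})$). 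By construction this glued object is, locally on $Y$, isomorphic to the graded model, hence is an $\ccI$-filtration in the sense of Definition \ref{def:Ifilt}; and since each $g_{ij}$ acts as the identity on $\gr$, one gets a canonical identification $\gr\cF\isom\cG$. Finally I would check that cohomologous cocycles give isomorphic $\ccI$-filtrations and conversely, closing the bijection. The main obstacle I anticipate is purely bookkeeping: making sure that "isomorphism class of $\ccI$-filtration with $\gr\cF=\cG$" is interpreted so that the isomorphisms are required to be the identity on the fixed $\gr$ (otherwise one would get a quotient of $H^1$ by the action of global automorphisms of $\cG$, changing the statement) — and verifying carefully that the filtration-compatibility of automorphisms of the local graded model is exactly the $\cAut^{<0}$ condition, via the stalkwise decomposition \eqref{eq:decI}, with the only subtlety being the invertibility/nilpotence argument when $\mu$ is not proper on $\Supp\cG$.
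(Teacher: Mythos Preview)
Your proposal is correct and is precisely the standard descent argument the paper has in mind: the paper's own proof is a one-line reference (``This is standard, see e.g.\ the proof of Prop.~II.1.2.2, p.~112 in \cite{B-V89}''), and what you have written is exactly that argument spelled out. Your observation about the bookkeeping subtlety (isomorphisms must be the identity on the fixed $\gr\cF=\cG$, else one only gets a quotient of $H^1$) is well taken and is indeed the intended reading of the statement.
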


\begin{proof}
This is standard (see \eg the proof of Prop\ptbl II.1.2.2, p\ptbl112 in \cite{B-V89}).
\end{proof}

\subsection{$\ccI$-filtered local systems (the Hausdorff case)}\label{subsec:Ifillocsys}\index{I-filtered@$\ccI$-filtered local system}\index{local system!I-filtered@$\ccI$-filtered}
The case where $\cF$ is a \emph{locally constant sheaf of finite dimensional $\kk$\nobreakdash-vector spaces on~$Y$} (that we call a local system) will be the most important for us. Let us assume that $(\cF,\cF_\bbullet)$ is a $\ccI$-filtered locally constant sheaf. As locally constant sheaves are stable by direct summand, it follows from the local isomorphism $\cF\simeq\mu_!\gr\cF$ that $\mu$ is proper on $\Sigma=\Supp\gr\cF$, that~$\Sigma$ is a finite covering of~$Y$, and $\gr\cF$ is a locally constant sheaf on its support $\Sigma$. Moreover, $\mu_*\gr\cF=\mu_!\gr\cF$ is a local system on~$Y$ locally isomorphic to~$\cF$. We call~$\Sigma$ a \index{I-covering@$\ccI$-covering}\emph{$\ccI$-covering} of $Y$.

We note that $\mu:\Sigma\to Y$, being a finite covering, is a local homeomorphism and therefore defines a subsheaf $\Sigma^\sh$ of $\ccI$. This subsheaf is a sheaf of ordered sets (in general not abelian groups) with the ordered induced from that of $\ccI$. Restricting $\cF_\leq$ to $\Sigma$ defines then a $\Sigma^\sh$-filtration of $\cF$. Due to the local grading property of $\cF_\leq$, it is not difficult to check that $\cF_\leq$ is determined by $\cF_{\leq|\Sigma}$ and that both $\cF_\leq$ and $\cF_{\leq|\Sigma}$ have the same graded sheaves. In other words, if we know $\cF_{\leq\varphi_y}\subset\cF_y$ for any $\varphi_y\in\Sigma_y$, we can reconstruct $\cF_{\leq\psi_y}\subset\cF_y$ for any $\psi_y\in\ccI_y$ by setting $\cF_{\leq\psi_y}=\sum_{\varphi_y\leq\psi_y,\,\varphi_y\in\Sigma_y}\cF_{\leq\varphi_y}$, where the sum is taken in $\cF_y$.

Let us make precise Lemma \ref{lem:stabpullback} in the case where $\cF$ is a local system. We consider the setting of Definition \ref{def:pullbackpreI}. Firstly, the pre-$\wt\ccI$-filtration $\wt f^{-1}\cF_\leq$ of $f^{-1}\cF$ is a $\wt\ccI$-filtration and we have $\gr f^{-1}\cF=\wt f^{-1}\gr\cF$, whose support \index{$SZIGMAWT$@$\wt\Sigma$}$\wt\Sigma$ is nothing but $\wt f^{-1}\Sigma$, and is a finite covering of $Y'$. Since $q_f:\wt\ccIet\to\ccIpet$ is étale, being induced by a morphism of sheaves, the image $\Sigma'=q_f(\wt\Sigma)$ is a finite covering of $Y'$ and $q_f:\wt\Sigma\to\Sigma'$ is also a finite covering. The proof of Lemma \ref{lem:stabpullback} shows that $\gr\cF'=q_{f,*}\wt f^{-1}\gr\cF$.

\subsection{$\ccI$-filtered local systems (the stratified Hausdorff case)}\label{subsec:Ifilstrat}\index{I-filtered@$\ccI$-filtered local system}\index{local system!I-filtered@$\ccI$-filtered}
We will now extend the notion of $\ccI$-filtered local system in the case $\ccI$ satisfies a property weaker than Hausdorff, that we call the \index{Hausdorff! stratified}\emphb{stratified Hausdorff property}. We restrict to local systems, as this will be the only case of interest for us, and we will thus avoid general definitions as for the Hausdorff case.

Let $\cY=(Y_\alpha)_{\alpha\in A}$ be a \emphb{stratification} of~$Y$ (that is, a locally finite partition of~$Y$ into locally closed sets, called strata, such that the closure of every stratum is a union of strata). For $\alpha,\beta\in A$, we will denote by $\alpha\preceq\beta$ the relation $Y_\alpha\subset\ov Y_\beta$. In the following, we will always assume that the stratification satisfies the following property:

\enum{.07}{.91}{\refstepcounter{equation}\label{eq:propstrat}\eqref{eq:propstrat}}
{For each $\alpha\in A$, each point $y\in Y_\alpha$ has a fundamental system of open neighbourhoods $\nb(y)$ such that, for any $\beta\succeq\alpha$, $\nb(y)\cap\ov Y_\beta$ is contractible.}

\begin{definitio}[The stratified Hausdorff property]\label{def:stratHausdorff}
Let~$\ccI$ be a sheaf on $Y$ (in the setting of \S\ref{subsec:espaceetales}). We say that~$\ccI$ satisfies the \emph{stratified Hausdorff property} with respect to $\cY$ if for any $Y_\alpha\in\cY$, $\mu^{-1}(Y_\alpha)$ is Hausdorff, that is, the sheaf-theoretic restriction of~$\ccI$ to every $Y_\alpha$ satisfies the Hausdorff property.
\end{definitio}

\pagebreak[2]\skpt
\begin{exemples}\label{exem:separebis}\ligne
\begin{enumerate}
\item\label{exem:separe5}
If~$\ccI$ is a constructible sheaf on~$Y$ with respect to a stratification $\cY$ for which the strata are locally connected, then~$\ccI$ is Hausdorff with respect to $\cY$.
\item\label{exem:separe6}
Let~$X$ be a complex manifold, let~$D$ be a reduced divisor in~$X$ and let $\cY$ be a stratification of~$D$ by complex locally closed submanifolds such that $(X\moins D,\cY)$ is a Whitney stratification of~$X$. Then $\ccI=\cO_X(*D)/\cO_X$ has the Hausdorff property with respect to $\cY$. Indeed, if $x$ belongs to a connected stratum $Y_\alpha$ and if a germ $\varphi_x\in \cO_{X,x}(*D)$, with representative $\varphi$, is such that $\varphi_y$ is holomorphic for $y\in Y_\alpha$ close enough to $x$, then $\varphi$ is holomorphic on each maximal dimensional connected stratum of~$D$ which contains $Y_\alpha$ in its closure, so, by the local product property, $\varphi$ is holomorphic on the smooth part of~$D$ in the neighbourhood of $x$. By Hartogs, $\varphi$ is holomorphic near $x$.
\item\label{exem:separe6b}
The same argument holds if~$X$ is only assumed to be a normal complex space and~$D$ is a locally principal divisor in~$X$.
\end{enumerate}
\end{exemples}

We have seen in \S\ref{subsec:Ifillocsys} that the support of $\gr\cF$ is a finite covering of $Y$ contained in $\ccIet$. We now introduce the notion of stratified $\ccI$-covering (with respect to $\cY$).

\begin{definitio}[stratified $\ccI$-covering]\label{def:Sigma}\index{stratified I-covering@stratified $\ccI$-covering}\index{I-covering@$\ccI$-covering!-- stratified}
Let $\Sigma\subset\ccIet$ be a closed subset. We will say that $\mu_{|\Sigma}:\Sigma\to Y$ is a \emph{stratified $\ccI$-covering} of $Y$ if
\begin{enumerate}
\item
$\mu_{|\Sigma}:\Sigma\to Y$ is a local homeomorphism, that is, $\Sigma$ is the étale space of a subsheaf $\Sigma^\sh$ of $\ccI$,
\item
for each $\alpha\in A$, setting $\Sigma_\alpha=\Sigma\cap\mu^{-1}(Y_\alpha)$, the map $\mu_{|\Sigma_\alpha}:\Sigma_\alpha\to Y_\alpha$ is a finite covering,
\end{enumerate}
\end{definitio}

Let us explain this definition. The first property implies that $\mu_{|\Sigma_\alpha}$ is a local homeomorphism on $Y_\alpha$. Since $\ccIet_{|Y_\alpha}$ is Hausdorff, so is $\Sigma_\alpha$, and the second property is then equivalent to $\mu_{|\Sigma_\alpha}$ being proper.

What does $\Sigma $ look like near a point of $Y_\alpha$? For each $\alpha\in A$, each $y\in Y_\alpha$ has an open neighbourhood $\nb(y)\subset Y$ such that, denoting $\varphi_y^{(i)}$ the points in $\mu_{|\Sigma_\alpha}^{-1}(y)$, the germs $\varphi_y^{(i)}$ are induced by sections $\varphi^{(i)}\in\Gamma(\nb(y),\ccI)$. Then we have $\Sigma\cap\mu^{-1}(\nb(y))=\bigcup_i\varphi^{(i)}(\nb(y))$. We note however that, while the germs $\varphi^{(i)}_y$ (or more generally the germs $\varphi^{(i)}_{y'}$ for $y'\in\nb(y)\cap Y_\alpha$) are distinct, such may not be the case when $y'\not\in Y_\alpha$ and, for $\beta\succeq\alpha$, $\Sigma_\beta\cap\mu^{-1}(\nb(y))$ may have less connected components than $\Sigma_\alpha\cap\mu^{-1}(\nb(y))$.

\begin{definitio}[$\ccI$-filtration (the stratified Hausdorff case)]\label{def:Ifiltstrat}
Let $\ccI$ be stratified Hausdorff with respect to $\cY$, let $\cF$ be a locally constant sheaf of finite dimensional $\kk$-vector spaces on $Y$ and let $\cF_\leq$ be a pre-$\ccI$-filtration of $\cF$. We say that $\cF_\leq$ is a $\ccI$-filtration of $\cF$ if
\begin{enumerate}
\item\label{def:Ifiltstrat1}
for each $\alpha\in A$, the restriction $\cF_{\leq|\mu^{-1}(Y_\alpha)}$ is a $\ccI_{|\mu^{-1}(Y_\alpha)}$-filtration of $\cF_{|Y_\alpha}$ in the sense of Definition \ref{def:Ifilt}, with associated covering $\Sigma_\alpha\subset\ccIet_{|\mu^{-1}(Y_\alpha)}$ and associated local system $\cG_\alpha=\gr\cF_{|\mu^{-1}(Y_\alpha)}$,
\item\label{def:Ifiltstrat2}
the set \index{$SZIGMA$@$\Sigma$, $\Sigma_\alpha$}$\Sigma=\bigcup_\alpha\Sigma_\alpha$ is a stratified $\ccI$-covering of $Y$,
\item\label{def:Ifiltstrat3}
for each $\alpha\in A$ and each $y\in Y_\alpha$, let us set $\mu^{-1}(y)\cap\Sigma_\alpha=\{\varphi_y^{(i)}\mid i\in I_\alpha\}$, and let $\nb(y)$ be a small open neighbourhood on which all $\varphi^{(i)}$ are defined as sections of~$\ccI$; for each $\beta\succeq\alpha$, let $\Sigma_\beta(y,j)$ be the connected components of $\mu^{-1}(\nb(y))\cap \Sigma_\beta=\bigcup_i\varphi^{(i)}(\nb(y)\cap Y_\beta)$; then, for each $j$, $\cG_{\beta|\Sigma_\beta(y,j)}$ is a trivial local system and
\[
\rk\cG_{\beta|\Sigma_\beta(y,j)}=\sum_{i\mid\varphi^{(i)}(\nb(y)\cap Y_\beta)=\Sigma_\beta(y,j)}\rk\cG_{\alpha|\varphi^{(i)}(\nb(y)\cap Y_\alpha)}.
\]
\end{enumerate}
\end{definitio}

The last condition is a gluing condition near a stratum $Y_\alpha$. Since  the stratified space satisfies \eqref{eq:propstrat}, each $y\in Y_\alpha$ has a fundamental system of simply connected neighbourhood such that $\nb(y)\cap Y_\alpha$ is also simply connected. Then the local system~$\cG_\alpha$, restricted to $\varphi^{(i)}(\nb(y)\cap Y_\alpha)$ extends in a unique way to a local system $\cG_\alpha^{(i)}$ on $\varphi^{(i)}(\nb(y))$ (both local systems are trivial, according to our assumptions on $\nb(y)$). The second condition means that $\cG_{\beta|\Sigma_\beta(y,j)}$ is isomorphic to the direct sum of the $\cG_\alpha^{(i)}$ restricted to $\nb(y)\cap Y_\beta$.

As in the Hausdorff case, we also note that the restriction of $\cF_\leq$ to $\Sigma$ defines a $\Sigma^\sh$-filtration of $\cF$, and that $\cF_\leq$ is determined by $\cF_{\leq|\Sigma}$.

\begin{lemme}[Stability by pull-back]\label{lem:stabpullbackstrat}
Let $f:(Y',\cY')\to(Y,\cY)$ be a stratified continuous map, let $\ccI$ be stratified Hausdorff with respect to $\cY$ and let $(\cF,\cF_\leq)$ be a $\ccI$-filtered local system. Assume that we are in the setting of Definition \ref{def:pullbackpreI}. Then $f^+(\cF,\cF_\leq)$ is $\ccI'$-filtered local system with associated $\ccI'$ stratified covering $\Sigma'$ given by $q_f(\wt f^{-1}(\Sigma))$.
\end{lemme}

\begin{proof}
The result on each stratum follows from Lemma \ref{lem:stabpullback} and the details given in \S\ref{subsec:Ifillocsys}. The point is to check the gluing properties \ref{def:Ifiltstrat}\eqref{def:Ifiltstrat2} and \ref{def:Ifiltstrat}\eqref{def:Ifiltstrat3}. Firstly, these properties are easily checked for $\wt f^{-1}\cF_\leq$ as a $\wt\ccI$-filtration, and for $\wt\Sigma=\wt f^{-1}\Sigma$. One then shows that $q_f:\wt\Sigma\to\Sigma'\defin q_f(\Sigma)$ is a stratified covering with respect to the stratification $(\Sigma'_{\alpha'})_{\alpha'\in A'}$. The desired properties follow.
\end{proof}

\subsection{Comments}
The first definition of a $\ccI$-filtered local system appears in \cite{Deligne78b} in the following way (taking the notation of the present notes):

Given the local system $\ccI$ of multi-valued differential forms on the circle~$S^1$, equipped with the order as defined in Example \ref{exem:Stokes}, a $\ccI$-filtration on a local system $\cL$ on~$S^1$ consists of the data of a family of subsheaves $\cL^\varphi$ of $\cL$ indexed by $\ccI$ (be careful that~$\ccI$ is only a local system, but this is enough) satisfying the local grading property like \eqref{eq:decI}.

This definition is taken up in \cite[\S4, Def\ptbl4.1]{Malgrange83b}, and later in \cite[Def\ptbl2.1, p\ptbl57]{Malgrange91} in a similar way, but B\ptbl Malgrange adds between brackets:

``Be careful that the $\cL^\varphi$ are not subsheaves in the usual sense, because they are indexed by a local system and not a set.''

Later, the definition is once more taken up in \cite[p\ptbl74--77]{B-V89}, starting by defining a $\ccI$-graded local system (this is similar to Example \ref{exem:trivIfilt}) and then a $\ccI$-filtered local system by gluing local $\ccI$-graded local systems in a way which preserves the filtration (but possibly not the gradation).

In this \chaptersname we have tried to give a precise intrinsic answer to the question:

\centerline{what \emph{is} a $\ccI$-filtered local system?}

\noindent in order to use it in higher dimensions.

Another approach is due to T\ptbl Mochizuki in \cite[\S2.1]{Mochizuki08}, starting with a family of filtrations indexed by ordered sets, and adding compatibility conditions (corresponding to compatibility with respect to restriction morphisms in the sheaf-theoretic approach of the present notes).

\part{Dimension one}
\chapter{Stokes-filtered local systems in~dimension~one}\label{chap:Stokesone}

\begin{sommaire}
We consider Stokes filtrations on local systems on~$S^1$. We review some of the definitions of the previous \chaptersname in this case and make explicit the supplementary properties coming from this particular case. This \chaptersname can be read independently of \Chaptersname \ref{chap:Ifil}.
\end{sommaire}

\subsection{Introduction}
The notion of a (pre-)Stokes filtration is a special case of the notion of a (pre-)$\ccI$-filtration defined in \Chaptersname \ref{chap:Ifil} with a suitable sheaf $\ccI$ on the topological space $Y=S^1$. We have chosen to present this notion independently of the general results of the previous \chaptername, since many properties are simpler to explain in this case (\cf Proposition \ref{prop:stokeswithout}). Nevertheless, we make precise the relation with the previous \chaptersname when we introduce new definitions. We moreover start with the non-ramified Stokes filtrations, to make easier the manipulation of such objects, and we also call it a $\ccI_1$-filtration, in accordance with the previous \chaptername. The (possibly ramified) Stokes filtrations are introduced in \S\ref{subsec:Stokeswithramif}, where we define the sheaf $\ccI$ with its order. They correspond to the $\ccI$-filtrations of the previous \chaptername.

We also make precise the relation with the approach by Stokes data in the case of Stokes filtrations of simple exponential type.

References are \cite{Deligne78b}, \cite{Malgrange83b}, \cite{B-V89} and \cite[Chap\ptbl IV]{Malgrange91}.

\subsection{Non-ramified Stokes-filtered local systems}\label{subsec:Stokesnon-ram}
Let $\kk$ be a field. In this section, we consider local systems of finite dimensional $\kk$-vector spaces on~$S^1$. Recall (\cf Example \ref{exem:Stokes}) that we consider $S^1$ equipped with the constant sheaf $\ccI_1$ with fibre \index{$P$@$\ccP$}$\ccP=\CC\lpb x\rpb/\CC\{x\}$ consisting of polar parts of Laurent series, and the order depends on the point $e^{i\theta}=x/|x|\in S^1$ as follows. Let $\eta\in\ccP$ and let us set $\eta=u_n(x)x^{-n}$ with $n\geq1$ and $u_n(0)\neq0$ if $\eta\neq0$. Then
\[\index{$AAAORDth$@$\leqtheta$, $\letheta$}\tag{\ref{eq:orderone}}
\eta\leqtheta0\ssi \eta=0\text{ or }\arg u_n(0)-n\theta\in(\pi/2,3\pi/2)\mod2\pi,
\]
and $\eta\letheta0\ssi(\eta\leqtheta0\text{ and }\eta\neq0)$ (\cf \eqref{eq:orderonestrict}). The order is supposed to be compatible with addition, namely, $\varphi\leqtheta\psi\ssi\varphi-\psi\leqtheta0$ and similarly for $\letheta$. Let us rephrase Definition \ref{def:pre-I-fil} in this setting.

\begin{definitio}\label{def:preStokesnonramif}
A \emphb{non-ramified pre-Stokes filtration} on a \index{local system!Stokes-filtered}local system~$\cL$ of finite dimensional $\kk$\nobreakdash-vector spaces on~$S^1$ consists of the data of a family of subsheaves \index{$LLEQETC$@$\cL_{\leq\varphi}$, $\cL_{<\varphi}$, $\gr_\varphi\cL$}$\cL_{\leq\varphi}$ indexed by $\ccP$ such that, for any $\theta\in S^1$, $\varphi\leqtheta\psi\implique\cL_{\leq\varphi,\theta}\subset\cL_{\leq\psi,\theta}$.
\end{definitio}

Let us set, for any $\varphi\in\ccP$ and any $\theta\in S^1$,
\begin{equation}\label{eq:onto<}
\cL_{<\varphi,\theta}=\sum_{\psi\letheta\varphi}\cL_{\leq\psi,\theta}.
\end{equation}
This defines a subsheaf $\cL_{<\varphi}$ of $\cL_{\leq\varphi}$, and we set $\gr_\varphi\cL=\cL_{\leq\varphi}/\cL_{<\varphi}$. Note that The étale space $\ccIet_1$ is Hausdorff (\cf Example \ref{exem:separe}\eqref{exem:separe1}) and the previous definition is in accordance with Definition \ref{def:grading}.

\begin{notation}\label{nota:beta} We rephrase here Notation \ref{not:Ypsiphi} in the present setting. Let $\varphi,\psi\in\ccP$. Recall that we denote by \index{$S1PSIPHI$@$S^1_{\psi\leq\varphi}$, $S^1_{\psi<\varphi}$}$S^1_{\psi\leq\varphi}\subset S^1$ the subset of~$S^1$ consisting of the $\theta$ for which $\psi\leqtheta\varphi$. Similarly, $S^1_{\psi<\varphi}\subset S^1$ is the subset of~$S^1$ consisting of the $\theta$ for which $\psi\letheta\varphi$. Both subsets are a finite union of open intervals. They are equal if $\varphi\neq\psi$. Otherwise, $S^1_{\varphi\leq\varphi}=S^1$ and $S^1_{\varphi<\varphi}=\emptyset$.

Given a sheaf $\cF$ on~$S^1$, we will denote by \index{$BETAPSIPHI$@$\beta_{\psi\leq\varphi}$, $\beta_{\psi<\varphi}$}$\beta_{\psi\leq\varphi}\cF$ the sheaf obtained by restricting~$\cF$ to the open set $S^1_{\psi\leq\varphi}$ and extending it by~$0$ as a sheaf on~$S^1$ (for any open set $Z\subset S^1$, this operation is denoted $\cF_Z$ in \cite{K-S90}). A similar definition holds for $\beta_{\psi<\varphi}\cF$.
\end{notation}

\begin{definitio}[(Graded) Stokes filtration]\label{def:gradedStokes}
Given a finite set $\Phi\subset\ccP$, a \index{local system!Stokes-graded}\index{Stokes filtration!graded} Stokes-graded local system with~\index{$FPHI$@$\Phi$ (exponential factors)}$\Phi$ as set of \emphb{exponential factors} consists of the data of local systems (that we denote by) $\gr_\varphi\cL$ on~$S^1$ ($\varphi\in \Phi$). Then the graded non-ramified Stokes filtration on $\gr\cL\defin\bigoplus_{\psi\in \Phi}\gr_\psi\cL$ is given by
\[
(\gr\cL)_{\leq\varphi}=\bigoplus_{\psi\in \Phi}\beta_{\psi\leq\varphi}\gr_\psi\cL.
\]
We then also have
\[
(\gr\cL)_{<\varphi}=\bigoplus_{\psi\in \Phi}\beta_{\psi<\varphi}\gr_\psi\cL.
\]

A \index{Stokes filtration!non-ramified}\emph{non-ramified $\kk$\nobreakdash-Stokes filtration} on~$\cL$ is a pre-Stokes filtration which is locally on~$S^1$ isomorphic to a graded Stokes filtration. It is denoted by \index{$LBULLET$@$\cL_\bbullet$ (Stokes filtration)}$\cL_\bbullet$.
\end{definitio}

For a \index{Stokes-filtered local@Stokes-filtered local system!non-ramified}Stokes-filtered local system $(\cL,\cL_\bbullet)$, each sheaf $\gr_\varphi\cL$ is a (possibly zero) local system on~$S^1$. By definition, for every $\varphi$ and every $\theta_o\in S^1$, we have on some neighbourhood $\nb(\theta_o)$ of $\theta_o$,
\begin{equation}\label{eq:L<varphi}
\begin{split}
\cL_{<\varphi|\nb(\theta_o)}&\simeq\bigoplus_{\psi\in \Phi}\beta_{\psi<\varphi}\gr_\psi\cL_{|\nb(\theta_o)},\\
\cL_{\leq\varphi|\nb(\theta_o)}&\simeq\cL_{<\varphi|\nb(\theta_o)}\oplus\gr_\varphi\cL_{|\nb(\theta_o)}=\bigoplus_{\psi\in \Phi}\beta_{\psi\leq\varphi}\gr_\psi\cL_{|\nb(\theta_o)},\\
\cL_{|\nb(\theta_o}&\simeq\bigoplus_{\psi\in \Phi}\gr_\psi\cL_{|\nb(\theta_o)}
\end{split}
\end{equation}
in a way compatible with the natural inclusions.

\begin{exo}\label{exo:graded}
Show that the category of Stokes-filtered local systems has direct sums, and that any Stokes-graded local system is the direct sum of Stokes-graded local systems, each of which has exactly one exponential factor.
\end{exo}

One can make more explicit the definition of a non-ramified Stokes-filtered local system.

\begin{proposition}\label{prop:stokeswithout}
Giving a non-ramified Stokes-filtered local system $(\cL,\cL_\bbullet)$ is equivalent to giving, for each $\varphi\in\ccP$, a $\RR$-constructible subsheaf $\cL_{\leq\varphi}\subset\cL$ subject to the following conditions:
\begin{enumerate}
\item
for any $\theta\in S^1$, the germs $\cL_{\leq\varphi,\theta}$ form an exhaustive increasing filtration of~$\cL_\theta$;
\item\label{prop:stokeswithout2}
defining $\cL_{<\varphi}$, and therefore $\gr_\varphi\cL$, from the family $\cL_{\leq\psi}$ as in \eqref{eq:onto<}, the sheaf $\gr_\varphi\cL$ is a local system of finite dimensional $\kk$\nobreakdash-vector spaces on~$S^1$;
\item\label{prop:stokeswithout4}
for any $\theta\in S^1$ and any $\varphi\in\ccP$, $\dim\cL_{\leq\varphi,\theta}=\sum_{\psi\leqtheta\varphi}\dim\gr_\psi\cL_\theta$.
\end{enumerate}
\end{proposition}

We note that when \ref{prop:stokeswithout}\eqref{prop:stokeswithout2} is satisfied, \ref{prop:stokeswithout}\eqref{prop:stokeswithout4} is equivalent to
\begin{enumerate}
\item[(\ref{prop:stokeswithout4}$'$)]
For any $\theta\in S^1$ and any $\varphi\in\ccP$, $\dim\cL_{<\varphi,\theta}=\sum_{\psi\letheta\varphi}\dim\gr_\psi\cL_\theta$.
\end{enumerate}

\begin{proof}[\proofname\ of Proposition \ref{prop:stokeswithout}]
The point is to get the local gradedness property from the dimension property of \ref{prop:stokeswithout}\eqref{prop:stokeswithout4}. Since the local filtrations are exhaustive, the dimension property implies that the local systems $\cL$ and $\bigoplus_\varphi\gr_\varphi\cL$ are locally isomorphic, hence for each $\theta_o$, there exists a finite family $\Phi_{\theta_o}\subset\ccP$ such that $\gr_\varphi\cL_{\theta_o}\neq0\implique\varphi\in\nobreak\Phi_{\theta_o}$. Since $\gr_\varphi\cL$ is a local system, it is zero if and only if it is zero near some~$\theta_o$. We conclude that the set $\Phi_{\theta_o}\subset\ccP$ is independent of $\theta_o$, and we simply denote it by~$\Phi$. We thus have $\gr_\varphi\cL\neq0\implique\varphi\in\Phi$.

\begin{lemme}\label{lem:H1vanishing}
Let $\cF$ be a $\RR$-constructible sheaf of $\kk$\nobreakdash-vector spaces on~$S^1$. For any $\theta_o\in S^1$, let $I$ be an open interval containing $\theta_o$ such that $\cF_{|I\moins\{\theta_o\}}$ is a local system of finite dimensional $\kk$\nobreakdash-vector spaces. Then $H^1(I,\cF)=0$.
\end{lemme}

\begin{proof}
Let $\iota:I\moins\{\theta_o\}\hto I$ be the inclusion. We have an exact sequence $0\to\iota_!\iota^{-1}\cF\to\cF\to\cG\to0$, where $\cG$ is supported at $\theta_o$. It is therefore enough to prove the result for $\iota_!\iota^{-1}\cF$. This reduces to the property that, if $i:(a,b)\hto(a,b]$ is the inclusion (with $a,b\in\RR$, $a<b$), then $H^1((a,b],i_!\kk)=0$, which is clear by Poincaré duality\index{duality!Poincaré}.
\end{proof}

Let us fix $\theta_o\in S^1$. Since $\cL_{<\psi}$ is $\RR$-constructible for any $\psi$, there exists an open interval $\nb(\theta_o)$ of~$S^1$ containing $\theta_o$ such that, for any $\psi\in \Phi$, $\cL_{<\psi}$ is a local system on $\nb(\theta_o)\moins\{\theta_o\}$. Then, for any such $\psi$, $H^1(\nb(\theta_o),\cL_{<\psi})=0$, according to the previous lemma and, as $\gr_\psi\cL$ is a constant local system on $\nb(\theta_o)$, we can lift a basis of global sections of $\gr_\psi\cL_{|\nb(\theta_o)}$ as a family of sections of $\cL_{\leq\psi,|\nb(\theta_o)}$. This defines a morphism $\bigoplus_\psi\gr_\psi\cL_{|\nb(\theta_o)}\to\cL_{|\nb(\theta_o)}$ sending $\bigoplus_{\psi\leqtheta\varphi}\gr_\psi\cL_{\theta}$ to $\cL_{\leq\varphi,\theta}$ for any $\theta\in\nb(\theta_o)$ and any~$\varphi$. Let us show, by induction on $\#\{\psi\in \Phi\mid\psi\leqtheta\varphi\}$, that it sends $\bigoplus_{\psi\leqtheta\varphi}\gr_\psi\cL_{\theta}$ \emph{onto} $\cL_{\leq\varphi,\theta}$ for any $\theta\in\nb(\theta_o)$ and any~$\varphi$: indeed, the assertion is clear by the dimension property if this number is zero; by the inductive assumption and according to \eqref{eq:onto<}, it sends $\bigoplus_{\psi\letheta\varphi}\gr_\psi\cL_{\theta}$ onto $\cL_{<\varphi,\theta}$ for any $\theta\in\nobreak\nb(\theta_o)$; since $\cL_{\leq\varphi}=\cL_{<\varphi}+\text{image}\gr_\varphi\cL$ in $\cL$, the assertion follows. As both spaces $\bigoplus_{\psi\leqtheta\varphi}\gr_\psi\cL_{\theta}$ and $\cL_{\leq\varphi,\theta}$ have the same dimension, due to \ref{prop:stokeswithout}(\ref{prop:stokeswithout4}), this morphism is an isomorphism.
\end{proof}

The finite subset $\Phi\subset\ccP$ such that $\gr_\varphi\cL\neq0\implique\varphi\in\Phi$ is called the \emphb{set of exponential factors} of the non-ramified Stokes filtration. The following proposition is easily checked, showing more precisely exhaustivity.

\begin{proposition}\label{prop:Sfini}
Let $\cL_\bbullet$ be a non-ramified $\kk$\nobreakdash-Stokes filtration on~$\cL$. Then, for any $\theta\in S^1$, and any $\varphi\in\ccP$,
\begin{itemize}
\item
if $\varphi\letheta \Phi$, then $\cL_{\leq\varphi,\theta}=0$,
\item
if $\Phi\letheta\varphi$, then $\cL_{<\varphi,\theta}=\cL_{\leq\varphi,\theta}=\cL_\theta$.\qed
\end{itemize}
\end{proposition}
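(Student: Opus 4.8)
The plan is to deduce both assertions directly from the local grading property of a non-ramified Stokes filtration, i.e.\ from the isomorphisms \eqref{eq:L<varphi} valid on a neighbourhood $\nb(\theta)$ of an arbitrary point $\theta\in S^1$. First I would take stalks at $\theta$ in \eqref{eq:L<varphi}, using that $(\beta_{\psi\leq\varphi}\gr_\psi\cL)_\theta$ equals $\gr_\psi\cL_\theta$ when $\psi\leqtheta\varphi$ and vanishes otherwise (and likewise with $<$ in place of $\leq$). Writing $\Phi$ for the set of exponential factors — which is finite, independent of $\theta$, and characterised by the property that $\gr_\psi\cL_\theta\neq0$ exactly for $\psi\in\Phi$ (Lemma \ref{lem:stokeswithout} and the discussion following it) — this yields
\[
\cL_{\leq\varphi,\theta}\simeq\bigoplus_{\substack{\psi\in\Phi\\\psi\leqtheta\varphi}}\gr_\psi\cL_\theta,
\qquad
\cL_{<\varphi,\theta}\simeq\bigoplus_{\substack{\psi\in\Phi\\\psi\letheta\varphi}}\gr_\psi\cL_\theta,
\]
together with $\cL_\theta\simeq\bigoplus_{\psi\in\Phi}\gr_\psi\cL_\theta$, all compatibly with the inclusions $\cL_{<\varphi,\theta}\subset\cL_{\leq\varphi,\theta}\subset\cL_\theta$.

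For the first bullet I would argue as follows. If $\varphi\letheta\Phi$, then for every $\psi\in\Phi$ we have $\varphi\leqtheta\psi$ with $\varphi\neq\psi$; since $\leqtheta$ is an order relation on $\ccI_\theta$ (Example \ref{exem:Stokes}), antisymmetry rules out $\psi\leqtheta\varphi$, so the index set of the first sum above is empty and $\cL_{\leq\varphi,\theta}=0$. For the second bullet, if $\Phi\letheta\varphi$ then $\psi\letheta\varphi$, hence also $\psi\leqtheta\varphi$, for every $\psi\in\Phi$, so both index sets above exhaust $\Phi$; comparing with $\cL_\theta\simeq\bigoplus_{\psi\in\Phi}\gr_\psi\cL_\theta$ and using the inclusions $\cL_{<\varphi,\theta}\subset\cL_{\leq\varphi,\theta}\subset\cL_\theta$, one gets $\cL_{<\varphi,\theta}=\cL_{\leq\varphi,\theta}=\cL_\theta$.

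I do not expect any real obstacle here: the whole argument is bookkeeping inside the local graded model, and the only inputs beyond \eqref{eq:L<varphi} are the antisymmetry of $\leqtheta$ and the well-definedness of $\Phi$. If one wishes, as the remark preceding the statement suggests, to also read off exhaustivity of $\cL_\bbullet$, it suffices to combine the two bullets with the exhaustivity property of $\ccI$ (Example \ref{exem:exhaust}): for each $\theta$ one chooses $\psi_\theta,\varphi_\theta\in\ccP$ with $\psi_\theta\letheta\Phi$ and $\Phi\letheta\varphi_\theta$, so that $\cL_{\leq\psi_\theta,\theta}=0$ and $\cL_{\leq\varphi_\theta,\theta}=\cL_\theta$.
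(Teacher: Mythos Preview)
Your argument is correct and is precisely the routine verification the paper has in mind: the statement is marked with a \qed\ and introduced as ``easily checked'', and the only available input is indeed the local graded description \eqref{eq:L<varphi} together with the antisymmetry of $\leqtheta$, which you use exactly as intended. Your added remark deriving exhaustivity from Example~\ref{exem:exhaust} is also the natural way to justify the sentence preceding the proposition.
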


\begin{remarque}\label{rem:redPhi}
One can also remark that the category of Stokes-filtered local systems with set of exponential factors contained in $\Phi$ is equivalent to the category of $\Phi$-filtered local systems, where we regard $\Phi$ as a constant sheaf on~$S^1$, equipped with the ordered induced by the order of $\ccI_1$ (constant sheaf with fibre $\ccP$).
\end{remarque}

\skpt
\begin{exemples}\label{exem:triviaux}\ligne
\begin{enumerate}
\item\label{exem:twistdual} (Twist)\index{twist}
Let $\eta\in\ccP_x$ and let $(\cL,\cL_\bbullet)$ be a (pre-)Stokes-filtered local system. The twisted local system $(\cL,\cL_\bbullet)[\eta]$ is defined by $\cL[\eta]_{\leq\varphi}=\cL_{\leq\varphi-\eta}$. In the Stokes-filtered case, the set of exponential factors $\Phi[\eta]$ is equal to $\Phi+\eta$. This is analogous to Definition \ref{def:twist}.
\item\label{exem:triviaux1}
The graded Stokes filtration with $\Phi=\{0\}$ (\cf Example \ref{exem:trivIfilt}) on the constant sheaf $\kk_{S^1}$ is defined by $\kk_{S^1,\leq\varphi}\defin\beta_{0\leq\varphi}\kk_{S^1}$ for any $\varphi$, so that $\kk_{S^1,\leq0}=\kk_{S^1}$, $\kk_{S^1,<0}=\nobreak0$, and, for any $\varphi\neq0$, $\kk_{S^1,\leq\varphi}=\kk_{S^1,<\varphi}$ has germ equal to~$\kk_\theta$ at $\theta\in S^1$ iff $0\leqtheta\varphi$, and has germ equal to~$0$ otherwise.

\item\label{exem:triviaux3}
Let $\cL_\bbullet$ be any non-ramified Stokes filtration with set of exponential factors~$\Phi$ reduced to one element $\eta$. According to Proposition \ref{prop:Sfini}, we have $\cL_{<\eta}=0$, and $\cL_{\leq\eta}=\gr_\eta\cL=\cL$ is a local system on~$S^1$. The non-ramified Stokes filtration is then described as in \ref{exem:triviaux}\eqref{exem:triviaux1} above, that is, $\cL_{\leq\varphi}=\beta_{\eta\leq\varphi}\cL$. If we denote by $\cL_\bbullet$ this Stokes filtration of $\cL$ then, using the twist operation \ref{exem:triviaux}\eqref{exem:twistdual}, the twisted Stokes filtration $\cL[-\eta]_\bbullet$ is nothing but the graded Stokes filtration on $\cL$, defined as in \ref{exem:triviaux}\eqref{exem:triviaux1}.
\item
Assume that $\#\Phi=2$ or, equivalently (by twisting, \cf above), that $\Phi=\{0,\varphi_o\}$ with $\varphi_o\neq0$. If the order of the pole of $\varphi_o$ is $n$, then there are $2n$ Stokes directions (\cf Example \ref{exem:Stokes}) dividing the circle in $2n$ intervals. Given such an open interval, then~$0$ and $\varphi_o$ are comparable (in the same way) at any $\theta$ in the interval, and the comparison changes alternatively on the intervals. Assume that $0\letheta\varphi_o$. Then, according to Proposition \ref{prop:Sfini} and to \ref{prop:stokeswithout}\eqref{prop:stokeswithout4}, $\cL_{\leq\varphi_o,\theta}=\cL_\theta$ and $\cL_{<0,\theta}=0$. Moreover, when restricted to the open interval containing $\theta$, $\cL_{\leq0}=\cL_{<\varphi_o}$ is a local system of rank equal to $\rk\gr_0\cL$. On the other intervals, the roles of~$0$ and $\varphi_o$ are exchanged.

Let now $\theta$ be a Stokes direction for $(0,\varphi_o)$. As $\varphi_o$ and~$0$ are not comparable at $\theta$, \ref{prop:stokeswithout}(\ref{prop:stokeswithout4}$'$) implies that $\cL_{<\varphi_o,\theta}=\cL_{<0,\theta}=0$ and, using $\varphi$ such that $0,\varphi_o\letheta\varphi$, we find, by exhaustivity, $\cL_\theta=\cL_{\leq\varphi_o,\theta}\oplus\cL_{\leq0,\theta}$. This decomposition reads as an isomorphism $\cL_\theta\simeq\gr_{\varphi_o}\cL_\theta\oplus\gr_0\cL_\theta$. It extends on a neighbourhood $\nb(\theta)$ of $\theta$ in~$S^1$ (we can take for $\nb(\theta)$ the union of the two adjacent intervals considered above ending at $\theta$) in a unique way as an isomorphism of local systems $\cL_{|\nb(\theta)}\simeq(\gr_{\varphi_o}\cL\oplus\gr_0\cL)_{|\nb(\theta)}$.

In order to end the description, we will show that the equality $\cL_{\leq\varphi}=\cL_{<\varphi}$ for $\varphi\not\in\{0,\varphi_o\}$ can be deduced from the data of the corresponding sheaves for $\varphi\in\{0,\varphi_o\}$. Let us fix $\theta\in S^1$. Assume first $0\letheta\varphi_o$ (and argue similarly if $\varphi_o\letheta0$).
\begin{itemize}
\item
If $\varphi$ is neither comparable to $\varphi_o$ nor to~$0$, then \ref{prop:stokeswithout}\eqref{prop:stokeswithout4} shows that $\cL_{\leq\varphi,\theta}=0$.
\item
If $\varphi$ is comparable to $\varphi_o$ but not to~$0$,
\begin{itemize}
\item
if $\varphi_o\letheta\varphi$, then $\cL_{\leq\varphi,\theta}=\cL_\theta$,
\item
if $\varphi\letheta\varphi_o$, then $\cL_{\leq\varphi,\theta}\subset\cL_{<\varphi_o,\theta}=\cL_{\leq0,\theta}$, hence \ref{prop:stokeswithout}\eqref{prop:stokeswithout4} implies $\cL_{\leq\varphi,\theta}=0$.
\end{itemize}
\item
If $\varphi$ is comparable to~$0$ but not to $\varphi_o$, the result is similar.
\item
If $\varphi$ is comparable to both $\varphi_o$ and~$0$, then,
\begin{itemize}
\item
if $0\letheta\varphi_o\letheta\varphi$, $\cL_{\leq\varphi,\theta}=\cL_\theta$,
\item
if $0\letheta\varphi\letheta\varphi_o$, then $\cL_{\leq\varphi,\theta}=\cL_{\leq0,\theta}$,
\item
if $\varphi\letheta0\letheta\varphi_o$, then $\cL_{\leq\varphi,\theta}=0$.
\end{itemize}
\end{itemize}
If $\varphi_o$ and~$0$ are not comparable at $\theta$, then one argues similarly to determine $\cL_{\leq\varphi,\theta}$.
\end{enumerate}
\end{exemples}

\begin{definitio}
A \emph{morphism} $\lambda:(\cL,\cL_\bbullet)\to(\cL',\cL'_\bbullet)$ of non-ramified $\kk$\nobreakdash-Stokes-filtered local systems is a morphism of local systems $\cL\to\cL'$ on~$S^1$ such that, for any $\varphi\in\ccP$, $\lambda(\cL_{\leq\varphi})\subset\cL'_{\leq\varphi}$. According to \eqref{eq:onto<}, a morphism also satisfies $\lambda(\cL_{<\varphi})\subset\cL'_{<\varphi}$. A morphism $\lambda$ is said to be \index{morphism!strict}\index{strict morphism}\emph{strict} if, for any~$\varphi$, $\lambda(\cL_{\leq\varphi})=\lambda(\cL)\cap\cL'_{\leq\varphi}$.
\end{definitio}

\begin{definitio}\label{def:HomSt}
Given two non-ramified $\kk$\nobreakdash-Stokes-filtered local systems $(\cL,\cL_\bbullet)$ and $(\cL',\cL'_\bbullet)$,
\begin{itemize}
\item
the direct sum $(\cL,\cL_\bbullet)\oplus(\cL',\cL'_\bbullet)$ has local system $\cL\oplus\cL'$ and filtration $(\cL\oplus\nobreak\cL')_{\leq\varphi}=\cL_{\leq\varphi}\oplus\cL'_{\leq\varphi}$,
\item
$\cHom(\cL,\cL')_{\leq\eta}$ is the subsheaf of $\cHom(\cL,\cL')$ consisting of local morphisms $\cL\to\cL'$ sending $\cL_{\leq\varphi}$ into $\cL'_{\leq\varphi+\eta}$ fo any $\varphi$;
\item
in particular, the dual $(\cL,\cL_\bbullet)^\vee$ is defined as $(\cHom(\cL,\kk_{S^1}),\cHom(\cL,\kk_{S^1})_\bbullet)$, where $\kk_{S^1}$ is equipped with the graded Stokes filtration of Example \ref{exem:triviaux}\eqref{exem:triviaux1}.
\item
$(\cL\otimes\cL')_{\leq\eta}=\sum_\varphi\cL_{\leq\varphi}\otimes\cL'_{\leq\eta-\varphi}\subset\cL\otimes\cL'$.
\end{itemize}
\end{definitio}

In particular, a morphism of non-ramified Stokes-filtered local systems is a global section of $\cHom(\cL,\cL')_{\leq0}$.

\begin{proposition}\label{prop:operationsSt}
Given two non-ramified $\kk$\nobreakdash-Stokes filtrations $\cL_\bbullet,\cL'_\bbullet$ of $\cL,\cL'$, $\cL_\bbullet\oplus\nobreak\cL'_\bbullet$, $\cHom(\cL,\cL')_\bbullet$, $(\cL^\vee)_\bbullet$ and $(\cL\otimes\cL')_\bbullet$ are non-ramified $\kk$\nobreakdash-Stokes filtrations of the corresponding local systems and $\cHom(\cL,\cL')_\bbullet\simeq(\cL^{\prime\vee}\otimes\cL)_\bbullet$. Moreover,
\begin{enumerate}
\item\label{prop:operationsSt1}
$\cHom(\cL,\cL')_{<\eta}$ is the subsheaf of $\cHom(\cL,\cL')$ consisting of local morphisms $\cL\to\cL'$ sending $\cL_{\leq\varphi}$ into $\cL'_{<\varphi+\eta}$ fo any $\varphi$;
\item\label{prop:operationsSt2}
$(\cL^\vee)_{\leq\varphi}=(\cL_{<-\varphi})^\perp$ and $(\cL^\vee)_{<\varphi}=(\cL_{\leq-\varphi})^\perp$ for any $\varphi$, so that $\gr_\varphi\cL^\vee=(\gr_{-\varphi}\cL)^\vee$ (here, $(\cL_{<-\varphi})^\perp$, \resp $(\cL_{\leq-\varphi})^\perp$, consists of local morphisms $\cL\to\kk_{S^1}$ sending $\cL_{<-\varphi}$, \resp $\cL_{\leq-\varphi}$, to zero);
\item\label{prop:operationsSt3}
$(\cL\otimes\cL')_{<\eta}=\sum_\varphi\cL_{\leq\varphi}\otimes\cL'_{<\eta-\varphi}=\sum_\varphi\cL_{<\varphi}\otimes\cL'_{\leq\eta-\varphi}$.
\end{enumerate}
\end{proposition}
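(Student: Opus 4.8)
The plan is to reduce every assertion to the graded case. By Definition~\ref{def:gradedStokes} a non-ramified $\kk$-Stokes filtration is, on a neighbourhood $\nb(\theta_o)$ of any point of~$S^1$, isomorphic to a graded one; and the four operations $\oplus$, $\cHom(\cdot,\cdot)$, $(\cdot)^\vee$ and $\otimes$ are functorial sheaf-theoretic constructions (contravariant in the first variable for $\cHom$ and $(\cdot)^\vee$, covariant otherwise), so filtered isomorphisms $\cL_{|\nb(\theta_o)}\simeq\gr\cL_{|\nb(\theta_o)}$ and $\cL'_{|\nb(\theta_o)}\simeq\gr\cL'_{|\nb(\theta_o)}$ induce, after applying any of them, a filtered isomorphism on $\nb(\theta_o)$. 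It therefore suffices to check (i)~that each operation applied to two \emph{graded} Stokes filtrations again yields a graded one, exhibiting its exponential factors and graded pieces, and (ii)~that the formulas of \eqref{prop:operationsSt1}--\eqref{prop:operationsSt3} hold in the graded models --- which is enough because all the sheaves occurring there are $\RR$-constructible subsheaves of a local system, hence determined by their stalks. The case of~$\oplus$ is immediate from Definition~\ref{def:HomSt} and Exercise~\ref{exo:graded}.

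First I would treat $\otimes$. Fixing $\theta$ and a local graded model $\cL=\bigoplus_{\alpha\in\Phi}\gr_\alpha\cL$, $\cL'=\bigoplus_{\beta\in\Phi'}\gr_\beta\cL'$, one has $(\cL\otimes\cL')_{\leq\eta,\theta}=\sum_{\mu\in\ccP}\cL_{\leq\mu,\theta}\otimes\cL'_{\leq\eta-\mu,\theta}$, and a summand $\gr_\alpha\cL_\theta\otimes\gr_\beta\cL'_\theta$ enters the right-hand side exactly when $\alpha\leqtheta\mu\leqtheta\eta-\beta$ is solvable in~$\mu$, i.e.\ (take $\mu=\alpha$) when $\alpha+\beta\leqtheta\eta$. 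Hence $(\cL\otimes\cL')_{\leq\eta,\theta}=\bigoplus_{\alpha+\beta\leqtheta\eta}\gr_\alpha\cL_\theta\otimes\gr_\beta\cL'_\theta$, so $(\cL\otimes\cL')_\bbullet$ is the graded Stokes filtration with exponential factors $\{\alpha+\beta\}\subset\Phi+\Phi'$ and $\gr_\eta(\cL\otimes\cL')=\bigoplus_{\alpha+\beta=\eta}\gr_\alpha\cL\otimes\gr_\beta\cL'$. The same argument with $\letheta$ at one end of the interval gives $(\cL\otimes\cL')_{<\eta,\theta}=\bigoplus_{\alpha+\beta\letheta\eta}\gr_\alpha\cL_\theta\otimes\gr_\beta\cL'_\theta$, which one recognizes as both $\sum_\varphi\cL_{\leq\varphi,\theta}\otimes\cL'_{<\eta-\varphi,\theta}$ and $\sum_\varphi\cL_{<\varphi,\theta}\otimes\cL'_{\leq\eta-\varphi,\theta}$; this is \eqref{prop:operationsSt3}.

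Next I would treat $\cHom$ and, as its specialization, the dual. In a graded model a morphism is a matrix $(f_{\beta\alpha})$ with $f_{\beta\alpha}\colon\gr_\alpha\cL\to\gr_\beta\cL'$, and the condition $f(\cL_{\leq\varphi})\subset\cL'_{\leq\varphi+\eta}$, read at~$\theta$ on the graded pieces, forces $f_{\beta\alpha,\theta}=0$ whenever there is $\varphi$ with $\alpha\leqtheta\varphi$ and $\beta\not\leqtheta\varphi+\eta$; taking $\varphi=\alpha$, and using transitivity for the converse, this is equivalent to $\beta-\alpha\not\leqtheta\eta$. So $\cHom(\cL,\cL')_\bbullet$ is graded, with exponential factors $\{\beta-\alpha\}\subset\Phi'-\Phi$ and $\gr_\eta\cHom(\cL,\cL')=\bigoplus_{\beta-\alpha=\eta}\cHom(\gr_\alpha\cL,\gr_\beta\cL')$; running the same reasoning with $\cL'_{<\varphi+\eta}$ in place of $\cL'_{\leq\varphi+\eta}$ yields \eqref{prop:operationsSt1}. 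Taking $\cL'=\kk_{S^1}$ with the graded Stokes filtration of Example~\ref{exem:triviaux}\eqref{exem:triviaux1} (so $\Phi'=\{0\}$) specializes this to $(\cL^\vee)_\bbullet$, graded with exponential factors $-\Phi$ and $\gr_\varphi\cL^\vee=(\gr_{-\varphi}\cL)^\vee$. The asserted isomorphism $\cHom(\cL,\cL')_\bbullet\simeq(\cL^{\prime\vee}\otimes\cL)_\bbullet$ (equivalently $(\cL^\vee\otimes\cL')_\bbullet$) is then induced by the canonical isomorphism of underlying local systems $\cHom(\cL,\cL')\simeq\cL^\vee\otimes\cL'$, its compatibility with the Stokes filtrations following from the graded computations just made.

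Finally, \eqref{prop:operationsSt2} amounts to the equalities $(\cL^\vee)_{\leq\varphi}=(\cL_{<-\varphi})^\perp$ and $(\cL^\vee)_{<\varphi}=(\cL_{\leq-\varphi})^\perp$, which I would again test in a graded model. There the $(\gr_\alpha\cL)^\vee$-summand of $(\cL^\vee)_{\leq\varphi}$ is supported on $S^1_{-\alpha\leq\varphi}=S^1_{0\leq\alpha+\varphi}$, while the $(\gr_\alpha\cL)^\vee$-summand of $(\cL_{<-\varphi})^\perp$ --- the local morphisms killing $\cL_{<-\varphi}$ --- is supported on the complement of the closure of $S^1_{\alpha<-\varphi}$; one then uses that, by Example~\ref{exem:Stokes}, for $\mu\neq0$ the circle is the disjoint union of the open sets $S^1_{\mu<0}$, $S^1_{0<\mu}=S^1_{0\leq\mu}$ and the finite set $\St(\mu,0)=\overline{S^1_{\mu<0}}\setminus S^1_{\mu<0}$, to conclude $S^1\setminus\overline{S^1_{\alpha<-\varphi}}=S^1_{0\leq\alpha+\varphi}$ (the case $\alpha=-\varphi$ being trivial), and similarly with $<$ and $\leq$ exchanged; quotienting gives back $\gr_\varphi\cL^\vee=(\gr_{-\varphi}\cL)^\vee$. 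I expect this last verification to be the delicate point: because $\leqtheta$ is only a \emph{near-total} order --- total away from the finitely many Stokes directions of each relevant pair --- one must bring in that finiteness to see that the open ``support'' sets attached to the graded summands of $(\cL^\vee)_\bbullet$, as defined through $\cHom(\cL,\kk_{S^1})$, coincide \emph{exactly} with those defined through orthogonality to $\cL_{<-\varphi}$, whereas the $\otimes$ and $\cHom$ parts reduce smoothly to the solvability of $\alpha\leqtheta\mu\leqtheta\eta-\beta$.
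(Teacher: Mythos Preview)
Your proof is correct and follows the same approach as the paper: reduce to local graded models and verify each assertion componentwise. Your treatment of \eqref{prop:operationsSt2} is in fact more careful than the paper's (which merely says ``the converse is also clear by using the local decomposition''), making explicit why the $\alpha$-summand of $(\cL_{<-\varphi})^\perp$ is supported on the complement of the \emph{closure} of $S^1_{\alpha<-\varphi}$ rather than on its set-theoretic complement --- this is exactly the point where the Stokes directions enter.
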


\begin{proof}
For the first assertion, let us consider the case of $\cHom$ for instance. Using a local decomposition of $\cL,\cL'$ given by the Stokes filtration condition, we find that a section of $\cHom(\cL,\cL')_\theta$ is decomposed as a section of $\bigoplus_{\varphi,\psi}\cHom(\gr_\varphi\cL,\gr_\psi\cL')_\theta$, and that it belongs to $\cHom(\cL,\cL')_{\leq\eta,\theta}$ if and only if its components $(\varphi,\psi)$ are zero whenever $\psi-\varphi\not\leqtheta\eta$. The assertion is then clear, as well as the characterization of $\cHom(\cL,\cL')_{<\eta}$.

As a consequence, a local section of $(\cL^\vee)_{\leq\varphi}$ has to send $\cL_{<-\varphi}$ to zero for any $\varphi$. The converse is also clear by using the local decomposition of $(\cL,\cL_\bbullet)$, as well as the other assertions for $\cL^\vee$.

The assertion on the tensor product is then routine.
\end{proof}

\begin{remarque}\label{rem:expfact}
One easily gets the behaviour of the set of exponential factors with respect to such operations. For instance, the direct sum corresponds to $(\Phi,\Phi')\mto\Phi\cup\Phi'$, the dual to $\Phi\mto-\Phi$ and the tensor product to $(\Phi,\Phi')\mto\Phi+\Phi'$.
\end{remarque}

\subsubsection*{Poincaré-Verdier duality}
For a sheaf~$\cF$ on~$S^1$, its \index{Poincaré-Verdier duality}\index{duality!Poincaré-Verdier}Poincaré-Verdier dual \index{$DUAL$@$\bD$, $\bD'$}$\bD\cF$ is $\bR\cHom_\CC(\cF,\kk_{S^1}[1])$ and we denote by $\bD'\cF=\bR\cHom_\CC(\cF,\kk_{S^1})$ the shifted complex. We clearly have $\bD'\cL=\cHom_\CC(\cL,\kk_{S^1})=:\cL^\vee$.

\begin{lemme}[Poincaré duality]\label{lem:dualS1}
For any $\varphi\in\ccP$, the complexes $\bD'(\cL_{\leq\varphi})$ and $\bD'(\cL/\cL_{\leq\varphi})$ are sheaves. Moreover, the two subsheaves $(\cL^\vee)_{\leq\varphi}$ and $\bD'(\cL/\cL_{<-\varphi})$ of $\cL^\vee$ coincide.
\end{lemme}

\begin{proof}
The assertions are local on~$S^1$, so we can assume that~$(\cL,\cL_\bbullet)$ is split with respect to the Stokes filtration, and therefore that $(\cL,\cL_\bbullet)$ has only one exponential factor $\eta$, that is, $\cL_{\leq\varphi}=\beta_{\eta\leq\varphi}\cL$ (\cf Example \ref{exem:triviaux}\eqref{exem:triviaux3}. Let us denote by $\alpha_{\psi<\varphi}$ the functor which is the composition of the restriction to the open set $S^1_{\psi<\varphi}$ (\cf Notation~\ref{nota:beta}) and the maximal extension to~$S^1$, and similarly with $\leq$. We have an exact sequence
\[
0\to\beta_{\eta<\varphi}\cL\to\cL\to\alpha_{\varphi\leq\eta}\cL\to0
\]
which identifies $\alpha_{\varphi\leq\eta}\cL$ to $\cL/\cL_{<\varphi}$, and a similar one with $\beta_{\eta\leq\varphi}$ and $\alpha_{\varphi<\eta}$. On the other hand, $\bD'(\beta_{\eta\leq\varphi}\cL)=\alpha_{\eta\leq\varphi}\cL^\vee$ and $\bD'(\alpha_{\varphi\leq\eta}\cL)=\beta_{\varphi\leq\eta}\cL^\vee$. The dual of the previous exact sequence, when we replace~$\varphi$ with $-\varphi$, is then
\begin{align*}
&0\to\beta_{-\varphi\leq\eta}\cL^\vee\to\cL^\vee\to\alpha_{\eta<-\varphi}\cL^\vee\to0,\\
\tag*{also written as}
&0\to\beta_{-\eta\leq\varphi}\cL^\vee\to\cL^\vee\to\alpha_{\varphi<-\eta}\cL^\vee\to0,
\end{align*}
showing that $\bD'(\cL/\cL_{<-\varphi})=(\cL^\vee)_{\leq\varphi}$.
\end{proof}

\subsection{Pull-back and push-forward}\label{subsec:pushpull}
Let $f:X'\to X$ be a holomorphic map from the disc $X'$ (with coordinate $x'$) to the disc~$X$ with coordinate $x$. We assume that both discs are small enough so that $f$ is ramified at $x'=0$ only. We now denote by $S^1_{x'}$ and $S^1_x$ the circles of directions in the spaces of polar coordinates $\wt X{}'$ and $\wt X$ respectively. Then $f$ induces $\wt f:S^1_{x'}\to S^1_x$, which is the composition of the multiplication by $N$ (the index of ramification of $f$) and a translation (the argument of $f^{(N)}(0)$). Similarly, $\ccP_x$ and $\ccP_{x'}$ denote the polar parts in the variables $x$ and $x'$ respectively.

\begin{remarque}\label{rem:pullback}
Let $\eta\in\ccP_x$ and set $f^*\eta=\eta\circ f\in\ccP_{x'}$. For any $\theta'\in S^1_{x'}$, set $\theta=\wt f(\theta')$. Then we have
\[
f^*\eta\leqthetap0\ssi\eta\leqtheta0\quad\text{and}\quad f^*\eta\lethetap0\ssi\eta\letheta0.
\]
(This is easily seen using the definition in terms of moderate growth in Example \ref{exem:Stokes}, since $\wt f:\wt X{}'\to\wt X$ is a finite covering.)
\end{remarque}

\begin{definitio}[Pull-back]\label{def:pullback}
Let~$\cL$ be a local system on $S^1_x$ and let $\cL_\bbullet$ be a non-ramified $\kk$\nobreakdash-pre-Stokes filtration of~$\cL$. For any $\varphi'\in\ccP_{x'}$ and any $\theta'\in S^1_{x'}$, let us set
\[
(\wt f^+\cL)_{\leq\varphi',\theta'}\defin\sum_{\substack{\psi\in\ccP_x\\f^*\psi\leqthetap\varphi'}}\cL_{\leq\psi,\wt f(\theta')}\subset \cL_{\wt f(\theta')}=(\wt f^{-1}\cL)_{\theta'}.
\]
Then $(\wt f^+\cL)_\bbullet$ is a non-ramified pre-Stokes filtration on $\wt f^{-1}\cL$, called the \index{pull-back (inverse image)!of Stokes filtrations}\emph{pull-back} of $\cL_\bbullet$ by~$f$. We denote by \index{$Fsupplus$@$f^+$ (pull-back)}$\wt f^+(\cL,\cL_\bbullet)$ the pull-back pre-Stokes-filtered local system, in order to remember that the indexing set has changed (\cf Definition \ref{def:pullbackpreI}).
\end{definitio}

\begin{proposition}[Pull-back]\label{prop:pullback}
The pull-back $\wt f^+\!(\cL,\!\cL_\bbullet)$ has the following properties:
\begin{enumerate}
\item\label{prop:pullback-1}
For any $\varphi'\in\ccP_{x'}$ and any $\theta'\in S^1_{x'}$,
\[
(\wt f^+\cL)_{<\varphi',\theta'}=\sum_{\substack{\psi\in\ccP_x\\f^*\psi\lethetap\varphi'}}\wt f^{-1}(\cL_{\leq\psi,\wt f(\theta')}).
\]
\item\label{prop:pullback0}
For any $\varphi\in\ccP_x$,
\begin{align*}
(\wt f^+\cL)_{\leq f^*\varphi}&=\wt f^{-1}(\cL_{\leq\varphi}),\\
(\wt f^+\cL)_{<f^*\varphi}&=\wt f^{-1}(\cL_{<\varphi})\\
\tag*{and}\gr_{f^*\varphi}(\wt f^+\cL)&=\wt f^{-1}(\gr_\varphi\cL).
\end{align*}
\item\label{prop:pullback0b}
In particular, if $\wt f^+(\cL,\cL_\bbullet$) is a non-ramified Stokes-filtered local system for some $f$, then for any $\varphi\in\ccP_x$, $\gr_\varphi\cL$ is a local system on $S^1_x$.
\item\label{prop:pullback2}
Let $\cL,\cL'$ be two local systems on $S^1_x$ equipped with non-ramified pre-Stokes filtrations and let $\lambda:\cL\to\cL'$ be a morphism of local systems such that, for some~$f$, $\wt f^{-1}\lambda:\wt f^{-1}\cL\to\wt f^{-1}\cL'$ is compatible with the non-ramified pre-Stokes filtrations $(\wt f^+\cL)_\bbullet,(\wt f^+\cL')_\bbullet$. Then~$\lambda$~is compatible with the non-ramified pre-Stokes filtrations $\cL_\bbullet,\cL'_\bbullet$.
\item\label{prop:pullback1}
Assume now that $\cL_\bbullet$ is a non-ramified $\kk$\nobreakdash-Stokes filtration (\ie is locally graded) and let~$\Phi$ be its set of exponential factors. Then $(\wt f^+\cL)_\bbullet$ is a non-ramified $\kk$\nobreakdash-Stokes filtration on $\wt f^{-1}\cL$ and, for any $\varphi'\in\ccP_{x'}$, $\gr_{\varphi'}\wt f^+\cL\neq0\implique\varphi'\in f^*\Phi$.
\item\label{prop:pullback3}
The pull-back of non-ramified Stokes filtrations is compatible with $\cHom$, duality and tensor product.
\end{enumerate}
\end{proposition}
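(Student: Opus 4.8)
I would prove the six assertions in turn, the guiding idea being that everything can be transported through the finite covering $\wt f\colon S^1_{x'}\to S^1_x$. Two facts drive the whole argument: Remark \ref{rem:pullback}, which tells us that $f^*\colon\ccP_x\to\ccP_{x'}$ is an \emph{injective morphism of ordered abelian groups} and that $\wt f^{-1}$ preserves \emph{and} reflects the relations $\leqthetap,\lethetap$ (for $\theta=\wt f(\theta')$); and the standard properties of the inverse image along the surjective local homeomorphism $\wt f$ — it is exact, commutes with extension by zero from open subsets and with $\cHom$ and $\otimes$ of local systems, carries $\kk_{S^1_x}$ to $\kk_{S^1_{x'}}$, and reflects inclusions of subsheaves. (Everything below is in fact the one-variable, constant-sheaf instance of Lemma \ref{lem:stabpullback} and of \S\ref{subsec:Ifillocsys}, but it is quicker to argue directly.) For \eqref{prop:pullback-1}: by \eqref{eq:onto<}, $(\wt f^{-1}\cL)_{<\varphi',\theta'}=\sum_{\varphi''}(\wt f^{-1}\cL)_{\leq\varphi'',\theta'}$, the sum over $\varphi''\in\ccP_{x'}$ with $\varphi''\lethetap\varphi'$; substituting Definition \ref{def:pullback} and observing that $\bigcup_{\varphi''\lethetap\varphi'}\{\psi\in\ccP_x\mid f^*\psi\leqthetap\varphi''\}=\{\psi\in\ccP_x\mid f^*\psi\lethetap\varphi'\}$ (the inclusion $\supseteq$ by taking $\varphi''=f^*\psi$) gives the formula.

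For \eqref{prop:pullback0}: by Remark \ref{rem:pullback} and additivity of $f^*$ and of the order, $f^*\psi\leqthetap f^*\varphi\iff\psi\leqtheta\varphi$ with $\theta=\wt f(\theta')$; hence Definition \ref{def:pullback} gives $(\wt f^{-1}\cL)_{\leq f^*\varphi,\theta'}=\sum_{\psi\leqtheta\varphi}\cL_{\leq\psi,\theta}=\cL_{\leq\varphi,\theta}$, the last equality because $\cL_\bbullet$ is a pre-Stokes filtration (all summands lie in $\cL_{\leq\varphi,\theta}$, and $\psi=\varphi$ realizes it). This is $\wt f^{-1}(\cL_{\leq\varphi})$. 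The identity for $<$ follows the same way from \eqref{prop:pullback-1} and the strict half of Remark \ref{rem:pullback}, and the one for $\gr$ then follows by exactness of $\wt f^{-1}$. For \eqref{prop:pullback0b}: if $(\wt f^{-1}\cL)_\bbullet$ is a Stokes filtration then $\gr_{f^*\varphi}(\wt f^{-1}\cL)=\wt f^{-1}\gr_\varphi\cL$ is a finite-rank local system; since $\wt f$ is a finite surjective covering, a sheaf on $S^1_x$ whose $\wt f$-pullback is a finite-rank local system is itself one, so $\gr_\varphi\cL$ is a local system. For \eqref{prop:pullback2}: the compatibility of $\wt f^{-1}\lambda$ at $\varphi'=f^*\varphi$ reads, by \eqref{prop:pullback0}, $\wt f^{-1}\bigl(\lambda(\cL_{\leq\varphi})\bigr)\subset\wt f^{-1}(\cL'_{\leq\varphi})$ inside $\wt f^{-1}\cL'$, and since $\wt f^{-1}$ reflects inclusions of subsheaves, $\lambda(\cL_{\leq\varphi})\subset\cL'_{\leq\varphi}$.

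The heart is \eqref{prop:pullback1}. As it is local on $S^1_x$, I assume $\cL_\bbullet$ graded, so $\cL_{\leq\psi,\theta}=\bigoplus_{\chi\in\Phi,\,\chi\leqtheta\psi}\gr_\chi\cL_\theta$. Plugging this into Definition \ref{def:pullback} and using Remark \ref{rem:pullback} (in particular $\chi\leqtheta\psi\implique f^*\chi\leqthetap f^*\psi$ for the forward inclusion, and $\psi=\chi$ for the reverse one) one gets, for every $\varphi'\in\ccP_{x'}$ and $\theta=\wt f(\theta')$,
\[
(\wt f^{-1}\cL)_{\leq\varphi',\theta'}=\tbigoplus_{\substack{\chi\in\Phi\\f^*\chi\leqthetap\varphi'}}\wt f^{-1}\gr_\chi\cL_{\theta'},
\qquad
(\wt f^{-1}\cL)_{<\varphi',\theta'}=\tbigoplus_{\substack{\chi\in\Phi\\f^*\chi\lethetap\varphi'}}\wt f^{-1}\gr_\chi\cL_{\theta'},
\]
the second equality using \eqref{prop:pullback-1} likewise. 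Because $f^*$ is injective, the right-hand sides are precisely the graded Stokes filtration of Definition \ref{def:gradedStokes} on $\bigoplus_{\chi\in\Phi}\wt f^{-1}\gr_\chi\cL$ with set of exponential factors $f^*\Phi$ (each $\wt f^{-1}\gr_\chi\cL$ being a local system). Hence $(\wt f^{-1}\cL)_\bbullet$ is locally graded, i.e. a non-ramified Stokes filtration; moreover $\gr_{\varphi'}(\wt f^{-1}\cL)$ equals $\wt f^{-1}\gr_\chi\cL$ when $\varphi'=f^*\chi$ for the unique such $\chi\in\Phi$, and is $0$ otherwise — indeed if $\varphi'\notin f^*\Phi$ then $f^*\chi\neq\varphi'$ for all $\chi\in\Phi$, so $f^*\chi\leqthetap\varphi'\iff f^*\chi\lethetap\varphi'$ and the two displayed sums coincide. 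This also gives $\gr_{\varphi'}\wt f^{-1}\cL\neq0\implique\varphi'\in f^*\Phi$.

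Finally, \eqref{prop:pullback3} reduces to local graded models, where (Exercise \ref{exo:graded}) $\cL$ and $\cL'$ split into summands with a single exponential factor; by \eqref{prop:pullback0} (and Example \ref{exem:triviaux}\eqref{exem:triviaux3}) the pull-back of a single-factor object with underlying local system $\cG$ and factor $\eta$ is the single-factor object with underlying local system $\wt f^{-1}\cG$ and factor $f^*\eta$. Since $\wt f^{-1}$ commutes with $\cHom$, $\otimes$ and duality of local systems and carries $\kk_{S^1_x}$ to $\kk_{S^1_{x'}}$, while the exponential factors of $\cHom$, $\cL^\vee$, $\otimes$ transform by $(\psi,\psi')\mapsto\psi'-\psi$, $\psi\mapsto-\psi$, $(\psi,\psi')\mapsto\psi+\psi'$ (Remark \ref{rem:expfact}) and $f^*$ is a group homomorphism, the pull-back of the Stokes filtration on $\cHom(\cL,\cL')$ (resp. $\cL^\vee$, resp. $\cL\otimes\cL'$) coincides term by term with that on $\cHom(\wt f^{-1}\cL,\wt f^{-1}\cL')$ (resp. $(\wt f^{-1}\cL)^\vee$, resp. $\wt f^{-1}\cL\otimes\wt f^{-1}\cL'$). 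The main obstacle is \eqref{prop:pullback1}: a priori the pull-back is an infinite sum over $\ccP_x$, and one must see it collapses locally onto the correct finite graded model — this is exactly what Remark \ref{rem:pullback} together with the injectivity of $f^*$ provide, the non-injectivity of $\wt f$ itself being harmless because $\wt f^{-1}$ is exact, commutes with extension by zero, and reflects inclusions of subsheaves.
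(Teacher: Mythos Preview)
Your proof is correct and follows essentially the same approach as the paper's own proof, which is considerably terser: the paper unwinds the definition for \eqref{prop:pullback-1}, invokes Remark~\ref{rem:pullback} for \eqref{prop:pullback0}, and dismisses \eqref{prop:pullback0b}, \eqref{prop:pullback1}, \eqref{prop:pullback3} in one line each. In fact, the paper's proof does not address \eqref{prop:pullback2} at all, so your argument there (using that the surjective local homeomorphism $\wt f$ makes $\wt f^{-1}$ reflect inclusions of subsheaves) genuinely fills in a step the paper leaves implicit.
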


\begin{proof}
By definition,
\[
(\wt f^+\cL)_{<\varphi',\theta'}=\sum_{\substack{\psi'\lethetap\varphi'}}(\wt f^+\cL)_{\leq\psi',\theta'}=\sum_{\substack{\psi'\lethetap\varphi'}}\sum_{\substack{\psi\in\ccP_x\\f^*\psi\leqthetap\psi'}}\cL_{\leq\psi,\wt f(\theta')},
\]
and this is the RHS in \ref{prop:pullback}\eqref{prop:pullback-1}.

The first two lines of \ref{prop:pullback}\eqref{prop:pullback0} are a direct consequence of Remark \ref{rem:pullback}, and the third one is a consequence of the previous ones. Then \ref{prop:pullback}\eqref{prop:pullback0b} follows, as each $\gr_{\varphi'}(\wt f^+\cL)$ is a local system on $S^1_{x'}$.

\ref{prop:pullback}\eqref{prop:pullback2} follows from the first line in \ref{prop:pullback}\eqref{prop:pullback0} and \ref{prop:pullback}\eqref{prop:pullback1} from third line and from the local gradedness condition. Then, \ref{prop:pullback}\eqref{prop:pullback3} is clear.
\end{proof}

\begin{remarque}
In order to make clear the correspondence with the notion introduced in Definition \ref{def:pullbackpreI} and considered in Lemma \ref{lem:stabpullback}, note that the sheaf $\ccI_1$ is the constant sheaf on $S^1_x$ with fibre $\ccP_x$, $\wt f^{-1}\ccI_1$ is the constant sheaf on $S^1_{x'}$ with fibre $\ccP_x$ and $\ccI'_1$ is the constant sheaf on $S^1_{x'}$ with fibre $\ccP_{x'}$. The map $q_f$ is $f^*:\ccP_x\to\ccP_{x'}$.
\end{remarque}

\begin{exo}[Push forward]\label{def:pushforward}
Let $\cL'$ be a local system on $S^1_{x'}$ equipped with a non-ramified pre-Stokes filtration $\cL'_\bbullet$. Show that
\begin{enumerate}
\item
$\wt f_*\cL'$ is naturally equipped with a non-ramified pre-Stokes filtration defined by $(\wt f_*\cL')_{\leq\varphi}=\wt f_*(\cL'_{\leq f^*\varphi})$;

\item
assume moreover that $\cL'_\bbullet$ is a non-ramified Stokes filtration and let $\Phi'\subset\ccP_{x'}$ be its set of exponential factors; if there exists a~finite subset $\Phi\subset\ccP_x$ such that $\Phi'=f^*\Phi$, then the \index{push-forward (direct image)}push-forward pre-Stokes filtration $(\wt f_*\cL')_\bbullet$ is a Stokes filtration.
\end{enumerate}
\end{exo}

\subsection{Stokes filtrations on local systems}\label{subsec:Stokeswithramif}
We now define the general notion of a (possibly ramified) Stokes filtration on a local system $\cL$ on~$S^1$.

Let $d$ be a nonzero integer and let $\rho_d:X_d\to X$ be a holomorphic function from a disc $X_d$ (coordinate~$x'$) to the disc~$X$ (coordinate~$x$). For simplicity, we will assume that the coordinates are chosen so that $\rho_d(x')=x^{\prime d}$.

\begin{definitio}[(Pre-)Stokes filtration]\label{def:Stokeswithramif}
Let~$\cL$ be a local system on $S^1_x$. A~\index{Stokes-filtered local@Stokes-filtered local system}\index{local system!Stokes-filtered}\index{Stokes filtration}\emph{$\kk$\nobreakdash-\hbox{(pre-)}\allowbreak Stokes filtration} (ramified of order $\leq d$) on $\cL$ consists of a non-ramified (pre-)Stokes filtration on $\cL'\defin\rho_d^{-1}\cL$ such that, for any automorphism $\sigma$
\[
\xymatrix{
X_d\ar[dr]_{\rho_d}\ar[rr]^-{\sigma}&&X_d\ar[dl]^{\rho_d}\\&X&
}
\]
and any $\varphi'\in\ccP_{x'}$, we have $\cL'_{\leq\sigma^*\varphi'}=\wt\sigma^{-1}\cL'_{\leq\varphi'}$ in $\cL'=\wt\sigma^{-1}\cL'$. Similarly, a morphism of (pre-)$\kk$\nobreakdash-Stokes-filtered local systems is a morphism of local systems which becomes a morphism of non-ramified Stokes-filtered local systems after ramification.
\end{definitio}

We will make precise the relation with the notion of a $\ccI$-filtration of \Chaptersname\ref{chap:Ifil} by defining first the sheaf $\ccI$.

\subsubsection*{The sheaf $\ccI$ on $S^1$}
Let $d$ be a positive integer. We denote by \index{$ID$@$\ccI_1$, $\ccI_d$, $\ccI_{\bun}$, $\ccI_{\bmd}$}$\ccI_d$ the local system on $S^1_x$ whose fibre at $\theta=0$ is $\ccP_{x'}$ and whose monodromy is given by $\ccP_{x'}\ni\varphi'(x')\mto\varphi'(e^{2\pi i/d}x')$. If we denote by $\wt\rho{}_d:S^1_{x'}\to S^1_x$, $\theta'\mto d\cdot\theta'$ the associated map, the sheaf $\wt\rho{}_d^{-1}\ccI_d$ is the constant sheaf on~$S^1_{x'}$ with fibre $\ccP_{x'}$. In particular, $\ccI_d$ is a sheaf of ordered abelian groups on $S^1_x$, and the constant sheaf \index{$ID$@$\ccI_1$, $\ccI_d$, $\ccI_{\bun}$, $\ccI_{\bmd}$}$\ccI_1$ with fibre $\ccP_x$ is a subsheaf of ordered abelian groups. We will then denote by~\index{$I$@$\ccI$, $\ccIet$}$\ccI$ the sheaf $\bigcup_{d\geq1}\ccI_d$.

\begin{remarque}\label{rem:Euler}
Let us give another description of the sheaf \index{$ID$@$\ccI_1$, $\ccI_d$, $\ccI_{\bun}$, $\ccI_{\bmd}$}$\ccI_d$ which will be useful in higher dimensions. We use the notation of Example \ref{exem:Stokes}. Let us denote by~$j_\partial$ and~$j_{\partial,d}$ the natural inclusions $X^*\hto\wt X$ and $X^*_d\hto\wt X_d$, and by $\wt\rho_d:\wt X_d\to\wt X$ the lifting of $\rho_d$. The natural inclusion $\cO_{X^*}\hto\nobreak\rho_{d,*}\cO_{X_d^*}$ induces an injective morphism $j_{\partial,*}\cO_{X^*}\hto j_{\partial,*}\rho_{d,*}\cO_{X_d^*}=\wt\rho{}_{d,*}j_{\partial,d,*}\cO_{X_d^*}$, that we regard as the inclusion of a subsheaf.

Let us denote by $(j_{\partial,*}\cO_{X^*})^\lb$ the subsheaf of $j_{\partial,*}\cO_{X^*}$ consisting of functions which are locally bounded on~$\wt X$. We have $(j_{\partial,*}\cO_{X^*})^\lb=\wt\rho{}_{d,*}(j_{\partial,d,*}\cO_{X_d^*})^\lb\cap j_{\partial,*}\cO_{X^*}$ since~$\wt\rho{}_d$ is proper.

Let us set $\wt\ccI_1=\varpi^{-1}\cO_X(*0)$, that we consider as a subsheaf of $j_{\partial,*}\cO_{X^*}$. We have $\varpi^{-1}\cO_X=\wt\ccI_1\cap(j_{\partial,*}\cO_{X^*})^\lb$ in $j_{\partial,*}\cO_{X^*}$ (a meromorphic function which is bounded in some sector is bounded everywhere, hence is holomorphic). Therefore, $\ccI_1\defin\varpi^{-1}(\cO_X(*0)/\cO_X)$ is also equal to $\wt\ccI_1/\wt\ccI_1\cap(j_{\partial,*}\cO_{X^*})^\lb$.

Similarly, we can first define $\wt\ccI_d$ as the subsheaf of $\CC$-vector spaces of $j_{\partial,*}\cO_{X^*}$ which is the intersection of $\wt\rho{}_{d,*}\varpi_d^{-1}\cO_{X_d}(*0)$ and $j_{\partial,*}\cO_{X^*}$ in $\wt\rho{}_{d,*}j_{\partial,d,*}\cO_{X_d^*}$. We then set $\ccI_d=\wt\ccI_d/\wt\ccI_d\cap(j_{\partial,*}\cO_{X^*})^\lb$, which is a subsheaf of $j_{\partial,*}\cO_{X^*}/(j_{\partial,*}\cO_{X^*})^\lb$. We have $\ccI_d\subset\ccI_{d'}$ if $d$ divides $d'$.

As we already noticed, $\ccI_1=\varpi^{-1}(\cO_X(*0)/\cO_X)$. More generally, let us show that $\wt\rho{}_d^{-1}\ccI_d=\varpi_d^{-1}(\cO_{X_d}(*0)/\cO_{X_d})$. We will start by showing that $\wt\rho{}_d^{-1}\wt\ccI_d=\varpi_d^{-1}\cO_{X_d}(*0)$.

Let us first note that $\rho_d^{-1}\cO_{X^*}=\cO_{X_d^*}$ since $\rho_d$ is a covering, and $\wt\rho{}_d^{-1}j_{\partial,*}\cO_{X^*}=j_{\partial,d,*}\rho_d^{-1}\cO_{X^*}$ since $\wt\rho{}_d$ is a covering. Hence, $\wt\rho{}_d^{-1}j_{\partial,*}\cO_{X^*}=j_{\partial,d,*}\cO_{X_d^*}$.

It follows that $\wt\rho{}_d^{-1}\wt\ccI_d$ is equal to the intersection of $\wt\rho{}_d^{-1}\wt\rho{}_{d,*}[\varpi_d^{-1}\cO_{X_d}(*0)]$ (since $\wt\ccI_{\wt X_d,1}=\varpi_d^{-1}\cO_{X_d}(*0)$) and $j_{\partial,d,*}\cO_{X_d^*}$ in $\wt\rho{}_d^{-1}\wt\rho{}_{d,*}[j_{\partial,d,*}\cO_{X_d^*}]$. This is $\varpi_d^{-1}\cO_{X_d}(*0)$. Indeed, a germ in $\wt\rho{}_d^{-1}\wt\rho{}_{d,*}[\varpi_d^{-1}\cO_{X_d}(*0)]$ at $\theta'$ consists of a $d$-uple of germs in $\cO_{X_d}(*0)$ at $0$. This $d$-uple belongs to $j_{\partial,d,*}\cO_{X_d^*,\theta'}$ iff the restrictions to $X_d^*$ of the terms of the $d$-uple coincide. Then all the terms of the $d$-uple are equal. The argument for $\ccI_d$ is similar.

Let us express these results in terms of étale spaces. We first note that, since~$\ccI_1$ is a constant sheaf, the étale space $\ccIet_1$ is a trivial covering of $S^1_x$. The previous argument shows that the fibre product $S^1_{x'}\times_{S^1_x}\ccIet_d$ is identified with $\ccIet_{x',1}$, hence is a trivial covering of $S^1_{x'}$. It follows that, since $\wt\rho_d:S^1_{x'}\times_{S^1_x}\ccIet_d\to\ccIet_d$ is a finite covering of degree $d$, that $\ccIet_d\to S^1_x$ is a covering.

The following property will also be useful: there is a one-to-one correspondence between finite sets $\Phi_d$ of $\ccP_{x'}$ and finite coverings \index{$SZIGMAWT$@$\wt\Sigma$}$\wt\Sigma\subset\ccIet_d$. Indeed, given such a $\wt\Sigma$, its pull-back $\wt\Sigma_d$ by $\wt\rho_d$ is a covering of $S^1_{x'}$ contained in the trivial covering $\wt\rho_d^{-1}\ccIet_d$, hence is trivial, and is determined by its fibre $\Phi_d\subset\ccP_{x'}$. Conversely, given such $\Phi_d$, it defines a trivial covering $\wt\Sigma_d$ of $S^1_{x'}$ contained in $\wt\rho_d^{-1}\ccIet_d$. Let $\wt\Sigma$ be its image in $\ccIet_d$. Because the composed map $\wt\Sigma_d\to S^1_{x'}\to S^1_x$ is a covering, so are both maps $\wt\Sigma_d\to\wt\Sigma$ and $\wt\Sigma\to S^1_x$. Moreover, the degree of $\wt\Sigma\to S^1_x$ is equal to that of $\wt\Sigma_d\to S^1_{x'}$, that is, $\#\Phi_d$. Lastly, the pull-back of $\wt\Sigma$ by $\wt\rho_d$ is a covering of $S^1_{x'}$ contained in $S^1_{x'}\times_{S^1_x}\ccIet_d$, hence is a trivial covering, of degree $\#\Phi_d$, and containing $\wt\Sigma_d$, so is equal to $\wt\Sigma_d$.
\end{remarque}

\subsubsection*{Order}
The sheaf $j_{\partial,*}\cO_{X^*}$ is naturally ordered by defining $(j_{\partial,*}\cO_{X^*})_{\leq0}$ as the subsheaf of $j_{\partial,*}\cO_{X^*}$ whose sections have an exponential with moderate growth along~$S^1_x$. Similarly, $j_{\partial,d,*}\cO_{X_d^*}$ is ordered. In this way,~$\wt\ccI$ inherits an order: $\wt\ccI_{\leq0}=\ccI\cap(j_{\partial,*}\cO_{X^*})_{\leq0}$. This order is not altered by adding a local section of $(j_{\partial,*}\cO_{X^*})^\lb$, and thus defines an order on $\ccI$. For each $d$, we also have $\wt\ccI_{d,\leq0}=\wt\rho{}_{d,*}\big((\varpi_d^{-1}\cO_{X_d}(*0)\big)_{\leq0})\cap j_{\partial,*}\cO_{X^*}$ and we also conclude that $\wt\rho{}_d^{-1}(\ccI_{d,\leq0})=\big(\varpi_d^{-1}(\cO_{X_d}(*0)/\cO_{X_d})\big)_{\leq0}$.

\begin{remarque}\label{rem:coIfiltStokes}
The sheaf of ordered abelian groups $\ccI$ satisfies the property \eqref{eq:coIfilt}. The direction $\implique$ is clear. For the other direction, assume that $\psi_\theta\not\leqtheta0$. We will prove that there exists $\eta_\theta$ such that $0\letheta\eta_\theta$ and $\psi_\theta\not\letheta\eta_\theta$. If $\psi_\theta=u_n(x)/x^n$ with $n\in\QQ_+^*$ and $\arg u_n(0)-n\theta\in[-\pi/2,\pi/2]\bmod2\pi$, then we take $\eta\neq0$ having a pole order strictly less than $n$ and a dominant coefficient such that $0\letheta\eta_\theta$. Then the order relation between $0$ and $\psi_\theta$ is the same as the order relation between $0$ and $\psi_\theta-\eta_\theta$.

Note that this argument does not hold on a subsheaf $\ccI_d$ with $d$ fixed.
\end{remarque}

One can rephrase the definition of a (pre-)Stokes filtration by using the terminology of \Chaptersname\ref{chap:Ifil}.

\begin{lemme}
A (pre-)Stokes filtration on $\cL$ is a \hbox{(pre-)\,}$\ccI$-filtration on $\cL$, with~$\ccI$ defined above. It is ramified of order $\leq d$ if the support of $\gr\cL$ is contained in $\ccIet_d$.\qed
\end{lemme}

\skpt
\begin{remarques}\label{rem:Stokeswithramif}\ligne
\begin{enumerate}
\item\label{rem:Stokeswithramif1b}
The condition in Definition \ref{def:Stokeswithramif} can be restated by saying that, for any~$\sigma$, the Stokes-filtered local system $(\cL',\cL'_\bbullet)$ and its pull-back by $\wt\sigma$ coincide (owing to the natural identification $\cL'=\wt\sigma^{-1}\cL'$).

\item\label{rem:Stokeswithramif2}
Given a (possibly ramified) Stokes-filtration on a local system $\cL$, and given a section $\varphi\in\Gamma(U,\ccI)$ on some open set of $S^1$, the subsheaf $\cL_{\leq\varphi}\subset\cL_{|U}$ is well-defined, as well as $\cL_{<\varphi}$, and $\gr_\varphi\cL$ is a local system on $U$. If $\varphi$ is a section of $\ccI$ all over~$S^1$, then it is non-ramified, \ie it is a section of $\ccI_1$, and $\cL_{\leq\varphi},\cL_{<\varphi}$ are subsheaves of $\cL$. From the point of view of Definition \ref{def:Stokeswithramif}, if the non-ramified Stokes filtration exists on $\cL'=\wt\rho_d^{-1}\cL$, one can restrict the set of indices to $\ccP_x\subset\ccP_{x'}$. Then, for $\varphi\in\ccP_x$, $\cL'_{\leq\rho_d^*\varphi}$ is invariant by the automorphisms of~$\cL'$ induced by the automorphisms $\wt\sigma$, hence is the pull-back of a subsheaf $\cL_{\leq\varphi}$ of $\cL$, and similarly for $\cL_{<\varphi}$ and $\gr_\varphi\cL$. This defines a non-ramified pre-Stokes filtration on $\cL$ for which the graded sheaves are local systems (but the dimension property \ref{prop:stokeswithout}\eqref{prop:stokeswithout4} may not be satisfied). Note also that a morphism of Stokes-filtered local systems is compatible with this pre-Stokes filtration. Hence the category of Stokes filtrations on~$\cL$ is a subcategory of the category of non-ramified pre-Stokes filtrations on $\cL$.

Notice however that the non-ramified Stokes-filtered local system $(\wt f^{-1}\cL,(\wt f^{-1}\cL)_\bbullet)$ is not (in general) equal to the pull-back $\wt f^+(\cL,\cL_\bbullet)$ where $\cL_\bbullet$ is this pre-Stokes filtration.

\item
We will still denote by $\cL_\bbullet$ a (possibly ramified) Stokes filtration on $\cL$ and by $(\cL,\cL_\bbullet)$ a (possibly ramified) Stokes-filtered local system, although the previous remark makes it clear that we do not understand $\cL_\bbullet$ as a family of subsheaves of $\cL$ on $S^1$.

\item\label{rem:Stokeswithramif3b}
The ``set of exponential factors of the Stokes-filtered local system'' is now replaced by a subset $\wt\Sigma\subset\ccIet$ such that the projection to \index{$XWTDP$@$\partial\wt X(D)$}$\partial\wt X$ is a finite covering. It corresponds to a finite subset $\Phi_d\subset\ccP_{x'}$ for a suitable ramified covering $\rho_d$ (\cf the last part of Remark \ref{rem:Euler}), which is the set of exponential factors of the non-ramified Stokes filtration of $\wt\rho{}_d^{-1}\cL$.

\item
The category of Stokes-filtered local systems $(\cL,\cL_\bbullet)$ with associated covering contained in $\wt\Sigma$ is equivalent to the category of $\wt\Sigma^\sh$-filtered local systems (\cf Remark \ref{rem:redPhi}).

\item\label{rem:Stokeswithramif4}
Proposition \ref{prop:operationsSt} holds for $\kk$\nobreakdash-Stokes filtrations.

\item\label{rem:Stokeswithramif4b}
Lemma \ref{lem:dualS1} holds for $\kk$\nobreakdash-Stokes filtrations, that is, the family $\bD'(\cL/\cL_{<-\varphi})$ of local subsheaves of $\cL^\vee$ indexed by local sections of~$\ccI$ forms a Stokes filtration of $\cL^\vee$.

\item\label{rem:Stokeswithramif1}
The category of non-ramified $\kk$\nobreakdash-Stokes-filtered local systems on $S^1_x$ is a full subcategory of that of $\kk$\nobreakdash-Stokes-filtered local systems. Indeed, given a non-ramified Stokes-filtered local system on $S^1_x$, one extends it as a ramified Stokes-filtered local system of order~$d$ from $\ccIet_1$ to $\ccIet_d$ using a formula analogous to that of Proposition \ref{prop:pullback}\eqref{prop:pullback-1}.

\item\label{rem:Stokeswithramif5}
If the set $\Phi_d$ of exponential factors of $\wt\rho_d(\cL,\cL_\bbullet)$ takes the form $\rho_d^*\Phi$ for some finite subset $\Phi\subset\ccP_x$ (equivalently, the finite covering $\wt\Sigma$ of $\partial\wt X$ is trivial, \cf \ref{rem:Stokeswithramif}\eqref{rem:Stokeswithramif3b}), then the Stokes filtration is non-ramified.
\end{enumerate}
\end{remarques}

\subsection{Extension of scalars}\index{extension of scalars}
Let $(\cL,\cL_\bbullet)$ be a $\kk$-Stokes-filtered local system and let $\kk'$ be an extension of $\kk$. Then $(\kk'\otimes_{\kk}\nobreak\cL,\kk'\otimes_{\kk}\nobreak\cL_\bbullet)$ is a $\kk'$- Stokes-filtered local system defined over $\kk'$. The following properties are satisfied for any local section $\varphi$ of $\ccI$:
\begin{itemize}
\item
$(\kk'\otimes_{\kk}\cL)_{<\varphi}=\kk'\otimes_{\kk}\cL_{<\varphi}$, and $\gr_\varphi(\kk'\otimes_{\kk}\nobreak\cL)=\kk'\otimes\gr_\varphi\cL$, so the set of exponential factors of $(\kk'\otimes_{\kk}\nobreak\cL,\kk'\otimes_{\kk}\nobreak\cL_\bbullet)$ is equal to that of $(\cL,\cL_\bbullet)$;
\item
$\cL_{\leq\varphi}=(\kk'\otimes_{\kk}\cL_{\leq\varphi})\cap\cL$ in $\kk'\otimes_{\kk}\cL$.
\end{itemize}
In such a case, we say that the $\kk'$-Stokes-filtered local system $(\kk'\otimes_{\kk}\nobreak\cL,\kk'\otimes_{\kk}\nobreak\cL_\bbullet)$ is \emphb{defined over $\kk$}.

Conversely, let now $(\cL,\cL_\bbullet)$ be a $\kk'$-Stokes-filtered local system and let $\wt\Sigma\subset\ccI$ be its covering of exponential factors. We wish to find sufficient conditions to ensure that it comes from a $\kk$-Stokes-filtered local system by extension of scalars.

\begin{proposition}
Assume that the local system $\cL$ is defined over $\kk$, that is, $\cL=\kk'\otimes\cL_{\kk}$ for some $\kk$-local system $\cL_{\kk}$ (regarded as a subsheaf of $\cL$), and that, for any local section $\varphi$ of $\wt\Sigma$,
\[
\cL_{\leq\varphi}=\kk'\otimes_{\kk}(\cL_{\kk}\cap\cL_{\leq\varphi}),
\]
where the intersection is taken in $\cL$. Then $(\cL,\cL_\bbullet)$ is a $\kk'$-Stokes-filtered local system defined over $\kk$.
\end{proposition}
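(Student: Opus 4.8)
The plan is to descend the Stokes filtration index by index using flatness of $\kk'$ over $\kk$, and then to invoke Lemma \ref{lem:stokeswithout} to recognize that what one has built is genuinely a Stokes filtration. First I would reduce to the non-ramified case: choosing $d$ so that $\wt\Sigma$ becomes the trivial covering of $S^1_x$ attached to a finite set $\Phi_d\subset\ccP_{x'}$, one replaces $S^1_x$ by $S^1_{x'}$ and $\cL$ by $\rho_d^{-1}\cL=\kk'\otimes_\kk\rho_d^{-1}\cL_\kk$, and the hypothesis on local sections of $\wt\Sigma$ becomes the statement that $(\rho_d^{-1}\cL)_{\leq\varphi'}=\kk'\otimes_\kk\bigl(\rho_d^{-1}\cL_\kk\cap(\rho_d^{-1}\cL)_{\leq\varphi'}\bigr)$ for every $\varphi'\in\Phi_d$. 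So I may assume $\cL_\bbullet$ is non-ramified with set of exponential factors $\Phi$, and I set $\cL_{\kk,\leq\varphi}\defin\cL_\kk\cap\cL_{\leq\varphi}\subset\cL$ for every $\varphi\in\ccP$.

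The first step is to show that $\cL_{\leq\varphi}=\kk'\otimes_\kk\cL_{\kk,\leq\varphi}$ for \emph{every} $\varphi$, not merely for $\varphi\in\Phi$. Since $(\cL,\cL_\bbullet)$ is a Stokes filtration, each germ is reconstructed from its values on $\Phi$ by $\cL_{\leq\varphi,\theta}=\sum_{\chi\in\Phi,\;\chi\leqtheta\varphi}\cL_{\leq\chi,\theta}$, as in \S\ref{subsec:Ifillocsys}. Together with the two elementary facts that, for $\kk$-subspaces $W,W_i$ of a finite-dimensional $\kk$-vector space $V$, one has $\kk'\otimes_\kk\sum_i W_i=\sum_i(\kk'\otimes_\kk W_i)$ and $\cL_\kk\cap(\kk'\otimes_\kk W)=W$ inside $\kk'\otimes_\kk V$ (flatness of $\kk'$ over $\kk$), this yields $\cL_\kk\cap\cL_{\leq\varphi,\theta}=\sum_\chi(\cL_\kk\cap\cL_{\leq\chi,\theta})$ and hence $\cL_{\leq\varphi}=\kk'\otimes_\kk\cL_{\kk,\leq\varphi}$ for all $\varphi$. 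Defining $\cL_{\kk,<\varphi}$ from the family $\cL_{\kk,\leq\psi}$ by \eqref{eq:onto<}, the same manipulation gives $\cL_{<\varphi}=\kk'\otimes_\kk\cL_{\kk,<\varphi}$, and then, by exactness of $\kk'\otimes_\kk(-)$, $\gr_\varphi\cL=\kk'\otimes_\kk\gr_\varphi\cL_\kk$.

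Next I would check that $(\cL_{\kk,\leq\varphi})_{\varphi\in\ccP}$ is a non-ramified $\kk$-Stokes filtration of $\cL_\kk$ by verifying the three conditions of Lemma \ref{lem:stokeswithout}. Each $\cL_{\kk,\leq\varphi}$ is $\RR$-constructible, being the intersection of the local system $\cL_\kk$ with an $\RR$-constructible subsheaf of $\cL$; the filtration property $\varphi\leqtheta\psi\implique\cL_{\kk,\leq\varphi,\theta}\subset\cL_{\kk,\leq\psi,\theta}$ is inherited from $\cL_\bbullet$; exhaustivity of the germs follows from Proposition \ref{prop:Sfini}, since $\Phi\letheta\psi$ gives $\cL_{\leq\psi,\theta}=\cL_\theta$, whence $\cL_{\kk,\leq\psi,\theta}=\cL_{\kk,\theta}$, and $\psi\letheta\Phi$ gives $\cL_{\leq\psi,\theta}=0$, whence $\cL_{\kk,\leq\psi,\theta}=0$. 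That $\gr_\varphi\cL_\kk$ is a local system of finite-dimensional $\kk$-vector spaces follows from $\kk'\otimes_\kk\gr_\varphi\cL_\kk=\gr_\varphi\cL$ being one, since $\kk\to\kk'$ is faithfully flat: an $\RR$-constructible sheaf of $\kk$-vector spaces on $S^1$ is a finite-dimensional local system as soon as it becomes one after $\otimes_\kk\kk'$ (the relevant specialization maps become isomorphisms after the extension, hence were already isomorphisms, and $\dim_\kk=\dim_{\kk'}(\kk'\otimes_\kk-)$). Finally the dimension condition \ref{lem:stokeswithout}\eqref{lem:stokeswithout4} for $\cL_{\kk,\bbullet}$ coincides with that for $\cL_\bbullet$, because $\dim_\kk\cL_{\kk,\leq\varphi,\theta}=\dim_{\kk'}\cL_{\leq\varphi,\theta}$ and $\dim_\kk\gr_\psi\cL_{\kk,\theta}=\dim_{\kk'}\gr_\psi\cL_\theta$.

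This produces a non-ramified $\kk$-Stokes-filtered local system $(\cL_\kk,\cL_{\kk,\bbullet})$ with $\kk'\otimes_\kk(\cL_\kk,\cL_{\kk,\bbullet})=(\cL,\cL_\bbullet)$ by construction, settling the non-ramified case. In the ramified case, the construction above applied on $S^1_{x'}$ gives a non-ramified $\kk$-Stokes filtration on $\rho_d^{-1}\cL_\kk$; its $\sigma$-equivariance (Remark \ref{rem:Stokeswithramif}\eqref{rem:Stokeswithramif0}) is forced by that of $\cL_\bbullet$, because $\wt\sigma^{-1}(\rho_d^{-1}\cL_\kk)_{\leq\sigma^*\varphi'}$ and $(\rho_d^{-1}\cL_\kk)_{\leq\varphi'}$ are two $\kk$-subsheaves of $\rho_d^{-1}\cL_\kk$ with the same scalar extension, hence equal; so it descends to a (possibly ramified) $\kk$-Stokes filtration on $\cL_\kk$ whose scalar extension is $(\cL,\cL_\bbullet)$. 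I expect no serious obstacle: the only point requiring care is that the operations ``$\cL_\kk\cap{}$'' and ``$\kk'\otimes_\kk{}$'' commute with the finite sums defining $\cL_{<\varphi}$ and with the reconstruction of the full filtration from its restriction to $\wt\Sigma$, so that one genuinely lands in the hypotheses of Lemma \ref{lem:stokeswithout}; there is no analytic or geometric input beyond that lemma, Proposition \ref{prop:Sfini}, and the description of ramified Stokes filtrations in Remark \ref{rem:Stokeswithramif}.
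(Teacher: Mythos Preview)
Your argument is correct and is a cleaner, more algebraic route than the paper's. The key difference lies in how you extend the descent property from $\varphi\in\Phi$ to arbitrary $\psi\in\ccP_x$. The paper does this by giving an explicit sheaf-theoretic description of $\cL_{\leq\psi}$: on each connected component of $S^1\moins\St(\Psi,\Psi)$ (with $\Psi=\Phi\cup\{\psi\}$) it coincides with some $\cL_{\leq\varphi(I,\psi)}$, $\varphi(I,\psi)\in\Phi$, and at a Stokes direction $\theta_o$ the stalk is the \emph{intersection} of the nearby pieces; the descent at $\theta_o$ is then checked by hand from this intersection description. You instead use the stalkwise \emph{sum} formula $\cL_{\leq\psi,\theta}=\sum_{\chi\in\Phi,\;\chi\leqtheta\psi}\cL_{\leq\chi,\theta}$ from \S\ref{subsec:Ifillocsys}, together with the fact that $\kk'\otimes_\kk(-)$ commutes with finite sums of $\kk$-subspaces and that $\cL_\kk\cap(\kk'\otimes_\kk W)=W$; this dispatches the extension in one line and feeds directly into Lemma~\ref{lem:stokeswithout}. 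Your approach is shorter and makes the role of faithful flatness transparent; the paper's approach has the advantage of exhibiting the concrete local model of $\cL_{\leq\psi}$ near a Stokes direction, which is of independent interest but not strictly needed for the statement.
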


\begin{proof}
It is not difficult to reduce to the non-ramified case, so we will assume below that $\ccI=\ccI_1$ and replace $\wt\Sigma$ by $\Phi\subset\ccP_x$.
We set, for any local section $\psi$ of $\ccP_x$, $\cL_{\kk,\leq\psi}\defin\cL_{\kk}\cap\cL_{\leq\psi}$, so that the condition reads $\cL_{\leq\varphi}=\kk'\otimes_{\kk}\cL_{\kk,\leq\varphi}$ for $\varphi\in\Phi$. This defines a pre-$\ccI$-filtration of $\cL_{\kk}$, and we will show that this is indeed a $\ccI$-filtration.
\begin{enumerate}
\item
We start with a general property of Stokes-filtered local systems. Let $(\cL,\cL_\bbullet)$ and $\Phi$ be as above, and let $\psi\in\ccP_x$. Set $\Psi=\Phi\cup\{\psi\}$ and denote by $\St(\Psi,\Psi)$ the (finite) set of Stokes directions of pairs $\varphi,\eta\in\Psi$. The sheaves $\cL_{\leq\psi}$ and $\cL_{<\psi}$ can be described as triples consisting of their restrictions to the open set $S^1\moins\St(\Psi,\Psi)$, the closed set $\St(\Psi,\Psi)$, and a gluing map from the latter to the restriction to this closed set of the push-forward of the former. We will make this description explicit.

On any connected component $I$ of $S^1\moins\St(\Psi,\Psi)$, the set $\Psi$ is totally ordered, and there exists $\varphi=\varphi(I,\psi)\in\Phi$ such that $\cL_{\leq\psi|I}=\cL_{\leq\varphi|I}$. Similarly, there exists $\eta=\eta(I,\psi)\in\Phi$ such that $\cL_{<\psi|I}=\cL_{<\eta|I}$.

Let us fix $\theta_o\in\St(\Psi,\Psi)$ and denote by $I_1,I_2$ the two connected components of $S^1\moins\St(\Psi,\Psi)$ containing $\theta_o$ in their closure, with corresponding inclusions $j_i:I_i\hto S^1$, $i=1,2$. We also denote by $i_o:\{\theta_o\}\hto S^1$ the closed inclusion and set $\varphi_i\defin\varphi(I_i,\psi)$, $i=1,2$ (\resp $\eta_i\defin\eta(I_i,\psi)$).

We claim that, in the neighbourhood of $\theta_o$ (and more precisely, on $I_1\cup\nobreak I_2\cup\nobreak\{\theta_o\}$), the sheaf $\cL_{\leq\psi}$ is described by the data $\cL_{\leq\psi|I_i}=\cL_{\leq\varphi_i|I_i}$, $i=1,2$, $\cL_{\leq\psi,\theta_o}=i_o^{-1}j_{1,*}\cL_{\leq\varphi_1|I_1}\cap i_o^{-1}j_{2,*}\cL_{\leq\varphi_2|I_2}$, where the intersection is taken in $i_o^{-1}j_{1,*}\cL=i_o^{-1}j_{2,*}\cL=\cL_{\theta_o}$, and the gluing map is the natural inclusion map of the intersection into each of its components. A similar statement holds for $\cL_{<\psi}$. This easily follows from the local description $\cL_{\leq\psi}=\bigoplus_{\varphi\in\Phi}\beta_{\varphi\leq\psi}\gr_\varphi\cL$, and similarly for $\cL_{<\psi}$.

\item
We now claim that $\cL_{\kk,\leq\psi}\defin\cL_{\kk}\cap\cL_{\leq\psi}$ satisfies $\kk'\otimes\cL_{\kk,\leq\psi}=\cL_{\leq\psi}$. This is by assumption on $S^1\moins\St(\Psi,\Psi)$, according to the previous description, and it remains to check this at any $\theta_o\in\St(\Psi,\Psi)$. The previous description gives $i_o^{-1}\cL_{\kk,\leq\psi}=i_o^{-1}j_{1,*}\cL_{\kk,\leq\varphi_1|I_1}\cap i_o^{-1}j_{2,*}\cL_{\kk,\leq\varphi_2|I_2}$ and the result follows easily (by considering a suitable basis of $\cL_{\kk,\theta_o}$ for instance).

\item
Let us now define $\cL_{\kk,<\psi}$ as $\sum_{\psi'\in\ccP_x}\beta_{\psi'<\psi}\cL_{\kk,\psi'}$, as in \eqref{eq:onto<}. Then the previous description also shows that $\cL_{\kk,<\psi}=\cL_{\kk}\cap\cL_{<\psi}$ and that $\cL_{<\psi}=\kk'\otimes_{\kk}\cL_{\kk,<\psi}$.

\item
As a consequence, we obtain that $\gr_\psi\cL=\kk'\otimes_{\kk}\gr_\psi\cL_{\kk}$ for any $\psi$, from which the proposition follows.\qedhere
\end{enumerate}
\end{proof}

\begin{remarque}
The condition considered in the proposition is that considered in~\cite{K-K-P08} in order to define a $\kk$-structure on a Stokes-filtered local system defined over $\kk'$ (\eg $\kk=\QQ$ and $\kk'=\CC$). This proposition shows that there is no difference with the notion of Stokes-filtered $\kk$-local system.
\end{remarque}

\subsection{Stokes-filtered local systems and Stokes data}\label{sec:Stokesfil}
In this section, we make explicit the relationship between Stokes filtrations and the more conventional approach with Stokes data in the simple case of a Stokes-filtered local system of \emphb{exponential type}.

\Subsubsection*{Stokes-filtered local systems of exponential type}

\begin{definitio}[\cf \cite{Malgrange91} and \cite{K-K-P08}]
We say that a \index{local system!Stokes-filtered}Stokes-filtered local system $(\cL,\cL_\bbullet)$ is of \emphb{exponential type} if it is non-ramified and its exponential factors have a pole of order one at most.
\end{definitio}

In such a case, we can replace $\ccP$ with $\CC\cdot x^{-1}$, and thus with $\CC$, and for each $\theta\in S^1$, the partial order $\leqtheta$ on $\CC$ is compatible with addition and satisfies
\[
c\leqtheta0\ssi c=0\text{ or } \arg c-\theta\in(\pi/2,3\pi/2)\mod2\pi.
\]
We will use notation of \S\ref{subsec:Stokesnon-ram} by replacing $\varphi=c/x\in\ccP$ with $c\in\CC$. For each pair $c\neq c'\in\CC$, there are exactly two values of $\theta\bmod2\pi$, say $\theta_{c,c'}$ and $\theta'_{c,c'}$, such that~$c$ and $c'$ are not comparable at $\theta$. We have $\theta'_{c,c'}=\theta_{c,c'}+\pi$. These are the Stokes directions of the pair $(c,c')$. For any $\theta$ in one component of $S^1\moins\{\theta_{c,c'},\theta'_{c,c'}\}$, we have $c\letheta c'$, and the reverse inequality for any $\theta$ in the other component.

\subsubsection*{Stokes data}
These are linear data which provide a description of Stokes-filtered local system. Given a finite set $C\subset \CC$ and given $\theta_o\in S^1$ which is not a Stokes direction of any pair $c\neq c'\in C$, $\theta_o$ defines a total ordering on $C$, that we write $c_1\lethetao c_2\lethetao\cdots\lethetao c_n$.

\begin{definitio}\label{def:catStokesdata}
Let $C$ be a finite subset of $\CC$ totally ordered by $\theta_o$. The category of \index{Stokes data}\emph{Stokes data of type $(C,\theta_o)$} has objects consisting of two families of $\kk$-vector spaces $(G_{c,1},G_{c,2})_{c\in C}$ and a diagram of morphisms
\bgroup\numstareq
\begin{equation}\label{eq:catStokesdata}
\begin{array}{c}
\xymatrix@C=1.5cm{
\bigoplus_{c\in C}G_{c,1}\ar@/^1pc/[r]^-{S}\ar@/_1pc/[r]_-{S'}&\bigoplus_{c\in C}G_{c,2}
}
\end{array}
\end{equation}
\egroup
such that, for the numbering $C=\{c_1,\dots,c_n\}$ given by $\theta_o$,
\begin{enumerate}
\item
$S=(S_{ij})_{i,j=1,\dots, n}$ is block-upper triangular, \ie $S_{ij}:G_{c_i,1}\to G_{c_j,2}$ is zero unless $i\leq j$, and $S_{ii}$ is invertible (so $\dim G_{c_i,1}=\dim G_{c_i,2}$, and $S$ itself is invertible),
\item
$S'=(S'_{ij})_{i,j=1,\dots, n}$ is block-lower triangular, \ie $S'_{ij}:G_{c_i,1}\to G_{c_j,2}$ is zero unless $i\geq j$, and $S'_{ii}$ is invertible (so $S'$ itself is invertible).
\end{enumerate}

A morphism of Stokes data consists of morphisms of $\kk$-vector spaces $\lambda_{c,\ell}:G_{c,\ell}\to G'_{c,\ell}$, $c\in C$, $\ell=1,2$ which are compatible with the diagrams \eqref{eq:catStokesdata}.
\end{definitio}

Fixing bases in the spaces $G_{c,\ell}$, $c\in C$, $\ell=1,2$, allows one to present Stokes data by matrices $(\Sigma,\Sigma')$ where $\Sigma=(\Sigma_{ij})_{i,j=1,\dots, n}$ (\resp $\Sigma'=(\Sigma'_{ij})_{i,j=1,\dots, n}$) is block-lower (\resp -upper) triangular and each $\Sigma_{ii}$ (\resp $\Sigma'_{ii}$) is invertible.

The following is a translation of a classical result (\cf \cite{Malgrange83b} and the references given therein, \cf also \cite{H-S09} for applications):

\begin{proposition}\label{prop:datafiltered}
There is a natural functor from the category of Stokes-filtered local systems with exponential factors contained in $C$ and the category of Stokes data of type $(C,\theta_o)$, which is an equivalence of categories.\qed
\end{proposition}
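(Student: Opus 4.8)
The plan is to produce a quasi\nobreakdash-inverse to an explicitly constructed gluing functor; the whole content lies in the topology of~$S^1$ together with the local gradedness of Definition~\ref{def:gradedStokes}. Besides~$\theta_o$, single out the antipodal direction $\theta_1:=\theta_o+\pi$: since the Stokes directions of a pair $c\neq c'\in C$ form an antipodal pair, $\theta_1$ too is not a Stokes direction of any pair in~$C$, and the total order $\leq_{\theta_1}$ on~$C$ is the reverse of $\leq_{\theta_o}$. The points $\theta_o,\theta_1$ cut~$S^1$ into two closed half\nobreakdash-circles $\overline\gamma_1\ni\theta_o+\pi/2$ and $\overline\gamma_2\ni\theta_o-\pi/2$, both contractible; after enlarging them slightly one may arrange that their overlap is a small arc around~$\theta_o$ and a small arc around~$\theta_1$, neither containing a Stokes direction of a pair in~$C$, so that on each of these two arcs the order $\leq_\theta$ is constant (equal to $\leq_{\theta_o}$, resp.\ to $\leq_{\theta_1}$).

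\emph{The gluing functor.} Given Stokes data $(G_{c,\ell},S,S')$ of type $(C,\theta_o)$, equip $\overline\gamma_1$ (resp.~$\overline\gamma_2$) with the graded Stokes filtration of Definition~\ref{def:gradedStokes} attached to the trivial local systems $(G_{c,1})_{c\in C}$ (resp.~$(G_{c,2})_{c\in C}$), using the order $\leq_\theta$ of Example~\ref{exem:Stokes} restricted to~$C$. Glue the two resulting graded Stokes\nobreakdash-filtered local systems over the overlap near~$\theta_o$ by (an isomorphism built from)~$S$ and over the overlap near~$\theta_1$ by~$S'$; both gluing isomorphisms go $\bigoplus_c G_{c,1}\to\bigoplus_c G_{c,2}$. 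On these overlaps the relevant order is constant, equal to $\leq_{\theta_o}$ near~$\theta_o$ and to its reverse near~$\theta_1$, and the block\nobreakdash-triangularity conditions on $S$ and~$S'$ in Definition~\ref{def:catStokesdata} say exactly that the two gluing isomorphisms are morphisms of the corresponding graded Stokes filtrations over the respective overlap arcs, while invertibility of the diagonal blocks makes them isomorphisms. Hence the gluing produces a Stokes\nobreakdash-filtered local system $(\cL,\cL_\bbullet)$; one checks directly that $\gr_c\cL$ is the local system glued from the constant sheaves $G_{c,1}$ on~$\overline\gamma_1$ and $G_{c,2}$ on~$\overline\gamma_2$ by $S_{cc}$ and $S'_{cc}$, so it is locally constant of rank $\dim G_{c,1}=\dim G_{c,2}$ and the exponential factors lie in~$C$. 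Morphisms of Stokes data glue to morphisms of the associated objects, so this is a functor~$\mathsf G$.

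\emph{$\mathsf G$ is an equivalence.} Let $(\cL,\cL_\bbullet)$ have exponential factors in~$C$. The key topological point is that $(\cL,\cL_\bbullet)_{|\overline\gamma_i}$ is isomorphic to a graded Stokes filtration: since $\overline\gamma_i$ is contractible, each $\gr_c\cL_{|\overline\gamma_i}$ is a trivial local system, and the obstruction to splitting lives, by Proposition~\ref{prop:classifhausdorff} applied over~$\overline\gamma_i$, in $H^1\!\big(\overline\gamma_i,\cAut_{\kk}^{<0}(\mu_!\gr\cL)\big)$, which is trivial because $\overline\gamma_i$ is an interval --- concretely one runs, between consecutive interior Stokes directions, the $H^1$\nobreakdash-vanishing lifting argument of Lemma~\ref{lem:H1vanishing} used to prove Lemma~\ref{lem:stokeswithout}, lifting bases of the constant sheaves $\gr_c\cL$ to sections of $\cL_{\leq c}$. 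Setting $G_{c,1}:=(\gr_c\cL)_{\theta_o}$, $G_{c,2}:=(\gr_c\cL)_{\theta_1}$ and identifying the two graded models (via parallel transport inside $\gr_c\cL$ along $\overline\gamma_1$, resp.\ along~$\overline\gamma_2$), the recollement of $(\cL,\cL_\bbullet)$ from its two restrictions is encoded by the transition isomorphisms over the overlaps near~$\theta_o$ and near~$\theta_1$, which by the triangularity computation above satisfy the conditions of Definition~\ref{def:catStokesdata} and so form a pair $(S,S')$ of Stokes data of type $(C,\theta_o)$. Applying $\mathsf G$ to this pair returns $(\cL,\cL_\bbullet)$ up to canonical isomorphism, and conversely the round trip from Stokes data returns them unchanged since the graded models carry no monodromy along the arcs, so $\mathsf G$ is essentially surjective. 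Full faithfulness is the same bookkeeping: a morphism between two glued objects amounts to a pair of morphisms of the graded models over $\overline\gamma_1$ and $\overline\gamma_2$ that are compatible with the transition maps, i.e.\ to a morphism of Stokes data. Naturality in $(\cL,\cL_\bbullet)$ holds once~$\theta_o$ (hence $\gamma_1,\gamma_2$) has been fixed, as in the statement.

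\emph{Main obstacle.} The only non\nobreakdash-formal step is the gradability of $(\cL,\cL_\bbullet)$ over a half\nobreakdash-circle $\overline\gamma_i$: one must iterate the $H^1$\nobreakdash-vanishing argument across an interval that genuinely contains interior Stokes directions --- so that $\leq_\theta$ is not constant on~$\overline\gamma_i$ --- keeping track of how the constructible subsheaves $\cL_{\leq c}$ jump there. Everything else (matching the block\nobreakdash-triangularity of $(S,S')$ with the two overlap gluings, functoriality, and the two round\nobreakdash-trip identifications) is routine once this is in place.
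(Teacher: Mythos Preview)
Your approach is correct and is essentially the one the paper has in mind: split $S^1$ into two half-circles at $\theta_o$ and $\theta_o+\pi$, grade $(\cL,\cL_\bbullet)$ on each, and read off the Stokes data as the two transition matrices. The paper's one-line proof points to Lemma~\ref{lem:grandintervalle} (and Theorem~\ref{th:abelianwithout}) for exactly this.

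One correction on the key step. Your justification of gradability over $\overline\gamma_i$ is not quite right as written: the vanishing of $H^1\big(\overline\gamma_i,\cAut_{\kk}^{<0}(\mu_!\gr\cL)\big)$ does not follow merely from $\overline\gamma_i$ being an interval (the sheaf is non-abelian and jumps at Stokes directions), and Lemma~\ref{lem:H1vanishing} only handles a single interior singular point, so one cannot simply ``iterate'' it across several Stokes directions. The tool you need is precisely Lemma~\ref{lem:grandintervalle}: in the exponential-type case every pair $c\neq c'\in C$ has exactly two \emph{antipodal} Stokes directions, so your slightly enlarged open half-circle (whose boundary avoids Stokes directions by the choice of $\theta_o$) contains at most one Stokes direction of each pair, and Lemma~\ref{lem:grandintervalle} gives the splitting $(\cL,\cL_\bbullet)_{|\overline\gamma_i}\simeq\bigoplus_{c\in C}\gr_c\cL_{|\overline\gamma_i}$ directly. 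With that reference in place, the rest of your argument (triangularity of $S,S'$ on the overlap arcs, the quasi-inverse, full faithfulness) goes through as you wrote it.
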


The proof of this proposition, that we will not reproduce here, mainly uses Theorem \ref{th:abelianwithout} of the next \chaptername, and more precisely Lemma \ref{lem:grandintervalle} to define the functor.

\subsubsection*{Duality}\label{subsec:duality}\index{duality}
Let $(\cL,\cL_\bbullet)$ be a Stokes-filtered local system. Recall (\cf Definition \ref{def:HomSt}) the dual local system $\cL^\vee$ comes equipped with a Stokes filtration $(\cL^\vee)_\bbullet$ defined by
\[
(\cL^\vee)_{\leq c}=(\cL_{<-c})^\perp,
\]
where the orthogonality is relative to duality. In particular, $\gr_c(\cL^\vee)=(\gr_{-c}\cL)^\vee$. Similarly, given Stokes data $((G_{c,1},G_{c,2})_{c\in C},S,S')$ of type $(C,\theta_0)$, let us denote by~$\tS$ the adjoint of $S$ by duality. Define Stokes data $((G_{c,1},G_{c,2})_{c\in C},S,S')^\vee$ of type $(-C,\theta_o)$ by the formula $G^\vee_{-c,i}=(G_{c,i})^\vee$ ($i=1,2$) and $S^\vee=\tS^{-1}$, $S^{\prime\vee}=\tS^{\prime-1}$, so that the diagram \eqref{eq:catStokesdata} becomes
\begin{equation}\tag*{(\protect\ref{eq:catStokesdata})$^\vee$}
\begin{array}{c}
\xymatrix@C=1.5cm{
\bigoplus_{i=1}^r(G_{c_i,1})^\vee\ar@/^1pc/[r]^-{\tS^{-1}} \ar@/_1pc/[r]_-{\tS^{\prime-1}}&\bigoplus_{i=1}^r(G_{c_i,2})^\vee
}
\end{array}
\end{equation}
Then the equivalence of Proposition \ref{prop:datafiltered} is compatible with duality (\cf \cite{H-S09}).

\chapter{Abelianity and strictness}\label{chap:abelian}

\begin{sommaire}
We prove that the category of $\kk$\nobreakdash-Stokes-filtered local systems on~$S^1$ is abelian. The main ingredient, together with vanishing properties of the cohomology, is the introduction of the level structure. Abelianity is also a consequence of the Riemann-Hilbert correspondence considered in \Chaptersname \ref{chap:RH}, but it is instructive to prove it over the base field $\kk$.
\end{sommaire}

\subsection{Introduction}
The purpose of this \chaptersname is to prove the following:

\begin{theoreme}\label{th:abelian}
The category of $\kk$\nobreakdash-Stokes-filtered local systems on~$S^1$ is \index{abelian (category)}abelian and every morphism is strict. Moreover, it is stable by extension in the category of pre-Stokes-filtered sheaves.
\end{theoreme}

Pre-Stokes-filtered sheaves are defined, in the setting of \Chaptersname\ref{chap:Stokesone}, in Definition~\ref{def:preStokesnonramif} (non-ramified case). The possibly ramified case is obtained as in Definition \ref{def:Stokeswithramif}. They correspond to pre-$\ccI$-filtered sheaves on~$S^1$, with~$\ccI$ as in \S\ref{subsec:Stokeswithramif} (this is checked in a way similar to the case of local systems).

Should the morphisms be graded, then the statement of the theorem would be obvious. The global structure of morphisms is quite involved. The idea is to consider morphisms on sufficiently big intervals of $S^1$ (\cf the Comments of \S\ref{subsec:comments3} for the origin of this idea and variants). However, how big the intervals must be depends on the order of the pole of the exponential factors, and a procedure is needed to work on intervals of a fixed length. This procedure is obtained as an induction on the level structure of the Stokes filtration, a notion which is introduced in \S\ref{subsec:level} and corresponds to a fitlration of the indexing set (exponential factors) with respect to the order of the pole.

\subsection{Strictness and abelianity}
Let $\lambda:(\cL,\cL_\bbullet)\to(\cL',\cL'_\bbullet)$ be a morphism of non-ramified pre-Stokes-filtered local systems. The strictness property should be the following:
\begin{equation}\label{eq:strictnon-ramified}
\forall\varphi\in\ccP_x,\quad\lambda(\cL_{\leq\varphi})=\cL_{\leq\varphi}\cap\lambda(\cL_\theta).
\end{equation}
If a morphism $\lambda$ satisfies \eqref{eq:strictnon-ramified}, the two naturally defined pre-Stokes filtrations on $\lambda(\cL)$ coincide, and then the local systems $\ker\lambda$, $\im\lambda$ and $\coker\lambda$ are naturally equipped with non-ramified pre-Stokes filtrations.

However, it is not clear that, for general pre-Stokes filtrations, \eqref{eq:strictnon-ramified} for $\lambda$ implies \eqref{eq:strictnon-ramified} for the pull-back $\wt\rho^+\lambda$ (\cf Definition \ref{def:pullback}) if $\rho$ is some ramification and $\wt\rho$ is the associated covering of $S^1$. Nevertheless, if conversely $\wt\rho^+\lambda$ satisfies \eqref{eq:strictnon-ramified}, then so does~$\lambda$, according to Proposition \ref{prop:pullback}\eqref{prop:pullback0}, and $\ker$, $\im$, $\coker$ in the pre-Stokes-filtered sense are compatible with $\wt\rho^+$. This leads to the following definition for non-ramified or ramified pre-Stokes-filtered local systems.

\begin{definitio}[Strictness]\label{def:strictness}
A morphism~$\lambda:(\cL,\cL_\bbullet)\to(\cL',\cL'_\bbullet)$ between (possibly ramified) pre-Stokes-filtered local systems is said to be \index{morphism!strict}\index{strict morphism}\emph{strict} if $\wt\rho^+\lambda$ satisfies \eqref{eq:strictnon-ramified} for any ramification $\rho$ such that $(\cL,\cL_\bbullet)$ and $(\cL',\cL'_\bbullet)$ are non-ramified pre-Stokes-filtered local systems.
\end{definitio}

\skpt
\begin{remarques}
\begin{enumerate}
\item
The notion of strictness in the definition corresponds to that of Definition \ref{def:morphismpreIfiltered} for a pre-$\ccI$-filtered sheaf, for $\ccI$ as in \S\ref{subsec:Stokeswithramif}, while Condition \eqref{eq:strictnon-ramified} corresponds to strictness for a pre-$\ccI_1$-filtered sheaf.
\item
Let $\lambda:(\cL,\cL_\bbullet)\to(\cL',\cL'_\bbullet)$ be a morphism of non-ramified Stokes-filtered local systems. Assume that, for every $\theta_o\in S^1$, we can find local trivializations \eqref{eq:L<varphi} on some neighbourhood $\nb(\theta_o)$ for $(\cL,\cL_\bbullet)$ and $(\cL',\cL'_\bbullet)$ such that $\lambda:\cL_{\theta_o}\to\cL'_{\theta_o}$ is block-diagonal. Then $\lambda_{\theta_o}$ is block-diagonal in the neighbourhood of~$\theta_o$ and it is strict near~$\theta_o$ (\ie \eqref{eq:strictnon-ramified} is satisfied after any ramification).
\end{enumerate}
\end{remarques}

Theorem \ref{th:abelian} asserts that, if $(\cL,\cL_\bbullet)$ and $(\cL',\cL'_\bbullet)$ are Stokes-filtered local systems, then so are $\ker\lambda$, $\im\lambda$ and $\coker\lambda$ (\ie they also satisfy the local gradedness property). The first part of Theorem \ref{th:abelian}, in the non-ramified case, is a consequence of the following more precise result, proved in \S\ref{pf:abelianwithout}:

\begin{theoreme}\label{th:abelianwithout}
Given two non-ramified Stokes-filtered local systems $(\cL,\cL_\bbullet)$ and $(\cL',\cL'_\bbullet)$, there exist trivializations of them in the neighbourhood of any point of~$S^1$ such that any morphism $\lambda$ between them is diagonal with respect to these local trivializations, hence is strict (Definition \ref{def:strictness}). In particular, such a morphism satisfies \eqref{eq:strictnon-ramified}, and the natural pre-Stokes filtrations on the local systems $\ker\lambda$, $\im\lambda$ and $\coker\lambda$ are Stokes filtrations. Their sets of exponential factors satisfy
\[
\Phi(\ker\lambda)\subset \Phi,\quad \Phi(\coker\lambda)\subset \Phi',\quad \Phi(\im\lambda)\subset \Phi\cap \Phi'.
\]
\end{theoreme}

\begin{corollaire}
Let $\wt\rho$ be a ramification. Then $\ker$, $\im$ and $\coker$ in the category of non-ramified Stokes-filtered local systems are compatible with $\wt\rho^+$.\qed
\end{corollaire}

\begin{proof}[\proofname\ of the first part of Theorem \ref{th:abelian}]
Let $\lambda:(\cL,\cL_\bbullet)\to(\cL',\cL'_\bbullet)$ be a morphism of (possibly ramified) pre-Stokes-filtered local systems. For any $\rho$ such that $\wt\rho^+(\cL,\cL_\bbullet)$ and $\wt\rho^+(\cL',\cL'_\bbullet)$ are non-ramified Stokes-filtered local systems, then $\wt\rho^+\lambda$ satisfies \eqref{eq:strictnon-ramified}, according to Theorem \ref{th:abelianwithout}, and $\ker\wt\rho^+\lambda$, $\im\wt\rho^+\lambda$ and $\coker\wt\rho^+\lambda$ are non-ramified Stokes-filtered local systems, which are compatible with supplementary ramifications.
\end{proof}

\subsection{Level structure of a Stokes-filtered local system}\label{subsec:level}
\let\oldbell\bell
\let\oldbmm\bmm
\let\bell\ell
\let\bmm m
In this paragraph, we work with \emph{non-ramified Stokes-filtered local systems} without mentioning it explicitly. For every $\bell\in\NN$, we define the notion of \index{Stokes filtration!oflevelb@of level $\geq\ell$}\emph{Stokes filtration of \index{level structure}level $\geq\bell$} on~$\cL$, by replacing the set of indices~$\ccP$ by the set $\ccP(\bell)\defin\cO(*0)/x^{-\ell}\cO$. We denote by $[{\cdot}]_\bell$ the map $\ccP\to\ccP(\bell)$. The constant sheaf \index{$IELL$@$\ccI_1(\ell)$, $\ccI_1(\bell)$}$\ccI_1(\bell)$ with fibre $\ccP(\bell)$ is ordered as follows: for every connected open set~$U$ of~$S^1$ and $[\varphi]_\bell,[\psi]_\bell\in\ccP(\bell)$, we have $[\psi]_\bell\leqU[\varphi]_\bell$ if, for some (or any) representatives $\varphi,\psi$ in $\cO(*0)$, $e^{\mx^\bell(\psi-\varphi)}$ has \emphb{moderate growth} in a neighbourhood of~$U$ in~$X$ intersected with~$X^*$. In particular, a Stokes filtration as defined previously has level $\geq0$.

Let us make this more explicit. Let $\eta\in\ccP$, $\eta=u_n(x)x^{-n}$ with either $n=0$ (that is, $\eta=0$) or $n\geq1$ and $u_n(0)\neq0$. Then, in a way analogous to \eqref{eq:orderone} and \eqref{eq:orderonestrict}, we have
\begin{align}\tag{\ref{eq:orderone}$_\ell$}\label{eq:orderonel}
[\eta]_\bell\leqtheta0&\ssi n\leq\bell\text{ or }\arg u_n(0)-n\theta\in(\pi/2,3\pi/2)\bmod2\pi,\\\tag{\ref{eq:orderonestrict}$_\ell$}\label{eq:orderonelstrict}
[\eta]_\bell\letheta0&\ssi u_n(0)\neq0,\ n>\bell\text{ and }\arg u_n(0)-n\theta\in(\pi/2,3\pi/2)\bmod2\pi.
\end{align}

\begin{lemme}\label{lem:IIellorder}
The natural morphism $\ccI_1\to\ccI_1(\bell)$ is compatible with the order.
\end{lemme}

\begin{proof}
According to \eqref{eq:orderone} and \eqref{eq:orderonel}, we have $\eta\leqtheta0\implique[\eta]_\bell\leqtheta0$.
\end{proof}

We will now introduce a reduction procedure with respect to the level. Given a Stokes-filtered local system $(\cL,\cL_\bbullet)$ (of level $\geq0$), we set, using Notation \ref{nota:beta},\index{$LLEQLETC$@$\cL_{\leq[\varphi]_\bell}$, $\cL_{<[\varphi]_\bell}$, $\gr_{[\varphi]_\bell}\cL$}
\[
\cL_{\leq[\varphi]_\bell}=\sum_\psi\beta_{[\psi]_\bell\leq[\varphi]_\bell}\cL_{\leq\psi},
\]
where the sum is taken in~$\cL$. Then
\[
\cL_{<[\varphi]_\bell}\defin\sum_{[\psi]_\bell}\beta_{[\psi]_\bell<[\varphi]_\bell}\cL_{\leq[\psi]_\bell}
=\sum_\psi\beta_{[\psi]_\bell<[\varphi]_\bell}\cL_{\leq\psi}.
\]
Indeed,
\[
\cL_{<[\varphi]_\bell}=\sum_{[\psi]_\bell}\sum_\eta\beta_{[\psi]_\bell<[\varphi]_\bell}
\beta_{[\eta]_\bell\leq[\varphi]_\bell}\cL_{\leq\eta},
\]
and for a fixed $\eta$, the set of $\theta\in S^1$ for which there exists $[\psi]_\bell$ satisfying $[\eta]_\bell\leqtheta[\psi]_\bell\letheta[\varphi]_\bell$ is equal to the set of $\theta$ such that $[\eta]_\bell\letheta[\varphi]_\bell$. So the right-hand term above is written
\[
\sum_\eta\beta_{[\eta]_\bell<[\varphi]_\bell}\cL_{\leq\eta}.
\]

We can also pre-Stokes-filter $\mu^{-1}\gr_{[\varphi]_\bell}\cL$ by setting, for $\psi\in\ccP$,
\[
(\gr_{[\varphi]_\bell}\cL)_{\leq\psi}=(\cL_{\leq\psi}\cap\cL_{\leq[\varphi]_\bell}+\cL_{<[\varphi]_\bell})/\cL_{<[\varphi]_\bell}.
\]

\begin{proposition}\label{prop:level}
Assume $(\cL,\cL_\bbullet)$ is a Stokes-filtered local system (of level $\geq0$) and let~$\Phi$ be the finite set of its exponential factors.
\begin{enumerate}
\item\label{prop:level1}
For each $\bell\in\NN$, $\cL_{\leq[\cbbullet]_\bell}$ defines a Stokes-filtered local system $(\cL,\cL_{[\cbbullet]_\bell})$ of level \hbox{$\geq\bell$} on~$\cL$, $\gr_{[\varphi]_\bell}\cL$ is locally isomorphic to $\bigoplus_{\psi,\,[\psi]_\bell=[\varphi]_\bell}\gr_\psi\cL$, and the set of exponential factors of $(\cL,\cL_{[\cbbullet]_\bell})$ is $\Phi(\bell)\defin\image(\Phi\to\ccP(\bell))$.
\item\label{prop:level2}
For every $[\varphi]_\bell\in \Phi(\bell)$, $(\gr_{[\varphi]_\bell}\cL,(\gr_{[\varphi]_\bell}\cL)_\bbullet)$ is a Stokes-filtered local system and its set of exponential factors is the pull-back of $[\varphi]_\bell$ by $\Phi\to \Phi(\bell)$.
\item\label{prop:level3}
Let us set
\[
\big(\gr_\bell\cL,(\gr_\bell\cL)_\bbullet\big)\defin\bigoplus_{[\psi]_\bell\in \Phi(\bell)}\big(\gr_{[\psi]_\bell}\cL,(\gr_{[\psi]_\bell}\cL)_\bbullet\big).
\]
Then $(\gr_\bell\cL,(\gr_\bell\cL)_\bbullet)$ is a Stokes-filtered local system (of level $\geq0$) which is locally isomorphic to $(\cL,\cL_\bbullet)$.
\end{enumerate}
\end{proposition}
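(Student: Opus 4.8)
The plan is to reduce all three assertions to an explicit computation on a graded local model. Each assertion is local on $S^1$, and the sheaves occurring in the statement — $\cL_{\leq[\varphi]_\bell}$, $\cL_{<[\varphi]_\bell}$, $\gr_{[\varphi]_\bell}\cL$, $(\gr_{[\varphi]_\bell}\cL)_{\leq\psi}$ and their finite direct sums — are manufactured out of the subsheaves $\cL_{\leq\psi}\subset\cL$ ($\psi\in\ccP$) using only the operations $\beta$, locally finite sum, intersection and quotient of subsheaves of~$\cL$. Such constructions are natural with respect to isomorphisms of Stokes-filtered local systems, so it suffices to prove the proposition when $(\cL,\cL_\bbullet)$ is graded, that is $\cL=\bigoplus_{\psi\in\Phi}\gr_\psi\cL$ with $\cL_{\leq\varphi}=\bigoplus_{\psi\in\Phi}\beta_{\psi\leq\varphi}\gr_\psi\cL$ (and similarly for $\cL_{<\varphi}$), as in \eqref{eq:L<varphi}; the general case then follows by transporting the conclusions through the local trivializations. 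From now on I assume $(\cL,\cL_\bbullet)$ graded.

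For \eqref{prop:level1}, I would compute the germ at $\theta\in S^1$: $(\cL_{\leq[\varphi]_\bell})_\theta=\sum_{\eta\in\Phi,\,[\eta]_\bell\leqtheta[\varphi]_\bell}\,\bigoplus_{\psi\in\Phi,\,\psi\leqtheta\eta}(\gr_\psi\cL)_\theta$. The point is to identify the index set of the resulting direct sum with $\{\psi\in\Phi\mid[\psi]_\bell\leqtheta[\varphi]_\bell\}$: by Lemma \ref{lem:IIellorder}, $\psi\leqtheta\eta$ forces $[\psi]_\bell\leqtheta[\eta]_\bell$, hence $[\psi]_\bell\leqtheta[\varphi]_\bell$ by transitivity of $\leqtheta$ on $\ccP(\bell)$; conversely if $[\psi]_\bell\leqtheta[\varphi]_\bell$ one takes $\eta=\psi$. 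Thus $\cL_{\leq[\cbbullet]_\bell}$ is exactly the graded $\ccI(\bell)$-Stokes filtration with set of exponential factors $\Phi(\bell)$ and $\bell$-graded pieces $\bigoplus_{[\psi']_\bell=[\varphi]_\bell}\gr_{\psi'}\cL$; in particular it is a Stokes filtration of level $\geq\bell$. Running the same computation with $\letheta$ for $\cL_{<[\varphi]_\bell}$ and passing to the quotient gives $\gr_{[\varphi]_\bell}\cL\simeq\bigoplus_{\psi:\,[\psi]_\bell=[\varphi]_\bell}\gr_\psi\cL$ (the two index sets, for $\leqtheta$ and for $\letheta$, differing precisely by the $\psi$ with $[\psi]_\bell=[\varphi]_\bell$), which also shows $\gr_{[\varphi]_\bell}\cL$ is a local system of finite rank and that the set of exponential factors is $\Phi(\bell)=\image(\Phi\to\ccP(\bell))$.

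For \eqref{prop:level2}, I would substitute the explicit germs $(\cL_{\leq\psi})_\theta=\bigoplus_{\eta\leqtheta\psi}(\gr_\eta\cL)_\theta$, $(\cL_{\leq[\varphi]_\bell})_\theta=\bigoplus_{[\eta]_\bell\leqtheta[\varphi]_\bell}(\gr_\eta\cL)_\theta$ and $(\cL_{<[\varphi]_\bell})_\theta=\bigoplus_{[\eta]_\bell\letheta[\varphi]_\bell}(\gr_\eta\cL)_\theta$ into the definition $(\gr_{[\varphi]_\bell}\cL)_{\leq\psi}=(\cL_{\leq\psi}\cap\cL_{\leq[\varphi]_\bell}+\cL_{<[\varphi]_\bell})/\cL_{<[\varphi]_\bell}$, using that germs commute with these finite operations on subsheaves of $\cL$, obtaining $(\gr_{[\varphi]_\bell}\cL)_{\leq\psi,\theta}=\bigoplus_{\eta\in\Phi,\,\eta\leqtheta\psi,\,[\eta]_\bell=[\varphi]_\bell}(\gr_\eta\cL)_\theta$, where the condition $[\eta]_\bell=[\varphi]_\bell$ records that ``$[\eta]_\bell\leqtheta[\varphi]_\bell$ but not $[\eta]_\bell\letheta[\varphi]_\bell$'' means equality in $\ccP(\bell)$. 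This is visibly the graded level-$\geq0$ Stokes filtration on $\bigoplus_{[\eta]_\bell=[\varphi]_\bell}\gr_\eta\cL$ with exponential factors $\{\eta\in\Phi\mid[\eta]_\bell=[\varphi]_\bell\}$, the preimage of $[\varphi]_\bell$ under $\Phi\to\Phi(\bell)$. Then \eqref{prop:level3} follows formally: each summand $(\gr_{[\psi]_\bell}\cL,(\gr_{[\psi]_\bell}\cL)_\bbullet)$ is a Stokes-filtered local system of level $\geq0$ by \eqref{prop:level1}--\eqref{prop:level2}, a finite direct sum of such is again one (Exercise \ref{exo:graded}), and on the graded model $\bigoplus_{[\psi]_\bell\in\Phi(\bell)}(\gr_{[\psi]_\bell}\cL)_{\leq\varphi}=\bigoplus_{\eta\in\Phi}\beta_{\eta\leq\varphi}\gr_\eta\cL=\cL_{\leq\varphi}$, so $(\gr_\bell\cL,(\gr_\bell\cL)_\bbullet)$ is, locally on $S^1$, isomorphic to $(\cL,\cL_\bbullet)$.

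The only genuinely delicate point is the order bookkeeping in \eqref{prop:level1} — that ``passing to level $\geq\bell$'' is compatible with the partial orders — which is where Lemma \ref{lem:IIellorder} is used, in conjunction with transitivity of $\leqtheta$ on $\ccP(\bell)$; isolating the graded pieces further uses that $\leqtheta$-without-$\letheta$ on $\ccP(\bell)$ is equality in $\ccP(\bell)$. Everything else is routine once the reduction to a graded model is in place, the one thing to keep in mind being that the various ``local isomorphisms'' should be checked to be isomorphisms of (level-$\bell$, resp.\ level-$0$) Stokes-filtered local systems — which is automatic from the naturality invoked in the first step.
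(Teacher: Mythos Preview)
Your proof is correct and follows essentially the same route as the paper's: reduce to the graded model by locality and naturality, then compute $\cL_{\leq[\varphi]_\bell}$, $\cL_{<[\varphi]_\bell}$ and $(\gr_{[\varphi]_\bell}\cL)_{\leq\psi}$ explicitly as direct sums of the $\gr_\eta\cL$ governed by the order relations, using the compatibility of $\ccI\to\ccI(\bell)$ with the order. The only cosmetic differences are that the paper phrases the index-set identification at the level of the $\beta$ operators rather than at germs, and keeps the outer summation variable ranging over all of $\ccP$ rather than over $\Phi$ (harmless, as your own bookkeeping shows).
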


\begin{proof}
All the properties are local, so we can assume that $(\cL,\cL_\bbullet)$ is graded. We have then
\[
\cL_{\leq\psi}=\bigoplus_\eta\beta_{\eta\leq\psi}\gr_\eta\cL,
\]
hence
\[
\cL_{\leq[\varphi]_\bell}=\bigoplus_\eta\Big(\sum_\psi\beta_{[\psi]_\bell\leq[\varphi]_\bell}\beta_{\eta\leq\psi}\gr_\eta\cL\Big),\quad
\cL_{<[\varphi]_\bell}=\bigoplus_\eta\Big(\sum_\psi\beta_{[\psi]_\bell<[\varphi]_\bell}\beta_{\eta\leq\psi}\gr_\eta\cL\Big)
\]
For a fixed $\eta$, the set of $\theta\in S^1$ such that there exists $\psi$ with $\eta\leqtheta\psi$ and $[\psi]_\bell\leqtheta[\varphi]_\bell$ (\resp $[\psi]_\bell\letheta[\varphi]_\bell$) is the set of $\theta$ such that $[\eta]_\bell\leqtheta[\varphi]_\bell$ (\resp $[\eta]_\bell\letheta[\varphi]_\bell$), so
\[
\cL_{\leq[\varphi]_\bell}=\bigoplus_\eta\beta_{[\eta]_\bell\leq[\varphi]_\bell}\gr_\eta\cL,\quad\cL_{<[\varphi]_\bell}=\bigoplus_\eta\beta_{[\eta]_\bell<[\varphi]_\bell}\gr_\eta\cL,
\]
which gives \ref{prop:level}\eqref{prop:level1}.

Let us show \ref{prop:level}\eqref{prop:level2}. All sheaves entering in the definition of $(\gr_{[\varphi]_\bell}\cL)_{\leq\psi}$, $(\gr_{[\varphi]_\bell}\cL)_{<\psi}$ and $\gr_\psi\gr_{[\varphi]_\bell}\cL$ decompose with respect to $\eta$ as above. Given $\eta\in\Phi$ and $\theta\in S^1$, the component $\gr_\eta\cL_\theta$ occurs in $\cL_{\leq\psi,\theta}\cap \cL_{\leq[\varphi]_\bell,\theta}$ if and only if $\eta\leqtheta\psi$ and $[\eta]_\bell\leqtheta[\varphi]_\bell$. Arguing similarly, we finally find that $\gr_\eta\cL_\theta$ occurs in $\gr_\psi\gr_{[\varphi]_\bell}\cL_\theta$ if and only if $\eta=\psi$ and $[\eta]_\ell=[\varphi]_\ell$, that is,
\[
\gr_\psi\gr_{[\varphi]_\bell}\cL_\theta=\begin{cases}
0&\text{if }[\psi]_\bell\neq[\varphi]_\bell,\\
\gr_\psi\cL_\theta&\text{if }[\psi]_\bell=[\varphi]_\bell.
\end{cases}
\]
This concludes the proof of \ref{prop:level}\eqref{prop:level2}. Now, \ref{prop:level}\eqref{prop:level3} is clear.
\end{proof}

\begin{remarque}\label{rem:stepconstr}
Let us explain the meaning and usefulness of this proposition. To a Stokes-filtered local system $(\cL,\cL_\bbullet)$ is associated a partially graded Stokes-filtered local system $(\gr_\bell\cL,(\gr_\bell\cL)_\bbullet)$. Going from $(\cL,\cL_\bbullet)$ to $(\gr_\bell\cL,(\gr_\bell\cL)_\bbullet)$ consists in
\begin{itemize}
\item
considering $(\cL,\cL_\bbullet)$ as indexed by $\ccP(\bell)$ (or $\Phi(\bell)$) and grading it as such,
\item
remembering the $\ccP$-filtration on the graded object, making it a Stokes-filtered local system as well.
\end{itemize}

Conversely, let $(\cG_\bell,\cG_{\bell,\bbullet})$ be a fixed Stokes-filtered local system graded at the level $\bell\geq0$, that is, the associated $\ccP(\bell)$-filtered local system is graded. As a consequence of the last statement of the proposition, an argument similar to that of Proposition \ref{prop:classifhausdorff} implies that the pointed set of isomorphism classes of Stokes-filtered local systems $(\cL,\cL_\bbullet)$ equipped with an isomorphism $f_\bell:(\gr_\bell\cL,(\gr_\bell\cL)_\bbullet)\isom(\cG_\bell,\cG_{\bell,\bbullet})$ is in bijection with the pointed set $H^1\big(S^1,\cAut^{<_{[\cbbullet]_\bell}0}(\cG_\bell)\big)$, where $\cAut^{<_{[\cbbullet]_\bell}0}(\cG_\bell)$ is the sheaf of automorphisms $\lambda$ of $(\cG_\bell,\cG_{\bell,\bbullet})$ such that $\gr_{[\varphi]_\bell}\lambda=\id$ on the local system $\gr_{[\varphi]_\bell}\cG_\bell$ for any $\varphi\in\ccP$ (equivalently, any $[\varphi]_\bell$ in $\ccP(\bell)$).

In particular, one can reconstruct $(\cL,\cL_\bbullet)$ from $\gr\cL$ either in one step, by specifying an element of $H^1(S^1,\cAut^{<0}\gr\cL)$, or step by step with respect to the level, by specifying at each step $\ell$ an element of $H^1\big(S^1,\cAut^{<_{[\cbbullet]_\bell}0}(\gr_\bell\cL)\big)$.
\end{remarque}

\subsection{\proofname\ of Theorem \ref{th:abelianwithout}}\label{pf:abelianwithout}
It will be done by induction, using Corollary \ref{cor:level} below for the inductive assumption on $\gr_\bell\cL$ of Proposition \ref{prop:level}\eqref{prop:level3}.

Let~$\Phi$ be a finite set in~$\ccP$. Assume $\#\Phi\geq2$. We then set $\bmm(\Phi)=\max\{\bmm(\varphi-\psi)\mid\varphi\neq\psi\in \Phi\}$, and we have $\bmm(\Phi)>0$. If $\#\Phi=1$, we set $\bmm(\Phi)=0$. We also set $\bell(\Phi)=\bmm(\Phi)-1$ if $\bmm(\Phi)>0$.

\begin{lemme}
Assume $\#\Phi\geq2$ and fix $\varphi_o\in\Phi$. Then
\[
\bmm(\Phi,\varphi_o)\defin\max\{\bmm(\varphi-\varphi_o)\mid\varphi\in \Phi\}=\bmm(\Phi).
\]
\end{lemme}

\begin{proof}
Let $\eta$ be the sum of common monomials to all $\varphi\in\Phi$. By replacing $\Phi$ with $\Phi-\eta$ we can assume that $\eta=0$. We will show that, in such a case, $\bmm(\Phi)=\max\{\bmm(\varphi)\mid\varphi\in\Phi\}=\max\{\bmm(\varphi-\varphi_o)\mid\varphi\in \Phi\}$. If all $\varphi\in\Phi$ have the same order $k>0$, two of them, say $\varphi_1$ and $\varphi_2$, do not have the same dominant monomial. Then $\bmm(\Phi)=k$. Therefore, the dominant monomial of a given $\varphi_o\in\Phi$ differs from that of~$\varphi_1$ or that of $\varphi_2$, so one of $\varphi_1-\varphi_o$ and $\varphi_2-\varphi_o$ has order $k$, and $\bmm(\Phi,\varphi_o)=k$.

Assume now that $\varphi_1$ has maximal order $k$ and $\varphi_2$ has order $<k$. Then $\bmm(\Phi)=k$. Given $\varphi_o\in\Phi$, then either $\varphi_o$ has order $k$, and then $\varphi_2-\varphi_o$ has order $k$, or $\varphi_o$ has order $<k$, and then $\varphi_1-\varphi_o$ has order $k$, so in any case we have $\bmm(\Phi,\varphi_o)=k$.
\end{proof}

Let us fix $\varphi_o\in \Phi$. In the following, we will work with $\Phi-\varphi_o$. We will assume that $\#\Phi\geq2$, that is, $m(\Phi)>0$ (otherwise the theorem is clear), and we set $\bell=\bell(\Phi)$ as above.

Let $\varphi\in \Phi-\varphi_o$. If $\bmm(\varphi)=\bmm(\Phi)$, then its image in $(\Phi-\varphi_o)(\bell)$ is nonzero. For every $\varphi\in \Phi-\varphi_o$, the subset $(\Phi-\varphi_o)_{[\varphi]_\bell}\defin\{\psi\in \Phi-\varphi_o\mid[\psi]_\bell=[\varphi]_\bell\}$ satisfies $\bmm((\Phi-\varphi_o)_{[\varphi]_\bell})\leq\bell<\bmm$. Indeed, if $[\varphi]_\bell=0$, then any $\psi\in (\Phi-\varphi_o)_{[\varphi]_\bell}$ can be written as $x^{-\bell}u_\psi(x)$ with $u_\psi(x)\in\cO$, and the difference of two such elements is written $x^{-\bell}v(x)$ with $v(x)\in\cO$. On the other hand, if $[\varphi]_\bell\neq0$, $\psi$ is written as $\varphi+x^{-\bell}u_\psi(x)$ and the same argument applies.

\begin{corollaire}[of Prop\ptbl\ref{prop:level}]\label{cor:level}
Let $(\cL,\cL_\bbullet)$ be a Stokes-filtered local system, let $\Phi''$ be a finite subset of~$\ccP$ containing $\Phi(\cL,\cL_\bbullet)$, set $\bmm=\bmm(\Phi'')$, $\bell=\bmm(\Phi'')-1$, and fix~$\varphi_o$ as above.

Then, for every $[\varphi]_{\bell}\in (\Phi''-\varphi_o)(\bell)$, $(\gr_{[\varphi]_{\bell}}\cL[-\varphi_o],(\gr_{[\varphi]_{\bell}}\cL[-\varphi_o])_\bbullet)$ is a Stokes-filtered local system and $\bmm(\gr_{[\varphi]_{\bell}}\cL[-\varphi_o],(\gr_{[\varphi]_{\bell}}\cL[-\varphi_o])_\bbullet)\leq\bell<\bmm$.\qed
\end{corollaire}

\begin{lemme}\label{lem:grandintervalle}
Let $(\cL,\cL_\bbullet)$ be a Stokes-filtered local system and let $\Phi$ be the set of its exponential factors. Let~$I$ be any open interval of~$S^1$ such that, for any $\varphi,\psi\in \Phi$, $\card(I\cap\nobreak\St(\varphi,\psi))\leq1$. Then the decompositions \eqref{eq:L<varphi} hold on~$I$.
\end{lemme}

\begin{proof}
It is enough to show that $H^1(I,\cL_{<\varphi})=\nobreak0$ for any $\varphi\in\Phi$. Indeed, arguing as in the proof of Proposition \ref{prop:stokeswithout}, if we restrict to such a~$I$, we can lift for any~$\varphi$ a basis of global sections of $\gr_\varphi\cL$ as sections of $\cL_{\leq\varphi}$ and get an injective morphism $\gr_\varphi\cL\to\nobreak\cL_{\leq\varphi}$. We therefore obtain a morphism $\bigoplus_{\varphi\in \Phi}\gr_\varphi\cL\to\cL$ sending $\gr_\varphi\cL$ into $\cL_{\leq\varphi}$ for each $\varphi\in\Phi$. This is an isomorphism: indeed, as both sheaves are local systems, it is enough to check this at some $\theta\in I$; considering a splitting of~$\cL$ at $\theta$, the matrix of this morphism is block-triangular with respect to the order at~$\theta$ and the diagonal blocks are equal to the identity.

It remains to show that this morphism induces the splitting for each $\cL_{\leq\eta}$. For every~$\eta$, we have a natural inclusion $\beta_{\varphi\leq\eta}\cL_{\leq\varphi}\hto\cL_{\leq\eta}$, hence the previous isomorphism induces a morphism $\bigoplus_{\varphi\in \Phi}\beta_{\varphi\leq\eta}\gr_\varphi\cL\to\cL_{\leq\eta}$, which is seen to be an isomorphism on stalks.

Let us now show that $H^1(I,\cL_{<\varphi})=\nobreak0$. It will be easier to argue by duality, that is, to show that $H^0_c(I,\bD'(\cL_{<\varphi}))=\nobreak0$ (\cf Lemma \ref{lem:dualS1}, from which we will keep the notation). There is a local decomposition
\begin{equation}\label{eq:L<varphivee}
\bD'(\cL_{<\varphi})_{|\nb(\theta_o)}\simeq\bigoplus_{\psi\in\Phi}\alpha_{\psi<\varphi}(\gr_\psi\cL)^\vee_{|\nb(\theta_o)}.
\end{equation}
Let us set $I=(\theta_0,\theta_{n+1})$ and let us denote by $\theta_1,\dots,\theta_n$ the successive Stokes directions in~$I$. We set $I_i=(\theta_{i-1},\theta_{i+1})$ for $i=1,\dots,n$. According to the first part of the proof and to Lemma \ref{lem:H1vanishing}, the decomposition \eqref{eq:L<varphivee} holds on each~$I_i$ and, on $I_i\cap I_{i+1}$, two decompositions \eqref{eq:L<varphivee} are related by $\lambda^{(i,i+1)}$ whose component $\lambda^{(i,i+1)}_{\psi,\eta}:\alpha_{\psi<\varphi}(\gr_\psi\cL)^\vee_{|I_i\cap I_{i+1}}\to\alpha_{\eta<\varphi}(\gr_\eta\cL)^\vee_{|I_i\cap I_{i+1}}$ is nonzero only if $\psi\leq_{_{I_i\cap I_{i+1}}}\nobreak\eta$, and is equal to $\id$ if $\psi=\eta$.

Assume that $s\in H^0_c(I,\bD'(\cL_{<\varphi}))$ is nonzero. For any $i=1,\dots,n$, we denote by~$s_i$ its restriction to $I_i$ and by $s_{i,\psi}$ its component on $\alpha_{\psi<\varphi}(\gr_\psi\cL)^\vee_{|I_i}$ for the chosen decomposition on $I_i$. Therefore, $s_{i,\psi}$ and $s_{i+1,\psi}$ may differ on $I_i\cap I_{i+1}$. Let us note, however, that the set $\Phi$ is totally ordered by $\leq_{_{I_i\cap I_{i+1}}}$ on $I_i\cap I_{i+1}$, and if we set $\psi_i=\min\{\psi\in \Phi\mid s_{i,\psi}\neq0\text{ on } I_i\cap I_{i+1}\}$ (when defined, \ie if $s_i\not\equiv0$ on $I_i\cap\nobreak I_{i+1}$), then we also have \hbox{$\psi_i=\min\{\psi\in \Phi\mid s_{i+1,\psi}\neq0\text{ on } I_i\cap I_{i+1}\}$}. Indeed, let us denote by $\psi'_i$ the right-hand term.
\begin{itemize}
\item
On the one hand, $s_{i+1,\psi_i}=\sum_{\psi\leq\psi_i}\lambda^{(i,i+1)}_{\psi,\psi_i}(s_{i,\psi})=s_{i,\psi_i}$ on $I_i\cap I_{i+1}$ since $\lambda^{(i,i+1)}_{\psi,\psi_i}=0$ if $\psi<\psi_i$ on $I_i\cap I_{i+1}$, and $\lambda^{(i,i+1)}_{\psi_i,\psi_i}=\id$. Therefore, $\psi'_i\leq\psi_i$ on $I_i\cap I_{i+1}$.
\item
On the other hand, if $\psi'_i<\psi_i$ on $I_i\cap I_{i+1}$, then $s_{i+1,\psi'_i}=\sum_{\psi\leq\psi'_i}\lambda^{(i,i+1)}_{\psi,\psi'_i}(s_{i,\psi})=0$, a contradiction.
\end{itemize}

Since $s$ is compactly supported on $I$, its restrictions to $(\theta_0,\theta_1)$ and to $(\theta_n,\theta_{n+1})$ vanish identically. In the following, we assume that $s_1$ and $s_n$ are not identically zero (otherwise, we just forget these Stokes directions and consider the first and last one for which $s_i$ is not identically zero). Then $\psi_1$ is defined, and we have
\begin{itemize}
\item
$\psi_1<\varphi$ on $I_1\cap I_2$ since $s_{1,\psi_1}\neq0$ on $I_1\cap I_2$,
\item
$\theta_1\in\St(\psi_1,\varphi)$ since $s_{1,\psi_1}=0$ on $(\theta_0,\theta_1)$,
\item
$\varphi<\psi_1$ on $(\theta_0,\theta_1)$ since $\theta_1\in\St(\psi_1,\varphi)$,
\item
and $\psi_1<\varphi$ on $(\theta_1,\theta_{n+1})$, as it is so on $(\theta_1,\theta_2)$ and there is no other element of $\St(\psi_1,\varphi)$ in $(\theta_1,\theta_{n+1})$.
\end{itemize}

It follows that $s_{2,\psi_1}\not\equiv0$ on $I_1\cap I_2$. Moreover, $\theta_2\not\in\St(\psi_1,\varphi)$, so $s_{2,\psi_1}\neq0$ on~$I_2$ (being a section of a local system on this interval). Therefore, $s_2\neq0$ on $I_2\cap I_3$, $\psi_2$ is defined and (by definition) $\psi_2\leq\psi_1$ on $I_2\cap I_3$, and there is a component $s_{2,\psi_2}$ on~$I_2$ which is not identically zero. In conclusion, on $I_2\cap I_3$, the following holds:
\begin{itemize}
\item
$\psi_2\leq\psi_1<\varphi$,
\item
$s_{2,\psi_2}\neq0$.
\end{itemize}
We claim that $\psi_2<\varphi$ on $(\theta_2,\theta_{n+1})$ (and in particular $\theta_3\notin\St(\psi_2,\varphi)$):
\begin{itemize}
\item
this is already proved if $\psi_2=\psi_1$;
\item
assume therefore that $\psi_2<\psi_1$ on $I_2\cap I_3$; then,
\begin{itemize}
\item
if $s_{2,\psi_2}\neq0$ on $I_1\cap I_2$, then $\psi_1<\psi_2$ on $I_1\cap I_2$ (by definition of $\psi_1$) and so $\theta_2\in\St(\psi_1,\psi_2)$, hence there is no other element of $\St(\psi_1,\psi_2)$ belonging to $(\theta_2,\theta_{n+1})$ and therefore also $\psi_2<\varphi$ since $\psi_1<\varphi$ on $(\theta_1,\theta_{n+1})$;
\item
if $s_{2,\psi_2}=0$ on $I_1\cap I_2$, this means that $\theta_2\in\St(\psi_2,\varphi)$, hence there is no other element of $\St(\psi_2,\varphi)$ belonging to $(\theta_2,\theta_{n+1})$, so we keep $\psi_2<\varphi$ on $(\theta_2,\theta_{n+1})$.
\end{itemize}
\end{itemize}

Continuing in the same way, we find that $\psi_n$ is defined, which means that $s_n$ is not identically zero on $(\theta_n,\theta_{n+1})$, a contradiction.
\end{proof}

\begin{remarque}\label{rem:H1leq}
For $I\neq S^1$ as in Lemma \ref{lem:grandintervalle}, we also have $H^1(I,\cL_{\leq\varphi})=0$ for any~$\varphi$. This follows from the exact sequence
\[
0=H^1(I,\cL_{<\varphi})\to H^1(I,\cL_{\leq\varphi})\to H^1(I,\gr_\varphi\cL)=0,
\]
where the last equality holds because $\gr_\varphi\cL$ is a local system.
\end{remarque}

\begin{corollaire}\label{cor:trivI}
In the setting of Corollary \ref{cor:level}, let~$I$ be any open interval of~$S^1$ such that, for any $\varphi,\psi\in \Phi$, $\card(I\cap\nobreak\St(\varphi,\psi))\leq1$. Then
\[
(\cL,\cL_\bbullet)_{|I}\simeq(\gr_\bell\cL,(\gr_\bell\cL)_\bbullet)_{|I}.
\]
\end{corollaire}

\begin{proof}
By Proposition \ref{prop:level}\eqref{prop:level3}, the set of exponential factors of $(\cL,\cL_\bbullet)$ and $(\gr_\bell\cL,(\gr_\bell\cL)_\bbullet)$ coincide, hence $I$ satisfies the assumption of Lemma \ref{lem:grandintervalle} for both $(\cL,\cL_\bbullet)$ and $(\gr_\bell\cL,(\gr_\bell\cL)_\bbullet)$. Therefore, when restricted to $I$, both are isomorphic to the graded Stokes-filtered local system determined by $\gr\cL$ restricted to~$I$.
\end{proof}

\begin{proof}[End of the proof of Theorem \ref{th:abelianwithout}]
Let $\lambda:(\cL,\cL_\bbullet)\to(\cL',\cL'_\bbullet)$ be a morphism of Stokes-filtered local systems. The proof will be done by induction on $\bmm=\bmm(\Phi'')$, with $\Phi''=\Phi\cup \Phi'$. The result is clear if $\bmm=0$ (so $\#\Phi''=1$), as both Stokes-filtered local systems have only one jump.

Assume now $\bmm\neq0$. We fix $\varphi_o\in \Phi''$ as in Corollary \ref{cor:level}. It is enough to prove the assertion for $\lambda:\cL[-\varphi_o]\to\cL'[-\varphi_o]$. We shall assume that $\varphi_o=0$ in order to simplify the notation below. Let~$I$ be any open interval of~$S^1$ of length $\pi/\bmm$ with no Stokes points (of $\Phi''$) as end points. According to the definition of $m$ and~$\ell$, and to Corollary \ref{cor:trivI}, $\lambda_{|I}$ decomposes as $\bigoplus_{[\psi]_\bell,[\psi']_\bell}\lambda_{|I,[\psi]_\bell,[\psi']_\bell}$, with $\lambda_{|I,[\psi]_\bell,[\psi']_\bell}:\gr_{[\psi]_\bell}\cL_{|I}\to\gr_{[\psi']_\bell}\cL'_{|I}$, where $[\psi]_\bell$ runs in $\Phi(\bell)$ and $[\psi']_\bell$ in $\Phi'(\bell)$ (otherwise, it is zero). Each $\lambda_{|I,[\psi]_\bell,[\psi']_\bell}$ is a morphism of (constant) local systems, hence is constant.

Let us show that $\lambda_{|I,[\psi]_\bell,[\psi']_\bell}=0$ if $[\psi]_\bell\neq[\psi']_\bell$. We denote by $\ccP_{[\psi]_\bell}$ the set of $\eta\in\ccP$ such that $[\eta]_\bell=[\psi]_\bell$. According to the proof of Corollary \ref{cor:trivI}, $\lambda_{|I,[\psi]_\bell,[\psi']_\bell}$ itself decomposes as blocks with components $\lambda_{|I,\eta,\eta'}$ where $\eta$ varies in $\ccP_{[\psi]_\bell}$ and $\eta'$ in $\ccP_{[\psi']_\bell}$. We will show that each such block is zero.

As in restriction to $I$,~$\lambda$ is (isomorphic to) a filtered morphism $\gr_\bell\cL\to\gr_\bell\cL'$, it sends $(\gr_\bell\cL)_{\leq\varphi}$ to $(\gr_\bell\cL')_{\leq\varphi}$ for any $\varphi$, that is, after the proof of Corollary \ref{cor:trivI}, it sends $\bigoplus_{[\psi]_\bell}\bigoplus_{\eta\in\ccP_{[\psi]_\bell}}\beta_{\eta\leq\varphi}\gr_\eta\cL$ in $\bigoplus_{[\psi']_\bell}\bigoplus_{\eta'\in\ccP_{[\psi']_\bell}}\beta_{\eta'\leq\varphi}\gr_{\eta'}\cL$, so the block $\lambda_{|I,\eta,\eta'}$ sends $\beta_{\eta\leq\varphi}\gr_\eta\cL$ in $\beta_{\eta'\leq\varphi}\gr_{\eta'}\cL$. Choosing $\varphi=\eta$ shows that $\lambda_{|I,\eta,\eta'}$ sends $\gr_\eta\cL$ in $\beta_{\eta'\leq\eta}\gr_{\eta'}\cL$. Now, if $[\psi]_\bell\neq[\psi']_\bell$, we have $\eta-\eta'=x^{-\bmm}u(x)$ with $u(0)\neq0$ and,~$I$ being of length $\pi/\bmm$, contains at least one Stokes point of $\St(\eta,\eta')$. Therefore $\lambda_{|I,\eta,\eta'}$ vanishes on some non-empty open subset of $I$, hence everywhere.

In conclusion, $\lambda_{|I}$ is diagonal with respect to the $[\cbbullet]_{\bell}$-decomposition, that is, it coincides with the graded morphism $\gr_{[\cbbullet]_\bell}\lambda$, graded with respect to the filtration $\cL_{[\cbbullet]}$ of level $\geq\bell$.

By Proposition \ref{prop:level}\eqref{prop:level2} and Corollary \ref{cor:level}, the inductive assumption can be applied to $\gr_{[\cbbullet]_\bell}\lambda$. By induction we conclude that~$\lambda$ can be diagonalized (with respect to the decomposition in the third line of \eqref{eq:L<varphi}) in the neighbourhood\footnote{In fact, the argument shows that the diagonalization holds on $I$.} of any point of~$I$. As this holds for any such~$I$, this ends the proof of Theorem \ref{th:abelianwithout}.
\end{proof}

\begin{corollaire}[of Lemma \ref{lem:grandintervalle}]
Let $(\cL,\cL_\bbullet)$ be a Stokes-filtered local system. Then, for any $\varphi\in\ccP_x$, $\Gamma(S^1,\cL_{<\varphi})=0$.
\end{corollaire}

\begin{proof}
Let us keep the notation used above, which was introduced at the beginning of \S\ref{pf:abelianwithout}. It is easy to reduce to the case where $(\cL,\cL_\bbullet)$ is non-ramified, that we assume to hold below (\cf Remark \ref{rem:Stokeswithramif}\eqref{rem:Stokeswithramif2} for the definition of $\cL_{<\varphi}$ in the ramified case).

We will argue by decreasing induction on $m(\Phi)$. If $m(\Phi)=0$, so $\Phi=\{\varphi_o\}$, we have $\cL_{<\varphi}=\beta_{\varphi_o<\varphi}\cL$ (\cf Example \ref{exem:triviaux}\eqref{exem:triviaux1}), and the assertion is clear.

Assume now $m(\Phi)>0$, that is, $\#\Phi\geq2$. By twisting it is enough to show $\Gamma(S^1,\cL_{<0})=0$. We set $\bell=\bell(\Phi)$ as above.

There is a natural inclusion $\cL_{\leq0}\hto\cL_{\leq[0]_\bell}$, hence a map $\cL_{\leq0}\to\gr_{[0]_\bell}\cL$ sending $\cL_{\leq0}$ to $(\gr_{[0]_\bell}\cL)_{\leq0}$ and $\cL_{<0}$ to $(\gr_{[0]_\bell}\cL)_{<0}$. It is enough to show that the induced map $\Gamma(S^1,\cL_{<0})\to\Gamma(S^1,(\gr_{[0]_\bell}\cL)_{<0})$ is injective. Indeed, we can apply the inductive assumption to the Stokes-filtered local system $(\gr_{[0]_\bell}\cL,(\gr_{[0]_\bell}\cL)_\bbullet)$ to conclude that $\Gamma(S^1,(\gr_{[0]_\bell}\cL)_{<0})=0$.

Let $I$ be an interval of length $\pi/m(\Phi)$ with no Stokes directions for $\Phi\cup\{0\}$ at its boundary. We will show that $\Gamma(I,\cL_{<0})\to\Gamma(I,(\gr_{[0]_\bell}\cL)_{<0})$ is injective for any such~$I$, which is enough. By Lemma \ref{lem:grandintervalle}, we have $\cL_{<0|I}\simeq\bigoplus_{\varphi\in\Phi}\beta_{\varphi<0}\gr_\varphi\cL_{|I}$, and the map $\cL_{<0|I}\to(\gr_{[0]_\bell}\cL)_{<0|I}$ is the projection to the sum taken on $\varphi\in[0]_\bell$, that is, the $\varphi\in\Phi$ having a pole of order $<m(\Phi)$. But since $I$ contains a Stokes direction for $(\varphi,0)$ for any $\varphi\not\in[0]_\bell$, we have $\Gamma(I,\beta_{\varphi<0}\gr_\varphi\cL_{|I})=0$ for any such $\varphi$. We conclude that $\Gamma(I,\cL_{<0|I})\to\to\Gamma(I,(\gr_{[0]_\bell}\cL)_{<0})$ is isomorphic to the identity map from $\Gamma\big(I,\bigoplus_{\varphi\in[0]_\bell}\beta_{\varphi<0}\gr_\varphi\cL_{|I}\big)$ to itself, by applying the same reasoning to $(\gr_{[0]_\bell}\cL,(\gr_{[0]_\bell}\cL)_\bbullet)$.
\end{proof}

\begin{remarque}
The dimension of the only non-zero cohomology space $H^1(S^1,\cL_{<0})$ is usually called the \emphb{irregularity number} of $(\cL,\cL_\bbullet)$.
\end{remarque}

\begin{corollaire}
Let $f:(\cL,\cL_\bbullet)\to(\cL',\cL'_\bbullet)$ be a morphism of Stokes-filtered local systems on~$S^1$ sending $\cL_{\leq}$ to $\cL'_<$. Then $f$ is equal to zero.
\end{corollaire}

\begin{proof}
Indeed, $f$ is a section on~$S^1$ of $\cHom\big((\cL,\cL_\bbullet),(\cL',\cL'_\bbullet)\big)_{<0}$.
\end{proof}

\subsection{\proofname\ of the stability by extension}
We now prove the second part of Theorem \ref{th:abelian}. We consider an exact sequence
\[
0\to(\cL',\cL'_\bbullet)\to(\cL,\cL_\bbullet)\to(\cL'',\cL''_\bbullet)\to0,
\]
where $(\cL',\cL'_\bbullet),(\cL'',\cL''_\bbullet)$ are Stokes-filtered local systems, but no assumption is made on $(\cL,\cL_\bbullet)$, except that $(\cL,\cL_\bbullet)$ is a pre-Stokes-filtered sheaf in the sense of Definition \ref{def:preStokesnonramif} (or the ramified analogue). Firstly, the extension~$\cL$ of $\cL'$ and $\cL''$ is clearly a local system. It is now a matter of finding a ramification $\rho$ such that the pre-Stokes filtration on $\wt\rho^{-1}\cL$ is non-ramified and is a Stokes filtration. It is therefore enough to assume that $(\cL',\cL'_\bbullet),(\cL'',\cL''_\bbullet)$ are non-ramified.

Let us now restrict to an interval~$I$ as in Lemma \ref{lem:grandintervalle}, with $\Phi=\Phi'\cup\Phi''$. Since $H^1(I,\cL'_{\leq \psi})=0$ for any $\psi\in\ccP$ (\cf Remark \ref{rem:H1leq}), there is a section of $\cL_{\leq\psi}\to\nobreak\cL''_{\leq\psi}$. We will make these sections compatible with respect to $\psi$, that is, coming from a morphism of pre-Stokes-filtered local systems. According to the decomposition of $(\cL'',\cL''_\bbullet)$ on~$I$, we obtain an injective morphism $\gr_\varphi\cL''\hto\cL_{\leq\varphi}$ for any $\varphi\in\nobreak\Phi''$. By the filtration property, it defines for any $\psi\in\ccP$ an injective morphism $\beta_{\varphi\leq\psi}\gr_\varphi\cL''\hto\nobreak\cL_{\leq\psi}$ and then a section $\cL''_{\leq\psi}=\bigoplus_{\varphi\in \Phi''}\beta_{\varphi\leq\psi}\gr_\varphi\cL''\to\cL_{\leq\psi}$ of the projection $\cL_{\leq\psi}\to\cL''_{\leq\psi}$, which is now a morphism of pre-Stokes filtrations. We therefore get an isomorphism $(\cL,\cL_\bbullet)_{|I}\simeq(\cL',\cL'_\bbullet)_{|I}\oplus(\cL'',\cL''_\bbullet)_{|I}$. In particular, $(\cL,\cL_\bbullet)$ is a Stokes filtration when restricted to~$I$.

As this holds for any such~$I$, this proves the assertion.\qed

\subsection{More on the level structure}\label{subsec:morelevel}
It will be useful to understand the level structure in an intrinsic way when the Stokes-filtered local system is possibly ramified. We will therefore use the presentation and the notation of Remark \ref{rem:Euler}.

Let $\ell$ be any non-negative rational number. We define the subsheaf $(\wtj_*\cO_{X^*})(\ell)\subset\wtj_*\cO_{X^*}$ as the subsheaf of germs $\varphi$ such that $x^\ell\varphi$ is a local section of $(\wtj_*\cO_{X^*})^\lb$, for some (or any) local determination of $x^\ell$. We have, for any $\ell\leq\ell'$,
\[
(\wtj_*\cO_{X^*})^\lb=(\wtj_*\cO_{X^*})(0)\subset(\wtj_*\cO_{X^*})(\ell)\subset(\wtj_*\cO_{X^*})(\ell')\subset\wtj_*\cO_{X^*}.
\]
We set $\ccI_1(\ell)=\wt\ccI_1/\wt\ccI_1\cap(\wtj_*\cO_{X^*})(\ell)$. We have a natural morphism $\ccI_1=\ccI_1(0)\to\ccI_1(\ell)$ which is compatible with the order (same proof as for Lemma \ref{lem:IIellorder}). We have a commutative diagram
\[
\xymatrix{
\ccIet_1\ar[r]^-{q_\ell}\ar[dr]_{\mu}&\ccI_1(\ell)^\et\ar[d]^{\mu_\ell}\\
&S^1
}
\]

The map $q_\ell$ is étale and onto. If $\Sigma\subset\ccIet_1$ is a finite covering of~$S^1$ (via $\mu$), then $\Sigma(\ell)\defin q_\ell(\Sigma)$ is a finite covering of~$S^1$ (via $\mu_\ell$), and $q_\ell:\Sigma\to\Sigma(\ell)$ is also a finite covering. Moreover, since $q_\ell:\ccIet_{1|\Sigma(\ell)}\to\Sigma(\ell)$ is étale, it corresponds to a sheaf of ordered abelian groups $\ccI_{1,\Sigma(\ell)}$ on $\Sigma(\ell)$, and $\Sigma$ can be regarded as a finite covering of $\Sigma(\ell)$ contained in $(\ccI_{1,\Sigma(\ell)})^\et$.

\begin{proposition}[Intrinsic version of Prop\ptbl\ref{prop:level}]
Let $(\cL,\cL_\bbullet)$ be a (possibly ramified) Stokes-filtered local system on~$S^1$ (in particular we have a subsheaf $\cL_\leq$ of $\mu^{-1}\cL$) and let $\Sigma\subset\ccIet_1$ be the support of $\gr\cL$. For $\ell\in\QQ_+$,
\begin{enumerate}
\item
the subsheaf $\Tr_{\leq q_\ell}(\cL_\leq,\mu_\ell^{-1}\cL)$ of $\mu_\ell^{-1}\cL$ (\cf Lemma \ref{lem:traceqleq}) defines a pre-$\ccI_1(\ell)$-filtration of $\cL$ which is in fact a $\ccI_1(\ell)$-filtration, that we denote by $(\cL,\cL_{[\cbbullet]_\ell})$;
\item
the sheaf $\gr_{[\cbbullet]_\ell}\cL$ is supported on $\Sigma(\ell)\defin q_\ell(\Sigma)$ and is a local system on $\Sigma(\ell)$;
\item
the sheaf $\cL_\leq$ induces a pre-$\ccI_{1,\Sigma(\ell)}$-filtration on $q_\ell^{-1}\gr_{[\cbbullet]_\ell}\cL$, which is a $\ccI_{1,\Sigma(\ell)}$-filtration denoted by $(\gr_{[\cbbullet]_\ell}\cL,(\gr_{[\cbbullet]_\ell}\cL)_\bbullet)$, and $\gr(\gr_{[\cbbullet]_\ell}\cL)$ is a local system on $\Sigma$; this local system has the same rank as the local system $\gr\cL$.
\end{enumerate}

Conversely, let $\cL_\leq\subset\mu^{-1}\cL$ be a pre-$\ccI_1$-filtration of $\cL$. Let us assume that $\Tr_{\leq q_\ell}(\cL_\leq,\mu_\ell^{-1}\cL)$ is a $\ccI_1(\ell)$-filtration of $\cL$ and let us denote by $\Sigma(\ell)$ the support of $\gr_{[\cbbullet]_\ell}\cL$. Let us also assume that the pre-$\ccI_{1,\Sigma(\ell)}$-filtration induced by $\cL_\leq$ on the local system $\gr_{[\cbbullet]_\ell}\cL$ (on $\Sigma(\ell)$) is a $\ccI_{1,\Sigma(\ell)}$-filtration, and let us denote by $\Sigma$ the support of $\gr(\gr_{[\cbbullet]_\ell}\cL)$. Then $\cL_\leq$ is a $\ccI_1$-filtration of $\cL$ and $\gr\cL$ is supported on $\Sigma$ and has the same rank as $\gr(\gr_{[\cbbullet]_\ell}\cL)$.\qed
\end{proposition}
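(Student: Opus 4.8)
The plan is to reduce everything to the non-ramified case treated in Proposition \ref{prop:level}, and then to organize the argument around the two commuting \'etale maps $q_\ell\colon\ccIet\to\ccI(\ell)^\et$ and the restriction $q_\ell\colon\ccIet_{|\Sigma(\ell)}\to\Sigma(\ell)$. First I would observe that all the assertions are local on $S^1$ (they concern the existence of local gradings and the identification of supports), so one may pick a ramification index $d$ with $\Sigma\subset\ccIet_d$, pull back by $\wt\rho_d$, and use Lemma \ref{lem:stabpullback} together with Remark \ref{rem:Stokeswithramif}\eqref{rem:Stokeswithramif0} to transport the statement to the non-ramified situation, where $\ell\in\QQ_+$ plays the role of $\bell\in\NN$ after rescaling (note $\ccP(\bell)=\cO(*0)/x^{-\bell}\cO$ is exactly $\ccP$ quotiented by the $\lb$-part, so $\ccI(\ell)$ in the intrinsic language matches $\ccI(\bell)$). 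The compatibility of $\ccI\to\ccI(\ell)$ with the order is Lemma \ref{lem:IIellorder} (stated there for $\bell$, same proof), so $q_\ell$ is a morphism of ordered sheaves and the trace construction $\Tr_{\leq q_\ell}$ of Lemma \ref{lem:traceqleq} applies.

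The core of the forward direction is then essentially a restatement of Proposition \ref{prop:level}. For item (1), once we are graded locally, $\Tr_{\leq q_\ell}(\cL_\leq,\mu_\ell^{-1}\cL)$ is computed fibrewise by the formula in Lemma \ref{lem:traceqleq}, and the computation in the proof of Proposition \ref{prop:level}\eqref{prop:level1} — for fixed $\eta$, the set of $\theta$ where some $[\psi]_\ell$ satisfies $[\eta]_\ell\leqtheta[\psi]_\ell\letheta[\varphi]_\ell$ equals the set where $[\eta]_\ell\letheta[\varphi]_\ell$ — shows that locally $\cL_{\leq[\varphi]_\ell}=\bigoplus_\eta\beta_{[\eta]_\ell\leq[\varphi]_\ell}\gr_\eta\cL$, which is the graded $\ccI(\ell)$-filtration on $\mu_{\ell,!}\gr_{[\cbbullet]_\ell}\cL$; hence it is a genuine $\ccI(\ell)$-filtration in the sense of Definition \ref{def:Ifilt}. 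Item (2) is the last sentence of Proposition \ref{prop:level}\eqref{prop:level1}: $\gr_{[\varphi]_\ell}\cL$ is locally $\bigoplus_{[\psi]_\ell=[\varphi]_\ell}\gr_\psi\cL$, a local system, and its support is $\Sigma(\ell)=q_\ell(\Sigma)$ by construction, this being a finite covering of $S^1$ since $q_\ell$ is \'etale and finite on $\Sigma$ (cf.\ the discussion preceding the proposition, and Example \ref{exem:geometale} for why the image of a finite covering inside an \'etale space is again such). Item (3) is Proposition \ref{prop:level}\eqref{prop:level2}: the pre-$\ccI_{\Sigma(\ell)}$-filtration induced on $q_\ell^{-1}\gr_{[\cbbullet]_\ell}\cL$ is, locally, the graded one with $\gr_\psi\gr_{[\varphi]_\ell}\cL$ equal to $\gr_\psi\cL$ when $[\psi]_\ell=[\varphi]_\ell$ and $0$ otherwise, so its graded sheaf is supported on $\Sigma$ and has locally the same rank (fibre dimension $\sum_{\psi}\rk\gr_\psi\cL$ summed appropriately) as $\gr\cL$.

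For the converse, I would run the synthesis of Remark \ref{rem:stepconstr}: knowing the $\ccI(\ell)$-filtration $(\cL,\cL_{[\cbbullet]_\ell})$ is locally graded, and knowing that each graded piece $\gr_{[\varphi]_\ell}\cL$ carries a locally graded $\ccI_{\Sigma(\ell)}$-filtration, one assembles a local grading of $\cL_\leq$ itself: locally on $S^1$ choose compatible splittings of $\cL_\leq\to\gr_{[\cbbullet]_\ell}\cL$ (using the vanishing $H^1(I,\cL_{<[\varphi]_\ell})=0$ on small intervals, as in Lemmas \ref{lem:stokeswithout} and \ref{lem:grandintervalle}) and then splittings of the induced filtration on $\gr_{[\cbbullet]_\ell}\cL$; the composite realizes $\cL$ locally as $\mu_!\gr(\gr_{[\cbbullet]_\ell}\cL)$ with its graded filtration, which by Definition \ref{def:Ifilt} means $\cL_\leq$ is a $\ccI$-filtration with $\gr\cL$ supported on $\Sigma$ and locally isomorphic to $\gr(\gr_{[\cbbullet]_\ell}\cL)$. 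The routine bookkeeping — matching the two trace/saturation operations, $\Tr_{\leq q_\ell}$ followed by restriction to $\Sigma(\ell)$, against the single $\Tr_\leq$ reconstruction — is exactly the content of the identity $\Tr_{\leq q_\ell}=\Tr_\leq\circ\Tr_{q_\ell}$ from the proof of Lemma \ref{lem:traceqleq}, applied twice.

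The main obstacle I anticipate is not any single deep point but the ramified bookkeeping: one must check that the image $\Sigma(\ell)=q_\ell(\Sigma)$ is genuinely a finite covering of $S^1$ and that $q_\ell\colon\Sigma\to\Sigma(\ell)$ is a finite covering even though $\ccIet$ itself is only \emph{stratified} Hausdorff in the ramified picture — here one uses that $\mu\colon\Sigma\to S^1$ is already a finite (unramified) covering, so $\Sigma$ is discrete over $S^1$ and the \'etale map $q_\ell$ restricted to it is automatically a local homeomorphism onto its image, with image closed because $\mu$ is proper on $\Sigma$ (same mechanism as in Example \ref{exem:geometale} and in the proof of Lemma \ref{lem:stabpullbackstrat}). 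The other delicate point is that a morphism of Stokes-filtered local systems need not be graded (Definition \ref{def:Ifilt}), so in the converse one must be careful that the splittings are chosen only \emph{locally} and that the intrinsic statement asserts local, not global, isomorphism of $\gr\cL$ with $\gr(\gr_{[\cbbullet]_\ell}\cL)$ — which is why the conclusion says ``locally isomorphic'' and why no rigidity beyond rank is claimed.
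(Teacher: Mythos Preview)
Your proposal is correct and matches the paper's intent: the statement is presented there with a bare \qed\ and no proof, signaling that it is meant as a direct intrinsic reformulation of Proposition~\ref{prop:level} once one reduces to the non-ramified case by a suitable $\rho_d$. Your write-up simply makes explicit the reduction and the bookkeeping (the role of $\Tr_{\leq q_\ell}$, the covering properties of $\Sigma(\ell)$, and the two-step local splitting for the converse) that the paper leaves to the reader.
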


\subsection{Comments}\label{subsec:comments3}
The notion of level structure has been implicitly present for a long time in the theory of meromorphic differential equations (\cf \cite{Jurkat78,B-J-L79}). It appeared as the ``dévissage Gevrey'' in \cite{Ramis78}. Later, it has been an important tool for the construction of the moduli space of meromorphic connections \cite{B-V89}, as emphasized in \cite{Deligne86b}, and has been strongly related to the multisummation property of solutions of differential equations (\cf\eg\cite{M-R92} and the references given therein). Note also that the basic vanishing property proved during the proof of Lemma \ref{lem:grandintervalle} is already present in \cite{Deligne86b} and explained in the one-slope case in \cite[\S5]{Malgrange83b}, \cf also \cite[Lemma 4.3]{M-R92}. It is related to the Watson lemma (see \loccit). It~is also related to the existence of asymptotic solutions of a differential equation in ``large'' Stokes sectors, see \eg \cite{B-J-L79} and in particular \S B for historical references.

This level structure is not a filtration of the filtered local system. It is the structure induced by a suitable filtration of the indexing sheaf $\ccI$. A visually similar structure appears in knot theory when considering iterated torus knots, for instance the knots obtained from a singularity of an irreducible germ of complex plane curve. The algebraic structure related to the level structure is the ``dévissage'' by Gevrey exponents or the multi-summability property. Similarly, the algebraic structure related to iterated torus knots of singularities of plane curves are the Puiseux approximations.
\let\bell\oldbell
\let\bmm\oldbmm

\chapter{Stokes-perverse~sheaves on Riemann~surfaces}\label{chap:Stokesone-pervers}

\begin{sommaire}
In this \chaptername, we introduce the global notion of Stokes-perverse sheaves on a Riemann surface. The Riemann-Hilbert correspondence of the next \chaptersname will be an equivalence between holonomic $\cD$-modules on the Riemann surface and Stokes-perverse sheaves on it. If $\kk$ is a subfield of $\CC$, this allows one to speak of a $\kk$-structure on a holonomic $\cD$-module when the corresponding Stokes-perverse is defined over $\kk$.
\end{sommaire}

\subsection{Introduction}
Let $X$ be a Riemann surface. Meromorphic connections on holomorphic vector bundles, with poles on a discrete set of points~$D$, form a classic subject going back to the nineteenth century. The modern approach extends the interest to arbitrary holonomic $\cD_X$-modules (\cf \S\ref{subsec:somebasic} in the next \chaptername) and also extends it to arbitrary dimensions. A fundamental result is that the de~Rham complex (or the solution complex) of such a $\cD_X$-module is a perverse constructible complex (also called a perverse sheaf). Moreover, the Riemann-Hilbert correspondence for connections with regular singularities, \resp for regular holonomic $\cD_X$-modules, induces an equivalence between the category of such objects and that of local systems on $X\moins D$, \resp of perverse sheaves. For Riemann surfaces, a modern presentation of this equivalence has been explained in \cite{Malgrange91}.

Perverse sheaves on a Riemann surface are not difficult to understand. For a given perverse sheaf $\cF$, there is a discrete set of points $D$ such that $\cF_{|X\moins D}$ is a local system shifted by one (that is, a complex whose only cohomology sheaf is $\cH^{-1}\cF_{|X\moins D}$ and is a local system on $X\moins D$). The supplementary data to determine the perverse sheaf~$\cF$ are complex vector spaces equipped with an automorphism, one for each point of~$D$, and gluing data around each point of $D$ (\cf\eg\cite{Malgrange91}).

A perverse sheaf has a Jordan-Hölder sequence, whose successive quotients (in the perverse sense) are simple perverse sheaves, that is, either $j_*\cL[1]$ for some irreducible local system $\cL$ on $X\moins D$ (and $j:X\moins D\hto X$ is the open inclusion), or $i_{x,*}\CC$, where $i_x:\{x\}\hto X$ is the closed inclusion. Examples of perverse sheaves are $j_!\cL[1]$, $j_*\cL[1]$, $\bR j_*\cL[1]$, where $\cL$ is a local system on $X\moins D$, and $j_*\cL[1]$ is known as the \emphb{intermediate (or middle) extension} of $\cL[1]$, while $\bR j_*\cL[1]$ is in some sense the maximal one and $j_!\cL[1]$ the minimal one.

In this \chaptername, we construct a category of Stokes-perverse sheaves, which will be the receptacle of the Riemann-Hilbert correspondence for possibly irregular holonomic $\cD_X$-modules, which will be explained in the next \chaptername. It will now be convenient to use the formalism introduced in \Chaptersname \ref{chap:Ifil}. The idea is to replace the data on $(X\moins D,D)$ with data on $(X\moins D,\ccIet)$, where $\ccI$ is defined as in \S\ref{subsec:Stokeswithramif} for each point of~$D$.

The constructions in this \chaptersname are taken from \cite{Deligne78b,Deligne84cc,Malgrange91}.

\subsection{The setting}\label{subsec:settingStPerv}
Let~$X$ be a Riemann surface and let~$D$ be a discrete set of points of~$X$. We set $X^*=X\moins D$. We denote by \index{$XWTD$@$\wt X(D)$}$\varpi:\wt X(D)\to X$ the \index{real blow-up}\index{real blow-up!along a divisor}\emphb{real oriented blowing-up} of~$X$ at each of the points of~$D$. When~$D$ is fixed, we set $\wt X=\wt X(D)$. Locally near a point of~$D$, we are in the situation of Example \ref{exem:Stokes}. We set \index{$XWTDP$@$\partial\wt X(D)$}$\partial\wt X=\varpi^{-1}(D)$ and we denote by $j_\partial:X^*\hto\wt X$ and $i_\partial:\partial\wt X\hto\wt X$ the inclusions. We denote by~$\ccI$ the sheaf which is zero on~$X^*$ and is equal to $\bigcup_d\ccI_{d,x_o}$ on\footnote{The notation $S^1_{x_o}$ should not be confused with the notation $S^1_x$ of \S\ref{subsec:pushpull}.} $S^1_{x_o}\defin\varpi^{-1}(x_o)$ for $x_o\in D$, where $\ccI_{d,x_o}$ is the sheaf introduced in Remark \ref{rem:Euler}. Notice that $\ccIet$ is not Hausdorff, since we consider~$\ccI$ as a sheaf on $\wt X$, not on $\partial\wt X$. Since $\mu:\ccIet\to\wt X$ is a homeomorphism above~$X^*$, we shall denote by $\wtj:X^*=\ccIet_{|X^*}\hto\ccIet$ the open inclusion, by $\wti:\ccIet_{|\partial\wt X}\hto\ccIet$ the closed inclusion, so that $\mu\circ\wtj=j_\partial$ and we have a commutative diagram
\begin{equation}\label{eq:diagIet}
\begin{array}{c}
\xymatrix{
\ccIet_{|\partial\wt X}\ar[d]_{\mu_\partial}\ar@<-.5ex>@{^{ (}->}[r]^-{\wti}&\ccIet\ar[d]^\mu\\
\partial\wt X\ar@<-.5ex>@{^{ (}->}[r]^-{i_\partial}&\wt X
}
\end{array}
\end{equation}

\begin{remarque}[Sheaves on $\ccIet$]\label{rem:sheavesIet}
The sheaf~$\ccI$ does not satisfy the Hausdorff property only because it is zero on $X^*$, but it is Hausdorff when restricted to $\partial\wt X$. It may therefore be convenient to describe a sheaf $\cF_\leq$ on $\ccIet$, up to unique isomorphism inducing the identity on $X^*$ and $\partial\wt X$, as a triple $(\cF^*,\cL_\leq,\nu)$ consisting of a sheaf $\cF^*$ on $X^*$, a sheaf $\cL_\leq$ on $\ccIet_{|\partial\wt X}$, and a morphism $\nu:\cL_\leq\to\wti{}^{-1}\wtj_*\cF^*$. Note that, since $\mu$ is a local homeomorphism, we have $\wti{}^{-1}\wtj_*\cF^*=\mu_\partial^{-1}(i_\partial^{-1}j_{\partial,*}\cF^*)$.

In a similar way, a pre-$\ccI$-filtration $\cF_\leq$ on $\ccIet$ determines two pre-$\ccI_{\partial\wt X}$-filtrations, namely $\cL_\leq=\wti{}^{-1}\cF_\leq$ and $\wti{}^{-1}\wtj_*\cF^*$ (as a constant pre-$\ccI$-filtration, \cf \S\ref{subsec:catpreIfilt}), and the natural morphism $\nu:\cL_\leq\to\wti{}^{-1}\wtj_*\cF^*$ is a morphism of pre-$\ccI_{\partial\wt X}$-filtrations. As a consequence, a pre-$\ccI$-filtration $\cF_\leq$ on $\ccIet$ corresponds in a one-to-one way to a triple $(\cF^*,\cL_\leq,\nu)$, where $\cL_\leq$ is a pre-$\ccI_{\partial\wt X}$-filtration and $\nu$ is a morphism from $\cL_\leq$ to the constant pre-$\ccI_{\partial\wt X}$-filtration $\wti{}^{-1}\wtj_*\cF^*$. Then one can use Remark \ref{rem:sheafonccI} to describe $\cL_\leq$.
\end{remarque}

\subsection{The category of Stokes-$\CC$-constructible sheaves on $\protect\wt X$}\label{subsec:Stcons}
The category of $\St$-$\CC$-constructible sheaves will be a subcategory of the category $\Mod(\kk_{\ccIet,\leq})$ of pre-$\ccI$-filtrations introduced in \S\ref{subsec:catpreIfilt}, and we use the notation introduced there, with~$\ccI$ as above. Recall that $\CC$-constructible means that the stratification giving constructibility is $\CC$-analytic in~$X$. The coefficient field is $\kk$.

Let $\cF^*$ be a $\CC$-constructible sheaf on~$X^*$ with singularity set~$S$. We assume that~$S$ is locally finite on~$X$, so that it does not accumulate on~$D$. Then $\cF^*$ is a locally constant sheaf of finite dimensional $\kk$-vector spaces on $X^*\moins S$, and in particular in the punctured neighbourhood of each point of~$D$. This implies that $\bR j_{\partial,*}\cF^*=j_{\partial,*}\cF^*$ is a local system on $\wt X\moins S$, as well as its restriction to each $S^1_{x_o}$, $x_o\in D$. Similarly, $\bR \wtj_*\cF^*=\wtj_*\cF^*$ is a local system on $\ccIet\moins S$, since the property is local on $\ccIet$ and $\mu$ is a local homeomorphism. Moreover, the adjunction morphism $\mu^{-1}\mu_*\to\id$ induces a morphism $\mu^{-1}j_{\partial,*}\cF^*\to\wtj_*\cF^*$, which is easily seen to be an isomorphism. We will identify both sheaves.

We will set $\cL=i_\partial^{-1}j_{\partial,*}\cF^*$, which is a local system on $\partial\wt X$. We thus have $\mu_\partial^{-1}\cL=\wti{}^{-1}\wtj_*\cF^*$.

\begin{definitio}[$\St$-$\CC$-constructible sheaves]\label{def:CSt}
Let $\cF_\leq$ be a pre-$\ccI$-filtration, \ie an object of $\Mod(\kk_{\ccIet,\leq})$. We say that $\cF_\leq$ is \index{Stokes-$\CC$-constructible sheaf@$\St$-$\CC$-constructible sheaf}$\St$-$\CC$-constructible if
\begin{enumerate}
\item\label{def:CSt1}
$\cF^*\defin\wtj^{-1}\cF_\leq$ is a $\CC$-constructible sheaf on~$X^*$ whose singularity set $S\subset X^*$ is locally finite in~$X$,
\item\label{def:CSt2}
the natural morphism $\nu:\cL_{\leq}\defin\wti{}^{-1}\cF_\leq\to\wti{}^{-1}\wtj_*\cF^*$ is \emph{injective}, \ie the sheaf $\cF_\leq$ does not have a non-zero subsheaf supported in $\ccIet_{|\partial\wt X}$,
\item\label{def:CSt3}
the inclusion $\nu:\cL_\leq\hto\mu_\partial^{-1}\cL$ is a Stokes filtration of $\cL$.
\end{enumerate}

We will denote by \index{$ModkIetstc$@$\Mod_\CSt(\kk_{\ccIet,\leq})$}$\Mod_\CSt(\kk_{\ccIet,\leq})$ the full subcategory of $\Mod(\kk_{\ccIet,\leq})$ whose objects are $\St$-$\CC$-constructible on $\ccIet$.
\end{definitio}

We can also regard $\cF_\leq$ as a pre-$\ccI$-filtration of the sheaf $\cF\defin j_{\partial,*}\cF^*$, in the sense of Definition \ref{def:pre-I-fil}. Note that, for any open set $U\subset\wt X$ and any $\varphi\in\Gamma(U,\ccI)$, $\cF_{\leq\varphi}\defin\varphi^{-1}\cF_\leq$ is a subsheaf of $\cF_{|U}$.

\begin{lemme}\label{lem:CStabelian}
The category of $\St$-$\CC$-constructible sheaves on $\ccIet$ is \index{abelian (category)}abelian and stable by extensions in the category of pre-$\ccI$-filtrations on $\ccIet$.
\end{lemme}

\begin{proof}
When restricted to~$X^*$, the result is well-known for the category of $\CC$\nobreakdash-constructible sheaves in the category of sheaves of $\kk$-vector spaces. It is therefore enough to prove it over $\partial\wt X$. Let $\lambda:\cF_\leq\to\cF'_\leq$ be a morphism of $\St$-$\CC$-constructible sheaves. Its restriction to $\partial\wt X$ induces a morphism in the category of Stokes-filtered local systems on $\partial\wt X$. Note also that
\[
j_{\partial,*}j_\partial^{-1}\ker\lambda=\ker j_{\partial,*}j_\partial^{-1}\lambda\quad\text{and}\quad j_{\partial,*}j_\partial^{-1}\coker\lambda=\coker j_{\partial,*}j_\partial^{-1}\lambda.
\]
According to Theorem \ref{th:abelian} and the previous identification, the kernel and cokernel of $\lambda_{|\partial\wt X}$ inject into $\mu^{-1}\ker[\cL\to\cL']$ and $\mu^{-1}\coker[\cL\to\cL']$, and define Stokes-filtered local systems. Therefore, $\ker\lambda$ and $\coker\lambda$ (taken in $\Mod(\kk_{\ccIet,\leq})$) belong to $\Mod_\CSt(\kk_{\ccIet,\leq})$.

Similarly, the stability by extension is clear on~$X^*$, and an extension of two $\St$-$\CC$-constructible sheaves satisfies \ref{def:CSt}\eqref{def:CSt2} above. Then it satisfies \ref{def:CSt}\eqref{def:CSt3} according to Theorem \ref{th:abelian}.
\end{proof}

Note that, as a consequence, $\coker\lambda$ does not have any nonzero subsheaf supported on $\ccIet_{|\partial\wt X}$, so in particular $\lambda$ is strict with respect to the support condition \ref{def:CSt}\eqref{def:CSt2}.

\begin{remarque}\label{rem:nuprime}
Assume that $\cF_\leq$ is as in Definition \ref{def:CSt}\eqref{def:CSt1}. We have a distinguished triangle in $D^b(\ccIet)$:
\[
\wti^!\cF_\leq\to\wti^{-1}\cF_\leq\to\wti^{-1}\bR\wtj_*\cF^*=\wti^!\bR\wtj_!\cF^*[1]\To{+1}.
\]
Since $\wti^{-1}\cF_\leq$ and $\wti^{-1}\bR\wtj_*\cF^*=\wti^{-1}\wtj_*\cF^*$ are sheaves, this triangle reduces to the exact sequence
\[
0\to\cH^0(\wti^!\cF_\leq)\to\wti^{-1}\cF_\leq\to\wti^{-1}\wtj_*\cF^*\to\cH^1(\wti^!\cF_\leq)\to0
\]
and \ref{def:CSt}\eqref{def:CSt2} is equivalent to $\cH^0(\wti^!\cF_\leq)=0$. If this condition is fulfilled, the morphism $\nu':\mu_\partial^{-1}\cL\to\mu_\partial^{-1}\cL/\cL_\leq$ is equal to $\cH^1$ of the morphism $\wti^!\bR\wtj_!\cF^*\to\wti^!\cF_\leq$ and there is no nonzero other $\cH^k$.
\end{remarque}

\begin{remarque}[$\cHom$]\label{rem:HomSt}
Given two $\St$-$\CC$-constructible sheaves $\cF_\leq,\cF'_\leq$, one can define a $\St$-$\CC$-constructible sheaf $\cHom(\cF,\cF')_\leq$ whose generic part is the constructible sheaf $\cHom(\cF^*,\cF^{\prime*})$, and whose restriction to $\ccIet_{\partial\wt X}$ is $\cHom(\cL,\cL')_\leq$ (\cf Definition \ref{def:HomSt}). Firstly, one remarks that since $\wtj_*\cF^*$ and $\wtj_*\cF^{\prime*}$ are local systems, the natural morphism $\cHom(\wtj_*\cF^*,\wtj_*\cF^{\prime*})\to\wtj_*\cHom(\cF^*,\cF^{\prime*})$ is an isomorphism. The natural injection $\cHom(\cL,\cL')_\leq\hto\cHom(\cL,\cL')=\wti^{-1}\wtj_*\cHom(\cF^*,\cF^{\prime*})$ defines thus the desired $\St$-$\CC$-constructible sheaf.
\end{remarque}

Let $\cF_\leq$ be a Stokes-$\CC$-constructible sheaf on $\wt X$. Since~$\ccI$ is Hausdorff on $\partial\wt X$, the subsheaf $\cL_<$ of $\cL_\leq$ is well-defined, and we can consider the triple $(\cF^*,\cL_<,\nu_{|\cL_<})$, which defines a subsheaf $\cF_{\!\prec}$ of $\cF_\leq$, according to Remark \ref{rem:sheavesIet} (we avoid the notation~$\cF_<$, which has a different meaning over $X^*$).

\begin{definitio}[co-$\St$-$\CC$-constructible sheaves]\label{def:coCSt}
We will say that a pre-$\ccI$-filtration~$\cF_{\!\prec}$ is a \index{co-Stokes-$\CC$-constructible sheaf@co-$\St$-$\CC$-constructible sheaf}co-Stokes-$\CC$-constructible sheaf if it satisfies the properties \ref{def:CSt}\eqref{def:CSt1}, \ref{def:CSt}\eqref{def:CSt2} and, setting $\cL_<=\wti^{-1}\cF_\prec$,
\begin{enumerate}
\item[$(\ref{def:CSt3}')$]
the inclusion $\nu:\cL_<\hto\mu_\partial^{-1}\cL$ is a Stokes co-filtration of $\cL$ (\cf Remark~\ref{rem:coIfilt}).
\end{enumerate}
This defines a full subcategory \index{$ModkIetstco$@$\Mod_\coCSt(\kk_{\ccIet,\leq})$}$\Mod_{\coCSt}(\kk_{\ccIet,\leq})$ of $\Mod(\kk_{\ccIet,\leq})$.
\end{definitio}

We similarly have:

\begin{lemme}\label{lem:coCStabelian}
The category of co-$\St$-$\CC$-constructible sheaves on $\ccIet$ is \index{abelian (category)}abelian and stable by extensions in the category of pre-$\ccI$-filtrations on $\ccIet$.\qed
\end{lemme}

\begin{exo}
From a co-Stokes-$\CC$-constructible sheaf $\cF_{\!\prec}$, reconstruct the Stokes-$\CC$-constructible sheaf $\cF_\leq$ which defines it (this mainly consists in reconstructing $\cL_\leq$ from $\cL_<$, \cf Remark \ref{rem:coIfilt} and Remark \ref{rem:coIfiltStokes}).
\end{exo}

Lastly, there is a simpler subcategory of $\Mod(\kk_{\ccIet,\leq})$ which will also be useful, namely that of \index{graded-$\CC$-constructible sheaf}graded $\CC$-constructible sheaves on $\ccIet$. By definition, such an object takes the form $\wti_*(\mu_{\partial,!}\cG)_\leq$, where $\cG$ is a locally constant sheaf of finite dimensional $\kk$-vector spaces on $\ccIet_{|\partial\wt X}$, on the support of which $\mu_\partial$ is proper. It is easy to check that it is abelian and stable by extensions in $\Mod(\kk_{\ccIet,\leq})$. The following is then clear:

\begin{lemme}\label{lem:Stcofilt}
Let $\cF_\leq$ be a Stokes-$\CC$-constructible sheaf on $\ccIet$ and let $\cF_{\!\prec}$ be the associated co-Stokes-$\CC$-constructible sheaf, together with the natural inclusion $\cF_{\!\prec}\hto\cF_\leq$. Then the quotient (in $\Mod(\kk_{\ccIet,\leq})$) is a graded $\CC$-constructible sheaf on $\ccIet$.\qed
\end{lemme}

\begin{remarque}\label{rem:gr<}
There is no nonzero morphism (in the category $\Mod(\kk_{\ccIet,\leq})$) from a graded $\CC$-constructible sheaf $\cG$ on $\ccIet$ to a (co-)Stokes-$\CC$-constructible sheaf $\cF_\leq$ (or~$\cF_{\!\prec}$). Indeed, presenting $\cF_\leq$ as a triple $(\cF^*,\cL_\leq,\nu)$ and $\cG$ by $(0,\wti^{-1}\cG,0)$, a morphism $\cG\to\cF_\leq$ consists of a morphism $\wti^{-1}\cG\to\cL_\leq$ which is zero when composed with $\nu$. Since $\nu$ is injective, after \ref{def:CSt}\eqref{def:CSt2}, the previous morphism is itself zero. The same argument applies to $\cG\to\cF_{\!\prec}$. In other words, since $\cF_\leq$ (\resp $\cF_{\!\prec}$) does not have any non-zero subsheaf supported on $\ccIet_{|\partial\wt X}$, and since $\cG$ is supported on this subset, any morphism from $\cG$ to $\cF_\leq$ (\resp $\cF_{\!\prec}$) is zero.
\end{remarque}

\Subsection{Derived categories and duality}\label{subsec:dualStperv}
\subsubsection*{Derived categories}
We denote by \index{$DbkIet$@$D^\rb(\kk_{\ccIet,\leq})$}$D^\rb(\kk_{\ccIet,\leq})$ the bounded derived category of pre-$\ccI$-filtrations, and by \index{$DbkIetcst$@$D^\rb_\CSt(\kk_{\ccIet,\leq})$}$D^\rb_\CSt(\kk_{\ccIet,\leq})$ the full subcategory of $D^\rb(\kk_{\ccIet,\leq})$ consisting of objects having $\St$-$\CC$-constructible cohomology. By Lemma \ref{lem:CStabelian}, the category $D^\rb_\CSt(\kk_{\ccIet,\leq})$ is a full triangulated subcategory of $D^\rb(\kk_{\ccIet,\leq})$. We will now define a $t$\nobreakdash-structure on $D^\rb_\CSt(\kk_{\ccIet,\leq})$ which restricts to the usual\footnote{We refer for instance to \cite{Dimca04} for basic results on \index{perverse sheaf}perverse sheaves; recall that the constant sheaf supported at one point is perverse, and a local system shifted by one is perverse, see \eg \cite[Ex\ptbl5.2.23]{Dimca04}. See also \cite{B-B-D81}, \cite[Chap\ptbl10]{K-S90} for more detailed results.} perverse one on~$X^*$.

The subcategory \index{$DbkIetcstp$@$\pD^{b,\leq0}_\CSt(\kk_{\ccIet,\leq})$, $\pD^{b,\geq0}_\CSt(\kk_{\ccIet,\leq})$}$\pD^{b,\leq0}_\CSt(\kk_{\ccIet,\leq})$ consists of objects $\cF_\leq$ satisfying $\cH^j(\cF_\leq)=0$ for $j>-1$ and $\cH^j(i_{x_o}^{-1}\cF_\leq)=0$ for any $x_o\in X^*$ and $j>0$.

Similarly, the subcategory $\pD^{b,\geq0}_\CSt(\kk_{\ccIet,\leq})$ consists of objects $\cF_\leq$ satisfying $\cH^j(\cF_\leq)=0$ for $j<-1$ and $\cH^j(i_{x_o}^!\cF_\leq)=0$ for any $x_o\in X^*$ and $j<0$.

The pair $\big(\pD^{b,\leq0}_\CSt(\kk_{\ccIet,\leq}),\pD^{b,\geq0}_\CSt(\kk_{\ccIet,\leq})\big)$ is a $t$\nobreakdash-structure on $D^\rb_\CSt(\kk_{\ccIet,\leq})$ (this directly follows from the result on~$X^*$).

\begin{definitio}\label{def:Stperv1}
The category \index{$STperv$@$\StPerv_{D}(\kk_{\ccIet,\leq})$}$\StPerv_{D}(\kk_{\ccIet,\leq})$ of perverse sheaves on~$X^*$ with a Stokes structure at $\partial\wt X$ (that we also call \index{Stokes perverse@Stokes-perverse sheaf!on $\wt X$}\emph{Stokes-perverse sheaves} on $\wt X$) is the heart of this $t$\nobreakdash-structure.
\end{definitio}

This is an \index{abelian (category)}abelian category (\cf \cite{B-B-D81}, \cite[\S10.1]{K-S90}).

\begin{remarque}\label{rem:StPerv}
Objects of $\StPerv_{D}(\kk_{\ccIet,\leq})$ behave like sheaves, \ie can be reconstructed, up to isomorphism, from similar objects on each open set of an open covering of~$X$, together with compatible gluing isomorphisms. An object of $\StPerv_{D}(\kk_{\ccIet,\leq})$ is a sheaf shifted by $1$ in the neighbourhood of $\ccIet_{|\partial\wt X}$ and is a perverse sheaf (in the usual sense) on~$X^*$.

Notice also that the objects of $\StPerv_{D}(\kk_{\ccIet,\leq})$ which have no singularity on $X^*$ are sheaves (shifted by one). We will call them \emph{smooth} $\St$-$\CC$-constructible sheaves or smooth Stokes-perverse sheaves (depending on the shifting convention).
\end{remarque}

However, it will be clear below that this presentation is not suitable for defining a duality functor on it. We will use a presentation which also takes into account $\cF_{\!\prec}$ and $\gr\cF$. Anticipating on the Riemann-Hilbert correspondence, $\cF_\leq$ corresponds to horizontal sections of a connection having moderate growth at $\partial\wt X$, while $\cF_{\!\prec}$ corresponds to horizontal sections having rapid decay there, and duality pairs moderate growth with rapid decay.

Note that similar arguments apply to co-Stokes-$\CC$-constructible sheaves, and we get a $t$\nobreakdash-structure $\big(\pD^{b,\leq0}_\coCSt(\kk_{\ccIet,\leq}),\pD^{b,\geq0}_\coCSt(\kk_{\ccIet,\leq})\big)$ on $D^\rb_\coCSt(\kk_{\ccIet,\leq})$.

Lastly, the full subcategory $D^\rb_\grCc(\kk_{\ccIet,\leq})$ of $D^\rb(\kk_{\ccIet,\leq})$ whose objects have graded $\CC$-constructible cohomology is endowed with the $t$\nobreakdash-structure induced from the shifted natural one $(D^{b,\leq-1}(\kk_{\ccIet,\leq}),D^{b,\geq-1}(\kk_{\ccIet,\leq}))$.

\subsubsection*{Duality}
Recall that the \emphb{dualizing complex} on $\wt X$ is $j_{\partial,!}\kk_{X^*}[2]$ (\cf \eg \cite{K-S90}) and (\cf Corollary \ref{cor:Rhom}) we have a functor $\bR\cHom(\iota^{-1}\cbbullet,\mu^{-1}j_{\partial,!}\kk_{X^*}[2])$ from the category $D^{b,\textup{op}}(\kk_{\ccIet,\leq})$ to $D^+(\kk_{\ccIet,\leq})$. We will denote it by \index{$DUAL$@$\bD$, $\bD'$}$\bD$. We will prove the following.

\begin{proposition}\label{prop:DDCSt}\index{duality}
The duality functor $\bD$ induces functors $\bD:D^{b,\textup{op}}_\CSt(\kk_{\ccIet,\leq})\to D^\rb_\coCSt(\kk_{\ccIet,\leq})$ and $\bD:D^{b,\textup{op}}_\coCSt(\kk_{\ccIet,\leq})\to D^\rb_\CSt(\kk_{\ccIet,\leq})$ which are $t$-exact, and there are isomorphisms of functors $\id\simeq\bD\circ\bD$ in both $D^\rb_\CSt(\kk_{\ccIet,\leq})$ and $D^\rb_\coCSt(\kk_{\ccIet,\leq})$.

On the other hand, it induces a functor $\bD:D^{b,\textup{op}}_\grCc(\kk_{\ccIet,\leq})\to D^\rb_\grCc(\kk_{\ccIet,\leq})$ such that $\bD\circ\bD\equiv\id$.
\end{proposition}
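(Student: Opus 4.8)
The plan is to prove Proposition~\ref{prop:DDCSt} by reducing everything to two places: the Zariski-open part~$X^*$, where the statement is the classical biduality for $\CC$-constructible complexes on a curve, and the boundary~$\partial\wt X$, where $\ccI$ is Hausdorff and everything is governed by the Poincar\'e duality of Stokes-filtered local systems already recorded in Lemma~\ref{lem:dualS1} and Remark~\ref{rem:Stokeswithramif}\eqref{rem:Stokeswithramif4b}. Since $\cH^j$, $i_{x_o}^{-1}$, $i_{x_o}^!$ and the restriction/section functors along $\partial\wt X$ all commute with $\bD$ up to the standard shifts, $t$-exactness and biduality can be checked separately on $X^*$ and on $\partial\wt X$, and the triple description of sheaves on $\ccIet$ (Remark~\ref{rem:sheavesIet}) glues the two verifications together.

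First I would compute $\bD$ on $X^*$: there $\ccIet_{|X^*}=X^*$, $\iota=\id$, and $\bD$ is the ordinary Verdier dual $\bR\cHom(\cbbullet,\kk_{X^*}[2])$, so $\St$-$\CC$-constructibility of $\cF^*$ passes to $\bD\cF^*$ and the $t$-exactness (for the perverse $t$-structure shifted as in \S\ref{subsec:dualStperv}) is classical on a curve; biduality $\id\simeq\bD\bD$ on $X^*$ is also classical. Next I would treat the boundary. Here I use that $\bR\wtj_*\cF^*=\wtj_*\cF^*$ and $\bR j_{\partial,*}\cF^*=j_{\partial,*}\cF^*$ are local systems away from~$S$, so on $\ccIet_{|\partial\wt X}$ the computation of $\bR\cHom(\iota^{-1}\cF_\leq,\mu^{-1}j_{\partial,!}\kk_{X^*}[2])$ reduces, via Corollary~\ref{cor:Rhom} (which gives $\varphi^{-1}\bR\cHom(\iota^{-1}\cF_\leq,\mu^{-1}\cG)=\bR\cHom(\cF_{\leq-\varphi},\cG)$), to computing $\bD'(\cL_{\leq\varphi})$ and $\bD'(\cL/\cL_{\leq\varphi})$ and $\bD'(\cL/\cL_{<\varphi})$ for Stokes-filtered local systems. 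By Lemma~\ref{lem:dualS1} (extended to the ramified case by Remark~\ref{rem:Stokeswithramif}\eqref{rem:Stokeswithramif4b}) these are sheaves, and $(\cL^\vee)_{\leq\varphi}=\bD'(\cL/\cL_{<-\varphi})$, $(\cL^\vee)_{<\varphi}=\bD'(\cL/\cL_{\leq-\varphi})$; this identifies $\wti^{-1}\bD\cF_\leq$ with the \emph{co}-Stokes-filtered structure $(\cL^\vee)_<$ attached to $\iota^*\cL^\vee$. Hence $\bD$ sends the triple $(\cF^*,\cL_\leq,\nu)$ to a triple $((\bD\cF^*),(\cL^\vee)_<,\nu^\vee)$ satisfying conditions \ref{def:CSt}\eqref{def:CSt1}, \ref{def:CSt}\eqref{def:CSt2} (injectivity of $\nu^\vee$ coming from surjectivity of the dual of $\nu$, i.e.\ from the fact that $\cF_\leq$ has no quotient supported on $\mu^{-1}\partial\wt X$, which holds because $\cF_\leq\hto\mu^{-1}\cF$) and $(\ref{def:CSt3}')$, i.e.\ it lands in $D^b_\coCSt(\kk_{\ccIet,\leq})$; symmetrically $\bD$ sends $D^{b,\textup{op}}_\coCSt$ to $D^b_\CSt$. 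The $t$-exactness along $\partial\wt X$ then follows from the concentration-in-one-degree statements of Lemma~\ref{lem:dualS1}, and $\bD\bD\simeq\id$ on the boundary follows from the biduality $(\cL^\vee)^\vee\simeq\cL$ of local systems together with Remark~\ref{rem:coIfilt}, which says that $\cL_\leq$ is recovered from $\cL_<$ by the intersection formula, so that applying the construction twice returns the original Stokes filtration.

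Finally, for the graded case I would observe that a graded $\CC$-constructible sheaf is $\wti_*(\mu_{\partial,!}\cG)_\leq$ with $\mu_\partial$ proper on $\Supp\cG$, so it is supported on $\mu^{-1}\partial\wt X$ and its dual is computed fiberwise over $\partial\wt X$ by \eqref{eq:trivIfilt}-type base-change identities; the dual of the graded $\ccI$-filtration on $\mu_!\cG$ is the graded $\ccI$-filtration on $\mu_!(\cG^\vee)$ after the involution $\iota$ (using $\gr_\varphi\cL^\vee=(\gr_{-\varphi}\cL)^\vee$ of Proposition~\ref{prop:operationsSt}\eqref{prop:operationsSt2}), and $\cG^{\vee\vee}\simeq\cG$ gives $\bD\bD\equiv\id$; the $t$-structure here being the shifted natural one, $t$-exactness is immediate. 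The main obstacle I anticipate is the careful bookkeeping at the boundary: one must check that the abstract functor $\bR\cHom(\iota^{-1}\cbbullet,\mu^{-1}j_{\partial,!}\kk_{X^*}[2])$, applied to a $\St$-$\CC$-constructible sheaf presented as a triple, really produces the \emph{co}-filtered triple predicted by Lemma~\ref{lem:dualS1}, including compatibility of the gluing morphism $\nu$ with duality and the vanishing of the potential degree-shift contributions from the non-Hausdorff point of $\ccIet$ over $X^*$; everything else is a combination of results already proved (Corollary~\ref{cor:Rhom}, Lemma~\ref{lem:dualS1}, Remarks~\ref{rem:coIfilt} and~\ref{rem:gr<}) and of classical biduality on the curve $X^*$.
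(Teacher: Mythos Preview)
Your approach is essentially the same as the paper's: both reduce to $X^*$ (classical) and to $\partial\wt X$ (governed by Lemma~\ref{lem:dualS1} via Corollary~\ref{cor:Rhom}). The paper simply packages your boundary computation into a separate Lemma~\ref{lem:StCdual}, which shows directly that $\bD'\cF_\leq$ is a sheaf equal to the co-Stokes triple $(\cF^{*,\vee},(\cL^\vee)_<,\nu^\vee)$ with $\nu^\vee$ the dual of the surjection $\nu':\mu^{-1}\cL\to\mu^{-1}\cL/\cL_\leq$, and then invokes it in one line.

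Two small points where the paper is cleaner. First, your justification of the injectivity of $\nu^\vee$ is slightly off: it does not come from ``$\cF_\leq$ has no quotient supported on $\mu^{-1}\partial\wt X$'' but rather from the injectivity of the original $\nu$, which is equivalent to the surjectivity of $\nu'$, whose dual is $\nu^\vee$. Second, for biduality the paper does not go through the reconstruction formula of Remark~\ref{rem:coIfilt}; instead it observes (Lemma~\ref{lem:Rconst}) that each $\varphi^{-1}\cF_\leq$ is $\RR$-constructible on $U$, applies the standard $\RR$-constructible biduality $\id\simeq\bD\bD$ section-by-section, and notes that these isomorphisms are compatible with restriction of open sets, hence glue to the desired isomorphism on $\ccIet$. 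This is shorter and avoids having to verify that the Stokes~$\to$~co-Stokes~$\to$~Stokes round trip agrees on the nose with the abstract $\bD\bD$. Your graded argument matches the paper's.
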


Let $\cF_\leq$ be an object of $D^\rb(\kk_{\ccIet,\leq})$. Recall that, for any open set $U\subset\wt X$ and any $\varphi\in\Gamma(U,\ccI)$, $\cF_{\leq\varphi}\defin\varphi^{-1}\cF_\leq$ is an object of $D^\rb(\kk_U)$.

\begin{lemme}\label{lem:Rconst}
If $\cF_\leq$ is a Stokes-$\CC$-constructible sheaf on $\ccIet$, then for any $\varphi\in\Gamma(U,\ccI)$, $\cF_{\leq\varphi}$ is $\RR$-constructible on~$U$. The same result holds for a co-Stokes-$\CC$-constructible sheaf $\cF_{\!\prec}$.
\end{lemme}

\begin{proof}
It suffices to check this on $U\cap\partial\wt X$, where it follows from Proposition \ref{prop:stokeswithout}.
\end{proof}

\begin{lemme}\label{lem:StCdual}
If $\cF_\leq$ is a Stokes-$\CC$-constructible sheaf (\resp $\cF_{\!\prec}$ is a co-Stokes-$\CC$-constructible sheaf) on $\ccIet$, then on $\wt X\moins\Sing\cF^*$, $\cF^\vee_\prec\defin\bR\cHom(\iota^{-1}\cF_\leq,\wtj_!\kk_{X^*})$ (\resp $\cF^\vee_\leq\defin\bR\cHom(\iota^{-1}\cF_{\!\prec},\wtj_!\kk_{X^*})$) has cohomology in degree $0$ at most and is equal to the co-Stokes-$\CC$-constructible sheaf $(\cF^{*,\vee},(\cL^\vee)_<,\nu^{\prime,\vee})$ (\resp the Stokes-$\CC$-constructible sheaf $(\cF^{*,\vee},(\cL^\vee)_\leq,\nu^{\prime,\vee})$), where $\nu^{\prime,\vee}$ is the morphism dual to $\nu'$. Moreover, the dual of $\nu$ is $(\nu^{\prime,\vee})'$
\end{lemme}

\begin{proof}
If $U$ is an open set in $\wt X$ and $\varphi\in\Gamma(U,\ccIet)$, we will set
\[
\bD(\cF_{\leq\varphi})=\bR\cHom_{\kk}(\cF_{\leq\varphi},j_{\partial,!}\kk_{U^*}[2])\quad\text{and}\quad \bD'(\cF_{\leq\varphi})=\bR\cHom_{\kk}(\cF_{\leq\varphi},j_{\partial,!}\kk_{U^*}).
\]
As a consequence of Lemma \ref{lem:Rconst} we have (\cf \cite[Prop\ptbl3.1.13]{K-S90}),
\begin{equation}\label{eq:DDi}
\begin{split}
i_\partial^!\bD(\cF_{\leq\varphi})&=\bR\cHom(i_\partial^{-1}\cF_{\leq\varphi},i_\partial^!j_{\partial,!}\kk[2])\\
&=\bR\cHom(i_\partial^{-1}\cF_{\leq\varphi},\kk[1])=:\bD(i_\partial^{-1}\cF_{\leq\varphi}),
\end{split}
\end{equation}
since $\kk_{\partial\wt X}[1]$ is the dualizing complex on $\partial\wt X$, and by biduality we have $\bD(i_\partial^!\cF_{\leq\varphi})=i_\partial^{-1}\bD(\cF_{\leq\varphi})$ (\cf \cite[Prop\ptbl3.4.3]{K-S90}). This can also be written as
\[
i_\partial^!\bD'(\cF_{\leq\varphi})[1]=\bD'(i_\partial^{-1}\cF_{\leq\varphi})\quad\text{and}\quad i_\partial^{-1}\bD'(\cF_{\leq\varphi})=\bD'(i_\partial^!\cF_{\leq\varphi}[1]).
\]

For the proof of Lemma \ref{lem:StCdual}, we can assume that $\Sing\cF^*=\emptyset$. We will prove the lemma starting from a Stokes $\CC$-constructible sheaf, the co-Stokes case being similar.

For the first part of the statement, it suffices to show that $i_\partial^{-1}\bD'(\cF_{\leq\varphi})$ is a sheaf. We have $i_\partial^{-1}\bD'(\cF_{\leq\varphi})=\bD'(i_\partial^!\cF_{\leq\varphi}[1])=\bD'(\cL_{|U}/\cL_{\leq\varphi})$, \cf Remark \ref{rem:nuprime}, and thus $i_\partial^{-1}\bD'(\cF_{\leq\varphi})$ is a sheaf, equal to $(\cL^\vee)_{<-\varphi}$, according to Lemma \ref{lem:dualS1}. Therefore, $\cF^\vee_\prec=\cHom(\iota^{-1}\cF_\leq,\wtj_!\kk_{X^*})$. Moreover, still using Remark \ref{rem:nuprime}, one checks that the gluing morphism $\nu^\vee$ is the dual of $\nu'$. Since $\nu'$ is onto, $\nu^{\prime,\vee}$ is injective, and $\cF_\prec^\vee$ is equal to $(\cF^{*,\vee},(\cL^\vee)_<,\nu^{\prime,\vee})$, as wanted. A similar argument shows that the dual of~$\nu$ is $(\nu^{\prime,\vee})'$.
\end{proof}

\begin{proof}[\proofname\ of Proposition \ref{prop:DDCSt}]
The first claim is now a direct consequence of Lemma \ref{lem:StCdual}. For any $U$ and any $\varphi\in\Gamma(U,\ccI)$ we have the bi-duality isomorphism after applying the pull-back by $\varphi:U\to\ccIet$, according to the standard result for $\RR$-constructible sheaves and Lemma \ref{lem:Rconst}. This isomorphism is clearly compatible with restrictions of open sets, hence gives $\id\simeq\bD\circ\bD$.

For the graded case, the situation is simpler, as we work with local systems on $\ccIet_{|\partial\wt X}$ extended by zero on $\ccIet$.
\end{proof}

\subsection{The category of Stokes-perverse sheaves on $\protect\wt X$}\label{subsec:Stperv}
In the case of Stokes-filtered local systems on $\partial\wt X$, the Stokes and co-Stokes aspect (namely~$\cL_\leq$ and $\mu^{-1}\cL/\cL_<$) can be deduced one from the other. The same remark applies to (co\nobreakdash-)Stokes-$\CC$-constructible \emph{sheaves}. However, such a correspondence $\text{Stokes}\allowbreak\ssi\allowbreak\text{co-Stokes}$ heavily depends on sheaves operations. In the derived category setting, we will keep them together.

It will be useful to consider the $t$-categories $D^\rb_\CSt(\kk_{\ccIet,\leq})$ etc.\ as subcategories of the same one. We can choose the following one. By a $\CC$-$\RR$-constructible sheaf on $\ccIet$ we will mean a sheaf $\cG$ on $\ccIet$ such that for any open set $U\subset\wt X$ and any $\varphi\in\Gamma(U,\ccI)$, $\varphi^{-1}\cG$ is $\RR$-constructible on $U$ and $\CC$-constructible on $U^*=U\cap X^*$ with locally finite singularity set on $U$. The category $\Mod_\CRc(\kk_{\ccIet,\leq})$ is the full subcategory of $\Mod(\kk_{\ccIet,\leq})$ whose underlying sheaf is $\CC$-$\RR$-constructible. According to Lemma \ref{lem:Rconst}, it contains the full subcategories of (co-/gr-)$\St$-constructible sheaves. It is abelian and stable by extensions in $\Mod(\kk_{\ccIet,\leq})$. The full triangulated subcategory $D^\rb_\CRc(\kk_{\ccIet,\leq})$ of $D^\rb(\kk_{\ccIet,\leq})$ is equipped with a $t$-structure defined as in the beginning of \S\ref{subsec:dualStperv}. This $t$-structure induces the already defined one on the full triangulated subcategories $D^\rb_\CSt(\kk_{\ccIet,\leq})$ etc.\

\begin{definitio}\label{def:StPervtri}
The category \index{$STkiet$@$\St(\kk_{\ccIet,\leq})$}$\St(\kk_{\ccIet,\leq})$ is the full subcategory of the category of distinguished triangles of $D^\rb(\kk_{\ccIet,\leq})$ whose objects consist of distinguished triangles $\cF_{\!\prec}\to\cF_\leq\to\gr\cF\To{+1}$, where $\cF_{\!\prec}$ is an object of $D^\rb_\coCSt(\kk_{\ccIet,\leq})$, $\cF_\leq$ of $D^\rb_\CSt(\kk_{\ccIet,\leq})$, and $\gr\cF$ of $D^\rb_{\grCc}(\kk_{\ccIet,\leq})$. These triangles are also distinguished triangles in $D^\rb_\CRc(\kk_{\ccIet,\leq})$.
\end{definitio}

\begin{proposition}\label{prop:SttstructD}
The category $\St(\kk_{\ccIet,\leq})$ is triangulated, and endowed with a $t$\nobreakdash-structure \index{$STkietstr$@$\St^{\leq0}(\kk_{\ccIet,\leq}),\St^{\geq0}(\kk_{\ccIet,\leq})$}$(\St^{\leq0}(\kk_{\ccIet,\leq}),\St^{\geq0}(\kk_{\ccIet,\leq}))$ defined by the property that $\cF_{\!\prec},\cF_\leq,\gr\cF$ belong to the $\leq0$ (\resp $\geq0$) part of their corresponding categories. There is a \index{duality}duality functor $\bD:\St^{\textup{op}}(\kk_{\ccIet,\leq})\to\St(\kk_{\ccIet,\leq})$ satisfying $\bD\circ\bD\simeq\id$ and which is compatible to the $t$\nobreakdash-structures.
\end{proposition}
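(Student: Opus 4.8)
The plan is to build the triangulated structure, the $t$\nobreakdash-structure and the duality of $\St(\kk_{\ccIet,\leq})$ termwise out of the three derived categories $D^b_\CSt(\kk_{\ccIet,\leq})$, $D^b_\coCSt(\kk_{\ccIet,\leq})$, $D^b_\grCc(\kk_{\ccIet,\leq})$, all realized as full triangulated subcategories of $D^b_\CRc(\kk_{\ccIet,\leq})$ thanks to Lemmas \ref{lem:CStabelian}, \ref{lem:coCStabelian} (and the analogous trivial statement for graded objects), the main non-formal ingredient being the rigidity coming from Remark \ref{rem:gr<}.

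First I would set up the triangulated structure. The shift $[1]$ shifts the three terms of a triangle $\cF_{\!\prec}\to\cF_\leq\to\gr\cF\To{+1}$ simultaneously, and a sequence in $\St(\kk_{\ccIet,\leq})$ is declared distinguished when the induced sequences $\cF_{1,\prec}\to\cF_{2,\prec}\to\cF_{3,\prec}\To{+1}$ and $\cF_{1,\leq}\to\cF_{2,\leq}\to\cF_{3,\leq}\To{+1}$ are distinguished in $D^b_\CRc(\kk_{\ccIet,\leq})$; the $3\times3$ (nine) lemma in a triangulated category then produces, on the cones, a distinguished triangle $\gr\cF_1\to\gr\cF_2\to\gr\cF_3\To{+1}$ with graded $\CC$\nobreakdash-constructible terms (that class being stable by cone). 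The axioms (TR1)--(TR4) are then verified row by row from the corresponding statements in $D^b_\CRc(\kk_{\ccIet,\leq})$. The one genuinely delicate point is that such triangles of triangles are \emph{rigid}: given a morphism of the underlying arrows $\cF_{\!\prec}\to\cF_\leq$, $\cF'_{\!\prec}\to\cF'_\leq$ with vertical maps $a,b$, the morphism $c\colon\gr\cF\to\gr\cF'$ supplied by TR3 is unique, because a morphism of triangles of the shape $(0,0,c)$ has $c$ factoring through the structural map $\gr\cF\to\cF_{\!\prec}[1]$, and the relevant group $\Hom_{D^b_\CRc}(\cF_{\!\prec}[1],\gr\cF')$ vanishes — the derived-category enhancement of the absence of nonzero morphisms between graded $\CC$\nobreakdash-constructible and (co\nobreakdash-)$\St$-$\CC$-constructible objects (Remark \ref{rem:gr<}), reduced to $\Hom$- and $\mathrm{Ext}$-groups of cohomology sheaves by the hypercohomology spectral sequence. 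This rigidity is exactly what makes cones of morphisms well defined in the category of these triangles, so that $\St(\kk_{\ccIet,\leq})$ is genuinely triangulated and not merely pre-triangulated.

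Next I would glue the $t$\nobreakdash-structures. Set $\St^{\leq0}(\kk_{\ccIet,\leq})$ (\resp $\St^{\geq0}(\kk_{\ccIet,\leq})$) to consist of the triangles whose three terms lie in the nonpositive (\resp nonnegative) parts of the $t$\nobreakdash-structures on $D^b_\CSt$, $D^b_\coCSt$ and $D^b_\grCc$. Stability by shift is immediate, and $\Hom(\St^{\leq0},\St^{\geq1})=0$ follows termwise from the three component $t$\nobreakdash-structures, since a morphism in $\St(\kk_{\ccIet,\leq})$ is a triple of morphisms of the respective terms. For the truncation triangle, given $T=(\cF_{\!\prec}\to\cF_\leq\to\gr\cF\To{+1})$ one truncates each term in its own $t$\nobreakdash-structure; the maps $\tau^{\leq0}\cF_{\!\prec}\to\tau^{\leq0}\cF_\leq\to\tau^{\leq0}\gr\cF$ assemble into a distinguished triangle in $D^b_\CRc(\kk_{\ccIet,\leq})$ — and likewise for $\tau^{\geq1}$ — because the structural morphism $\gr\cF\to\cF_{\!\prec}[1]$ is compatible with the truncations: a morphism from an object in $t$\nobreakdash-degrees $\geq1$ to one in $t$\nobreakdash-degrees $\leq0$ vanishes, and a parallel vanishing identifies the cone of $\tau^{\leq0}\cF_{\!\prec}\to\tau^{\leq0}\cF_\leq$ with $\tau^{\leq0}\gr\cF$. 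This yields the functorial truncation triangle $\tau^{\leq0}T\to T\to\tau^{\geq1}T\To{+1}$ in $\St(\kk_{\ccIet,\leq})$, hence the $t$\nobreakdash-structure; its heart is the category of Stokes-perverse sheaves to be studied in the next sections.

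Finally, for the duality: applying the functor $\bD$ of Proposition \ref{prop:DDCSt} to each term of a triangle $\cF_{\!\prec}\to\cF_\leq\to\gr\cF\To{+1}$ and rotating, one obtains a distinguished triangle $\bD(\cF_\leq)\to\bD(\cF_{\!\prec})\to\bD(\gr\cF)[1]\To{+1}$, whose terms are respectively co-$\St$-, $\St$- and graded $\CC$\nobreakdash-constructible by Proposition \ref{prop:DDCSt} (the shift by $1$ being harmless as $D^b_\grCc(\kk_{\ccIet,\leq})$ is stable by shift); thus $\bD$ preserves $\St(\kk_{\ccIet,\leq})$ and is contravariant exact, and the isomorphism $\bD\circ\bD\simeq\id$ as well as $t$\nobreakdash-exactness are inherited termwise from Proposition \ref{prop:DDCSt}. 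The hard part of the whole proof is the rigidity of the first step — identifying precisely the $\Hom$-vanishings needed for cones to be well defined — together with the check that the three separately-defined truncations glue along $\gr\cF\to\cF_{\!\prec}[1]$; the rest is bookkeeping that reduces to $D^b_\CRc(\kk_{\ccIet,\leq})$ and to Proposition \ref{prop:DDCSt}.
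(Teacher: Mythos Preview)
Your overall plan---build everything componentwise out of the three subcategories and invoke Remark \ref{rem:gr<} for the glue---is the right strategy, and the treatment of duality and the shape of the truncation argument are essentially the paper's. But the crux of your triangulated-structure argument, the rigidity step, has a real gap.

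You claim that for a morphism of triangles of the form $(0,0,c)$ the map $c$ factors through $\gr\cF\to\cF_{\!\prec}[1]$ and then vanishes because $\Hom_{D^b_\CRc}(\cF_{\!\prec}[1],\gr\cF')=0$. Remark \ref{rem:gr<} does not give this: it asserts vanishing of \emph{abelian-level} morphisms \emph{from graded to (co-)Stokes}, whereas you are asking for vanishing of derived morphisms \emph{from co-Stokes to graded}. Even if you use the other factorization of $c$ (through $\cF'_\leq$), you would need $\Hom_{D^b}(\gr\cF,\cF'_\leq)=0$, and the hypercohomology spectral sequence you invoke has graded pieces $\Ext^i(\pcH^a\gr\cF,\pcH^b\cF'_\leq)$ with $i=a-b$; for $i>0$ these are higher Ext groups of a graded object by a Stokes object in $\Mod_\CRc$, and there is no reason for them to vanish (nonsplit extensions exist). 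So the uniqueness of $c$, hence the well-definedness of cones in your sense, is not established.

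The paper avoids this by using Remark \ref{rem:gr<} only where it literally applies: in the long exact sequence of perverse cohomology attached to a triangle $\cF_{\!\prec}\to\cF_\leq\to\gr\cF\To{+1}$, the connecting morphisms $\pcH^k\gr\cF\to\pcH^{k+1}\cF_{\!\prec}$ are abelian maps from a graded object to a co-Stokes object, hence zero. One then \emph{characterizes} the distinguished triangles in $\St(\kk_{\ccIet,\leq})$ as exactly those whose perverse cohomology breaks into short exact sequences $0\to\pcH^k\cF_{\!\prec}\to\pcH^k\cF_\leq\to\pcH^k\gr\cF\to0$. This cohomological criterion replaces your rigidity argument: it makes the triangulated axioms checkable degree by degree in the hearts, and it immediately shows that the perverse truncations $\tau^{\leq0}$, $\tau^{\geq1}$ of $D^b_\CRc$ preserve objects of $\St$ (the truncated sequences are just the low-, resp.\ high-, degree pieces of the original exact sequences), which is the nontrivial part of the $t$-structure. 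Your truncation paragraph in fact implicitly uses this characterization (``a parallel vanishing identifies the cone of $\tau^{\leq0}\cF_{\!\prec}\to\tau^{\leq0}\cF_\leq$ with $\tau^{\leq0}\gr\cF$'' is exactly the computation of perverse cohomology of that cone via the short exact sequences), so once you adopt the paper's criterion your argument goes through.
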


\begin{proof}[Sketch of proof]
Any (not necessarily distinguished) triangle $\cF_{\!\prec}\to\cF_\leq\to\gr\cF\To{+1}$ induces a (not necessarily exact) sequence $\cdots\to\pcH^k\cF_{\!\prec}\to\pcH^k\cF_\leq\to\pcH^k\gr\cF\to\cdots$ by taking perverse cohomology of objects of $D^\rb_\CRc(\kk_{\ccIet,\leq})$, and, according to Remark \ref{rem:gr<}, each connecting morphism $\pcH^k\gr\cF\to\pcH^{k+1}\cF_{\!\prec}$ is zero. One then shows that such a triangle is distinguished if and only if each short sequence as before is exact. The axioms of a triangulated category are then checked for $\St(\kk_{\ccIet,\leq})$ by using this property.

In order to check the $t$-structure property (\cf \cite[Def\ptbl10.1.1]{K-S90}), the only non trivial point is to insert a given object of $\St(\kk_{\ccIet,\leq})$ in a triangle with first term in $\St^{\leq0}$ and third term in~$\St^{\geq1}$. Denoting by $\tau^{\leq0}$ and $\tau^{\geq1}$ the perverse truncation functors in $D^\rb_\CRc(\kk_{\ccIet,\leq})$, it is enough to check that each of these functors preserves objects of $\St(\kk_{\ccIet,\leq})$. This follows from the cohomological characterization of the distinguished triangles which are objects of $\St(\kk_{\ccIet,\leq})$.

The assertion on duality follows from Proposition \ref{prop:DDCSt}.
\end{proof}

The heart of this $t$\nobreakdash-structure is an \index{abelian (category)}abelian category (\cf \cite{B-B-D81}, \cite{K-S90}).

\begin{lemme}\label{lem:forget}
The ``forgetting'' functor from $\St(\kk_{\ccIet,\leq})$ to $D^\rb_\CSt(\kk_{\ccIet,\leq})$, sending a triangle $\cF_{\!\prec}\to\cF_\leq\to\gr\cF\To{+1}$ to $\cF_\leq$ (which is by definition compatible with the $t$\nobreakdash-structures) induces an equivalence of abelian categories from the heart of the $t$\nobreakdash-structure of $\St(\kk_{\ccIet,\leq})$ to \index{$STperv$@$\StPerv_{D}(\kk_{\ccIet,\leq})$}$\StPerv_{D}(\kk_{\ccIet,\leq})$.
\end{lemme}

\begin{proof}
For the essential surjectivity, the point is to recover $\cF_{\!\prec}$ and $\gr\cF$ from $\cF_\leq$. As our objects can be obtained by gluing from local data, the question can be reduced to a local one near $\ccIet_{|\partial\wt X}$, where we can apply the arguments for sheaves and define $\cF_{\!\prec}$ and $\gr\cF$ as in Lemma \ref{lem:Stcofilt}.

Similarly, any morphism $\cF_\leq\to\cF'_\leq$ lifts as a morphism between triangles. It remains to show the uniqueness of this lifting. This is a local problem, that we treat for sheaves. The triangle is then an exact sequence, and the morphism $\cF_{\!\prec}\to\cF'_\prec$ coming in a morphism between such exact sequences is simply the one induced by $\cF_\leq\to\cF'_\leq$, so is uniquely determined by it. The same argument holds for $\gr\cF\to\gr\cF'$.
\end{proof}

\begin{corollaire}[Duality]\label{cor:DDStPerv}
The category $\StPerv_{D}(\kk_{\ccIet,\leq})$ is stable by \index{duality}duality.\qed
\end{corollaire}

\subsection{Direct image to~$X$}\label{subsec:imdirStperv}
Given a sheaf $\cF_\leq$ on $\ccIet$, any section $\varphi\in\Gamma(U,\ccI)$ produces a sheaf $\cF_{\leq\varphi}\defin\varphi^{-1}\cF_\leq$ on $U$. We now use more explicitly that~$\ccI$ has a global section $0$, meaning that there is a global section $0:\wt X\to\ccIet$ of $\mu$, making $\wt X$ a closed subset of $\ccIet$. Then $\cF_{\leq0}$ is a sheaf on $\wt X$.

\begin{proposition}\label{prop:StDPervimdir}
Let $\cF_\leq$ be an object of $\StPerv_{D}(\kk_{\ccIet,\leq})$. Then $\bR\varpi_*\cF_{\leq0}$ is perverse (in the usual sense) on~$X$.
\end{proposition}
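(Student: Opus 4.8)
The statement is local on $X$, so we may work in the neighbourhood of a single point $x_o\in D$, identify this neighbourhood with the disc $X$ of Example \ref{exem:Stokes}, and keep the notation $\wt X = S^1\times[0,1)$, $\partial\wt X = S^1$, $\varpi\colon\wt X\to X$. Away from $D$, $\varpi$ is an isomorphism and $\cF_{\leq0}$ is just the underlying perverse sheaf $\cF^*$ on $X^*$, so nothing is to be proved there. The whole point is to check perversity of $\bR\varpi_*\cF_{\leq0}$ at the origin $x=0$. Since $\cF_\leq$ is an object of $\StPerv_{D}(\kk_{\ccIet,\leq})$, in a neighbourhood of $\ccIet_{\partial\wt X}$ it is a sheaf shifted by $1$; more precisely, by Definition \ref{def:CSt} and Remark \ref{rem:StPerv}, $\cF_{\leq0} = \cF_{|0}$ is $\cL_{\leq0}[1]$ near $S^1$, where $\cL_{\leq0}\subset\cL$ is the degree-$0$ term of a Stokes filtration of the local system $\cL = i_\partial^{-1}j_{\partial,*}\cF^*$ on $S^1$, together with the gluing to $\cF^*$ on $X^*$ via the injective morphism $\nu$.

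\textbf{Key steps.} First I would compute the stalk cohomology $\cH^j(\bR\varpi_*\cF_{\leq0})_0 = H^j(\wt X\cap\varpi^{-1}(\text{small disc}),\cF_{\leq0}) = H^j(\wt X,\cF_{\leq0})$ (taking $X$ small enough), using that $\wt X$ retracts onto $S^1$ — but carefully, since $\cF_{\leq0}$ is \emph{not} locally constant near $S^1$ (the $\beta_{\psi\leq0}$'s cut it along Stokes directions). The right tool is the decomposition triangle $\cF_{\!\prec}\to\cF_{\leq0}\to\gr\cF\To{+1}$ near $\partial\wt X$, equivalently $\cL_{<0}[1]\to\cL_{\leq0}[1]\to\gr_0\cL[1]$ where $\gr_0\cL = \cL_{\leq0}/\cL_{<0}$ is a genuine local system on $S^1$; I would also use that $\cL_{\leq0}$ and $\cL_{<0}$ are $\RR$-constructible (Lemma \ref{lem:Rconst}) with the explicit local description $\bigoplus_\psi\beta_{\psi\leq0}\gr_\psi\cL$, $\bigoplus_\psi\beta_{\psi<0}\gr_\psi\cL$ of the graded model. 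The vanishing lemmas of Chapter \ref{chap:abelian} — in particular $\Gamma(S^1,\cL_{<0})=0$ (the corollary to Lemma \ref{lem:grandintervalle}) and the fact that $H^1(S^1,\cL_{<0})$ is the only possibly nonzero cohomology — together with $H^*(S^1,\gr_0\cL)$ for the local system part, give $H^j(\wt X,\cF_{\leq0})$ concentrated in degrees $0,1$ for the $<0$ piece, i.e.\ $\cH^j(\cF_{\leq0})$ in degrees $-1,0$; and the perverse $\geq0$ condition at $0$ needs $\cH^j(i_0^!\bR\varpi_*\cF_{\leq0}) = 0$ for $j<0$, which by adjunction $i_0^!\bR\varpi_* = \bR(\varpi_{\partial})_* i_{\partial}^!$ and the corresponding vanishing for $\cL_{\leq0}$ (exact sequence $0\to\cL_{\leq0}\to\cL\to\cL/\cL_{\leq0}\to0$, plus $\Gamma(S^1,\cL_{<0})=0$ applied suitably, or duality with the co-Stokes side and $\cF^\vee_\prec = \bD'\cF_\leq$ of Lemma \ref{lem:StCdual}). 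Second, I would assemble: the support condition says $\bR\varpi_*\cF_{\leq0}$ lives in perverse degrees $\leq0$, i.e.\ $\cH^{j}$ vanishes for $j>-1$ on the $1$-dimensional part and $\cH^j(i_{x}^{-1}\bullet)$ for $j>0$ at points (automatic since it is a sheaf in degree $-1$ on $X^*\setminus\Sing\cF^*$ and perverse on $X^*$); the cosupport condition is the $i_0^!$ computation just described.

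\textbf{Main obstacle.} The delicate part is the cosupport (perverse $\geq0$) inequality at $0$: one must show $H^j\big(\wt X, i_{\partial,!}i_\partial^!\cF_{\leq0}\big)$, or rather the local section-with-support groups, vanish in the right range. Concretely $i_\partial^!\cF_{\leq0}[1]$ has cohomology $\cL/\cL_{\leq0}$ in degree $0$ only (this is exactly the injectivity condition \ref{def:CSt}\eqref{def:CSt2} unravelled as in \S\ref{subsec:dualStperv}), and one needs $H^0(S^1,\cL/\cL_{\leq0})$ to behave — but $\cL/\cL_{\leq0}$ is the co-Stokes graded-type quotient, and by the dual of the vanishing $\Gamma(S^1,\cL_{<0})=0$ (equivalently, by Poincaré–Verdier duality Lemma \ref{lem:dualS1} / Remark \ref{rem:Stokeswithramif}\eqref{rem:Stokeswithramif4b}, $\cL/\cL_{\leq0}$ is dual to $(\cL^\vee)_{<0}$-type data so has no global sections coming from the wrong place), the desired vanishing follows. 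So the real work is bookkeeping between the Stokes side $\cL_{\leq0}$ and co-Stokes side $\cL/\cL_{\leq0}\simeq \bD'$ of a Stokes-co-filtration, and invoking the Chapter \ref{chap:abelian} vanishing theorems on the circle; the ramified case reduces to the non-ramified one exactly as in Remark \ref{rem:Stokeswithramif}\eqref{rem:Stokeswithramif0}, since perversity is local and compatible with the finite covering $\wt\rho_d$. Once the stalk and costalk computations at $0$ are in hand, gluing with the already-perverse restriction to $X^*$ finishes the proof.
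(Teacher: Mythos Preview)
Your approach is correct but substantially overcomplicated compared with the paper's two-line argument. You correctly identify the two key structural facts: (i) $i_\partial^{-1}\cF_{\leq0}$ is, up to the shift $[1]$, the $\RR$-constructible sheaf $\cL_{\leq0}$ on $\partial\wt X\simeq S^1$; and (ii) the injectivity condition \ref{def:CSt}\eqref{def:CSt2} forces $i_\partial^!\cF_{\leq0}$ to be a sheaf concentrated in a single degree (with stalk $\cL/\cL_{\leq0}$, as explained after Lemma~\ref{lem:Rconst}). The paper simply observes that once these are sheaves in one degree, the one-dimensionality of $S^1$ immediately bounds their cohomology to two consecutive degrees; proper base change then gives $i_D^{-1}\bR\varpi_*\cF_{\leq0}$ in degrees $-1,0$ and $i_D^!\bR\varpi_*\cF_{\leq0}$ in degrees $0,1$, which is exactly the perversity condition at $D$.

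The machinery you invoke---the triangle $\cF_{\!\prec}\to\cF_\leq\to\gr\cF$, the vanishing $\Gamma(S^1,\cL_{<0})=0$ from Chapter~\ref{chap:abelian}, and Poincar\'e--Verdier duality via Lemma~\ref{lem:dualS1}---is not needed here. In particular, when you write ``one needs $H^0(S^1,\cL/\cL_{\leq0})$ to behave'', nothing further is required: $\cL/\cL_{\leq0}$ being an honest $\RR$-constructible sheaf on a circle already forces $H^k(S^1,\cL/\cL_{\leq0})=0$ for $k\neq0,1$, and that is all the proposition asks. Your extra tools would compute the \emph{dimensions} of these cohomology groups, but perversity only needs the \emph{range}. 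So your plan works, but you should recognise that the hard vanishing results of Chapter~\ref{chap:abelian} are reserved for finer statements (like Proposition~\ref{prop:StDPervhatimdir}), not this one.
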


\begin{proof}
It is a matter of proving that $i_D^{-1}\bR\varpi_*\cF_{\leq0}$ has cohomology in degrees $-1,0$ at most and $i_D^!\bR\varpi_*\cF_{\leq0}$ in degrees $0,1$ at most.

We have $i_D^{-1}\bR\varpi_*\cF_{\leq0}=\bR\Gamma(\partial\wt X,i_\partial^{-1}\cF_{\leq0})$, because $\varpi$ is proper (\cf \cite[Prop\ptbl2.5.11]{K-S90}), so, setting $\cL_{\leq0}=i_\partial^{-1}\cF_{\leq0}[-1]$, the first assertion reduces to $H^k(\partial\wt X,\cL_{\leq0})=0$ for $k\neq0,1$, which is clear since $\cL_{\leq0}$ is a $\RR$-constructible sheaf. Similarly, as we have seen after Lemma \ref{lem:Rconst}, $i_\partial^!(\cF_{\leq0})=i_\partial^!(\cF_{\leq0}[-1])[1]$ is a sheaf, so the second assertion is also satisfied (\cf \cite[Prop\ptbl3.1.9(ii)]{K-S90}).
\end{proof}

\subsection{Stokes-perverse sheaves on $\protect\underline{\protect\wt X}$}\label{subsec:Stpervhat}
We will prove in the next \chaptersname that Stokes-perverse sheaves on $\wt X$ correspond to holonomic $\cD$-modules on~$X$ which are isomorphic to their localization at~$D$, and have at most regular singularities on $X^*$. In order to treat arbitrary holonomic $\cD_X$-modules, we need to introduce supplementary data at~$D$, which are regarded as filling the space $\wt X$ by discs in order to obtain a topological space $\underline{\wt X}$, that we describe now. This construction goes back to \cite{Deligne84cc} and is developed in \cite{Malgrange91}.

For any $x_o\in D$, let $\wh X_{\wh x_o}$ be an open disc with center denoted by $\wh x_o$, and let $\ov{\wh X}_{\wh x_o}$ be the associated closed disc. Denote by $\whj:\wh X_{\wh x_o}\hto\ov{\wh X}_{\wh x_o}$ the open inclusion and $\whi:S^1_{\wh x_o}\hto\ov{\wh X}_{\wh x_o}$ the complementary closed inclusion.

We denote by \index{$XWTU$@$\underline{\wt X}$|finindexnotations}$\underline{\wt X}$ the topological space (homeomorphic to~$X$) obtained by gluing each closed disc $\ov{\wh X}_{\wh x_o}$ to $\wt X$ along their common boundary $S^1_{\wh x_o}\!=\!S^1_{x_o}$ for $x_o\!\in\!D$ (\cf Figure~\ref{fig:0}).
\begin{figure}[htb]
\begin{center}
\includegraphics[scale=.4]{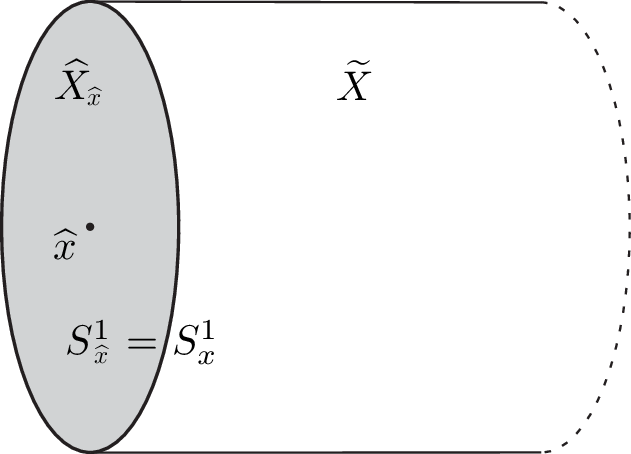}
\end{center}
\caption{The space $\protect\underline{\wt X}$ near $S^1_{x}$ ($x\in D$)}\label{fig:0}
\end{figure}

We first define the category \index{$ModkietleqD$@$\Mod(\kk_{\ccIet,\leq,\wh D})$}$\Mod(\kk_{\ccIet,\leq,\wh D})$ to be the category whose objects are triples $(\cF_\leq,\wh\cF,\wh\nu)$, where $\cF_\leq$ is a pre-$\ccI$-filtration, \ie an object of $\Mod(\kk_{\ccIet,\leq})$, $\wh\cF$ is a sheaf on $\bigsqcup_{x_o\in D}\wh X_{\wh x_o}$, and $\wh\nu$ is a morphism $i_\partial^{-1}\cF_{\leq0}\to\whi^{-1}\whj_*\wh\cF$. The morphisms in this category consist of pairs $(\lambda,\wh\lambda)$ of morphisms in the respective categories which are compatible with $\wh\nu$ in an obvious way. We have ``forgetting'' functors to $\Mod(\kk_{\ccIet,\leq})$ (hence also to $\Mod(\kk_{\ccIet})$) and $\Mod(\kk_{\wh X})$. Then $\Mod(\kk_{\ccIet,\leq,\wh D})$ is an \index{abelian (category)}abelian category. A sequence in $\Mod(\kk_{\ccIet,\leq,\wh D})$ is exact iff the associated sequences in $\Mod(\kk_{\ccIet})$ and $\Mod(\kk_{\wh X})$ are exact. The associated bounded derived category is denoted by $D^\rb(\kk_{\ccIet,\leq,\wh D})$.

\begin{definitio}[$\St$-$\CC$-constructible sheaves on $\underline{\wt X}$]\label{def:CSthat}\index{Stokes-$\CC$-constructible sheaf@$\St$-$\CC$-constructible sheaf}
The objects in the category $\Mod_\CSt(\kk_{\ccIet,\leq,\wh D})$ consist of triples $(\cF_\leq,\wh\cF,\wh\nu)$, where
\begin{enumerate}
\item\label{def:CSthat1}
$\cF_\leq$ is a $\St$-$\CC$-constructible sheaf on $\wt X$ (\cf Definition \ref{def:CSt}), defining a Stokes-filtered local system $(\cL,\cL_\bbullet)$ on each $S^1_{x_o}$, $x_o\in D$,
\item\label{def:CSthat2}
$\wh \cF$ is a $\CC$-constructible sheaf on $\bigsqcup_{x_o\in D}\wh X_{\wh x_o}$ with singularity at $\wh x_o$ ($x_o\in D$) at most,
\item\label{def:CSthat3}
$\wh\nu$ is an isomorphism $\gr_0\cL\isom\whi^{-1}\whj_*\wh\cF$.
\end{enumerate}
Morphisms between such triples consist of pairs $(\lambda,\wh\lambda)$ of morphisms in the respective categories which are compatible with $\wh\nu$.
\end{definitio}

By definition, $\Mod_\CSt(\kk_{\ccIet,\leq,\wh D})$ is a full subcategory of $\Mod(\kk_{\ccIet,\leq,\wh D})$.

\begin{lemme}\label{lem:CSthatabelian}
The category $\Mod_\CSt(\kk_{\ccIet,\leq,\wh D})$ is \index{abelian (category)}abelian and stable by extensions in $\Mod(\kk_{\ccIet,\leq,\wh D})$.
\end{lemme}

\begin{proof}
We apply Lemma \ref{lem:CStabelian} for the $\cF_\leq$-part, a standard result for the $\wh\cF$-part, and the compatibility with $\wh\nu$ follows.
\end{proof}

The definition of the category $\StPerv_D(\kk_{\ccIet,\leq,\wh D})$ now proceeds exactly as in \S\ref{subsec:Stperv}, by only adding the information of the usual $t$\nobreakdash-structure on the various $\wh X_{\wh x_o}$, $x_o\in D$. We leave the details to the interested reader. As above, a \index{Stokes perverse@Stokes-perverse sheaf!on $\underline{\wt X}$}Stokes-perverse sheaf is a sheaf shifted by one away from the singularities in~$X^*$ and of $\wh D$.

\begin{remarque}
The previous presentation makes a difference between singularities~$S$ in $X^*$ and singularities in~$D$. One can avoid this difference, by considering objects which are local systems on $X^*\moins S$ and by replacing $D$ with $D\cup S$. Then ``regular singularities'' are the points of~$D$ where the set of exponential factors of $\cF_\leq$ reduces to $\{0\}$. This point of view is equivalent to the previous one through the functor $\cP$ at those points (see below, Proposition \ref{prop:StDPervhatimdir}).
\end{remarque}

\subsection{Associated perverse sheaf on~$X$}\index{associated perverse sheaf}\index{perverse sheaf!associated --}
We now define a functor $\cP:\StPerv_D(\kk_{\ccIet,\leq,\wh D})\to\Perv(\kk_X)$ between Stokes-perverse sheaves on $\underline{\wt X}$ and perverse sheaves (in the usual sense) on~$X$. This is an extension of the direct image procedure considered in \S\ref{subsec:imdirStperv}. Namely, if $(\cF_\leq,\wh\cF,\wh\nu)$ is a Stokes-constructible sheaf on $\underline{\wt X}$, the triple $(\cF_{\leq0},\wh\cF,\wh\nu)$ allows one to define a sheaf $\underline\cF_{\leq0}$ on $\underline{\wt X}$: indeed, a sheaf on $\underline{\wt X}$ can be determined up to isomorphism by a morphism $\cL_{\leq0}\to\whi^{-1}\whj_*\wh\cF$; we use the composed morphism $\cL_{\leq0}\to\gr_0\cL\To{\wh\nu}\whi^{-1}\whj_*\wh\cF$.

Extending this construction to Stokes-perverse sheaves and taking the direct image by the continuous projection $\underline\varpi:\underline{\wt X}\to X$ which contracts $\ov{\wh X}$ to~$D$ is $\cP(\cF_\leq,\wh\cF,\wh\nu)$. We will however not explicitly use the previous gluing construction in this perverse setting, but an ersatz of it, in order to avoid a precise justification of this gluing. Let us make this more precise.

We first note that the pull-back by the zero section $0:\wt X\hto\ccIet$ defines an exact functor $0^{-1}:\Mod(\kk_{\ccIet,\leq})\to\Mod(\kk_{\wt X})$ and therefore, taking the identity on the $\wh\cF$\nobreakdash-part, an exact functor from $\Mod(\kk_{\ccIet,\leq,\wh D})$ to the category $\Mod(\kk_{\underline{\wt X},\leq0,\wh D})$ consisting of triples $(\cF_{\leq0},\wh\cF,\wh\nu)$.

In the following, we implicitly identify sheaves on~$D$ and sheaves on~$X$ supported on~$D$ via $\bR i_{D,*}$ and we work in $D^\rb_{\Cc}(\kk_X)$. On the one hand, we have a natural composed morphism
\[
\bR\varpi_*\cF_{\leq0}\to\bR\Gamma(\partial\wt X,\wti{}^{-1}\cF_{\leq0})\to\bR\Gamma(\partial\wt X,\gr_0\cF)\underset{\sim}{\To{\wh\nu}}\bR\Gamma(\partial\wh X,\whi{}^{-1}\whj_*\wh\cF).
\]
On the other hand, we have a distinguished triangle
\[
\bR\Gamma(\wh X,\whj_*\wh\cF)\to\bR\Gamma(\partial\wh X,\whi{}^{-1}\whj_*\wh\cF)\to\bR\Gamma(\ov{\wh X},\whj_!\wh\cF)[1]\To{+1}.
\]
We deduce a morphism
\[
\bR\varpi_*\cF_{\leq0}\to\bR\Gamma(\ov{\wh X},\whj_!\wh\cF)[1]=\bR\Gamma_\rc(\wh X,\wh\cF)[1].
\]

\begin{proposition}\label{prop:StDPervhatimdir}
If $(\cF_\leq,\wh\cF,\wh\nu)$ is an object of $\StPerv_D(\kk_{\ccIet,\leq,\wh D})$, then the complex $\bR\Gamma_\rc(\wh X,\wh\cF)[1]$ has cohomology in degrees $-1$ and $0$ at most and the cone of the previous morphism--that we denote by \index{$PP$@$\cP$}$\cP(\cF_\leq,\wh\cF,\wh\nu)[1]$--has perverse cohomology in degree $1$ at most.
\end{proposition}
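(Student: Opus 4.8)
The statement is local near each $x_o\in D$, so I would fix $x_o$ and drop it from the notation, writing $S^1=S^1_{x_o}$, $\wh X=\wh X_{\wh x_o}$, etc. Since $(\cF_\leq,\wh\cF,\wh\nu)$ is Stokes-perverse, $\cF_{\leq0}[-1]$ restricts on $\partial\wt X$ to the $\RR$-constructible sheaf $\cL_{\leq0}$ underlying the Stokes-filtered local system $(\cL,\cL_\bbullet)$, and $\wh\cF[-1]$ is a perverse sheaf on $\wh X$ with singularity at $\wh x_o$ at most, that is $\wh\cF$ is (up to the shift) a local system on $\wh X^*$ together with the usual perversity data at $\wh x_o$. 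The first claim, that $\bR\Gamma_\rc(\wh X,\wh\cF)[1]$ has perverse cohomology in degrees $-1,0$ at most, is a statement about a perverse sheaf on a disc: since $\wh\cF[-1]$ is perverse on $\wh X$, $\bR\Gamma_\rc(\wh X,\wh\cF[-1])=\bR\Gamma_\rc(\wh X,\wh\cF)[-1]$ sits in degrees $0,1$ at most by the support/cosupport estimates for perverse sheaves on a curve (a point and its complement), hence $\bR\Gamma_\rc(\wh X,\wh\cF)[1]$ sits in degrees $-1,0$; I would phrase this using the compactly supported cohomology of a local system on a punctured disc ($H^1_\rc$ and $H^2_\rc$, the latter controlled by perversity at $\wh x_o$) together with the cone description of $\bR\Gamma_\rc(\wh X,\whj_!\wh\cF)$.

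\textbf{The main estimate.} For $\cP(\cF_\leq,\wh\cF,\wh\nu)[1]$, the cone of
\[
\bR\varpi_*\cF_{\leq0}\to\bR\Gamma_\rc(\wh X,\wh\cF)[1],
\]
I would proceed in two stages. First, from Proposition \ref{prop:StDPervimdir} we already know $\bR\varpi_*\cF_{\leq0}$ is perverse on $X$, hence lives in perverse degrees $-1,0$; combined with the first claim, the cone a priori lives in degrees $-1,0,1$. The content of the proposition is the vanishing of perverse cohomology in degree $-1$, i.e.\ that the morphism is surjective on $\pcH^{-1}$ (and the kernel in degree $-1$ dies). Away from $D$ both sides are the perverse direct image of $\cF_\leq$ on $X^*$, so everything is concentrated at $D$ and reduces to a computation of stalk and costalk at $x_o$. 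Using $\varpi$ proper, $i_D^{-1}\bR\varpi_*\cF_{\leq0}=\bR\Gamma(\partial\wt X,\cL_{\leq0})[1]$ and $i_D^{-1}$ of the target is $\bR\Gamma_\rc(\wh X,\wh\cF)[1]$; the morphism between them factors through $\bR\Gamma(\partial\wt X,\gr_0\cF)\xrightarrow[\sim]{\wh\nu}\bR\Gamma(\partial\wh X,\whi^{-1}\whj_*\wh\cF)$ followed by the connecting map of the triangle defining $\bR\Gamma_\rc(\wh X,\wh\cF)[1]$. So I must show that the composite
\[
\bR\Gamma(\partial\wt X,\cL_{\leq0})\to\bR\Gamma(\partial\wt X,\gr_0\cL)\xrightarrow{\ \sim\ }\bR\Gamma(\partial\wh X,\whi^{-1}\whj_*\wh\cF)\to\bR\Gamma_\rc(\wh X,\wh\cF)[1]
\]
is surjective on $H^0$ of the shifted complexes, i.e.\ on $H^1(\partial\wt X,\cL_{\leq0})\to H^1_\rc(\wh X,\wh\cF)$ appropriately interpreted. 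Here the key input is that $H^1(\partial\wt X,\cL_{\leq0})\to H^1(\partial\wt X,\gr_0\cL)$ is surjective: this follows because $\cL_{<0}$ is a subsheaf of $\cL_{\leq0}$ with $H^1(S^1,\cL_{<0})=0$ by the corollary to Lemma \ref{lem:grandintervalle} (``$\Gamma(S^1,\cL_{<\varphi})=0$'' and, by the dual/$H^1$ version, the vanishing of $H^1(S^1,\cL_{<0})$ — the ``irregularity number'' remark identifies $H^1(S^1,\cL_{<0})$ but one needs rather the vanishing statement for $\cL_{<0}$ as recorded there), so the long exact sequence gives $H^1(S^1,\cL_{\leq0})\twoheadrightarrow H^1(S^1,\gr_0\cL)$.

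\textbf{Finishing.} After this surjectivity, I would compare $H^1(S^1,\gr_0\cL)\cong H^1(\partial\wh X,\whi^{-1}\whj_*\wh\cF)$ (via $\wh\nu$, which is an isomorphism $\gr_0\cL\isom\whi^{-1}\whj_*\wh\cF$) with $H^1_\rc(\wh X,\wh\cF)$ through the distinguished triangle
\[
\bR\Gamma(\wh X,\whj_*\wh\cF)\to\bR\Gamma(\partial\wh X,\whi^{-1}\whj_*\wh\cF)\to\bR\Gamma_\rc(\wh X,\wh\cF)[1]\xrightarrow{+1},
\]
and check that the map $H^1(\partial\wh X,\whi^{-1}\whj_*\wh\cF)\to H^1_\rc(\wh X,\wh\cF[1])=H^2_\rc(\wh X,\wh\cF)$ is onto: its cokernel injects into $H^2(\wh X,\whj_*\wh\cF)=0$ (a disc has cohomological dimension $1$ for such sheaves), so it is onto. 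Chaining the two surjectivities shows the composite is surjective on the relevant $H^1$, hence $\pcH^{-1}$ of the cone vanishes, and the cone has perverse cohomology in degree $1$ at most (the degree $0$ part, $\pcH^0$ of the cone $=\cP(\cF_\leq,\wh\cF,\wh\nu)$, is exactly the perverse sheaf being constructed; I would not need to say more about it here). Away from $D$ nothing happens, so this local analysis suffices. The main obstacle is organizing the stalk-at-$x_o$ computation cleanly — keeping track of which shifts and which of the four maps in the composite are iso's — and invoking the correct vanishing statement $H^1(S^1,\cL_{<0})=0$, which is the $H^1$-analogue (via Poincar\'e--Verdier duality, Remark \ref{rem:H1leq}) of the $\Gamma$-vanishing proved after Lemma \ref{lem:grandintervalle}.
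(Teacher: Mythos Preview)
Your overall approach matches the paper's: localize at $x_o\in D$, factor the map through $\gr_0\cL\cong\whi^{-1}\whj_*\wh\cF$, and obtain the key surjectivity by chaining two surjectivities coming from two long exact sequences. But the justification you give for the first link of the chain is wrong.

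You assert $H^1(S^1,\cL_{<0})=0$ and cite the irregularity-number remark for support. That remark says precisely the opposite: $H^1(S^1,\cL_{<0})$ \emph{is} the irregularity number, and it is nonzero whenever the Stokes filtration has a nontrivial jump; you half-notice this and then assert the vanishing anyway. Moreover, even if it vanished, $H^1(\cL_{<0})=0$ would yield \emph{injectivity} of $H^1(\cL_{\leq0})\to H^1(\gr_0\cL)$ from the long exact sequence of $0\to\cL_{<0}\to\cL_{\leq0}\to\gr_0\cL\to0$, not surjectivity. What you actually need is $H^2(S^1,\cL_{<0})=0$, which is immediate because $\cL_{<0}$ is a sheaf concentrated in degree~$0$ on the one-real-dimensional circle. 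The paper's justification is simply ``since $\cL_{<0}$ is a sheaf on $\partial\wt X$'', which encodes exactly this. Your reference to Remark~\ref{rem:H1leq} is likewise misplaced: that remark treats $H^1(I,\cL_{\leq\varphi})$ for a proper interval $I\subsetneq S^1$, not for $S^1$ itself.

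Beyond this, your perverse-degree bookkeeping is tangled in several places: a perverse sheaf sits in perverse degree~$0$, not in degrees $-1,0$, so the cone lies in perverse degrees $-1,0$, not $-1,0,1$; the surjectivity you establish kills $\pcH^0$ of the cone (equivalently $\pcH^1(\cP)$), not $\pcH^{-1}$; and your second surjectivity lands in $H^2_\rc$ although the composite you set out to study targets $H^1_\rc$. The paper organizes the endgame slightly differently, checking the support condition $\cH^1(i_D^{-1}\cP)=0$ via the surjectivity above and then the cosupport condition $\cH^{-1}(i_D^!\cP)=0$ separately, using $i_D^!\bR\varpi_*\cF_{\leq0}=\bR\Gamma(\partial\wt X,\cL/\cL_{\leq0})$ and the trivial observation that a sheaf on $S^1$ has no $H^{-1}$. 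With correct bookkeeping your perverse long exact sequence would subsume both checks, but as written the argument does not stand.
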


\begin{corollaire}\label{cor:StDPervhatimdir}
The functor $\cP:(\cF_\leq,\wh\cF,\wh\nu)\mto\pcH^0\cP(\cF_\leq,\wh\cF,\wh\nu)$ is an exact functor from $\StPerv_D(\kk_{\ccIet,\leq,\wh D})$ to $\Perv(\kk_X)$.\qed
\end{corollaire}

\begin{proof}[\proofname\ of Proposition \ref{prop:StDPervhatimdir}]
We consider the complementary inclusions
\[
\wh D\Hto{i_{\wh D}}\wh X\Hfrom{j_{\wh D}}\wh X\moins\{\wh D\}.
\]
We have a distinguished triangle $i_{\wh D}^!\wh\cF\to\wh\cF\to\bR j_{\wh D,*}j_{\wh D}^{-1}\wh\cF\To{+1}$, to which we apply~$\whj_!$ (recall that $\whj$ denotes the inclusion $\wh X_{\wh x_o}\hto\ov{\wh X}_{\wh x_o}$). This has no effect to the first term, which remains $i_{\wh D}^!\wh\cF$ and has cohomology in degrees~$0$ and~$1$ at most by the cosupport condition for $\wh\cF$. We then have
\[
\HH^{-1}(\ov{\wh X},\whj_!\wh\cF)\hto\HH_C^{-1}(\wh X{}^*,j_{\wh D}^{-1}\wh\cF),
\]
where $C$ is the family of supports in $\wh X{}^*$ whose closure in $\wh X$ is compact. We are thus reduced to showing that, if $\wh\cL$ is a local system on $\wh X{}^*$, we have $H^0_C(\wh X,\wh\cL)=0$, which is clear. Therefore, $\bR\Gamma_\rc(\wh X,\wh\cF)$ has cohomology in degrees $0$ and $1$ at most, as wanted.

Let us now show that $\cP=\cP(\cF_\leq,\wh\cF,\wh\nu)$ is perverse on~$X$. Obviously, we only have to check the support and cosupport conditions at~$D$, and we will use Proposition \ref{prop:StDPervimdir}.

For the support condition, we note that $i_D^{-1}\bR\varpi_*\cF_{\leq0}=\bR\Gamma(\partial\wt X,\wti{}^{-1}\cF_{\leq0})$, and $i_D^{-1}\cP[1]$ is the cone of the diagonal morphism below (where $\wh\nu$ is implicitly used):
\begin{equation}\label{eq:Sttri}
\begin{array}{c}
\xymatrix{
&\bR\Gamma(\partial\wt X,\wti{}^{-1}\cF_{\leq0})\ar[d]\ar[dr]&&\\
\bR\Gamma(\wh X,\wh\cF)\ar[r]&\bR\Gamma(\partial\wt X,\gr_0\cF)\ar[r]&\bR\Gamma_\rc(\wh X,\wh\cF)[1]\ar@<-.3ex>[r]^-{+1}&
}
\end{array}
\end{equation}
This gives an exact sequence
\begin{multline}\label{eq:Stsuiteex}
0\to\cH^{-1}(i_D^{-1}\cP)\to \HH^{-1}(\partial\wt X,\wti{}^{-1}\cF_{\leq0})\to\HH^0_\rc(\wh X,\wh\cF)\to\cH^0(i_D^{-1}\cP)\\
\to \HH^0(\partial\wt X,\wti{}^{-1}\cF_{\leq0})\to\HH^1_\rc(\wh X,\wh\cF)\to\cH^1(i_D^{-1}\cP)\to0
\end{multline}
and the support condition reduces to the property that $\HH^0(\partial\wt X,\wti{}^{-1}\cF_{\leq0})\to\HH^1_\rc(\wh X,\wh\cF)$ is onto, because we already know that $\HH^2_\rc(\wh X,\wh\cF)=0$. On the one hand, $\HH^0(\partial\wt X,\wti{}^{-1}\cF_{\leq0})\to\HH^0(\partial\wt X,\gr_0\cF)$ is the morphism $H^1(\partial\wt X,\cL_{\leq0})\to H^1(\partial\wt X,\gr_0\cL)$, which is onto since $\cL_{<0}$ is a sheaf on $\partial\wt X$. On the other hand the distinguished horizontal triangle above gives an exact sequence
\[
\HH^0(\partial\wt X,\gr_0\cF)\to \HH^1_\rc(\wh X,\wh\cF)\to\HH^1(\wh X,\wh\cF),
\]
and $\HH^1(\wh X,\wh\cF)=\cH^1(i_{\wh D}^{-1}\wh\cF)=0$ by the support condition for $\wh\cF$.

Let us now check the cosupport condition. We have $\wti^!\cF_{\leq0}=\cL/\cL_{\leq0}$, as remarked at the end of the proof of Lemma \ref{lem:StCdual}. We now argue as above, by replacing $\bR\Gamma(\partial\wt X,\wti{}^{-1}\cF_{\leq0})$ with $\bR\Gamma(\partial\wt X,\wti{}^!\cF_{\leq0})$ in \eqref{eq:Sttri}, so that \eqref{eq:Stsuiteex} becomes
\begin{multline*}
0\to\cH^{-1}(i_D^!\cP)\to \HH^{-1}(\partial\wt X,\wti{}^!\cF_{\leq0})\to\HH^0_\rc(\wh X,\wh\cF)\to\cH^0(i_D^!\cP)\\
\to \HH^0(\partial\wt X,\wti{}^!\cF_{\leq0})\to\HH^1_\rc(\wh X,\wh\cF)\to\cH^1(i_D^!\cP)\to\HH^1(\partial\wt X,\wti{}^!\cF_{\leq0})\to0,
\end{multline*}
since $\HH^2_\rc(\wh X,\wh\cF)=0$. The cosupport condition means $\cH^{-1}(i_D^!\cP)=0$, which follows from $\HH^{-1}(\partial\wt X,\wti{}^!\cF_{\leq0})=H^{-1}(\partial\wt X,\cL/\cL_{\leq0})=0$.
\end{proof}

\chapter[Riemann-Hilbert correspondence]{The Riemann-Hilbert correspondence for holonomic $\cD$-modules on curves}\label{chap:RH}

\begin{sommaire}
In this \chaptername, we define the Riemann-Hilbert functor on a Riemann surface~$X$ as a functor from the category of holonomic $\cD_X$-modules to that of Stokes-perverse sheaves. It is induced from a functor at the derived category level which is compatible with $t$-structures. Given a discrete set~$D$ in~$X$, we first define the functor from the category of $\cD_X(*D)$-modules which are holonomic and have regular singularities away from~$D$ to that of Stokes-perverse sheaves on $\wt X(D)$, and we show that it is an equivalence. We then extend the correspondence to holonomic $\cD_X$-modules with singularities on $D$, on the one hand, and Stokes-perverse sheaves on $\underline{\wt X}(D)$ on the other hand.
\end{sommaire}

\subsection{Introduction}

In this \chaptername, the base field $\kk$ is $\CC$. Let~$X$ be a Riemann surface and let~$D$ be a discrete set of points in~$X$ as in \S\ref{subsec:settingStPerv} (from which we keep the notation). We also denote by $\varpi:\wt X(D)\to X$ the real oriented blowing-up at all points of $D$.

We refer for instance to \cite{Kashiwara70,Bibi90b,Malgrange91} for basic results on holonomic $\cD$-modules on a Riemann surface. Recall that $\cD_X$ denotes the sheaf of holomorphic differential operators on $X$ with coefficients in $\cO_X$, and that a $\cD_X$-module $\cM$ on a Riemann surface is nothing but a $\cO_X$-module with a holomorphic connection (by a $\cD_X$-module or $\cD_X(*D)$-module, we usually mean a left module). It is \index{holonomic $\cD$-modules}\emph{holonomic} if it is $\cD_X$-coherent and if moreover any local section of $\cM$ is annihilated by some nonzero local section of $\cD_X$. Given a $\cD_X$-module $\cM$, its (holomorphic) de~Rham complex $\DR\cM$ is the complex $\cM\To\nabla\Omega^1_X\otimes_{\cO_X}\cM$.

If $\cM$ is $\cO_X$-coherent, it is then $\cO_X$-locally free, hence is a vector bundle with connection. In such a case, the de~Rham complex $\DR\cM$ has cohomology in degree~$0$ at most and $\cH^0\DR\cM=\ker\nabla=\cM^\nabla$ is the sheaf of horizontal local sections of~$\cM$. It is a locally constant sheaf of finite dimensional $\CC$-vector spaces. Conversely, $\cM$ can be recovered from $\cM^\nabla$ as $\cM\simeq\cO_X\otimes_\CC\cM^\nabla$ with the standard connection on $\cO_X$. This defines a perfect correspondence (equivalence of categories) between holomorphic vector bundles with connection and locally constant sheaves of finite-dimensional $\CC$-vector spaces.

This elementary form of the Riemann-Hilbert correspondence extends to the case where $\cM$ is a holonomic $\cD_X$-module with regular singularities. In such a case, the shifted de~Rham functor \index{$DRAP$@$\pDR\cM$}$\pDR\cM=\DR\cM[1]$ is an equivalence from the category of such $\cD_X$-modules and that of $\CC$-perverse sheaves on $X$.

In this \chaptername, we consider holonomic $\cD_X$-modules without any restriction on their singularities (except that they are located on a fixed set $D$). It is well-known that the de~Rham complex above is not sufficient to recover the whole information of such a module when it has irregular singularities. We therefore extend the target category, which will be the category of Stokes-perverse sheaves as introduced in \Chaptersname\ref{chap:Stokesone-pervers}, and define a modified de~Rham functor with values in this category. Its $\cH^0$ consists of horizontal local sections with moderate growth or rapid decay near the singular set~$D$.

We will prove the Riemann-Hilbert correspondence (equivalence of categories) in this setting, and we will also check the compatibility of the extended de~Rham functor with the duality functor, either for holonomic $\cD_X$-modules, or for Stokes-perverse sheaves (\cf\S\ref{subsec:dualStperv}).

This \chaptersname mainly follows \cite{Deligne78b,Deligne84cc}, \cite{Malgrange83b}, \cite{B-V89} and \cite[\S IV.3]{Malgrange91}.
 
\Subsection{Some basic sheaves}\label{subsec:somebasic}

\subsubsection*{On~$X$}
We denote by $\cO_X$ the sheaf of holomorphic functions on~$X$, by \index{$DAX$@$\cD_X$}$\cD_X$ the sheaf of holomorphic differential operators, by $\cO_X(*D)$ the sheaf of meromorphic functions on~$X$ with poles of arbitrary order on~$D$, and we set \index{$DAXD$@$\cD_X(*D)$}$\cD_X(*D)=\cO_X(*D)\otimes_{\cO_X}\cD_X$.

\subsubsection*{On $\wt X=\wt X(D)$}
Since $\wt X(D)$ is a real manifold with boundary, the notion of a~$C^\infty$ function on $\wt X(D)$ is well-defined. Moreover, working in local polar coordinates $x=re^{i\theta}$, one checks that the operator $\ov x\partial_{\ov x}=\frac12(r\partial_r+i\partial_\theta)$ acts on $\cC^\infty_{\wt X}$ and we can lift the $\ov\partial$ operator on $\cC^\infty_X$ as an operator
\[
\ov\partial:\cC^\infty_{\wt X}\to \cC^\infty_{\wt X}\otimes_{\varpi^{-1}\cO_{\ov X}}\varpi^{-1}\Omega^1_{\ov X}(\log\ov D),
\]
where $\ov X$ denotes the complex conjugate Riemann surface. We will then set
\[
\index{$AXT$@$\cA_{\wt X}$}\cA_{\wt X}=\ker\ov\partial.
\]
This is a $\varpi^{-1}\cD_X$-module, which coincides with $\cO_{X^*}$ on $X^*\defin X\moins D=\wt X\moins\varpi^{-1}(D)$. We set $\cD_{\wt X}=\cA_{\wt X}\otimes_{\varpi^{-1}\cO_X}\nobreak\varpi^{-1}\cD_X$ and we define similarly $\cA_{\wt X}(*D)$ and $\cD_{\wt X}(*D)$ by tensoring with $\varpi^{-1}\cO_X(*D)$.

We have already implicitly defined the sheaf \index{$AXTMOD$@$\cA_{\wt X}^\modD$}$\cA_{\wt X}^\modD$ (\cf Example \ref{exem:Stokes}) of \index{holomorphic functions!with moderate growth ($\cA_{\wt X}^\modD$)}holomorphic functions on $X^*$ having moderate growth along $\partial\wt X$. Let us recall the definition in the present setting. Given an open set $\wt U$ of $\wt X$, a section $f$ of $\cA_{\wt X}^\modD$ on $\wt U$ is a holomorphic function on $U^*\defin\wt U\cap X^*$ such that, for any compact set $K$ in $\wt U$, in the neighbourhood of which~$D$ is defined by $g_K\in\cO_X(K)$, there exists constants $C_K>0$ and $N_K\geq0$ such that $|f|\leq C_K|g_K|^{-N_K}$ on $K$.

Similarly, \index{$AXTRD$@$\cA_{\wt X}^\rdD$}$\cA_{\wt X}^\rdD$ consists of \index{holomorphic functions!with rapid decay ($\cA_{\wt X}^\rdD$)}functions having rapid decay along $\partial\wt X$: change the definition above by asking that, for any $K$ and any $N\geq0$, there exists a constant $C_{K,N}$ such that $|f|\leq C_{K,N}|g_K|^N$ on $K$.

Both sheaves $\cA_{\wt X}^\modD$ and $\cA_{\wt X}^\rdD$ are left $\cD_{\wt X}(*D)$-modules and are $\varpi^{-1}\cO_X(*D)$-flat. It will be convenient to introduce the quotient sheaf $\cA_{\wt X}^\grD\defin\cA_{\wt X}^\modD/\cA_{\wt X}^\rdD$ with its natural $\cD_{\wt X}(*D)$-module structure. It is supported on $\partial\wt X$ and is $\varpi^{-1}\cO_X$-flat (because it has no nonzero $\varpi^{-1}\cO_X$-torsion).

More information on these sheaves can be found for instance in \cite{Malgrange83b,Majima84,Malgrange91,Bibi97}. In particular, $\bR\varpi_*\cA_{\wt X}^\modD=\varpi_*\cA_{\wt X}^\modD=\cO_X(*D)$, which will be reproved in any dimension in Proposition \ref{prop:RpiA}.

\subsubsection*{On $\ccIet$}
Recall that $\ccI$ is the extension by $0$ of the sheaf $\ccI_{\partial\wt X}$. We will use the definition of Remark \ref{rem:Euler} in order to consider $\ccI_{\partial\wt X}$ as a subsheaf of $i_\partial^{-1}j_{\partial,*}\cO_{X^*}/i_\partial^{-1}(j_{\partial,*}\cO_{X^*})^\lb$ on $\partial\wt X$. We will use the following lemma.

\begin{lemme}\label{lem:ephiAmod}
Let $U$ be an open subset of $\partial\wt X$ and let $\varphi\in\Gamma(U,\ccI_{\partial\wt X})$. There exists a unique subsheaf of $(i_\partial^{-1}j_{\partial,*}\cO_{X^*})_{|U}$, denoted by $e^\varphi\cA_U^\modD$ (\resp by $e^\varphi\cA_U^\rdD$) such that, for any $\theta\in U$ and any lifting $\wt\varphi_\theta\in\wt\ccI_\theta$ of the germ $\varphi_\theta\in\ccI_\theta$, the germ $(e^\varphi\cA_U^\modD)_\theta$ (\resp $(e^\varphi\cA_U^\rdD)_\theta$) is equal to $e^{\wt\varphi_\theta}\cA^\modD_\theta\subset(j_{\partial,*}\cO_{X^*})_\theta$ (\resp $e^{\wt\varphi_\theta}\cA^\rdD_\theta\subset(j_{\partial,*}\cO_{X^*})_\theta$).
\end{lemme}

\begin{proof}
It is a matter of checking that $e^{\wt\varphi_\theta}\cA^\modD_\theta$ (\resp $e^{\wt\varphi_\theta}\cA^\rdD_\theta$) does not depend on the choice of the lifting $\wt\varphi_\theta$. This is clear since any two such choices differ by a locally bounded function.
\end{proof}

We will define sheaves \index{$AZIET$@$\cA_{\ccIet}^\modD$, $\cA_{\ccIet}^\modD$, $\cA_{\ccIet}^\grD$}$\cA^\modD_{\ccIet}$ and $\cA^\rdD_{\ccIet}$ on $\ccIet$ by their restrictions to the closed and open subsets above, together with the gluing morphism.
\begin{itemize}
\item
On $\ccIet_{X^*}=X^*$, we set $\cA^\modD_{\ccIet_{X^*}}=\cA^\rdD_{\ccIet_{X^*}}=\cO_{X^*}$.
\item
On $\ccIet_{|\partial\wt X}$, we universally twist the sheaves $\mu_\partial^{-1}\cA^\modD_{\partial\wt X}$ and $\mu_\partial^{-1}\cA^\rdD_{\partial\wt X}$ by $e^\varphi$ as follows. For any open subset $U\subset\partial\wt X$ and any $\varphi\in\Gamma(U,\ccI)$, we set $\varphi^{-1}\cA^\modD_{\ccIet_{|\partial\wt X}}\defin e^\varphi\cA^\modD_U\subset(i_\partial^{-1}j_{\partial,*}\cO_{X^*})_{|U}$ and we define $\varphi^{-1}\cA^\rdD_{\ccIet_{|\partial\wt X}}$ similarly (\cf Lemma \ref{lem:ephiAmod} above; note that on the left-hand side, $\varphi$ is considered as a section of $\mu_\partial:\ccIet_{|\partial\wt X}\to\partial\wt X$, while on the right-hand side, $\varphi$ and $e^\varphi$ are considered as local sections of $i_\partial^{-1}j_{\partial,*}\cO_{X^*}$). One can check that these data correspond to sheaves $\cA^\modD_{\ccIet_{|\partial\wt X}}$ and $\cA^\rdD_{\ccIet_{|\partial\wt X}}$, with $\cA^\rdD_{\ccIet_{|\partial\wt X}}\subset\cA^\modD_{\ccIet_{|\partial\wt X}}\subset\wti{}^{-1}\wtj_*\cO_{X^*}$.
\item
The previous inclusion is used for defining the gluing morphism defining  $\cA^\rdD_{\ccIet}$ and $\cA^\modD_{\ccIet}$ as sheaves on $\ccIet$.
\end{itemize}

We have natural inclusions $\cA^\rdD_{\ccIet}\subset\cA^\modD_{\ccIet}\subset\wtj_*\cO_{X^*}$ of sheaves on $\ccIet$. These are inclusions of sheaves of $\mu^{-1}\cD_{\wt X}(*D)$-modules. Indeed, this is clear on $X^*$, and on $\ccIet_{|\partial\wt X}$ one remarks that, in a local coordinate $x$ used for defining $\ccI$, $\partial_x(e^\varphi\cA^\modD_{\partial\wt X})=e^\varphi(\partial_x+\partial\varphi/\partial x)\cA^\modD_{\partial\wt X}\subset e^\varphi\cA^\modD_{\partial\wt X}$, and similarly for $\cA^\rdD_{\partial\wt X}$.

It will be convenient to introduce the notation $\cA^\grD_{\ccIet}$ for the quotient sheaf $\cA^\modD_{\ccIet}/\cA^\rdD_{\ccIet}$. This sheaf is supported on $\ccIet_{|\partial\wt X}$, and is also equipped with a natural $\mu^{-1}\cD_{\wt X}(*D)$-module structure.

These sheaves are flat over $\mu^{-1}\varpi^{-1}\cO_X$ (\ie they have no torsion).

\begin{lemme}\label{lem:AmodpreIfilt}
The sheaves $\cA^\modD_{\ccIet}$, $\cA^\rdD_{\ccIet}$ are pre-$\ccI$-filtrations of $j_{\partial,*}\cO_{X^*}$, the sheaf $\cA^\grD_{\ccIet}$ is a graded pre-$\ccI$-filtration, and the exact sequence
\[
0\to\cA^\rdD_{\ccIet}\to\cA^\modD_{\ccIet}\to\cA^\grD_{\ccIet}\to0
\]
is an exact sequence in $\Mod(\CC_{\ccIet,\leq})$. Moreover, the pre-$\ccI$-filtration morphisms $\beta_\leq p_1^{-1}\to p_2^{-1}$ are compatible with the $\mu^{-1}\cD_{\wt X}$-action.
\end{lemme}

\begin{proof}
In order to prove the abstract pre-$\ccI$-filtration property, we need to prove that there are natural morphisms $\beta_\leq p_1^{-1}\cA^\modD_{\ccIet}\to p_2^{-1}\cA^\modD_{\ccIet}$, etc.\, compatible with the exact sequence of the lemma and the $\mu^{-1}\cD_{\wt X}$-action. This amounts to defining natural morphisms $\beta_{\varphi\leq\psi}e^\varphi\cA^\modD_{\partial\wt X}\to e^\psi\cA^\modD_{\partial\wt X}$, etc.\, for any open set $U\subset\partial\wt X$ and any pair $\varphi,\psi\in\Gamma(U,\ccI_{\partial\wt X})$. On $U_{\varphi\leq\psi}$, we have $e^\varphi\cA^\modD_{\partial\wt X}=e^\psi e^{\varphi-\psi}\cA^\modD_{\partial\wt X}\subset e^\psi\cA^\modD_{\partial\wt X}$, and this inclusion defines the desired morphism. Notice that it is compatible with the action of $\mu_\partial^{-1}\cD_{\wt X}$ by definition.

Clearly, the natural inclusions of $\cA^\modD_{\ccIet}$ and $\cA^\rdD_{\ccIet}$ in $\wtj_*\cO_{X^*}=\mu^{-1}j_{\partial,*}\cO_{X^*}$ make these sheaves pre-$\ccI$-filtrations of $j_{\partial,*}\cO_{X^*}$.
\end{proof}

\Subsection{The Riemann-Hilbert correspondence for germs}\label{subsec:RHgermsmero}

\Subsubsection*{The Riemann-Hilbert correspondence for germs of meromorphic connections}\index{Riemann-Hilbert correspondence!for germs of meromorphic connections}
We recall here the fundamental results of Deligne \cite{Deligne78b,Deligne84cc} and Malgrange \cite{Malgrange83b}, \cite[\S IV.3]{Malgrange91}. We work here in a local setting, and we denote by $\cO$, $\cD$, etc.\ the germs at $0\in\CC$ of the corresponding sheaves. We denote by~$S^1$ the fibre over~$0$ of the real blow-up space of $\CC$ at $0$, and by $\wt\cO$, $\cA^{\rmod0}$, $\cA^{\rd0}$, $\cA^{\gr_0}$ the restriction to~$S^1$ of the sheaves $j_{\partial,*}\cO_{X^*}$, $\cA^\modD_{\wt X}$, $\cA^\rdD_{\wt X}$ and $\cA^\grD_{\wt X}$ introduced above.

By a meromorphic connection $\cM$ we mean a free $\cO(*0)$-module of finite rank with a connection. We can form various de~Rham complexes, depending on the sheaves of coeffcients.
\begin{enumerate}
\item
\index{$DR$@$\DR\cM$}$\DR\cM=\{\cM\To{\nabla}\Omega^1\otimes\cM\}$ is the ordinary holomorphic de~Rham complex.
\item
\index{$DRWT$@$\wt\DR\cM$}$\wt\DR\cM=\{\wt\cO\otimes\varpi^{-1}\cM\To{\nabla}\wt\cO\otimes\varpi^{-1}(\Omega^1\otimes\cM)\}$ is the multivalued essential holomorphic de~Rham complex.
\item
\index{$DRMOD$@$\DR^{\rmod0}\cM$, $\DR^\modD\cM$}$\DR^{\rmod0}\cM=\{\cA^{\rmod0}\otimes\varpi^{-1}\cM\To{\nabla}\cA^{\rmod0}\otimes\varpi^{-1}(\Omega^1\otimes\cM)\}$ is the moderate de~Rham complex (the tensor products are taken over $\varpi^{-1}\cO$ and the connection is extended in a natural way, by using the $\varpi^{-1}\cD$-structure on $\cA^{\rmod0}$).
\item
\index{$DRRD$@$\DR^{\rd0}\cM$, $\DR^\rdD\cM$}$\DR^{\rd0}\cM=\{\cA^{\rd0}\otimes\varpi^{-1}\cM\To{\nabla}\cA^{\rd0}\otimes\varpi^{-1}(\Omega^1\otimes\cM)\}$ is the rapid-decay de~Rham complex.
\end{enumerate}

Since $\bR\varpi_*\cA^{\rmod0}=\varpi_*\cA^{\rmod0}=\cO(*0)$ as already recalled, the projection formula for the proper map $\varpi$ gives
\[
\DR\cM\simeq\bR\varpi_*\DR^{\rmod0}\cM.
\]
On the other hand, $\wt\DR\cM$ has cohomology in degree $0$ at most, and this cohomology is the locally constant sheaf with the same monodromy as the monodromy of the local system of horizontal sections of $\cM$ away from~$0$. We will denote by $\cL$ the corresponding local system on~$S^1$.

\begin{theoreme}\label{th:H1nul}
For any germ $\cM$ of meromorphic connection, the complexes $\DR^{\rmod0}\cM$, $\DR^{\rd0}\cM$ and $\DR^{\gr_0}\cM$ have cohomology in degree~$0$ at most. The natural morphisms $\DR^{\rd0}\cM\to\DR^{\rmod0}\cM\to\wt\DR\cM$ induce inclusions $\cH^0\DR^{\rd0}\cM\hto\cH^0\DR^{\rmod0}\cM\hto\cH^0\wt\DR\cM$, and $\cH^0\DR^{\gr_0}\cM$ is equal to $\cH^0\DR^{\rmod0}\cM/\cH^0\DR^{\rd0}\cM$.
\end{theoreme}

\begin{proof}[Sketch of the proof]
We will indicate that $\DR^{\rmod0}\cM$ has cohomology in degree $0$ at most. The proof for $\DR^{\rd0}\cM$ and $\DR^{\gr_0}\cM$ is similar, and the remaining part is then straightforward.
\begin{enumerate}
\item\label{proof:varphi}
One first proves the result in the case where \index{$EFHI$@$\cE^\varphi$}$\cM=\cE^\varphi\defin(\cO(*0),d+d\varphi)$ for $\varphi\in\ccP_x$ (\cf \eg \cf \eg \cite{Wasow65} or \cite[App\ptbl1]{Malgrange91}).
\item\label{proof:xalpha}
Using that, for any complex number $\alpha$, the local branches of the function~$x^\alpha$ determine invertible local sections of $\cA_{\wt X}^{\rmod0}$, \eqref{proof:varphi} implies that the result holds for any~$\cM$ of the form $\cE^\varphi\otimes\cR_\alpha$, where $\cR_\alpha$ is the free $\cO[1/x]$-module of rank one with the connection $d+\alpha dx/x$.
\item
Using the well-known structure of free $\cO[1/x]$-modules (of any rank) with a connection having a regular singularity, it follows from \eqref{proof:xalpha} that the result holds for any $\cM$ of the form $\cE^\varphi\otimes\cR_\varphi$, where $\cR_\varphi$ has a connection with regular singularity. We call a direct sum of such modules an \emphb{elementary model}.
\item\label{proof:directsum}
The theorem holds then for any $\cM$ such that, locally on~$S^1$, $\cA_{\wt X}^{\rmod0}\otimes_{\varpi^{-1}\cO}\varpi^{-1}\cM$ is isomorphic to an elementary model.
\item\label{proof:HT}
By the \emphb{Hukuhara-Turrittin theorem} (\cf \eg \cite{Wasow65} or \cite[App\ptbl1]{Malgrange91}), there exists a ramification $\rho:S^1_{x'}\to S^1_x$ such that the pull-back $\rho^+\cM$ satisfies the previous property (more precisely, $\rho^+\cM$ is locally isomorphic to an elementary model after tensoring with the sheaf $\cA_{x'}^{\rmod0}$), hence the result holds for $\rho^+\cM$ for any $\cM$ and a suitable $\rho$ (that we can assume to be of the form $x'\mto x^{\prime d}=x$).
\item\label{proof:rho}
We have $\cA^{\rmod0}_{x'}=\rho^{-1}\cA^{\rmod0}_x$. Then
\[
\cA^{\rmod0}_{x'}\otimes_{\varpi^{\prime-1}\cO}\varpi^{\prime-1}\cM= \rho^{-1}\big(\cA^{\rmod0}_x\otimes_{\varpi^{-1}\cO}\varpi^{-1}\cM\big),
\]
and since $x'\partial_{x'}$ acts like as $d\cdot x\partial_x$, we conclude that $\DR^{\rmod0}(\rho^+\cM)\simeq\wt\rho^{-1}\DR^{\rmod0}(\cM)$. As a consequence, if the theorem holds for $\rho^+\cM$, it holds for $\cM$.\qedhere
\end{enumerate}
\end{proof}

\begin{corollaire}\label{cor:RHStokes}
Let $U$ be any open set of~$S^1$, let $\varphi\!\in\!\Gamma(U,\ccI)$ and set (\cf Lemma \ref{lem:ephiAmod})
\begin{align*}\index{$DRZPHI$@$\DR_{\leq\varphi}$, $\DR_{<\varphi}$, $\DR_{\gr\varphi}$}
\DR_{\leq\varphi}\cM&=\DR(e^\varphi\cA^{\rmod0}\otimes\cM)\\\DR_{<\varphi}\cM&=\DR(e^\varphi\cA^{\rd0}\otimes\cM)\\
\DR_{\gr_\varphi}\cM&=\DR([e^\varphi\cA^{\rmod0}/e^\varphi\cA^{\rd0}]\otimes\cM).
\end{align*}
Then these complexes have cohomology in degree $0$ at most. The natural morphisms between these complexes induce inclusions $\cH^0\DR_{<\varphi}\cM\hto\cH^0\DR_{\leq\varphi}\cM\hto\cH^0\wt\DR\cM$, and $\cH^0\DR_{\gr_\varphi}\cM$ is equal to $\cH^0\DR_{\leq\varphi}\cM/\cH^0\DR_{<\varphi}\cM$.
\end{corollaire}

We will use both notations $\DR_{\leq0}$ and $\DR^{\rmod0}$ (\resp $\DR_{<0}$ and $\DR^{\rd0}$) for the same complex.

\begin{proof}
Assume first that $\varphi$ is not ramified, \ie comes from an element of $\ccP_x$. Termwise multiplication by $e^{-\varphi}$ induces an isomorphism of complexes
\[
\DR_{\leq\varphi}\cM\isom\DR^{\rmod0}(\cE^\varphi\otimes\cM)
\]
and similarly for $\DR_{<\varphi}$ and $\DR_{\gr_\varphi}$. We can apply the previous result to $\cE^\varphi\otimes\cM$ (where we recall that $\cE^\varphi=(\cO(*0),d+d\varphi)$).

If $\varphi$ is ramified, then we first perform a suitable ramification $\rho$ so that $\varphi$ defines a section of $\ccP_{x'}$, and we argue as in \eqref{proof:rho} of the proof of Theorem \ref{th:H1nul} to descend to~$\cM$.
\end{proof}

\begin{proposition}\label{prop:RHStokes}
Let $\rho$ be such that $\cM'=\rho^+\cM$ is locally isomorphic on $S^1_{x'}$ to an elementary model after tensoring with $\cA^{\rmod0}_{x'}$. Then the subsheaves $\cL'_{\leq\varphi'}\defin\cH^0\DR_{\leq\varphi'}\cM'$ considered above ($\varphi'\in\ccP_{x'}$) form a non-ramified Stokes filtration on $\cL'\defin\cH^0\wt\DR\cM$. Moreover, it is a Stokes filtration on $\cL$, for which $\cL_{\leq\varphi}=\cH^0\DR_{\leq\varphi}\cM$, $\cL_{<\varphi}=\cH^0\DR_{<\varphi}\cM$, and $\gr_\varphi\cL=\cH^0\DR_{\gr_\varphi}\cM$, for any local section $\varphi$ of~$\ccI$.
\end{proposition}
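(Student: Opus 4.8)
The statement is essentially a combination of Corollary \ref{cor:RHStokes}, which already provides the sheaves $\cL_{\leq\varphi}=\cH^0\DR_{\leq\varphi}\cM$ together with the inclusions $\cL_{<\varphi}\hto\cL_{\leq\varphi}\hto\cL$ and the identification $\gr_\varphi\cL=\cH^0\DR_{\gr_\varphi}\cM$, with the local gradedness property that must still be checked. So the first step is to observe that for any local section $\varphi$ of $\ccI$ the sheaf $\cL_{\leq\varphi}$ is a subsheaf of $\cL$ and that $\varphi\leqtheta\psi$ implies $\cL_{\leq\varphi,\theta}\subset\cL_{\leq\psi,\theta}$: this is immediate from the inclusion $e^\varphi\cA^{\rmod0}\subset e^\psi\cA^{\rmod0}$ on $U_{\varphi\leq\psi}$ (Lemma \ref{lem:AmodpreIfilt}), so that $\cL_\bbullet$ is at least a pre-$\ccI$-filtration of $\cL$. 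One must also check that the sheaf $\cL_{<\varphi}$ defined intrinsically from the pre-$\ccI$-filtration via \eqref{eq:onto<} coincides with $\cH^0\DR_{<\varphi}\cM$; this will come out of the local model computation below, by comparing with the known graded case.

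\textbf{Reduction to an elementary model.} The heart of the argument is local on $S^1$ and uses the Hukuhara-Turrittin theorem exactly as in step \eqref{proof:HT} of the proof of Theorem \ref{th:H1nul}. Choose the ramification $\rho:S^1_{x'}\to S^1_x$, $x'\mapsto x^{\prime d}=x$, such that $\cA^{\rmod0}_{x'}\otimes_{\varpi'^{-1}\cO}\varpi'^{-1}\rho^+\cM$ is, locally on $S^1_{x'}$, isomorphic to $\cA^{\rmod0}_{x'}\otimes(\text{elementary model})$, i.e.\ to $\bigoplus_\alpha \cA^{\rmod0}_{x'}\otimes(\cE^{\varphi_\alpha}\otimes\cR_\alpha)$ with $\cR_\alpha$ regular. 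Since tensoring with the invertible sheaves $e^{\varphi_\alpha}$ and with the branches of $x^{\prime a}$ (which are invertible in $\cA^{\rmod0}_{x'}$) is compatible with $\DR$, and since for a regular connection $\cR$ the sheaves $\cH^0\DR_{\leq\varphi'}(\cE^{\varphi_\alpha}\otimes\cR)$ are the ones prescribed by the \emph{graded} non-ramified Stokes filtration with exponential factor $\varphi_\alpha$ (this is the content of Corollary \ref{cor:RHStokes} applied to $\cE^{\varphi_\alpha}\otimes\cR$, together with the fact that for a regular $\cR$ one has $\cH^0\DR^{\rd0}\cR=0$ and $\cH^0\DR^{\rmod0}\cR=\cL_{\cR}$), I conclude that locally on $S^1_{x'}$ the family $(\cL'_{\leq\varphi'})_{\varphi'\in\ccP_{x'}}$ is isomorphic to the graded Stokes filtration attached to $\bigoplus_\alpha \gr_{\varphi_\alpha}\cL'$. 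Hence $\cL'_\bbullet$ is a non-ramified Stokes filtration on $\cL'$, and $\cL'_{<\varphi'}$ (computed by \eqref{eq:onto<}) equals $\cH^0\DR_{<\varphi'}\cM'$ and $\gr_{\varphi'}\cL'=\cH^0\DR_{\gr_{\varphi'}}\cM'$.

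\textbf{Descent to $\cL$.} It remains to transfer this to $\cM$ itself. By step \eqref{proof:rho} of the proof of Theorem \ref{th:H1nul}, $\cA^{\rmod0}_{x'}=\wt\rho^{-1}\cA^{\rmod0}_x$ and $\DR_{\leq\varphi}(\rho^+\cM)\simeq\wt\rho^{-1}\DR_{\leq\varphi}(\cM)$ for $\varphi$ a section of $\ccI$ (once $\varphi$ has been pulled back to a section of $\ccP_{x'}$), so $\cH^0\DR_{\leq\varphi'}\cM'=\wt\rho^{-1}(\cH^0\DR_{\leq\varphi}\cM)$ compatibly with the automorphisms $\sigma$ of the covering $\rho$. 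Therefore the pre-$\ccI$-filtration $\cL_\bbullet$ on $\cL$ whose pull-back by $\rho$ is the non-ramified Stokes filtration $\cL'_\bbullet$, and which is invariant under the $\sigma$-action as required, is precisely a (ramified) Stokes filtration on $\cL$ in the sense of Definition \ref{def:Stokeswithramif} and Remark \ref{rem:Stokeswithramif}\eqref{rem:Stokeswithramif0}; its level-$\leq\varphi$, level-$<\varphi$ and graded pieces are $\cH^0\DR_{\leq\varphi}\cM$, $\cH^0\DR_{<\varphi}\cM$ and $\cH^0\DR_{\gr_\varphi}\cM$ respectively, using Corollary \ref{cor:RHStokes} and Proposition \ref{prop:pullback}\eqref{prop:pullback0} to match the pull-back formulas.

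\textbf{Main obstacle.} The only genuinely delicate point is the verification, in the elementary-model case, that $\cH^0\DR_{\leq\varphi'}(\cE^{\varphi_\alpha}\otimes\cR_\alpha)$ is \emph{exactly} $\beta_{\varphi_\alpha\leq\varphi'}\gr_{\varphi_\alpha}\cL'_\alpha$ and not something larger or smaller — i.e.\ that multiplying by $e^{-\varphi'}$ the moderate-growth condition for $e^{\varphi_\alpha-\varphi'}\cA^{\rmod0}$ selects precisely the intervals where $\varphi_\alpha\leqtheta\varphi'$, with no boundary discrepancy. This is where one must invoke the precise asymptotic statement behind Theorem \ref{th:H1nul}\eqref{proof:varphi} (the classical computation for $\cE^\varphi$ via \cite{Wasow65} or \cite[App.~1]{Malgrange91}) and the characterization \eqref{eq:orderone}, \eqref{eq:orderonestrict} of the order $\leqtheta$ in terms of moderate growth of $e^{\psi-\varphi}$; everything else is formal manipulation of de~Rham complexes and pull-backs along $\rho$.
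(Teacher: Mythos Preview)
Your proposal is correct and follows essentially the same approach as the paper: reduce locally via Hukuhara--Turrittin to an elementary model, check that for $\cE^{\varphi_\alpha}\otimes\cR_\alpha$ with $\cR_\alpha$ regular the filtration is the graded one (the paper phrases this as checking when $e^{\varphi}x^\alpha(\log x)^k$ has moderate growth, which reduces to when $e^\varphi$ does), and then verify the Galois equivariance for descent. The paper's descent step is slightly crisper: it directly checks the condition $\cL'_{\leq\sigma^*\varphi'}=\wt\sigma^{-1}\cL'_{\leq\varphi'}$ of Remark~\ref{rem:Stokeswithramif}\eqref{rem:Stokeswithramif0} via the one-line identity $\sigma^+(\cE^{\varphi'}\otimes\rho^+\cM)=\cE^{\sigma^*\varphi'}\otimes\rho^+\cM$, whereas your phrasing (``compatibly with the automorphisms~$\sigma$\ldots invariant under the $\sigma$-action as required'') gestures at this without writing the identity down.
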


\begin{proof}
For the first point, one has to check the local gradedness property for $(\cL',\cL'_\bbullet)$. This is a direct consequence of the Hukuhara-Turrittin mentioned in \eqref{proof:HT} of the proof above. Indeed, this theorem reduces the problem to showing that the pre-Stokes filtration associated to a connection with regular singularity is the graded Stokes filtration with zero as only exponential factor. This reduces to checking when the function $e^{\varphi}\mx^\alpha(\log\mx)^k$ has moderate growth, and this reduces to checking when the function $e^{\varphi}$ has moderate growth: we recover exactly the graded Stokes filtration.

For the second point, we have to check that $\cL'_{\leq\sigma^*\varphi'}=\wt\sigma^{-1}\cL'_{\leq\varphi'}$ (\cf Definition \ref{def:Stokeswithramif}). This follows from
\[
\sigma^+(\cE^{\varphi'}\otimes\rho^+\cM)=\cE^{\sigma^*\varphi'}\otimes\rho^+\cM.\qedhere
\]
\end{proof}

\begin{remarques}\label{rem:infvarphi}
Let us fix $\theta\in\partial\wt X$. Assume that $\cA_{\wt X,\theta}\otimes\cM\simeq\bigoplus_{\varphi\in\Phi}\cA_{\wt X,\theta}\otimes\nobreak(\cE^\varphi\otimes\nobreak\cR_\varphi)$ for some germs of flat meromorphic bundles with a regular singularity, where $\Phi$ is a finite set in $\ccP_x$.
\begin{enumerate}
\item\label{rem:infvarphi1}
Note first that, for $\eta\in\ccP_x$, $\DR^{\rmod0}(\cE^\eta\otimes\cR)_\theta\neq0$ iff $e^{-\eta}\in\cA^{\rmod0}_{\wt X,\theta}$, that is, iff $0\leqtheta\eta$.
\item\label{rem:infvarphi2}
It follows that, for $\cM$ as above, $\DR_{\leq\eta}(\cM)_\theta\simeq\bigoplus_{0\leqtheta\eta+\varphi}\DR^{\rmod0}(\cE^{\varphi+\eta}\otimes\cR_\varphi)_\theta$. In particular, $\DR_{\gr_\eta}(\cM)_\theta\simeq\DR^{\rmod0}(\cR_{-\eta})_\theta$.
\item\label{rem:infvarphi3}
Let us define the filtration $(\cA_{\wt X,\theta}\otimes\cM)_{\leq\eta}$ by $\bigoplus_{\varphi\mid0\leqtheta\eta+\varphi}\cA_{\wt X,\theta}\otimes(\cE^\varphi\otimes\cR_\varphi)$. This is the approach taken in \cite{Mochizuki08,Mochizuki10b}. We then have
\[
\DR_{\leq\eta}(\cM)_\theta\simeq\DR^{\rmod0}\big(\cE^\eta\otimes(\cA_{\wt X,\theta}\otimes\cM)_{\leq\eta}\big).
\]
\end{enumerate}
\end{remarques}

\begin{definitio}[The Riemann-Hilbert functor]\label{def:RHfunctorgerms}
The \index{Riemann-Hilbert functor (RH)}Riemann-Hilbert functor~\index{$RH$@$\RH$}$\RH$ from the category of germs at $0$ of meromorphic connections to the category of Stokes-filtered local systems is the functor which sends an object $\cM$ to $(\cH^0\wt\DR\cM,\cH^0\DR_\leq\cM)$ and to a morphism the associated morphism at the de~Rham level.
\end{definitio}

\begin{theoreme}[Deligne \cite{Deligne78b}, Malgrange \cite{Malgrange83b}]\label{th:RHequivgermmero}
The Riemann-Hilbert functor $\cM\mto(\cH^0\wt\DR\cM,\cH^0\DR_\leq\cM)$ is an equivalence of categories.
\end{theoreme}

\begin{proof}[\proofname\ of the full faithfulness]
We first define a morphism
\[
\cH^0\DR^{\rmod0}\cHom_{\cO_X}(\cM,\cM')\to\cHom(\cL,\cL')_{\leq0}.
\]
The source of the desired morphism consists of local morphisms $\cA^{\rmod0}\otimes\cM\to\cA^{\rmod0}\otimes\cM'$ which are compatible with the connection. These sections also send $e^\varphi\cA^{\rmod0}\otimes\cM$ in $e^\varphi\cA^{\rmod0}\otimes\cM'$ for any section $\varphi$ of $\ccI$, hence, restricting to the horizontal subsheaves, send $\cL_{\leq\varphi}$ in $\cL'_{\leq\varphi}$ for any such $\varphi$. In other words, they define sections of $\cHom(\cL,\cL')_{\leq0}$.

To show that this morphism is an isomorphism, it is enough to argue locally near $\theta_o\in S^1$ and after a ramification, so that one can use the Hukuhara-Turrittin decomposition for $\cHom_{\cO_X}(\cM,\cM')$ coming from that of $\cM$ (indexed by $\Phi$) and of $\cM'$ (indexed by $\Phi'$). In fixed local bases of $\cA^{\rmod0}\otimes\cM$ and $\cA^{\rmod0}\otimes\cM'$ adapted to this decomposition, the matrix of a local section of $\cH^0\DR^{\rmod0}\cHom_{\cO_X}(\cM,\cM')$ has blocks $e^{\varphi-\varphi'}u_{\varphi,\varphi'}$, where $u_{\varphi,\varphi'}$ is a local horizontal section of $\cHom_{\cO_X}(\cR_\varphi,\cR_{\varphi'})$, that is, of $\cHom(\gr_\varphi\cL,\gr_{\varphi'}\cL')$. The condition that it has moderate growth is equivalent to $\varphi\leqthetao\varphi'$, that is, is a section of $\cHom(\gr_\varphi\cL,\gr_{\varphi'}\cL')_{\leq0}$, as wanted.

Let us set $\cN=\cHom_{\cO_X}(\cM,\cM')$ with its natural connection. Then we have $\Hom_\cD(\cM,\cM')=\cH^0\DR\cN$. On the one hand, by the projection formula, $\DR\cN=\bR\varpi_*\DR^{\rmod0}\cN$ and by applying Theorem \ref{th:H1nul} to $\cN$, this is $\bR\varpi_*\cH^0\DR^{\rmod0}\cN$. Therefore, $\Hom_\cD(\cM,\cM')=\varpi_*\cH^0\DR^{\rmod0}\cN$.

As a consequence, $\Hom_\cD(\cM,\cM')=\Gamma\big(S^1,\cHom(\cL,\cL')_{\leq0}\big)$, and the latter term is $\Hom\big((\cL,\cL_\bbullet),(\cL',\cL'_\bbullet)\big)$.
\end{proof}

\begin{proof}[\proofname\ of the essential surjectivity]
We will need:
\begin{lemme}\label{lem:elem}
Any graded non-ramified Stokes-filtered local system on~$S^1$ comes from an elementary model by Riemann-Hilbert.
\end{lemme}

\begin{proof}
Using Exercise \ref{exo:graded} and the twist, one is reduced to showing the lemma for the Stokes-graded local system with only one exponential factor, this one being equal to zero. We have indicated above that the germ of connection with regular singularity corresponding to the local system $\cL$ gives the corresponding Stokes-filtered local system.
\end{proof}

We will first prove the essential surjectivity when $(\cL,\cL_\bbullet)$ is non-ramified. The isomorphism class of $(\cL,\cL_\bbullet)$ is obtained from the Stokes-graded local system $\gr_\bbullet\cL$ by giving a class in $H^1(S^1,\cAut^{<0}(\gr_\bbullet\cL))$.

Let $\cM^\el$ be the elementary model corresponding to the Stokes-graded local system $\gr_\bbullet\cL$ as constructed in Lemma \ref{lem:elem}. Applying the last part of Proposition \ref{prop:RHStokes} to $\cEnd(\cM^\el)$ and the full faithfulness, we find
\[
\cEnd^{\rd0}_{\cD}(\cM^\el)\defin\cH^0\DR_{<0}\cEnd_\cO(\cM^\el)=\cHom(\gr\cL,\gr\cL)_{<0},
\]
and therefore $\cAut^{\rd0}(\cM^\el)\defin\id+\cEnd^{\rd0}_{\cD}(\cM^\el)=\cAut^{<0}(\gr_\bbullet\cL)$.

A cocycle of $\cAut^{<0}(\gr_\bbullet\cL)$ determining $(\cL,\cL_\bbullet)$ (with respect to some covering~$(I_i)$ of~$S^1$) determines therefore a cocycle of $\cAut^{\rd0}(\cM^\el)$. If we fix a $\cO(*0)$-basis of $\cM^\el$, the \emphb{Malgrange-Sibuya theorem} (\cf\cite{Malgrange83b,Malgrange91}) allows one to lift it as a zero cochain $(f_i)$ of $\GL_d(\cA^{\rmod0})$, where $d=\rk\cM$ (the statement of the Malgrange-Sibuya theorem is in fact more precise). This zero cochain is used to twist the connection $\nabla^\el$ of $\cM^\el$ on $\cA^{\rmod0}\otimes\cM^\el$ as follows: on $I_i$ we set $\nabla_i=f_i^{-1}\nabla^\el f_i$. Since $f_if_j^{-1}$ is $\nabla^\el$-flat on $I_i\cap I_j$, the connections $\nabla_i$ glue together as a connection $\nabla$ on the free $\cA^{\rmod0}$-module $\cA^{\rmod0}\otimes\cM^\el$. The matrix of $\nabla$ in the fixed basis of $\cM^\el$ is a global section of $\cEnd((\cA^{\rmod0})^d)$, hence a section of $\End(\cO(*0)^d)$. In other words, $\nabla$ defines a new meromorphic connection on the $\cO(*0)$-module $\cM^\el$, that we now denote $\cM$. By construction, $(\cL,\cL_\bbullet)$ and $(\cH^0\wt\DR\cM,\cH^0\DR_\leq\cM)$ are isomorphic, since they correspond to the same cocycle.

Let us now prove the theorem in the ramified case. We have seen (\cf Definition \ref{def:Stokeswithramif}) that a Stokes filtration on $\cL$ is a non-ramified Stokes filtration on $\wt\rho^{-1}\cL$ which is equivariant with respect to the automorphisms $\wt\sigma$, for a suitable $\rho:X'\to X$.

Similarly, let $\cM$ be a meromorphic connection and let $\cM^*=j_*j^{-1}\cM$, where $j$ denotes the inclusion $\CC\moins\{0\}\hto\CC$ and where we implicitly consider germs at $0$. Then giving $\cM$ as a subspace of $\cM^*$ stable by the connection is equivalent to giving $\cM'$ as a subspace of $\rho^*\cM^*$ compatible with the connection, and such that $\sigma^*\cM'=\cM'$ in $\rho^*\cM^*$, under the natural identification $\sigma^*\rho^*\cM^*=\rho^*\cM^*$.

The essential surjectivity in the ramified case follows then from the functoriality (applied to $\sigma^*$) of the Riemann-Hilbert functor.
\end{proof}

Let $\wh\cO$ be the ring of formal power series in the variable $x$. If $\cM$ is a germ of meromorphic connection, we set $\wh\cM=\wh\cO\otimes_{\cO}\cM$. Recall (\cf \eg \cite[Th\ptbl(1.5) p\ptbl45]{Malgrange91}) that $\wh\cM$ decomposes as \index{$MREG$@$\wh\cM_\reg$, $\wh\cM_\irr$}$\wh\cM_\reg\oplus\wh\cM_\irr$ where the first summand is regular and the second one is purely irregular. Moreover, $\wh\cM_\reg$ is the formalization of a unique regular meromorphic connection $\cM_\reg$ (which is in general not a summand of $\cM$).

\begin{proposition}\label{prop:gr0com}
The diagram of functors
\[
\xymatrix{
\cM\ar@{|->}[r]\ar@{|->}[d]_{(\cH^0\DR^{\rd0},\cH^0\DR^{\rmod0})}&\cM_\reg\ar@{|->}[d]^{\cH^0\DR^{\rmod0}}\\
(\cL_{<0},\cL_{\leq0})\ar@{|->}[r]^-{\gr_0}&\cG
}
\]
commutes.
\end{proposition}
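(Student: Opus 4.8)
The plan is to read off both functors in terms of de~Rham complexes and compare them on an elementary model. By the last assertion of Proposition~\ref{prop:RHStokes} (applied with $\varphi=0$), the Stokes-filtered local system $(\cL,\cL_\bbullet)$ attached to $\cM$ by the Riemann-Hilbert functor satisfies $\gr_0\cL=\cH^0\DR_{\gr_0}\cM$, while by construction $\cG=\cH^0\DR^{\rmod0}\cM_\reg$. So the task is to produce, functorially in~$\cM$, an isomorphism $\cH^0\DR_{\gr_0}\cM\isom\cH^0\DR^{\rmod0}\cM_\reg$.

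First I would reduce to the non-ramified case. For a ramification $\rho\colon x'\mapsto x'^d=x$ one has, by the computation in step~\eqref{proof:rho} of the proof of Theorem~\ref{th:H1nul} (using $\cA^{\rd0}_{x'}=\rho^{-1}\cA^{\rd0}_x$, etc.), isomorphisms $\DR_{\gr_0}(\rho^+\cM)\simeq\wt\rho^{-1}\DR_{\gr_0}\cM$ and $\DR^{\rmod0}(\rho^+\cN)\simeq\wt\rho^{-1}\DR^{\rmod0}\cN$ for any germ~$\cN$. Moreover $(\rho^+\cM)_\reg\simeq\rho^+(\cM_\reg)$: indeed $\rho^+$ is exact, commutes with formalization, preserves regularity, and sends a nonzero $\varphi\in\ccP_x$ to a nonzero $\rho^*\varphi\in\ccP_{x'}$, hence respects the $\reg/\irr$ splitting of the formalization, and one concludes by the uniqueness statement preceding the proposition. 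Both sides of the desired isomorphism are then equivariant with respect to the Galois automorphisms $\wt\sigma$ of~$\rho$, and descend exactly as in the essential surjectivity part of the proof of Theorem~\ref{th:RHequivgermmero}; so I may assume $\cM$ non-ramified.

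Next, by the Hukuhara-Turrittin theorem (steps \eqref{proof:HT}--\eqref{proof:directsum} of the proof of Theorem~\ref{th:H1nul}) there is an elementary model $\cM^\el=\bigoplus_{\varphi\in\Phi}(\cE^\varphi\otimes\cR_\varphi)$ of~$\cM$, with each $\cR_\varphi$ regular, such that $\cA^{\rmod0}\otimes\varpi^{-1}\cM$ and $\cA^{\rmod0}\otimes\varpi^{-1}\cM^\el$ are locally isomorphic on~$S^1$; and by Lemma~\ref{lem:elem} one may choose $\cM^\el$ so that the graded Stokes-filtered local system of $\cM^\el$ is $\gr_\bbullet\cL$. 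Consequently $\gr_0\cL$ is the degree-zero graded piece of the Stokes filtration of $\cM^\el$, which is the direct sum over $\varphi\in\Phi$ of the degree-zero graded pieces of the Stokes filtrations of the summands $\cE^\varphi\otimes\cR_\varphi$. Now, as recalled in the proof of Proposition~\ref{prop:RHStokes}, the pre-Stokes filtration of a regular connection is graded with $0$ as its only exponential factor, so by the twist of Example~\ref{exem:twistdual} the Stokes filtration of $\cE^\varphi\otimes\cR_\varphi$ has $\varphi$ as its only exponential factor. Its degree-zero graded piece therefore vanishes when $\varphi\neq0$, and for $\varphi=0$ it equals $\cH^0\wt\DR\cR_0=\cH^0\DR^{\rmod0}\cR_0$ (for a regular connection $\cH^0\DR^{\rd0}\cR_0=0$, so the exact sequence coming from $0\to\DR^{\rd0}\to\DR^{\rmod0}\to\DR^{\gr_0}\to 0$ and Theorem~\ref{th:H1nul} give $\cH^0\DR_{\gr_0}\cR_0=\cH^0\DR^{\rmod0}\cR_0$, and all flat sections being of moderate growth this equals $\cH^0\wt\DR\cR_0$). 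Hence $\gr_0\cL\simeq\cH^0\DR^{\rmod0}\cR_0$.

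It remains to identify $\cR_0$ with $\cM_\reg$, and this I expect to be the only delicate point. The module $\cR_0$ is a regular meromorphic connection whose formalization is the regular summand of $\wh{\cM^\el}$; by the Levelt-Turrittin decomposition theorem the local isomorphism above forces $\wh\cM\simeq\wh{\cM^\el}=\wh\cR_0\oplus\bigoplus_{\varphi\neq0}\wh\cE^\varphi\otimes\wh\cR_\varphi$, and since $\wh\cR_0$ is regular while each $\wh\cE^\varphi\otimes\wh\cR_\varphi$ with $\varphi\neq0$ is purely irregular, uniqueness of the $\reg/\irr$ splitting gives $\wh\cR_0=\wh\cM_\reg$. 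By uniqueness of the regular meromorphic connection with prescribed formalization (the statement recalled just before the proposition) we get $\cR_0=\cM_\reg$, whence $\gr_0\cL\simeq\cH^0\DR^{\rmod0}\cM_\reg=\cG$. Since all the isomorphisms used are natural in~$\cM$, the square of functors commutes. The hard part is thus not the vanishing of the $\varphi\neq0$ contributions — which is an immediate consequence of the results already established — but the bookkeeping showing that the $\varphi=0$ block of the elementary model is exactly the canonical regular connection $\cM_\reg$ singled out by the formal decomposition.
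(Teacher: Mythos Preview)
Your argument has a genuine gap at the sentence ``and by Lemma~\ref{lem:elem} one may choose $\cM^\el$ so that the graded Stokes-filtered local system of $\cM^\el$ is $\gr_\bbullet\cL$''. You are trying to use one object $\cM^\el$ with two properties: (a) it is the Hukuhara--Turrittin model, so that $\cA^{\rmod0}\otimes\varpi^{-1}\cM$ and $\cA^{\rmod0}\otimes\varpi^{-1}\cM^\el$ are locally isomorphic on $S^1$ (this is what later gives you $\wh\cM\simeq\wh{\cM^\el}$ and hence $\cR_0=\cM_\reg$); and (b) $\RH(\cM^\el)=\gr_\bbullet\cL$ globally (this is what lets you compute $\gr_0\cL$ as $\cH^0\DR^{\rmod0}\cR_0$). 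Lemma~\ref{lem:elem} only furnishes \emph{some} elementary model with property~(b); it says nothing about~(a). Conversely, the Hukuhara--Turrittin model visibly has~(a), but the local isomorphisms it provides do not a priori glue, so you cannot read off~(b) from them. Two local systems on $S^1$ that are locally isomorphic need not be globally isomorphic, so the passage from local to global really requires an argument.

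The missing step is precisely the one the paper singles out: the local Hukuhara--Turrittin isomorphisms differ on overlaps by a cocycle in $\cAut^{\rd0}(\cM^\el)=\id+\cEnd^{\rd0}(\cM^\el)$, and this cocycle becomes the identity after passing to the quotient $\cA^{\rmod0}/\cA^{\rd0}$. Hence the induced local isomorphisms
\[
(\cA^{\rmod0}/\cA^{\rd0})\otimes\varpi^{-1}\cM\;\isom\;(\cA^{\rmod0}/\cA^{\rd0})\otimes\varpi^{-1}\cM^\el
\]
do glue to a global isomorphism, which yields $\gr_0\cL=\cH^0\DR_{\gr_0}\cM\simeq\cH^0\DR_{\gr_0}\cM^\el$ globally; the rest of your computation then goes through. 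Incidentally, this also shows that what you flagged as ``the only delicate point'' (the identification $\cR_0=\cM_\reg$) is in fact routine once the gluing is in place; the delicate point is the gluing itself.
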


We note that the right vertical functor (to the category of local systems on~$S^1$) is an equivalence. The point in the proposition is that the way from $\cM$ to $\cM_\reg$ by the lower path and the way by the upper horizontal arrow are not directly related, since $\cA^{\rmod0}/\cA^{\rd0}$ is much bigger than $\varpi^{-1}\CC\lpr x\rpr$ (the latter sheaf is equal to $\cA/\cA^{\rd0}$, with $\cA_{\wt X}$ introduced at the beginning of \S\ref{subsec:somebasic}).

\begin{proof}[Sketch of proof]
We will prove the result in the non-ramified case, the ramified case being treated as above. In such a case, $\wh\cM_\irr$ decomposes itself as $\bigoplus_{\varphi\neq0}\cE^\varphi\otimes\wh\cR_\varphi$ (Levelt-Turrittin decomposition) and we can easily define in a functorial way a lifting $\cM_\irr$ of $\wh\cM_\irr$ by lifting each $\wh\cR_\varphi$ occurring in the decomposition of $\wh\cM$. (One can also define such a lifting in the ramified case, but we will not use it.) The functor $\cM\mto\cM_\reg$ factorizes thus through the functor $\cM\mto\cM^\el=\cM_\irr\oplus\cM_\reg\mto\cM_\reg$. By definition, we have
\[
\gr_0(\cL_{<0},\cL_{\leq0})=\cL_{\leq0}/\cL_{<0}=\DR\big((\cA^{\rmod0}/\cA^{\rd0})\otimes_{\varpi^{-1}\cO}\varpi^{-1}\cM\big).
\]
The local isomorphisms
\[
(\cA^{\rmod0}/\cA^{\rd0})\otimes_{\varpi^{-1}\cO}\varpi^{-1}\cM\isom(\cA^{\rmod0}/\cA^{\rd0})\otimes_{\varpi^{-1}\cO}\varpi^{-1}\cM^\el
\]
deduced from the Hukuhara-Turrittin theorem glue together as a global isomorphism since they correspond to the same $\gr_0\cL$, according to Theorem \ref{th:RHequivgermmero} (this would not be the case in general for the isomorphisms with coefficients in $\cA^{\rmod0}$ or $\cA^{\rd0}$, and this assertion is the main point of the proof). It is therefore enough to prove the assertion in the case where $\cM=\cM^\el$, so that $\cM=\cM_{\reg}\oplus\cM_{\irr}$.

If $\varphi\neq0$, then one checks that the natural morphism $\DR^{\rd0}(\cE^\varphi\otimes\cR)\hto\DR^{\rmod0}(\cE^\varphi\otimes\nobreak\cR)$ is a quasi-isomorphism (it is enough to check that it induces an isomorphism on~$\cH^0$, since both $\cH^1$ are zero: this is Theorem \ref{th:H1nul}). If $\varphi=0$, then $\DR^{\rd0}\cM_{\reg}=0$, and $\DR^{\rmod0}\cM_{\reg}$ is the local system defined by $\cM_{\reg}$, hence the assertion.
\end{proof}

\Subsubsection*{The Riemann-Hilbert correspondence for germs of holonomic $\cD$-modules}\index{Riemann-Hilbert correspondence!for germs of holonomic $\cD$-modules}
We remark (\cf \cite[p\ptbl59]{Malgrange91}) that the category of germs of holonomic $\cD$-modules is equivalent to the category of triples $(\cM_*,\wh\cM_\reg,\wh\mu)$, where
\begin{itemize}
\item
$\cM_*$ is a locally free $\cO(*0)$-module with connection,
\item
$\wh\cM_\reg$ is a regular holonomic $\wh\cD$-module,
\item
$\wh\mu$ is a $\wh\cD$-linear isomorphism $(\wh\cM_*)_\reg\simeq\wh\cM_\reg(*0)$.
\end{itemize}

Indeed, to a holonomic $\cD$-module $\cM$, we associate $\cM_*=\cO(*0)\otimes_{\cO}\cM=\cM(*0)$, $\wh\cM=\wh\cO\otimes_{\cO}\cM$ which has a unique decomposition $\wh\cM=\wh\cM_\reg\oplus\wh\cM_\irr$ into regular part and irregular part. Lastly, $\wh\mu$ is uniquely determined by the canonical isomorphism $(\wh\cM)(*0)\simeq\wh{\cM(*0)}$, by restricting to the regular summand of both terms.

On the other hand, one recovers the holonomic $\cD$-module $\cM$ from $(\cM_*,\wh\cM_\reg,\wh\mu)$ as the kernel of the morphism $\cM_*\oplus\wh\cM_\reg\to\wh\cM_\reg(*0)$, $m_*\oplus m_r\mto\wh\mu(\wh{m_*}_r)-\wh{\mathrm{loc}}(\wh m_r)$, where $\wh{\mathrm{loc}}$ is the natural localization morphism $\wh\cM_\reg\to\wh\cM_\reg(*0)$.

The Riemann-Hilbert functor for germs of holonomic $\cD$-modules is now defined as follows: to the holonomic $\cD$-module $\cM=(\cM_*,\wh\cM_\reg,\wh\mu)$ is associated the triple $((\cL,\cL_\bbullet),\wh\cF,\nu)$ consisting of a Stokes-filtered local system $(\cL,\cL_\bbullet)$, a germ of perverse sheaf $\wh\cF$ and an isomorphism $\wh\nu:\gr_0\cL\simeq\psi_x\wh\cF$, where $\psi_x\wh\cF$ is the local system on~$S^1$ defined as in \S\ref{subsec:Stpervhat}. The previous results make it clear that this functor is an equivalence.

\subsection{The Riemann-Hilbert correspondence in the global case}\label{subsec:RHfunctorone}\index{Riemann-Hilbert correspondence!for holonomic $\cD$-modules}
We will say that a coherent $\cD_X(*D)$-module is holonomic if it is holonomic as a $\cD_X$-module. Recall that, in some neighbourhood of $D$ in $X$, holonomic $\cD_X(*D)$-modules are locally free $\cO_X(*D)$-modules of finite rank with connection. On the other hand, any holonomic $\cD_X$-module~$\cM$ gives rise to a holonomic $\cD_X(*D)$-module $\cM(*D)\defin\cD_X(*D)\otimes_{\cD_X}\cM$. We will set $\cM_{\wt X}\defin\cA_{\wt X}\otimes_{\varpi^{-1}\cO_{X}}\varpi^{-1}\cM$ and $\cM_{\ccIet}=\mu^{-1}\cM_{\wt X}$.

Let $\cM^\cbbullet$ be an object of \index{$Dbdx$@$D^\rb(\cD_X)$, $D^\rb_\rh(\cD_X)$}$D^\rb(\cD_X)$. The de~Rham functor $\pDR:D^\rb(\cD_X)\to D^\rb(\CC_X)$ is defined by \index{$DRAP$@$\pDR\cM$}$\pDR(\cM^\cbbullet)=\omega_X\otimes_{\cD_X}^{\bL}\cM^\cbbullet$, where $\omega_X$ is $\Omega^1_X$ regarded as a right $\cD_X$-module. It is usually written, using the de~Rham resolution of $\omega_X$ as a right $\cD_X$-module, as $\Omega^{1+\cbbullet}_X\otimes_{\cO_X}\cM^\cbbullet$, with a suitable differential (there is a shift by one with respect to the notation $\DR$ used in the previous paragraph). We define similarly functors\index{de~Rham complex!moderate --}\index{de~Rham complex!rapid-decay --}
\begin{itemize}
\item
$\pDR^\modD_{\wt X}$, $\pDR^\rdD_{\wt X}$ and $\pDR^\grD_{\wt X}$ from $D^\rb(\cD_{\wt X})$ to $D^\rb(\CC_{\wt X})$,
\item
$\pDR^\modD_{\ccIet}$, $\pDR^\rdD_{\ccIet}$ and $\pDR^\grD_{\ccIet}$ from $D^\rb(\mu^{-1}\cD_{\wt X})$ to $D^\rb(\CC_{\ccIet,\leq})$.
\end{itemize}
These are obtained by replacing $\omega_X$ with $\cA^\modD_{\wt X}\otimes_{\varpi^{-1}\cO_X}\varpi^{-1}\omega_X$, etc.\ That $\pDR^\modD_{\ccIet}$, etc.\ take value in $D^\rb(\CC_{\ccIet,\leq})$ and not only in $D^\rb(\CC_{\ccIet})$ follows from Lemma \ref{lem:AmodpreIfilt}.

\begin{theoreme}\label{th:DRIet}
If $\cM^\cbbullet$ is an object of \index{$Dbdx$@$D^\rb(\cD_X)$, $D^\rb_\rh(\cD_X)$}$D^\rb_{\rh}(\cD_X(*D))$ (\ie has holonomic cohomology), then
\begin{itemize}
\item
$\pDR^\modD_{\ccIet}\cM^\cbbullet$ is an object of $D^\rb_\CSt(\CC_{\ccIet,\leq})$,
\item
$\pDR^\rdD_{\ccIet}\cM^\cbbullet$ is an object of $D^\rb_\coCSt(\CC_{\ccIet,\leq})$,
\item
$\pDR^\grD_{\ccIet}\cM^\cbbullet$ is an object of $D^\rb_\grCc(\CC_{\ccIet,\leq})$,
\end{itemize}
and the functor (called the \index{Riemann-Hilbert functor (RH)}Riemann-Hilbert functor) of triangulated categories
\[
\pDR_{\ccIet}\defin \{\pDR^\rdD_{\ccIet}\to\pDR^\modD_{\ccIet}\to\pDR^\grD_{\ccIet}\To{+1}\}
\]
from $D^\rb_{\rh}(\cD_X(*D))$ to $\St(\CC_{\ccIet,\leq})$ is $t$-exact when $D^\rb_{\rh}(\cD_X(*D))$ is equipped with its natural $t$-structure.
\end{theoreme}

\begin{proof}
One first easily reduces to the case when $\cM$ is a holonomic $\cD_X(*D)$-module. The question is local, and we can assume that~$X$ is a disc with coordinate $x$. We will denote by $\cO$, $\cD$, etc.\ the germs at the origin $0$ (here equal to~$D$) of the corresponding sheaves. The result follows then from Corollary \ref{cor:RHStokes} and Proposition \ref{prop:RHStokes}.
\end{proof}

\subsubsection*{The Riemann-Hilbert correspondence for meromorphic connections}\index{Riemann-Hilbert correspondence!for meromorphic connections}

We restrict here the setting to meromorphic connections with poles on $D$ at most.

\begin{proposition}\label{prop:RHmero}
The functor $\cH^{-1}\pDR_{\ccIet}$ of Theorem \ref{th:DRIet} induces an equivalence between the category of meromorphic connections on~$X$ with poles on~$D$ at most and the category $\StPerv_{D,\textup{smooth}}(\CC_{\ccIet,\leq})$ of Stokes-perverse sheaves which are locally constant on $X^*$.
\end{proposition}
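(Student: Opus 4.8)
The statement is essentially the global version of Theorem~\ref{th:RHequivgermmero}, glued over the discrete set~$D$. The plan is to reduce the global equivalence to a combination of two already-established facts: first, the classical Deligne--Malgrange equivalence on $X^*$ between local systems of finite-dimensional $\CC$-vector spaces (shifted by one, so as to land in the heart of the perverse $t$-structure) and meromorphic connections with no singularity on $X^*$ --- which here is just the usual Riemann--Hilbert correspondence for flat connections on the open Riemann surface $X^*$; and second, the local equivalence near each point $x_o\in D$ provided by Theorem~\ref{th:RHequivgermmero}, i.e. the functor $\cM\mto(\cH^0\wt\DR\cM,\cH^0\DR_\leq\cM)$ from germs of meromorphic connections at $x_o$ to Stokes-filtered local systems on $S^1_{x_o}$. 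The bulk of the work is then to check that these local equivalences are compatible with the gluing data that define an object of $\StPerv_{D,\textup{smooth}}(\CC_{\ccIet,\leq})$ on one side and a global meromorphic connection on the other.

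\textbf{Key steps.} First I would unwind both categories. On the $\cD$-module side, a meromorphic connection $\cM$ on $X$ with poles along $D$ is, by definition, a locally free $\cO_X(*D)$-module of finite rank with integrable connection; it is determined by its restriction $\cM_{|X^*}$ (a flat bundle on $X^*$, hence a local system $\cL^*$) together with, at each $x_o\in D$, the germ $\cM_{x_o}$ which is a germ of meromorphic connection whose associated local system is the restriction of $\cL^*$ to a punctured neighbourhood of $x_o$ --- and there is no further gluing datum, since $\cM$ is recovered from $\cM_{|X^*}$ and the collection of germs $(\cM_{x_o})_{x_o\in D}$ by glueing the sheaf $\cM$ over $X$ (a meromorphic connection is a sheaf, reconstructible from an open cover). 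On the topological side, by Remark~\ref{rem:sheavesIet}, a smooth $\St$-$\CC$-constructible sheaf, equivalently (after the shift, Remark~\ref{rem:StPerv}) an object of $\StPerv_{D,\textup{smooth}}(\CC_{\ccIet,\leq})$, corresponds to a triple $(\cF^*,\cL_\leq,\nu)$ with $\cF^*$ a local system on $X^*$, $\cL_\leq$ a Stokes filtration on each $S^1_{x_o}$ of the local system $\cL=i_\partial^{-1}j_{\partial,*}\cF^*$, and $\nu$ the identification, all subject to the conditions of Definition~\ref{def:CSt}; again this is reconstructible by glueing over a cover of $\wt X$, essentially because $\ccIet$ fails to be Hausdorff only over $X^*$ where everything is a plain local system.

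\textbf{Second step: the functor and its inverse.} I would take the functor to be $\cH^{-1}\pDR_{\ccIet}$ as stated; by Theorem~\ref{th:DRIet} it sends $D^b_{\rh}(\cD_X(*D))$ to $\St(\CC_{\ccIet,\leq})$ $t$-exactly, hence sends a meromorphic connection (which, placed in degree zero, is an object of the heart of the natural $t$-structure) to an object of $\StPerv_D(\CC_{\ccIet,\leq})$, and since $\cM$ has no singularity on $X^*$ the image is locally constant there, i.e. lands in the smooth subcategory. Concretely, unwinding through Remark~\ref{rem:sheavesIet}: over $X^*$ this is the ordinary de~Rham functor producing the dual local system of horizontal sections, and near each $x_o\in D$ it produces, by Proposition~\ref{prop:RHStokes} and Corollary~\ref{cor:RHStokes}, exactly the Stokes-filtered local system $(\cH^0\wt\DR\cM_{x_o},\cH^0\DR_\leq\cM_{x_o})$ attached by Theorem~\ref{th:RHequivgermmero} to the germ $\cM_{x_o}$, with the gluing morphism $\nu$ being the tautological inclusion $\cH^0\DR_\leq\cM\hto\cH^0\wt\DR\cM$. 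Full faithfulness is then local on $\wt X$: over $X^*$ it is the classical Riemann--Hilbert correspondence, and over a neighbourhood of each $x_o$ it is the full faithfulness part of Theorem~\ref{th:RHequivgermmero} (whose proof computes $\Hom_\cD(\cM,\cM')=\Gamma(S^1,\cHom(\cL,\cL')_{\leq0})$ via $\bR\varpi_*$ and $\pDR^\modD$); the two descriptions agree on punctured neighbourhoods because both compute $\Hom$ of the underlying local systems there, and $\Hom$ is a sheaf, so the local isomorphisms patch. For essential surjectivity, given $(\cF^*,\cL_\leq,\nu)$ I would, near each $x_o$, apply the essential surjectivity of Theorem~\ref{th:RHequivgermmero} to produce a germ $\cM_{x_o}$ of meromorphic connection with $(\cH^0\wt\DR\cM_{x_o},\cH^0\DR_\leq\cM_{x_o})\simeq (\cL,\cL_\bbullet)_{|S^1_{x_o}}$, realizing the isomorphism of underlying local systems compatibly with the identification of $\cF^*$ with the local system on the punctured disc; on $X^*$ take the flat bundle with monodromy $\cF^*$; then glue these into a global meromorphic connection $\cM$ on $X$ using that $\cM$ is a sheaf and that the gluing data on punctured neighbourhoods match the ones coming from $\cF^*$. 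By construction $\cH^{-1}\pDR_{\ccIet}(\cM)\simeq(\cF^*,\cL_\leq,\nu)$.

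\textbf{Main obstacle.} The delicate point is the compatibility of the \emph{germ-wise} Riemann--Hilbert equivalence of Theorem~\ref{th:RHequivgermmero} with the global glueing --- specifically, checking that the isomorphism of local systems on the punctured disc coming from the germ $\cM_{x_o}$ can be chosen to agree with the one coming from $\cF^*_{|X^*}$, and that the resulting glueing data for the sheaf $\cM$ over $X$ are consistent. This is where one must use that the germwise functor is an \emph{equivalence} (not merely essentially surjective) in order to rigidify the choices: the isomorphism class of a germ together with the data of a horizontal-section-level trivialization is controlled, and one invokes, as in the proof of the ramified case of Theorem~\ref{th:RHequivgermmero}, the Malgrange--Sibuya construction to produce the connection matrix as an actual meromorphic object. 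A secondary technical nuisance, but a routine one given Remark~\ref{rem:sheavesIet} and Remark~\ref{rem:StPerv}, is the bookkeeping of the shift by one and of the fact that objects of $\StPerv$ behave like sheaves and so can be glued from local data.
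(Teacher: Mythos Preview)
Your proposal is correct and follows essentially the same approach as the paper: reduce to the germ equivalence of Theorem~\ref{th:RHequivgermmero} for both full faithfulness and essential surjectivity, and use the sheaf-like nature of both categories (Remark~\ref{rem:StPerv}) together with full faithfulness to glue local data --- exactly as you identify in your ``main obstacle''. The paper packages the full-faithfulness step slightly differently, via an explicit compatibility-with-$\cHom$ lemma (Lemma~\ref{lem:comphom}) stating $\cH^0\DR_{\ccIet}\cHom_{\cO_X}(\cM,\cM')\simeq\cHom(\cH^0\DR_{\ccIet}\cM,\cH^0\DR_{\ccIet}\cM')$, but this amounts to the same local computation you invoke.
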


Recall (\cf Remark \ref{rem:StPerv}) that we also consider such objects as $\St$-$\CC$-constructible sheaves shifted by one, so we will mainly work with sheaves and the functor $\DR$ instead of perverse sheaves and the functor $\pDR$. The proof being completely parallel to that of Theorem \ref{th:RHequivgermmero}, we only indicate it.

\begin{lemme}[Compatibility with $\cHom$]\label{lem:comphom}
If $\cM,\cM'$ are meromorphic connections on~$X$ with poles on~$D$ at most, then
\[
\cH^0\DR_{\ccIet}\cHom_{\cO_X}(\cM,\cM')\simeq\cHom(\cH^0\DR_{\ccIet}\cM,\cH^0\DR_{\ccIet}\cM').
\]
\end{lemme}

\begin{proof}
It is similar to the first part of the proof of Theorem \ref{th:RHequivgermmero}.
\end{proof}

\begin{proof}[\proofname\ of Proposition \ref{prop:RHmero}: full faithfulness]
It is analogous to the corresponding proof in Theorem \ref{th:RHequivgermmero}.
\end{proof}

\begin{proof}[\proofname\ of Proposition \ref{prop:RHmero}: essential surjectivity]
Because both categories consist of objects which can be reconstructed by gluing local pieces, and because the full faithfulness proved above, it is enough to prove the local version of the essential surjectivity. This is obtained by the similar statement in Theorem \ref{th:RHequivgermmero}.
\end{proof}

\subsubsection*{The Riemann-Hilbert correspondence for holonomic $\cD_X$-modules}\label{subsec:RHholone}\index{Riemann-Hilbert correspondence!for holonomic $\cD$-modules}
For the sake of simplicity, we consider here the Riemann-Hilbert correspondence for holonomic $\cD_X$-modules, and not the general case of $D^\rb_{\textup{hol}}(\cD_X)$ of bounded complexes with holonomic cohomology. We first define the Riemann-Hilbert functor with values in $\StPerv_D(\CC_{\ccIet,\leq,\wh D})$ (\cf\S\ref{subsec:Stpervhat}). As for germs, we first remark (\cf \cite[p\ptbl59]{Malgrange91}) that the category of holonomic $\cD_X$-modules with singularity set contained in~$D$ is equivalent to the category of triples $(\cM_*,\cM_\reg,\wh\mu)$, where
\begin{itemize}
\item
$\cM_*$ is a locally free $\cO_X(*D)$-module with connection,
\item
$\wh\cM_\reg$ is a regular holonomic $\cD_{\wh D}$-module, where $\cO_{\wh D}$ is the formal completion of $\cO_X$ at~$D$ and $\cD_{\wh D}=\cO_{\wh D}\otimes_{\cO_X}\cD_X$,
\item
$\wh\mu$ is a $\cD_{\wh D}$-linear isomorphism $(\cO_{\wh D}\otimes_{\cO_X}\cM_*)_\reg\simeq\cO_{\wh D}\otimes_{\cO_X}\cM_\reg(*D)$.
\end{itemize}
Indeed, to a holonomic $\cD_X$-module $\cM$, we associate $\cM_*=\cO_X(*D)\otimes_{\cO_X}\cM=\cM(*D)$, $\wh\cM=\cO_{\wh D}\otimes_{\cO_X}\cM$ which has a unique decomposition $\wh\cM=\wh\cM_\reg\oplus\wh\cM_\irr$ into regular part and irregular part. Lastly, $\wh\mu$ is uniquely determined by the canonical isomorphism $(\wh\cM)(*D)\simeq\wh{\cM(*D)}$, by restricting to the regular summand of both terms.

On the other hand, one recovers $\cM$ from data $(\cM_*,\wh\cM_\reg,\wh\mu)$ as the kernel of the morphism $\cM_*\oplus\nobreak\wh\cM_\reg\to\wh\cM_\reg(*D)$, $m_*\oplus m_r\mto\wh\mu((\wh m_*)_r)-\wh{\mathrm{loc}}(\wh m_r)$, where $\wh{\mathrm{loc}}$ is the natural localization morphism $\wh\cM_\reg\to\wh\cM_\reg(*D)$.

A statement similar to that of Proposition \ref{prop:gr0com} holds in this global setting, since it is essentially local at $D$.

The Riemann-Hilbert functor for holonomic $\cD$-modules is now defined as follows: to the holonomic $\cD_X$\nobreakdash-module~$\cM=(\cM_*,\wh\cM_\reg,\wh\mu)$ is associated the triple $(\pDR_{\ccIet}\cM_*,\wh\cF,\wh\nu)$, where $\pDR_{\ccIet}\cM_*$ is the Stokes-perverse sheaf attached to the meromorphic connection $\cM_*\defin\cO_X(*D)\otimes_{\cO_X}\cM$, $\wh\cF=\pDR(\wh\cM_\reg)$, and $\wh\nu$ is the isomorphism defined from $\wh\mu$ and Proposition \ref{prop:gr0com}. At this point, we have proved that this triple is a Stokes-perverse sheaf on $\underline{\wt X}$ (\cf \S\ref{subsec:Stpervhat}) and have given the tools for the proof of

\begin{theoreme}\label{th:RHStperv}
The Riemann-Hilbert functor is an equivalence between the category of holonomic $\cD_X$-modules and the category of Stokes-perverse sheaves on $\underline{\wt X}$. Under this equivalence, the de~Rham functor $\pDR$ corresponds to the ``associated perverse sheaf'' functor $\cP$ (\cf Proposition \ref{prop:StDPervhatimdir}).\qed
\end{theoreme}

\subsection{Compatibility with duality for meromorphic connections}\index{duality}\index{Riemann-Hilbert correspondence!and duality}
We will check the compatibility of the Riemann-Hilbert functor of Definition \ref{def:RHfunctorgerms} with duality, of meromorphic connections on the one hand and of Stokes-filtered local systems on the other hand (\cf Lemma \ref{lem:dualS1}). For the sake of simplicity, we do not go further in checking the analogous compatibility at the level of holonomic $\cD$-modules and Stokes-perverse sheaves. General results in this directions are obtained in \cite{Mochizuki10}.

Let $\cM$ be a germ of meromorphic connection, as in \S\ref{subsec:RHfunctorone} and let $\cM^\vee$ be the dual connection. Let $(\cL,\cL_\bbullet)$ be the Stokes-filtered local system associated to $\cM$ by the Riemann-Hilbert functor $\RH$ of Definition \ref{def:RHfunctorgerms}. On the other hand, let $(\cL,\cL_\bbullet)^\vee$ be the dual Stokes-filtered local system, with underlying local system $\cL^\vee$. We have a natural and functorial identification
\[
\cH^0\wt\DR(\cM^\vee)=\cL^\vee.
\]

\begin{proposition}\label{prop:RHStokesdual}
Under this identification, the Stokes filtration $\cH^0\DR_\leq(\cM^\vee)$ is equal to the dual Stokes filtration $(\cL_\leq)^\vee$.
\end{proposition}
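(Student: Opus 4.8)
The statement is local on $S^1$, hence we may work with the germ $\cM$ at $0$ and its associated Stokes-filtered local system $(\cL,\cL_\bbullet)$. The natural pairing $\cM\otimes_{\cO(*0)}\cM^\vee\to\cO(*0)$ of meromorphic connections, after tensoring with $\varpi^{-1}$ of it and with the sheaf $\cA^{\rmod0}$, together with the multiplication $\cA^{\rmod0}\otimes\cA^{\rd0}\to\cA^{\rd0}$ (rapid decay times moderate growth is rapid decay), induces for any local section $\varphi$ of $\ccI$ a pairing of complexes
\[
\DR_{\leq\varphi}\cM\otimes\DR_{<-\varphi}(\cM^\vee)\longrightarrow\DR^{\rd0}\big(\cO(*0)\big),
\]
since $e^\varphi\cA^{\rmod0}\cdot e^{-\varphi}\cA^{\rd0}=\cA^{\rd0}$. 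Here I use Corollary \ref{cor:RHStokes}, which tells us that all the complexes involved have cohomology in degree $0$ only, so the pairing is really a pairing of the local systems $\cL_{\leq\varphi}$ and $(\cL^\vee)_{<-\varphi}=\cH^0\DR_{<-\varphi}(\cM^\vee)$ with values in $\cH^0\DR^{\rd0}\cO(*0)$, which (as $\cO(*0)$ has regular singularity, indeed is trivial) is the constant sheaf $\CC_{S^1}$. This shows $\cH^0\DR_{\leq\varphi}(\cM)$ is orthogonal to $\cH^0\DR_{<-\varphi}(\cM^\vee)$ inside the perfect pairing $\cL\otimes\cL^\vee\to\CC_{S^1}$; that is, $\cH^0\DR_{\leq\varphi}(\cM^\vee)\subset(\cL_{<-\varphi})^\perp=(\cL^\vee)_{\leq\varphi}$, by the very definition of the dual Stokes filtration (Lemma \ref{lem:dualS1} and Definition \ref{def:HomSt}, item on $\cL^\vee$).

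For the reverse inclusion, the clean way is a dimension count. On one hand, by Proposition \ref{prop:operationsSt}\eqref{prop:operationsSt2} we have $\dim(\cL^\vee)_{\leq\varphi,\theta}=\dim\cL_\theta-\dim\cL_{<-\varphi,\theta}$ for every $\theta\in S^1$ and every local section $\varphi$ of $\ccI$ near $\theta$. On the other hand, applying Corollary \ref{cor:RHStokes} to $\cM$ and using that the exponential factors of $\cM^\vee$ are the negatives of those of $\cM$ (Remark \ref{rem:expfact}, or directly: $\cM^\vee\simeq\cE^{-\varphi}\otimes\cR^\vee$ in the local elementary model), one computes $\dim\cH^0\DR_{\leq\varphi}(\cM^\vee)_\theta$ directly from the Hukuhara-Turrittin decomposition of $\cM$: a block indexed by an exponential factor $\psi$ of $\cM$ (so $-\psi$ for $\cM^\vee$) contributes to $\cH^0\DR_{\leq\varphi}(\cM^\vee)_\theta$ exactly when $e^{\varphi-\psi}$ has moderate growth at $\theta$, i.e. when $-\psi\leqtheta\varphi$, with multiplicity $\rk\gr_{-\psi}\cL^\vee=\rk\gr_\psi\cL$. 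Hence $\dim\cH^0\DR_{\leq\varphi}(\cM^\vee)_\theta=\sum_{-\psi\leqtheta\varphi}\rk\gr_\psi\cL_\theta=\sum_{\eta\leqtheta\varphi}\rk\gr_\eta\cL^\vee_\theta=\dim(\cL^\vee)_{\leq\varphi,\theta}$, using Proposition \ref{prop:operationsSt}\eqref{prop:operationsSt2} once more to identify $\rk\gr_\psi\cL=\rk\gr_{-\psi}\cL^\vee$. Combined with the inclusion of the first paragraph, and the fact (Lemma \ref{lem:stokeswithout}) that both sides are $\RR$-constructible subsheaves of $\cL^\vee$ with the prescribed stalk dimensions, we conclude $\cH^0\DR_{\leq\varphi}(\cM^\vee)=(\cL^\vee)_{\leq\varphi}$ as subsheaves of $\cL^\vee$.

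Finally I should check that this identification is compatible with the pre-$\ccI$-filtration morphisms $\beta_{\varphi\leq\psi}$, i.e. that it is an isomorphism of Stokes-filtered local systems and not merely a stalkwise equality; this is automatic because both filtrations are subobjects of the same $\mu^{-1}\cL^\vee$ and the $\beta$-morphisms are in each case the inclusions induced by $e^{\varphi-\psi}\cA^{\rmod0}\subset\cA^{\rmod0}$ on the one side, and the canonical inclusions of the dual filtration on the other, which agree once the underlying sheaves agree.

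\textbf{Main obstacle.} The delicate point is the reverse inclusion: rather than do it by the dimension count above, one could try to produce directly the duality pairing realizing $(\cL^\vee)_{\leq\varphi}$ as $\bD'(\cL/\cL_{<-\varphi})$ (the description in Lemma \ref{lem:dualS1}) and match it with $\cH^0\DR_{\leq\varphi}(\cM^\vee)$ via the pairing $\cA^{\rmod0}\otimes\cA^{\gr_0}\to\cA^{\gr_0}$ on graded pieces; this requires care with the extension-by-zero/maximal-extension functors $\beta$ and $\alpha$ of Notation \ref{nota:beta} and with the self-duality of $\cO(*0)$ under Poincaré-Verdier duality on $S^1$. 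I expect the dimension-count route is shorter and avoids these sheaf-theoretic subtleties, reducing everything to the already-proven Corollary \ref{cor:RHStokes} and the elementary-model normal form; the only genuine work is bookkeeping of which exponential block has moderate growth in which direction, which is exactly equation \eqref{eq:orderone}. The ramified case reduces to the non-ramified one by pulling back along $\rho_d$ as in the proof of Theorem \ref{th:RHequivgermmero}, using that pull-back commutes with duality (Proposition \ref{prop:pullback}\eqref{prop:pullback3}) and with $\cH^0\DR_\leq$ (step \eqref{proof:rho} of the proof of Theorem \ref{th:H1nul}).
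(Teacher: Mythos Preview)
Your argument has a genuine slip in the first step that undercuts the orthogonality claim as written. You assert that $\cH^0\DR^{\rd0}\cO(*0)$ ``is the constant sheaf $\CC_{S^1}$'' because $\cO(*0)$ is regular. This is false: for the trivial connection, horizontal sections are nonzero constants, which have moderate growth but \emph{not} rapid decay, so $\cH^0\DR^{\rd0}\cO(*0)=0$ while $\cH^0\DR^{\rmod0}\cO(*0)=\CC_{S^1}$. Fortunately this error works in your favor: the pairing you set up does land in $\cH^0\DR^{\rd0}\cO(*0)=0$, and \emph{that} is what gives the orthogonality $\cH^0\DR_{\leq\varphi}(\cM^\vee)\subset(\cL_{<-\varphi})^\perp$. (You also swapped the roles of $\cM$ and $\cM^\vee$ between the displayed pairing and the stated conclusion; the correct pairing for your conclusion is $\DR_{<-\varphi}\cM\otimes\DR_{\leq\varphi}(\cM^\vee)\to\DR^{\rd0}\cO(*0)$.) With these two fixes, Step~1 is valid, and your dimension count in Step~2 is correct.

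The paper takes a shorter and more direct route: since the statement is local, it reduces immediately via the Hukuhara--Turrittin decomposition to the case $\cM=\cE^\eta\otimes\cR$ with a single exponential factor $\eta$, and then simply writes down both $\cH^0\DR_{\leq0}(\cM^\vee)$ and $(\cL^\vee)_{\leq0}$ explicitly as the same extension-by-zero of $\cL^\vee$ from the open set $I_{-\eta}\subset S^1$ where $e^\eta$ has moderate growth. Your dimension count is in effect this same computation, so the pairing step is conceptually appealing but redundant: once you invoke the elementary-model normal form to count dimensions, you have already identified the two subsheaves directly, and the prior inclusion is not needed. The paper's approach thus buys brevity; yours isolates an a priori inclusion that holds without any local splitting, which is a nice structural observation even if not strictly necessary here.
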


\begin{proof}
The question is local, as it amounts to identifying two Stokes filtrations of the same local system $\cL^\vee$, hence it is enough to consider the case where \hbox{$\cM=\cE^\varphi\otimes\cR$} (we will neglect to check what happens with ramification). We will work on a neighbourhood $X$ of the origin, and its real blow-up space $\wt X$. We will also restrict to checking the compatibility for $\DR_{\leq0},\DR_{<0}$, since the compatibility for $\DR_{\leq\psi},\DR_{<\psi}$ (any $\psi$) directly follows, according to Corollary \ref{cor:RHStokes} and its proof.

The result is clear if $\varphi=0$. If $\varphi\neq0$, we denote by $I_\varphi$ the open set of~$S^1$ where~$e^{-\varphi}$ has moderate growth. On the one hand, $\cH^0\DR^{\rmod0}\big((\cE^\varphi\otimes\cR)^\vee\big)$ is equal to~$\cL^\vee$ on the open set $I_{-\varphi}$ and is zero on the complementary closed subset $\ov{I_\varphi}$ of~$S^1$. On the other hand, $\cL_{<0}=\cL_{\leq0}$ is equal to $\cL$ on $I_\varphi$ and zero on $\ov{I_{-\varphi}}$. Then, $(\cL^\vee)_{\leq0}$, as defined in Proposition \ref{prop:operationsSt} (or Lemma \ref{lem:dualS1}), satisfies the same property as $\cH^0\DR^{\rmod0}\big((\cE^\varphi\otimes\cR)^\vee\big)$ does, and both subsheaves of $\cL^\vee$ are equal.
\end{proof}

\chapterspace{-2}
\chapter[Applications to holonomic distributions]{Applications of the Riemann-Hilbert correspondence to holonomic distributions}\label{chap:holdist}

\begin{sommaire}
To any holonomic $\cD$-module on a Riemann surface $X$ is associated its Hermitian dual, according to M\ptbl Kashiwara. We give a proof that the Hermitian dual is also holonomic. As an application, we make explicit the local expression of a holonomic distribution, that is, a distribution on $X$ (in Schwartz' sense) which is solution to a nonzero holomorphic differential equation on $X$. The conclusion is that working with $C^\infty$ objects hides the Stokes phenomenon.
\end{sommaire}

\subsection{Introduction}

Let $X$ be a Riemann surface. For each open set $U$ of $X$, the space $\Db(U)$ of distributions on $U$ (that is, continuous linear forms on the space of $C^\infty$ functions on $U$ with compact support, equipped with its usual topology) is a left module over the ring $\cD_X(U)$ of holomorphic differential operators on $U$, since any distribution can be differentiated with the usual rules of differentiation. These spaces $\Db(U)$ form a sheaf $\Db_X$ when $U$ varies. Note that $\Db(U)$ is also acted on by anti-holomorphic differential operators (that is, holomorphic differential operators on the conjugate manifold $\ov X$ with structure sheaf $\cO_{\ov X}=\ov{\cO_X}$), and $\Db_X$ is also a left $\cD_{\ov X}$-module. Moreover, the actions of $\cD_X$ and of $\cD_{\ov X}$ commute.

Given a $\cD_X$-module $\cM$, its Hermitian dual is the $\cD_{\ov X}$-module $\cHom_{\cD_X}(\cM,\Db_X)$, where the $\cD_{\ov X}$-action is induced by that on $\Db_X$. The main theorem in this \chaptersname is the result that, if $\cM$ is holonomic, so is its Hermitian dual, and all $\cExt^i_{\cD_X}(\cM,\Db_X)$ vanish identically. From the commutative algebra point of view, this duality is easier to handle than the duality of $\cD$-modules, since it is a $\cHom$ and not a $\cExt^1$.

Owing to the fact that a holonomic $\cD_X$-module is locally generated by one section, this result allows one to give an explicit expansion of holonomic distributions on $X$, that is, distributions which are solution of a nonzero holomorphic differential equation.

The \chaptersname shows an interplay between complex analysis (through the Stokes phenomenon) and real analysis on a Riemann surface. The Stokes phenomenon is hidden, in the $C^\infty$ world, behind the relation between the holomorphic and anti-holomorphic part of a distribution (when this is meaningful).

The notion of Hermitian duality has been introduced by M\ptbl Kashiwara \cite{Kashiwara86}, who analyzed it mainly in the case of regular singularities, and gave also various applications to distributions (see also \cite{Bjork93}). We will revisit the proof of the main result in the case of irregular singularities, as given in \cite[\S II.3]{Bibi97}. The expansion of holonomic distributions given in Theorem \ref{th:distholmod} follows that of \cite{Bibi06a}. We will revisit Hermitian duality in higher dimension in \Chaptersname\ref{chap:RHgoodnc}.\enlargethispage{\baselineskip}%
 
\subsection{The Riemann-Hilbert correspondence for meromorphic connections of Hukuhara-Turrittin type}\index{Riemann-Hilbert correspondence!for meromorphic connections of Hukuhara-Turrittin type}

Let us go back to the local setting of \S\ref{subsec:RHgermsmero}. Let~$\wt\cM$ be a locally free $\cA^{\rmod0}$-module of rank $d<\infty$ on~$S^1$ with a connection $\wt\nabla$. We say that $\wt\cM$ is \index{regular meromorphic connection}\emph{regular} if, locally on~$S^1$, it admits a $\cA^{\rmod0}$-basis with respect to which the matrix of the connection takes the form $Cdx/x$, where $C$ is constant.

Let us start by noting that, locally on~$S^1$, one can fix a choice of the argument of~$x$, so the matrix $x^C$ is well-defined as an element of $\GL_d(\cA^{\rmod0})$, and therefore a regular $\cA^{\rmod0}$-connection of rank $d$ is locally isomorphic to $(\cA^{\rmod0})^d$ with its standard connection. As already mentioned in the proof of Theorem \ref{th:H1nul} (\cf \cite[App\ptbl1]{Malgrange91}), the derivation $\partial_x:\cA^{\rmod0}\to\cA^{\rmod0}$ is onto, and it is clear that its kernel is the constant sheaf. As a consequence, for a regular $\cA^{\rmod0}$-connection $\wt\cM$, the sheaf $\ker\wt\nabla$ is a locally constant sheaf of rank~$d$ on~$S^1$, and the natural morphism $\ker\wt\nabla\otimes_\CC\cA^{\rmod0}\to\wt\cM$ is an isomorphism of $\cA^{\rmod0}$-connections.

We conclude that any regular $\cA^{\rmod0}$-connection $\wt\cM$ takes the form $\wt\cM=\cA^{\rmod0}\otimes_{\varpi^{-1}\cO}\varpi^{-1}\cM$ for some regular $\cO(*0)$-connection $\cM$, which is nothing but the regular meromorphic connection whose associated local system on the punctured disc is the local system isomorphic to $\ker\wt\nabla$. According to the projection formula, we also have $\cM=\varpi_*\wt\cM$.

We now extend this result to more general $\cA^{\rmod0}$-connections. As before, the sheaf $\ccI$ on~$S^1$ is that introduced in \S\ref{subsec:Stokeswithramif}.

\begin{definitio}\label{def:HTtype}
Let $\wt\cM$ be a locally free $\cA^{\rmod0}$-module of finite rank on~$S^1$. We say that $\wt\cM$ is of \emphb{Hukuhara-Turrittin type} if, for any $\theta\in S^1$, there exists a finite set $\Phi_\theta\subset\ccI_\theta$ such that, in some neighbourhood of $\theta$, $\wt\cM$ is isomorphic to the direct sum indexed by $\varphi\in\Phi_\theta$ of $\cA^{\rmod0}$-connections $(\cA^{\rmod0},d+d\varphi)^{d_\varphi}$, for some $d_\varphi\in\NN^*$.
\end{definitio}

As indicated above, the regular part of each summand can be reduced to the trivial connection $d$. On the other hand, to any $\cA^{\rmod0}$-connection one can associate de~Rham complexes $\DR_{\leq\psi}\wt\cM$ and $\DR_{<\psi}\wt\cM$ for any local section $\psi$ of $\ccI$, by the same formulas as in Corollary \ref{cor:RHStokes}. In particular, $\cH^0\DR_{\leq\psi}\wt\cM$ is a subsheaf of the locally constant sheaf $\cH^0\wt\DR\wt\cM$ (horizontal sections with arbitrary growth along~$S^1$) and defines a pre-$\ccI$-filtration of this sheaf.

\begin{proposition}\label{prop:RHAmod}
Assume that $\wt\cM$ is of Hukuhara-Turrittin type. Then $\cL_{\leq}\defin\cH^0\DR_{\leq}\wt\cM$ is a $\ccI$-filtration of $\cL\defin\cH^0\wt\DR\wt\cM$, for which $\cL_<$ coincides with $\cH^0\DR_<\wt\cM$. Moreover, the correspondence $\wt\cM\mto(\cL,\cL_\bbullet)$ is an equivalence between the category of $\cA^{\rmod0}$-connections of Hukuhara-Turrittin type and the category of Stokes-filtered local systems.
\end{proposition}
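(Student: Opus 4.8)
The statement parallels Theorem~\ref{th:RHequivgermmero} and Proposition~\ref{prop:RHStokes}, but now the object on the $\cD$-module side is a locally free $\cA^{\rmod0}$-module with connection of Hukuhara-Turrittin type, rather than a germ of meromorphic connection. The first step is to show that $\cL_\leq\defin\cH^0\DR_\leq\wt\cM$ is indeed a Stokes filtration of $\cL\defin\cH^0\wt\DR\wt\cM$, and that $\cL_<=\cH^0\DR_<\wt\cM$. Since $\wt\cM$ is locally (on $S^1$) isomorphic to a direct sum of elementary pieces $(\cA^{\rmod0},d+d\varphi)^{d_\varphi}$, I would first treat this local model: for $\wt\cM=(\cA^{\rmod0},d+d\varphi)^{d_\varphi}$, termwise multiplication by $e^{-\varphi}$ identifies $\DR_{\leq\psi}\wt\cM$ with $\DR_{\leq\psi-\varphi}(\cA^{\rmod0})^{d_\varphi}$, and one computes directly (exactly as in Corollary~\ref{cor:RHStokes} / Theorem~\ref{th:H1nul}\eqref{proof:varphi}) that these complexes have cohomology in degree $0$ only, with $\cH^0\DR_{\leq\psi}$ the locally constant sheaf supported on $\{\psi\geq\varphi\}$, of rank $d_\varphi$ there. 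Summing over $\Phi_\theta$ gives precisely a graded Stokes filtration locally. Hence $\cL_\leq$, being locally isomorphic to a graded Stokes filtration, is a $\ccI$-filtration; the identification $\cL_<=\cH^0\DR_<\wt\cM$ follows from the analogous local computation with $\cA^{\rd0}$ and from the defining formula \eqref{eq:onto<} for $\cL_<$. Because $\wt\cM$ is only locally of elementary form, one should note here, as in Proposition~\ref{prop:RHStokes}, that ramification may be needed: one pulls back by a suitable $\rho_d$, works with $\ccP_{x'}$, and descends using $\sigma$-equivariance as in Remark~\ref{rem:Stokeswithramif}\eqref{rem:Stokeswithramif0}; the compatibility $\cL'_{\leq\sigma^*\varphi'}=\wt\sigma^{-1}\cL'_{\leq\varphi'}$ follows from $\sigma^+(\cE^{\varphi'}\otimes\cdots)=\cE^{\sigma^*\varphi'}\otimes\cdots$.

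For the equivalence of categories, I would follow the scheme of the proof of Theorem~\ref{th:RHequivgermmero} but simplified, since there is now no formal-regular part to carry along. \emph{Full faithfulness}: given $\wt\cM,\wt\cM'$ of Hukuhara-Turrittin type, set $\wt\cN=\cHom_{\cA^{\rmod0}}(\wt\cM,\wt\cM')$ with its natural connection; it is again of Hukuhara-Turrittin type (its exponential factors lie in $\Phi'-\Phi$). One first constructs the natural morphism $\cH^0\DR_{<\psi}^{?}$... more precisely a morphism from $\cH^0\DR^{\rmod0}\wt\cN$ to $\cHom(\cL,\cL')_{\leq0}$ exactly as in the germ case: a moderate horizontal section of $\wt\cN$ sends $e^\varphi\cA^{\rmod0}\otimes\wt\cM$ into $e^\varphi\cA^{\rmod0}\otimes\wt\cM'$, hence $\cL_{\leq\varphi}$ into $\cL'_{\leq\varphi}$. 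Checking it is an isomorphism is a local computation after ramification, using the block decomposition adapted to the Hukuhara-Turrittin splittings of $\wt\cM$ and $\wt\cM'$: a block $e^{\varphi-\varphi'}u_{\varphi,\varphi'}$ has moderate growth iff $\varphi\leq_{\theta_o}\varphi'$. Then one needs $\Hom_{\cA^{\rmod0}\text{-conn}}(\wt\cM,\wt\cM')=\Gamma(S^1,\cH^0\DR^{\rmod0}\wt\cN)$: here $\Hom=\cH^0\DR\wt\cN$ with $\DR$ the \emph{non}-localized de~Rham complex on $S^1$, but since $\wt\cN$ lives on $S^1$ already (there is no $\varpi$ to push forward), $\Hom_{\cA^{\rmod0}\text{-conn}}(\wt\cM,\wt\cM')=\Gamma(S^1,\cH^0\DR^{\rmod0}\wt\cN)$ directly, using that $\DR^{\rmod0}\wt\cN$ has cohomology in degree $0$ only (the Hukuhara-Turrittin case of Theorem~\ref{th:H1nul}). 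Combining, $\Hom_{\cA^{\rmod0}\text{-conn}}(\wt\cM,\wt\cM')=\Gamma(S^1,\cHom(\cL,\cL')_{\leq0})=\Hom\big((\cL,\cL_\bbullet),(\cL',\cL'_\bbullet)\big)$.

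\emph{Essential surjectivity}: given a Stokes-filtered local system $(\cL,\cL_\bbullet)$, I would reduce to the non-ramified case by the usual $\rho_d$-equivariance argument, then mimic the second part of the proof of Theorem~\ref{th:RHequivgermmero}. One first builds, from the Stokes-graded local system $\gr_\bbullet\cL$ (a finite direct sum of local systems $\gr_\varphi\cL$ on $S^1$, each twisted by $\varphi$), an ``elementary'' $\cA^{\rmod0}$-connection $\wt\cM^\el=\bigoplus_\varphi (e^\varphi\cA^{\rmod0}\otimes_\CC \gr_\varphi\cL)$ — here each $\gr_\varphi\cL$, being a local system on $S^1$, tensored with $\cA^{\rmod0}$ and equipped with the trivial connection $d$, gives a regular $\cA^{\rmod0}$-connection (by the paragraph on regular connections just before the proposition), and one twists it by $\cE^\varphi$. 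Then $\RH(\wt\cM^\el)=\gr_\bbullet\cL$ as a graded Stokes-filtered local system. The class of $(\cL,\cL_\bbullet)$ in $H^1(S^1,\cAut^{<0}(\gr_\bbullet\cL))$ is identified, via full faithfulness applied to $\cEnd(\wt\cM^\el)$, with a class in $H^1(S^1,\cAut^{\rd0}(\wt\cM^\el))$; one lifts a representing cocycle to a $0$-cochain $(f_i)$ of $\GL_d(\cA^{\rmod0})$ by the Malgrange-Sibuya theorem and twists the connection $\nabla^\el$ to get $\nabla_i=f_i^{-1}\nabla^\el f_i$ on $\cA^{\rmod0}\otimes\wt\cM^\el$; these glue (the transition $f_if_j^{-1}$ being $\nabla^\el$-flat) to a connection $\nabla$ on the locally free $\cA^{\rmod0}$-module underlying $\wt\cM^\el$, which is still of Hukuhara-Turrittin type, and whose associated Stokes-filtered local system is $(\cL,\cL_\bbullet)$ by construction.

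\textbf{Main obstacle.} The serious point, compared with the germ case, is that one does \emph{not} have a meromorphic connection $\cM$ with $\wt\cM=\cA^{\rmod0}\otimes_{\varpi^{-1}\cO}\varpi^{-1}\cM$ to fall back on globally; the global structure of $\wt\cM$ is only that of a locally free $\cA^{\rmod0}$-module of Hukuhara-Turrittin type, and everything — the degree-$0$ concentration of the de~Rham complexes, the local gradedness of $\cL_\leq$, the Malgrange-Sibuya lifting — must be run intrinsically on $S^1$ for these sheaves. Concretely, the delicate step is verifying that the de~Rham complex $\DR^{\rmod0}\wt\cN$ of the internal $\cHom$ has no $\cH^1$ and that global sections compute $\Hom$ of connections, without invoking a projection formula from $\wt X$ to $X$; this has to be deduced from the local elementary model plus the surjectivity of $\partial_x$ on $\cA^{\rmod0}$ and a Mayer-Vietoris / \v{C}ech argument over a finite cover of $S^1$ by intervals of the type used in Lemma~\ref{lem:grandintervalle}. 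I would expect the cleanest route is to observe that $\wt\cN$ is of Hukuhara-Turrittin type, reduce its de~Rham cohomology to the cases $\cE^\varphi$ with $\varphi\in\ccP_x$ (after ramification) via the already-proven local statements, and then glue.
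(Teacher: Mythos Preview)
Your proposal is correct, but takes a longer route than the paper for essential surjectivity, and misses a small preliminary point that the paper makes explicit.

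\textbf{The preliminary point.} Definition~\ref{def:HTtype} gives, for each $\theta\in S^1$, a local decomposition indexed by a finite set $\Phi_\theta\subset\ccI_\theta$; it is not a priori clear that $\Phi_\theta$ is independent of~$\theta$. The paper first observes that near~$\theta$ there is (up to scalar) at most one nonzero morphism of $\cA^{\rmod0}$-connections $(\cA^{\rmod0},d+d\varphi)\to(\cA^{\rmod0},d+d\psi)$, given by $1\mapsto e^{\varphi-\psi}$, and it exists iff $\varphi\leqtheta\psi$; hence two elementary pieces are locally isomorphic iff $\varphi=\psi$. This shows $\Phi_\theta$ is locally, hence globally, constant, and then the proofs of Corollary~\ref{cor:RHStokes} and of full faithfulness in Theorem~\ref{th:RHequivgermmero} go through verbatim for~$\wt\cM$. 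You bypass this step by appealing to the abstract definition of a Stokes filtration (locally graded suffices), which is fine, but the paper's observation is also what underlies the local block decomposition you use for full faithfulness.

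\textbf{Essential surjectivity.} Here you work harder than necessary. The paper's argument is one line: given $(\cL,\cL_\bbullet)$, Theorem~\ref{th:RHequivgermmero} already produces a germ $\cM$ of meromorphic connection with $\RH(\cM)\simeq(\cL,\cL_\bbullet)$; then $\wt\cM\defin\cA^{\rmod0}\otimes_{\varpi^{-1}\cO}\varpi^{-1}\cM$ is of Hukuhara-Turrittin type (by the classical Hukuhara-Turrittin theorem) and has the same Stokes filtration. So the ``main obstacle'' you identify --- that there is no meromorphic $\cM$ to fall back on --- simply does not arise: the paper \emph{produces} one via the already-proven Riemann-Hilbert correspondence for germs. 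Your direct reconstruction via $\wt\cM^\el$ and Malgrange-Sibuya works too, but it duplicates the proof of Theorem~\ref{th:RHequivgermmero} rather than using it. The payoff of your route would be a proof of Proposition~\ref{prop:RHAmod} logically independent of Theorem~\ref{th:RHequivgermmero}; the payoff of the paper's route is brevity, and it is exactly what makes Corollary~\ref{cor:RHAmod} (every Hukuhara-Turrittin $\wt\cM$ comes from a meromorphic $\cM$) an immediate consequence.
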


\begin{proof}
We first note that, near $\theta$, we have at most one non-zero morphism (up to a scalar constant) from $\cA^{\rmod0}\otimes\cE^\varphi=(\cA^{\rmod0},d+d\varphi)$ to $\cA^{\rmod0}\otimes\cE^\psi=(\cA^{\rmod0},d+d\psi)$ as $\cA^{\rmod0}$-connections, which is obtained by sending $1$ to $e^{\varphi-\psi}$, and this morphism exists if and only if $\varphi\leqtheta\psi$. In particular, both $\cA^{\rmod0}$-connections are isomorphic near $\theta$ if and only if $\varphi=\psi$ near $\theta$, hence everywhere. As a consequence, the set $\Phi_\theta$ of Definition \ref{def:HTtype} is locally constant with respect to $\theta$, and we may repeat the proof of Theorem \ref{th:H1nul} and Corollary \ref{cor:RHStokes} for~$\wt\cM$. Similarly, the proof of the full faithfulness in Theorem \ref{th:RHequivgermmero} can be repeated for~$\wt\cM$, since the main argument is local.

The essential surjectivity follows from the similar statement in Theorem \ref{th:RHequivgermmero}.
\end{proof}

\begin{corollaire}\label{cor:RHAmod}
Let $\wt\cM$ be of Hukuhara-Turrittin type. Then $\cM\defin\varpi_*\wt\cM$ is a $\cO(*0)$-connection and $\wt\cM=\cA^{\rmod0}\otimes_{\varpi^{-1}\cO}\varpi^{-1}\cM$.
\end{corollaire}

\begin{proof}
Let $(\cL,\cL_\bbullet)$ be the Stokes-filtered local system associated with $\wt\cM$ by the previous proposition and let $\cN$ be a $\cO(*0)$-connection having $(\cL,\cL_\bbullet)$ as associated Stokes-filtered local system, by Theorem \ref{th:RHequivgermmero}. Let us set $\wt\cN=\cA^{\rmod0}\otimes_{\varpi^{-1}\cO}\varpi^{-1}\cN$. Since $\cN$ is $\cO(*0)$-free, we have $\varpi_*\wt\cN=(\varpi_*\cA^{\rmod0})\otimes_{\cO}\cN=\cN$. The identity morphism $(\cL,\cL_\bbullet)\to(\cL,\cL_\bbullet)$ lifts in a unique way as a morphism $\wt\cM\to\wt\cN$, by the full faithfulness in Proposition \ref{prop:RHAmod}, and the same argument shows that it is an isomorphism. Therefore, $\varpi_*\wt\cM\simeq\cN$.
\end{proof}

\subsection{The Hermitian dual of a holonomic $\cD_X$-module}
We now assume that $X$ is a Riemann surface, and we denote by $\ov X$ the complex conjugate surface (equipped with the structure sheaf $\cO_{\ov X}\defin\ov{\cO_X}$). The sheaf of \index{distribution}distributions \index{$DB$@$\Db_X$}$\Db_X$ on the underlying $C^\infty$ manifold is at the same time a left $\cD_X$ and $\cD_{\ov X}$-module, and both actions commute. If $\cM$ is a left $\cD_X$-module, then \index{$CX$@$C_X\cM$ (Hermitian dual)}$C_X\cM\defin\cHom_{\cD_X}(\cM,\Db_X)$ is equipped with a natural structure of left $\cD_{\ov X}$-module, induced by that on $\Db_X$. This object, called the \emphb{Hermitian dual} of $\cM$, has been introduced by M\ptbl Kashiwara in \cite{Kashiwara86}.

\begin{theoreme}[\cite{Kashiwara86}, \cite{Bibi97}]\label{th:Hermdualone}
Assume that $\cM$ is a holonomic $\cD_X$-module. Then $\cExt^k_{\cD_X}(\cM,\Db_X)=\nobreak0$ for $k>0$ and $C_X\cM$ is $\cD_{\ov X}$-holonomic.
\end{theoreme}

We refer to \cite{Kashiwara86}, \cite{Bjork93} and \cite{Bibi97} for various applications of this result. For instance, the vanishing of $\cExt^1$ implies the solvability in $\Db(\Omega)$ for any linear differential operator with meromorphic coefficients in an open set $\Omega\subset\CC$.

\begin{proof}
Let us recall various reductions used in \cite{Kashiwara86} and \cite{Bibi97}. If $D$ is the singular divisor of $\cM$, it is enough to prove the result for $\cM(*D)$, and one can replace $\Db_X$ with the sheaf \index{$DBMOD$@$\Db_X^\modD$}$\Db_X^\modD$ of \index{distribution!with moderate growth}distributions having moderate growth at $D$ (whose sections on an open set consist of continuous linear forms on the space of $C^\infty$ functions with compact support on this subset and having rapid decay at the points of $D$ in this open set). In such a case, $\cM(*D)$ is a meromorphic bundle with connection, and the statement is that $C_X^\modD(\cM(*D))\defin\cHom_{\cD_X(*D)}(\cM(*D),\Db_X^\modD)$ is an anti-meromorphic bundle with connection, and the corresponding $\cExt^k$ vanish for $k>0$.

One can then work locally near each point of $D$, and prove a similar result on the real blow-up space $\wt X(D)$, by replacing $\cM(*D)$ with $\wt\cM=\cA^\modD\otimes\cM(*D)$ and $\Db_X^\modD$ with the similar sheaf $\Db_{\wt X}^\modD$ on $\wt X$. One can then reduce to the case where $\wt\cM(*D)$ has no ramification. It is then proved in \cite[Prop\ptbl II.3.2.6]{Bibi97} that $\cExt^k_{\cD_{\wt X}}(\wt\cM,\Db_{\wt X}^\modD)=\nobreak0$ for $k>0$ and that $C_{\wt X}^\modD(\wt\cM)$ is of Hukuhara-Turrittin type (see \cite[p\ptbl69]{Bibi97}). One concludes in \loccit by analyzing the Stokes matrices, but this can be avoided by using Corollary \ref{cor:RHAmod} above: we directly conclude that Prop\ptbl II.3.2.6(2) of \loccit is fulfilled.
\end{proof}

\subsection{Asymptotic expansions of holonomic distributions}
We will apply Theorem \ref{th:Hermdualone} to give the general form of a germ of holonomic distribution of one complex variable. We follow \cite{Bibi06a}.

Since the results will be of a local nature, we will denote by $X$ a disc centered at~$0$ in~$\CC$, with coordinate $x$. We denote by $\Db_X^{\rmod0}$ the sheaf on $X$ of distributions on $X\moins\{0\}$ with moderate growth at the origin (\cf above) and by $\Db^{\rmod0}$ its germ at the origin. In particular $\Db^{\rmod0}$ is a left $\cD$ and $\ov\cD$-module. Let $u\in\Db^{\rmod0}$ be holonomic, that is, the $\cD$-module $\cD\cdot u$ generated by $u$ in $\Db^{\rmod0}$ is holonomic. In other words, $u$ is a solution of a non-zero linear differential operator with holomorphic coefficients. Notice that Theorem \ref{th:Hermdualone} implies that $\ov\cD\cdot u$ is also holonomic as a $\ov\cD$-module. Indeed, the inclusion $\cD\cdot u\hto\Db^{\rmod0}$ is a germ of section $\sigma$ of $C_X^{\rmod0}(\cD\cdot u)$ which satisfies $\sigma(u)=u$. For a anti-holomorphic differential operator $\ov P$, we have $(\ov P\sigma)(u)=\ov P u$. There exists $\ov P\neq0$ with $\ov P\sigma=0$, and thus $\ov Pu=0$.

Let $\rho:X'\to X$, $x'\mto x^{\prime d}=x$, be a ramified covering of degree $d\in\NN^*$. Then the pull-back by $\rho$ of a moderate distribution at $0$ is well-defined as a moderate distribution at $0$ on $X'$. If $u$ is holonomic, so is $\rho^*u$ (if $u$ is annihilated by $P\in\cD$, then $\rho^*P$ is well-defined in $\cD_{X',0}(*0)$ and annihilates $\rho^*u$).

\begin{theoreme}\label{th:distholmod}
Let $u$ be a germ at $0$ of a \index{distribution!moderate holonomic --}\index{moderate holonomic distribution}moderate holonomic distribution on~$X$. Then there exist:
\begin{itemize}
\item
an integer $d$, giving rise to a ramified covering $\rho:X'\to X$,
\item
a finite set $\Phi\subset x^{\prime-1}\CC[x^{\prime-1}]$,
\item
for all $\varphi\in\Phi$, a finite set $B_\varphi\in\CC$ and an integer $L_\varphi\in\NN$,
\item
for all $\varphi\in\Phi$, $\beta\in B_\varphi$ and $\ell=0,\dots,L_\varphi$, a function $f_{\varphi,\beta,\ell}\in C^\infty(X')$
\end{itemize}
such that, in $\Dbmo(U')$ and in particular in $C^\infty(U^{\prime*})$ (if $U'$ is a sufficiently small neighbourhood of $0$ in $X'$),
\bgroup\numstareq
\begin{equation}\label{eq:distholmod}
\rho^*u=\sum_{\varphi\in\Phi}\sum_{\beta\in B_\varphi}\sum_{\ell=0}^{L_\varphi}f_{\varphi,\beta,\ell}(x')e^{\varphi-\ov\varphi}|x'|^{2\beta}\Ly^\ell,\quad\text{with }\Ly\defin\module{\log|x'|^2}.
\end{equation}
\egroup
\end{theoreme}

\skpt
\begin{remarques}\label{rem:distholmod}
\begin{enumerate}
\item\label{rem:distholmod1}
Notice that, for $\varphi\in x^{\prime-1}\CC[x^{\prime-1}]$, the function $e^{\varphi-\ov\varphi}$ is a multiplier in $\Dbmo$ (since it is $C^\infty$ away from $0$, with moderate growth, as well as all its derivatives, at the origin). So are the functions $|x'|^{2\beta}$ and $\Ly^\ell$.
\item\label{rem:distholmod2}
One can be more precise concerning the sets $B_\varphi$, in order to ensure a minimality property. For $f\in C^\infty(X)$, the Taylor expansion of $f$ at $0$ expressed with $x,\ov x$ allows us to define a minimal set $E(f)\subset\NN^2$ such that $f=\sum_{(\nu',\nu'')\in E(f)}x^{\nu'}\ov x^{\nu''}\!g_{(\nu',\nu'')}$ with $g_{(\nu',\nu'')}\in C^\infty(X)$ and $g_{(\nu',\nu'')}(0)\neq0$. We will set $E(f)=\emptyset$ if $f$ has rapid decay at~$0$.

It is not a restriction to assume (and we will do so) that any two distinct elements of the index set $B_\varphi$ occurring in \eqref{eq:distholmod} do not differ by an integer, and that each $\beta\in B_\varphi$ is maximal, meaning that the set $\bigcup_\ell E(f_{\varphi,\beta,\ell})$ is contained in $\NN^2$ and in no subset $(m,m)+\NN^2$ with $m\in\NN^*$.
\end{enumerate}
\end{remarques}

We will first assume the existence of an expansion like \eqref{eq:distholmod} and we will make precise in Corollary \ref{cor:asympt} below the $\varphi,\beta$ such that $f_{\varphi,\beta,\ell}\neq0$ for some~$\ell$. We will allow to restrict the neighbourhood~$U$ or $U'$ as needed.

Let $\cM$ denote the $\cD[x^{-1}]$-module generated by $u$ in $\Dbmo$. Then $\cM$ is a free $\cO[x^{-1}]$-module of finite rank equipped with a connection, induced by the action $\partial_x$. Let $\rho:X'\to\nobreak X$ be a ramification such that $\cM'\defin\rho^+\cM$ is formally isomorphic to $\cM^{\prime\el}=\oplus_{\varphi\in\Phi}(\cE^\varphi\otimes\nobreak \cR_\varphi)$. The germ $\rho^+\cM$ is identified to the $\cD'[x^{\prime-1}]$-submodule of $\Dbmo_{X',0}$ generated the moderate distribution $v=\rho^*u$. In particular, $v$ is holonomic if $u$ is so.

\begin{definitio}\label{def:sansram}
We say that the holonomic moderate distribution $u$ has \index{distribution!moderate holonomic -- with no ramification}\emph{no ramification} if one can choose $\rho=\id$.
\end{definitio}

In the following, we will assume that $u$ has no ramification, since the statement of the theorem is given after some ramification. We will however denote by $v$ such a distribution, in order to avoid any confusion. For $\psi\in x^{-1}\CC[x^{-1}]$, let $\cM_\psi$ be the $\cD[x^{-1}]$-module generated by $e^{\ov\psi-\psi} v$ in $\Dbmo$. Note that $e^{\ov\psi-\psi} v$ is also holonomic and that this module is $\cO[x^{-1}]$-locally free of finite rank with a connection.

We can write a differential equation satisfied by $e^{\ov\psi-\psi}v$ in the following way: $[b(x\partial_x)-xQ(x,x\partial_x)]\cdot e^{\ov\psi-\psi} v=0$, for some nonzero polynomial $b(s)\in\CC[s]$. Up to multiplying this equation on the left by a polynomial in $x\partial_x$, we can find a differential equation satisfied by $e^{\ov\psi-\psi} v$ of the following form:
\begin{equation}\label{eq:Bernstein1}
\bigg[\prod_{k=0}^{k(1)}\prod_{\beta\in B'_\psi(v)}\big[-(x\partial_{x}-\beta-k)\big]^{L'_{\psi,\beta}}-xP_{\psi,1}\bigg]\cdot e^{\ov\psi-\psi} v=0,
\end{equation}
for some minimal finite set $B'_\psi(v)\subset\CC$ (in particular no two elements of $B'_\psi(v)$ differ by a nonzero integer), $P_{\psi,1}\in\CC\{x\}\langle x\partial_{x}\rangle$, and for each $\beta\in B'_\psi(v)$, $L'_{\psi,\beta}(v)\in\NN$. Iterating this relation gives a relation of the same kind, for any $j\in\NN^*$:
\begin{equation}\label{eq:Bernsteinj}
\bigg[\prod_{k=0}^{k(j)}\prod_{\beta\in B'_\psi(v)}\big[-(x\partial_{x}-\beta-k)\big]^{L'_{\psi,\beta}}-x^jP_{\psi,j}\bigg]\cdot e^{\ov\psi-\psi} v=0.
\end{equation}

One defines in a conjugate way the objects $L''_{\psi,\beta}$ and $B''_\psi(v)$. One then sets
\begin{equation}\label{eq:Bphiv}
B_\psi(v)=\Big[B'_\psi(v)\cap(B''_\psi(v)-\NN)\Big]\cup\Big[(B'_\psi(v)-\NN)\cap B''_\psi(v)\Big]\subset B'_\psi(v)\cup B''_\psi(v).
\end{equation}
In other words, $\beta\in B_\psi(v)$ iff $\beta\in B'_\psi(v)\cup B''_\psi(v)$ and both $(\beta+\NN)\cap B'_\psi(v)$ and $(\beta+\nobreak\NN)\cap B''_\psi(v)$ are non-empty. For all $\beta\in\CC$, let us set $L_{\psi,\beta}(v)=\min\{L'_{\psi,\beta}(v),L''_{\psi,\beta}(v)\}$.

We denote by $\Phi(v)$ the set of~$\varphi$ for which the component $\cE^\varphi\otimes \cR'_\varphi$ of the formal module associated with $\cD[x^{-1}]\cdot v$ is non-zero. It is also the set of $\varphi$ for which the component $\ov{\cE^{-\varphi}\otimes \cR''_\varphi}$ of the formal module associated with $\ov\cD[1/\ov x]\cdot v$ is non-zero, as a consequence of the proof of Theorem \ref{th:Hermdualone}.

\begin{corollaire}\label{cor:asympt}
Let $v$ be a holonomic moderate distribution with no ramification. Then $v$ has an expansion \eqref{eq:distholmod} in $\Dbmo$, with $\Phi=\Phi(v)$ and $B_\varphi= B_\varphi(v)$. Moreover, if $f_{\varphi,\beta,\ell}\neq0$ and if the point $(k',k'')\in\NN^2$ is in $E(f_{\varphi,\beta,\ell})$, then $\beta+k'\in B'_\varphi(v)+\NN$ and $\beta+k''\in B''_\varphi(v)+\NN$.
\end{corollaire}

\begin{proof}
We assume that the theorem is proved. We will use the \index{Mellin transformation}Mellin transformation to argue on each term of~\eqref{eq:distholmod}. Let $\chi$ be a $C^\infty$ function with compact support contained in an open set where $v$ is defined, and identically equal to $1$ near $0$. We will also denote by $\chi$ the differential form $\chi\tfrac{i}{2\pi}\,dx\wedge d\ov x$. On the other hand, let us choose a distribution $\wt v$ inducing~$v$ on~$X\moins\{0\}$ and let $p$ be its order on the support of $\chi$. Let us first consider the terms in \eqref{eq:distholmod} for which $\varphi=0$.

For all $k',k''\in\NN$, the function $s\mto\langle \wt v,|x|^{2s}x^{-k'}\ov x^{-k''}\chi\rangle$ is defined and holomorphic on the half plane $\{s\mid2\reel s>p+k'+k''\}$. For all $j\geq1$, let us denote by~$Q_j$ the operator such that \eqref{eq:Bernsteinj} (for $\psi=0$) reads $Q_j\cdot v=0$. Then $Q_j\cdot \wt v$ is supported at the origin. It will be convenient below to use the notation $\alpha$ for $-\beta-1$ and to set $A'_\varphi(v)=\{\alpha\mid\beta=-\alpha-1\in B'_\varphi(v)\}$. We deduce then that, on some half plane $\reel s\gg0$, the function
\[
\bigg[\prod_{k=0}^{k(j)}\prod_{\alpha\in A'_0(v)}(s-\alpha-k'+k)^{L'_{0,\alpha}}\bigg]\langle \wt v,|x|^{2s}x^{-k'}\ov x^{-k''}\chi\rangle
\]
coincides with a holomorphic function defined on $\{s\mid2\reel s>p+k'+k''-j\}$. Applying the same argument in a anti-holomorphic way, we obtain that, for each $k',k''\in\nobreak\NN$, the function $s\mto\langle \wt v,|x|^{2s}x^{-k'}\ov x^{-k''}\chi\rangle$ can be extended as a meromorphic function on~$\CC$ with poles contained in $(A'_0(v)+k'-\NN)\cap (A''_0(v)+k''-\NN)$, the order of the pole at $\alpha+\ZZ$ being bounded by $L_{0,\alpha}(v)$. Moreover, this function does not depend on the choice of the lifting~$\wt v$ of~$v$.

\begin{lemme}\label{lem:sanspole}
If $\varphi\neq0$, then for any function $g\in C^\infty(X)$, the Mellin transform of $g(x)e^{\varphi-\ov\varphi}|x|^{2\beta}\Lx^\ell$ is an entire function of the variable~$s$.
\end{lemme}

\begin{proof}
We will show that this Mellin transform is holomorphic on any half plane $\{s\mid\reel s>-q\}$ ($q\in\NN$). In order to do so, for $q$ fixed, we decompose $g$ as the sum of a polynomial in $x,\ov x$ and a remaining term which vanishes with high order at the origin so that the corresponding part of the Mellin transform is holomorphic for $\reel s>-q$. One is thus reduced to showing the lemma when $g$ is a monomial in~$x,\ov x$. One can then find differential equations for $g(x)e^{\varphi-\ov\varphi}|x|^{2\beta}\Lx^\ell$ of the kind \eqref{eq:Bernsteinj} ($j\in\NN^*$) with exponents $L'$ equal to zero, and anti-holomorphic analogues. Denoting by $p$ the order of a distribution lifting this moderate function, we find as above that the Mellin transform is holomorphic on a half plane $2\reel s>p+k'+k''-j$. As this holds for any~$j$, the Mellin transform, when $g$ is a monomial, is entire. The Mellin transform for general $g$ is thus holomorphic on any half plane $\{s\mid\reel s>-q\}$, thus is also entire.
\end{proof}

Let us now compute the Mellin transform of the expansion \eqref{eq:distholmod} for~$v$. Let us first consider the terms of the expansion \eqref{eq:distholmod} for $v$ for which $\varphi=0$. We will use the property that, for all $(\nu',\nu'')\in\ZZ^2$ \emph{not both negative} and any function $g\in C^\infty(X)$ such that $g(0)\neq0$, the poles of the meromorphic function $s\mto\langle g(x)|x|^{2\beta}\Lx^\ell,|x|^{2s}x^{\nu'}\ov x^{\nu''}\chi\rangle$ are contained in $\alpha-\NN$ (with $\alpha=-\beta-1$), and there is a pole at $\alpha$ if and only if $\nu'=0$ and $\nu''=0$, this pole having order $\ell+1$ exactly.

Let $\beta\in B_0$, where $B_0$ satisfies the minimality property of Remark \ref{rem:distholmod}\eqref{rem:distholmod2} and let $E_\beta\subset\NN^2$ be minimal such that $E_\beta+\NN^2=\bigcup_\ell (E(f_{0,\beta,\ell})+\NN^2)$. It follows from the previous remark and from the expansion \eqref{eq:distholmod} that, for each $(k',k'')\in E_\beta$, the function $s\mto\langle \wt v,|x|^{2s}x^{-k'}\ov x^{-k''}\chi\rangle$ has a non trivial pole at $\alpha$; from the first part of the proof we conclude that $\alpha-k'\in A'_0(v)-\NN$ and $\alpha-k''\in A''_0(v)-\NN$, that is, $\beta+k'\in B'_0(v)+\NN$ and $\beta+k''\in B''_0(v)+\NN$. By Remark \ref{rem:distholmod}\eqref{rem:distholmod2} for $\beta\in B_0$, there exists $(k',k'')\in E_\beta$ with $k'=0$ or $k''=0$. As a consequence, we have $\beta\in B_\varphi(v)$ (defined by \eqref{eq:Bphiv}), and the property of the corollary is fulfilled by the elements $(k',k'')$ of $E_\beta$. It is then trivially fulfilled by the elements $(k',k'')$ of all the subsets $E(f_{0,\beta,\ell})$.

The same result holds for the coefficients $f_{\varphi,\beta,\ell}$ by applying the previous argument to the moderate distribution $e^{\ov\varphi-\varphi}v$.
\end{proof}

\begin{proof}[\proofname\ of Theorem \ref{th:distholmod}]
We go back to the setting of the theorem. Recall that we set $\cM=\cD(*0)\cdot u\subset\Dbmo$ and $\Cmo\cM=\cHom_{\cD(*0)}(\cM,\Dbmo)$. There is thus a canonical $\cD\otimes_\CC\ov\cD$-linear pairing
\[
k:\cM\otimes_\CC\Cmo\cM\to\Dbmo,\qquad (m,\mu)\mto\mu(m).
\]
Since $\cM$ is generated by $u$ as a $\cD(*0)$-module, an element $\mu\in\Cmo\cM$ is determined by its value $\mu(u)\in\Dbmo$. Therefore, there exists a section $\bun_u$ of $\Cmo\cM$ such that $\bun_u(u)=u$.

We are thus reduced to proving the theorem in the case where
\[
k:\cM'\otimes_\CC\ov\cM{}''\to\Dbmo
\]
is a sesquilinear pairing between two free $\cO(*0)$-modules of finite rank with connection, $m',m''$ are local sections of $\cM',\cM''$, and $u=k(m',\ov{m''})$.

As we allow cyclic coverings, we can also reduce to the case where both $\cM'$ and~$\cM''$ have a formal decomposition with model $\cM^{\prime\el}$ and $\cM^{\prime\prime\el}$ (when $\cM''=\ov{\Cmo\cM'}$, if this assumption is fulfilled for $\cM'$, it is also fulfilled for $\cM''$, as follows from the proof of Theorem \ref{th:Hermdualone}). We will still denote by $x$ the variable after ramification, and by $X$ the corresponding disc.

We then introduce the real blow-up space $e:\wt X\to X$ at the origin, together with the sheaves $\cA_{\wt X}$ (\cf\S\ref{subsec:somebasic}) and $\Dbmo_{\wt X}$ (sheaf on $\wt X$ of moderate distributions along $e^{-1}(0)=S^1$). Lastly, we set $\wt \cM=\cA_{\wt X}\otimes_{e^{-1}\cO}e^{-1}\cM$ (for $\cM=\cM',\cM''$). This is a left $\cD_{\wt X}$-module which is $\cA_{\wt X}(*0)$-free.

The pairing $k$ can be uniquely extended as a $\cD_{\wt X}\otimes_\CC\cD_{\wt{\ov X}}$-linear pairing
\[
\wt k:\wt \cM'\otimes_\CC\ov{\wt \cM''}\to\Dbmo_{\wt X}
\]
(because $\cM',\cM''$ are $\cO(*0)$-free). One can then work locally on $\wt X$ with $\wt k$ and, according to the Hukuhara-Turrittin theorem, we can replace $\wt\cM'$ et $\wt\cM''$ by their respective elementary models $\bigoplus_\varphi (\cE^\varphi\otimes \cR'_\varphi)$ and $\bigoplus_\varphi (\cE^\varphi\otimes \cR''_\varphi)$.

\begin{lemme}
If $\varphi,\psi\in x^{-1}\CC[x^{-1}]$ are distinct, any sesquilinear pairing $\wt k_{\varphi,\psi}:(\cE^\varphi\otimes \cR'_\varphi)\otimes_\CC(\cE^{-\ov\psi}\otimes \ov{\cR''_{-\psi}})\to\Dbmo_{\wt X}$ takes values in the subsheaf of $C^\infty$ functions with rapid decay.
\end{lemme}

\begin{proof}
Since $e^{\psi-\ov\psi}$ is a multiplier on $\Dbmo_{\wt X}$, we can assume that $\psi=0$ for example. By induction on the rank of $\cR'_\varphi$ and $\cR''_0$, we can reduce to the rank-one case, and since the functions $x^{\alpha}$ and $\ov x^{\beta}$ are also multipliers, we can reduce to the case where $\cR'_\varphi$ and $\cR''_0$ are both equal to $\cO(*0)$. Now, denoting by $\ephi$ the generator $1$ of $\cE^\varphi$, the germ of moderate distribution $\wt u=\wt k(\ephi,1)$ on $\wt X$ satisfies $\ov\partial_{x}\wt u=0$ and $\partial_{x}\wt u=\varphi'(x)\wt u$. It follows that $\wt u_{|X^*}=e^{\varphi}$ with $0\neq\varphi\in x^{-1}\CC[x^{-1}]$. Therefore, $\wt u$ has moderate growth at $\theta\in S^1\ssi\wt u$ has rapid decay at $\theta$.
\end{proof}

In a similar way (using the Jordan normal form for the matrix of the connection on $\cR'_0,\cR''_0$), one checks that the diagonal terms $\wt k_{\varphi,\varphi}(\wt m',\ov{\wt m''})$ decompose as a sum, with coefficients in $\cC^\infty_{\wt X}$, of terms $e^{\varphi-\ov\varphi}x^{\beta'}\ov x^{\beta''}(\log x)^j(\log \ov x)^k$ ($\beta',\beta''\in\CC$, $j,k\in\NN$). One rewrites each term as a sum, with coefficients in $\cC^\infty_{\wt X}$, of terms $|x|^{2\beta}\Lx^\ell$ ($\beta\in\CC$, $\ell\in\NN$).

If $m',m''$ are local sections of $\cM',\cM''$, one uses a partition of the unity on~$\wt X$ to obtain for $e^*k(m',\ov{m''})$ an expansion like in \eqref{eq:distholmod}, with coefficients $\wt f_{\varphi,\beta,\ell}$ in $e_*\cC^\infty_{\wt X}$, up to adding a $C^\infty$ function with rapid decay along $e^{-1}(0)$; such a function can be incorporated in one of the coefficients $\wt f_{\varphi,\beta,\ell}$. We denote then by $\wt B_\varphi$ the set of indices $\beta$ corresponding to $\varphi$. Since $|x|$ is $C^\infty$ on $\wt X$, one can assume that two distinct elements of $\wt B_\varphi$ do not differ by half an integer.

It remains to check that this expansion can be written with coefficients $f_{\varphi,\beta,\ell}$ in $C^\infty(X)$. We will use the Mellin transformation, as in Corollary \ref{cor:asympt}, from which we only take the notation.

We will use polar coordinates $x=r e^{i\theta}$. A function $\wt f\in e_*\cC^\infty_{\wt X}$ has a Taylor expansion $\sum_{m\geq0}\wt f_m(\theta)r^m$, where $\wt f_m(\theta)$ is $C^\infty$ on~$S^1$ and expands as a Fourier series $\sum_n\wt f_{mn}e^{in\theta}$. Such a function can be written as $\sum_{k=-2k_0}^0g_k(x)|x|^k$ with $k_0\in\NN$ and $g_k\in C^\infty(X)$ if and only if
\begin{equation}\label{eq:cns}
\wt f_{m,n}\neq0\implique \frac{m\pm n}{2}\geq -k_0.
\end{equation}
Indeed, one direction is easy, and if \eqref{eq:cns} is fulfilled, let us set $k=\min(m-\nobreak n,m+\nobreak n)$. We have $k\in\ZZ$ and $k\geq-2k_0$. One writes $\wt f_{m,n}e^{in\theta}r^m=\wt f_{m,n}|x|^kx^{\ell'}\ov x^{\ell''}$ with $\ell',\ell''\!\in\!\nobreak\NN$. The part of the Fourier expansion of $\wt f$ corresponding to $k$ fixed gives, up to multiplying by $|x|^k$, according to Borel's lemma, a function $g_k(x)\in C^\infty(X)$. The difference $\wt f-\nobreak\sum_{k=-2k_0}^0g_k(x)|x|^k$ is~$C^\infty$ on~$\wt X$, with rapid decay along~$S^1$. It is thus $C^\infty$ on~$X$, with rapid decay at the origin. One can add it to $g_0$ in order to obtain the desired decomposition of~$\wt f$.

The condition \eqref{eq:cns} can be expressed in terms of Mellin transform. Indeed, one notices that, for all $k',k''\in\hNN$ such that $k'+k''\in\NN$, the Mellin transform $s\mto\langle \wt f,|x|^{2s}x^{-k'}\ov x^{-k''}\chi\rangle$, which is holomorphic for $\reel(s)\gg0$, extends as a meromorphic function on $\CC$ with simple poles at most, and these poles are contained in $\hZZ$. The condition \eqref{eq:cns} is equivalent to the property:\refstepcounter{equation}\label{enum:615}
\enum{.05}{.92}{\eqref{enum:615}}{there exists $k_0\in\NN$ such that, for all $k',k''\in\hNN$ with $k'+k''\in\NN$, the poles of $s\mto\langle \wt f,|x|^{2s}x^{-k'}\ov x^{-k''}\chi\rangle$ are contained in the intersection of the sets $k_0-1+k'-\hNN^*$ and $k_0-1+k''-\hNN^*$.}

Arguing by decreasing induction on $\ell$, that is also on the maximal order of the poles, we conclude that a function $\wt f=\sum_{\ell=0}^L\wt f_\ell\Lx^\ell$ with coefficients in $C^\infty(\wt X)$ can be rewritten as $\sum_{-2k_0\leq k\leq0}\sum_{\ell=0}^Lg_{k,\ell}(x)|x|^k\Lx^\ell$ with $g_{k,\ell}\in C^\infty(X)$ if and only if \eqref{enum:615} is fulfilled by $s\mto\langle \wt f,|x|^{2s}x^{-k'}\ov x^{-k''}\chi\rangle$ (and the poles have order $\leq L+1$).

Now, if $\wt B_0\subset\CC$ is a finite set such that two distinct elements do not differ by half an integer, a function $\wt f=\sum_{\beta\in\wt B_0}\sum_{\ell=0}^L\wt f_{\beta,\ell}|x|^{2\beta}\Lx^\ell$ with coefficients in $C^\infty(\wt X)$ can be rewritten $\sum_{\beta\in B_0}\sum_{\ell=0}^Lf_{\beta,\ell}|x|^{2\beta}\Lx^\ell$ for some subset $B_0$, with $f_{\beta,\ell}\in C^\infty(X)$, if and only if there exists a finite subset $A_0\subset\CC$ such that, for all $k',k''\in\hNN$ with $k'+k''\in\nobreak\NN$, the poles of $s\mto\langle \wt f,|x|^{2s}x^{-k'}\ov x^{-k''}\chi\rangle$ are contained in \hbox{$(A_0+k'-\NN)\cap(A_0+k''-\NN)$}.

Lastly, if $\wt f$ has an expansion of the kind \eqref{eq:distholmod} with coefficients in $C^\infty(\wt X)$, the condition above applied to $\wt f$ is equivalent to the fact that $\wt f$ can be rewriten with coefficients $f_{0,\beta,\ell}\in C^\infty(X)$, according to an obvious analogue of Lemma \ref{lem:sanspole}.

We apply this to $k(m',\ov{m''})$: that the condition on the Mellin transform is fulfilled is seen by using equations like \eqref{eq:Bernstein1} for $m'$ and $m''$, in the same way as in Corollary \ref{cor:asympt} and this gives the result for the coefficients corresponding to $\varphi=0$. If $\varphi\neq0$, one applies the same reasoning to $\cE^{-\varphi}\otimes \cM'$ et $\cE^{\varphi}\otimes \cM''$.
\end{proof}

\subsection{Comments}
The notion of Hermitian dual of a $\cD$-module was introduced by M\ptbl Kashiwara in \cite{Kashiwara86}, where many applications of the property that the Hermitian dual of a holonomic $\cD$-module remains holonomic have been given (see also~\cite{Bjork93}). Kashiwara only treated the case of regular holonomic $\cD$-modules (in arbitrary dimension), and examples of such regular holonomic distributions also appear in \cite{Barlet83}.

The vanishing of $\cExt^1$ in Theorem \ref{th:Hermdualone} is already apparent in \cite[Th\ptbl10.2]{Malgrange74}, if one restricts to the real domain however. The analysis of the $\cHom$ has been done in \cite{Svensson81}, in particular Th\ptbl3.1 which is a real version of Theorem \ref{th:distholmod}, still in the real domain (I~thank J.-E\ptbl Björk for pointing this reference out to me).

\chapterspace{-3}
\chapter[Riemann-Hilbert and Laplace]{Riemann-Hilbert and Laplace on the~affine~line (the regular case)}\label{chap:Laplace}

\begin{sommaire}
The Laplace transform $\Fou M$ of a holonomic $\cD$-module $M$ on the affine line $\Afu$ is also holonomic. If $M$ has only regular singularities (included at infinity), $\Fou M$ provides the simplest example of an irregular singularity (at infinity). We will describe the Stokes-filtered local system attached to $\Fou M$ at infinity in terms of data of $M$. More precisely, we define the topological Laplace transform of the perverse sheaf $\pDR^\an M$ as a perverse sheaf on $\Afuh$ equipped with a Stokes structure at infinity. We make explicit this topological Laplace transform. As a consequence, if $\kk$ is a subfield of $\CC$ and if we have a $\kk$-structure on $\pDR^\an M$, we find a natural $\kk$-structure on $\pDR^\an\Fou M$ which extends to the Stokes filtration at infinity. In other words, the Stokes matrices can be defined over~$\kk$. We end this \chaptersname by analyzing the behaviour of duality by Laplace and topological Laplace transformation, and the relations between them.
\end{sommaire}

\subsection{Introduction}\label{subsec:introRHL}
We denote by \index{$CLT$@$\Clt$}$\Clt$ the \emphb{Weyl algebra} in one variable, consisting of linear differential operators in one variable $t$ with polynomial coefficients. Let $M$ be a holonomic $\Clt$-module (\ie $M$ is a left $\Clt$-module such that each element is annihilated by some nonzero operator in $\Clt$), that we also regard as a sheaf of holonomic modules over the sheaf $\cD_{\Afu}$ of algebraic differential operators on the affine line $\Afu$ (with its Zariski topology). Its \emphb{Laplace transform} \index{$MFOU$@$\Fou M$, $\Fou\ccM$, $\FcM$}$\Fou M$ (also called the Fourier transform) is a holonomic $\Cltau$-module, where $\tau$ is a new variable. Recall that $\Fou M$ can be defined in various equivalent ways. We consider below the Laplace transform with kernel $e^{t\tau}$, and a similar description can be made for the \index{Laplace transform!inverse}inverse Laplace transform, which has kernel $e^{-t\tau}$.\enlargethispage{\baselineskip}%

\begin{enumerate}
\item\label{enum:introRHL1}
The simplest way to define \index{$MFOU$@$\Fou M$, $\Fou\ccM$, $\FcM$}$\Fou M$ is to set $\Fou M=M$ as a $\CC$-vector space and to define the action of $\Cltau$ in such a way that $\tau$ acts as $-\partial_t$ and $\partial_\tau$ as $t$ (this is modeled on the behaviour of the action of differential operators under Fourier transform of temperate distributions).
\item\label{enum:introRHL2}
One can mimic the Laplace integral formula, replacing the integral by the direct image of $\cD$-modules. We consider the diagram
\[
\xymatrix@=.5cm{
&\Afu\times\Afuh\ar[dl]_p\ar[dr]^{\wh p}\\
\Afu&&\Afuh
}
\]
where $t$ is the coordinate on $\Afu$ and $\tau$ that on $\Afuh$. Then $\Fou M=\wh p_+(p^+M\otimes E^{t\tau})$, where \index{$ETTAU$@$E^{t\tau}$}$E^{t\tau}$ is $\CC[t,\tau]$ equipped with the connection $d+d(t\tau)$, and $p^+M$ is $\CC[\tau]\otimes_\CC M$ equipped with its natural connection. Recall also that $\wh p_+$ is the \index{push-forward (direct image)!of $\cD$-modules}\emph{direct image} of $\cD$-modules, which is defined here in a simple way: $\wh p_+(p^+M\otimes E^{t\tau})$ is the complex
\[
0\to(p^+M\otimes E^{t\tau})\To{\partial_t}(p^+M\otimes E^{t\tau})\to0
\]
where the source of $\partial_t$ is in degree $-1$ and the target in degree~$0$. More concretely, this complex is written
\[
0\to \CC[\tau]\otimes_\CC M\To{1\otimes\partial_t+\tau\otimes1}\CC[\tau]\otimes_\CC M\to0
\]
and one checks that the differential is injective, so that the complex is quasi-isomorphic to its cokernel. This complex is in the category of $\CC[\tau]$-modules, and is equipped with an action of $\Cltau$, if $\partial_\tau$ acts as $\partial_\tau\otimes1+1\otimes t$. The map $\CC[\tau]\otimes_\CC M\to M$ sending $\tau^k\otimes m$ to $(-\partial_t)^km$ identifies the cokernel with $\Fou M$ as defined in \eqref{enum:introRHL1}.

\item\label{enum:introRHL3}
It will be useful to work with the analytic topology (not the Zariski topology, as above). In order to do so, one has to consider the projective completion $\PP^1$ of $\Afu$ (\resp $\wh\PP^1$ of $\Afuh$) obtained by adding the point $\infty$ to $\Afu$ (\resp the point $\wh\infty$ to $\Afuh$). In the following, we shall denote by $t'$ (\resp $\tau'$) the coordinate centered at $\infty$ (\resp $\wh\infty$), so that $t'=1/t$ (\resp $\tau'=1/\tau$) on $\Afu\moins\{0\}$ (\resp on $\Afuh\moins\{\wh0\}$). We consider the diagram
\begin{equation}\label{eq:ppproper}
\begin{array}{c}
\xymatrix@=.5cm{
&\PP^1\times\wh\PP^1\ar[dl]_p\ar[dr]^{\wh p}\\
\PP^1&&\wh\PP^1
}
\end{array}
\end{equation}

Let $\ccM$ (\resp \index{$MFOU$@$\Fou M$, $\Fou\ccM$, $\FcM$}$\Fou\ccM$) be the algebraic $\cD_{\PP^1}$-module (\resp $\cD_{\wh\PP^1}$-module) determined by~$M$ (\resp $\Fou M$): it satisfies by definition $\ccM=\cO_{\PP^1}(*\infty)\otimes_{\cO_{\PP^1}}\ccM$ and $M=\Gamma(\PP^1,\ccM)$ (and similarly for $\Fou M$). It is known that $\ccM$ (\resp $\Fou\ccM$) is still holonomic. Let~$\cM$ (\resp $\FcM$) be its analytization. We now write $\cD_{\PP^1}$ (\resp $\cD_{\wh\PP^1}$) instead of $\cD_{\PP^1}^\an$ (\resp $\cD_{\wh\PP^1}^\an$). Notice that, by definition, $\cM=\cO_{\PP^1}(*\infty)\otimes_{\cO_{\PP^1}}\cM$ and \index{$MFOU$@$\Fou M$, $\Fou\ccM$, $\FcM$}$\FcM=\cO_{\wh\PP^1}(*\wh\infty)\otimes_{\cO_{\wh\PP^1}}\FcM$. Applying a similar construction to $E^{t\tau}$ we get $\cE^{t\tau}$ on $\PP^1\times\wh\PP^1$, which is a free $\cO_{\PP^1\times\wh\PP^1}(*(D_\infty\cup D_{\wh\infty}))$-module of rank one, with $D_\infty\cup D_{\wh\infty}=\PP^1\times\wh\PP^1\moins(\Afu\times\Afuh)$. Then we have
\[
\FcM=\cH^0\wh p_+(p^+\cM\otimes\cE^{t\tau}).
\]
\end{enumerate}

If $M$ is a regular holonomic $\Clt$-module (\ie regular at finite distance \emph{and} at infinity), the following is well-known (\cf \eg \cite{Malgrange91} for the results and the definition of the \emphb{vanishing cycle!moderate} functor):
\begin{enumeratea}
\item\label{enum:a}
The Laplace transform $\Fou M$ is holonomic, has a regular singularity at the origin $\tau=0$, no other singularity at finite distance, and possibly irregular at infinity.
\item\label{enum:b}
The formal structure of $\Fou M$ at infinity can be described exactly from the vanishing cycles of $M$ (or of $\DR^\an M$) at its critical points at finite distance. More precisely, denoting by $\wh{\FcM}$ the formalized connection at $\wh\infty$, we have a decomposition $\wh{\FcM}\simeq\bigoplus_c(\cE^{c/\tau'}\otimes\cR_c)$, where the sum is taken over the singular points $c\in\Afu$ of $M$, and $\cR_c$ is a regular formal meromorphic connection corresponding in a one-to-one way to the data of the \index{vanishing cycle}vanishing cycles of the perverse sheaf $\pDR^\an M$ at~$c$. As a consequence, the set of exponential factors of the Stokes filtration of $\Fou M$ at $\wh\infty$ consists of these $c/\tau'$, and the Stokes filtration is non-ramified.
\end{enumeratea}

The purpose of this \chaptersname is to give an explicit formula for the Stokes filtration of $\Fou M$ at infinity, in terms of topological data obtained from $M$. More precisely, let $\cF=\pDR^\an M$ be the analytic de~Rham complex of $M$ (shifted according to the usual perverse convention). The question we address is a formula for the Stokes filtration of~$\Fou M$ at $\tau=\infty$ in terms of $\cF$ only. In other words, we will define a topological Laplace transform of $\cF$ as being a perverse sheaf on $\Afuh$ with a Stokes filtration at infinity, in such a way that the topological Laplace transform of $\DR^\an M$ is $\DR^\an\Fou M$ together with its Stokes filtration at infinity (\ie $\DR^\an\Fou M$ as a Stokes-perverse sheaf on $\wh\PP^1$, \cf Definition \ref{def:Stperv1}).

A consequence of this result is that, if the perverse sheaf $\cF$ is defined over $\QQ$ (say), then the Stokes filtration of $\Fou M$ at infinity is also defined over $\QQ$.

\subsection{Direct image of the moderate de~Rham complex}
Our goal is to obtain the Stokes filtration of $\Fou M$ at infinity as the direct image by $\wh p$ of a sheaf defined over a completion of $\Afu\times\Afuh$ (integral formula).

Let $X\defin\wt\PP^1$ be the real blow-up space of $\PP^1$ at $\infty$ (in order to simplify the notation, we do not use the same notation as in \Chaptersname\ref{chap:Stokesone-pervers}). This space is homeomorphic to a closed disc with boundary $S^1_\infty\defin S^1\times\{\infty\}$. A similar construction can be done starting from $\Afuh$ and its projective completion~$\wh\PP^1$. We get a space $\wh X\defin\wt{\wh\PP^1}$ with boundary $S^1_{\wh\infty}$. We set $X^*=X\moins\{0\}$ and $\wh X{}^*\defin\wh X\moins\{\wh0\}$.

We thus have a diagram
\begin{equation}\label{eq:diagramwhX}
\begin{array}{c}
\xymatrix{
&\PP^1\times\wh X\ar[d]_{\id\times\wh\varpi}\ar@/^1pc/[ddrr]^{\wh q}&&\\
&\PP^1\times\wh\PP^1\ar[dl]_p\ar[dr]^{\wh p}&&\\
\PP^1&&\wh\PP^1&\ar[l]_-{\wh\varpi}\wh X
}
\end{array}
\end{equation}

We denote by $\cA_{\wh X}^{\rmod\wh\infty}$ the sheaf on $\wh X$ of holomorphic functions on $\Afuh$ which have moderate growth along $S^1_{\wh\infty}$.

Similarly we denote by $\cA_{\PP^1\times\wh X}^{\rmod\wh\infty}$ the sheaf on $\PP^1\times\wh X$ of holomorphic functions on $\PP^1\times\Afuh$ which have moderate growth along $\PP^1\times S^1_{\wh\infty}$. We have a natural inclusion
\begin{equation}\label{eq:naturalA}
\wh q^{-1}\cA_{\wh X}^{\rmod\wh\infty}\hto\cA_{\PP^1\times\wh X}^{\rmod\wh\infty}.
\end{equation}

We will denote by $\DR^{\rmod\wh\infty}$ the de~Rham complex of a $\cD$-module with coefficients in such rings.

\begin{lemme}\label{lem:DRmod}
There is a functorial morphism
\bgroup\def\theequation{\ref{lem:DRmod}$_{\leq0}$}\addtocounter{equation}{-1}
\begin{equation}\label{eq:DRmod}
\DR^{\rmod\wh\infty}(\FcM)\to \bR\wh q_*\DR^{\rmod\wh\infty}(p^+\cM\otimes\cE^{t\tau})[1]
\end{equation}
\egroup
and this morphism is injective on the zero-th cohomology sheaves.
\end{lemme}

\begin{proof}
Let us first consider the relative de~Rham complex $\DR_{\wh q}^{\rmod\wh\infty}(p^+\cM\otimes\cE^{t\tau})$ defined as
\[
\cA_{\PP^1\times\wh X}^{\rmod\wh\infty}\otimes(p^+\cM\otimes\cE^{t\tau})\To{\nabla'}\cA_{\PP^1\times\wh X}^{\rmod\wh\infty}\otimes\big[\Omega^1_{\PP^1\times\wh X/\wh X}\otimes(p^+\cM\otimes\cE^{t\tau})\big],
\]
where $\nabla'$ is the relative part of the connection on $(p^+\cM\otimes\cE^{t\tau})$, \ie which differentiates only in the $\PP^1$ direction. Then $\DR^{\rmod\wh\infty}(p^+\cM\otimes\cE^{t\tau})$ is the single complex associated to the double complex
\[
\DR_{\wh q}^{\rmod\wh\infty}(p^+\cM\otimes\cE^{t\tau})\To{\nabla''}\big((\id\otimes\varpi)^{-1}\Omega^1_{\PP^1\times\wh\PP^1/\PP^1}\big)\otimes\DR_{\wh q}^{\rmod\wh\infty}(p^+\cM\otimes\cE^{t\tau}),
\]
where $\nabla''$ is the connection in the $\wh\PP^1$ direction. Hence, $\bR q_*\DR^{\rmod\wh\infty}(p^+\cM\otimes\nobreak\cE^{t\tau})$ is the single complex associated to the double complex
\[
\bR\wh q_*\DR_{\wh q}^{\rmod\wh\infty}(p^+\cM\otimes\cE^{t\tau})\To{\nabla''}\varpi^{-1}\Omega^1_{\wh\PP^1}\otimes\bR\wh q_*\DR_{\wh q}^{\rmod\wh\infty}(p^+\cM\otimes\cE^{t\tau}).
\]
The point in using the relative de~Rham complex is that it is a complex of $(\wh\varpi\circ\wh q)^{-1}\cO_{\wh\PP^1}$-modules and the differential is linear with respect to this sheaf of rings. On the other hand, one knows that $\cA_{\wh X}^{\rmod\wh\infty}$ is flat over $\wh\varpi^{-1}\cO_{\wh\PP^1}$, because it has no $\wh\varpi^{-1}\cO_{\wh\PP^1}$-torsion. So we can apply the projection formula to get (in a functorial way)
\begin{multline*}
\cA_{\wh X}^{\rmod\wh\infty}\ootimes_{\wh\varpi^{-1}\cO_{\wh\PP^1}}\bR\wh q_*(\id\times\wh\varpi)^{-1}\DR_{\wh p}(p^+\cM\otimes\cE^{t\tau})\\
\simeq\bR\wh q_*\Big(\wh q^{-1}\cA_{\wh X}^{\rmod\wh\infty}\ootimes_{\wh q^{-1}\wh\varpi^{-1}\cO_{\wh\PP^1}}(\id\times\wh\varpi)^{-1}\DR_{\wh p}(p^+\cM\otimes\cE^{t\tau})\Big).
\end{multline*}
Using the natural morphism \eqref{eq:naturalA}, we find a morphism
\[
\cA_{\wh X}^{\rmod\wh\infty}\otimes\bR\wh q_*(\id\times\wh\varpi)^{-1}\DR_{\wh p}(p^+\cM\otimes\cE^{t\tau})\to\bR\wh q_*\DR_{\wh q}^{\rmod\wh\infty}(p^+\cM\otimes\cE^{t\tau}),
\]
and therefore, taking the double complex with differential $\nabla''$ and the associated single complex, we find
\begin{equation}\label{eq:intermediaire}
\cA_{\wh X}^{\rmod\wh\infty}\otimes\bR\wh q_*(\id\times\wh\varpi)^{-1}\DR(p^+\cM\otimes\cE^{t\tau})\to\bR\wh q_*\DR^{\rmod\wh\infty}(p^+\cM\otimes\cE^{t\tau}).
\end{equation}
On the other hand, we know (\cf \eg \cite{Malgrange90b}) that the (holomorphic) de~Rham functor has a good behaviour with respect to the direct image of $\cD$-modules, that is, we have a functorial isomorphism
\[
\bR\wh p_*\DR(p^+\cM\otimes\cE^{t\tau})[1]\simeq\DR\big[\wh p_+(p^+\cM\otimes\cE^{t\tau})\big]
\]
where the shift by one comes from the definition of $p_+$ for $\cD$-modules. Because $\wh p$ is proper and the commutative diagram in \eqref{eq:diagramwhX} is cartesian, the base change $\wh\varpi^{-1}\bR\wh p_*\simeq(\id\times\wh\varpi)^{-1}\bR\wh q_*$ shows that the left-hand side of \eqref{eq:intermediaire} is $\DR^{\rmod\wh\infty}(\FcM)[-1]$. Shifting \eqref{eq:intermediaire} by one gives the functorial morphism \eqref{eq:DRmod}.

Notice that, over $\Afuh$, this morphism is an isomorphism, as it amounts to the compatibility of $\DR$ and direct images (\cf the standard references on $\cD$-modules). On the other hand, when restricted to $S^1_{\wh\infty}$, the left-hand term has cohomology in degree $0$ at most (\cf Theorem \ref{th:H1nul}).

Let us show that $\cH^0\eqref{eq:DRmod}$ is injective. It is enough to check this on $S^1_{\wh\infty}$. If $\wtj:\Afuh\hto\wh X$ denotes the inclusion, we have a commutative diagram
\[
\xymatrix{
\cH^0\DR^{\rmod\wh\infty}(\FcM)\ar[r]^-{\cH^0\eqref{eq:DRmod}}\ar@{^{ (}->}[d]&\cH^0\bR\wh q_*\DR^{\rmod\wh\infty}(p^+\cM\otimes\cE^{t\tau})[1]\ar[d]\\
\wtj_*\wtj^{-1}\cH^0\DR^{\rmod\wh\infty}(\FcM)\ar@{=}[r]&\wtj_*\wtj^{-1}\cH^0\bR\wh q_*\DR^{\rmod\wh\infty}(p^+\cM\otimes\cE^{t\tau})[1]
}
\]
and the injectivity of $\cH^0\eqref{eq:DRmod}$ follows.
\end{proof}

\begin{theoreme}\label{th:RHL}
The morphism \eqref{eq:DRmod} is an isomorphism.
\end{theoreme}

This theorem reduces our problem of expressing the Stokes filtration of $\Fou M$ at $\wh\infty$ in terms of $\DR^\an M$ to the question of expressing $\DR^{\rmod\wh\infty}(p^+\cM\otimes\cE^{t\tau})$ in terms of $\DR\cM$. Indeed, applying $\bR\wh p_*$ would then give us the answer for the $\leq0$ part of the Stokes filtration of $\Fou M$, as recalled in \eqref{enum:b} of \S\ref{subsec:introRHL}. The $\leq c/\tau'$ part is obtained by replacing~$t$ with $t-c$ and applying the same argument.

By Lemma \ref{lem:DRmod}, it is enough to prove that the right-hand term of \eqref{eq:DRmod} has cohomology in degree zero at most, and that $\cH^0\eqref{eq:DRmod}$ is onto on $S^1_{\wh\infty}$, or equivalently that the germs of both $\cH^0$ at any $\wh\theta\in S^1_{\wh\infty}$ have the same dimension.

The proof of the theorem will be done by identifying $\DR^{\rmod\wh\infty}(p^+\cM\otimes\nobreak\cE^{t\tau})$ with a complex constructed from $\DR^\an M$ and by computing explicitly its direct image. This computation will be topological. This complex obtained from $\DR^\an M$ will be instrumental for defining the topological Laplace transform. Moreover, the identification will be more easily done on a space obtained by blowing up $\PP^1\times\wh\PP^1$, in order to use asymptotic analysis (see also Remark \ref{rem:blowup} below). So, before proving the theorem, we first do the topological computation and define the topological Laplace transform. (See also Theorem \ref{th:Rpimod} and the references given there for another, more direct proof.)

\begin{remarque}
If we replace the sheaf $\cA_{\wh X}^{\rmod\wh\infty}$ by the sheaf $\cA_{\wh X}^{\rd\wh\infty}$ of functions having rapid decay at infinity, and similarly for $\cA_{\PP^1\times\wh X}^{\rd\wh\infty}$, the same proof as for Lemma \ref{lem:DRmod} gives a morphism for the corresponding rapid decay de~Rham complexes, that we denote by $(\ref{lem:DRmod})_{<0}$. Then Lemma \ref{lem:DRmod} and Theorem \ref{th:RHL} are valid for the rapid-decay complexes, and the proofs are similar.
\end{remarque}

\subsection{Topological spaces}\label{subsec:topspaces}
The behaviour of the function $e^{t\tau}$ near the divisor $D_\infty\cup D_{\wh\infty}=\PP^1\times\wh\PP^1\moins(\Afu\times\Afuh)$, where we set $D_\infty\defin\{\infty\}\times\wh\PP^1$ and $D_{\wh\infty}\defin\PP^1\times\{\wh\infty\}$, is not clearly understood near the two points $(0,\wh\infty)$ and $(\infty,\wh0)$, where it is written in local coordinates as $e^{t/\tau'}$ and $e^{\tau/t'}$ respectively. This leads us to blow up these points.

Let $e:Z\to\PP^1\times\wh\PP^1$ be the complex blowing-up of $(0,\wh\infty)$ and $(\infty,\wh0)$ in $\PP^1\times\wh\PP^1$. Above the chart with coordinates $(t,\tau')$ in $\PP^1\times\wh\PP^1$, we have two charts of $Z$ with respective coordinates denoted by $(t_1,u)$ and $(v',\tau'_1)$, so that~$e$ is given respectively by $t\circ e=t_1$, $\tau'\circ e=t_1u$ and $t\circ e=v'\tau'_1$, $\tau'\circ e=\tau'_1$. The exceptional divisor~$E$ of~$e$ above $(0,\wh\infty)$ is defined respectively by $t_1=0$ and $\tau'_1=0$.

Similarly, above the chart with coordinates $(t',\tau)$ in $\PP^1\times\wh\PP^1$, we have two charts of $Z$ with respective coordinates denoted by $(t'_1,u')$ and $(v,\tau_1)$, so that~$e$ is given respectively by $t'\circ e=t'_1$, $\tau\circ e=t'_1u'$ and $t'\circ e=v\tau_1$, $\tau\circ e=\tau_1$. The exceptional divisor~$F$ of~$e$ above $(\infty,\wh0)$ is defined respectively by $t'_1=0$ and $\tau_1=0$.

Away from~$E\cup F$ (in $Z$) and $\{(0,\wh\infty),(\infty,\wh0)\}$ (in $\PP^1\times\wh\PP^1$),~$e$ is an isomorphism. In particular, we regard the two sets $\Afu\times\Afuh$ and $(\PP^1\moins\{0\})\times(\wh\PP^1\moins\{\wh0\})$ as two open subsets of $Z$ whose union is $Z\moins(E\cup F)=\PP^1\times\wh\PP^1\moins\{(0,\wh\infty),(\infty,\wh0)\}$.

Moreover, the strict transform of $D_\infty$ in $Z$ is a divisor in $Z$ which does not meet~$E$, and meets transversally $F$ and the strict transform of $D_{\wh\infty}$. Similarly, the strict transform of $D_{\wh\infty}$ in $Z$ is a divisor which meets transversally both~$E$ and the strict transform of $D_\infty$, and does not meet $F$. We still denote by $D_\infty,D_{\wh\infty}$ these strict transforms, and we denote by $D$ the normal crossing divisor $D=e^{-1}(D_\infty\cup D_{\wh\infty})=D_\infty\cup D_{\wh\infty}\cup E\cup F$. This is represented on Figure \ref{fig:Z}.
\begin{figure}[htb]
\begin{center}
\includegraphics[scale=.5]{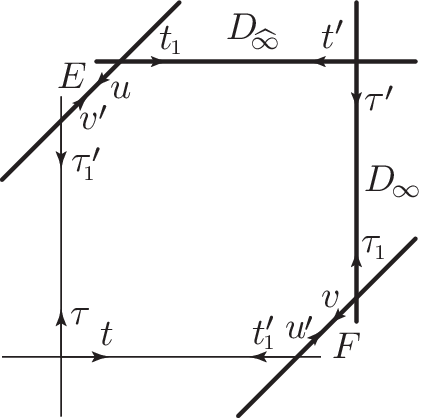}
\end{center}
\vspace*{-\baselineskip}
\caption{The natural divisors on $Z$}\label{fig:Z}
\end{figure}

Let $\wt Z$ be the real blow-up space of $Z$ along the four components $D_\infty,D_{\wh\infty},E,F$ of the normal crossing divisor $D$. Then the map~$e$ lifts as $\wt e:\wt Z\to X\times\wh X$. We have the following commutative diagram of maps:
\begin{equation}\label{eq:ZX}
\begin{array}{c}
\xymatrix@C=1.5cm@R=.7cm{
\wt Z\ar[dd]_{\varpi_Z}\ar[r]^-{\wt e}\ar[rd]_-{\wt\epsilon}&X\times\wh X\ar[d]^{\varpi\times\id}\\
&\PP^1\times\wh X\ar[d]^{\id\times\wh\varpi}\\
Z\ar[r]^-e&\PP^1\times\wh\PP^1
}
\end{array}
\end{equation}
The set $\wt E\defin\varpi_Z^{-1}(E)$ can be described as follows. Over the chart $\Afu_{v'}$ of $E$ with coordinate $v'$, it is equal to $\Afu_{v'}\times S^1$ (coordinates $(v',\arg\tau'_1)$), and over $u=0$ it is equal to $S^1\times S^1$ (coordinates $(\arg u,\arg t_1)$), so it can be identified with $\wt\PP^1_{v'}\times S^1$ through the gluing over $u=0$ defined by the diffeomorphism $(\arg v',\arg\tau'_1)\mto(\arg u=-\arg v',\arg t_1=\arg v'+\arg\tau'_1)$. Similarly, the set $\wt F\defin\varpi_Z^{-1}(F)$ can be described as follows: over the chart $\Afu_{u'}$ of $F$ with coordinate $u'$, it is equal to $\Afu_{u'}\times S^1$ (coordinates $(u',\arg t'_1)$), and over $v=0$ it is equal to $S^1\times S^1$ (coordinates $(\arg v,\arg\tau_1)$), so it can be identified with $\wt\PP^1_{u'}\times S^1$ through the gluing over $v=0$ defined by the diffeomorphism $(\arg u',\arg t'_1)\mto(\arg v=-\arg u',\arg \tau_1=\arg u'+\arg t'_1)$. The complement $\wt Z\moins(\wt E\cup\wt F)$ is identified with $X\times\wh X\moins[(\{0\}\times S^1_{\wh\infty})\cup(S^1_\infty\times\{\wh0\})]$.

We define two nested closed subsets $L''_{\leq0}\subset L''_{<0}$ of $\wt Z$ as the subsets of points in the neighbourhood of which $e^{-t\tau}$ does not have moderate growth (\resp is not exponentially decreasing). More specifically:
\begin{enumerate}
\item
$L''_{\leq0}\subset \varpi_Z^{-1}(D_\infty\cup D_{\wh\infty})$,
\item
over the chart $(t_1,u)$, $L''_{\leq0}=\{\arg u\in[\pi/2,3\pi/2]\bmod2\pi\}\cap\{u=0\}$,
\item
over the chart $(t',\tau')$, $L''_{\leq0}=\{\arg t'+\arg\tau'\in[\pi/2,3\pi/2]\bmod2\pi\}$,
\item
over the chart $(v,\tau_1)$, $L''_{\leq0}=\{\arg v\in[\pi/2,3\pi/2]\bmod2\pi\}\cap\{v=0\}$.
\item
We have $L''_{<0}=L''_{\leq0}\cup\wt E\cup\wt F$.
\end{enumerate}

We denote by $L'_{\leq0}$ (\resp $L'_{<0}$) the complementary open set of $L''_{\leq0}$ (\resp $L''_{<0}$) in~$\wt Z$. So $L'_{\leq0}\supset L'_{<0}\supset\Afu\times\Afuh$ and, away from $u=0$ (\resp away from $v=\nobreak0$), $L'_{\leq0}\cap\wt E$ (\resp $L'_{\leq0}\cap\wt F$) coincides with $\wt E$ (\resp $\wt F$). Moreover, $L'_{<0}\cap\wt E=\emptyset$ and $L'_{<0}\cap\nobreak\wt F=\nobreak\emptyset$.

We have a diagram
\begin{equation}\label{eq:diagabc}
\begin{array}{c}
\xymatrix{
\Afu\times\Afuh\ar@{^{ (}->}[r]^-\alpha\ar[d]_p& L'_{<0}\ar@{^{ (}->}[r]^-\gamma&L'_{\leq0}\ar@{^{ (}->}[r]^-\beta&\wt Z\ar[d]^{\wt{\wh q}}\\
\Afu&&&\wh X
}
\end{array}
\end{equation}

\subsection{Topological Laplace transform}\index{Laplace transform!topological --}
Let $\cF$ be a perverse sheaf on $\Afu$. Unless otherwise stated, we will usually denote by $\cG$ the perverse sheaf $p^{-1}\cF[1]$ on $\Afu\times\Afuh$.

\begin{definitio}\label{def:cFcG}
Using the notation as in Definition \ref{def:coCSt},
\begin{itemize}
\item
for any perverse sheaf $\cG$ on $\Afu\times\Afuh$, we set $(\cFcG)_{\leq0}=\beta_!\bR(\gamma\circ\alpha)_*\cG$ and $(\cFcG)_{\prec0}=(\beta\circ\nobreak\gamma)_!\bR\alpha_*\cG$,
\item
if $\cG=p^{-1}\cF[1]$, we set \index{$FLFC$@$(\FcF)_{\leq0}$, $(\FcF)_{\prec0}$}$(\FcF)_{\leq0}=\bR\wt{\wh q}_*(\cFcG)_{\leq0}$ and \index{$FLFC$@$(\FcF)_{\leq0}$, $(\FcF)_{\prec0}$}$(\FcF)_{\prec0}=\bR\wt{\wh q}_*(\cFcG)_{\prec0}$.
\end{itemize}
\end{definitio}

\begin{proposition}\label{prop:Laplace0}
When restricted to $\Afuh$, $(\FcF)_{\leq0|\Afuh}=(\FcF)_{\prec0,|\Afuh}$ is a perverse sheaf with singularity at $\wh0$ at most, and $i_{\wh0}^{-1}[(\FcF)_{\leq0}]=\bR\Gamma_\rc(\Afu,\cF)[1]$. On the other hand, when restricted to $\wh X^*$, $(\FcF)_{\prec0}[-1]$ and $(\FcF)_{\leq0}[-1]$ are two nested sheaves, and $\gr_0\FcF[-1]\defin(\FcF)_{\leq0}[-1]/(\FcF)_{\prec0}[-1]$ is a local system on $S^1_{\wh\infty}$ isomorphic to $(\phip_t\cF,T)$ when considered as a vector space with monodromy.
\end{proposition}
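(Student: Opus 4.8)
\textbf{Proof plan for Proposition \ref{prop:Laplace0}.}

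The strategy is to split the statement into its two geographically separate assertions---the behaviour over $\Afuh$ (near $\wh0$) and the behaviour over $S^1_{\wh\infty}$---and treat each by a direct topological computation of the relevant direct image $\bR\wt{\wh q}_*$ of the sheaves $(\cFcG)_{\leq0}$ and $(\cFcG)^{\ll0}$ introduced in Definition \ref{def:cFcG}. Throughout, $\cG=p^{-1}\cF[1]$, and one should keep in mind the diagram \eqref{eq:diagabc} and the explicit description of $\wt Z$, $\wt E$, $\wt F$, $L'_{\leq 0}$, $L'_{<0}$ from \S\ref{subsec:topspaces}. The fibre of $\wt{\wh q}$ over a point $\hat\tau\in\Afuh$ is a copy of $X=\wt\PP^1$ (real-blown-up $\PP^1$), and over a point $\hat\theta\in S^1_{\wh\infty}$ it is a copy of $\wt Z$ restricted to the appropriate ``direction'' $\arg\tau'=\hat\theta$; the geometry of $L'_{\leq0}$ and $L'_{<0}$ inside that fibre is what governs everything.

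First I would handle the assertion over $\Afuh$. Over $\Afuh\smallsetminus\{\wh0\}$, the function $e^{-t\tau}$ has moderate growth (indeed exponential decay or exponential growth depending on $\arg t$) only near $t=\infty$, so away from $\wh0$ one checks that $L'_{\leq0}$ and $L'_{<0}$ agree fibrewise, that $\wt E$, $\wt F$ play no role (they sit over $\wh0$ and $\wh\infty$), and that over a fixed $\hat\tau\neq\wh0$ the set $L'_{\leq0}\cap\wt{\wh q}^{-1}(\hat\tau)$ is the complement in $X$ of a closed half-circle of $S^1_\infty$ depending on $\arg\tau$. Pushing forward $\bR(\gamma\alpha)_*\cG$ and then by $\beta_!$, a Leray/base-change argument together with the fact that a closed half-disc retracts shows that $(\FcF)_{\leq0}$ restricted to $\Afuh\smallsetminus\{\wh0\}$ is a local system (of the expected rank), placed in the correct perverse degree, and equals $(\FcF)^{<0}$ there. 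Perversity at $\wh0$ is then the usual support/cosupport estimate: one computes $i_{\wh0}^{-1}(\FcF)_{\leq0}$ as $\bR\Gamma$ of the fibre over $\wh0$, which by the definition $(\cFcG)_{\leq0}=\beta_!\bR(\gamma\alpha)_*\cG$ and the geometry of the exceptional divisor $\wt E$ collapses (again by a contraction/excision argument, using that $\cG$ is constructible and $\cF$ perverse on the affine line) to $\bR\Gamma_\rc(\Afu,\cF)[1]$. I would get the cosupport bound dually, or directly from the identification $(\FcF)_{\leq0|\Afuh}=(\FcF)^{<0}_{|\Afuh}$ combined with the already-known perversity away from $\wh0$ and the explicit stalk. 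This yields the first sentence of the proposition.

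Next comes the computation over $S^1_{\wh\infty}$, which is the heart of the matter. Here one must show that $(\FcF)_{\leq0}[-1]$ and $(\FcF)^{<0}[-1]$ are sheaves (cohomology concentrated in degree $0$ after the shift) on $\wh X^*$, that one is a subsheaf of the other, and that the quotient is the local system on $S^1_{\wh\infty}$ whose underlying vector-space-with-monodromy is $(\phip_t\cF, T)$, the (total) vanishing cycles of $\cF$ on $\Afu$ with its monodromy. The strategy is: fix a direction $\hat\theta\in S^1_{\wh\infty}$, so $\arg\tau'=\hat\theta$; analyze the fibre $\wt{\wh q}^{-1}(\hat\theta)$, which contains $\wt E$ (a copy of $\wt\PP^1_{v'}\times\{\mathrm{pt}\}$ glued to the rest by the diffeomorphism recorded in \S\ref{subsec:topspaces}). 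The key observation is that, along this fibre, $L'_{\leq0}$ contains all of $\wt E$ except near $u=0$, while $L'_{<0}$ is disjoint from $\wt E$; hence the quotient $(\cFcG)_{\leq0}/(\cFcG)^{\ll0}$ is supported precisely on $\wt E\cap\wt{\wh q}^{-1}(\hat\theta)$, and computing its pushforward amounts to computing the sections of $\cG=p^{-1}\cF[1]$ over the part of $\wt E$ where $e^{-t\tau}$ is $\leq0$ but not $\ll0$. Since $\wt E$ fibres over the $t$-line (via $v'=t/\tau'_1$, i.e. essentially the $t$-coordinate after real blow-up), this computation becomes: take $\cF$ on $\Afu$, pull back, restrict to the locus where $\arg(t\tau)\in\{\pm\pi/2\}$ at infinity in $t$... and this is exactly the geometry that produces vanishing cycles---one recognizes that cutting $\Afu$ (compactified, with the real blow-up at $\infty$) by the moving half-plane condition $\Re(t\tau)\leq0$ and passing to the associated graded picks out, as $\arg\tau$ varies, the nearby/vanishing cycle data of $\cF$ at \emph{all} its singular points, assembled into one local system on $S^1_{\wh\infty}$ with monodromy equal to the monodromy $T$ of $\phip_t\cF$. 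Concretely I would invoke the well-known topological picture (e.g.\ the ``Fourier--Sato'' or Malgrange-type description of vanishing cycles via the relative compactification, as in \cite{Malgrange91}) to identify the stalk at $\hat\theta$ of $\gr_0\FcF[-1]$ with $\phip_t\cF$ and to check that going once around $S^1_{\wh\infty}$ induces $T$. That $(\FcF)_{\leq0}[-1]$ and $(\FcF)^{<0}[-1]$ are sheaves (not complexes) on $\wh X^*$ follows from the same fibrewise analysis: over each direction the relevant $\bR\wt{\wh q}_*$ has cohomology in one degree only, because the fibres of $\wt{\wh q}$ over $\wh X^*$ are (up to the exceptional locus, which is a real blow-up of a disc) contractible-to-a-point pieces carrying a perverse sheaf in the right degree, so only $\cH^1$ survives before the shift.

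The main obstacle I anticipate is precisely the identification $\gr_0\FcF[-1]\simeq(\phip_t\cF,T)$ as a local system on $S^1_{\wh\infty}$: one has to control simultaneously all singular points $c\in\Afu$ of $\cF$, understand how their local vanishing-cycle data glue as $\arg\tau$ rotates (the monodromy around $S^1_{\wh\infty}$ mixing the contributions of the various $c$ with their $e^{c/\tau'}$-twists, although at level $\gr_0$ only the reference point $c=0$ ``survives'' and the others feed into the monodromy $T$ of the \emph{total} nearby fibre), and check that the diffeomorphism gluing $\wt E$ to the rest of $\wt Z$---the map $(\arg v',\arg\tau'_1)\mapsto(\arg u=-\arg v',\arg t_1=\arg v'+\arg\tau'_1)$---induces exactly the standard monodromy and not, say, its inverse or a shift. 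This is a bookkeeping-heavy but geometric verification; once it is done, the perversity statements and the subsheaf relation follow formally from the preceding fibrewise analysis and standard constructibility arguments, and the whole proposition is assembled.
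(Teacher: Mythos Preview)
Your overall strategy---split into the $\Afuh$ and $S^1_{\wh\infty}$ regions and compute $\bR\wt{\wh q}_*$ fibrewise using the explicit geometry of $L'_{\leq0}$ and $L'_{<0}$---is exactly the paper's approach. But there is a genuine misconception in your treatment of the $S^1_{\wh\infty}$ part: $\phip_t\cF$ denotes the vanishing cycles of $\cF$ with respect to the function $t$, hence is supported at $t=0$ alone, and $T$ is its \emph{local} monodromy there. The other singular points $c\neq0$ of $\cF$ do not contribute to $\gr_0$ at all (they show up in $\gr_{c/\tau'}$, for the blow-up centred at $(c,\wh\infty)$). So your anticipated obstacle---controlling all singular points simultaneously and understanding how their contributions ``mix'' into $T$---simply does not arise. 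The paper's computation is correspondingly cleaner: it identifies the restriction of $\bR(\gamma\circ\alpha)_*\cG$ to $\wt E\cap\wt{\wh q}^{-1}(\hat\theta'_o)$ with the pull-back of $\cF$ under the rotation $v'\mapsto v'e^{i\hat\theta'_o}$, and then the quotient $(\FcF)_{\leq0,\hat\theta'_o}/(\FcF)^{<0}_{\hat\theta'_o}$ becomes the relative cohomology $\bR\Gamma(\ov\Delta,I_{\hat\theta'_o};\cF_{|\ov\Delta}[1])$ for a \emph{small} disc $\Delta$ centred at $0$, which is the standard topological model for $\phip_t\cF$; the monodromy is read off from the rotation of the interval $I_{\hat\theta'_o}$ as $\hat\theta'_o$ goes around $S^1_{\wh\infty}$.

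Two further points. First, over $\wh0$ the relevant exceptional divisor is $\wt F$ (lying over $(\infty,\wh0)$), not $\wt E$; the paper's key step there is the vanishing claim $\bR\wt e_*(\cFcG)_{\leq0}{}_{|S^1_\infty\times\{\wh 0\}}=0$, proved by observing that each fibre of $\wt e$ over $S^1_\infty\times\{\wh0\}$ is a closed disc with a closed boundary interval deleted, carrying a constant sheaf extended by zero---whose compactly supported cohomology vanishes. Second, rather than repeatedly invoking ``contraction/excision'' or ``contractible pieces,'' the paper isolates a single clean topological lemma (Lemma~\ref{lem:pervers-semidisque}): for a perverse sheaf $\cP$ on a disc $\Delta$ and the inclusion $\alpha:\Delta\hookrightarrow\Delta'$ into the closed disc with a nonempty closed boundary interval removed, $\HH^k_\rc(\Delta',\bR\alpha_*\cP)$ vanishes for $k\neq0$ and has dimension equal to the total dimension of vanishing cycles of $\cP$. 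This lemma is applied three times---to show the vanishing over $\wt F$ just mentioned, to show $(\FcF)^{<0}_{\hat\theta'_o}$ is concentrated in degree $-1$ (the fibre $L'_{<0,\hat\theta'_o}$ being homeomorphic to such a $\Delta'$), and to show the same for $(\FcF)_{\leq0,\hat\theta'_o}$ (the fibre $L'_{\leq0,\hat\theta'_o}$, after gluing in the $\wt E$ part, is again homeomorphic to such a $\Delta'$). Stating this lemma explicitly will make your argument precise where it is currently only gestural.
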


As usual, $\Gamma_\rc$ denotes the sections with compact support and $\bR\Gamma_\rc$ denotes its derived functor, whose associated cohomology is the cohomology with compact support. Given a perverse sheaf $\cF$ on $\Afu$, the \index{nearby cycle!complex}\emph{nearby cycle} complex \index{$FYPSI$@$\psip_t\cF$, $\phip_t\cF$}$(\psip_t\cF,T)$ and the \index{vanishing cycle!complex}\emph{vanishing cycle} complex \index{$FYPSI$@$\psip_t\cF$, $\phip_t\cF$}$(\phip_t\cF,T)$ (equipped with their monodromy) have cohomology in degree zero at most, and there is a canonical morphism $\mathrm{can}:(\psip_t\cF,T)\to(\phip_t\cF,T)$ whose cone represents the inverse image $i_0^{-1}\cF[-1]$ (\cf\eg\cite{Malgrange91}).

\begin{proof}
Let $C\subset\Afu$ denote the set of singular points of $\cF$, and let us still denote by $C\times\wh\PP^1\subset Z$ the strict transform by $e$ of $C\times\wh\PP^1\subset\PP^1\times\wh\PP^1$. Since $\cF[-1]$ is a local system away from $C$, one checks that $\bR\alpha_*p^{-1}\cF[-1]$ (\resp $\bR(\gamma\circ\alpha)_*p^{-1}\cF[-1]$) is a local system on $L'_{<0}\moins\varpi_Z^{-1}(C\times\wh\PP^1)$ (\resp on $L'_{\leq0}\moins\varpi_Z^{-1}(C\times\wh\PP^1)$). This will justify various restrictions to fibres that we will perform below.

For the first assertion, let us work over $\wt V\defin X\times\Afuh$, and still denote by $\wt e:\wt W\to\wt V$ the restriction of $\wt e$ over this open set.
\begin{figure}[htb]
\centerline{\includegraphics[scale=.5]{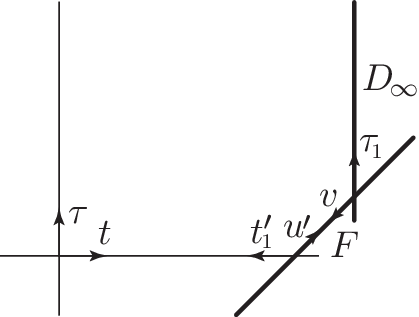}}
\vspace*{-\baselineskip}
\caption{}
\end{figure}
Over $\Afuh{}^*$, $\wt W$ coincides with $X\times\Afuh{}^*$. Then it is known (\cf \eg \cite{Malgrange91} or \cite[\S1b]{Bibi05}) that $(\FcF)_{\leq0|\Afuh{}^*}$ is a smooth perverse sheaf (\ie a~local system shifted by one) with germ at $\tau_o\neq0$ equal to $\HH^0_{\Phi_{\tau_o}}(\Afu,\cF)[1]$, where $\Phi_{\tau_o}$ is the family of closed sets in $\Afu$ whose closure in $\wt\PP^1$ does not cut $L''_{\leq0}\cap(S^1_\infty\times\{\tau_o\})$. Moreover, $L''_{\leq0}$ and $L''_{<0}$ coincide above $\Afuh{}^*$, so $(\FcF)_{\leq0|\Afuh{}^*}=(\FcF)_{\prec0|\Afuh{}^*}$.

On the other hand, we claim that
\[
\bR\wt e_*(\cFcG)_{\leq0}{}_{|S^1_\infty\times\{\wh 0\}}=\bR\wt e_*(\cFcG)_{\prec0}{}_{|S^1_\infty\times\{\wh 0\}}=0.
\]
This will show that
\[
\bR\wt e_*(\cFcG)_{\leq0}{}_{|X\times\{\wh 0\}}=\bR\wt e_*(\cFcG)_{\prec0}{}_{|X\times\{\wh 0\}}\simeq \wtj_!\cF[1],
\]
where $\wtj:\Afu\hto X$ denotes the inclusion, concluding the proof of the first assertion by taking $\bR\Gamma$. Moreover, the natural map $i_{\wh0}^{-1}[(\FcF)_{\leq0}][-1]\to\psip_\tau[(\FcF)_{\leq0}]$ is then induced by the natural map $\bR\Gamma_\rc(\Afu,\cF)\to\bR\Gamma_{\Phi_{\tau_o}}(\Afu,\cF)=\HH^0_{\Phi_{\tau_o}}(\Afu,\cF)$. The cone of this map is the complex $\bR\Gamma_\rc\big(L'_{\leq0}\cap(S^1_\infty\times\{\tau_o\}),\cL[1]\big)$, where~$\cL$ is the local system defined by~$\cF$ on $S^1_\infty$. In particular, it has cohomology in degree~$0$ at most. As a consequence, the complex $\phip_\tau[(\FcF)_{\leq0}]$ has cohomology in degree~$0$ at most, and it follows that $(\FcF)_{\leq0}$ is perverse in the neighbourhood of $\wh0$, since the perversity of any constructible complex $\wh\cF$ near $\tau=0$ is equivalent to both $\psip_\tau\wh\cF$ and $\phip_\tau\wh\cF$ having cohomology in degree $0$ at most.

\begin{remarque}
Developing the proof more carefully at this point, one would obtain an identification of $\phip_\tau[(\FcF)_{\leq0}]$ with $\psip_{t'}\cF$, compatible with monodromies when suitably oriented. The second part of the lemma, that we consider now, is an analogous statement, where the roles of $\Afu$ and $\Afuh$ are exchanged.
\end{remarque}

Let us now prove the claim. On the one hand, $(\cFcG)_{\prec0}$ is zero on $L''_{<0}$ by definition, hence on $\wt F$, so its restriction to $\wt e{}^{-1}(S^1_\infty\times\{\wh0\})$ is zero.

On the other hand, $\wt e{}^{-1}(\theta',\wh0)$ is homeomorphic to the real blow-up space of $F$ at $v=0$ (topologically a closed disc), and $L'_{\leq0}\cap \wt e{}^{-1}(\theta',\wh0)$ is homeomorphic to a closed disc with a closed interval deleted on its boundary. On this set, $\bR(\gamma\circ\alpha)_*\cG[-2]$ is a local system (which is thus constant). Since the cohomology with compact support (due to~$\beta_!$) of such a closed disc with a closed interval deleted on its boundary is identically zero (a particular case of Lemme \ref{lem:pervers-semidisque} below), we get the vanishing of $\bR\wt e_*(\cFcG)_{\leq0,(\theta',\wh0)}$ for any $\theta'\in S^1_\infty$, as claimed.

\smallskip
We now prove the second part of the proposition. Let us compute the fibre of $(\FcF)_{\leq0}$ and $(\FcF)_{\prec0}$ at $(|\tau'_o|=0,\arg\tau'_o=\wh\theta'_o)$. Notice that, above this point that we also denote by $\wh\theta'_o$, we have
\[
L'_{<0,\wh\theta'_o}=X^*\moins\{\arg t\in[\wh\theta'_o+\pi/2,\wh\theta'_o+3\pi/2]\bmod2\pi\},
\]
and this set is equal to $\wt\PP^1$ minus the closure of the half-plane $\reel(te^{-i\wh\theta'_o})\leq0$. Then $(\FcF)_{\prec0,\wh\theta'_o}=\bR\Gamma_\rc(L'_{<0,\wh\theta'_o},\bR\alpha_*\cF)[1]$.

\begin{lemme}\label{lem:pervers-semidisque}
Let $\cP$ be a perverse sheaf with finite singularity set on an open disc $\Delta\subset\CC$ and let $\Delta'$ be the open subset of the closed disc $\ov\Delta$ obtained by deleting from $\partial \ov\Delta$ a nonempty closed interval. Let $\alpha:\Delta\hto \Delta'$ denote the open inclusion. Then $\HH^k_\rc(\Delta',\bR\alpha_*\cP)=0$ if $k\neq0$ and $\dim\HH^0_\rc(\Delta',\bR\alpha_*\cP)$ is the sum of dimensions of the \index{vanishing cycle}vanishing cycles of $\cP$ at points of~$\Delta$ (\ie at the singular points of $\cP$ in~$\Delta$).
\end{lemme}

\begin{proof}
This can be proved as follows: one reduces to the case of a sheaf supported on some point (trivial), and to the case of $\cP=j_*\cL[1]$, where $j$ is the inclusion $\Delta\moins\Sing(\cP)\hto \Delta$ and $\cL$ is a local system on $\Delta\moins\Sing(\cP)$; clearly, there is no $H^0_\rc(\Delta',\alpha_*j_*\cL)$ and, arguing by duality, there is no $H^2_\rc(\Delta',\alpha_*j_*\cL)$. The computation of the dimension of $H^1_\rc(\Delta',\alpha_*j_*\cL)$ is then an exercise.
\end{proof}

Since $L'_{<0,\wh\theta'_o}$ is homeomorphic to such a $\Delta'$, we find that $(\FcF)_{\prec0,\wh\theta'_o}$ has cohomology in degree $-1$ only.

Taking into account the description of $\wt E$ given above, the difference $L'_{\leq0}\moins L'_{<0}=L'_{\leq0}\cap\wt E$ is identified to the set $\wt\PP^1_{v'}\times S^1$ with the subset
\[
\{|v'|=\infty,\,\arg v'\in[\pi/2,3\pi/2]\bmod2\pi\}
\]
deleted. Then $L'_{\leq0,\wh\theta'_o}$ is homeomorphic to the space obtained by gluing $S^1\times[0,\infty]\moins\{\arg t_1\in[\wh\theta'_o+\pi/2,\wh\theta'_o+3\pi/2]\bmod2\pi\}$ (with coordinates $(\arg t_1,|t_1|)$) with \hbox{$\wt\PP^1_{v'}\moins\{|v'|=\infty,\,\arg v'\in[\pi/2,3\pi/2]\bmod2\pi\}$} along $|t_1|=0$, $|v'|=\infty$, by identifying $\arg t_1$ with $\arg v'+\wh\theta'_o$. So $L'_{\leq0,\wh\theta'_o}$ is also homeomorphic to $\Delta'$ like in the lemma.
\begin{figure}[htb]
\centerline{\includegraphics[scale=1]{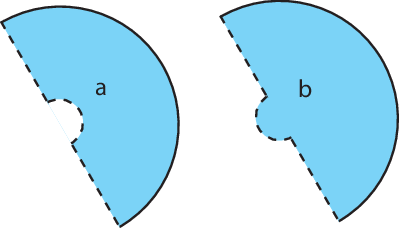}}
\caption{}\label{fig:L'}
\end{figure}

Note that we have an isomorphism $\Afu\times\Afuh{}^*\simeq\Afu_{v'}\times\Afuh{}^*_{\tau'_1}$ given by $(t,\tau)\mto(v'=\nobreak t\tau,\tau'_1=1/\tau)$. It follows that the restriction $\cP$ of $\bR(\gamma\circ\alpha)_*\cG$ to $\Afu_{v'}\times S^1\subset \wt E$ is equal to the pull-back of $\cF$ by the map $(v',\arg\tau'_1)\mto t=v'e^{i\arg\tau'_1}$. Its restriction to $L'_{\leq0,\wh\theta'_o}$ is therefore isomorphic to the pull-back of $\cF$ by the map $\wt\varpi_{\wh\theta'_o}:v'\mto v'e^{i\wh\theta'_o}$. In other words, we can regard the picture of $L'_{\leq0,\wh\theta'_o}$ (Figure \ref{fig:L'}) as the corresponding subset of $\wt\PP^1$ and $\cP$ as the restriction of $\wtj_*\cF$ to this subset. We can apply Lemma \ref{lem:pervers-semidisque} to get that $(\FcF)_{\leq0,\wh\theta'_o}$ has cohomology in degree $-1$ only.

Let us now compute $(\FcF)_{\leq0,\wh\theta'_o}/(\FcF)_{\prec0,\wh\theta'_o}$. By the previous computation, this is
\[
\bR\Gamma_\rc\Big(\wt\PP^1_{v'}\moins\{|v'|=\infty,\,\arg v'\in[\pi/2,3\pi/2]\bmod2\pi\},\wt\varpi_{\wh\theta'_o}^{-1}\cF[1]\Big).
\]
In the coordinate $t$, let~$\Delta$ be a small open disc centered at~$0$ and let $I_{\wh\theta'_o}$ be the closed interval of $\partial\ov\Delta$ defined by $\arg t\in[\wh\theta'_o+\pi/2,\wh\theta'_o+3\pi/2]$. Then the previous complex is the relative cohomology complex $\bR\Gamma(\ov\Delta,I_{\wh\theta'_o};\cF_{|\ov\Delta}[1])$. This complex has cohomology in degree zero at most and $\HH^0(\ov\Delta,I_{\wh\theta'_o};\cF_{|\ov\Delta}[1])\simeq\phip_t\cF$. When $\wh\theta'_o$ runs counterclockwise once around $S^1_{\wh\infty}$, then $I_{\wh\theta'_o}$ also moves counterclockwise once around $\partial\ov\Delta$. Then $\gr_0(\FcF)$ is a local system whose monodromy is identified with the monodromy on $\phip_t\cF$.
\end{proof}

We now extend this construction in order to define $(\FcF)_{\leq c/\tau'}$ for any $c\in\CC$. Let $Z(c)\to\PP^1\times\wh\PP^1$ be the complex blowing-up at the points $(c,\infty)$ and $(\infty,\wh0)$, and let $\wt Z(c)$ be the corresponding real blow-up space (so that $Z(0)$ and $\wt Z(0)$ are respectively equal to~$Z$ and~$\wt Z$ introduced above). We also define $L''_*(c)$ and $L'_*(c)$, where $*$ is for $<0$ or $\leq0$, as we did for $L''_*$ and $L'_*$, and we denote by $\alpha_c$, etc.\ the corresponding maps.

\begin{definitio}
For any $c\in\CC$, we set
\begin{itemize}
\item
$(\cFcG)_{\leq c/\tau'}=\beta_!\bR(\gamma\circ\alpha)_*\cG$ and $(\cFcG)_{\prec c/\tau'}=(\beta\circ\nobreak\gamma)_!\bR\alpha_*\cG$,
\item
$(\FcF)_{\leq c/\tau'}=\bR\wt{\wh q}_*(\cFcG)_{\leq c/\tau'}$ and $(\FcF)_{\prec c/\tau'}=\bR\wt{\wh q}_*(\cFcG)_{\prec c/\tau'}$.
\end{itemize}
\end{definitio}

Clearly, Lemma \ref{prop:Laplace0} also applies similarly to $(\FcF)_{\leq c/\tau'}$ and $(\FcF)_{\prec c/\tau'}$. It is important to notice that the spaces $\wt Z(c)$ ($c\in\CC$) all coincide when restricted over $\PP^1\times\Afuh$. As a consequence, the restrictions of $(\FcF)_{\leq c/\tau'}$ and $(\FcF)_{\prec c/\tau'}$ to $\Afuh$ are the same perverse sheaf, that we denote by $\FcF$. The previous construction defines thus a $\ccI$\nobreakdash-filtration on~$\FcF$, according to Proposition \ref{prop:Laplace0} applied with any $c\in\CC$. Here, we denote by $\ccI$ the sheaf equal to $0$ on $\Afuh$ and to the constant sheaf with fibre $\CC\cdot(1/\tau')$ on $S^1_{\wh\infty}$.

\begin{proposition}\label{prop:StokesF}
The triangle $[(\FcF)_{\prec c/\tau'}\to(\FcF)_{\leq c/\tau'}\to\gr_{c/\tau}\cF\To{+1}]_{c\in\CC}$ defines an object of $\St_{\wh\infty}(\CC_{\ccIet,\leq})$ (\cf Definition \ref{def:StPervtri}).
\end{proposition}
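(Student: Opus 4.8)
The statement asserts that the collection of triangles $\big[(\FcF)^{<c/\tau'}\to(\FcF)_{\leq c/\tau'}\to\gr_{c/\tau}\cF\To{+1}\big]_{c\in\CC}$ assembles into an object of the category $\St_{\wh\infty}(\CC_{\ccIet,\leq})$ introduced in Definition \ref{def:StPervtri}. By that definition I must check three things: first, that $(\FcF)_{\leq\cbbullet}$ is an object of $D^b_\CSt(\CC_{\ccIet,\leq})$, that is, a $\St$-$\CC$-constructible sheaf (shifted), or more precisely an object in the correct derived subcategory; second, that $(\FcF)^{<\cbbullet}$ is an object of $D^b_\coCSt(\CC_{\ccIet,\leq})$; third, that $\gr_{\cbbullet}\cF$ is an object of $D^b_{\grCc}(\CC_{\ccIet,\leq})$; and fourth, that the triangle is distinguished, which by the cohomological criterion in the proof of Proposition \ref{prop:SttstructD} amounts to exactness of the associated short sequences of perverse cohomology sheaves — here the triangle already is a genuine distinguished triangle by construction (mapping cone), so this last point will be essentially automatic once the three membership conditions are verified. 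Since $\ccI$ here is $0$ on $\Afuh$ and constant with fibre $\CC\cdot(1/\tau')$ on $S^1_{\wh\infty}$, over $\Afuh$ all three objects reduce to the single perverse sheaf $\FcF$, so the content is entirely concentrated on $S^1_{\wh\infty}$.

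\textbf{Key steps.} First I would record, from Proposition \ref{prop:Laplace0} (and its extension to arbitrary $c$ noted right after the definition of $(\FcF)_{\leq c/\tau'}$), that on $\Afuh$ the object $(\FcF)_{\leq c/\tau'}{}_{|\Afuh} = (\FcF)^{<c/\tau'}_{|\Afuh} = \FcF$ is perverse with singularity at $\wh0$ at most — this gives the perversity/constructibility away from the boundary, and that part of the $\St$-$\CC$-constructibility condition \ref{def:CSt}\eqref{def:CSt1}. Second, near $S^1_{\wh\infty}$: Proposition \ref{prop:Laplace0} tells us that $(\FcF)^{<c/\tau'}[-1]$ and $(\FcF)_{\leq c/\tau'}[-1]$ are two nested \emph{sheaves} on $\wh X^*$ (in particular the complexes have cohomology in degree $-1$ only there), and that the graded piece $\gr_{c/\tau}\cF[-1] = (\FcF)_{\leq c/\tau'}[-1]/(\FcF)^{<c/\tau'}[-1]$ is a local system on $S^1_{\wh\infty}$, identified with $(\phip_{t-c}\cF,T)$ as a vector space with monodromy. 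These are exactly the ingredients needed: the family $\{(\FcF)_{\leq c/\tau'}\}_c$ is a pre-$\ccI$-filtration of the sheaf underlying $\FcF$ near the boundary (monotonicity $(\FcF)_{\leq c/\tau'}\subset(\FcF)_{\leq c'/\tau'}$ when $c/\tau'\leq_\theta c'/\tau'$ has to be checked from the inclusions $L'_{\leq c/\tau'}\subset L'_{\leq c'/\tau'}$ of the topological models, which holds because $e^{-t\tau}$ having moderate growth is a condition that only becomes less restrictive as one twists), the injectivity condition \ref{def:CSt}\eqref{def:CSt2} follows from the fact that $(\FcF)_{\leq c/\tau'}$ is defined as $\beta_!\bR(\gamma\alpha)_*\cG$ which has no subsheaf supported on the boundary, and condition \ref{def:CSt}\eqref{def:CSt3} (the Stokes-filtration/local-gradedness property) is precisely what must be extracted. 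Third, to get the local gradedness I would invoke Theorem \ref{th:RHL} together with Theorem \ref{th:DRIet}: Theorem \ref{th:RHL} identifies $\DR^{\rmod\wh\infty}(p^+\cM\otimes\cE^{t\tau})$ (and its rapid-decay and graded analogues) with the topologically-defined complexes, so that, taking $\cF = \pDR^\an M$, the $\ccI$-filtered object $[(\FcF)^{<\cbbullet}\to(\FcF)_{\leq\cbbullet}\to\gr_\cbbullet\cF]$ is identified with $\pDR_{\ccIet}(\Fou\cM)$ on the real blow-up $\wh X = \wt{\wh\PP^1}$; since by Theorem \ref{th:DRIet} the latter is an object of $\St(\CC_{\ccIet,\leq})$, we are done for $\cF$ of this form, and every perverse sheaf on $\Afu$ arises this way by the Riemann-Hilbert correspondence (Theorem \ref{th:RHStperv} in the regular case suffices, as Laplace transform of a regular holonomic module is again holonomic, by fact \eqref{enum:a} of \S\ref{subsec:introRHL}).

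\textbf{The main obstacle.} The delicate point is not the formal bookkeeping but establishing that the topological $\ccI$-filtration of Definition \ref{def:cFcG} genuinely coincides, under $\RH$, with the analytic Stokes filtration $\cH^0\DR_{\leq c/\tau'}(\Fou\cM)$ coming from the $\cD$-module side — in other words, that the comparison morphism $\eqref{eq:DRmod}$ of Lemma \ref{lem:DRmod} is an isomorphism compatible with the nested filtrations indexed by $c$, which is exactly the content of Theorem \ref{th:RHL} (and its $c$-twisted, rapid-decay, graded versions). I am allowed to assume Theorem \ref{th:RHL}, so in the write-up this becomes a matter of carefully threading the identifications: (i) twist $t\rightsquigarrow t-c$ to pass from the $\leq0$ statement to the $\leq c/\tau'$ statement, using that replacing $t$ by $t-c$ multiplies $\cE^{t\tau}$ by the invertible factor $e^{-c\tau}$ whose modulus controls moderate growth of $e^{-(t-c)\tau}$; (ii) apply $\bR\wt{\wh q}_*$ to the isomorphism of Theorem \ref{th:RHL} to obtain the identification of $(\FcF)_{\leq c/\tau'}$ with $\cH^0\DR_{\leq c/\tau'}(\Fou\cM)$ as recalled just after the statement of Theorem \ref{th:RHL}; (iii) conclude via Theorem \ref{th:DRIet} that the resulting triangle lands in $\St(\CC_{\ccIet,\leq})$, hence in $\St_{\wh\infty}(\CC_{\ccIet,\leq})$ after restricting attention to the single boundary circle $S^1_{\wh\infty}$ (the divisor $D$ in the sense of \Chaptersname\ref{chap:Stokesone-pervers} being here $D = \{\wh\infty\}$, with the extra point $\wh0\in\Afuh$ a regular singularity carrying trivial Stokes structure). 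A minor additional check is that the perversity at $\wh0$ established in Proposition \ref{prop:Laplace0} is the one required by the $\StPerv$ formalism, i.e. that $(\FcF)_{\leq c/\tau'}$ satisfies the support and cosupport conditions at $\wh0$ — but this is immediate from $i_{\wh0}^{-1}(\FcF)_{\leq0} = \bR\Gamma_\rc(\Afu,\cF)[1]$ having cohomology in degrees $-1,0$ and the dual computation, both of which are already in the proof of Proposition \ref{prop:Laplace0}.
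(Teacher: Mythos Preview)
Your approach is logically sound but takes a genuinely different route from the paper's. The paper gives a short, direct, purely topological argument: it invokes the characterization of Stokes filtrations from Lemma~\ref{lem:stokeswithout} and verifies its three conditions by hand. The filtration property (monotonicity in $c/\tau'$) is checked by an explicit geometric observation on the half-pipe picture of Figure~\ref{fig:L'}: if $c$ lies in $L'_{<0,\wh\theta'_o}$, i.e.\ $\reel(ce^{-i\wh\theta'_o})<0$, then $(\FcF)_{\leq c/\tau',\wh\theta'_o}\subset(\FcF)^{<0}_{\wh\theta'_o}$; the identification of $(\FcF)^{<c/\tau'}$ with the sheaf built from the $\leq$'s via formula~\eqref{eq:onto<} follows from the same picture; the local-system property of the graded pieces is already in Proposition~\ref{prop:Laplace0}; and the dimension condition is read off the last part of Lemma~\ref{lem:pervers-semidisque}. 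No $\cD$-modules, no Theorem~\ref{th:RHL}.

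By contrast, you route the whole verification through the Riemann--Hilbert correspondence: write $\cF=\pDR^\an M$, invoke Theorem~\ref{th:RHL} (and its twisted/rapid-decay variants) to match the topological filtration with the analytic one on $\Fou\cM$, and then appeal to Theorem~\ref{th:DRIet} to conclude membership in $\St$. This is valid and not circular, since the proof of Theorem~\ref{th:RHL} in \S\ref{subsec:proofthRHL} uses only Proposition~\ref{prop:Laplace0}, not Proposition~\ref{prop:StokesF}; but it does invert the paper's intended logical order, in which Proposition~\ref{prop:StokesF} is established \emph{before} and \emph{independently of} Theorem~\ref{th:RHL}. The paper's direct argument buys two things your approach does not: it works over an arbitrary coefficient field~$\kk$ (needed implicitly in \S\ref{subsec:compFourierdual} and for the chapter's stated goal of producing $\kk$-structures on Stokes data), and it makes the topological Laplace transform a self-contained construction whose Stokes-perverse nature does not hinge on the analytic comparison. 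Your approach, on the other hand, is conceptually economical once Theorem~\ref{th:RHL} is granted, and it makes transparent \emph{why} the topological filtration is a Stokes filtration---because it coincides with one we already know to be.
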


\begin{proof}
It is enough to argue on Stokes filtrations and we will use the definition given in Proposition \ref{prop:stokeswithout}. As indicated above, it will be enough to take the space $\{c/\tau'\mid c\in\CC\}$ as index set. Let us check the filtration property. Let us fix $\wh\theta'_o\in S^1_{\wh\infty}$. From Figure~\ref{fig:L'}, it is clear that if $c$ belongs to $L'_{<0}$, then $(\FcF)_{\leq c/\tau',\wh\theta'_o}\subset(\FcF)_{\prec0,\wh\theta'_o}$. This condition on~$c$ reads $\reel(ce^{-i\wh\theta'_o})<0$, or equivalently $c/\tau'\mathrel{<_{_{\wh\theta'_o}}}0$. We therefore get a pre-Stokes filtration, with jumps at $c/\tau'$ where $c$ is a singular point of~$\cF$. The same argument shows that $(\FcF)_{\prec c/\tau'}$ is the subsheaf defined from $(\FcF)_{\leq}$ by Formula \eqref{eq:onto<}.

Lastly, the dimension property is obtained by the last part of Lemma \ref{lem:pervers-semidisque}.
\end{proof}

\begin{remarque}\label{rem:blowup}
One can ask whether the sheaves $\FcF_{\leq c/\tau'}$ could be defined without using the blowing-up map~$e$ or not. Recall that this blowing-up was used in order to determine in a clear way whether $e^{t/\tau'}$ (\resp $e^{\tau/t'}$) has moderate growth or not near $t=0$, $\tau'=0$ (\resp $t'=0$, $\tau=0$). As we already indicated in the proof of Proposition \ref{prop:Laplace0}, the blowing-up of $(\infty,\wh0)$ is not needed. We introduced it by symmetry, and (mainly) in order to treat duality later. On the other hand, working over $\Afu\times\wh X$, if the sheaves $\cFcG_{\leq c/\tau'}$ were to be defined without blowing up, they should be equal to $\bR\wt e_*\cFcG_{\leq c/\tau'}$, in order that the definition remains consistent. But one can check that $\bR\wt e_*\cFcG_{\leq c/\tau'}$ are not sheaves, but complexes, hence do not enter in the frame of Stokes filtrations of a local system in two variables.
\end{remarque}

\Subsection{\proofname\ of Theorem \ref{th:RHL} and compatibility with Riemann-Hilbert}\label{subsec:proofthRHL}
Let us first indicate the steps of the proof. We anticipate on the notation and results explained in \Chaptersname\ref{chap:realbl}, which we refer to.
\begin{enumerate}
\item
The first step computes $\DR^{\rmod\wh\infty}(p^+\cM\otimes\cE^{t\tau})$ (a complex living on $\PP^1\times\wh X$) from a complex defined on $\wt Z$. We will use the notation of the commutative diagram \eqref{eq:ZX}. We consider the sheaf $\cA_{\wt Z}^\modD$ on $\wt Z$ of holomorphic functions on $\wt Z\moins\partial\wt Z=Z\moins D$ having moderate growth along $\partial\wt Z$. In particular, $\cA_{\wt Z}^\modD{}_{|\wt Z\moins\wt D}=\cO_{Z\moins D}$. Applying Proposition \ref{prop:Rpimod} of the next \chaptersname to $e:Z\to\PP^1\times\wh\PP^1$, and then its variant to $\varpi\times\id$, gives
\begin{equation}\label{eq:imdirepsilon}
\DR^{\rmod\wh\infty}(p^+\cM\otimes\cE^{t\tau})\simeq\bR\wt e_*\DR^\modD\big(e^+(p^+\cM\otimes\cE^{t\tau})\big).
\end{equation}

\begin{remarque}
In fact, \eqref{eq:imdirepsilon} is a statement similar to Theorem \ref{th:RHL}, but is much easier, in particular because of Proposition \ref{prop:Rpimod}, which would not apply in the setting of Theorem \ref{th:RHL}: indeed, $e$ is a proper modification while $\wh p$ is not.
\end{remarque}

As a consequence we get
\begin{equation}\label{eq:qht}
\bR\wh q_*\DR^{\rmod\wh\infty}(p^+\cM\otimes\cE^{t\tau})\simeq\bR\wt{\wh q}_*\DR^\modD\big(e^+(p^+\cM\otimes\cE^{t\tau})\big).
\end{equation}

\item
The second step consists in comparing $\DR^\modD\big(e^+(p^+\cM\otimes\cE^{t\tau})\big)$ with $\beta_!\bR(\gamma\circ\nobreak\alpha)_*p^{-1}(\cF)[-1]$, where $\cF=\DR^\an M[1]$. Both complexes coincide on $\wt Z\moins\partial\wt Z$. We therefore have a natural morphism
\begin{equation}\label{eq:DRmodtop}
\DR^\modD\big(e^+(p^+\cM\otimes\cE^{t\tau})\big)\to\bR\beta_*\bR(\gamma\circ\alpha)_*p^{-1}(\cF)[-1].
\end{equation}
That it factorizes through $\beta_!$...\ would follow from $\delta^{-1}\DR^\modD\big(e^+(p^+\cM\otimes\nobreak\cE^{t\tau})\big)=\nobreak0$ (which will be proved below), where $\delta$ is the closed inclusion $L''_{\leq0}\hto\wt Z$ complementary to $\beta$ (\cf Diagram \eqref{eq:diagabc}). Therefore, proving

\refstepcounter{enumii}\label{enum:loca}\eqref{enum:loca}
that a morphism $\DR^\modD\big(e^+(p^+\cM\otimes\nobreak\cE^{t\tau})\big)\to\beta_!\bR(\gamma\circ\alpha)_*p^{-1}(\cF)[-1]$ exists

\refstepcounter{enumii}\label{enum:locb}\eqref{enum:locb}
and that it is an isomorphism
\par\noindent
are both local statements on $\partial\wt Z$. We will also anticipate on results proved in \Chaptername~\ref{chap:realbl}.

Let $C\subset\Afu$ be the union of the singular set of $M$ and $\{0\}$. We also set $C'=C\moins\{0\}$. We can reduce both local statements to the case where $M$ is supported on $C$ and the case where $M$ is localized along $C$, that is, for each $c\in C$, the multiplication by $t-c$ is invertible on $M$. The first case will be left as an exercise, and we will only consider the second one.\label{page:localisation}

We note that, when expressed in local coordinates adapted to~$D$, the pull-back $e^+\cE^{t\tau}$ satisfies the assumption in Proposition \ref{prop:HkEphinul} (this is one reason for using the complex blowing-up $e$). Therefore, by a simple inductive argument on the rank, Proposition \ref{prop:HkEphinul} applies to $e^+(p^+\cM\otimes\nobreak\cE^{t\tau})$ away from $(p\circ e)^{-1}(C')$ (where the polar divisor of $e^+(p^+\cM\otimes\nobreak\cE^{t\tau})$ contains other components than those of $D$). Both local statements are then clear on such a set, by using that $\cH^0\DR^\modD\cE^{t\tau}$ vanishes where $e^{-t\tau}$ does not have moderate growth. We are thus reduced to considering the situation above a neighbourhood of a point $c\in C'$.

\begin{figure}[htb]
\begin{center}
\includegraphics[scale=.5]{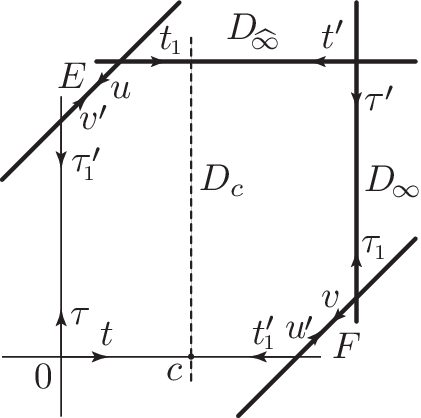}
\end{center}
\end{figure}

We denote by $t_c$ a local coordinate on $\Afu$ centered at $c$ and we set $\tau'=1/\tau$ as above. We will work near the point $(c,\infty)\in Z$ with coordinates $(t_c,\tau')$ (recall that the complex blowing-up $e$ is an isomorphism there, so we identify $Z$ and $\PP^1\times\wh\PP^1$, and we still denote by $Z$ a sufficiently small neighbourhood of $(c,\infty)$). The divisor~$D$ is locally defined by $\tau'=0$, and $p^+\cM\otimes\cE^{t\tau}=\cE^{(c+t_c)/\tau'}\otimes\cR$, where $\cR$ is a free $\cO_Z(*(D\cup D_c))$-module with a regular connection, setting $D_c=\{t_c=0\}$. We denote by $\cL$ the local system on $Z\moins(D\cup D_c)$ determined by $\cR$ (we know that $\cL$ has monodromy equal to identity around $\tau'=0$, but this will not be used in the following). Denoting by $j_c:Z\moins(D\cup D_c)\hto Z\moins D$ the inclusion, we have $p^{-1}\cF=\bR j_{c,*}\cL[1]$. We cannot directly apply Proposition \ref{prop:HkEphinul} as above, because $\cR$ is localized along $D_c$, \ie the multiplication by $t_c$ is invertible on $\cR$, so we will first consider the real blowing-up $\varpi_c:\wt Z_c\to\wt Z$ of $D_c$ in $\wt Z$ (\ie we will work in polar coordinates in both variables~$\tau'$ and~$t_c$).

By Proposition \ref{prop:HkEphinul}, the complex $\DR^{\rmod(D\cup D_c)}(\cE^{(c+t_c)/\tau'}\otimes\cR)$ on $\wt Z_c$ has cohomology in degree $0$ at most, and by (a variant of) Proposition \ref{prop:Rpimod}, we have $\DR^\modD(\cE^{(c+t_c)/\tau'}\otimes\cR)=\bR\varpi_{c,*}\DR^{\rmod(D\cup D_c)}(\cE^{(c+t_c)/\tau'}\otimes\cR)$.

Notice now that the open set in $\wt Z_c$ where $e^{-(c+t_c)/\tau'}$ is exponentially decreasing is nothing but $\varpi^{\prime-1}L'_{\leq0}$ (\cf Diagram \eqref{eq:diagabc}). Therefore, with obvious notation, we have
\[
\beta_!\bR(\gamma\circ\alpha)_*\bR j_{c,*}\cL=\bR\varpi_{c,*}\beta_{c,!}\bR(\gamma_c\circ\alpha_c)_*\cL.
\]
Since both statements \eqref{enum:loca} and \eqref{enum:locb} hold on $\wt Z_c$ by the argument given in the first part of Step two, they hold on $\wt Z$ after applying $\bR\varpi_{c,*}$.

\item
Third step. We conclude from Step two and \eqref{eq:qht} that we have an isomorphism
\[
\bR\wh q_*\DR^{\rmod\wh\infty}(p^+\cM\otimes\cE^{t\tau})[1]\simeq(\FcF)_{\leq0}[-1].
\]
In order to prove the surjectivity of $\cH^0\eqref{eq:DRmod}$ one can restrict to $S^1_{\wh\infty}$, as it holds on $\Afuh$. By Proposition \ref{prop:Laplace0}, we conclude that $\bR\wh q_*\DR^{\rmod\wh\infty}(p^+\cM\otimes\cE^{t\tau})[1]$ has cohomology in degree zero only, and we have already seen that so has the left-hand term of \eqref{eq:DRmod}. We are thus reduced to showing that the fibers of these sheaves have the same dimension at any $\wh\theta\in S^1_{\wh\infty}$. This follows from Proposition \ref{prop:Laplace0} and its extension to any index $c/\tau$, showing that the dimension of the right-hand term of $\cH^0(\eqref{eq:DRmod})_{\wh\theta}$ is the sum of the dimensions of $\phi_{t-c}\cF$, where $c$ varies in the open subset where $0\leqwhtheta c/\tau$. That the dimension of the left-hand term is computed similarly follows from \eqref{enum:b} of \S\ref{subsec:introRHL}.

\item\label{subsec:proofthRHL4}
In conclusion, we have proved Theorem \ref{th:RHL} and, at the same time, the fact that the Stokes filtration of Proposition \ref{prop:StokesF} is the Stokes filtration of $\Fou M$ at infinity, and more precisely that, through the \index{Riemann-Hilbert correspondence!and Laplace}Riemann-Hilbert correspondence given by $\pDR_{\ccIet}$, the image of $\Fou M$ is the triangle of Proposition \ref{prop:StokesF}. In other words, when $M$ has only regular singularities, the Stokes-de~Rham functor (\ie the de~Rham functor enriched with the Stokes filtration at infinity) exchanges the Laplace transform with the topological Laplace transform.\qed
\end{enumerate}

\subsection{Compatibility of Laplace transformation with duality}\index{duality!and Laplace}\index{Laplace transform!and duality}

Let $M$~be a $\Clt$-module of finite type (an object of $\Mod_f(\Clt)$), or more generally an object of $D^\rb_f(\Clt)$ (bounded derived category of $\Clt$-modules with finite-type cohomology). The Laplace transformation considered in \S\ref{subsec:introRHL} is in fact a transformation from $\Mod_f(\Clt)$ (\resp $D^\rb_f(\Clt)$) to $\Mod_f(\Cltau)$ (\resp $D^\rb_f(\Cltau)$), and is not restricted to holonomic objects. Let $\bD$ be the duality functor in these categories. In particular, for a holonomic left $\Clt$-module~$M$, $\bD M$ is the complex having cohomology in degree $0$ only, this cohomology being the holonomic left $\Clt$-module $M^\vee$ naturally associated to the right holonomic $\Clt$-module $\mathrm{Ext}^1_{\Clt}(M,\Clt)$, where the right $\Clt$-module structure comes from the right structure on $\Clt$.

According to \cite{Malgrange91}, there is a canonical isomorphism of functors $\Fou{}\circ\bD(\cbbullet)\isom\iota^+\bD\circ\nobreak\Fou{(\cbbullet)}$, where $\iota$ denotes the involution $\Cltau\isom\Cltau$ sending $\tau$ to $-\tau$ and $\partial_\tau$ to $-\partial_\tau$. This isomorphism can be algebraically described in two ways, depending on the choice of the definition of the Laplace transformation (\cf \S\ref{subsec:introRHL}, \eqref{enum:introRHL1} and \eqref{enum:introRHL2}). Both definitions coincide (\cf \cite[App\ptbl2.4,\,p\ptbl224]{Malgrange91}). Let us recall them.

\begin{enumerate}
\item
Let us choose a resolution $L^\cbbullet$ of $M$ by free left $\Clt$-modules. Then $\Fou L^\cbbullet$ is a resolution of $\Fou M$ by free left $\Cltau$-modules, and clearly, by using such a resolution, $\mathrm{Ext}^1_{\Cltau}(\Fou M,\Cltau)=\iota^+\Fou \mathrm{Ext}^1_{\Clt}(M,\Clt)$ as right $\Cltau$-modules (\cf \cite[Lem\ptbl V.3.6, p\ptbl86]{Malgrange91}, \cite[\S V.2.b]{Bibi00}).\enlargethispage{1.5\baselineskip}%
\item
Let us use the definition $\Fou M=\wh p_+(p^+M\otimes E^{t\tau})$. However, since we will have to use the commutation of direct image with duality, it is necessary to work with proper maps \eqref{eq:ppproper}, and we can work either in the algebraic or the analytic setting. We will choose the latter for future use, and, as in the Introduction of this \chaptername, we denote by $\cM$ the $\cD_{\PP^1}$-module localized at~$\infty$ associated with~$M$. Since $p$ is smooth, we have $p ^+\bD(\cbbullet)=\bD p^+(\cbbullet)$ (\cf \cite[Prop\ptbl VII.9.13]{Borel87}, taking into account the difference in notation). In other words, denoting by $(p^+\cM)^\vee$ the left $\cD_{\PP^1\times\wh\PP^1}$-module associated with $\mathrm{Ext}^2_{\cD_{\PP^1\times\wh\PP^1}}(p^+\cM,\cD_{\PP^1\times\wh\PP^1})$, we have $p^+(\cM^\vee)=(p^+\cM)^\vee$. We now get $p^+(\bD \cM)\otimes \cE^{t\tau}=\bD(p^+\cM)\otimes \cE^{t\tau}=\bD(p^+\cM\otimes \cE^{-t\tau})$. Notice also that $\bD\cM$ may not be localized at $\infty$ (as was $\cM$ by definition), but we have $p^+(\bD \cM)\otimes \cE^{t\tau}=p^+\big((\bD \cM)(*\infty)\big)\otimes \cE^{t\tau}$, since $\cE^{t\tau}$ is localized along $D_\infty$. Therefore, $\wh p_+(p^+(\bD \cM)\otimes \cE^{t\tau})$ is the $\cD_{\wh\PP^1}$-module localized at $\wh\infty$ associated with $\Fou(\bD M)$.

By the relative duality for the proper map $\wh p_+$ (\cf\eg \cite[Prop\ptbl VII.9.6]{Borel87}, \cite{MSaito89b}, \cite[App\ptbl2, Th\ptbl3.3]{Malgrange91}), we have an isomorphism
\[
\wh p_+\bD(p^+\cM\otimes \cE^{-t\tau})\isom\bD\wh p_+(p^+\cM\otimes \cE^{-t\tau}),
\]
from which we get (by applying $\Gamma(\wh\PP^1,\cbbullet)$)
$$
\Fou(\bD M)\isom\iota^+\bD\Fou M.\eqno\qed
$$
\end{enumerate}

The localized module $\CC[\tau,\tau^{-1}]\otimes_{\CC[\tau]}\Fou M$ is a free $\CC[\tau,\tau^{-1}]$-module of finite rank with connection. We denote by $(\cL,\cL_\bbullet)(\Fou M)$ the associated Stokes-filtered local system on~$S^1_{\wh\infty}$.

Applying the Riemann-Hilbert functor of Definition \ref{def:RHfunctorgerms} together with the duality isomorphism of Proposition \ref{prop:RHStokesdual}, we get an isomorphism
\begin{equation}\label{eq:isodualLaplace}
(\cL,\cL_\bbullet)(\Fou\bD M)\isom\iota^{-1}[(\cL,\cL_\bbullet)(\Fou M)]^\vee.
\end{equation}

\Subsection{Compatibility of topological Laplace transformation with Poincaré-Verdier duality}\index{duality!and topological Laplace}

We will define an analogue of the previous isomorphism by working at the topological level. We will compare both constructions in \S\ref{subsec:compFourierdual}. It will be clearer to distinguish between the projection $p:\PP^1\times\wh\PP^1\to\PP^1$ and its restriction over $\Afu$, that we now denote by $p_o:\Afu\times\wh\PP^1\to\Afu$ (or $\Afu\times\Afuh\to\Afu$).

Let $\cF$ be a perverse sheaf of $\kk$-vector spaces on $\Afu$, with singularity set $C$. The pull-back $\cG\defin p_o^{-1}\cF[1]$ on $\Afu\times\Afuh$ is also perverse. Moreover, there is a functorial isomorphism $p_o^{-1}(\bD\cF)[1]\isom(p_o^!\bD\cF)[-1]=\bD(p_o^{-1}\cF[1])$ (\cf\eg\cite[Prop\ptbl3.3.2]{K-S90}) which is compatible with bi-duality.

Let $\iota$ denote the involution $\tau\mto-\tau$ on $\Afuh$. It induces an involution on the spaces $\Afu\times\Afuh$, $\wh X$, $Z$, $X\times\wh X$ and $\wt Z$ in a unique way. We all denote them by $\iota$. We will use the notation $\alpha_\iota$ for the conjugate $\iota\circ\alpha\circ\iota$, etc.

\begin{proposition}\label{prop:dualtopLaplace}
The topological Laplace transformation, from $\kk$-perverse sheaves to Stokes-$\kk$-perverse sheaves, is compatible with duality (up to $\iota$) and bi-duality.
\end{proposition}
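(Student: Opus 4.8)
The statement claims that the topological Laplace transform of Definition \ref{def:cFcG} commutes with Poincar\'e--Verdier duality, up to the involution $\iota$. I would model the argument on the $\cD$-module side (the two-step description in \S\ref{subsec:compFourierdual}, namely $\Fou\circ\bD(\cbbullet)\isom\iota^+\bD\circ\Fou(\cbbullet)$), replacing $\cD$-module direct images by proper (derived) pushforwards of sheaves and $\bD$ by the Poincar\'e--Verdier dual, so that the relative duality for proper maps becomes the essential input. The key point is that $(\FcF)_{\leq c/\tau'}$ and $(\FcF)^{<c/\tau'}$ are defined as $\bR\wt{\wh q}_*$ of sheaves constructed by $(\beta\circ\gamma)_!\bR\alpha_*$ and $\beta_!\bR(\gamma\circ\alpha)_*$ from $\cG=p_o^{-1}\cF[1]$ on $\wt Z(c)$; since $\wt{\wh q}$ is proper, $\bR\wt{\wh q}_*=\bR\wt{\wh q}_!$ commutes with $\bD$, and the whole problem reduces to a computation of $\bD$ of the sheaves $(\cFcG)_{\leq c/\tau'}$ and $(\cFcG)^{<c/\tau'}$ on $\wt Z(c)$.

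First I would record the exchange $\bD(\beta_!\bR\delta_*\cK)\simeq \beta_*\bR\delta_!\,\bD\cK$ for a composition of an open embedding $\delta$ and an open embedding $\beta$ (here applied with $\delta=\gamma\circ\alpha$ or $\delta=\alpha$), using the standard formulas $\bD\circ j_!=\bR j_*\circ\bD$ and $\bD\circ\bR j_*=j_!\circ\bD$ for open embeddings (\cite[\S3.1]{K-S90}). Then, because $\bD(p_o^{-1}\cF[1])\isom p_o^{-1}(\bD\cF)[1]$ compatibly with bi-duality, one gets $\bD(\bR(\gamma\circ\alpha)_*\cG)\simeq (\gamma\circ\alpha)_!\bD\cG\simeq(\gamma\circ\alpha)_!\,p_o^{-1}(\bD\cF)[1]$ on $L'_{\leq0}(c)$, and similarly with $\alpha$. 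So $\bD$ exchanges ``moderate'' ($\beta_!\bR(\gamma\circ\alpha)_*$) with ``rapid decay'' ($(\beta\circ\gamma)_!\bR\alpha_*$) constructions, provided the geometric spaces $L'_{*}(c)$ are correctly exchanged by $\iota$. The crucial geometric lemma is: over $\wt Z(c)$, the involution $\iota:\tau\mapsto-\tau$ sends $L''_{\leq0}(c)$ to $L''_{<0}(-c)$-type sets; concretely $\iota$ changes $\arg u$ (resp.\ $\arg v$, resp.\ $\arg t'+\arg\tau'$) by $\pi$, hence sends the locus where $e^{-t\tau}$ does not have moderate growth to the locus where $e^{+t\tau}$ does not decay, i.e.\ it interchanges the open/closed nested pairs $(L'_{\leq0},L'_{<0})$ and their $\iota$-conjugates. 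From this, $\iota^{-1}(\cFcG)_{\leq c/\tau'}^{\,\vee}\simeq(\cFcG_\iota)^{<c/\tau'}$ and $\iota^{-1}(\cFcG)^{<c/\tau'\,\vee}\simeq(\cFcG_\iota)_{\leq c/\tau'}$ on the nose (up to the shift bookkeeping that matches the perverse normalization), and applying $\bR\wt{\wh q}_*$ and using properness gives the isomorphism of nested complexes on $S^1_{\wh\infty}$; on $\Afuh$ it is the usual compatibility of Verdier duality with proper pushforward for $\FcF=p_{o+}$-type objects. One then checks that the induced isomorphism on graded pieces $\gr_{c/\tau'}$ is the identity on vanishing cycles up to $\iota$, via Lemma \ref{lem:pervers-semidisque} (the dual of ``cohomology with compact support of a half-disc with a boundary arc removed'' being the same computation read backwards), so that the triangle $[(\FcF)^{<c/\tau'}\to(\FcF)_{\leq c/\tau'}\to\gr_{c/\tau'}\cF\To{+1}]$ of Proposition \ref{prop:StokesF} is sent to the dual triangle $[\iota^{-1}\bD(\FcF)^{<c/\tau'}\leftarrow\cdots]$, i.e.\ to the object $\Fou(\bD\cF)$.

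The compatibility with bi-duality is then formal: it follows by combining the bi-duality isomorphism for $\RR$-constructible sheaves on $\wt Z(c)$ (and its compatibility with $j_!$, $\bR j_*$) with the bi-duality isomorphism $\cF\isom\bD\bD\cF$ used to build $\cG$, exactly as the naturality assertions in Proposition \ref{prop:DDCSt} are derived; one checks the compatibility square commutes stalkwise on $\partial\wt Z$ and on $\Afuh$, which suffices since our objects are reconstructed by gluing. I expect the main obstacle to be precisely the bookkeeping in this geometric lemma: keeping track of which of the several charts $(t_1,u)$, $(v',\tau'_1)$, $(t',\tau')$, $(v,\tau_1)$, $(t'_1,u')$ one is in, verifying that $\iota$ really does interchange $L'_{\leq0}(c)$ with the $\iota$-conjugate of $L'_{<0}(-c)$ (and hence the exceptional-divisor behaviour along $\wt E$, $\wt F$ matches up), and making sure the degree shifts $[\pm1]$ from the perverse normalization of $\cG$, $\bD\cG$, and from $\beta_!$ versus $\bR\beta_*$ are assembled consistently so that the dualizing complex $\wtj_!\kk[2]$ appears in the right place. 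Once that lemma is in hand, the rest is a routine application of the six-operations formalism together with the properness of $\wt{\wh q}$ and of the real blow-up maps.
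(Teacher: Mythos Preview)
Your overall plan is the same as the paper's: reduce to duality on $\wt Z$ via $\bD\circ j_!=\bR j_*\circ\bD$ and $\bD\circ\bR j_*=j_!\circ\bD$, then push forward by the proper map $\wt{\wh q}$. The gap is in what you call the ``geometric lemma''. You assert that $\iota$ interchanges the nested pairs $(L'_{\leq0},L'_{<0})$ with their $\iota$-conjugates so that $\iota^{-1}\bD(\cFcG)_{\leq0}\simeq(\cFou\bD\cG)^{\ll0}$ holds ``on the nose''. This is not correct: away from $\wt E\cup\wt F$ the closed sets $L''_{\leq0}$ and $\iota(L''_{\leq0})$ are the half-tori $\arg t'+\arg\tau'\in[\pi/2,3\pi/2]$ and $\arg t'+\arg\tau'\in[-\pi/2,\pi/2]$, which overlap on the Stokes hypersurfaces $\arg t'+\arg\tau'=\pm\pi/2$. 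Hence $\bD(\cFcG)_{\leq0}=\bR\beta_*(\gamma\circ\alpha)_!\bD\cG$ is \emph{not} automatically of the form $(\beta_\iota\circ\gamma_\iota)_!\bR\alpha_{\iota,*}(\bD\cG)$; a priori it has sections supported on this overlap. (Your parenthetical ``verifying that $\iota$ really does interchange $L'_{\leq0}(c)$ with the $\iota$-conjugate of $L'_{<0}(-c)$'' is also off: $\iota$ fixes $t$, so it does not move $c$.)

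What the paper actually proves (Lemma \ref{lem:dualtopLaplace}) is the vanishing $\ov\delta_\iota^{-1}\bR\beta_*(\gamma\circ\alpha)_!\cG=0$ and $\delta_\iota^{-1}\bR(\beta\circ\gamma)_*\alpha_!\cG=0$, i.e.\ that the dual complex has no cohomology along $\iota(L''_{<0})$. This is a genuine local computation---at smooth boundary points it reduces to $H^*(\Delta\cup I,I;\kk^d)=0$ for a half-disc with an added boundary arc, and there are separate checks at $(\infty,\wh\infty)$ and along the strict transforms of the singular curves $\ov S_i$. Only after this vanishing does the cartesian square $\iota(L'_{<0})\cap L'_{\leq0}=\Afu\times\Afuh$ yield, via adjunction, the isomorphism $(\beta_\iota\circ\gamma_\iota)_!\bR\alpha_{\iota,*}\bD\cG\isom\bR\beta_*(\gamma\circ\alpha)_!\bD\cG$ (Lemma \ref{lem:isodualG}), from which the bi-duality square is then formal. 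So the step you flag as ``bookkeeping'' is in fact the heart of the argument, and it is not a set-theoretic interchange but a cohomological vanishing on the Stokes locus.
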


Given a $\kk$-perverse sheaf $\cF$ on $\Afu$, we denote by $(\cL,\cL_\bbullet)(\FcF)$ the Stokes-filtered local system $(\FcF_\leq)_{|S^1_{\wh\infty}}$. Then, according to Lemma \ref{lem:StCdual}, the proposition gives an isomorphism\enlargethispage{\baselineskip}%
\begin{equation}\label{eq:isodualtopLaplace}
(\cL,\cL_\bbullet)(\Fou\bD\cF)\isom\iota^{-1}[(\cL,\cL_\bbullet)(\FcF)]^\vee,
\end{equation}
where $(\cL,\cL_\bbullet)^\vee$ is defined by Proposition \ref{prop:operationsSt}\eqref{prop:operationsSt2}.

\begin{proof}[\proofname\ of Proposition \ref{prop:dualtopLaplace}]
We first wish to prove the existence of isomorphisms
\begin{equation}\label{eq:DF}
\Fou(\bD\cF)_{\prec0}\simeq\iota^{-1}\bD(\FcF_{\leq0}),\quad\Fou(\bD\cF)_{\leq0}\simeq\iota^{-1}\bD(\FcF_{\prec0})
\end{equation}
which are exchanged by duality up to bi-duality isomorphisms. Note that we could also write $\iota^{-1}\Fou(\bD\cF)_{\prec0}$ as ${}^{\ov{\mathrm F}}\!(\bD\cF)_{\prec0}$ etc., where $\ov{\mathrm F}$ denotes the inverse Laplace transformation, which has kernel $e^{-t\tau}$.

These isomorphisms are obtained by applying to similar isomorphisms on $\wt Z$ the direct image $\bR\wt{\wh q}_*$, which is known to commute with~$\bD$ in a way compatible with bi-duality since $\wt{\wh q}$ is proper, according to Poincaré-Verdier duality (\cf\eg\cite[Prop\ptbl3.1.10]{K-S90}).

We will thus first work locally on $\wt Z$. Let $\cG$ be a $\kk$-perverse sheaf on $\Afu\times\Afuh$ with singular set $S=p_o^{-1}(C)$. By Poincaré-Verdier duality, we have $\bD(\beta_!\bR(\gamma\circ\alpha)_*\cG)=\bR\beta_*(\gamma\circ\alpha)_!\bD\cG$ in a way compatible with bi-duality (\cf \eqref{eq:diagabc} for the notation). We denote by $\ov\delta$ the closed inclusion $L''_{<0}\hto\wt Z$ and similarly $\ov\delta_\iota:\iota(L''_{<0})\hto\wt Z$.

Similarly, $\bD((\beta\circ\gamma)_!\bR\alpha_*\cG)=\bR(\beta\circ\gamma)_*\alpha_!\bD\cG$ in a way compatible with bi-duality, and we denote by $\delta$ the closed inclusion $L''_{\leq0}\hto\wt Z$ and similarly $\delta_\iota:\iota(L''_{\leq0})\hto\wt Z$.

Both $\cG$ and $\bD\cG$ have the same singular set, and we will later apply the following lemma to $\bD\cG$.

\begin{lemme}\label{lem:dualtopLaplace}
If $\cG$ satisfies the previous assumptions, we have
\begin{itemize}
\item
$\ov\delta_\iota^{-1}\bR\beta_*(\gamma\circ\nobreak\alpha)_!\cG=0$,
\item
$\delta_\iota^{-1}\bR(\beta\circ\gamma)_*\alpha_!\cG=0$.
\end{itemize}
\end{lemme}

\begin{proof}
We start with the first equality. By definition, the cohomology of $\bR\beta_*(\gamma\circ\nobreak\alpha)_!\cG$ is zero on $L'_{\leq0}$, so it is enough to prove that the pull-back of $\bR\beta_*(\gamma\circ\nobreak\alpha)_!\cG$ to $\iota(L''_{<0})\cap L''_{\leq0}$ is zero. Notice that this set does not cut $\wt E\cup\wt F$ by definition of $L''_{\leq0}$. Notice also that, now that we are far from $E$ and $F$, there is no difference between $L''_{<0}$ and $L''_{\leq0}$, so we will forget~$\gamma$ in the notation. We will distinguish three cases:
\begin{enumerate}
\item
Smooth points of $D_\infty\cup D_{\wh\infty}$ not in $\ov S$ (closure of $S$).
\item
The point $(\infty,\wh\infty)$.
\item
The points of $(D_\infty\cup D_{\wh\infty})\cap\ov S$ (they are smooth points of $D_{\wh\infty}$ by assumption).
\end{enumerate}

\begin{enumerate}
\item\label{enum:cassmooth}
Near such a point, $\iota(L''_{\leq0})\cap L''_{\leq0}$ is topologically a product of a similar dimension-one intersection with a disc and $\cG$ is a local system (up to a shift). We will therefore treat the dimension-one analogue. We consider a local system $\cL$ on $(0,\epsilon)\times S^1$, where~$S^1$ has coordinate $e^{i\theta}$, we set $L''_{\leq0}=\{(0,\theta)\mid\theta\in[\pi/2,3\pi/2]\bmod2\pi\}$ and $\iota(L''_{\leq0})=L''_{\leq0}+\pi$. Moreover, $\alpha:(0,\epsilon)\times S^1\hto(0,\epsilon)\times S^1\cup L'_{\leq0}$ and $\beta:(0,\epsilon)\times S^1\cup L'_{\leq0}\hto[0,\epsilon)\times S^1$ are the inclusions. We wish to show that $\bR\beta_*\alpha_!\cL$ is zero at the points with coordinates $(0,\pi/2)$ and $(0,3\pi/2)$. The cohomology of the germ of $\bR\beta_*\alpha_!\cL$ at $(0,\pi/2)$ (say) is the cohomology of an open disc with an open interval $I$ added on its boundary, with coefficient in a sheaf which is constant on the open disc and zero on this open interval. This is also the relative cohomology $H^*(\Delta\cup I,I;\kk^d)$ ($d=\rk\cL$). So this is clearly zero.\enlargethispage{2\baselineskip}%

\item
The pull-back in $\wt Z$ of a neighbourhood of $(\infty,\ov\infty)$ takes the form $[0,\epsilon)^2\times S^1_\infty\times S^1_{\wh\infty}$, with coordinates $(\theta',\wh\theta')$ on $S^1_\infty\times S^1_{\wh\infty}$. The set $L''_{\leq0}$ is defined by $\theta'+\wh\theta'\in[\pi/2,3\pi/2]\bmod2\pi$ and $\iota(L''_{\leq0})$ by $\theta'+\wh\theta'\in[-\pi/2,\pi/2]\bmod2\pi$ (since $\iota$ consists in changing $\wh\theta'$ to $\wh\theta'+\pi$). On the other hand, $\cG$ is a local system (up to a shift) on $(0,\epsilon)^2\times S^1_\infty\times S^1_{\wh\infty}$. Up to taking new coordinates $(\theta'+\wh\theta',\theta'-\wh\theta')$, one is reduced to the same computation as in \eqref{enum:cassmooth}, up to the cartesian product by intervals, which has no effect on the result.

\item
As in the proof on Page \pageref{page:localisation}, it is enough to consider the case where $\cG$ is supported in $\ov S$, which is treated as in \eqref{enum:cassmooth}, and the case where $\cG$ is the maximal extension of a local system (up to a shift) away from $D_c$ ($c\in C'$). In this case, the situation is a product of that considered in \eqref{enum:cassmooth} and that of a local system on an open punctured disc $\Delta^*$. The argument of \eqref{enum:cassmooth} applies here also.
\end{enumerate}

Let us now consider the second equality, for which we argue similarly. The cohomology of $\bR(\beta\circ\gamma)_*\alpha_!\cG$ is zero on $L'_{<0}$, so it is enough to prove that the pull-back of $\bR(\beta\circ\gamma)_*\alpha_!\cG$ to $\iota(L''_{\leq0})\cap L''_{<0}$ is zero. This set does not cut $\wt E\cup\wt F$, so $\gamma=\id$ on this set, and we are reduced to the previous computation.
\end{proof}

\subsubsection*{End of the proof of Proposition \ref{prop:dualtopLaplace}}

Let $\cG$ be as above. As in Definition \ref{def:cFcG}, we set $\cFcG_{\leq0}=\beta_!\bR(\gamma\circ\nobreak\alpha)_*\cG$ and $\cFcG_{\prec0}=(\beta\circ\gamma)_!\bR\alpha_*\cG$. We therefore get two functors $\cFou(\cbbullet)_{\leq0}$ and $\cFou(\cbbullet)_{\prec0}$. We will show that they are compatible with $\iota$-twisted duality.

\begin{lemme}\label{lem:isodualG}
There exist unique isomorphisms
\[
\cFou(\iota^{-1}\bD\cG)_{\prec0}\To{\lambda_{\leq0}}\iota^{-1}\bD(\cFcG_{\leq0}),\quad
\cFou(\iota^{-1}\bD\cG)_{\leq0}\To{\lambda_{\prec0}}\iota^{-1}\bD(\cFcG_{\prec0})
\]
which extend the identity on $\Afu\times\Afuh$. They are functorial, and fit into a bi-duality commutative diagram, and a simlar one by exchanging $\leq0$ and $\prec0$ (and we use the identity $\iota^{-1}\bD=\bD\iota^{-1}$):
\begin{equation}\label{eq:diagbidual}
\begin{array}{c}
\xymatrix@C=2cm{
\iota^{-1}\bD\,\cFou(\iota^{-1}\bD\cbbullet)_{\leq0}&\cFou(\bD\bD\cbbullet)_{\prec0}\ar[l]_-{\lambda_{\leq0}\cdot\iota^{-1}\bD}^-\sim\\
\bD\big[\bD\big(\cFou(\cbbullet)_{\prec0}\big)\big]\ar[u]^{\iota^{-1}\bD(\lambda_{\prec0})}_\wr\\
\cFou(\cbbullet)_{\prec0}\ar[u]_\wr^{\bid\cdot \cFou(\cbbullet)_{\prec0}}\ar@{=}[r]&\cFou(\cbbullet)_{\prec0}\ar[uu]_-{\cFou(\bid)_{\prec0}}^-\wr
}
\end{array}
\end{equation}
\end{lemme}

\begin{proof}
We will use the cartesian square of open inclusions:
\[
\xymatrix@C=2cm{
\iota(L'_{<0})\ar@{^{ (}->}[r]^-{\beta_\iota\circ\gamma_\iota}&\wt Z\\
\Afu\times\Afuh=\iota(L'_{<0})\cap L'_{\leq0}\ar@{^{ (}->}[u]^{\alpha_\iota}\ar@{^{ (}->}[r]^-{\gamma\circ\alpha}&L'_{\leq0}\ar@{_{ (}->}[u]_\beta
}
\]
which implies (\cf \cite[Prop\ptbl2.5.11]{K-S90}) $(\gamma\circ\alpha)_!\alpha_\iota^{-1}=\beta^{-1}(\beta_\iota\circ\gamma_\iota)_!$ and $\alpha_{\iota,!}(\gamma\circ\alpha)^{-1}=(\beta_\iota\circ\gamma_\iota)^{-1}\beta_!$, and, by openness, $\bR(\gamma\circ\alpha)_*\alpha_\iota^{-1}=\beta^{-1}\bR(\beta_\iota\circ\gamma_\iota)_*$ and $(\beta_\iota\circ\gamma_\iota)^{-1}\bR\beta_*=\bR\alpha_{\iota,*}(\gamma\circ\alpha)^{-1}$.

For any $\cG$ as above, the previous remark and the first assertion of Lemma \ref{lem:dualtopLaplace} imply that the following natural adjunction morphism
\[
(\beta_\iota\circ\gamma_\iota)_!\bR\alpha_{\iota,*}\cG=(\beta_\iota\circ\gamma_\iota)_!(\beta_\iota\circ\gamma_\iota)^{-1}\bR\beta_*(\gamma\circ\alpha)_!\cG\to\bR\beta_*(\gamma\circ\alpha)_!\cG
\]
is an isomorphism. We have by adjunction (\cf \cite[(2.6.14) \& Th\ptbl3.15]{K-S90}):
\begin{align*}
\Hom((\beta_\iota\circ\gamma_\iota)_!\bR\alpha_{\iota,*}\cG,\bR\beta_*(\gamma\circ\alpha)_!\cG)&=\Hom(\beta^{-1}(\beta_\iota\circ\gamma_\iota)_!\bR\alpha_{\iota,*}\cG,(\gamma\circ\alpha)_!\cG)\\
&=\Hom((\gamma\circ\alpha)_!\cG,(\gamma\circ\alpha)_!\cG)\\
&=\Hom(\cG,\cG),
\end{align*}
where the last equality follows from $(\gamma\circ\alpha)^!=(\gamma\circ\alpha)^{-1}$ (since $\gamma\circ\alpha$ is open). Therefore, we can also express the previous adjunction morphism as the adjunction
\[
(\beta_\iota\circ\gamma_\iota)_!\bR\alpha_{\iota,*}\cG\to\bR\beta_*\beta^{-1}(\beta_\iota\circ\gamma_\iota)_!\bR\alpha_{\iota,*}\cG=\bR\beta_*(\gamma\circ\alpha)_!\cG.
\]
Note that it is directly seen to be an isomorphism by dualizing the second assertion of Lemma \ref{lem:dualtopLaplace}.

Applying these isomorphisms to $\bD\cG$ instead of $\cG$, the $D^\rb(\wt Z,\kk)$-isomorphism
\[
(\beta_\iota\circ\gamma_\iota)_!\bR\alpha_{\iota,*}\bD\cG\isom\bR\beta_*(\gamma\circ\alpha)_!\bD\cG=\bD(\beta_!\bR(\gamma\circ\alpha)_*\cG)
\]
that we deduce is by definition $\lambda_{\leq0}$. Uniqueness follows from the identity of $\Hom$ above. Now, $\lambda_{\prec0}$ is defined in order that \eqref{eq:diagbidual} commutes. It can be defined in a way similar to~$\lambda_{\leq0}$ by using the two other possible adjunction morphisms, by the same uniqueness argument.
\end{proof}

The proof of \eqref{eq:DF} now follows: on the one hand, $\bD(p_o^{-1}\cF[1])\simeq p_o^{-1}(\bD\cF)[1]$ since $p_o$ is smooth of real relative dimension two, and, on the other hand, Poincaré-Verdier duality can be applied to $\bR\wt{\wh q}_*$ since $\wt{\wh q}_*$ is proper; one concludes by noticing that $\iota^{-1}p_o^{-1}\cF=p_o^{-1}\cF$, since $p_o\circ\iota=p_o$.

The proof of the proposition is obtained by applying the same reasoning to each $c\in\CC$ and $\FcF_{\leq c/\tau'},\FcF_{\prec c/\tau'}$.
\end{proof}

\subsection{Comparison of both duality isomorphisms}\label{subsec:compFourierdual}

The purpose of this section is to show that, through the \index{Riemann-Hilbert correspondence!duality and Laplace}Riemann-Hilbert correspondence $\cF=\pDR^\an M$, the duality isomorphisms \eqref{eq:isodualLaplace} and \eqref{eq:isodualtopLaplace} correspond each other. We will neglect questions of signs, so the correspondence we prove has to be understood ``up to sign''.

Let us make this more precise. By the compatibility of the Riemann-Hilbert correspondence with Laplace and topological Laplace transformations, shown in \S\ref{subsec:proofthRHL}\eqref{subsec:proofthRHL4}, we have a natural isomorphism
\begin{equation}\label{eq:LFDR}
(\cL,\cL_\bbullet)(\Fou\pDR^\an M)\simeq(\cL,\cL_\bbullet)(\Fou M).
\end{equation}
By using the local duality theorem $\bD\,\pDR^\an M\simeq\pDR^\an\bD M$ (\cf\cite{Narvaez04} and the references given therein), \eqref{eq:isodualLaplace} gives rise to an isomorphism
\begin{equation}\label{eq:isodualLaplacetop}
(\cL,\cL_\bbullet)(\Fou\bD\pDR^\an M)\simeq\iota^{-1}[(\cL,\cL_\bbullet)(\Fou\pDR^\an M)]^\vee,
\end{equation}
as the composition
\begin{align*}
(\cL,\cL_\bbullet)(\Fou\bD\pDR^\an M)&\simeq(\cL,\cL_\bbullet)(\Fou\pDR^\an\bD M)\\
&\simeq(\cL,\cL_\bbullet)(\Fou \bD M)&&\text{by \eqref{eq:LFDR} for }\bD M,\\
&\simeq\iota^{-1}[(\cL,\cL_\bbullet)(\Fou M)]^\vee&&\text{by \eqref{eq:isodualLaplace}},\\
&\simeq\iota^{-1}[(\cL,\cL_\bbullet)(\Fou\pDR^\an M)]^\vee&&\text{by \eqref{eq:LFDR}}.
\end{align*}
Since both source and target of \eqref{eq:isodualtopLaplace} and \eqref{eq:isodualLaplacetop} are identical, proving that both isomorphisms coincide amounts to proving that their restriction to the corresponding local systems coincide, through the previously chosen identifications. These identifications will be easier to follow if we restrict to the local systems, that is, to $\tau\in\CC^*$.

From now on, the notation in the diagram \eqref{eq:ppproper} will be understood with $\CC^*_\tau$ replacing $\wh\PP^1$, and we will also consider the restriction $p_o$ of $p$ to $\Afu\times\CC^*_\tau$. We denote by~$\cM$ the $\cD_{\PP^1}$-module localized at $\infty$ corresponding to $M$ and we set $\cF=\pDR^\an M$. With this understood, we have
\[
(\Fou M)^\an=\wh p_+(p^+\cM\otimes\cE^{t\tau}),\quad\FcF=\bR\wh p_*\bR\varpi_*(\beta_!\bR\alpha_*p_o^{-1}\cF).
\]

We now consider the functorial isomorphism
\begin{equation}\label{eq:dualisoD}
\bD(p^+\cM\otimes\cE^{t\tau})\simeq\bD(p^+\cM)\otimes\cE^{-t\tau}\simeq p^+(\bD\cM)\otimes\cE^{-t\tau}.
\end{equation}
Let us denote by $M^\an$ the restriction of $\cM$ to $\Afu$. We have a natural isomorphism of $\cO^\an_{\Afu\times\Afuh}$-modules with connection: $(\cO,d)\isom(\cE^{t\tau},\nabla)$, sending~$1$ to $e^{-t\tau}\cdot 1$. Up to this isomorphism, and its dual, the restriction of \eqref{eq:dualisoD} to $\Afu\times\CC^*_\tau$ is the corresponding isomorphism
\[
\bD(p_o^+M^\an)\simeq p_o^+(\bD M^\an).
\]
On the other hand, the isomorphism associated to \eqref{eq:dualisoD}, obtained by using the local duality theorem:
\begin{equation}\label{eq:dualisoDR}
\bD\,\pDR(p^+\cM\otimes\cE^{t\tau})\isom\pDR\bD(p^+\cM\otimes\cE^{t\tau})\isom\pDR(p^+(\bD\cM)\otimes\cE^{-t\tau})
\end{equation}
restricts similarly to
\begin{equation}\label{eq:dualisoDRrestr}
\bD\pDR(p_o^+M^\an)\isom\pDR\bD(p_o^+M^\an)\simeq\pDR(p_o^+(\bD M^\an)).
\end{equation}

Recall now that \eqref{eq:DRmodtop} (restricted over $\CC^*_\tau$) induces, by applying $\bR\varpi_*$, an isomorphism
\[
\pDR(p^+\cM\otimes\cE^{t\tau})\isom\bR\varpi_*(\beta_!\bR\alpha_*p_o^{-1}\cF).
\]
Hence \eqref{eq:dualisoDR} gives rise to an isomorphism
\begin{equation}\label{eq:dualisoDRtop}
\bD\bR\varpi_*(\beta_!\bR\alpha_*p_o^{-1}\cF[1])\isom\iota^{-1}\bR\varpi_*(\beta_!\bR\alpha_*p_o^{-1}(\bD\cF)[1]),
\end{equation}
by using the local duality $\bD\,\pDR M^\an\simeq\pDR\bD M^\an$.

On the other hand, the isomorphisms of Lemma \ref{lem:isodualG} applied with $\cG=p_o^{-1}\cF[1]$ also give an isomorphism
\begin{equation}\label{eq:dualisotop}
\bD\,\bR\varpi_*(\beta_!\bR\alpha_*p_o^{-1}\cF[1])\isom\iota^{-1}\bR\varpi_*(\beta_!\bR\alpha_*p_o^{-1}(\bD\cF)[1]),
\end{equation}
since here $\gamma=\id$ and there is no distinction between $\cFcG_\leq0$ and $\cFcG_{\prec0}$.

\begin{lemme}\label{lem:dualisoDRtop}
The isomorphisms \eqref{eq:dualisoDRtop} and \eqref{eq:dualisotop} coincide (up to a nonzero constant).
\end{lemme}

We first start by proving:

\begin{lemme}\label{lem:dualisorestr}
The isomorphisms \eqref{eq:dualisoDRtop} and \eqref{eq:dualisotop} are completely determined by their restriction to $\Afu\times\CC^*_\tau$.
\end{lemme}

\begin{proof}
We will use the following notation: $\Delta_\infty$ is a disc in $\PP^1$ centered at $\infty$, $\varpi:\wt\Delta_\infty\to\Delta_\infty$ is the oriented real blowing-up, $\Delta_\infty^*$ is the punctured disc. The open subset $L'_{\leq0}\subset\partial\Delta_\infty\times\CC^*_\tau$ is defined as in \S\ref{subsec:topspaces}. In particular, the fiber of $\varpi:L'_{\leq0}\to\{\infty\}\times\CC^*_\tau$ is an open half-circle in $\varpi^{-1}(\infty,\tau)$. As above, we denote by $\wtj=\beta\circ\alpha$ the corresponding decomposition of the inclusion $\wtj:\Delta_\infty^*\times\CC^*_\tau\hto\wt\Delta_\infty\times\CC^*_\tau$, and we still denote by $i_\partial$ the inclusion $\partial\Delta_\infty\times\CC^*_\tau\hto\Delta_\infty\times\CC^*_\tau$. Let $\cG$ be a locally constant sheaf on $\Delta_\infty^*\times\CC^*_\tau$.

\begin{lemme}\label{lem:automrestr}
With this notation, any automorphism of $\bR\varpi_*\beta_!\bR\alpha_*\cG$ is uniquely determined from its restriction to $\Delta_\infty^*\times\CC^*_\tau$.
\end{lemme}

\begin{proof}
We notice that $\bR\varpi_*\beta_!\bR\alpha_*\cG$ is a sheaf, equal to $\varpi_*\beta_!\alpha_*\cG$: indeed, this is clearly so for $\beta_!\alpha_*\cG$, whose restriction to $\partial\Delta_\infty\times\CC^*_\tau$ is a local system on $L'_{\leq0}$ extended by $0$; since $\varpi$ is proper, it is enough to check that the push-forward of the latter sheaf is also a sheaf, which is clear.

By adjunction, we have
\begin{align*}
\Hom(\bR\varpi_*\beta_!\bR\alpha_*\cG,\bR\varpi_*\beta_!\bR\alpha_*\cG)&=\Hom(\varpi^{-1}\bR\varpi_*\beta_!\bR\alpha_*\cG,\beta_!\bR\alpha_*\cG)\\
&=\Hom(\varpi^{-1}\varpi_*\beta_!\alpha_*\cG,\beta_!\alpha_*\cG).
\end{align*}
On the one hand, $i_\partial^{-1}\varpi^{-1}\varpi_*\beta_!\alpha_*\cG$ is a local system $\cH'$ on $\partial\Delta_\infty\times\CC^*_\tau$, and on the other hand $i_\partial^{-1}\beta_!\alpha_*\cG$ is a local system on $L'_{\leq0}$ extended by $0$, and is a subsheaf of the local system $\cH\defin i_\partial^{-1}\wtj_*\cG$. If $\lambda$ belongs to $\Hom(\varpi^{-1}\varpi_*\beta_!\alpha_*\cG,\beta_!\alpha_*\cG)$, then $i_\partial^{-1}\lambda$ induces a morphism $\cH'\to\cH$ which vanishes on $L''_{\leq0}$, hence is zero. Therefore, $\lambda$ is uniquely determined by its restriction to $\Delta_\infty^*\times\CC^*_\tau$.
\end{proof}

In order to end the proof of Lemma \ref{lem:dualisorestr}, we notice that $\bR\varpi_*(\beta_!\bR\alpha_*p_o^{-1}\cF[1])$ is of the form considered in Lemma \ref{lem:automrestr} in the neighbourhood of $D_\infty$.
\end{proof}

\begin{proof}[\proofname\ of Lemma \ref{lem:dualisoDRtop}]
According to Lemma \ref{lem:dualisorestr}, we are reduced to proving the coincidence on $\Afu\times\CC^*_\tau$. This is given by Lemma \ref{lem:dualDRiminv} below.
\end{proof}

\begin{lemme}\label{lem:dualDRiminv}
The functorial duality isomorphism $\bD\circ p_o^+(\cbbullet)\isom p_o^+\circ\bD(\cbbullet)$, in the category of analytic holonomic modules, is compatible (up to a nonzero constant), via the de~Rham functor, to the functorial isomorphism $\bD\circ p_o^{-1}(\cbbullet)\isom p_o^{-1}\circ\bD(\cbbullet)[2]$ in the category of perverse sheaves.
\end{lemme}

\begin{proof}
See \cite[Cor\ptbl5.6.8]{M-S02}.
\end{proof}

\begin{proposition}
The isomorphisms \eqref{eq:isodualtopLaplace} and \eqref{eq:isodualLaplacetop} coincide (up to a nonzero constant).
\end{proposition}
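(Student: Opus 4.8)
The plan is to reduce the desired coincidence to the two already-established comparison results, namely the compatibility of the Riemann--Hilbert correspondence with Laplace and topological Laplace transformations (\S\ref{subsec:proofthRHL}\eqref{subsec:proofthRHL4}) and Lemma \ref{lem:dualisoDRtop}. Recall that \eqref{eq:isodualLaplacetop} is obtained from the algebraic duality isomorphism \eqref{eq:isodualLaplace} by transporting it through $\RH$ and using the local duality theorem $\bD\,\pDR^\an M\simeq\pDR^\an\bD M$, whereas \eqref{eq:isodualtopLaplace} is the purely topological isomorphism of Proposition \ref{prop:dualtopLaplace}. Both have the same source and target once we fix the identification $\Fou(\cL,\cL_\bbullet)(\pDR^\an M)\simeq\Fou(\cL,\cL_\bbullet)(M)$ coming from \S\ref{subsec:proofthRHL}, so the statement is that two automorphisms of the same Stokes-filtered local system on $S^1_{\wh\infty}$ agree up to a nonzero constant.

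First I would observe, as in the discussion preceding Lemma \ref{lem:dualisoDRtop}, that it suffices to prove the coincidence after restriction to the underlying local systems, i.e.\ over $\tau\in\CC^*$; indeed, a Stokes-filtered local system morphism on $S^1_{\wh\infty}$ is determined by the corresponding morphism of local systems together with compatibility with the Stokes filtrations, and the latter holds automatically for both isomorphisms since both are constructed functorially from duality data that respect the filtrations (Proposition \ref{prop:RHStokesdual} on the $\cD$-module side, Lemma \ref{lem:StCdual} and Lemma \ref{lem:isodualG} on the topological side). So I work over $\Afu\times\CC^*_\tau$ with the notation of \S\ref{subsec:compFourierdual}.

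Next, using the integral-formula descriptions $(\Fou M)^\an=\wh p_+(p^+\cM\otimes\cE^{t\tau})$ and $\FcF=\bR\wh p_*\bR\varpi_*(\beta_!\bR\alpha_*p_o^{-1}\cF)$, I would trace both duality isomorphisms back to isomorphisms \emph{before} applying $\bR\wh p_*$. On the $\cD$-module side this is the chain \eqref{eq:dualisoD}, i.e.\ $\bD(p^+\cM\otimes\cE^{t\tau})\simeq p^+(\bD\cM)\otimes\cE^{-t\tau}$, composed with relative duality for the proper map $\wh p$; on the topological side it is the chain \eqref{eq:dualisotop} coming from Lemma \ref{lem:isodualG} with $\cG=p_o^{-1}\cF$, composed with Poincar\'e--Verdier duality for the proper map $\wh p\circ\varpi$. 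Both relative/Poincar\'e--Verdier duality steps are functorial and compatible with the de~Rham functor (the key here is Lemma \ref{lem:dualDRiminv}/\cite{M-S02} for the smooth pullback $p_o$, and the compatibility of $\pDR$ with proper direct image), so the whole comparison is reduced to checking that \eqref{eq:dualisoDRtop} and \eqref{eq:dualisotop} agree. But that is precisely the content of Lemma \ref{lem:dualisoDRtop}, which was established via Lemma \ref{lem:dualisorestr} (reduction to the restriction to $\Afu\times\CC^*_\tau$, using Lemma \ref{lem:automrestr}) and Lemma \ref{lem:dualDRiminv}. Assembling these, the two isomorphisms \eqref{eq:isodualtopLaplace} and \eqref{eq:isodualLaplacetop} differ at most by a nonzero scalar.

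The main obstacle, as the ``up to sign'' caveat in \S\ref{subsec:compFourierdual} already signals, is bookkeeping: one must carefully match the two identifications $\Fou(\cL,\cL_\bbullet)(\pDR^\an M)\simeq\Fou(\cL,\cL_\bbullet)(M)$ and $\bD\,\pDR^\an\simeq\pDR^\an\bD$ across the various shifts by $[1]$ and $[2]$, the involution $\iota$ (coming from the kernel $e^{-t\tau}$ versus $e^{t\tau}$), and the normalization of $\cE^{t\tau}\simeq(\cO,d)$ sending $1\mapsto e^{-t\tau}$. None of these affects the conclusion beyond a nonzero constant, which is exactly what is claimed; making each compatibility square commute on the nose is what forces the statement to be formulated only up to sign. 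Once the reductions above are in place, no further computation is needed: the result follows by concatenating the functorial compatibilities and Lemma \ref{lem:dualisoDRtop}.
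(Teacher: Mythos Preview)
Your approach is correct and essentially the same as the paper's: both arguments unwind \eqref{eq:isodualLaplacetop} and \eqref{eq:isodualtopLaplace} as $\bR\wh p_*$ applied to \eqref{eq:dualisoDRtop} and \eqref{eq:dualisotop} respectively, followed by the same Verdier duality step, and then invoke Lemma \ref{lem:dualisoDRtop}. The paper is more economical---it cites Saito's theorem \cite[Th.~3.13]{MSaito89b} explicitly for the compatibility of $\pDR$ with proper push-forward and duality (the step you phrase as ``functorial and compatible with the de~Rham functor''), and it omits your preliminary reduction to local systems over $\CC^*_\tau$ since that reduction was already made in the setup of \S\ref{subsec:compFourierdual} before the proposition was stated.
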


\begin{proof}
According to the relative duality theorem already mentioned, the isomorphism \eqref{eq:isodualLaplacetop} is also obtained first by applying $\wh p_+$ to \eqref{eq:dualisoD} and then applying $\pDR$. By \cite[Th\ptbl3.13]{MSaito89b}, we can first apply $\pDR$ to \eqref{eq:dualisoD} and then $\bR\wh p_*$. It is thus obtained by applying $\bR\wh p_*$ to \eqref{eq:dualisoDR} or equivalently to \eqref{eq:dualisoDRtop}, and then by using Verdier duality for $\bR\wh p_*$ (\ie commuting $\bD$ and $\bR\wh p_*$).

On the other hand, \eqref{eq:isodualtopLaplace} is obtained by applying $\bR\wh p_*$ to \eqref{eq:dualisotop} and then by using Verdier duality for $\bR\wh p_*$. The conclusion follows then from Lemma \ref{lem:dualisoDRtop}.
\end{proof}

\part{Dimension two and more}
\chapterspace{-3}
\chapter{Real blow-up spaces and moderate~de~Rham~complexes}\label{chap:realbl}

\begin{sommaire}
The purpose of this \chaptersname is to give a global construction of the real blow-up space of a complex manifold along a family of divisors. On this space is defined the sheaf of holomorphic functions with moderate growth, whose basic properties are analyzed. The moderate de~Rham complex of a meromorphic connection is introduced, and its behaviour under the direct image by a proper modification is explained. This \chaptersname ends with an example of a moderate de~Rham complex having cohomology in degree $\geq1$, making a possible definition of Stokes-perverse sheaves more complicated than in dimension one.
\end{sommaire}

\subsection{Introduction}
Given a meromorphic connection on a complex manifold $X$ with poles along a divisor $D$, the asymptotic analysis of the solutions of the corresponding differential equation, \ie the horizontal sections of the connection, in the neighbourhood of $D$ leads us to introduce a space taking into account the multi-sectors where an asymptotic expansion can be looked for. We introduce in this \chaptersname the real blow-up spaces that we will encounter later in this book. Together with these spaces come various extensions of the sheaf of holomorphic functions on $X\moins D$. We analyze their relations with respect to complex blowing-ups, since such blowing-up maps will be an essential tool for simplifying the formal normal form of a meromorphic connection (\cf \Chaptersname\ref{chap:goodformal}).

These constructions extend, in the many-variable case, those already introduced in \Chaptername s \ref{chap:Stokesone-pervers} and \ref{chap:RH}.

We conclude this \chaptersname with an example showing a new phenomenon in dimension $\geq2$. This example will lead to the introduction of the ``goodness'' property in \Chaptersname \ref{chap:StokesfilteredNCD}.

\subsection{Real blow-up}\label{subsec:realblowup}
Recall that the \emphb{real blow-up} space $\wt\CC^\ell$ of $\CC^\ell$ along $t_1,\dots,t_\ell$ is the space of polar coordinates in each variable $t_j$, that is, the product $(S^1\times\RR_+)^\ell$ with coordinates $(e^{i\theta_j},\rho_j)_{j=1,\dots,\ell}$ and $t_j=\rho_je^{i\theta_j}$. The \emphb{real oriented blowing-up} map $\varpi:\wt\CC^\ell\to\CC^\ell$ induces a diffeomorphism $\{\rho_1\cdots\rho_\ell\neq0\}=:(\wt\CC^\ell)^*\isom(\CC^\ell)^*\defin\{t_1\cdots t_\ell\neq0\}$.

\subsubsection*{Real blow-up along a divisor}
Let~$X$ be a reduced complex analytic space (\eg a complex manifold) and let $f:X\to\CC$ be a holomorphic function on~$X$ with zero set $X_0=X_0(f)$. The oriented \index{real blow-up!along a divisor}real blow-up space of~$X$ along $f$, denoted by $\wt X(f)$, is the closure in $X\times S^1$ of the graph of the map $f/|f|:X^*=X\moins X_0\to S^1$. The real blowing-up map $\varpi:\wt X\to X$ is the map induced by the first projection. The inverse image $\varpi^{-1}(X_0)$, that we denote by $\partial\wt X$, is a~priori contained in $X_0\times S^1$.\enlargethispage{\baselineskip}%

\begin{lemme}\label{lem:eclatereel}
We have $\partial\wt X=X_0\times S^1$.
\end{lemme}

\begin{proof}
This is a local question on $X_0$. As $f$ is open, for $x_o\in X_0$ there exists a fundamental system $(U_m)_{m\in\NN}$ of open neighbourhoods of~$x_o$ and a decreasing family~$\Delta_m$ of open discs centered at~$0$ in $\CC$ such that $f:U_m\to\Delta_m$ is onto, as well as $f:U_m^*=U_m\moins X_0\to\Delta_m\moins\{0\}$. It follows that, given any $e^{i\theta_o}\in S^1$, there exists $x_m\in U_m^*$ with $f(x_m)/|f(x_m)|=e^{i\theta_o}$, so $(x_o,e^{i\theta_o})\in\wt X$.
\end{proof}

As a consequence, $\wt X$ is equal to the subset of $X\times S^1$ defined by the (in)equation $fe^{-i\theta}\in\RR_+$, so is a real semi-analytic subset of $X\times S^1$.

Let now~$D$ be a locally principal divisor in~$X$ and let $(U_\alpha)_{\alpha\in A}$ be a locally finite covering of~$X$ by open sets $U_\alpha$ such that in each $U_\alpha$, the divisor~$D$ is defined by a holomorphic function $f^{(\alpha)}$. The data $[U_\alpha,f^{(\alpha)}]_{\alpha\in A}$ allow one to define, by gluing the real blow-up spaces $\wt U_\alpha(f^{(\alpha)})$, a space \index{$XWTD$@$\wt X(D)$}$\wt X(D)$. Set $f^{(\alpha)}=u^{(\alpha,\beta)}f^{(\beta)}$ on $U_\alpha\cap U_\beta$. The gluing map is induced by
\begin{align*}
(U_\alpha\cap U_\beta)\times(\CC^*/\RR_+^*)&\to (U_\alpha\cap U_\beta)\times(\CC^*/\RR_+^*)\\
\big(x,(e^{i\theta})\big)&\mto\big(x,(u^{(\alpha,\beta)}e^{i\theta}\bmod\RR_+^*)\big).
\end{align*}
One checks that the space $\wt X(D)$ does not depend on the choices made (up to a unique homeomorphism compatible with the projection to~$X$).

In a more intrinsic way, let $L(D)$ be the rank-one bundle over~$X$ associated with~$D$ (with associated sheaf $\cO_X(D)$) and let $S^1L(D)$ be the corresponding~$S^1$-bundle. Let us fix a section $f:\cO_X\to\cO_X(D)$. It vanishes exactly along~$D$ and induces a holomorphic map $X^*\defin X\moins D\to L(D)\moins D$, that we compose with the projection $L(D)\moins D\to S^1L(D)$. Then $\wt X(D)$ is the closure in $S^1L(D)$ of the image of~$X^*$ by this map. From Lemma \ref{lem:eclatereel} we deduce that $\partial\wt X(D)=S^1L(D)_{|D}$. If $g=u\cdot f$ with $u\in\Gamma(X,\cO_X^*)$, then both constructions give homeomorphic blow-up spaces. More precisely, denoting by $\varpi_f:\wt X(f)\to X$ the real blowing-up map obtained with the section~$f$, the multiplication by $u$ induces the multiplication by $u/|u|$ on $S^1L(D)$, which sends the subspace $\wt X(f)$ to $\wt X(g)$. Moreover, this is the unique homeomorphism $\wt X(f)\isom\wt X(g)$ making the following diagram
\[
\xymatrix{
\wt X(f)\ar[r]^-\sim\ar[d]_{\varpi_f}&\wt X(g)\ar[d]^{\varpi_g}\\
X\ar@{=}[r]&X
}
\]
commute. This explains the notation and terminology for \emph{the} real blow-up space of~$X$ along~$D$.

\subsubsection*{Real blow-up along a family of divisors}
Let now $(D_j)_{j\in J}$ be a locally finite family of locally principal divisors in~$X$ and let $f_j$ be sections $\cO_X\to\cO_X(D_j)$. The fibre product over~$X$ of the $\wt X(D_j)$ (each defined with $f_j$), when restricted over $X^*_J\defin X\moins\bigcup_jD_j$, is isomorphic to~$X^*_J$. We then define the \index{real blow-up!along a family of divisors}real blow-up \index{$XWTDZJ$@$\wt X(D_{j\in J})$}$\wt X(D_{j\in J})$ as the closure of~$X^*_J$ in this fibre product. If~$J$ is finite, $\wt X(D_{j\in J})$ is the closure in the direct sum bundle $\bigoplus_jS^1L(D_j)$ of the image of the section $(f_j/|f_j|)_{j\in J}$ on~$X^*$. It is defined up to unique homeomorphism compatible with the projection to $X$. We usually regard $X_J^*$ as an open analytic submanifold in $\wt X(D_{j\in J})$.

The closure $\wt X(D_{j\in J})$ of~$X^*$ can be strictly smaller than the fibre product of the $\wt X(D_j)$ (\eg consider $\wt X(D,D)$ for twice the same divisor, and more generally when the $D_j$ have common components). Here is an example when both are the same.

\begin{lemme}\label{lem:ncdrealblup}
Assume that~$X$ and each $D_j$ is smooth and that the family $(D_j)_{j\in J}$ defines a normal crossing divisor $D=\bigcup_jD_j$ in~$X$. Then $\wt X(D_{j\in J})$ is equal to the fibre product (over~$X$) of the $\wt X(D_j)$ for $j\in J$ and we have a natural proper surjective map $\wt X(D_{j\in J})\to \wt X(D)$.
\end{lemme}

\begin{proof}
The first assertion is checked locally. For instance, in the case of two divisors crossing normally, we are reduced to checking that
\[
\big[(S^1\times\RR_+)\times\CC\big]\mathop{\times}\limits_{\CC\times\CC}\big[\CC\times(S^1\times\RR_+)\big]\simeq(S^1\times\RR_+)\times(S^1\times\RR_+).\qedhere
\]
\end{proof}

\begin{corollaire}\label{cor:jncd}
Under the assumptions of Lemma \ref{lem:ncdrealblup}, if $\cF^*$ is a local system on~$X^*$ and $\wtj:X^*\hto\wt X(D_{j\in J})$ denotes the open inclusion, then $\bR\wtj_*\cF^*=\wtj_*\cF^*$ is a local system on $\wt X$.
\end{corollaire}

\begin{proof}
The question is local and, according to the lemma, we can locally regard~$\wtj$ as being the inclusion $(\RR_+^*)^\ell\times(S^1)^\ell\times\CC^{n-\ell}\hto(\RR_+)^\ell\times(S^1)^\ell\times\CC^{n-\ell}$, and we are mainly reduced to consider the inclusion of the open octant $(\RR_+^*)^\ell$ into the closed octant $(\RR_+)^\ell$. Then the assertion is clear.
\end{proof}

\subsubsection*{Morphisms between real blow-up spaces}
For any locally principal divisor~$D$ in~$X$ and any integer $n\geq1$, there is a natural morphism $\cO(D)\to\cO(nD)$, inducing $L(D)\to L(nD)$ and $\wt X(D)\to\wt X(nD)$, which is the identity on~$X^*$.

More generally, let $(D_{j\in J})$ be a finite family of locally principal divisors and let $(n_{ij})$ ($i\in I$, $j\in J$) be a finite family of nonnegative integers. Set $E_i=\sum_jn_{ij}D_j$, so that in particular the support of $(E_{i\in I})$ is contained in the support of $(D_{j\in J})$. Then the identity morphism $\id:X\to X$ lifts as a morphism $\wt X(D_{j\in J})\to\wt X(E_{i\in I})$. Indeed, for any $i\in I$, one has a natural morphism $\bigoplus_jL(D_j)\to L(E_i)$, and taking the direct sums of such morphisms when $i$ varies induces the desired lifting of $\id$.

In particular, if $J'$ is a subset of~$J$, there is a natural projection map between the fibre products, which induces a proper surjective map $\wt X(D_{j\in J})\to \wt X(D_{j\in J'})$.

Similarly, defining locally $D=\bigcup_jD_j$ by the product of the local equations of the~$D_j$, we have a proper surjective map $\wt X(D_{j\in J})\to \wt X(D)$.

Given a morphism $\pi:X\to X'$ and a family $(E'_{i\in I})$ of divisors of~$X'$, let $(E_{i\in I})$ be the pull-back family in~$X$. Then there is a natural morphism $\wt\pi:\wt X(E_{i\in I})\to\wt X{}'(E'_{i\in I})$. In particular, if we are given a family of divisors $(D_{j\in J})$ in~$X$ such that $E_i=\sum_in_{ij}D_j$ for any $j$ ($n_{ij}\in\NN$), we get a natural morphism $\wt\pi:\wt X(D_{j\in J})\to\wt X{}'(E'_{i\in I})$.

For example, if $\pi$ is chosen such that the divisor $E=\pi^*(\sum_jD_j)$ has simple normal crossings, we can choose for $(E'_{i\in I})$ the family of reduced irreducible components of~$E$.

\subsection{The sheaf of functions with moderate growth on the real blow-up space}\label{subsec:modgrowthfunct}

We consider as above a locally finite family $(D_j)_{j\in J}$ of effective divisors in a smooth complex manifold~$X$, and we set $D=\bigcup_j|D_j|$, where $|D_j|$ denotes the support of $D_j$. Let $\cO_X(*D)$ denotes the sheaf of meromorphic functions on~$X$ with poles along~$D$ at most. It can also be defined as the subsheaf of $j_*\cO_{X^*}$ (with $j:X^*=X\moins D\hto X$ the open inclusion) consisting of holomorphic functions having moderate growth along~$D$.

We define a similar sheaf on $\wt X\defin\wt X(D_{j\in J})$, that we denote by \index{holomorphic functions!with moderate growth ($\cA_{\wt X}^\modD$)}\index{$AXTMOD$@$\cA_{\wt X}^\modD$}$\cA_{\wt X}^\modD$: Given an open set $\wt U$ of $\wt X$, a section $f$ of $\cA_{\wt X}^\modD$ on $\wt U$ is a holomorphic function on $U^*\defin\wt U\cap X^*$ such that, for any compact set $K$ in $\wt U$, in the neighbourhood of which~$D$ is defined by $g_K\in\cO_X(K)$, there exists constants $C_K>0$ and $N_K\geq0$ such that $|f|\leq C_K|g_K|^{-N_K}$ on $K$.

\begin{remarque}[Rapid decay]\label{rem:rapiddecay}
We will also use the sheaf \index{$AXTRD$@$\cA_{\wt X}^\rdD$}$\cA_{\wt X}^\rdD$ of \index{holomorphic functions!with rapid decay ($\cA_{\wt X}^\rdD$)}holomorphic functions having rapid decay along $\partial\wt X$: Given an open set $\wt U$ of $\wt X$, a section $f$ of $\cA_{\wt X}^\rdD$ on $\wt U$ is a holomorphic function on $U^*\defin\wt U\cap X^*$ such that, for any compact set~$K$ in~$\wt U$, in the neighbourhood of which~$D$ is defined by $g_K\in\cO_X(K)$, and for any $N\in\NN$, there exists a constant $C_{K,N}>0$ such that $|f|\leq C_K|g_K|^N$ on $K$.
\end{remarque}

\begin{proposition}[Faithful flatness, \cf {\cite[Prop\ptbl2.8]{Bibi93}}]
If $\dim X\leq2$, the sheaves $\cA_{\wt X}^\modD$ and $\cA_{\wt X}^\rdD$ are \index{flatness}flat over $\varpi^{-1}\cO_X(*D)$ or $\varpi^{-1}\cO_X$, \index{flatness!faithfull --}faithfully over $\varpi^{-1}\cO_X(*D)$.
\end{proposition}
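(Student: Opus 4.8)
The statement is a local assertion on $\wt X$, so I would fix a point $\wt x_o$ of $\partial\wt X$ and choose local coordinates $(t_1,\dots,t_n)$ on $X$ adapted to the normal crossing situation, so that $D$ is defined by $t_1\cdots t_\ell=0$ for some $\ell\leq n$ (and $\ell\leq 2$ when $\dim X\leq 2$, since we may also assume $n\leq 2$). On the real blow-up, the ring $\varpi^{-1}\cO_X(*D)$ is, near $\wt x_o$, the localization $\CC\{t\}[t_1^{-1},\dots,t_\ell^{-1}]$ regarded as a constant sheaf, and $\cA_{\wt X}^\modD$ (resp.\ $\cA_{\wt X}^\rdD$) is the sheaf of holomorphic functions on $X^*$ with moderate growth (resp.\ rapid decay) along $\partial\wt X$. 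The first reduction is that flatness over $\varpi^{-1}\cO_X(*D)$ follows from flatness over $\varpi^{-1}\cO_X$, because localization is flat: one shows $\cA_{\wt X}^\modD$ is flat over $\varpi^{-1}\cO_X$ and then tensors. So the core point is flatness of $\cA_{\wt X}^\modD$ and $\cA_{\wt X}^\rdD$ over $\varpi^{-1}\cO_X$, stalkwise on $\wt X$.

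\textbf{Key steps.} Since $\cO_{X,x_o}$ is a regular local ring of dimension $n\leq 2$, a module $M$ over it is flat if and only if $\Tor_1^{\cO_{X,x_o}}(\cO_{X,x_o}/\mfm, M)=0$ for the maximal ideal, equivalently if $M$ has no torsion when $n=1$, and when $n=2$ if $M$ is torsion-free and moreover any $\cO_{X,x_o}$-regular sequence of length $2$ is also $M$-regular — concretely, it suffices to check that for a local generator $g$ of $D$ (or for the two coordinates generating $\mfm$) multiplication by $g$ is injective on the relevant stalk and the quotient behaves well. Thus I would: (i) show $\cA_{\wt X}^\modD$ and $\cA_{\wt X}^\rdD$ have no $\varpi^{-1}\cO_X$-torsion — a section killed by a nonzero holomorphic germ is zero, which is immediate since these are sheaves of honest holomorphic functions on $X^*$; (ii) in dimension $1$ conclude directly; (iii) in dimension $2$, verify the depth/regular-sequence condition by a direct growth estimate: given $f$ holomorphic with moderate growth (resp.\ rapid decay) and $f = t_1 h$ with $h$ holomorphic on $X^*$, show that $h$ again has moderate growth (resp.\ rapid decay); this is where one uses that division by $t_1$ only costs one more negative power of $|t_1|$, controlled by the polar radius coordinate, together with the Cauchy estimates on sectors of $\wt X$. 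This is exactly the computation carried out in \cite[Prop\ptbl2.8]{Bibi93}, which I would invoke, only sketching the estimate.

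\textbf{Faithful flatness.} For faithful flatness over $\varpi^{-1}\cO_X(*D)$ it remains, after flatness, to check that $\mfn\cdot\cA_{\wt X}^\modD \neq \cA_{\wt X}^\modD$ for every maximal ideal $\mfn$ of the stalk of $\varpi^{-1}\cO_X(*D)$, equivalently that $\cA_{\wt X}^\modD \otimes_{\varpi^{-1}\cO_X(*D)} \kappa(\mfn)\neq 0$. The maximal ideals of $\cO_{X,x_o}(*D)$ correspond to points of $X^*$ near $x_o$, at which the stalk of $\cA_{\wt X}^\modD$ is just $\cO_{X^*}$, visibly nonzero after reduction; for points of $\partial\wt X$ the issue is vacuous since those are not primes of $\cO_X(*D)$. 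The cleanest argument is: $\cA_{\wt X}^\modD$ contains the constant subsheaf $\varpi^{-1}\cO_X(*D)$ (holomorphic functions with moderate growth include meromorphic ones), and a flat module containing the base ring as a direct summand-like submodule, or more simply a nonzero flat module over a local ring which is not itself torsion, is faithfully flat; one spells this out via $1\notin\mfn\cA_{\wt X}^\modD$.

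\textbf{Main obstacle.} The genuinely technical heart is step (iii): the growth estimate showing that exact division by a coordinate preserves moderate growth and rapid decay on the real blow-up, together with the verification that a regular sequence of length $2$ remains regular — this is a real-analytic estimate near the corner $\partial\wt X$, and it is precisely the restriction $\dim X\leq 2$ that makes the argument of \cite{Bibi93} work (in higher dimension flatness of $\cA_{\wt X}^\modD$ can fail, which is consistent with the phenomena discussed at the end of this chapter). I would therefore present (i), (ii) and the faithful flatness in full but refer to \cite[Prop\ptbl2.8]{Bibi93} for the dimension-two estimate, indicating the shape of the bound $|f|\leq C_K|g_K|^{-N_K}$ and how dividing by $g_K$ raises $N_K$ by one.
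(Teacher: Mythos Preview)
Your overall strategy --- reduce flatness over $\varpi^{-1}\cO_X$ to checking $\Tor_1^{\cO_{X,x_o}}(\cO_{X,x_o}/P,\,M)=0$ for all primes $P$, using torsion-freeness for height~$\leq 1$ primes and a separate argument for $\mfm$ --- is valid and genuinely different from the paper's route, which proceeds via the equational criterion for flatness and an induction on the number $p$ of elements in a relation. In fact your approach, carried through correctly, is \emph{simpler} than you realize: since both $\cA^\modD_{\wt x_o}$ and $\cA^\rdD_{\wt x_o}$ are $\cO_{X,x_o}(*D)$-modules, every local equation $t_i$ of a component of $D$ through $x_o$ acts invertibly on them; hence the Koszul complex on $(t_1,t_2)$ tensored with $M$ is acyclic and $\Tor_1(\cO_{X,x_o}/\mfm,\,M)=0$ automatically. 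In dimension $\leq 2$ the only remaining primes are $(0)$ and height-one $(f)$, handled by torsion-freeness. No growth estimate is needed. Your ``key step'' (if $t_1 h$ has moderate growth and $h\in\cO_{X^*}$ then $h$ has moderate growth) is a triviality --- $h=(t_1 h)\cdot t_1^{-1}$ and $t_1^{-1}$ has moderate growth --- and is \emph{not} what \cite{Bibi93} does. The paper instead proves the equational criterion for two arbitrary coprime $f_1,f_2\in\cO_{X,x_o}$ (not the coordinates $t_i$): one uses Hartogs to produce a holomorphic $\lambda$ on $U^*$ with $a_1=\lambda f_2$, $a_2=\lambda f_1$, then a resolution of $f_1\cdot g$ to normal crossings followed by a Cauchy estimate to show $\lambda$ has moderate growth. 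That $p=2$ step is dimension-independent; the restriction $\dim X\leq 2$ enters only in the reduction from $p\geq 3$ to $p=2$ (coprime $f_1,\dots,f_{p-1}$ then cut out a point, so generate the unit ideal in $\cO_{X,x_o}(*D)$).

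There is a genuine gap in your faithful-flatness argument. The inclusion $\varpi^{-1}\cO_X(*D)\hookrightarrow\cA^\modD$ does yield faithful flatness of $\cA^\modD$ once flatness is known, but there is no such inclusion for $\cA^\rdD$ --- a nonzero meromorphic function never has rapid decay --- so your argument does not cover that sheaf. The paper's argument is different and works uniformly: if $\cM_{x_o}$ is a finite $\cO_{X,x_o}(*D)$-module with $\cA_{\wt x_o}\otimes\cM_{x_o}=0$, extend $\cM_{x_o}$ to a coherent $\cO_X(*D)$-module; since the stalk of $\cA$ at points of $X^*$ is $\cO_{X^*}$, the vanishing forces $\cM$ to vanish on a multi-sector of $X^*$, hence on an open set by coherence, hence $\cM=0$. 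Also, be careful with your stated criterion: ``$\Tor_1(\cO_{X,x_o}/\mfm,\,M)=0$'' alone is \emph{not} equivalent to flatness for non-finitely-generated $M$; you need vanishing for all primes, which in dimension $2$ amounts to torsion-freeness together with the vanishing at $\mfm$.
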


\begin{remarque}
T\ptbl Mochizuki has recently shown similar results in higher dimension. More precisely, relying on the basic theorems in \cite[Chap\ptbl VI]{Malgrange66}, he proves the statement for $\cA_{\wt X}^\rdD$ (and more general sheaves defined with rapid decay condition). Concerning moderate growth, the trick is to use, instead of the sheaf of holomorphic functions with moderate growth along~$D$, which is very big, the subsheaf of such functions of the Nilsson class, as in \cite[p\ptbl45]{Deligne84cc} (this sheaf is denoted there by~$\cA_{\log}$) or in \cite[p\ptbl61]{Malgrange91} (this sheaf is denoted there by $\cO^\mathrm{Nils}$). This trick is useful when~$D$ has normal crossings. The flatness of this subsheaf is also a consequence of basic flatness results.
\end{remarque}

\begin{proof}
We will give the proof for $\cA_{\wt X}^\modD$, the proof for $\cA_{\wt X}^\rdD$ being completely similar. Let us fix $x_o\in D$ and $\wt x_o\in\varpi^{-1}(x_o)\subset\partial\wt X$. The case where $\dim X=1$ is clear, because $\cA_{\wt X}^\modD$ has no $\cO_X$-torsion.

\subsubsection*{Faithful flatness}
Notice first that, if flatness is proved, the faithful flatness over $\varpi^{-1}\cO_X(*D)$ is easy: If $\cM_{x_o}$ has finite type over $\cO_{X,x_o}(*D)$ and $\cA_{\wt X,\wt x_o}^\modD\otimes_{\cO_{X,x_o}(*D)}\cM_{x_o}=0$ then, extending locally $\cM_{x_o}$ as a $\cO_X(*D)$-coherent module $\cM$, we obtain that $\cM$ vanishes on some multi-sectorial neighbourhood of $\wt x_o$ away from $\varpi^{-1}(D)$. Being $\cO_X(*D)$-coherent, it vanishes on some neighbourhood of~$x_o$ away from~$D$. It is therefore equal to zero.

\subsubsection*{Flatness}
Proving flatness is a matter of proving that, given $f_1,\dots,f_p\in\cO_{X,x_o}(*D)$, if $a_1,\dots,a_p\in\cA_{\wt X,\wt x_o}^\modD$ (\resp in $\cA_{\wt X,\wt x_o}^\rdD$) are such that $a_1f_1+\cdots+a_pf_p=0$, then $(a_1,\dots,a_p)$ is a linear combination with coefficients in $\cA_{\wt X,\wt x_o}^\modD$ (\resp in $\cA_{\wt X,\wt x_o}^\rdD$) of relations between $f_1,\dots,f_p$ with coefficients in $\cO_{X,x_o}(*D)$. Notice then that it is equivalent to prove flatness over $\cO_X$, because any local equation of~$D$ is invertible in $\cA_{\wt X,\wt x_o}^\modD$ (\resp in $\cA_{\wt X,\wt x_o}^\rdD$), so we will assume below that $f_1,\dots,f_p\in\cO_{X,x_o}$. We argue by induction on $p$, starting with $p=2$.

\subsubsection*{Case where $p=2$}
At this point, we do not need to assume that $\dim X\leq2$. We can then assume that $f_1$ and $f_2$ have no common irreducible component. Let us denote by $g$ a local equation of~$D$ at~$x_o$. Let $\wt U$ be an neighbourhood of $\wt x_o$ in~$\wt X$ such that we have a relation $a_1f_1=a_2f_2$ on $U^*=\wt U\moins\partial\wt X$. By Hartogs, there exists then a holomorphic function $\lambda$ on $U^*$ such that $a_1=\lambda f_2$ and $a_2=\lambda f_1$. We wish to show that $\lambda$ belongs to $\Gamma(\wt U,\cA_{\wt X}^\modD)$.

Let us choose a proper modification $e:Z\to X$, with $Z$ smooth, such that $f_1\circ e\cdot g\circ e$ defines a divisor $E$ with normal crossing in some neighbourhood of $e^{-1}(x_o)$. It is then enough to show that $\lambda\circ e$ has moderate growth along the real blow-up space $\partial\wt Z$ of the irreducible components of $e^{-1}(D)$ in $Z$, in the neighbourhood of $\wt e^{-1}(\wt x_o)$. By compactness of this set, we can work locally near a point $\wt z_o\in\wt e^{-1}(\wt x_o)$. Let us fix local coordinates $z$ at $z_o=\varpi_Z(\wt z_o)\in Z$ adapted to $E$. We denote by $z'$ the coordinates defining $e^{-1}(D)$ and set $z=(z',z'')$, so that $f_1\circ e$ is the monomial $z^{\prime m'}z^{\prime\prime m''}$. The assumption is that, for any compact neighbourhood $K$ of $\wt z_o$ in $\wt Z$, there exists a constant $C_K$ and a negative integer $N_K$ such that, on $K^*$, $|\lambda\circ e|\cdot |z'|^{m'}|z''|^{m''}\leq C_K |z'|^{N_K}$. Notice that such a $K$ can be chosen as the product of a compact polydisc in the variables $z''$ with a compact multi-sector in the variables $z'$. Up to changing $N_K$, this reduces to $|\lambda\circ e|\cdot|z''|^{m''}\leq C_K |z'|^{N_K}$. Fixing $|z''_i|=r''_i>0$ and small enough, and using Cauchy's formula, we obtain $|\lambda\circ e|\leq C'_K |z'|^{N_K}$.

\subsubsection*{Case where $p\geq3$}
We argue by induction on $p$. Assume that we have a relation $a_1f_1+\cdots+a_pf_p=0$ as above.

Firstly, we can reduce to the case where $f_1,\dots,f_{p-1}$ do not have a non trivial common factor $\delta$: we deduce a relation $(a_1f'_1+\cdots+a_{p-1}f'_{p-1})\delta+a_pf_p=0$, and by the $p=2$ case, we deduce a relation $a_1f'_1+\cdots+a_{p-1}f'_{p-1}+a'_pf_p$ of the same kind.

In such a case, since $\dim X=2$, we have $\dim V(f_1,\dots,f_{p-1})=0$, so locally $V(f_1,\dots,f_{p-1})=\{x_o\}$, and there is a relation $\sum_{i=1}^{p-1}h_if_i=1$ in $\cO_{X,x_o}(*D)$. We deduce the relation $\sum_{i=1}^{p-1}(a_i+a_pf_ph_i)f_i=0$, and the inductive step expresses the vector of coefficients $(a_i+a_pf_ph_i)$ in terms of relations between $f_1,\dots,f_{p-1}$ in $\cO_{X,x_o}(*D)$. The conclusion then follows for $f_1,\dots,f_p$, by using the supplementary relation $(h_1f_p)f_1+\cdots+(h_{p-1}f_p)f_{p-1}-f_p=0$.
\end{proof}

If $\pi:X\to X'$ is a proper modification which is an isomorphism $X\moins\nobreak D\isom X'\moins\nobreak E'$, and if $E_i:=\pi^*E'_i=\sum_jn_{ij}D_j$ for each $i$ ($n_{ij}\in\NN$), it induces $\wt\pi:\wt X(D_{j\in J})\to\wt X{}'(E'_{i\in I})$. (With the first assumption, the inclusion $|E|\subset|D|$ is an equality.) Then $\wt\pi_*(\cA_{\wt X}^\modD)=\cA_{\wt X{}'}^{\rmod E'}$. Similarly, if $\varpi:\wt X(D_{j\in J})\to\nobreak X$ is the natural projection, we have $\varpi_*(\cA_{\wt X}^\modD)=\cO_X(*D)$.

\begin{proposition}\label{prop:RpiA}
With the previous assumption, assume moreover that the divisors $\sum_jD_j$ and $\sum_iE'_i$ are normal crossing divisors. Then $\bR\wt\pi_*(\cA_{\wt X}^\modD)=\cA_{\wt X{}'}^{\rmod E'}$, that is, $\bR^k\wt\pi_*(\cA_{\wt X}^\modD)=0$ for $k\geq1$.
\end{proposition}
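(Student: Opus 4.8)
The statement is local on $X'$, so I would fix a point $x'_o\in E'$ and work in a coordinate neighbourhood $X'$ of $x'_o$ in which $E'=\bigcup_i E'_i$ is a normal crossing divisor, say $E'_i=\{z'_i=0\}$ for $i$ in a finite subset $I'$ of $I$. Since $\pi$ is a proper modification which is an isomorphism over $X'\moins E'$, and since $\sum_j D_j$ is also normal crossing, the preimage $\pi^{-1}(E')$ has support $D=\bigcup_j D_j$. The key point is that, because both divisors are normal crossing, $\wt\pi$ is (locally over $X'$) a composition of elementary real-blow-up maps of the type appearing in Lemma \ref{lem:ncdrealblup} and Corollary \ref{cor:jncd}: after restricting to a suitable neighbourhood, $\wt X(D_{j\in J})$ is the fibre product over $X$ of the $\wt X(D_j)$, and $\wt X{}'(E'_{i\in I})$ the fibre product of the $\wt X{}'(E'_i)$. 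So the computation of $\bR\wt\pi_*$ reduces, via the Leray spectral sequence and an induction on the number of blow-up directions, to the model situations where $\wt\pi$ is a single real-oriented blow-up in one complex direction (the fibre being a circle), or a proper modification of the underlying complex manifolds lifted to real blow-ups.

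\textbf{First reduction.} I would first treat the case where $X'=X$ (so $\pi=\id$) and $\wt\pi$ is one of the projection maps between fibre products of real blow-ups, e.g.\ $\wt X(D_{j\in J})\to\wt X(D_{j\in J\moins\{j_0\}})$ with fibre $S^1$ over points of $\partial$. Over $X^*$ the map is a homeomorphism so there is nothing to prove; over $D$ one has to compute, for $\wt x'$ a point of the target, the stalk $(\bR^k\wt\pi_*\cA_{\wt X}^\modD)_{\wt x'}$ as $H^k$ of a small fibre $\wt\pi^{-1}(\nb(\wt x'))$, which is homeomorphic to a product of $\nb(\wt x')$ with an interval or a circle or a ``closed octant corner''. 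One then uses that $\cA_{\wt X}^\modD$ restricted to such a model is, up to the flatness already established, essentially $\varpi^{-1}\cO_X(*D)$ tensored with the sheaf of $C^\infty$-like functions with moderate growth, for which the relevant higher cohomology of the fibre (an interval, a corner) vanishes — the $H^{\geq1}$ of a contractible fibre along which the sheaf is ``constant enough'' is zero. The circle-fibre case is where a $H^1$ could a priori appear, but here one uses that the divisor direction being collapsed carries, on $\cA_{\wt X}^\modD$, the moderate-growth condition which kills the nontrivial cohomology class: concretely, the pushforward along the $S^1$-fibre of the relevant complex can be computed by a Mellin/Fourier argument exactly as in the one-variable de~Rham vanishing (Theorem \ref{th:H1nul}).

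\textbf{Second reduction and the geometric heart.} The remaining and harder case is when $\pi:X\to X'$ genuinely changes the complex manifold (a blow-up of a point or of a smooth centre), both source and target divisors being normal crossing. Here I would argue that $\wt\pi$ fits into a commutative square relating $\wt X(D_{j\in J})$, $\wt X'(E'_{i\in I})$, $X$ and $X'$ (as in diagram \eqref{eq:ZX}), and that $\bR\wt\pi_*\cA_{\wt X}^\modD$ can be computed by first pushing forward along the real blow-up maps $\varpi$ and then using $\pi_*\cO_X(*D)=\cO_{X'}(*E')$ together with $\bR^{\geq1}\pi_*\cO_X(*D)=0$, the latter being a standard fact for proper modifications and meromorphic structure sheaves (the fibres of $\pi$ over $E'$ being projective and the sheaf having ``no cohomology'' there because of the localization along the exceptional divisor). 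The precise bookkeeping uses the flatness of $\cA_{\wt X}^\modD$ over $\varpi^{-1}\cO_X$ and the projection formula, exactly in the style of the proof of Lemma \ref{lem:DRmod}, to move the higher-direct-image computation from the transcendental sheaf $\cA_{\wt X}^\modD$ down to the algebraic sheaf $\cO_X(*D)$.

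\textbf{Main obstacle.} The delicate point is the circle-fibre (or corner-fibre) local computation: one must show that $H^{\geq1}$ of the fibre of $\wt\pi$ with coefficients in $\cA_{\wt X}^\modD$ vanishes \emph{even though} the fibre need not be acyclic and the sheaf is genuinely infinite-dimensional and not locally constant. This is where the moderate-growth hypothesis is essential and where, strictly speaking, one uses $\dim X\le 2$ (for the flatness statement as stated in the excerpt) — or, following Mochizuki's remark, passes to the Nilsson-class subsheaf to get flatness in all dimensions. I would handle this by reducing, via the flatness and a Čech computation over a cover of the fibre by ``multi-sectors'', to the vanishing of the higher cohomology of a circle or a corner with coefficients in the sheaf of holomorphic functions of moderate growth in one polar direction, which is the one-variable statement $\bR\varpi_*\cA_{\wt X}^\modD=\varpi_*\cA_{\wt X}^\modD=\cO_X(*D)$ already recalled, applied fibrewise with parameters. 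Once this local vanishing is in hand, the global statement follows by the Leray spectral sequence for the factorization of $\wt\pi$ and the identification $\wt\pi_*\cA_{\wt X}^\modD=\cA_{\wt X'}^{\rmod E'}$ noted before the proposition.
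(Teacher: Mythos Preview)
Your approach is quite different from the paper's, and the sketch has real gaps.

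The paper's proof is short and purely analytic. It resolves $\cA_{\wt X}^\modD$ by the Dolbeault complex of moderate-growth currents $\Db_{\wt X}^{\modD,(0,\cbbullet)}$ (this is the Dolbeault--Grothendieck lemma on the real blow-up, valid precisely because $\sum_jD_j$ is normal crossing, \cf \cite[Prop\ptbl II.1.1.7]{Bibi97}). This resolution is $c$-soft, so $\bR\wt\pi_*\cA_{\wt X}^\modD=\wt\pi_*\Db_{\wt X}^{\modD,(0,\cbbullet)}$. Now $\wt\pi_*$ of moderate currents is integration along the fibres of $\wt\pi$; dually, since $\pi$ is a modification, $\wt\pi^*$ identifies rapid-decay compactly-supported test forms on $\wt X'$ with those on $\wt X$, and hence $\wt\pi_*\Db_{\wt X}^{\modD,(0,\cbbullet)}=\Db_{\wt X'}^{\rmod E',(0,\cbbullet)}$. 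Apply Dolbeault--Grothendieck again on the target (valid because $\sum_iE'_i$ is normal crossing) to conclude. No induction, no Leray spectral sequence, no flatness, no fibrewise computation.

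Your proposal has two substantive problems. First, the ``circle-fibre'' vanishing you need is not a consequence of Theorem \ref{th:H1nul}: that theorem is about the de~Rham complex of a meromorphic connection, not about higher direct images of $\cA^\modD$ itself; and the one-variable fact $\bR\varpi_*\cA^{\rmod0}=\cO(*0)$ you invoke at the end is literally a special case of the proposition you are proving (the paper says so explicitly just before the proposition), so bootstrapping from it is circular unless you supply an independent proof --- whose standard version is again the Dolbeault argument. Second, the ``apply the one-variable vanishing fibrewise with parameters'' step is not innocent: $\cA_{\wt X}^\modD$ does not decompose as a tensor product along a local product structure of $\wt X$, so the Leray computation for the projection $\wt X(D_{j\in J})\to\wt X(D_{j\in J\moins\{j_0\}})$ does not reduce to the one-variable case without further work. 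The projection-formula manoeuvre in your second reduction similarly requires flatness of $\cA_{\wt X}^\modD$ over $\varpi^{-1}\cO_X$, which the text only establishes for $\dim X\le2$; swapping in the Nilsson-class subsheaf changes the sheaf and hence the statement.
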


\begin{proof}
Since $\sum_jD_j$ is a normal crossing divisor, we can apply the Dolbeault-Grothendieck theorem on $\wt X(D)$ (\cf \cite[Prop\ptbl II.1.1.7]{Bibi97}) and get a $c$-soft resolution of $\cA_{\wt X}^\modD$ by the Dolbeault complex of moderate currents $\Db_{\wt X}^{\modD,(0,\cbbullet)}$ on $\wt X$ (which are $(0,\bbullet)$-forms on $\wt X$ with coefficients in the sheaf $\Db_{\wt X}^\modD$ of distributions on $X\moins D$ with moderate growth along~$D$). Therefore, $\bR\wt\pi_*(\cA_{\wt X}^\modD)=\wt\pi_*\Db_{\wt X}^{\modD,(0,\cbbullet)}$.

Recall that, on an open set $\wt U$, $\Db_{\wt X}^\modD(\wt U)$ is dual to the space of $C^\infty$ functions with compact support in $\wt U$ which have rapid decay along $\wt U\cap\partial\wt X$. Since $\pi$ is a modification, for any compact set $K$ in $\wt U$, setting $K'=\wt\pi(K)$, the \index{pull-back (inverse image)!of differential forms}pull-back of forms $\wt\pi^*$ identifies $C^\infty$ forms on $\wt X{}'$ with support in $K'$ and having rapid decay along $\partial\wt X{}'$ with the corresponding forms on $\wt X$ with support in $K$ and rapid decay along $\partial\wt X$, and this identification is compatible with the differential, as well as with $\partial$ and~$\ov\partial$. Dually, the integration along the fibres of $\wt\pi$ of currents identifies the complexes $\wt\pi_*\Db_{\wt X}^{\modD,(0,\cbbullet)}$ with $\Db_{\wt X{}'}^{\rmod E',(0,\cbbullet)}$.

Lastly, the latter complex is a resolution of $\cA_{\wt X{}'}^{\rmod E'}$ by Dolbeault-Grothendieck, since $\sum_iE'_i$ is a normal crossing divisor.
\end{proof}

\begin{remarque}\label{rem:RpiA}
Other variants of this proposition can be obtained with a similar proof. For instance, we have $\bR\varpi_*(\cA_{\wt X}^\modD)=\varpi_*(\cA_{\wt X}^\modD)=\cO_X(*D)$. More generally, with the assumptions in Proposition \ref{prop:RpiA}, let $I_1$ and $J_1$ be subsets of $I$ and~$J$ respectively such that each $E_i$ ($i\in I_1$) is expressed as a linear combination with coefficients in $\NN$ of $(D_{j\in J_1})$. Let $\wt\pi_1:\wt X(D_{j\in J_1})\to\wt X{}'(E'_{i\in I_1})$ be the morphism induced by $\pi$ between the partial real blow-up spaces. Then, with obvious notation, $\bR\wt\pi_*(\cA_{\wt X}^{\rmod D_1}(*D))=\cA_{\wt X{}'}^{\rmod E'_1}(*E')$. A particular case is $I_1=\emptyset$, $J_1=\emptyset$, giving $\bR\pi_*\cO_X(*D)=\cO_{X'}(*E')$.\enlargethispage{2\baselineskip}%
\end{remarque}

\subsection{The moderate de~Rham complex}\label{subsec:modgrowth}
We keep the setting of \S\ref{subsec:modgrowthfunct}. The sheaf $\cA_{\wt X}^\modD$ is stable by derivations of~$X$ (in local coordinates) and there is a natural de~Rham complex on $\wt X(D_{j\in J})$:
\[
\DR^\modD(\cO_X)\defin\{\cA_{\wt X}^\modD\To{d}\cA_{\wt X}^\modD\otimes\varpi^{-1}\Omega^1_X\to\cdots\}
\]
When restricted to~$X^*$, this complex is nothing but the usual holomorphic de~Rham complex.

From now on, we will freely use standard results in the theory of holonomic $\cD_X$-modules, for which we refer to \cite{Bjork79, Borel87,Mebkhout87,Bjork93,Kashiwara03}.

Let $\cM$ be a holonomic $\cD_X$-module which is localized along~$D$, that is, such that $\cM=\cO_X(*D)\otimes_{\cO_X}\cM$. In particular, $\cM$ is also a coherent $\cD_X(*D)$-module. We can also regard $\cM$ as a $\cO_X(*D)$-module equipped with a flat connection $\nabla$. If moreover~$\cM$ is $\cO_X(*D)$-coherent, we call it a meromorphic connection with poles along~$D$ (according to \cite[Prop\ptbl1.1]{Malgrange95}, it is then locally stably free as a $\cO_X(*D)$-module).

We associate with $\cM$ the \index{de~Rham complex!moderate --}\emph{moderate de~Rham complex}
\[
\index{$DRMOD$@$\DR^{\rmod0}\cM$, $\DR^\modD\cM$}\DR^\modD(\cM)\defin\{\cA_{\wt X}^\modD\otimes\varpi^{-1}(\cM)\To{\nabla}\cA_{\wt X}^\modD\otimes\varpi^{-1}(\Omega^1\otimes\cM_X)\ra\cdots\}
\]
which coincides with $\DR(\cM)$ on~$X^*$.

\begin{proposition}\label{prop:Rpimod}
Let $\pi:X\to X'$ be a proper modification between complex manifolds. Assume the following:
\begin{enumerate}
\item
There exist locally finite families of divisors $(E'_{i\in I})$ of $X'$ and $(D_{j\in J})$ of~$X$ such that $E_i\defin\pi^*E'_i=\sum_jn_{ij}D_j$ with $n_{ij}\in \NN$,
\item
$\pi:X\moins D\to X'\moins E'$ is an isomorphism.
\end{enumerate}
Let $\cM$ be a holonomic $\cD_X$-module which is localized along~$D$. If $\dim X\geq3$, assume moreover that $\cM$ is smooth on $X\moins D$, \ie is a meromorphic connection with poles along~$D$ at most. Let $\pi_+\cM$ the \index{push-forward (direct image)!of $\cD$-modules}direct image of $\cM$ (as a $\cD_{X'}(*E)$-module). Then\index{push-forward (direct image)!of moderate de~Rham complexes}
\[
\DR^{\rmod E'}(\pi_+\cM)\simeq\bR\wt\pi_*\DR^\modD(\cM).
\]
\end{proposition}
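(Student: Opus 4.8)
The statement is local on $X'$, and over $X'\moins E'$ both members reduce to the ordinary de~Rham complex $\DR(\cM)_{|X\moins D}$ (there $\pi$, hence $\wt\pi$, is an isomorphism, and $\pi_+$ is the push-forward along an isomorphism), so one has only to prove the isomorphism in a neighbourhood of an arbitrary point of $E'$. Write $\wt X=\wt X(D_{j\in J})$ and $\wt X'=\wt X'(E'_{i\in I})$, and keep the notation $\varpi,\varpi',\wt\pi,\pi$, so that $\pi\circ\varpi=\varpi'\circ\wt\pi$. The plan is to relativize, along $\wt\pi$, the compatibility of the de~Rham functor with proper direct images of $\cD$-modules (for which I refer to \cite{Malgrange90b}), in exactly the way this was carried out in the proof of Lemma \ref{lem:DRmod}; the essential simplification here is that $\wt\pi$ is a \emph{proper modification}, so that the easy Proposition \ref{prop:RpiA} (and its variants, Remark \ref{rem:RpiA}) may be used in place of the much harder Theorem \ref{th:RHL}. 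Concretely, I would first construct, exactly as for $(\ref{lem:DRmod})_{\leq0}$, a natural comparison morphism
\[
\DR^{\rmod E'}(\pi_+\cM)\to\bR\wt\pi_*\DR^\modD(\cM),
\]
built from the natural morphism $\wt\pi^{-1}\cA^{\rmod E'}_{\wt X'}\to\cA^\modD_{\wt X}$ (the pull-back of a function with moderate growth has moderate growth, precisely because $E_i=\sum_jn_{ij}D_j$ with $n_{ij}\in\NN$) together with the projection formula, and then prove it is an isomorphism.

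To do the latter I would decompose $\DR^\modD(\cM)$ on $\wt X$ as the total complex of a double complex, whose first differential is the relative moderate de~Rham differential $\nabla'$ — with coefficients in $\varpi^{-1}\Omega^1_{X/X'}$, using the part of the connection differentiating only in the fibre directions of $\pi$ — and whose second differential $\nabla''$ has coefficients in $\varpi^{-1}\pi^{-1}\Omega^1_{X'}=\wt\pi^{-1}\varpi'^{-1}\Omega^1_{X'}$. The relative part $\DR^\modD_{\wt\pi}(\cM)$ is a complex of $\wt\pi^{-1}\varpi'^{-1}\cO_{X'}$-modules. The key step is to identify
\[
\bR\wt\pi_*\DR^\modD_{\wt\pi}(\cM)\simeq\cA^{\rmod E'}_{\wt X'}\otimes_{\varpi'^{-1}\cO_{X'}}\varpi'^{-1}\bigl(\bR\pi_*\DR_\pi(\cM)\bigr)
\]
compatibly with $\nabla''$, where $\DR_\pi$ denotes the ordinary relative de~Rham complex. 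For this I would resolve $\cA^\modD_{\wt X}$ by the moderate Dolbeault complex of currents on $\wt X$ as in the proof of Proposition \ref{prop:RpiA} (\cf \cite[Prop\ptbl II.1.1.7]{Bibi97}), so that $\bR\wt\pi_*$ becomes $\wt\pi_*$ on a $c$-soft complex, and use that integration along the fibres of the modification $\wt\pi$ realizes, dually, the identification of rapid-decay $C^\infty$ forms upstairs and downstairs; this is exactly how $\bR\wt\pi_*\cA^\modD_{\wt X}=\cA^{\rmod E'}_{\wt X'}$ is obtained (Proposition \ref{prop:RpiA}, Remark \ref{rem:RpiA}), and here the normal-crossing hypotheses on $\sum_jD_j$ and $\sum_iE'_i$ are used. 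To pass from $\cA^\modD_{\wt X}$-coefficients to $\cM$-coefficients I would invoke the flatness of $\cA^\modD_{\wt X}$ over $\varpi^{-1}\cO_X$ (and of $\cA^{\rmod E'}_{\wt X'}$ over $\varpi'^{-1}\cO_{X'}$), which by the faithful-flatness proposition of \S\ref{subsec:modgrowthfunct} is available when $\dim X\leq2$; when $\dim X\geq3$ the extra hypothesis makes $\cM$ an $\cO_X(*D)$-locally free module, so that $\cA^\modD_{\wt X}\otimes_{\varpi^{-1}\cO_X}\varpi^{-1}\cM$ is $\cA^\modD_{\wt X}$-locally free and the relevant tensor products remain exact without using flatness of $\cA^\modD_{\wt X}$ itself.

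Reassembling the double complex — taking $\bR\wt\pi_*$ of the total complex and adding back the $\nabla''$-differential — together with the comparison $\bR\pi_*\DR_\pi(\cM)\simeq\DR_{X'}(\pi_+\cM)$ (up to shift; again \cite{Malgrange90b}) then gives
\[
\bR\wt\pi_*\DR^\modD(\cM)\simeq\cA^{\rmod E'}_{\wt X'}\otimes_{\varpi'^{-1}\cO_{X'}}\varpi'^{-1}\DR_{X'}(\pi_+\cM),
\]
and the right-hand side, endowed with the connection coming from $\nabla''$ and from the $\cD_{X'}$-action on $\cA^{\rmod E'}_{\wt X'}$, is precisely $\DR^{\rmod E'}(\pi_+\cM)$ — here one must track carefully the $\wt\pi^{-1}\varpi'^{-1}\cD_{X'}$-linearity of the whole construction to be sure that the connection inherited on the base side is the full connection of $\pi_+\cM$ twisted by that of $\cA^{\rmod E'}_{\wt X'}$, and not merely the relative one. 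I expect the main obstacle to be exactly the step controlling $\bR\wt\pi_*$ of the relative moderate de~Rham complex, i.e.\ interchanging $\bR\wt\pi_*$ with the (very large, and \emph{a priori} only flat in dimension $\leq2$) coefficient sheaf $\cA^\modD_{\wt X}$; this is the reason for the dimension restriction in the hypotheses, which is circumvented in higher dimension only by requiring $\cM$ to be a meromorphic connection. (One may also observe that the naive proper base change for the square $(\varpi,\varpi',\wt\pi,\pi)$ is unavailable, since that square is not cartesian in general — e.g.\ when $\pi^*E'_1$ has two components —, which is precisely why Proposition \ref{prop:RpiA}, proved by the current-theoretic argument rather than by base change, is the right tool.)
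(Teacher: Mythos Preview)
Your instinct to use Proposition~\ref{prop:RpiA}, the projection formula, and the flatness/local-freeness dichotomy is exactly right, but the double-complex framework you import from Lemma~\ref{lem:DRmod} is misplaced. There $\wh p$ is a projection with one-dimensional fibres, so $\Omega^1$ genuinely splits into base and fibre directions. Here $\pi$ is a proper \emph{modification} between equidimensional manifolds: the cotangent map $T^*\pi:\cO_X(*D)\otimes_{\pi^{-1}\cO_{X'}}\pi^{-1}\Omega^1_{X'}(*E')\to\Omega^1_X(*D)$ is an \emph{isomorphism} (both sides are locally free over $\cO_X(*D)$ and agree on the dense open set $X\moins D$), so $\Omega^1_{X/X'}(*D)=0$ and your relative de~Rham complex $\DR^\modD_{\wt\pi}(\cM)$ collapses to $\cA^\modD_{\wt X}\otimes\varpi^{-1}\cM$ in degree~$0$. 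There are no ``fibre directions'' to differentiate along. (Relatedly, the formula you quote as $\bR\pi_*\DR_\pi(\cM)\simeq\DR_{X'}(\pi_+\cM)$ is not what \cite{Malgrange90b} gives --- that reference concerns the \emph{absolute} de~Rham complex; what you actually need is simply $\bR\pi_*\cM=\pi_*\cM$, which is Lemma~\ref{lem:RkpiM}.)

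Once this degeneration is recognized, your argument becomes the paper's, which organizes it more transparently. The paper first establishes in the preliminaries (Lemmas~\ref{lem:RkpiO} and~\ref{lem:RkpiM}) that $\pi_+$ and $\pi^+$ are inverse equivalences on localized holonomic modules and that $\pi_*T^*\pi:\Omega^k_{X'}(*E')\isom\pi_*\Omega^k_X(*D)$. Writing $\cM=\pi^+\cM'$ with $\cM'=\pi_+\cM$, each term $\cA^\modD_{\wt X}\otimes\varpi^{-1}(\Omega^k_X\otimes\cM)$ of $\DR^\modD(\cM)$ rewrites as $\cA^\modD_{\wt X}\otimes_{\wt\pi^{-1}\varpi'^{-1}\cO_{X'}}\wt\pi^{-1}\varpi'^{-1}(\Omega^k_{X'}\otimes\cM')$, and the projection formula together with Proposition~\ref{prop:RpiA} applies \emph{termwise}, yielding $\wt\pi_*$-acyclicity of each term and the termwise identification with $\DR^{\rmod E'}(\cM')$. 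The differentials match via the diagram~\eqref{eq:nablapi_*nabla} extended to $\cA^{\rmod E'}$-coefficients. So the structural insight you are missing is precisely that a modification has no relative geometry after localization; exploit $\cM=\pi^+\pi_+\cM$ directly rather than routing through a degenerate double complex.
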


\begin{remarque}\label{rem:Rpimod}
We have variants corresponding to those in Remark \ref{rem:RpiA}.
\end{remarque}

\subsubsection*{Preliminaries on meromorphic connections and proper modifications}
Let us first recall classical facts concerning direct images of meromorphic connections by a proper modification. The setting is the following. We denote by $\pi:X\to X'$ a proper modification between complex manifolds, and we assume that there are reduced divisors $D\subset X$ and $E'\subset\nobreak X'$ such that $\pi:X\moins D\to X'\moins E'$ is an isomorphism (so that in particular $\pi^{-1}(E')=D$). We do not assume now that~$D$ or $E'$ are normal crossing divisors. We denote by $\cO_X(*D)$ and $\cO_{X'}(*E')$ the corresponding sheaves of meromorphic functions.

\begin{lemme}\label{lem:RkpiO}
We have $\bR\pi_*\cO_X(*D)=\cO_{X'}(*E')$.
\end{lemme}

\begin{proof}
The statement is similar to that of Proposition \ref{prop:RpiA}, but will be proved with less assumptions. If~$D$ and $E'$ have normal crossings, one can adapt the proof of Proposition \ref{prop:RpiA}, according to the Dolbeault-Grothendieck lemma using currents with moderate growth. In general, one can argue differently as follows.

Since $\pi$ is proper, one has $\bR^k\pi_*\cO_X(*D)=\varinjlim_j\bR^k\pi_*\cO_X(jD)$. The left-hand term is equal to $\cO_{X'}(*E')\otimes_{\cO_X'}\bR^k\pi_*\cO_X(*D)$: indeed, this amounts to proving that, if $f'$ is a local equation for $E'$, then the multiplication by $f'$ is invertible on $\bR^k\pi_*\cO_X(*D)$; but it is induced by the multiplication by $f'\circ\pi$ on $\cO_X(*D)$, which is invertible since $f'\circ\pi$ vanishes on $D$ at most (recall that $\pi^{-1}(E')=D$).

Then, $\cO_{X'}(*E')\otimes_{\cO_X'}\varinjlim_j\bR^k\pi_*\cO_X(jD)=\varinjlim_j\cO_{X'}(*E')\otimes_{\cO_X'}\bR^k\pi_*\cO_X(jD)$ (\cf \eg \cite[p\ptbl10]{Godement64}) and the right-hand term is zero if $k\geq 1$, since $\bR^k\pi_*\cO_X(jD)$ is then $\cO_{X'}$-coherent and supported on $E'$.
\end{proof}

For any $k\in\NN$ we have a natural morphism
\[
\Omega^k_{X'}(*E')\to\pi_*\big(\cO_X(*D)\ootimes_{\pi^{-1}\cO_{X'}}\pi^{-1}\Omega^k_{X'}(*E')\big).
\]
It follows from the previous lemma that this morphism is an isomorphism. Indeed, on the one hand, the projection formula and the previous lemma give
\begin{align*}
\bR\pi_*\big(\cO_X(*D)\ootimes^{\bL}_{\pi^{-1}\cO_{X'}}\pi^{-1}\Omega^k_{X'}(*E')\big)&\simeq\bR\pi_*\cO_X(*D)\ootimes^{\bL}_{\cO_{X'}}\Omega^k_{X'}(*E')\\[-5pt]
&\simeq\cO_{X'}(*E')\ootimes^{\bL}_{\cO_{X'}}\Omega^k_{X'}(*E')\simeq\Omega^k_{X'}(*E').
\end{align*}
On the other hand, since $\Omega^k_{X'}$ is $\cO_{X'}$-locally free, one can eliminate the `$\bL$' in the first line above, and conclude
\begin{align*}
\pi_*\big(\cO_X(*D)\ootimes_{\pi^{-1}\cO_{X'}}\pi^{-1}\Omega^k_{X'}(*E')\big)&\simeq\Omega^k_{X'}(*E'),\\
\bR^k\pi_*\big(\cO_X(*D)\ootimes_{\pi^{-1}\cO_{X'}}\pi^{-1}\Omega^k_{X'}(*E')\big)&=0\quad\forall k\geq1.
\end{align*}

The cotangent map $T^*\pi$ is a morphism $\cO_X\otimes_{\pi^{-1}\cO_{X'}}\pi^{-1}\Omega^1_{X'}\to\Omega^1_X$. It induces an isomorphism $\cO_X(*D)\otimes_{\pi^{-1}\cO_{X'}}\pi^{-1}\Omega^1_{X'}(*E')\to\Omega^1_X(*D)$. Applying $\pi_*$ and using the previous remark, we get an isomorphism $\pi_*T^*\pi:\Omega^1_{X'}(*E')\isom\pi_*\Omega^1_X(*D)$, and $\bR^k\pi_*\Omega^1_X(*D)=0$ if $k\geq1$. Since $T^*\pi$ is compatible with differentials, we get a commutative diagram
\begin{equation}\label{eq:dpi_*d}
\begin{array}{c}
\xymatrix{
\cO_{X'}(*E')\ar[d]^\wr\ar[r]^-d&\Omega^1_{X'}(*E')\ar[d]^{\pi_*T^*\pi}_{\wr}\\
\pi_*\cO_X(*D)\ar[r]^-{\pi_*d}&\pi_*\Omega^1_X(*D)
}
\end{array}
\end{equation}
where the upper $d$ is the differential on $X'$ and the lower $d$ is that on~$X$. Arguing similarly for each $\Omega^k$ and the corresponding differentials, we obtain an isomorphism of complexes
\[
\DR\cO_{X'}(*E')\isom\pi_*\DR\cO_X(*D)\simeq\bR\pi_*\DR\cO_X(*D).
\]

Let now $\cM$ be a holonomic $\cD_X$-module which is localized along~$D$ and let us also regard it as a $\cO_X(*D)$-module with a flat connection $\nabla$.

\skpt
\begin{proposition}\label{prop:RkpiM}\ligne
\begin{enumerate}
\item\label{prop:RkpiM1}
The direct image $\pi_+\cM$ of $\cM$ as a $\cD_X$-module, once localized along $E'$, has cohomology in degree $0$ at most and this cohomology is a holonomic $\cD_{X'}$-module localized along $E'$.
\item\label{prop:RkpiM2}
We have $\bR^k\pi_*\cM=0$ for $k\geq1$. Moreover, $\pi_*\cM$ is equal to the $\cO_{X'}(*D')$-module underlying $\pi_+\cM(*E')$, and the connection on the latter is equal to the composition of $\pi_*\nabla:\pi_*\cM\to\pi_*(\Omega^1_X(*D)\otimes\cM)$ with the isomorphism
\bgroup\numstareq
\begin{equation}\label{eq:RkpiM}
\Omega^1_{X'}(*E')\otimes\pi_*\cM\simeq\pi_*(\pi^{-1}\Omega^1_{X'}(*E')\otimes\cM)\isom\pi_*(\Omega^1_X(*D)\otimes\cM)
\end{equation}
\egroup
induced by $\pi_*T^*\pi$.
\end{enumerate}
\end{proposition}

\begin{remarque}\label{rem:imdirDloc}
Let $\Theta_X$ be the sheaf of vector fields on $X$. We also have a morphism $T_*\pi:\pi^*\Theta_{X'}\to\Theta_X$ of locally free $\cO_X$-modules, which induces an isomorphism $T_*\pi:\pi^*\Theta_{X'}(*E')\to\Theta_X(*D)$. We deduce an isomorphism $\pi_*T_*\pi:\Theta_{X'}(*E')\to\pi_*\Theta_X(*D)$. Since the sheaf of localized differential operators $\cD_X(*D)$ is generated by $\cO_X(*D)$ and $\Theta_X(*D)$, this implies that there is a ring isomorphism $\cD_{X'}(*E')\to\pi_*\cD_X(*D)$. The \index{push-forward (direct image)!of $\cD$-modules}direct image functor $\pi_+$ for $\cD$-modules is much simplified in this localized setting. For a holonomic left $\cD_X(*D)$-module $\cM$, we have $\pi_+\cM=\pi_*\cM$ (according to the first assertion of \ref{prop:RkpiM}\eqref{prop:RkpiM2}), where the $\cD_{X'}(*E')$-action is obtained through the $\pi_*\cD_X(*D)$-action.
\end{remarque}

\skpt
\begin{proof}\ligne
\begin{enumerate}
\item
Since $\cM$ has a coherent $\cO_X$-module $\cN$ generating $\cM$ as a $\cD_X$-module (\cf \cite{Malgrange95b}, \cf also \cite[Th\ptbl 3.1]{Malgrange04}), it is known (\cf \cite{Kashiwara76,Malgrange85}) that $\pi_+\cM$ has holonomic cohomology. Moreover, $\cH^k\pi_+\cM$ is supported on $E'$ if $k\geq1$. By flatness of $\cO_{X'}(*E')$ over $\cO_{X'}$, we have $\cH^k(\cO_{X'}(*E')\otimes_{\cO_{X'}}\pi_+\cM)=\cO_{X'}(*E')\otimes_{\cO_{X'}}\cH^k(\pi_+\cM)$. By a theorem of Kashiwara \cite[Prop\ptbl2.9]{Kashiwara78}, $\cO_{X'}(*E')\otimes_{\cO_{X'}}\cH^k(\pi_+\cM)$ is a holonomic $\cD_{X'}$-module and it is localized along $E'$ by definition. If $k\geq1$, it is thus equal to zero.
\item
For the first assertion, the proof is similar to that of Lemma \ref{lem:RkpiO}. We use that $\cM$ has a $\cO_X(*D)$-generating coherent $\cO_X$-submodule: In the case where $\cM$ is a meromorphic connection, this follows from \cite{Malgrange95} (\cf also \cite{Malgrange04}). In the general case, one first uses that $\cM$ has a coherent $\cO_X$-module $\cN$ generating $\cM$ as a $\cD_X$-module (\cf \cite{Malgrange95b}, \cf also \cite[Th\ptbl 3.1]{Malgrange04}). Using Bernstein's theory for a function locally defining~$D$, one then shows that, locally on~$D$, there exists $\ell$ such that $(F_\ell\cD_X)\cN$ generates $\cN$ as a $\cO_X(*D)$-module, where $F_\bbullet\cD_X$ is the filtration by the order. Since $\pi$ is proper, one can find a suitable~$\ell$ valid on the inverse image by~$\pi$ of any compact set in $X'$. In this way, we get the vanishing of $R^k\pi_*\cM$ for $k\geq1$.

Using the isomorphism \eqref{eq:RkpiM} induced by $\pi_*T^*\pi$, we regard $\pi_*\nabla:\pi_*\cM\to\pi_*(\Omega^1_X(*D)\otimes\cM)$ as a morphism $\pi_+\nabla:\pi_*\cM\to\Omega^1_{X'}(*E')\otimes\pi_*\cM$, and \eqref{eq:dpi_*d} shows that it is a connection on $\pi_*\cM$. We denote the resulting object by $\pi_+(\cM,\nabla)$. Checking that it is equal to $\cO_{X'}(*D')\otimes_{\cO_X}\cH^0\pi_+\cM$ (in the $\cD_X$-module sense) is then straightforward, according to Remark \ref{rem:imdirDloc}. By definition, we have a commutative diagram
\begin{equation}\label{eq:nablapi_*nabla}
\begin{array}{c}
\xymatrix{
\pi_*\cM\ar@{=}[d]\ar[r]^-{\pi_+\nabla}&\Omega^1_{X'}(*E')\otimes\pi_*\cM\ar[d]^{\pi_*(T^*\pi\otimes\id)}_{\wr}\\
\pi_*\cM\ar[r]^-{\pi_*\nabla}&\pi_*(\Omega^1_X(*D)\otimes\cM)
}
\end{array}
\qedhere
\end{equation}
\end{enumerate}
\end{proof}

Let us now go in the other direction. Given a holonomic $\cD_{X'}$-module $\cM'$ localized along $E'$, the \index{pull-back (inverse image)!of $\cD$-modules}inverse image $\pi^+\cM'\defin\cD_{X\to X'}\otimes^{\bL}_{\pi^{-1}\cD_{X'}}\pi^{-1}\cM'$ has holonomic cohomology (\cf \cite{Kashiwara78}) and $\cH^k\pi^+\cM'$ is supported on $D=\pi^{-1}(E')$. It follows that $\cH^k(\cO_X(*D)\otimes_{\cO_X}\pi^+\cM')=0$ if $k\neq0$ and $\cH^0(\cO_X(*D)\otimes_{\cO_X}\pi^+\cM')=\cO_X(*D)\otimes_{\cO_X}\cH^0(\pi^+\cM')$ is holonomic and localized along~$D$, and its underlying $\cO_X(*D)$-submodule is $\pi^*\cM'\defin\cO_X(*D)\otimes_{\pi^{-1}\cO_{X'}(*E')}\pi^{-1}\cM'$. Denoting by $\nabla'$ the connection on $\cM'$, denoted by \index{pull-back (inverse image)!of connections}$\pi^+\nabla'$, is defined as $d\otimes\id+T^*\pi(\id\otimes\pi^{-1}\nabla')$, where the second term is the composition of
\[
\id\otimes\nabla':\cO_X(*D)\otimes_{\pi^{-1}\cO_{X'}(*E')}\pi^{-1}\cM'\to\cO_X(*D)\otimes_{\pi^{-1}\cO_{X'}(*E')}\pi^{-1}(\Omega^1_{X'}\otimes\cM')
\]
with
\begin{multline*}
T^*\pi\otimes\id:(\cO_X(*D)\otimes_{\pi^{-1}\cO_{X'}(*E')}\pi^{-1}\Omega^1_{X'}(*E'))\otimes_{\pi^{-1}\cO_{X'}(*E')}\pi^{-1}\cM'\\
\to\Omega^1_X(*D)\otimes_{\pi^{-1}\cO_{X'}(*E')}\pi^{-1}\cM'.
\end{multline*}
We set $\pi^+(\cM',\nabla')=(\pi^*\cM,\pi^+\nabla')$, and this is nothing but $\cO_X(*D)\otimes_{\cO_X}\cH^0(\pi^+\cM')$ as a holonomic $\cD_X$-module localized along~$D$.

The natural morphism of $\cO_X(*D)$-modules
\[
\pi^*\pi_*\cM\defin\cO_X(*D)\otimes_{\pi^{-1}\cO_{X'}(*E')}\pi^{-1}\pi_*\cM\to\cM
\]
induced by the adjunction $\pi^{-1}\pi_*\cM\to\cM$ is compatible with the connections, so that it induces a morphism
\[
\pi^+\pi_+(\cM,\nabla)\to(\cM,\nabla)
\]
which can be regarded as the \emphb{adjunction} morphism at the level of $\cD_X(*D)$-modules. It is therefore an isomorphism, since the kernel and cokernel are localized holonomic $\cD_X$-modules which are supported on~$D$.

Similarly, the adjunction morphism $\id\to\pi_*\pi^{-1}$ together with the projection formula induces an isomorphism
\[
(\cM',\nabla')\isom\pi_+\pi^+(\cM',\nabla').
\]

In conclusion:

\begin{proposition}
The functors $\pi_+$ and $\pi^+$ are quasi-inverse one to the other.
\end{proposition}

\begin{proof}[\proofname\ of Proposition \ref{prop:Rpimod}]
One can extend the previous results by replacing $\cO_X(*D)$ with $\cA_{\wt X}^\modD$ and $\cO_{X'}(*E')$ with $\cA_{\wt X{}'}^{\rmod E'}$. In order to do this, we now assume that~$D$ and $E'$ have normal crossings. By the first assumption in the proposition, we have a morphism $\wt\pi:\wt X\to\wt X{}'$ lifting $\pi$. By the preliminaries above, we can write $\cM=\cO_X(*D)\otimes_{\pi^{-1}\cO_{X'}(*E')}\pi^{-1}\cM'$. Therefore, $\cA_{\wt X}^\modD\otimes_{\varpi^{-1}\cO_X}\cM=\cA_{\wt X}^\modD\otimes_{\wt\pi^{-1}\varpi^{\prime-1}\cO_{X'}}\wt\pi^{-1}\varpi^{\prime-1}\cM'$. Since $\cA_{\wt X}^\modD$ (\resp $\cA_{\wt X{}'}^{\rmod E'}$) is flat over $\varpi^{-1}\cO_X(*D)$ (\resp $\varpi^{\prime-1}\cO_{X'}(*E')$) ($\dim X\leq2$) or since $\cM$ is locally stably free over $\cO_X(*D)$ ($\dim X\geq3$), it follows from Proposition \ref{prop:RpiA} and the projection formula that
\begin{align*}
\bR\wt\pi_*(\cA_{\wt X}^\modD\otimes_{\varpi^{-1}\cO_X(*D)}\cM)&=\bR\wt\pi_*(\cA_{\wt X}^\modD\otimes^{\bL}_{\varpi^{-1}\cO_X(*D)}\cM)\\
&=\cA_{\wt X{}'}^{\rmod E'}\otimes^{\bL}_{\varpi^{\prime-1}\cO_{X'}}\varpi^{\prime-1}\cM'\\
&=\cA_{\wt X{}'}^{\rmod E'}\otimes_{\varpi^{\prime-1}\cO_{X'}}\varpi^{\prime-1}\cM',
\end{align*}
and therefore the latter term is equal to $\wt\pi_*(\cA_{\wt X}^\modD\otimes_{\varpi^{-1}\cO_X}\cM)$. Arguing similarly after tensoring with $\Omega^k$ gives that each term of the complex $\DR^\modD\cM$ is $\wt\pi_*$-acyclic and that $\wt\pi_*\DR^\modD\cM$ and $\DR^{\rmod E'}\cM'$ are isomorphic termwise. Moreover, the connection $\pi_*\nabla$ on $\wt\pi_*(\cA_{\wt X}^\modD\otimes_{\varpi^{-1}\cO_X}\cM)$ coincides, via a diagram similar to \eqref{eq:nablapi_*nabla} to the connection $\pi_+\nabla$ on $\cA_{\wt X{}'}^{\rmod E'}\otimes_{\varpi^{\prime-1}\cO_{X'}}\varpi^{\prime-1}\cM'$. Extending this isomorphism to the de~Rham complexes gives
\[
\DR^{\rmod E'}\cM'\isom\wt\pi_*\DR^\modD\cM\simeq\bR\wt\pi_*\DR^\modD\cM.\qedhere
\]
\end{proof}

\subsection{Examples of moderate de~Rham complexes}
We consider the local setting where $(X,0)$ is a germ of complex manifold,~$D$ is a divisor with normal crossing in~$X$ (defined by $t_1\cdots t_\ell=0$ in some coordinate system $(t_1,\dots,t_n)$ and, setting $L=\{1,\dots,\ell\}$, $\wt X=\wt X(D_{i\in L})$, with $D_i=\{t_i=0\}$. We will give examples of computation of moderate de~Rham complexes $\DR^\modD(\cE^\varphi)$, with $\varphi\in\cO_{X,0}(*D)/\cO_{X,0}$, and $\cE^\varphi\defin(\cO_X(*D), d+d\varphi)$. In the following, we assume that $\varphi\neq0$ in $\cO_{X,0}(*D)/\cO_{X,0}$.

\begin{proposition}\label{prop:HkEphinul}
Assume that there exists $\bmm\in\NN^\ell$ such that $\varphi=t^{-\bmm}u(t)\bmod\cO_{X,0}$, with $u\in\cO_{X,0}$ and $u(0)\neq0$. Then $\DR^\modD(\cE^\varphi)$ has cohomology in degree~$0$ at most.
\end{proposition}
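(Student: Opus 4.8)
The plan is to reduce the statement to the one-variable case treated in Theorem~\ref{th:H1nul} by a careful analysis in polar coordinates adapted to the normal crossing divisor. First I would fix polar coordinates $t_j=\rho_je^{i\theta_j}$ on $\wt X$ (for $j\in L$; the variables $t_{\ell+1},\dots,t_n$ remain as they are) and work in the neighbourhood of a point $\wt x_o\in\partial\wt X$ with coordinates $(\rho^o,\theta^o,t^o_{\ell+1},\dots,t^o_n)$, where some of the $\rho^o_j$ may vanish. The key point is that, near such a point, $\varphi=t^{-\bmm}u(t)$ with $u(0)\neq0$, so $\reel\varphi$ behaves, in the directions transverse to $\partial\wt X$, like $\pm\rho^{-\bmm}$ times a unit that does not vanish, and therefore $e^{-\varphi}$ either has moderate growth or is exponentially decreasing at $\wt x_o$, with no intermediate behaviour along the relevant strata; this is exactly the feature that makes the $\cE^\varphi$-case tractable, as already exploited in step~\eqref{proof:varphi} of the proof of Theorem~\ref{th:H1nul} and in the quoted references \cite{Wasow65},\cite[App\ptbl1]{Malgrange91}.

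Next I would write out the moderate de~Rham complex $\DR^\modD(\cE^\varphi)$ explicitly: after the termwise multiplication by $e^{-\varphi}$ that identifies $\DR^\modD(\cE^\varphi)$ with the subcomplex of $\wt\DR(\cO_X(*D))$ whose entries are $e^\varphi\cA_{\wt X}^\modD$-valued forms (\cf the proof of Corollary~\ref{cor:RHStokes}), the differential becomes the ordinary de~Rham differential $d$, but acting on the sheaf $e^{\varphi}\cA_{\wt X}^\modD$ of holomorphic functions with $e^{\varphi}$-twisted moderate growth. The vanishing of $\cH^k$ for $k\geq1$ then amounts to a Poincar\'e-lemma-type statement for this twisted sheaf. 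I would prove it by the standard iterated-integration argument, integrating variable by variable along paths staying inside the multi-sector: the crucial estimate is that if $g$ has $e^\varphi$-twisted moderate growth then a primitive of $g$ with respect to $\partial_{t_j}$, obtained by integrating from a well-chosen base point (chosen in the half-direction where $e^{-\varphi}$ decays, when $\varphi$ genuinely involves $t_j$, and along a radial path otherwise), again has $e^\varphi$-twisted moderate growth. For the variables $t_{\ell+1},\dots,t_n$ not entering $\varphi$ this is the ordinary holomorphic Poincar\'e lemma with a parameter; for the variables $t_1,\dots,t_\ell$ it reduces, after freezing the other coordinates, to the one-dimensional computation underlying Theorem~\ref{th:H1nul}\eqref{proof:varphi}.

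Concretely, since the differential splits according to the decomposition $dt_1,\dots,dt_\ell$ (the ``singular'' directions) versus $dt_{\ell+1},\dots,dt_n$, I would filter the complex by the number of singular differentials and run a spectral-sequence / induction-on-$n$ argument: the associated graded pieces are tensor products of one-variable twisted de~Rham complexes $\{e^{\varphi_j}\cA^\modD\xrightarrow{\ d\ }e^{\varphi_j}\cA^\modD\,dt_j\}$ with genuine poles, each acyclic in positive degree by the one-dimensional case, and ordinary holomorphic de~Rham complexes in the smooth directions. Care is needed because $\varphi$ is not literally a sum $\sum_j\varphi_j(t_j)$, but the hypothesis $u(0)\neq0$ forces the dominant term to be the monomial $t^{-\bmm}$, so the growth/decay dichotomy is governed by a genuine product structure up to units, which is enough to carry the factorization through. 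The main obstacle I anticipate is precisely handling this lack of exact separation of variables: one must be sure that in the ``mixed'' regions of $\partial\wt X$ (where several $\rho_j$ vanish simultaneously and the sign of $\reel\varphi$ could a priori oscillate) the set where $e^{-\varphi}$ has moderate growth is still a union of pieces of the form ``half of a circle factor times the rest'', so that the base points for the successive integrations can be chosen coherently; this is a real-blow-up analogue of the argument that $\St(\psi,\varphi)$ is finite in dimension one, and it is where the normal crossing and the $u(0)\neq0$ hypotheses are really used.
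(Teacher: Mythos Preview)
The paper gives no self-contained argument here: it simply invokes Majima's asymptotic analysis \cite{Majima84} and points to \cite[Appendix, Th.~A.1]{Hien07} for details. Your overall instinct---solve the equations $(\partial_{t_j}+\partial_{t_j}\varphi)g=f$ within the moderate-growth class by integrating along suitable paths, one variable at a time---is indeed the skeleton of Majima's proof, so at that level you are on the same track as the literature.

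The genuine gap is your spectral-sequence/tensor-product formulation. The associated graded of a filtration by the number of singular $dt_j$'s is \emph{not} a tensor product of one-variable twisted de~Rham complexes: the operator $\partial_{t_j}+\partial_{t_j}\varphi$ involves all the variables through $\varphi$, and no filtration will make the complex split as $\bigotimes_j\{e^{\varphi_j}\cA^\modD\to e^{\varphi_j}\cA^\modD\,dt_j\}$, since $\varphi$ is not of the form $\sum_j\varphi_j(t_j)$. You correctly observe that the Stokes region $\{\reel\varphi<0\}$ depends only on $\arg u(0)-\sum_j m_j\theta_j$ and hence has a product-like shape; but this topological fact about where $e^{-\varphi}$ has moderate growth does \emph{not} make the complex factor, and is not what is needed. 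What is needed is an analytic estimate: when you integrate $e^{\varphi}f$ along a path in the $t_j$-plane (with base point in the decay direction) and multiply back by $e^{-\varphi}$, the result must have moderate growth \emph{uniformly as any subset of the $\rho_i$ tends to zero}, not just $\rho_j$. Proving this for a general purely monomial $\varphi$ is precisely the content of Majima's integration lemmas for strongly asymptotically developable functions, and it does not reduce to the one-dimensional Theorem~\ref{th:H1nul} with a passive parameter. The obstacle you flag at the end is therefore not a technical detail but essentially the entire proof; your plan is a correct outline of what Majima does, but turning it into an argument requires his multi-variable asymptotic framework rather than a filtration trick.
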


\begin{proof}
This is a direct consequence of theorems in asymptotic analysis due to Majima \cite{Majima84}. See a proof in \cite[Appendix, Th\ptbl A.1]{Hien07}.
\end{proof}

On the other hand, if $u(0)=0$, the result does no longer hold, as shown by the following example.

\begin{exemple}\label{exem:degonecohom}
\emph{Assume that $X=\CC^2$ with coordinates $x,y$, $D=\{y=0\}$ and $\varphi=x^2/y$. Let $\varpi:\wt X(D)\to X$ be the real blowing-up of~$D$ in~$X$. Then $\DR^\modD(\cE^\varphi)$ has a nonzero $\cH^1$.}

\begin{proof}
By Proposition \ref{prop:HkEphinul}, $\cH^1\DR^\modD(\cE^\varphi)$ is supported on $\varpi^{-1}(0,0)\simeq S^1_y$, since~$\varphi$ satisfies the assumption of this proposition away from $x=0$. In order to compute this sheaf, we will use a blowing-up method similar to that used in \Chaptersname\ref{chap:Laplace}, in order to reduce to local computations where Proposition \ref{prop:HkEphinul} applies. In the following, we will fix $\theta_o\in S^1_y$ and we will compute the germ $\cH^1\DR^\modD(\cE^\varphi)$ at $\theta_o$.

Let $e:Y\to X$ be the complex blowing-up of the origin in~$X$. It is covered by two affine charts, $Y_1$ with coordinates $(x,v)$ with $e(x,v)=(x,xv)$ and $Y_2$ with coordinates $(u,y)$ with $e(u,y)=(yu,y)$. On $Y_1\cap Y_2$, we have $v=1/u$, and these are the coordinates on the exceptional divisor $E\simeq\PP^1$. We still denote by~$D$ the strict transform of~$D$, which is defined by $v=0$ in $Y_1$ (and does not meet $Y_2$). We have a natural map $\wt e:\wt Y(D,E)\to\wt X(D)$, and by Proposition \ref{prop:Rpimod}, we have $\DR^\modD(\cE^\varphi)=\bR\wt e_*\DR^{\rmod F}(\cE^{\psi})$, where $F=D\cup E=e^{-1}(D)$ and $\psi=\varphi\circ e$. Restricting to $\arg y=\theta_o$ gives, by proper base change,
\[
\cH^1\DR^\modD(\cE^\varphi)_{\theta_o}=\HH^1\big(\wt e^{-1}(\theta_o),\DR^{\rmod F}(\cE^{\psi})\big).
\]

\subsubsection*{Chart $Y_2$}
We have $\wt Y_2=\wt Y_2(E)$ and, on $Y_2$, $\psi=yu^2=0\bmod\cO_{Y_2}$. Therefore, according to Proposition \ref{prop:HkEphinul}, $\DR^\modE(\cE^{\psi})$ has cohomology in degree~$0$ only on $\wt Y_2$, and $\cH^0\DR^\modE(\cE^{\psi})$ is the constant sheaf $\CC$. Let us note that $\partial\wt Y_2=\Afu\times S^1_y$, so the restriction to $\arg y=\theta_o$ of $\DR^\modE(\cE^{\psi})$ is the constant sheaf $\CC_{\Afu}$.

\subsubsection*{Chart $Y_1$}
We have $\wt Y_1=\wt Y_1(D,E)$ and $\partial\wt Y_{1|E}=S^1_x\times S^1_v\times[0,\infty)$, where $|v|$ runs in $[0,\infty)$. The map $\wt e:\partial\wt Y_{1|E}\to S^1_y$ is the composed map
\begin{align*}
S^1_x\times S^1_v\times[0,\infty)\to S^1_x\times S^1_v&\to S^1_y\\
(\alpha,\beta)&\mto\alpha\beta,
\end{align*}
so, in this chart, $\wt e^{-1}(\theta_o)\simeq S^1\times[0,\infty)$ and we can assume $S^1=S^1_v$ so that we can identify $\wt e^{-1}(\theta_o)$ in $\partial\wt Y$ to a closed disc having $S^1_v$ as boundary. We also have $\psi=x/v$.

\begin{claim*}
The complex $\DR^{\rmod F}(\cE^{\psi})$ has cohomology in degree~$0$ at most on $\wt Y_1(D,E)$.
\end{claim*}

Assuming this claim is proved, it is not difficult to compute $\cH^0\DR^{\rmod F}(\cE^{\psi})_{\theta_o}$. In the chart $Y_2$, this has been done previously, so we are reduced to compute it on~$S^1_v$ (boundary of the closed disc $\wt e^{-1}(\theta_o)$). The $\cH^0$ is zero unless $\arg x-\arg v=\theta_o-2\arg v\in(\pi/2,3\pi/2)\bmod2\pi$.

\subsubsection*{Conclusion}
The complex $\DR^{\rmod F}(\cE^{\psi})_{\theta_o}$ on $\wt e^{-1}(\theta_o)$ has cohomology in degree~$0$ only, and is the constant sheaf on $\Delta\cup I_1\cup I_2$, extended by~$0$ to $\ov\Delta$, where $\Delta$ is an open disc in $\CC$ and $I_1$ and $I_2$ are two opposite (and disjoint) open intervals of length $\pi/2$ on $\partial\Delta$. It is now an exercise to show that $\dim\HH^1\big(\wt e^{-1}(\theta_o),\DR^{\rmod F}(\cE^{\psi})\big)=1$.
\end{proof}

\begin{proof}[\proofname\ of the claim]
Let us sketch it. The question is local in the $(x,v)$-chart, on $S^1_x\times S^1_v$. We blow up the origin in this chart $Y_1$ and get an exceptional divisor $G\simeq\PP^1$ with coordinates $v_1=1/u_1$. On the blow-up space $Z_1$, in the chart $Z_{11}$ with coordinates $(x,v_1)$, the blowing-up map $\epsilon$ is $(x,v_1)\mto(x,v=xv_1)$ and we have $\eta:=\psi\circ\epsilon=x/v=1/v_1$. In the chart $Z_{12}$ with coordinates $(u_1,v)$, the blowing-up map $\epsilon$ is $(u_1,v)\mto(x=u_1v,v)$ and we have $\eta=x/v=u_1$.

On $\wt Z_{12|F}$, $\wt\epsilon$ is the composed map
\begin{align*}
S^1_{u_1}\times S^1_v\times[0,\infty)\to S^1_{u_1}\times S^1_v&\to S^1_x\times S^1_v\\
(\alpha,\beta)&\mto(\alpha\beta,\beta),
\end{align*}
where $|u_1|$ varies in $[0,\infty)$. On this space, $\DR^{\rmod F}(\cE^{\eta})$ is the constant sheaf $\CC$.

Similarly, on $\wt Z_{11|F}$, $\wt\epsilon$ is the composed map
\begin{align*}
S^1_x\times S^1_{v_1}\times[0,\infty)\to S^1_x\times S^1_{v_1}&\to S^1_x\times S^1_v\\
(\alpha,\gamma)&\mto(\alpha,\alpha\gamma),
\end{align*}
and $|v_1|$ varies in $[0,\infty)$. On this space, $\DR^{\rmod F}(\cE^{\eta})$ has cohomology in degree~$0$ at most, after Proposition \ref{prop:HkEphinul}, and is the constant sheaf $\CC$, except on $|v_1|=0$, $\arg v_1\not\in(\pi/2,3\pi/2)\bmod2\pi$, where it is zero.

Let us fix a point $(\alpha^o,\beta^o)\in S^1_x\times S^1_v$. We have $\wt\epsilon{}^{-1}(\alpha^o,\beta^o)\simeq[0,+\infty]$, and $\cH^k\DR^{\rmod F}(\cE^{\psi})_{(\alpha^o,\beta^o)}=H^k([0,+\infty],\cF_{(\alpha^o,\beta^o)})$, where $\cF_{(\alpha^o,\beta^o)}$ is the constant sheaf $\CC$ if $\gamma^o\defin\beta^o\alpha^{o-1}\not\in(\pi/2,3\pi/2)\bmod2\pi$, and the constant sheaf on $(0,+\infty]$ extended by~$0$ at~$0$ otherwise.

In the first case, we have $H^k=0$ for any $k\geq1$, and in the second case we have $H^k=0$ for any $k\geq0$.
\end{proof}
\end{exemple}

\chapter[Stokes-filtered local systems along a NCD]{Stokes-filtered local systems along a~divisor with normal crossings}\label{chap:StokesfilteredNCD}

\begin{sommaire}
We construct the sheaf $\ccI$ to be considered as the index sheaf for Stokes filtrations. This is a sheaf on the real blow-up space of a complex manifold along a family of divisors. We will consider only divisors with normal crossings. The global construction of $\ccI$ needs some care, as the trick of considering a ramified covering cannot be used globally. The important new notion is that of goodness. It is needed to prove abelianity and strictness in this setting, generalizing the results of \Chaptersname\ref{chap:abelian}.
\end{sommaire}

\subsection{Introduction}
After having introduced the real blow-up spaces in the previous \chaptername, we now extend the notion of a Stokes-filtered local system in higher dimension, generalizing the contents of \Chaptername s \ref{chap:Stokesone} and \ref{chap:abelian}. For this purpose, we first define the sheaf $\ccI$ of ordered abelian groups, which will serve as the indexing sheaf for the Stokes filtrations. The general approach of \Chaptersname \ref{chap:Ifil} will now be used, and the sheaf $\ccI$ will be constructed in a way similar to that used in Remark \ref{rem:Euler}. The present \chaptersname was indeed the main motivation for developing \Chaptersname \ref{chap:Ifil}. We will make precise the \emph{global} construction of the sheaf $\ccI$, since the main motivation of this \chaptersname is to be able to work with Stokes-filtrered local systems globally on the real blow-up space $\wt X(D_{j\in J})$.

However, the order of a local section $\varphi$ of $\ccI$ is strongly related to the asymptotic behaviour of $\exp\varphi$ on $\partial\wt X(D_{j\in J})$. This behaviour is in general difficult to analyze, unless $\varphi$ behaves like a monomial with negative exponents. For instance, the asymptotic behaviour of $\exp(x_1/x_2)$ near $x_1=x_2=0$ is not easily analyzed, while that of $\exp(1/x_1x_2)$ is easier to understand. This is why we introduce the notion of \emph{pure monomiality}. Moreover, given a finite subset $\Phi$ of local sections of $\ccI$, the asymptotic comparison of the functions $\exp\varphi$ with $\varphi\in\Phi$ leads to considering the condition ``\emph{good}'', meaning that each nonzero difference $\varphi-\psi$ of local sections of $\Phi$ is purely monomial. The reason for considering differences $\varphi-\psi$ of elements of $\Phi$ is, firstly, that it allows one to totally order the elements of $\Phi$ with respect to the order of the pole and, secondly, that these differences are the exponential factors of the endomorphism of a given Stokes-filtered local system having $\Phi$ as exponential factors, and the corresponding Stokes-filtered local system is essential in classification questions. Of course, pure monomiality and goodness are automatically satisfied in dimension one.

The analogues of the main results of \Chaptername s \ref{chap:Stokesone} and \ref{chap:abelian} are therefore proved assuming goodness. We will find this goodness assumption in various points later on, and the notion of Stokes filtration developed in this text always assumes goodness.

\subsection{The sheaf~$\ccI$ on the real blow-up (smooth divisor case)}\label{subsec:smoothdivcase}\index{sheaf $\ccI$}
Let~$X$ be a smooth complex manifold and let~$D$ be a smooth divisor in~$X$. Let $\varpi:\wt X(D)\to X$ be ``the'' real blow-up space of $X$ along $D$ (associated with the choice of a section $f:\cO_X\to\cO_X(D)$ of~$L(D)$ defining~$D$, \cf \S\ref{subsec:realblowup}). We also denote by $\wti,\wtj$ the inclusions $\partial\wt X(D)\hto\wt X(D)$ and $X^*\hto\wt X(D)$.

In order to construct the sheaf $\ccI$, we will adapt to higher dimensions the construction of Remark \ref{rem:Euler}. We consider the sheaf $\wtj_*\cO_{X^*}$ and its subsheaf $(\wtj_*\cO_{X^*})^\lb$ of locally bounded functions on $\wt X$. We will construct $\ccI$ as a subsheaf of the quotient sheaf $\wtj_*\cO_{X^*}/(\wtj_*\cO_{X^*})^\lb$ (which is supported on $\partial\wt X$). It is the union, over $d\in\NN^*$, of the subsheaves $\ccI_d$ that we define below.

Let us start with \index{$ID$@$\ccI_1$, $\ccI_d$, $\ccI_{\bun}$, $\ccI_{\bmd}$}$\ccI_1$. There is a natural inclusion $\varpi^{-1}\cO_X(*D)\hto\wtj_*\cO_{X^*}$, and $\varpi^{-1}\cO_X=\varpi^{-1}\cO_X(*D)\cap(\wtj_*\cO_{X^*})^\lb$, since a meromorphic function which is bounded in some sector centered on an open set of~$D$ is holomorphic. We then set $\wt\ccI_1=\varpi^{-1}\cO_X(*D)\subset\wtj_*\cO_{X^*}$ and $\ccI_1=\varpi^{-1}(\cO_X(*D)/\cO_X)\subset\wtj_*\cO_{X^*}/(\wtj_*\cO_{X^*})^\lb$.

Locally on~$D$, we can define ramified coverings $\rho_d:X_d\to X$ of order $d$ along~$D$, for any $d$. Let $\wt\rho{}_d:\wt X_d\to\wt X$ be the corresponding covering. The subsheaf $\wt\ccI_d\subset\wtj_*\cO_{X^*}$ of $d$-multivalued meromorphic functions on $\wt X$ is defined as the intersection of the subsheaves $\wtj_*\cO_{X^*}$ and $\wt\rho{}_{d,*}\wt\ccI_{\wt X_d,1}$ of $\wt\rho{}_{d,*}\wtj_{d,*}\cO_{X_d^*}=\wtj_*\rho_{d,*}\cO_{X_d^*}$. As above, we have $\wt\ccI_d\cap(\wtj_*\cO_{X^*})^\lb=\wtj_*\cO_{X^*}\cap\nobreak \wt\rho{}_{d,*}\varpi_d^{-1}\cO_{X_d}$. The sheaf \index{$ID$@$\ccI_1$, $\ccI_d$, $\ccI_{\bun}$, $\ccI_{\bmd}$}$\ccI_d$ is then defined as the quotient sheaf $\wt\ccI_d/\wt\ccI_d\cap\nobreak(\wtj_*\cO_{X^*})^\lb$. This is a subsheaf of $\wtj_*\cO_{X^*}/(\wtj_*\cO_{X^*})^\lb$.

The locally defined subsheaves $\wt\ccI_d$ (and thus $\ccI_d$) glue together as a subsheaf of $\wtj_*\cO_{X^*}$, since the local definition does not depend on the chosen local ramified $d$\nobreakdash-covering. Similarly, $\ccI_d$ exists as a subsheaf of $\wtj_*\cO_{X^*}/(\wtj_*\cO_{X^*})^\lb$ all over~$D$.

\begin{definitio}[Case of a smooth divisor]\label{def:Idivisor}
The sheaf $\wt\ccI$ (\resp \index{$I$@$\ccI$, $\ccIet$}$\ccI$) is the union of the subsheaves~$\wt\ccI_d$ (\resp $\ccI_d$) of $\wtj_*\cO_{X^*}$ (\resp $\wtj_*\cO_{X^*}/(\wtj_*\cO_{X^*})^\lb$) for $d\in\NN^*$.
\end{definitio}

\begin{definitio}[$\ccI$ as a sheaf of ordered abelian groups]\label{def:orderI}
The sheaf $\wti^{-1}\wtj_*\cO_{X^*}$ is naturally ordered by setting $(\wti^{-1}\wtj_*\cO_{X^*})_{\leq0}=\log\cA_{\wt X(D)}^\modD$ (\cf \S\ref{subsec:modgrowth}). In this way,~$\wt\ccI$ inherits an order: $\wt\ccI_{\leq0}=\wt\ccI\cap\log\cA_{\wt X(D)}^\modD$. This order is not altered by adding a local section of $(\wtj_*\cO_{X^*})^\lb$, and thus defines an order on $\ccI$.
\end{definitio}

\begin{lemme}\label{lem:ramifcoverIsmooth}
For any local ramified covering $\rho_d:(X_d,D)\to(X,D)$ of order $d$ along~$D$, $\wt\rho{}_d^{-1}\wt\ccI_d$ is identified with $\varpi_d^{-1}\cO_{X_d}(*D)$ and $\wt\rho{}_d^{-1}\ccI_d$ with $\varpi_d^{-1}(\cO_{X_d}(*D)/\cO_{X_d})$. This identification is compatible with order.
\end{lemme}

\begin{proof}
The proof is completely similar to that given in Remark \ref{rem:Euler}.
\end{proof}

\Subsection{The sheaf~$\ccI$ on the real blow-up (normal crossing case)}\label{subsec:Inormcross}\index{sheaf $\ccI$}

Let us now consider a family $(D_{j\in J})$ of smooth divisors of $X$ whose union~$D$ has only normal crossings, and the corresponding real blowing-up map $\varpi:\wt X(D_{j\in J})\to X$. We will consider multi-integers $\bmd\in(\NN^*)^J$. The definition of the sheaves $\wt\ccI_{\bmd}$ and $\ccI_{\bmd}$ is similar to that in dimension one.

Let us set $\bun=(1,\dots,1)$ ($\#J$ terms) and $\wt\ccI_{\bun}=\varpi^{-1}\cO_X(*D)\subset\wtj_*\cO_{X^*}$. Let us fix $x_o\in D$, let us denote by $D_1,\dots,D_\ell$ the components of~$D$ going through~$x_o$, and set $\wt x_o\in\varpi^{-1}(x_o)\simeq(S^1)^\ell$. Then a local section of $\varpi^{-1}\cO_X(*D)$ near $\wt x_o$ is locally bounded in the neighbourhood of $\wt x_o$ if and only if it is holomorphic in the neighbourhood of~$x_o$. In other words, as in the smooth case, $\varpi^{-1}\cO_X(*D)\cap(\wtj_*\cO_{X^*})^\lb=\varpi^{-1}(\cO_X)$.

We locally define $\wt\ccI_{\bmd}$ near~$x_o$, by using a ramified covering $\rho_{\bmd}$ of $(X,x_o)$ along $(D,x_o)$ of order $\bmd=(d_1,\dots,d_\ell)$, by the formula $\wt\ccI_{\bmd}\defin\wt\rho{}_{\bmd,*}[\varpi_{\bmd,*}\cO_{X_{\bmd}}(*D)]\cap\wtj_*\cO_{X^*}$, and $\ccI_{\bmd}$ by $\ccI_{\bmd}\defin\wt\ccI_{\bmd}/\wt\ccI_{\bmd}\cap(\wtj_*\cO_{X^*})^\lb$.

The locally defined subsheaves $\wt\ccI_{\bmd}$ glue together all over~$D$ as a subsheaf $\wt\ccI_{\bmd}$ of $\wtj_*\cO_{X^*}$. We also set globally \index{$ID$@$\ccI_1$, $\ccI_d$, $\ccI_{\bun}$, $\ccI_{\bmd}$}$\ccI_{\bmd}=\wt\ccI_{\bmd}/\wt\ccI_{\bmd}\cap(\wtj_*\cO_{X^*})^\lb$.

\begin{definitio}\label{def:Igeneral}
The subsheaf~$\wt\ccI\subset\wtj_*\cO_{X^*}$ is the union of the subsheaves $\wt\ccI_{\bmd}$ for $\bmd\in(\NN^*)^J$. The sheaf \index{$I$@$\ccI$, $\ccIet$}$\ccI$ is the subsheaf $\wt\ccI/\wt\ccI\cap(\wtj_*\cO_{X^*})^\lb$ of $\wtj_*\cO_{X^*}/(\wtj_*\cO_{X^*})^\lb$.
\end{definitio}

\begin{definitio}\label{def:Igeneralorder}
The order on~$\wt\ccI$ is given by $\wt\ccI_{\leq0}\defin\wt\ccI\cap\log\cA_{\wt X(D_{j\in J})}^\modD$. It is stable by the addition of an element of $(\wtj_*\cO_{X^*})^\lb$ and defines an order on $\ccI$.
\end{definitio}

\begin{lemme}\label{lem:ramifcoverI}
For any local ramified covering $\rho_{\bmd}:(X_{\bmd},D)\to(X,D)$ of order $\bmd$ along~$(D_{j\in J})$, $\wt\rho{}_{\bmd}^{-1}\wt\ccI_{\bmd}$ is identified with $\varpi_{\bmd}^{-1}\cO_{X_{\bmd}}(*D)$ and $\wt\rho{}_{\bmd}^{-1}\ccI_{\bmd}$ with $\varpi_{\bmd}^{-1}\big(\cO_{X_{\bmd}}(*D)/\cO_{X_{\bmd}}\big)$. These identifications are compatible with order.
\end{lemme}

\begin{proof}
Same proof as in Remark \ref{rem:Euler}.
\end{proof}

\begin{remarque}\label{rem:IstrongHausdorff}
For any subset $I\subset J$, let $D_I$ denote the intersection $\bigcap_{j\in I}D_j$ and set $D_I^{\circ}=D_I\moins\bigcup_{j\in J\moins I}D_j$. Set also $Y_I=\varpi^{-1}(D_I^{\circ})\subset\partial\wt X(D_{j\in J})$. The family $\cY=(Y_I)_{I\subset J}$ is a stratification of $\partial\wt X$ which satisfies the property \eqref{eq:propstrat}. Moreover, the sheaf $\ccI$ is Hausdorff with respect to $\cY$ (this is seen easily locally on~$D$).
\end{remarque}

\subsection{Goodness}\label{subsec:goodness}
The order on sections of~$\ccI$ is best understood for \emph{purely monomial sections} of~$\ccI$. Let us use the following local notation. We consider the case where $X=\Delta^\ell\times\Delta^{n-\ell}$ with base point $0=(0_\ell,0_{n-\ell})$, and $D_i=\{t_i=0\}$ ($i\in L\defin\{1,\dots,\ell\}$) and $D=\bigcup_{i=1}^\ell D_i$. The real blowing-up map $\varpi_L:\wt X(D_{i\in L})=(S^1)^\ell\times[0,1)^\ell\times\Delta^{n-\ell}\to X=\Delta^\ell\times\Delta^{n-\ell}$ is defined by sending $(e^{i\theta_j},\rho_j)$ to $t_j=\nobreak\rho_je^{i\theta_j}$ ($j=1,\dots,\ell$).

\begin{itemize}
\item
In the non-ramified case (\ie we consider sections of $\ccI_{\bun}$), a germ $\eta$ at $\theta\in (S^1)^\ell=(S^1)^\ell\times0_\ell\times0_{n-\ell}\subset(S^1)^\ell\times[0,1)^\ell\times \Delta^{n-\ell}$ of section of $\ccI_{\bun}$ is nothing but a germ at $0\in X$ of section of $\cO_X(*D)/\cO_X$.

\begin{definitio}\label{def:purmonom}
We say that $\eta$ is \emphb{purely monomial} at $0$ if $\eta=0$ or $\eta$ is the class of $t^{-\bmm}u_{\bmm}$, with $\bmm\in\NN^\ell\moins\{0\}$, $u_{\bmm}\in\cO_{X,0}$ and $u_{\bmm}(0)\neq0$. We then set $\bmm=\bmm(\eta)$ with the convention that $\bmm(0)=0$, so that $\bmm(\eta)=0$ iff $\eta=0$.
\end{definitio}

For $\eta\in\cO_{X,0}(*D)/\cO_{X,0}$, written as $\sum_{\bmk\in\ZZ^\ell\times\NN^{n-\ell}}\eta_{\bmk}t^{\bmk}$, the Newton polyhedron \index{$NP$@$\NP$}$\NP(\eta)\subset\RR^\ell\times\RR_+^{n-\ell}$ is the convex hull of $\RR_+^n$ and the octants $\bmk+\RR_+^n$ for which $\eta_{\bmk}\neq0$. Then $\eta$ is purely monomial if and only if $\NP(\eta)$ is an octant $(-\bmm,0_{n-\ell})+\RR_+^n$ with $\bmm\in\NN^\ell$.

If $\eta$ is purely monomial, we have for every $\theta\in (S^1)^\ell$:
\begin{equation}\label{eq:ordern}\index{$AAAORDth$@$\leqtheta$, $\letheta$}
\eta\leqtheta0\ssi \eta=0\text{ or }\arg u_{\bmm}(0)-\ts\sum_jm_j\theta_j\in(\pi/2,3\pi/2)\mod2\pi.
\end{equation}
If $\eta\neq0$ and $\eta$ is purely monomial at $0$, it is purely monomial on some open set $Y=(S^1)^\ell\times V$ (with $V$ an open neighbourhood of $0_{n-\ell}$ in $\Delta^{n-\ell}$, embedded as $0_\ell\times V\subset[0,1)^\ell\times\Delta^{n-\ell}$), and $Y_{\eta\leq0}$ is defined by the inequation $\arg u_{\bmm}(v)-\sum_jm_j\theta_j\in(\pi/2,3\pi/2)\bmod2\pi$, where $v$ varies in the parameter space $V$. For $v$ fixed, it is the inverse image by the fibration map $(S^1)^n\to S^1$, $(e^{i\theta_1},\dots,e^{i\theta_n})\mto e^{i(m_1\theta_1+\cdots+m_n\theta_n)}$, of a set of the kind defined at the end of Example \ref{exem:Stokes}. This set rotates smoothly when~$v$ varies in~$V$. Similarly, the boundary $\St(\eta,0)$ of $Y_{\eta\leq0}$ in $Y$ is the pull-back by the previous map of a subset of $S^1\times V$ which is a finite covering of $V$. It has codimension one in $Y$.
\item
The order on $\ccI_{\bmd}$ is described similarly after a ramification which is cyclic of order~$d_i$ around the component $D_i$ of~$D$.
\end{itemize}

For non-purely monomial elements, we still have the following (\eg in the non-ramified case).

\begin{proposition}\label{prop:suban}
For every $\varphi,\psi\in\Gamma(U,\cO_X(*D)/\cO_X)$, the set $Y_{\psi\leq\varphi}$ is \index{subanalytic}subanalytic in~$Y$ and its boundary \index{$STDIR$@$\St(\psi,\varphi)$}$\St(\varphi,\psi)$ has (real) codimension $\geq1$ in~$Y$.
\end{proposition}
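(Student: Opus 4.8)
The statement is local on $D$, so I would work in the coordinate chart introduced just before the proposition, with $D=\bigcup_{i=1}^\ell D_i$, $D_i=\{t_i=0\}$, and $Y=\varpi_L^{-1}(\text{a stratum})$. The first reduction is to the difference $\eta=\psi-\varphi\in\Gamma(U,\cO_X(*D)/\cO_X)$, since $Y_{\psi\leq\varphi}=Y_{\eta\leq0}$ and $\St(\psi,\varphi)=\St(\eta,0)$; this uses only additivity of the order (Definition \ref{def:Igeneralorder}). Thus it suffices to prove: for any $\eta$ which is a germ at $0$ of a section of $\cO_X(*D)/\cO_X$, the set $Y_{\eta\leq0}=\{\theta\in Y\mid \eta\leqtheta 0\}$ is subanalytic in $Y$ and its boundary has real codimension $\geq 1$.

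\textbf{Key steps.} Choose a representative $\eta=t^{-\bmm}u(t)$ with $\bmm\in\NN^\ell$ and $u\in\cO_{X,0}$ (if $\eta=0$ the statement is trivial). In polar-type coordinates on $\wt X(D_{i\in L})$, writing $t_j=\rho_je^{i\theta_j}$, we have $\eta = \bigl(\prod_j\rho_j^{-m_j}\bigr)e^{-i\sum_j m_j\theta_j}\,u(t)$, where $u(t)$ is a bounded holomorphic function near the boundary. By the characterization recalled after Corollary \ref{cor:RHStokes}/Example \ref{exem:Stokes} (or more precisely by the definition of the order via moderate growth, Definition \ref{def:Igeneralorder}), $\eta\leqtheta 0$ iff $\reel(\eta)\leq 0$ in a neighbourhood of $(\theta,0)$ inside $\wt X$. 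So I would analyze the real-analytic function
\[
r(\rho,\theta)=\reel\Bigl(\Bigl(\ts\prod_j\rho_j^{-m_j}\Bigr)e^{-i\sum_j m_j\theta_j}u(\rho e^{i\theta})\Bigr)
\]
near $\rho=0$. Since $u(\rho e^{i\theta})$ is real-analytic in $(\rho,\theta)$ with a convergent expansion, the sign of $r$ near $\rho=0$ is governed by the leading term of $u$ along $\rho\to 0$: if $u(0)\neq 0$, this leading term is $\cos\bigl(\arg u(0)-\sum_j m_j\theta_j\bigr)$ up to a positive factor, giving \eqref{eq:ordern} and $Y_{\eta\leq 0}$ semi-analytic with smooth boundary of codimension one (a preimage of the "Stokes hypersurface" under a submersion, as described in the paragraph after \eqref{eq:ordern}). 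When $u(0)=0$ one has to expand further; the general mechanism is that $r$ has an asymptotic expansion in monomials $\rho^{\bma}$ ($\bma$ ranging over a lattice translated set), with real-analytic coefficients in $\theta$ and $\rho$, and "$\eta\leqtheta 0$" is equivalent to a lexicographic sign condition on this sequence of coefficients. This is a finite condition because the relevant coefficients lie in an ideal-theoretic finite set (by noetherianity / the structure of the Newton polyhedron of $u$), hence the set $Y_{\eta\leq 0}$ is a finite boolean combination of sets defined by real-analytic (in)equalities, thus subanalytic.

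\textbf{Codimension of the boundary.} For the codimension statement I would argue: the boundary $\St(\eta,0)$ is contained in the zero set of the "first nonvanishing coefficient" real-analytic function on $Y$; it suffices to show that this function is not identically zero on any connected component, which follows because $\eta\not\equiv 0$ as a section of $\cO_X(*D)/\cO_X$ — i.e. $\eta$ is not locally bounded — so $\reel(\eta)$ changes sign along fibres of $\varpi_L$. Concretely, fixing the parameters $(\rho,v)$ generically and looking at the top-degree angular term, it is a nonzero trigonometric polynomial in $\theta$, whose zero set is a proper real-analytic subset, hence of codimension $\geq 1$; subanalyticity then gives that $\St(\eta,0)$, being contained in a proper real-analytic (or subanalytic) subset, has codimension $\geq 1$ in $Y$. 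The ramified case is handled by pulling back via a cyclic ramification $\rho_{\bmd}$ (Lemma \ref{lem:ramifcoverI}): subanalyticity and codimension are preserved under the finite proper map $\wt\rho_{\bmd}$ and its inverse images, so the non-ramified result transfers.

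\textbf{Main obstacle.} The delicate point is the case $u(0)=0$ (non-purely-monomial $\eta$): one must show that the sign of $\reel(\eta)$ near the boundary is decided by \emph{finitely many} terms of an asymptotic expansion, uniformly enough that the resulting set is subanalytic and its boundary is genuinely of codimension $\geq 1$ rather than, a priori, of full dimension. I expect this to require a careful use of the Newton-polyhedron structure of $u$ together with the fact that "moderate growth of $e^{\eta}$" only depends on finitely many jets, or alternatively a resolution argument reducing to the purely monomial case $u(0)\neq 0$ treated above; in either approach the uniformity in the transverse parameters $v\in\Delta^{n-\ell}$ is what needs the most care.
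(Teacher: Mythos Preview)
Your reduction to $\eta=\psi-\varphi$ and your treatment of the purely monomial case are correct and match the paper. The gap is in the non-purely-monomial case $u(0)=0$: the claim that the sign of $\reel(\eta)$ near the boundary is governed by \emph{finitely many} terms of an asymptotic expansion, uniformly in the transverse parameters, is not justified by what you wrote. In several variables the zero locus of $u$ need not be a coordinate hyperplane, so no finite jet in the $\rho_j$ alone determines the sign; one would have to organise the expansion along the Newton polyhedron of $u$ and control all faces simultaneously, which you have not done. You correctly identify this as the main obstacle, but your sketch does not overcome it.

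The paper takes precisely the alternative you mention at the end: it invokes a resolution lemma (Lemma~\ref{lem:averifier}) producing a projective modification $\epsilon:U'\to U$ with $D'=|\epsilon^{-1}(D)|$ normal crossing and $\eta\circ\epsilon$ \emph{locally purely monomial} everywhere on $D'$. In the purely monomial case $Y'_{\eta\circ\epsilon\leq0}$ is semi-analytic with codimension-one boundary by the explicit description~\eqref{eq:ordern}. Then $Y\setminus Y_{\eta\leq0}$ is the image under the proper real-analytic map $\wt X'(D'_{j\in J})\to\wt X$ of the closed semi-analytic set $Y'\setminus Y'_{\eta\circ\epsilon\leq0}$, hence subanalytic by Hironaka's theorem on proper images; the statement for $Y_{\eta\leq0}$ and for $\St(\varphi,\psi)$ then follows from stability of subanalytic sets under complements and closures, and the codimension claim from the general fact that the boundary of a subanalytic set has strictly smaller dimension. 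So the missing idea in your main line is exactly this use of resolution plus Hironaka; your ``alternative'' was the right route.
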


\begin{lemme}\label{lem:averifier}
Let $\eta\in\Gamma(U,\cO_X(*D)/\cO_X)$ and let $x\in D$. Then there exists a projective modification $\epsilon:U'\to U$ of some open neighbourhood $U$ of $x$ in $X$, which is an isomorphism away from~$D$, such that $D'\defin |\epsilon^{-1}(D\cap U)|$ is a reduced divisor with normal crossings and smooth components, and $\eta\circ\epsilon$ is locally purely monomial everywhere on~$D'$.
\end{lemme}

\begin{proof}[\proofname\ of Proposition \ref{prop:suban}]
Set $\eta=\psi-\varphi=t^{-m}u_m$ with $m\in\NN^\ell\moins\{0\}$ and $u_m$ holomorphic. We have $Y_{\psi\leq\varphi}=Y_{\eta\leq0}$. The purely monomial case ($u_m\neq0$ everywhere on $U\cap D$) has been treated above. The statement is local subanalytic on $U$, and for any point of $U$ we replace~$U$ by a subanalytic open neighbourhood of this point, that we still denote by~$X$, on which Lemma \ref{lem:averifier} applies. If we set $D'=\bigcup_jD'_j$, we have natural real analytic proper maps (\cf \S\ref{subsec:realblowup}) $\wt\epsilon:\wt X{}'(D'_{j\in J})\to\wt X$. Let us set $Y'=\wt\epsilon^{-1}(Y)$. Then $Y'_{\eta\circ\epsilon\leq0}=\wt\epsilon^{-1}(Y_{\eta\leq0})$ since $e^\eta$ has moderate growth near $\wt x_o\in Y$ if and only if $e^{\eta\circ\epsilon}$ has moderate growth near any $\wt x'_o\in\wt\epsilon^{-1}(\wt x_o)$, by the properness of $\wt\epsilon$. As a consequence, the set $Y\moins Y_{\eta\leq0}$ is the push-forward by $\wt\epsilon$ of the set $Y'\moins Y'_{\eta\circ\epsilon\leq0}$. Using the purely monomial case considered previously, one shows that $Y'\moins Y'_{\eta\circ\epsilon\leq0}$ is closed and semi-analytic in $Y'$. The subanalyticity of $Y_{\eta\leq0}$ follows then from Hironaka's theorem \cite{Hironaka73b} on the proper images of sub- (or semi-) analytic sets, and stability by complements and closure. The statement for $\St(\varphi,\psi)$ also follows.
\end{proof}

\begin{proof}[Sketch of the proof of Lemma \ref{lem:averifier}]
By using the resolution of singularities in the neighbourhood of $x\in U$, we can find a projective modification $\epsilon_1:U_1\to U$ such that the union of the divisors of zeros and and of the poles of $\eta$ form a divisor with normal crossings and smooth components in $U_1$ (so that, locally in $U_1$ and with suitable coordinates, $\eta\circ\epsilon_1$ takes the form of a monomial with exponents in $\ZZ$). The problem is now reduced to the following question: given a divisor with normal crossings and smooth components $D_1$ in $U_1$, attach to each smooth component an integer (the order of the zero or minus the order of the pole of $\eta\circ\epsilon_1$), so as to write $D_1=D_1^+\cup D_1^0\cup D_1^-$ with respect to the sign of the integer; to any projective modification $\epsilon_2:U_2\to\nobreak U_1$ such that $D_2\defin\epsilon_2^{-1}(D_1)$ remains a divisor with normal crossings and smooth components, one can associate in a natural way a similar decomposition; we then look for the existence of such an $\epsilon_2$ such that $D_2^-$ and $D_2^+$ do not intersect.

We now denote $U_1$ by $X$ and $D_1$ by $D=\bigcup_{j\in J}D_j$. The divisor $D$ is naturally stratified, and we consider minimal (that is, closed) strata. To each such stratum is attached a subset~$L$ of $J$ consisting of indices $j$ for which $D_j$ contains the stratum. Because of the normal crossing condition, the cardinal of this subset is equal to the codimension of the stratum $D_L$. We set $D(L)=\bigcup_{j\in L}D_j$. We will construct the modification corresponding to this stratum with a toric argument. Let us set $\ell=\#L$ and, for each $j\in L$, let us denote by $\cI_j$ the ideal of $D_j$ in $\cO_X$.

As is usual in toric geometry (\cf \cite{Danilov78,Oda86,Fulton93} for instance), we consider the space~$\RR^\ell$ equipped with its natural lattice $N\defin\ZZ^\ell$ and the dual space $(\RR^\vee)^\ell$ equipped with the dual lattice $M$. To each rational cone $\sigma$ in the first octant $(\RR_+)^\ell$ we consider the dual cone $\sigma^\vee\in(\RR^\vee)^\ell$ and its intersection with $M$. This allows us to define a sheaf of subalgebras $\sum_{m\in\sigma^\vee\cap M}\cI_1^{m_1}\cdots\cI_\ell^{m_\ell}$ of $\cO_X(*D(L))$. Locally on~$D_L$, if $D_j$ is defined by $\{x_j=0\}$, this is $\cO_X\otimes_{\CC[x_1,\dots,x_\ell]}\CC[\sigma^\vee\cap M]$, hence this sheaf of subalgebras corresponds to an affine morphism $X_\sigma\to X$. Similarly, to any a fan $\Sigma$ in the first octant $(\RR_+)^\ell$ one associates a morphism $X_\Sigma\to X$, which is a projective modification if the fan completely subdivides the octant. Moreover, if each cone of the fan is strictly simplicial, the space $X_\Sigma$ is smooth and the pull-back of the divisor $D(L)$ has normal crossings with smooth components.

We will now choose the fan $\Sigma$. To each basis vector $e_j$ of $\RR^\ell$ is attached a multiplicity $\nu_j\in\ZZ$, namely the order of $\eta\circ\epsilon_1$ along $D_j$. We consider the trace $H$ on $(\RR_+)^\ell$ of the hyperplane $\{(n_1,\dots,n_\ell)\in\RR^\ell\mid\sum_j\nu_jn_j=0\}$ and we choose a strictly simplicial fan in $(\RR_+)^\ell$ such that $H$ is a union of cones of this fan. For each basis vector $(n_1,\dots,n_\ell)\in\NN^\ell$ of a ray (dimension-one cone) of this fan, the multiplicity of the pull-back of $\eta\circ\epsilon_1$ along the divisor corresponding to this ray is given by $\sum_j\nu_jn_j$. Therefore, for each $\ell$-dimensional cone of the fan, the multiplicities at the rays all have the same sign (or are zero).

The proof of the Lemma now proceeds by decreasing induction on the maximal codimension $\ell$ of closed strata of $D$ which are contained both in $D_+$ and $D_-$. In the space $\RR^J$ we consider the various subspaces~$\RR^L$ ($L\subset J$) corresponding to these closed strata of $D$. We subdivide each octant $(\RR_+)^L$ by a strict simplical fan as above. We also assume that the fans coincide on the common faces of distinct subspaces~$\RR^L$. We denote by $\Sigma$ the fan we obtain in this way. In order to obtain such a $\Sigma$, one can construct a strict simplicial fan completely subdividing $(\RR_+)^J$ which is compatible with the hyperplane $\sum_{j\in J}\nu_j n_j=0$ and with the various octants $(\RR_+)^L$ corresponding to codimension $\ell$ strata of $D$ contained in $D_+\cap D_-$, and then restrict it to the union of these octants $(\RR_+)^L$. We also consider the sheaves $\sum_{m\in\sigma^\vee\cap M_J}\prod_{j\in J}\cI_j^{m_j}\subset\cO_X(*D)$ for $\sigma\in\Sigma$, and get a projective modification $\epsilon_\Sigma:X_\Sigma\to X$. By construction, the maximal codimension of closed strata of $\epsilon_\Sigma^{-1}(D)$ contained in $\epsilon_\Sigma^{-1}(D)_+\cap\epsilon_\Sigma^{-1}(D)_-$ is $\leq\ell-1$.
\end{proof}

For a finite set $\Phi\subset\cO_{X,0}(*D)/\cO_{X,0}$, the notion of pure monomiality is replaced by \emph{goodness} (\cf also Remark \ref{rem:openness}\eqref{rem:openness2} below).

\begin{definitio}[Goodness]\label{def:localgoodness}
We say that a finite subset $\Phi$ of $\cO_{X,0}(*D)/\cO_{X,0}$ is \index{good set of exponential factors}\emph{good} if $\#\Phi=1$ or, for any $\varphi\neq\psi$ in $\Phi$, $\varphi-\psi$ is purely monomial, that is, the Newton polyhedron $\NP(\varphi-\psi)$ is an octant with vertex in $-\NN^\ell\times\{0_{n-\ell}\}$(\cf Definition \ref{def:purmonom}).
\end{definitio}

\begin{remarque}\label{rem:goodness}
Let us give some immediate properties of local goodness (\cf \cite[I.2.1.4]{Bibi97}).
\begin{enumerate}
\item\label{rem:goodness1}
Any subset of a good set is good, and any subset consisting of one element (possibly not purely monomial) is good. If $\Phi$ is good, then its pull-back $\Phi_{\bmd}$ by the ramification $X_{\bmd}\to X$ is good for any $\bmd$. Conversely, if $\Phi_{\bmd}$ is good for some $\bmd$, then $\Phi$ is good.
\item\label{rem:goodness1b}
More generally, let $f:X'\to X$ be a morphism of complex manifolds and let $(D_{j\in J})$ be a family of smooth divisors in $X$ whose union~$D$ is a divisor with normal crossings. We set $D'=f^{-1}(D)$, and we assume that $D'=\bigcup_{j'\in J'}D'_{j'}$ is also a divisor with normal crossings and smooth components $D'_{j'\in J'}$. We will usually denote by $f:(X',D')\to(X,D)$ a mapping satisfying such properties. If $\Phi\subset \cO_{X,x_o}(*D)/\cO_{X,x_o}$ is good, then for any $x'_o\in f^{-1}(x_o)$, the subset $(f^*\Phi)_{x'_o}\subset\cO_{X',x'_o}(*D')/\cO_{X',x'_o}$ is good.
\item\label{rem:goodness2}
Any germ $\varphi_0$ of $\cO_{X,0}(*D)/\cO_{X,0}$ defines in a unique way a germ $\varphi_x\in \cO_{X,x}(*D)/\cO_{X,x}$ for $x\in D\cap U$, $U$ some open neighbourhood of~$0$. Indeed, choose a lifting $\varphi_0^*$ in $\cO_{X,0}(*D)$. It defines in a unique way a section $\varphi^*$ of $\Gamma(U,\cO_U(*D))$ for~$U$ small enough. Its germ at $x\in U\cap D$ is denoted $\varphi_x^*$. Its image in $\cO_{X,x}(*D)/\cO_{X,x}$ is~$\varphi_x$. Given two liftings $\varphi_0^*$ and $\varphi_0^\star$, their difference is in $\cO_{X,0}$. Choose $U$ so that $\varphi^*$ and $(\varphi^*-\varphi^\star)$ are respectively sections of $\Gamma(U,\cO_U(*D))$ and $\Gamma(U,\cO_U)$. Then these two liftings give the same $\varphi_x$.

Similarly, any finite subset $\Phi$ of $\cO_{X,0}(*D)/\cO_{X,0}$ defines in a unique way a finite subset, still denoted by $\Phi$, of $\cO_{X,x}(*D)/\cO_{X,x}$ for any $x$ close enough to~$0$.

Then, if $\Phi$ is is good at~$0$, it is good at any point of~$D$ in some open neighbourhood of~$0$. Note however that a difference $\varphi-\psi$ which is non-zero at $0$ can be zero along some components of $D$, a phenomenon which causes $\ccIet$ to be non-Hausdorff.

\item\label{rem:goodness3}
A subset $\Phi$ is good at~$0$ if and only if for some (or any) $\eta\in \cO_{X,0}(*D)/\cO_{X,0}$ the translated subset $\Phi+\eta$ is good.

\item\label{rem:goodness4}
For a good set $\Phi\subset\cO_{X,0}(*D)/\cO_{X,0}$, and for any \emph{fixed} $\varphi_o\in \Phi$, the subset $\{\bmm(\varphi-\varphi_o)\mid\varphi\in \Phi\}\subset\NN^\ell$ is totally ordered (\ie the Newton polyhedra $\NP(\varphi-\varphi_0)$ form a nested family). Its maximum does not depend on the choice of $\varphi_o\in \Phi$, it is denoted by $\bmm(\Phi)$ and belongs to $\NN^\ell$. We have $\bmm(\Phi)=0$ iff $\#\Phi=1$. We have $\bmm(\Phi+\eta)=\bmm(\Phi)$ for any $\eta\in \cO_{X,0}(*D)/\cO_{X,0}$.

\item\label{rem:goodness5}
Assume $\Phi$ is good. Let us fix $\varphi_o\in\Phi$ and set $\bmm:=\bmm(\Phi)$. The set $\{\bmm(\psi-\varphi_o)\mid\psi\in\Phi\}$ is totally ordered, and we denote by $\bell=\bell_{\varphi_o}$ its submaximum. For any $\varphi\neq\varphi_o$ in $\Phi$ such that $\bmm(\varphi-\varphi_o)=\bmm$, we denote by $[\varphi-\varphi_o]_{\bell}$ the class of $\varphi-\varphi_o$ in $\cO_{X,0}(*D)/\cO_{X,0}(\sum \ell_iD_i)$. For such a $\varphi$, the set
\[
\Phi_{[\varphi-\varphi_o]_{\bell}}\defin\{\psi\in\Phi\mid [\psi-\varphi_o]_{\bell}=[\varphi-\varphi_o]_{\bell}\}\subset \Phi
\]
is good at~$0$, and $\bmm(\Phi_{[\varphi-\varphi_o]_{\bell}})<\bmm=\bmm(\Phi)$. Indeed, for any $\psi\in\Phi_{[\varphi -\varphi_o]_\bell}$, $\psi -\varphi_o $ can be written as $\varphi-\varphi_o+t^{-\bell}u_\psi(t)$ with $u_\psi(t)\in\CC\{t\}$, and the difference of two such elements $\psi,\eta\in\Phi_{[\varphi -\varphi_o]_\bell}$ is also written as $t^{-\bell}v(t)$ with $v(t)\in\CC\{t\}$.
\end{enumerate}
\end{remarque}

Let \index{$SZIGMAWT$@$\wt\Sigma$}$\wt\Sigma_L\subset\ccIet_{|Y_L}$ be a finite covering of $Y_L$ (\cf\S\ref{subsec:Inormcross}). There exists $\bmd$ such that the pull-back $\wt\Sigma_{L,\bmd}$ of $\wt\Sigma_L$ by the ramification $\wt X_{\bmd}\to \wt X$ is a trivial covering of $(S^1)^\ell\times D_L$. Hence there exists a finite set $\Phi_{\bmd}\subset \cO_{X_{\bmd}}(*D)/\cO_{X_{\bmd}}$ such that the restriction of $\wt\Sigma_{L,\bmd}$ over $(S^1)^\ell\times\{0\}$ is equal to $\Phi_{\bmd}\times(S^1)^\ell\times\{0\}$. We say that $\wt\Sigma_L$ is \emph{good} at $0\in D_L$ if the corresponding subset $\Phi_{\bmd}$ is good for some (or any) $\bmd$ making the covering trivial. By the previous remark, if $\wt\Sigma_L$ is good at $0$, it is good in some neighbourhood of $0\in D_L$. Moreover, if $f:X'\to X$ is as in \ref{rem:goodness}\eqref{rem:goodness1b}, if $\wt\Sigma_L$ is good at $0$ then $\wt f^*(\wt\Sigma_L)$ is good at each point of $f^{-1}(0)$.

Let us now consider a stratified $\ccI$-covering \index{$SZIGMAWT$@$\wt\Sigma$}$\wt\Sigma\subset\ccIet$ of $\wt X$ (\cf Definition \ref{def:Sigma}).

\begin{lemme}\label{lem:Sigmagoodouvert}
If $\wt\Sigma_L$ is good at $0\!\in\!D_L$, each $\wt\Sigma_I$ is good on some neighbourhood of~$0$.
\end{lemme}

\begin{proof}
Assume first that $\wt\Sigma_L\to Y_L=(S^1)^L\times\Delta^{n-\ell}$ is trivial and thus $\wt\Sigma_L=\Phi\times Y_L$ for some finite good set $\Phi\subset\cO_{X,0}(*D)/\cO_{X,0}$. Then, if $\Delta^n$ is small enough, we have $\wt\Sigma=\bigcup_{\varphi\in\Phi}\varphi(\wt\Delta^n)$ and the assertion follows from Remark \ref{rem:goodness}\eqref{rem:goodness2}.

In general, one first performs a suitable ramification around the components of $D$ to reduce to the previous case.
\end{proof}

Lastly, let us consider the global setting, where $X$ is a complex manifold and $(D_{j\in J})$ is a family of smooth divisors on~$X$ which intersect normally, and the sheaf of ordered abelian groups $\ccI$ on $\partial\wt X(D_{j\in J})$ is as in Definitions \ref{def:Igeneral} and \ref{def:Igeneralorder}. For any nonempty subset $I$ of $J$, we set $D_I=\bigcap_{i\in I}D_i$ and $D_I^{\circ}=D_I\moins\bigcup_{j\in J\moins I}D_j$. The family $(D_I^{\circ})_{\emptyset\neq I\subset J}$ is a Whitney stratification of $D=\bigcup_{j\in J}D_j$.

\begin{definitio}[Global goodness]\label{def:globalgoodness}
Let us consider a stratified $\ccI$-covering $\wt\Sigma\subset\ccIet$. We say that it is \index{good stratified $\ccI$-covering}\emph{good} if each $\wt\Sigma_I$ is good at each point of $D_I^{\circ}$.
\end{definitio}

\subsection{Stokes filtrations on local systems}\label{subsec:Stokesstr}
As above, $(D_{j\in J})$ is a family of smooth divisors on~$X$ which intersect normally, and the sheaf of ordered abelian groups $\ccI$ on $\partial\wt X(D_{j\in J})$ is as in Definitions \ref{def:Igeneral} and \ref{def:Igeneralorder}.

\begin{definitio}[Stokes-filtered local system]\label{def:Stokesfilt}
Let $\cL$ be a local system of $\kk$-vector spaces on $\partial\wt X(D_{j\in J})$. A~\index{Stokes filtration}\emphb{Stokes filtration} of $\cL$ is a~$\ccI$-filtration of $\cL$, in the sense of Definition \ref{def:Ifiltstrat}. We denote by $(\cL,\cL_\bbullet)$ a \index{Stokes-filtered local@Stokes-filtered local system}\index{local system!Stokes-filtered}Stokes-filtered local system.
\end{definitio}

\begin{remarque}
We will freely extend in the present setting the notation of \Chaptersname \ref{chap:Stokesone} and use some easy properties considered there. In the non-ramified case for instance, the sheaves $\cL_{\leq\varphi}$ are $\RR$-constructible, according to Proposition \ref{prop:suban}.
\end{remarque}

\begin{definitio}[Goodness]\label{def:goodness}
We say that a \index{Stokes-filtered local@Stokes-filtered local system!good}Stokes-filtered local system $(\cL,\cL_\bbullet)$ is \emph{good} if its associated stratified $\ccI$-covering $\wt\Sigma(\cL)\subset\ccIet$, which is the union of the supports of the various $\gr\cL_{|Y_I}$ (with $Y_I=\varpi^{-1}(D_I^{\circ})$), is good.
\end{definitio}

\begin{theoreme}
Let us fix a good stratified $\ccI$-covering $\wt\Sigma\subset\ccIet$ and let $(\cL,\cL_\bbullet)$, $(\cL',\cL'_\bbullet)$ be Stokes-filtered local systems on $\wt X(D_{j\in J})$ whose associated $\ccI$\nobreakdash-stratified coverings $\wt\Sigma(\cL),\wt\Sigma(\cL')$ are contained in $\wt\Sigma$. Let $\lambda:(\cL,\cL_\bbullet)\to(\cL',\cL'_\bbullet)$ be a morphism of local systems which is compatible with the Stokes filtrations. Then~$\lambda$ is \index{morphism!strict}\index{strict morphism}strict.
\end{theoreme}

\begin{corollaire}
If $\wt\Sigma$ is good, the category of Stokes-filtered local systems satisfying $\wt\Sigma(\cL)\subset\wt\Sigma$ is \index{abelian (category)}abelian.
\end{corollaire}

This will be a consequence of the following generalization of Theorem \ref{th:abelianwithout}.

\begin{proposition}\label{prop:abelianwithout}
Assume that $\wt\Sigma(\cL),\wt\Sigma(\cL')\subset\wt\Sigma$ for some good stratified $\ccI$-covering $\wt\Sigma\subset\ccIet$. Let~$\lambda$ be a morphism of local systems compatible with the Stokes filtrations. Then, in the neighbourhood of any point of $\partial\wt X(D_{j\in J})$ there exist gradations of the Stokes filtrations such that the morphism is diagonal with respect to them. In particular, it is \index{morphism!strict}\index{strict morphism}strict, and the natural~$\ccI$-filtrations on the local systems $\ker \lambda$, $\im \lambda$ and $\coker \lambda$ are good Stokes-filtered local systems. Their associated stratified $\ccI$-coverings satisfy
\[
\wt\Sigma(\ker \lambda)\subset \wt\Sigma(\cL),\quad \wt\Sigma(\coker \lambda)\subset \wt\Sigma(\cL'),\quad \wt\Sigma(\im \lambda)\subset \wt\Sigma(\cL)\cap \wt\Sigma(\cL').
\]
\end{proposition}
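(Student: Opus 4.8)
The strategy is to reduce the statement, which is local on $\partial\wt X(D_{j\in J})$, to a one-dimensional statement by \emph{restriction to a suitable transversal curve}, and then invoke Theorem \ref{th:abelianwithout}. Fix a point $\wt x_o\in\partial\wt X$ lying over a point $x_o\in D_I^\circ$ for some $I\subset J$ with $\#I=\ell$. Working in local coordinates $(t_1,\dots,t_n)$ adapted to $D$, as in \S\ref{subsec:goodness}, with $D_i=\{t_i=0\}$ for $i\in L=\{1,\dots,\ell\}$, the real blow-up space is locally $(S^1)^\ell\times[0,1)^\ell\times\Delta^{n-\ell}$. First I would perform a ramification $\rho_{\bmd}$ around the components $D_1,\dots,D_\ell$ so that both stratified $\ccI$-coverings $\wt\Sigma(\cL)_{\bmd}$, $\wt\Sigma(\cL')_{\bmd}$ become trivial over $(S^1)^\ell\times\{0\}$; by Remark \ref{rem:goodness}\eqref{rem:goodness1} goodness is preserved by this pull-back, and by Lemma \ref{lem:stabpullbackstrat} the pulled-back objects are still Stokes-filtered local systems, with strictness of $\lambda$ equivalent to strictness of $\rho_{\bmd}^{-1}\lambda$ (the argument is analogous to the reduction in the proof of Lemma in \S\ref{pf:abelianwithout} before Theorem \ref{th:abelianwithout}, using Remark \ref{rem:Stokeswithramif}\eqref{rem:Stokeswithramif0}). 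So I may assume $\wt\Sigma(\cL)$, $\wt\Sigma(\cL')$ are given by good finite sets $\Phi,\Phi'\subset\cO_{X,x_o}(*D)/\cO_{X,x_o}$ with purely monomial differences.

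Next, set $\Phi''=\Phi\cup\Phi'$, which is good. The key observation is that, because $\Phi''$ is good, the order on $\ccI$ restricted to this finite index set is controlled entirely by the monomial exponents $\bmm(\varphi-\psi)\in\NN^\ell$ and the arguments of the dominant coefficients, via formula \eqref{eq:ordern}: for $\theta=(\theta_1,\dots,\theta_\ell)\in(S^1)^\ell$ and $v\in\Delta^{n-\ell}$, one has $\psi\leq_{(\theta,v)}\varphi$ iff $\reel$ of the dominant term of $\psi-\varphi$ is $\le 0$ near $(\theta,v)$. The plan is then to restrict everything to a generic complex line $t\mapsto(a_1t,\dots,a_\ell t,v_o)$ through a point near $x_o$, with $a_i\in\CC^*$ generic and $v_o$ fixed. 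On such a line, each purely monomial $\varphi-\psi=t^{-\bmm}u$ becomes $t^{-|\bmm|}\cdot(\text{unit})$ (after collecting powers), so the restricted family is a (non-ramified, after a further cyclic ramification if needed) Stokes-filtered local system on a circle $S^1$ in the sense of \S\ref{subsec:Stokesnon-ram}, with index set the image of $\Phi''$; genericity of $(a_i)$ ensures the restricted exponential factors remain pairwise distinct and the restriction of the covering is still a covering. Theorem \ref{th:abelianwithout} then provides, on each arc of this circle, a trivialization in which $\lambda$ restricted to the line is diagonal. The remaining task is to propagate this diagonalization off the line: since $\cL$, $\cL'$ and $\lambda$ are locally constant in the $v$-directions and in the radial directions $\rho_i$, and the Stokes filtration near $\wt x_o$ is determined by its restriction to $\wt\Sigma$ (as noted after Definition \ref{def:Ifiltstrat}), the local decomposition \eqref{eq:decI} extends from the line to a full neighbourhood in $(S^1)^\ell\times[0,1)^\ell\times\Delta^{n-\ell}$. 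One checks, using formula \eqref{eq:ordern} again, that the block-triangularity conditions on $\lambda$ that hold along the line hold on the neighbourhood, so $\lambda$ is block-diagonal there, hence strict; then $\ker\lambda$, $\im\lambda$, $\coker\lambda$ inherit gradations, so they are good Stokes-filtered local systems, with the stated containments of stratified $\ccI$-coverings being immediate from the diagonal form.

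\textbf{On the induction.} I expect the cleanest route actually follows the pattern of \S\ref{pf:abelianwithout} rather than a single restriction-to-a-curve step: namely, induct on the (multi)integer $\bmm(\Phi'')\in\NN^\ell$, ordered so that $\bmm(\Phi'')=0$ iff $\#\Phi''=1$ (the one-factor case, where both objects have a single exponential factor and strictness is trivial, cf.\ Example \ref{exem:triviaux}\eqref{exem:triviaux1}). Fixing $\varphi_o\in\Phi''$ and letting $\bell=\bell_{\varphi_o}$ be the submaximal exponent (Remark \ref{rem:goodness}\eqref{rem:goodness5}), one introduces the level structure $\gr_{[\cbbullet]_{\bell}}$ exactly as in \S\ref{subsec:level} (and its intrinsic/higher-dimensional variant in \S\ref{subsec:morelevel}, which applies here since $\ccI$ is stratified Hausdorff by Remark \ref{rem:IstrongHausdorff}), and shows that on a polydisc of appropriate polyradius $\lambda$ is block-diagonal with respect to the $[\cbbullet]_{\bell}$-decomposition --- this is the analogue of Corollary \ref{cor:trivI} and uses the vanishing of $H^1$ of certain restrictions of the $\cL_{<\varphi}$, which is where the one-dimensional Lemma \ref{lem:grandintervalle} gets used after restriction to a transversal line as above. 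Then one applies the inductive hypothesis to the graded pieces $\gr_{[\varphi-\varphi_o]_{\bell}}\lambda$, whose sets of exponential factors have strictly smaller $\bmm$ by Remark \ref{rem:goodness}\eqref{rem:goodness5}.

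\textbf{Main obstacle.} The delicate point is the higher-dimensional vanishing argument replacing Lemma \ref{lem:grandintervalle}: in dimension one the Stokes directions $\St(\varphi,\psi)$ are finitely many points, an interval of length $\pi/m$ meets each $\St$ at most once, and Poincar\'e duality kills $H^1$. In several variables the "Stokes hypersurfaces" $\St(\varphi,\psi)$ are real codimension-one subanalytic sets in $(S^1)^\ell\times[0,1)^\ell\times\Delta^{n-\ell}$ (Proposition \ref{prop:suban}), and one must choose a polydisc-shaped neighbourhood --- of angular widths $\pi/m_i$ in the directions $\theta_i$ where $\bmm=(m_1,\dots,m_\ell)=\bmm(\Phi'')$ --- small enough that the relevant $\cL_{<\varphi}$ have no higher cohomology. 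Because goodness forces all differences to be purely monomial, these hypersurfaces are, in the $\theta$-variables, pullbacks under the monomial map $\theta\mapsto\sum m_i\theta_i$ of finitely many points moving smoothly with the transversal parameters $(\rho,v)$; that product/fibered structure is precisely what lets the one-variable estimate propagate, and verifying it carefully --- in particular that the needed cohomology vanishing on such a polydisc reduces, by the fibration $\theta\mapsto\sum m_i\theta_i$ and homotopy invariance in the transversal directions, to the one-dimensional Lemma \ref{lem:grandintervalle} --- is the technical heart of the proof.
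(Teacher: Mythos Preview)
Your overall strategy matches the paper's: reduce to the non-ramified case on the torus $(S^1)^\ell$, induct on $\bmm(\Phi'')$ via the level structure at $\bell=\bell_{\varphi_o}$, show $\lambda$ is block-diagonal for the $[\cbbullet]_\bell$-decomposition on suitable open sets, and apply the inductive hypothesis to the graded pieces. You also correctly identify the key obstacle.

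The difference lies in how that obstacle is resolved. You aim for $H^1$-vanishing of $\cL_{<\varphi}$ on a full polydisc of angular widths $\pi/m_i$, via the fibration $\theta\mapsto\sum m_i\theta_i$ plus homotopy invariance; this is harder than needed, and the fibration step as stated is shaky (its fibres are $(\ell{-}1)$-tori, not contractible). The paper instead implements your first-paragraph idea directly: embed a circle in $(S^1)^\ell$ via $\theta\mapsto(\alpha_i\theta+\theta_{o,i})_i$ with $\gcd(\alpha_i)=1$ (your complex line is the case $\alpha_i=1$), transverse to every Stokes hypersurface. On an interval $I$ of length $\pi/m$, $m=\sum\alpha_i m_i$, the \emph{one-dimensional} Lemma \ref{lem:grandintervalle} gives $H^1(I,\cL_{<\psi|I})=0$ directly --- no multi-dimensional vanishing is required. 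One then lifts bases of $\gr_\psi\cL_{|I}$ to $\cL_{\leq\psi|I}$; because these are sections of sheaves that are locally constant in a neighbourhood of $I$, they extend to a tubular neighbourhood $\nb(I)$ in the torus, yielding the splitting \eqref{eq:decI} on all of $\nb(I)$ (this is Lemma \ref{lem:grandintervallen}). The vanishing of off-diagonal blocks of $\lambda$ and the induction then run on $\nb(I)$ exactly as in \S\ref{pf:abelianwithout}. So the propagation off the circle that you flag as delicate is achieved by extending lifted \emph{sections} via local constancy of the graded pieces, not by extending a diagonalization or by proving a higher-dimensional $H^1$-vanishing --- a much softer step than you anticipate.
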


\subsubsection*{Preliminary reductions}
As in the one-dimensional case, one reduces to the non-ramified case by a suitable $\bmd$-cyclic covering. Moreover, the strictness property is checked on the germs at any point of $\partial\wt X$, so we can work in the local setting of \S\ref{subsec:Inormcross} and restrict the filtered local system to the torus $(S^1)^\ell$. We will moreover forget about the term $\Delta^{n-\ell}$ and assume that $\ell=n$.

In the following, we will give the proof for a~$\ccI$-local system on the torus $(S^1)^n$ in the non-ramified case, that is,~$\ccI$ is the constant sheaf with fibre $\ccP_n=\CC\{t_1,\dots,t_n\}[(t_1\cdots t_n)^{-1}]/\CC\{t_1,\dots,t_n\}$. As this sheaf satisfies the Hausdorff property, many of the arguments used in the proof of Theorem \ref{th:abelianwithout} can be extended in a straightforward way for Proposition \ref{prop:abelianwithout}. Nevertheless, we will give the proof with details, as the goodness condition is new here.

\subsubsection*{Level structure of a Stokes filtration}\index{level structure}
For every $\bell\in\NN^n$, we define the notion of \index{Stokes filtration!oflevelc@of level $\geq\bell$}\emph{Stokes filtration of level $\geq\bell$} on $\cL$, by replacing the set of indices $\ccP_n=\CC[t,t^{-1}]/\CC[t]$ ($t=t_1,\dots,t_n$) by the set $\ccP_n(\bell)\defin\CC[t,t^{-1}]/t^{-\bell}\CC[t]$ (with $t^{-\bell}\defin t_1^{-\bell_1}\cdots t_n^{-\bell_n}$). We denote by $[{\cdot}]_\bell$ the map $\CC[t,t^{-1}]/\CC[t]\to\CC[t,t^{-1}]/t^{-\bell}\CC[t]$. The constant sheaf \index{$IELL$@$\ccI_1(\ell)$, $\ccI_1(\bell)$}$\ccI(\bell)$ is ordered as follows: for every connected open set~$U$ of $(S^1)^n$ and $[\varphi]_\bell,[\psi]_\bell\in\ccP_n(\bell)$, we have $[\psi]_\bell\leqU[\varphi]_\bell$ if, for some (or any) representatives $\varphi,\psi$ in $\CC[t,t^{-1}]$, $e^{|t|^\bell(\psi-\varphi)}$ has moderate growth along~$D$ in a neighbourhood of~$U$ in~$X$ intersected with~$X^*$. In particular, a Stokes filtration as defined previously has level $\geq0$.

\begin{lemme}
The natural morphism $\ccI\to\ccI(\bell)$ is compatible with the order.
\end{lemme}

\begin{proof}
Let~$U$ be a connected open set in $(S^1)^n$ and let $K$ be a compact set in~$U$. Let $\eta\in\ccP_n$ (or a representative of it). We have to show that if $e^\eta$ has moderate growth along~$D$ on $\nb(K)\moins D$, then so does $e^{|t|^\bell\eta}$. Let $\epsilon:X'\to X$ be a projective modification as in the proof of Proposition \ref{prop:suban}, let $\wt\epsilon$ be the associated morphism of real blow-up spaces, and let $K'\subset Y'$ be the inverse image of $K$ in $Y'$. Then $e^\eta$ has moderate growth along~$D$ on $\nb(K)\moins D$ iff $e^{\eta\circ\epsilon}$ has moderate growth along~$D'$ on $\nb(K')\moins D'\simeq\nb(K)\moins D$. It is therefore enough to prove the lemma when $\eta$ is purely monomial, and the result follows from \eqref{eq:ordern}.
\end{proof}

Given a Stokes filtration $(\cL,\cL_\bbullet)$ (of level $\geq0$), we set
\[
\cL_{\leq[\varphi]_\bell}=\sum_\psi\beta_{[\psi]_\bell\leq[\varphi]_\bell}\cL_{\leq\psi},
\]
where the sum is taken in $\cL$. Then
\[
\cL_{<[\varphi]_\bell}\defin\sum_{[\psi]_\bell}\beta_{[\psi]_\bell<[\varphi]_\bell}\cL_{\leq[\psi]_\bell}
=\sum_\psi\beta_{[\psi]_\bell<[\varphi]_\bell}\cL_{\leq\psi}.
\]

We can also pre-$\ccI$-filter $\gr_{[\varphi]_\bell}\cL$ by setting, for $\psi\in\ccP_n$,
\[
(\gr_{[\varphi]_\bell}\cL)_{\leq\psi}=(\cL_{\leq\psi}\cap\cL_{\leq[\varphi]_\bell}+\cL_{<[\varphi]_\bell})/\cL_{<[\varphi]_\bell}.
\]

\begin{proposition}\label{prop:leveln}
Assume $(\cL,\cL_\bbullet)$ is a Stokes filtration (of level $\geq0$) and let $\Phi$ be the finite set of its exponential factors.
\begin{enumerate}
\item\label{prop:leveln1}
For each $\bell\in\NN^n$, $\cL_{\leq[\cbbullet]_\bell}$ defines a Stokes filtration $(\cL,\cL_{[\cbbullet]_\bell})$ of level \hbox{$\geq\bell$} on~$\cL$, $\gr_{[\varphi]_\bell}\cL$ is locally isomorphic to $\bigoplus_{\psi,\,[\psi]_\bell=[\varphi]_\bell}\gr_\psi\cL$, and the set of exponential factors of $(\cL,\cL_{[\cbbullet]_\bell})$ is $\Phi(\bell)\defin\image(\Phi\to\ccP_n(\bell))$.
\item\label{prop:leveln2}
For every $[\varphi]_\bell\in \Phi(\bell)$, $(\gr_{[\varphi]_\bell}\cL,(\gr_{[\varphi]_\bell}\cL)_\bbullet)$ is a Stokes filtration and its set of exponential factors is the pull-back of $[\varphi]_\bell$ by $\Phi\to \Phi(\bell)$.
\item\label{prop:leveln3}
Let us set
\[
\big(\gr_\bell\cL,(\gr_\bell\cL)_\bbullet\big)\defin
\bigoplus_{[\psi]_\ell\in \Phi(\bell)}\big(\gr_{[\psi]_\bell}\cL,(\gr_{[\psi]_\bell}\cL)_\bbullet\big).
\]
Then $(\gr_\bell\cL,(\gr_\bell\cL)_\bbullet)$ is a Stokes-filtered local system (of level $\geq0$) which is locally isomorphic to $(\cL,\cL_\bbullet)$.
\end{enumerate}
\end{proposition}

\begin{proof}
Similar to that of Proposition \ref{prop:level}.
\end{proof}

\begin{remarque}\label{rem:stepconstrbell}
Similarly to Remark \ref{rem:stepconstr}, we note that, as a consequence of the last statement of the proposition, given a fixed Stokes-filtered local system $(\cG_\bell,\cG_{\bell,\bbullet})$ graded at the level $\bell\geq0$, the pointed set of isomorphism classes of Stokes-filtered local systems $(\cL,\cL_\bbullet)$ equipped with an isomorphism $f_\bell:(\gr_\bell\cL,(\gr_\bell\cL)_\bbullet)\isom(\cG_\bell,\cG_{\bell,\bbullet})$ is in bijection with the pointed set $H^1\big((S^1)^\ell,\cAut^{<0}(\cG_\bell,\cG_{\bell,\bbullet})\big)$.
\end{remarque}

\begin{proof}[\proofname\ of Proposition \ref{prop:abelianwithout}]
Let $\Phi$ be a good finite set in $\ccP_n$ such that $\#\Phi\geq2$. As in Remark \ref{rem:goodness}\eqref{rem:goodness4}, let us set $\bmm=\bmm(\Phi)=\max\{\bmm(\varphi-\psi)\mid\varphi\neq\psi\in \Phi\}$ and let us fix $\varphi_o\in \Phi$ for which there exists $\varphi\in \Phi$ such that $\bmm(\varphi-\varphi_o)=\bmm$. The subset $\{\bmm(\varphi-\varphi_o)\mid\varphi\in \Phi\}$ is totally ordered, its maximum is~$\bmm$, and we denote by $\bell$ its submaximum (while $\bmm$ is independent of $\varphi_o$, $\bell$ may depend on the choice $\varphi_o$).

Let $\varphi\in \Phi-\varphi_o$. If $\bmm(\varphi)=\bmm$, then the image of $\varphi$ in $(\Phi-\varphi_o)(\bell)$ is nonzero, otherwise $\varphi$ is zero. For every $\varphi\in \Phi-\varphi_o$, the subset $(\Phi-\varphi_o)_{[\varphi]_\bell}\defin\{\psi\in\nobreak \Phi-\nobreak\varphi_o\mid\nobreak[\psi]_\bell=\nobreak[\varphi]_\bell\}$ is good (any subset of a good set is good) and $\bmm((\Phi-\varphi_o)_{[\varphi]_\bell})\leq\bell<\bmm$ (\cf Remark \ref{rem:goodness}\eqref{rem:goodness5}).

\begin{corollaire}[of Prop\ptbl\ref{prop:leveln}]\label{cor:leveln}
Let $(\cL,\cL_\bbullet)$ be a good Stokes filtration, let $\Phi''$ be a good finite subset of $\ccP_n$ containing $\Phi(\cL,\cL_\bbullet)$, set $\bmm=\bmm(\Phi'')$, fix $\varphi_o$ as above and let $\bell$ be the corresponding submaximum element of $\{\bmm(\varphi-\varphi_o)\mid\varphi\in \Phi''\}$.

Then, for every $[\varphi]_{\bell}\in (\Phi''-\varphi_o)(\bell)$, $(\gr_{[\varphi]_{\bell}}\cL[-\varphi_o],(\gr_{[\varphi]_{\bell}}\cL[-\varphi_o])_\bbullet)$ is a good Stokes filtration and $\bmm_{\max}(\gr_{[\varphi]_{\bell}}\cL[-\varphi_o],(\gr_{[\varphi]_{\bell}}\cL[-\varphi_o])_\bbullet)\leq\bell<\bmm$.\qed
\end{corollaire}

Let us fix $\theta_o\in(S^1)^n$ and $\alpha_1,\dots,\alpha_n\in\NN^*$ such that $\gcd(\alpha_1,\dots,\alpha_n)=1$. The map $\theta\mto(\alpha_1\theta+\theta_{o,1},\dots,\alpha_n\theta+\theta_{o,n})$ embeds~$S^1$ in $(S^1)^n$. In the following, $S^1_{\alphag,\theta_o}$ denotes this circle.

Let $\Phi\subset\ccP_n$ be good finite set. Let us describe the Stokes hypersurfaces $\St(\varphi,\psi)$ with $\varphi\neq\psi\in\Phi$. Since $\varphi-\psi$ is purely monomial, it is written $u_{\bmm}(t)t^{-\bmm}$ with $\bmm=(m_1,\dots,m_n)\in\NN^n\moins\{0\}$ and $u_{\bmm}(0)\neq0$. Then
\[
\St(\varphi,\psi)=\Big\{(\theta_1,\dots,\theta_n)\in(S^1)^n\mid\ts\sum_j m_j\theta_j-\arg u_{\bmm}(0)=\pm\pi/2\bmod2\pi\Big\},
\]
so in particular it is the union of translated subtori of codimension one. As a consequence, the circle $S^1_{\alphag,\theta_o}$ intersects transversally every Stokes hypersurface. We call \emphb{Stokes points} with respect to $\Phi$ the intersection points when $\varphi,\psi$ vary in $\Phi$.

\begin{lemme}\label{lem:grandintervallen}
Let~$I$ be any open interval of $S^1_{\alphag,\theta_o}$ such that, for any $\varphi,\psi\in \Phi$, $\card(I\cap\nobreak\St(\varphi,\psi))\leq1$. Then there exists an open neighbourhood $\nb(I)$ such that the decompositions \eqref{eq:decI} hold on~$\nb(I)$.
\end{lemme}

\begin{proof}
A proof similar to that of Lemma \ref{lem:grandintervalle} gives that $H^1(I,\cL_{<\psi|I})=\nobreak0$ for any~$\psi$. We can then lift for any $\psi\in\Phi$ a basis of global sections of $\gr_\psi\cL_{|I}$ as a family sections of $\cL_{\leq\psi|I}$, which are defined on some $\nb(I)$. The images of these sections in $\gr_\psi\cL_{|\nb(I)}$ restrict to the given basis of $\gr_\psi\cL_{|I}$ and thus form a basis of $\gr_\psi\cL_{|\nb(I)}$ if $\nb(I)$ is simply connected, since $\gr_\psi\cL$ is a locally constant sheaf. We therefore get a section $\gr_\psi\cL_{|\nb(I)}\to\cL_{\leq\psi|\nb(I)}$ of the projection $\cL_{\leq\psi|\nb(I)}\to\gr_\psi\cL_{|\nb(I)}$.

For every $\varphi\in\Phi$, we have a natural inclusion $\beta_{\psi\leq\varphi}\cL_{\leq\psi}\hto\cL_{\leq\varphi}$, and we deduce a morphism $\bigoplus_{\psi\in \Phi}\beta_{\psi\leq\varphi}\gr_\psi\cL_{|\nb(I)}\to\cL_{\leq\varphi|\nb(I)}$, which is seen to be an isomorphism on stalks at points of $I$, hence on a sufficiently small $\nb(I)$, according to the local decomposition \eqref{eq:decI}. The same result holds then for any $\eta\in\ccP_n$ instead of $\varphi$, since $\cL_{\leq\eta}=\sum_{\varphi\in\Phi}\beta_{\varphi\leq\eta}\cL_{\leq\varphi}$ and similarly for the graded pieces.
\end{proof}

\begin{corollaire}\label{cor:trivIn}
In the setting of Corollary \ref{cor:leveln}, let us set $\bmm=(m_1,\dots,m_n)$ and $m=\sum_im_i\alpha_i$. Let~$I$ be any open interval of~$S^1$ of length $\pi/m$ with no Stokes points as boundary points. Then, if $\nb(I)$ is a sufficiently small tubular neighbourhood of~$I$, $(\cL,\cL_\bbullet)_{|\nb(I)}\simeq(\gr_\bell\cL,(\gr_\bell\cL)_\bbullet)_{|\nb(I)}$.
\end{corollaire}

\begin{proof}
By the choice of $\bmm$ and the definition of $m$,~$I$ satisfies the assumption of Lemma \ref{lem:grandintervallen} for both $(\cL,\cL_\bbullet)$ and $(\gr_\bell\cL,(\gr_\bell\cL)_\bbullet)$, hence, when restricted to $\nb(I)$, both are isomorphic to the trivial Stokes filtration determined by $\gr\cL$ restricted to~$\nb(I)$.
\end{proof}

\subsubsection*{End of the proof of Proposition \ref{prop:abelianwithout}}
Let $\lambda:(\cL,\cL_\bbullet)\to(\cL',\cL'_\bbullet)$ be a morphism of Stokes-filtered local systems on $(S^1)^n$ with set of exponential factors contained in $\Phi''$. The proof that~$\lambda$ is strict and that $\ker \lambda$, $\im \lambda$ and $\coker \lambda$ (equipped with the naturally induced pre-$\ccP_n$-filtrations) are Stokes filtrations follows from the local decomposition of the morphism, the proof of which is be done by induction on $\bmm=\bmm(\Phi'')$, with $\Phi''=\Phi\cup \Phi'$. The result is clear if $\bmm=0$ (so $\Phi''=\{0\}$), as both Stokes filtrations have only one jump. The remaining part of the inductive step is completely similar to the end of the proof of Theorem \ref{th:abelianwithout} by working on $\nb(I)$ instead of $I$, and we will not repeat it. We obtain that, for any such $I$, $\lambda_{|\nb(I)}$ is graded, so this ends the proof of the proposition.
\end{proof}

\subsection{Behaviour by pull-back}\label{subsec:pullback}\index{pull-back (inverse image)!of Stokes filtrations}
Let $f:(X',D')\to(X,D)$ be a mapping as in Remark \ref{rem:goodness}\eqref{rem:goodness1b}. According to \S\ref{subsec:realblowup}, there is a natural morphism $\wt f:\wt X{}'(D'_{j'\in J'})\to\wt X(D_{j\in J})$ lifting $f:X'\to X$. There are natural inclusions $\wtj:X\moins D=X^*\hto\wt X(D_{j\in J})$ and $\wtj{}':X'\moins D'=X^{\prime*}\hto\wt X{}'(D'_{j'\in J'})$, and we have $\wt f\circ\wtj{}'=\wtj\circ f$.

Let us describe such a mapping in a local setting: the space $(X,D)$ is the polydisc $\Delta^n$ with coordinates $(x_1,\dots,x_n)$ and $D=\{x_1\cdots x_\ell=0\}$, and similarly for $(X',D')$, and $f(0)=0$ in these coordinates. We have coordinates $(\theta_1,\dots,\theta_\ell,\rho_1,\dots,\rho_\ell,x_{\ell+1},\dots,x_n)$ on $\wt X$, and similarly for $\wt X'$. In these local coordinates, we set $f=(f_1,\dots,f_n)$, with
\begin{equation}\label{eq:formefXD}
f_1(x')=u'_1(x')x^{\prime\kk_1},\dots,f_\ell(x')=u'_\ell(x')x^{\prime\kk_\ell},
\end{equation}
where $u'_j(x')$ are local units, and $\kk_j=(k_{j,1},\dots,k_{j,\ell'})\in\NN^{\ell'}\moins\{0\}$. We also have $f_j(0)=0$ for $j\geq\ell+1$. We note that the stratum $D'_{L'}$ going through the origin in~$X'$ (defined by $x'_1=\cdots=x'_{\ell'}=0$) is sent to the stratum $D_L$ going to the origin in~$X$ (defined by $x_1=\cdots=x_\ell=0$), maybe not submersively.

When restricted to $\varpi^{\prime-1}(D'_{L'})$ defined by the equations $\rho'_{j'}=0$, $j'=1,\dots,\ell'$, the map $\wt f$ takes values in $\varpi^{-1}(D_L)$ and is given by the formula (with $\rho_1=\cdots=\rho_\ell=0$):
\begin{equation}\label{eq:formewtfXD}
(\theta'_1,\dots,\theta'_{\ell'},x'_{\ell'+1},\dots,x'_{n'})\mto
\begin{pmatrix}
\sum k_{1,i}\theta'_i+\arg u'_1(0,x'_{\ell'+1},\dots,x'_{n'})\\
\vdots\\
\sum k_{\ell,i}\theta'_i+\arg u'_\ell(0,x'_{\ell'+1},\dots,x'_{n'})\\
f_{\ell+1}(0,x'_{\ell'+1},\dots,x'_{n'})\\
\vdots\\
f_n(0,x'_{\ell'+1},\dots,x'_{n'})
\end{pmatrix}
\end{equation}

Going back to the global setting, we have a natural morphism $f^*:\wt f{}^{-1}\wtj_*\cO_{X^*}\to\wtj{}'_*\cO_{X^{\prime*}}$, which sends $\wt f{}^{-1}(\wtj_*\cO_{X^*})^\lb$ to $(\wtj{}'_*\cO_{X^{\prime*}})^\lb$. This morphism is compatible with the order: it sends $\wt f{}^{-1}\cA^\modD_{\wt X(D_{j\in J})}$ to $\cA^{\rmod D'}_{\wt X{}'(D'_{j'\in J'})}$.

\medskip
We consider the sheaves $\wt\ccI$ and $\ccI$ on $\wt X$ and $\wt\ccI{}'$ and $\ccI'$ on $\wt X{}'$, relative to the divisors~$D$ and $D'$.

\begin{proposition}\label{prop:f*inj}
The morphism $f^*$ sends $\wt f^{-1}\wt\ccI$ to $\wt\ccI{}'$ and induces a morphism $f^*:\wt f^{-1}\ccI\to\ccI'$, which is compatible with the order. Moreover, if $\wt f:\wt X{}'\to\wt X$ is open, the morphism $f^*$ is injective and strictly compatible with the order.
\end{proposition}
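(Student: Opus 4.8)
The assertions being local on $\wt X{}'$, the plan is to work in the coordinates of \eqref{eq:formefXD}, where $f_j=u'_jx^{\prime\kk_j}$ ($1\le j\le\ell$, $u'_j$ a unit, $\kk_j\in\NN^{\ell'}\moins\{0\}$) and $f_j(0)=0$ for $j>\ell$, the reduced divisors being $D=\{x_1\cdots x_\ell=0\}$ and $D'=\{x'_1\cdots x'_{\ell'}=0\}$ with $D'=f^{-1}(D)$. First I would show that $f^*$ maps $\wt f^{-1}\wt\ccI_{\bmd}$ into $\wt\ccI{}'_{\bmd'}$ for a suitable $\bmd'$ (depending on $\bmd$ and the $\kk_j$): taking each $d'_i$ to be a common multiple of $d_1,\dots,d_\ell$, the map $f$ lifts to a holomorphic map $f_{\bmd',\bmd}:X'_{\bmd'}\to X_{\bmd}$ between the ramified covers (one only has to extract a $d_j$-th root of the unit $u'_j$, which is possible locally on $X'$), and $f_{\bmd',\bmd}^*$ sends $\cO_{X_{\bmd}}(*D)$ into $\cO_{X'_{\bmd'}}(*D')$ since $f_{\bmd',\bmd}^*$ of a defining function of a component of $D$ is a unit times a monomial in the defining functions of $D'$. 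By Lemma \ref{lem:ramifcoverI} this says exactly $f^*(\wt f^{-1}\wt\ccI_{\bmd})\subset\wt\ccI{}'_{\bmd'}$, and taking the directed union over $\bmd$ gives $f^*(\wt f^{-1}\wt\ccI)\subset\wt\ccI{}'$. Since $f^*$ also sends $\wt f^{-1}(\wtj_*\cO_{X^*})^\lb$ into $(\wtj{}'_*\cO_{X^{\prime*}})^\lb$, it sends the intersection $\wt f^{-1}\big(\wt\ccI\cap(\wtj_*\cO_{X^*})^\lb\big)$ into $\wt\ccI{}'\cap(\wtj{}'_*\cO_{X^{\prime*}})^\lb$, hence induces $f^*:\wt f^{-1}\ccI\to\ccI'$. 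Compatibility with the order is then immediate from the fact, recorded just before the statement, that $f^*$ maps $\wt f^{-1}\cA_{\wt X(D_{j\in J})}^\modD$ into $\cA_{\wt X{}'(D'_{j'\in J'})}^{\rmod D'}$: if $\wt\eta$ lifts a local section of $\ccI$ with $e^{\wt\eta}$ of moderate growth, then $e^{f^*\wt\eta}=f^*(e^{\wt\eta})$ has moderate growth along $D'$.

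For the last assertion, assume $\wt f$ open, and argue on stalks at a point $\wt x'$ with $\wt f(\wt x')=\wt x_0$. Two elementary observations are used: since $D'=f^{-1}(D)$ one has $f^{-1}(X^*)=X^{\prime*}$, and since $\wt f$ lifts $f$ one has $\wt f(\partial\wt X{}')\subset\partial\wt X$; hence for any open $V'\ni\wt x'$, $\wt f(V')\cap X^*=\wt f(V'\cap X^{\prime*})$, which by openness of $\wt f$ is a neighbourhood of $\wt x_0$ inside $X^*$. For injectivity, if $\wt\eta$ is a germ of $\wt\ccI$ at $\wt x_0$ with $f^*\wt\eta=\wt\eta\circ f$ bounded on $V'\cap X^{\prime*}$, then $\wt\eta$ is bounded on $\wt f(V')\cap X^*$, i.e.\ near $\wt x_0$, so its class in $\ccI$ is $0$. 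For strict compatibility with the order the same scheme applies, carrying the estimate: a local equation $g$ of $D$ near $\wt x_0$ pulls back to $g\circ f=\big(\prod_{j=1}^\ell u'_j\big)\,x^{\prime\sum_j\kk_j}$ with $\sum_{j=1}^\ell\kk_j$ having all entries $\ge1$ (every component of $D'$ maps into $D$), so near $\wt x'$, where $|x'_i|\le1$, one has $|g\circ f|\le C|g'|$ for a local equation $g'$ of $D'$, whence $|g'|^{-N}\le C^N|g\circ f|^{-N}$. Plugging this into the moderate-growth estimate for $f^*(e^{\wt\eta})$ along $D'$ on $V'\cap X^{\prime*}$ produces a moderate-growth estimate for $e^{\wt\eta}$ along $D$ on $\wt f(V')\cap X^*$; together with injectivity this shows that $f^*\bar\eta\le0$ (resp.\ $<0$) forces $\bar\eta\le0$ (resp.\ $<0$).

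The routine parts are the two elementary growth comparisons and the observation that $f^*$ on $\wtj_*\cO_{X^*}$ is already globally defined, so no extra gluing is needed (independence of the chosen local ramified covering is part of the construction of $\wt\ccI$ in Definition \ref{def:Igeneral} and is not an obstacle here). The step that really uses the hypotheses, and which I expect to be the main point to get right, is the last paragraph: openness of $\wt f$ is precisely what converts a sectorial estimate for $f^*\wt\eta$ near $\wt x'$ into an estimate for $\wt\eta$ on a genuine neighbourhood of $\wt x_0$, and the identity $D'=f^{-1}(D)$ is used both to control $f^{-1}(X^*)$ and to ensure that the pull-back of a defining function of $D$ is monomial (times a unit) in the defining functions of $D'$, yielding the inequality $|g\circ f|\le C|g'|$.
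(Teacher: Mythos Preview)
Your proof is correct and follows essentially the same route as the paper: the same ramified-cover diagram for $f^*(\wt f^{-1}\wt\ccI_{\bmd})\subset\wt\ccI{}'_{\bmd'}$, and the same openness argument (pushing sectorial bounds forward through $\wt f$) for injectivity and strictness. Your strictness step is slightly more elaborate than necessary---the paper simply takes $h'=h\circ f$ as the defining function for the $D'$-moderate-growth estimate, so the same exponent $N$ transfers directly without the intermediate comparison $|g\circ f|\le C|g'|$---but your version is also fine.
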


\begin{remarque}\label{rem:wtfopen}
If $\dim X=1$, then $\wt f$ is \emph{open}. Indeed, there are local coordinates in~$X'$ where~$f$ is expressed as a monomial. The assertion is easy to see in this case.
\end{remarque}

\begin{proof}[\proofname\ of Proposition \ref{prop:f*inj}]
Let us prove the first statement. It is clear that $f^*$ sends $\wt f^{-1}\wt\ccI_{\bun}$ to $\wt\ccI{}'_{\bun}$. In general, we note that the assertion is local on $X$ and $X'$ and, given a local ramified covering $\rho_{\bmd}:X_{\bmd}\to X$, there is a commutative diagram
\[
\xymatrix{
X'_{\bmd'}\ar[d]_g\ar[r]^-{\rho'_{\bmd'}}&X'\ar[d]^f\\
X_{\bmd}\ar[r]^-{\rho_{\bmd}}&X
}
\]
for a suitable $\bmd'$ (this is easily seen in local coordinates in $X$ and $X'$ adapted to~$D$ and $D'$). The morphism $\rho_{\bmd'}^{\prime-1}f^*:\rho_{\bmd'}^{\prime-1}f^{-1}\cO_{X^*}\to\rho_{\bmd'}^{\prime-1}\cO_{X^{\prime*}}$ is identified with $g^*:g^{-1}\cO_{X_{\bmd}^*}\to\cO_{X^{\prime*}_{\bmd'}}$ since $\rho_{\bmd}$ and $\rho'_{\bmd'}$ are coverings, and $f^*$ is recovered from $g^*$ as the restriction of $\rho'_{\bmd',*}(g^*)$ to $\cO_{X^{\prime*}}\subset\rho'_{\bmd',*}\cO_{X^{\prime*}_{\bmd'}}$.

As we know that $g^*$ sends $\wt g^{-1}\varpi_{\bmd}^{-1}\cO_{X_{\bmd}}(*D)$ to $\varpi_{\bmd'}^{\prime-1}\cO_{X'_{\bmd'}}(*D')$, we conclude by applying $\rho'_{\bmd',*}$ and intersecting with $\wtj{}'_*\cO_{X^{\prime*}}$ that $f^*$ sends $\wt f^{-1}\wt\ccI_{\bmd}$ to $\wt\ccI{}'_{\bmd'}$, hence in~$\wt\ccI{}'$.

For the injectivity statement, it is enough to prove that, if $\wt f:\wt X{}'\to \wt X$ is open, then $f^*:\wt f{}^{-1}\wtj_*\cO_{X^*}/\wt f{}^{-1}(\wtj_*\cO_{X^*})^\lb\to\wtj{}'_*\cO_{X^{\prime*}}/(\wtj{}'_*\cO_{X^{\prime*}})^\lb$ is injective. Let $y'\in\partial\wt X{}'$ and set $y=\wt f(y')\in\partial\wt X$. Note first that $\partial\wt X{}'=\wt f{}^{-1}(\partial\wt X)$. If $V(y')$ is an open neighbourhood of $y'$ in $\wt X{}'$, then $\wt f(V(y'))$ is an open neighbourhood of $y$ in $\wt X$ by the openness assumption, and we have
\[
\wt f(V(y'))\moins\partial\wt X=\wt f\big(V(y')\moins\wt f^{-1}(\partial\wt X)\big)=\wt f(V(y')\moins\partial\wt X{}').
\]
If $\lambda$ is a local section of $\wt f{}^{-1}\wtj_*\cO_{X^*}$ at $y'$, it is defined on such a $V(y')$. If $f^*\lambda$ is bounded on $V(y')\moins\partial\wt X{}'$, then $\lambda$ is a bounded section of $\cO_{X^*}$ on the open set $\wt f(V(y')\moins\partial\wt X{}')=\wt f(V(y'))\moins\partial\wt X$, hence is a local section of $\wt f{}^{-1}(\wtj_*\cO_{X^*})^\lb$.

Let us show the strictness property. It means that, for any $y'\in \wt f^{-1}(y)$, $\varphi\circ f\leqyp0$ implies $\varphi\leqy0$ if $\varphi\in\ccI_{\wt X,y}$. Let $h$ be a local equation of $D$ and set $h'=h\circ f$. The relation $\varphi\circ f\leqyp0$ means $|e^\varphi\circ f|\leq|h'|^{-N}=|h\circ f|^{-N}$ for some $N\geq0$ on $V(y')\moins\partial\wt X{}'$. We then have $|e^\varphi|\leq|h|^{-N}$ on $\wt f(V(y'))\moins\partial\wt X$, hence $\varphi\leqy0$ according to the openness of $\wt f$.
\end{proof}

In general, the map $f^*$ may not be injective. Indeed, (in the local setting) given $\varphi\in\cO_{X,0}(*D)/\cO_{X,0}$, $f^*\varphi$ may have no poles along $D'$ near $0\in X'$. More precisely, let $\varphi$ be a local section of $\ccI$ at $0\in X$ and let $\wt\Sigma_\varphi\subset\ccIet$ be the image by $\varphi$ of a small open neighbourhood of $0$. Then $f^*$ induces a map from $\wt f^{-1}\wt\Sigma_\varphi$ to $\ccIpet$ whose image is equal to $\wt\Sigma_{f^*\varphi}$.

With a goodness assumption (which is automatically satisfied in dimension one) we recover the injectivity.

\begin{lemme}\label{lem:0implique0}
Assume that $\varphi\in\cO_{X,0}(*D)/\cO_{X,0}$ is purely monomial. Then,
\[
f^*\varphi=\nobreak0\implique\varphi=0.
\]
\end{lemme}

\begin{proof}
If $\varphi\neq0$ let us set $\varphi=U(x)/x^{\bmm}$, where $U(x)$ is a local unit and $\bmm=(m_1,\dots,m_\ell)\in\NN^\ell\moins\{0\}$. Using the notation above for $f$, we have $f^*\varphi=U'(x')/x^{\prime\bmm'}$, where $U'=f^*U/u_1^{\prime m_1}\cdots u_\ell^{\prime m_\ell}$ is a local unit and $\bmm'=m_1\kk_1+\cdots+m_\ell\kk_\ell$. Then $\bmm'\in\NN^{\ell'}\moins\{0\}$, so $f^*\varphi\neq0$.
\end{proof}

\begin{corollaire}
Let $\varphi$ be a local section of $\ccI$ at $0\in X$ which is purely monomial. Then $f^*$ in injective on $\wt f^{-1}\Sigma_\varphi$.\qed
\end{corollaire}

We also recover a property similar to strictness.

\begin{lemme}\label{lem:negimpliqueneg}
With the same assumption as in Lemma \ref{lem:0implique0}, if $f^*\varphi\leq0$ (\resp $f^*\varphi<0$) at $(\theta'_o,0)\in\partial\wt X'_{|D_{L'}}$, then $\varphi\leq0$ (\resp $\varphi<0$) at $(\theta_o,0)=\wt f(\theta'_o,0)$.
\end{lemme}

\begin{proof}
We keep the same notation as above, and it is enough to consider the case $f^*\varphi<0$, according to Lemma \ref{lem:0implique0}. The assumption can then be written~as
\[
\arg U'(0)-\big\langle m_1\kk_1+\cdots+m_\ell\kk_\ell,\theta'_o\big\rangle\in(\pi/2,3\pi/2)\mod2\pi,
\]
where $\langle\,,\rangle$ is the standard scalar product on $\RR^{\ell'}$. Notice now that $\theta_{o,j}=\langle\kk_j,\theta'_o\rangle+\arg u'_j(0)$ for $j=1,\dots,\ell$, so that the previous relation is written as
\[
\arg U(0)-\sum m_j\theta_{o,j}\in(\pi/2,3\pi/2)\mod2\pi,
\]
which precisely means that $\varphi\lethetao0$.
\end{proof}

Let now $(\cL,\cL_\bbullet)$ be a Stokes-filtered local system on $\partial\wt X$. Its \index{pull-back (inverse image)!of Stokes-filtered local systems}pull-back $f^+(\cL,\cL_\bbullet)$ (\cf Definition \ref{def:pullbackpreI}), which is a priori a pre-$\ccI$-filtered local system, is also a Stokes-filtered local system (\cf Lemma \ref{lem:stabpullback}), and its associated stratified $\ccI$-covering is $f^*(\wt f^{-1}\wt\Sigma(\cL))$ (\cf Lemma \ref{lem:stabpullbackstrat}).

\begin{proposition}\label{prop:pullbackgood}
With the previous assumptions on $f$, let us assume that $(\cL,\cL_\bbullet)$ is good. Then $f^+(\cL,\cL_\bbullet)$ is also good.
\end{proposition}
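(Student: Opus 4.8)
The plan is to reduce the statement to the local, purely monomial situation already contained in Remark \ref{rem:goodness}\eqref{rem:goodness1b}, and then to assemble it stratum by stratum. Recall first that, by Lemma \ref{lem:stabpullbackstrat}, $f^+(\cL,\cL_\bbullet)$ is a $\ccI'$-filtered local system whose associated stratified $\ccI'$-covering is $\Sigma'=q_f\big(\wt f^{-1}(\Sigma)\big)$, where $\Sigma=\wt\Sigma(\cL)$. By Definition \ref{def:globalgoodness}, goodness of $f^+(\cL,\cL_\bbullet)$ means that, for every nonempty $I'\subset J'$, the piece $\Sigma'_{I'}=\Sigma'\cap\mu^{\prime-1}(Y'_{I'})$, with $Y'_{I'}=\varpi^{\prime-1}(D^{\prime\circ}_{I'})$, is good at every point of $D^{\prime\circ}_{I'}$. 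So I would fix $x'_o\in D^{\prime\circ}_{I'}$, set $x_o=f(x'_o)$, and work in local coordinates adapted to $D$ and $D'$ as in \eqref{eq:formefXD}: then $x_o$ lies in the stratum $D^\circ_L$ through the origin of $X$, with $L=\{1,\dots,\ell\}$, and by formula \eqref{eq:formewtfXD} the map $\wt f$ sends $Y'_{I'}$ into $Y_L=\varpi^{-1}(D^\circ_L)$.

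Next I would identify $\Sigma'_{I'}$, near $x'_o$, as the image by $q_f=f^*$ of the restriction $\wt f^{-1}(\Sigma_L)_{|Y'_{I'}}$, where $\Sigma_L=\Sigma\cap\mu^{-1}(Y_L)$; this follows from $\wt f(Y'_{I'})\subset Y_L$ together with the definitions of $q_f$ and of the restriction of a covering to a stratum. Since $(\cL,\cL_\bbullet)$ is good, $\Sigma_L$ is good at $x_o$: choosing a local ramified covering $\rho_{\bmd}\colon X_{\bmd}\to X$ of sufficiently divisible order $\bmd$ along $D$ which trivializes $\Sigma_L$, its fibre over $(S^1)^\ell$ is a good finite set $\Phi_{\bmd}\subset\cO_{X_{\bmd},0}(*D)/\cO_{X_{\bmd},0}$. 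As in the proof of Proposition \ref{prop:f*inj}, there exist $\bmd'$ and a commutative square with a morphism $g\colon X'_{\bmd'}\to X_{\bmd}$ lifting $f$; I would check that $g$ again satisfies the hypotheses of \S\ref{subsec:pullback} (so that $g^*$ acts on germs modulo holomorphic functions and $g^{-1}$ of a normal crossing divisor is one), and that pulling $\Sigma'_{I'}$ back by $\rho'_{\bmd'}$ yields the trivial covering whose fibre is $g^*\Phi_{\bmd}$.

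It then remains to see that $g^*\Phi_{\bmd}$ is good, which is the content of Remark \ref{rem:goodness}\eqref{rem:goodness1b}: for $\varphi\ne\psi$ in $\Phi_{\bmd}$ the difference $\varphi-\psi=U(x)x^{-\bmm}$ is purely monomial, and the computation in the proof of Lemma \ref{lem:0implique0} shows $g^*(\varphi-\psi)=U'(x')x^{\prime-\bmm'}$ with $\bmm'=m_1\kk_1+\cdots+m_\ell\kk_\ell\in\NN^{\ell'}\moins\{0\}$ and $U'$ a unit; hence $g^*(\varphi-\psi)$ is again purely monomial and nonzero, so $g^*$ is injective on $\Phi_{\bmd}$ and $g^*\Phi_{\bmd}$ is good. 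By Remark \ref{rem:goodness}\eqref{rem:goodness2} this propagates to the whole fibre over a neighbourhood, so $\Sigma'_{I'}$ is good at $x'_o$; since $x'_o$ and $I'$ were arbitrary, $f^+(\cL,\cL_\bbullet)$ is good.

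The main obstacle is not a hard estimate but the bookkeeping of ramifications and strata: one must verify that the ramified coverings $X_{\bmd}\to X$ and $X'_{\bmd'}\to X'$ can be chosen compatibly with $f$ (as in Proposition \ref{prop:f*inj}) while staying adapted to $D$ and $D'$, and that the operations ``restrict to $Y'_{I'}$'', ``pull back by $\wt f$'', ``apply $q_f$'', and ``trivialize by ramification'' commute in the way used above. Once this is set up, the goodness statement reduces to the elementary monomial computation of Lemma \ref{lem:0implique0}, and no use of openness of $\wt f$ (hence no injectivity of $f^*$ in general) is needed.
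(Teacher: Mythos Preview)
Your proposal is correct and follows essentially the same route as the paper: reduce via Lemma \ref{lem:stabpullbackstrat} to checking that the pulled-back stratified $\ccI$-covering is good, and then observe that this amounts to the preservation of pure monomiality under $f^*$, which is exactly the computation in the proof of Lemma \ref{lem:0implique0} (equivalently, Remark \ref{rem:goodness}\eqref{rem:goodness1b}). The paper's proof is a terse two-line version of yours; your explicit handling of the ramification via a compatible square $g:X'_{\bmd'}\to X_{\bmd}$ as in Proposition \ref{prop:f*inj} just unpacks what the paper leaves implicit.
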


\begin{proof}
According to the previous considerations, it remains to check that, if $\wt\Sigma$ is a good stratified $\ccI$-covering, then $f^*(\wt f^{-1}\wt\Sigma)$ is also good, and this reduces to showing that, if $\varphi\in\cO_{X,0}(*D)/\cO_{X,0}$ is purely monomial, then so is $f^*\varphi$, a property that we already saw in the proof of Lemma \ref{lem:0implique0}.
\end{proof}

\subsection{Partially regular Stokes-filtered local systems}

In the setting of \S\ref{subsec:smoothdivcase}, let $(\cL,\cL_\bbullet)$ be a Stokes-filtered local system on $\partial\wt X(D_{j\in J})$ with associated stratified $\ccI$-covering $\wt\Sigma$ equal to the zero section of $\ccIet_{|\partial\wt X}$. In such a case, we will say that $(\cL,\cL_\bbullet)$ is \index{Stokes-filtered local@Stokes-filtered local system!regular}\emph{regular}. Then $(\cL,\cL_\bbullet)$ is the graded Stokes-filtered local system with jump at $\varphi=0$ only. The category of regular Stokes-filtered local systems is then equivalent to the category of local systems on $\partial\wt X(D_{j\in J})$.

We now consider the case where $(\cL,\cL_\bbullet)$ is \index{Stokes-filtered local@Stokes-filtered local system!partially regular}\emph{partially regular}, that is, there exists a decomposition $J=J'\cup J''$ such that its associated stratified covering $\wt\Sigma$ reduces to the zero section when restricted over $D(J'')\moins D(J')$ (recall that $D(I)=\bigcup_{j\in I}D_j$). We will set $D'=D(J')$ and $D''=D(J'')$ for simplicity. Near each point of $D''\moins D'$ the Stokes-filtered local system is regular. We will now analyze its local behaviour near $D''\cap D'$. We will restrict to a local analysis in the non-ramified case.

According to \S\ref{subsec:realblowup}, the identity map $X\to X$ lifts as a map $\pi:\wt X\defin\wt X(D_{j\in J})\to\wt X{}'\defin\wt X(D_{j\in J'})$ and we have a commutative diagram
\begin{equation}\label{eq:pushdiag}
\begin{array}{c}
\xymatrix{
\ccIet_{\wt X}\ar[d]_\mu&\ar[l]_-q\pi^{-1}\ccIet_{\wt X{}'}\ar[r]^-{\wt\pi}\ar[dl]&\ccIet_{\wt X{}'}\ar[d]^{\mu'}\\
\wt X\ar[rr]^-\pi\ar[dr]_\varpi&&\wt X{}'\ar[dl]^{\varpi'}\\
&X&
}
\end{array}
\end{equation}
The boundary $\partial\wt X$ of $\wt X$ is $\varpi^{-1}(D)$ and $\partial\wt X{}'=\varpi^{\prime-1}(D')$.

We now consider the local setting of \S\ref{subsec:goodness} and we set $\ell=\ell'+\ell''$, $L'=\{1,\dots,\ell'\}$ and $L''=\{\ell'+1,\dots,\ell\}$. If $(\cL,\cL_\bbullet)$ is a non-ramified Stokes-filtered local system with associated $\ccI$-covering equal to $\wt\Sigma$, we assume that $\wt\Sigma_{|D_L}$ is a trivial covering of $Y_L=\varpi^{-1}(D_L)$. Then $\wt\Sigma$ is determined by a finite set $\Phi\subset\cO_{X,0}(*D)/\cO_{X,0}$. The partial regularity property means that the representatives $\varphi$ of the elements of $\Phi$ are holomorphic away from $D'$, that is, have no poles along $D''$, that is also, $\Phi\subset\cO_{X,0}(*D')/\cO_{X,0}$, and $\Phi$ defines a trivial stratified $\ccI_{\wt X'}$-covering $\wt\Sigma'$. The map $q$ induces an homeomorphism of $\pi^{-1}\wt\Sigma'$ onto $\wt\Sigma$.

\begin{proposition}\label{prop:partialreg}
In this local setting, the category of non-ramified Stokes-filtered local systems on $\partial\wt X(D_{j\in L})_{|D_L}$ with associated stratified $\ccI$-covering contained in~$\wt\Sigma$ is equivalent to the category of non-ramified Stokes-filtered local systems on $\partial\wt X(D_{j\in L'})_{|D_L}$ with associated stratified $\ccI$-covering contained in $\wt\Sigma'$, equipped with commuting automorphisms $T_k$ ($k\in L''$).
\end{proposition}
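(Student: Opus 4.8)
The key point is that, near a stratum where the Stokes-filtered local system is regular in the $L''$-directions, the Stokes filtration is insensitive to the $L''$-directions except through the $|L''|$ commuting monodromies around the components $D_k$, $k\in L''$. First I would set up the equivalence at the level of the underlying local systems: by Corollary~\ref{cor:jncd}, $\wtj_*\cL$ (abusing notation, the extension of the restriction of $\cL$ to $X^*$) is a local system on all of $\wt X(D_{j\in L})_{|D_L}$, and the boundary torus $\varpi^{-1}(0)\simeq(S^1)^{L'}\times(S^1)^{L''}$ fibers over $\varpi^{\prime-1}(0)\simeq(S^1)^{L'}$ via $\pi$. Restricting $\cL$ to $(S^1)^{L'}\times\{\theta_{L''}\}$ for a base point $\theta_{L''}$ and recording the monodromies $T_k$ ($k\in L''$) of the transverse circles gives the functor from left to right; conversely, a local system on $(S^1)^{L'}$ with commuting automorphisms $(T_k)_{k\in L''}$ is the same as a local system on the product torus, since $\pi_1((S^1)^{L'}\times(S^1)^{L''})=\pi_1((S^1)^{L'})\times\ZZ^{L''}$.

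The second and main step is to promote this to an equivalence compatible with the Stokes structure, i.e.\ to show that the data of $\cL_\bbullet$ (with associated $\ccI$-covering contained in $\wt\Sigma$) is equivalent to the data of $\cL'_\bbullet$ together with the $T_k$'s. The crucial input is that, since $\wt\Sigma\subset\pi^{-1}\wt\Sigma'$ (equivalently $\Phi\subset\cO_{X,0}(*D')/\cO_{X,0}$), the map $q:\pi^{-1}\ccIet_{\wt X{}'}\to\ccIet_{\wt X}$ is a homeomorphism onto $\wt\Sigma$, and more importantly the order relations involved only see the $L'$-directions: for $\varphi,\psi\in\Phi$, the set $Y_{\psi\le\varphi}$ inside $\varpi^{-1}(0)$ is the preimage under $\pi$ of the corresponding set inside $\varpi^{\prime-1}(0)$, because $\varphi-\psi$ is a section of $\cO_{X,0}(*D')/\cO_{X,0}$ and hence (by the purely monomial description of the order in \S\ref{subsec:goodness}, Equation~\eqref{eq:ordern}) its comparison to $0$ depends only on $(\theta_j)_{j\in L'}$. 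Consequently, writing $(\cL,\cL_\bbullet)$ near $\varpi^{-1}(0)$ by its local graded model indexed by $\Phi$ — using the local decomposition \eqref{eq:decI} — one sees that $\cL_{\le\varphi}=\pi^{-1}(\cL'_{\le\varphi})$ for the pull-back of a Stokes-filtered local system $(\cL',\cL'_\bbullet)$ on $\partial\wt X(D_{j\in L'})_{|D_L}$ (using the pull-back functor of Lemma~\ref{lem:stabpullbackstrat} for the map $\pi$, which is covered by the identity on~$X$, and Proposition~\ref{prop:f*inj} applied to $f=\id_X$ seen as $(X,D)\to(X,D')$, whose lift is $\pi$ and which is open in this normal crossing setting). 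The functor from right to left is then: take the pull-back $\pi^+$ of $(\cL',\cL'_\bbullet)$ and twist the underlying local system by the monodromy data $(T_k)$; the functor from left to right is: restrict $\cL_\bbullet$ to a fixed section of $\pi$ over $\varpi^{\prime-1}(0)$ (this is well defined up to the choice of section, the ambiguity being absorbed in the $T_k$), and remember the transverse monodromies.

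The third step is to check that these two functors are quasi-inverse and, in particular, that the pull-back $\pi^+(\cL',\cL'_\bbullet)$ really is a Stokes-filtered local system with $\ccI$-covering contained in $\wt\Sigma$ (this follows from Lemma~\ref{lem:stabpullbackstrat}, and goodness is preserved by Proposition~\ref{prop:pullbackgood}, though goodness is not strictly needed here since $\Phi$ is a fixed finite set), and that every Stokes-filtered local system with $\ccI$-covering contained in $\wt\Sigma$ arises this way. Essential surjectivity amounts to the fact, isolated above, that $\cL_{\le\varphi}$ is constant along the $L''$-directions, which is precisely the statement that $\cL_{\le\varphi}=\pi^{-1}\pi_*\cL_{\le\varphi}$; this uses that the fibers of $\pi:\varpi^{-1}(0)\to\varpi^{\prime-1}(0)$ are tori $(S^1)^{L''}$ over which each $\cL_{\le\varphi}$ restricts to a trivial local system (its rank is locally constant by the dimension condition \ref{def:Ifiltstrat}\eqref{def:Ifiltstrat3} and the partial-regularity of $\wt\Sigma$ along $D''\moins D'$), and full faithfulness is checked on stalks using the local graded models, exactly as in Proposition~\ref{prop:abelianwithout}.

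\textbf{Main obstacle.} The delicate point is the bookkeeping of the base-point/section ambiguity: the restriction functor depends on a choice of section of $\pi$ over $\varpi^{\prime-1}(0)$ (equivalently, a splitting $(S^1)^{L'}\hookrightarrow(S^1)^{L'}\times(S^1)^{L''}$), and different choices differ by the monodromies $T_k$, so one must verify that the pair (Stokes-filtered local system on $\partial\wt X(D_{j\in L'})$, automorphisms $(T_k)$) is intrinsically recovered, i.e.\ that the whole construction is compatible with changing the base point and with the gluing of these local pictures. Concretely, one has to check that the $T_k$ are automorphisms \emph{of the Stokes-filtered local system} $(\cL',\cL'_\bbullet)$ (not merely of the underlying $\cL'$): this follows because $T_k$ is the monodromy of a loop in the $L''$-direction, and since $\cL'_{\le\varphi}=\pi_*\cL_{\le\varphi}$ is $\pi$-pulled-back it is automatically stable under such transport. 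I expect that, once the reduction of the order relation to the $L'$-directions is made precise via \eqref{eq:ordern} and Lemma~\ref{lem:negimpliqueneg}, the remaining verifications are routine and parallel to the one-dimensional situation of Remark~\ref{rem:Stokeswithramif}.
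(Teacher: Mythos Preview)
Your approach is correct and rests on the same core observation as the paper: since $\Phi\subset\cO_{X,0}(*D(L'))/\cO_{X,0}$, the order relation $\psi\leqtheta\varphi$ for $\varphi,\psi\in\Phi$ depends only on the $L'$-coordinates of $\theta$, so each $\cL_{\leq\varphi}$ is locally constant along the $(S^1)^{L''}$-fibres of $\pi$ and is therefore determined by data on $(S^1)^{L'}$ together with the transverse monodromies.

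The paper organizes the argument somewhat differently. Rather than restricting to a section of $\pi$ and tracking the resulting base-point ambiguity (your ``main obstacle''), it isolates a general topological statement (Lemma~\ref{lem:constauto}): the category of $\RR$-constructible sheaves on $Z\times(S^1)^k$ that are locally constant on the $(S^1)^k$-fibres is equivalent to $\RR$-constructible sheaves on~$Z$ equipped with~$k$ commuting automorphisms, via the canonical functors $R^k\wt\pi_!$ and $\wt\pi^{-1}$ on the universal cover $Z\times\RR^k$. This is applied with $Z=\partial\wt X(D_{j\in L'})_{|D_L}$ and $k=\#L''$ to each sheaf $\cL_{\leq\varphi}$ separately (these are $\RR$-constructible by Proposition~\ref{prop:suban}), and the Stokes-specific content reduces to checking fibrewise local constancy, which follows from the local graded model exactly as you argue. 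The payoff is that the section-choice bookkeeping you flag disappears: the equivalence is canonical, and compatibility with morphisms (hence with the filtration inclusions $\cL_{\leq\varphi}\hookrightarrow\cL_{\leq\psi}$) is automatic. Your route works too, but the paper's abstraction makes the verification of full faithfulness and of the fact that the $T_k$ preserve the Stokes filtration essentially tautological.
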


\begin{proof}
Let us first consider the following general setting: $\cF$ is a $\RR$-constructible sheaf on $Z\times(S^1)^k$, where $Z$ is a nice space (\eg a subanalytic subset of $\RR^n$). We denote by $\pi:Z\times(S^1)^k\to Z$ the projection and by $\rho:Z\times\RR^k\to Z\times(S^1)^k$ the map $(z,\theta_1,\dots,\theta_k)\mto(z,e^{i\theta_1},\dots,e^{i\theta_k})$. We will also set $\wt\pi=\rho\circ\pi$.

\begin{lemme}\label{lem:constauto}
The category of $\RR$-constructible sheaves $\cF$ on $Z\times(S^1)^k$ whose restriction to each fibre of $\pi$ is locally constant is naturally equivalent to the category of $\RR$-constructible sheaves on $Z$ equipped with commuting automorphisms $T_1,\dots,T_k$.
\end{lemme}

\begin{proof}
Let $\sigma_i$ ($i=1,\dots,k$) denote the translation by one in the direction of the $i$-th coordinate in $\RR^k$. Then the functor $\rho^{-1}$ induces an equivalence between the category of sheaves $\cF$ on $Z\times(S^1)^k$ and that of sheaves $\cG$ on $Z\times\RR^k$ equipped with isomorphisms $\sigma_i^{-1}\cG\isom\cG$ which commute in a natural way. It induces an equivalence between the corresponding full subcategories of $\RR$-constructible sheaves which are locally constant in the fibres of $\pi$ and $\wt\pi$.

Let $\cG$ be a $\RR$-constructible sheaf on $Z\times\RR^k$. We have a natural (dual) adjunction morphism $\cG\to\wt\pi^!\bR\wt\pi_!\cG=\wt\pi^{-1}\bR\wt\pi_!\cG[k]$ (\cf \cite[Prop\ptbl3.3.2]{K-S90} for the second equality), which is an isomorphism if $\cG$ is locally constant (hence constant) in the fibres of~$\wt\pi$ (\cf \cite[Prop\ptbl2.6.7]{K-S90}). This shows that (via $R^k\wt\pi_!$ and $\wt\pi^{-1}$) the category of $\RR$\nobreakdash-constructible sheaves on $Z\times\RR^k$ which are constant in the fibres of $\wt\pi$ is equivalent to the category of $\RR$-constructible sheaves on $Z$. If now $\cH$ is a $\RR$-constructible sheaf on $Z$ with commuting automorphisms $T_i$ ($i=1,\dots,k$), it produces a sheaf $\cG=\wt\pi^{-1}\cH$ with commuting isomorphisms $\sigma_i^{-1}\cG\simeq\cG$ by composing the natural morphism $\sigma_i^{-1}\wt\pi^{-1}\cH\to\wt\pi^{-1}\cH$ with $T_i$.
\end{proof}

Let us end the proof of the proposition. We know that the first category considered in the proposition is equivalent to that of Stokes-filtered local systems indexed by $\Phi$. Each $\cL_{\leq\varphi}$ is locally constant in the fibres of $\pi$, due to the local grading property of $(\cL,\cL_\bbullet)$. We can therefore apply Lemma \ref{lem:constauto}, since each $\cL_{\leq\varphi}$ is $\RR$-constructible, to get the essential surjectivity. The full faithfulness is obtained in the same way.
\end{proof}

\begin{remarque}
The statement of Proposition \ref{prop:partialreg} does not extend as it is in the ramified case. Indeed, even if $\wt\Sigma$ is regular along $D''$, a ramification may be necessary along $D''$ to trivialize $\wt\Sigma_{|Y_L}$ (\eg a local section of $\wt\Sigma_{|Y_L}$ is written $a(y',y'')/y^{\prime k}$ for ramified coordinates $y',y''$, and $a$ is possibly not of the form $a(y',x'')$).
\end{remarque}

\chapter[The R-H correspondence (case of a smooth divisor)]{The Riemann-Hilbert correspondence for good meromorphic connections (case~of~a~smooth divisor)}\label{chap:RHgoodsmooth}

\begin{sommaire}
This \chaptersname is similar to \Chaptersname\ref{chap:RH}, but we add holomorphic parameters. Moreover, we assume that no jump occurs in the the exponential factors, with respect to the parameters. This is the meaning of the goodness condition in the present setting. We will have to treat the Riemann-Hilbert functor in a more invariant way, and more arguments will be needed in the proof of the main result (equivalence of categories) in order to make it global with respect to the divisor. For the sake of simplicity, we will only consider the case of \emph{germs} of meromorphic connections along a smooth divisor.
\end{sommaire}

\subsection{Introduction}\label{subsec:RHgoodsmoothintro}
How to deform a meromorphic bundle with connection $(\cM_o,\nabla_o)$ on a Riemann surface $X_o$ with poles along a discrete set of points $D_o$, as considered in \Chaptersname\ref{chap:RH}? Classically one looks for \emph{isomonodromy deformations}. Let us forget for a moment the deformation of the Riemann surface itself (\ie its holomorphic structure) and the divisor on it, to focus on the deformation of the connection itself, so that the ambient space of the deformation takes the form $(X,D)=(X_o\times T,D_o\times T)$ for some parameter space $T$, that we usually assume to be a complex manifold and simply connected for a while. If the connection has regular singularities at each point of $D_o$, it is completely determined by its monodromy representation, \ie the associated locally constant sheaf, and a natural condition for the deformation is that the monodromy remains constant along the deformation. In other words, the local systems corresponding to the family of deformed connections form themselves a locally constant sheaf on $X\moins D$ (since from our assumption $\pi_1(X\moins\nobreak D)=\pi_1(X_o\moins\nobreak D_o)$). By the Riemann-Hilbert correspondence for regular meromorphic connections, giving such an isomonodromy deformation of the connection is equivalent to giving a meromorphic bundle with \emph{integrable} connection $(\cM,\nabla)$ having regular singularities along $D$, whose restriction at $t=t_o\in T$ is equal to $(\cM_o,\nabla_o)$.

If we now start from $(\cM_o,\nabla_o)$ with possibly irregular singularities at some points of~$D_o$, the monodromy representation may be a poor invariant to control \hbox{deformations}. An isomonodromy deformation has then to be taken in the sense of an integrable \hbox{deformation} of the connection. However, not any integrable deformation can be \hbox{allowed}. Let us consider the simple case where $X_o$ is a disc and $D_o$ is its origin. Assume also that $(\cM_o,\nabla_o)=\cE^{\varphi_o(x)}\otimes\cR_o$, where $\varphi_o\in\Gamma(X_o,\cO_{X_o}(*D_o))/\Gamma(\cO_{X_o})$ and~$\cR_o$ has a regular singularity at $0\in X_o$. An isomonodromy deformation deforms $\cR_o$ in a constant way, while it can deform $\varphi_o$ as a section $\varphi\in\Gamma(X,\cO_X(*D))/\Gamma(X,\cO_X)$. However, if $\varphi_o=a_k/x^k+\cdots+a_1/x$ with $a_k\in\CC^*$ in a local coordinate $x$ on $X_o$, it is natural to consider deformations $\varphi(x,t)=a_k(t)/x^k+\cdots+a_1(t)/x$ with $a_k(t_o)=a_k$ (that is, $a_\ell(t)\equiv0$ for $\ell\geq k$). 

Conversely, given a meromorphic bundle with integrable connection $(\cM,\nabla)$ on $(X,D)$, it is a natural question to ask whether it is an isomonodromy deformation (in the previous extended meaning) of its restriction to some slice $(X_o,D_o)$, or not. In order to apply the previous criterion, it is necessary to find a Levelt-Turrittin decomposition after formalization along $D$, and possibly after some ramification around $D$, and to check whether the exponential factors $\varphi$ behave in the expected way.

It happens that such a formal Levelt-Turrittin decomposition after ramification exists generically along $D$, as follows from a straightforward adaptation to higher dimensions of the various proofs of the Levelt-Turrittin theorem in dimension one and by using the integrability property of the connection, but there may exist a subset~$S$ of the parameter space $T$ such that, for $t\in S$, the formal the Levelt-Turrittin decomposition after ramification of the restriction $(\cM_t,\nabla_t)$ is not related in a natural way to the formal the Levelt-Turrittin decomposition after ramification of the generic neighbouring restriction $(\cM_{t'},\nabla_{t'})$. The points of the subset $S$ are called \emph{turning points} of $(\cM,\nabla)$ in \cite{Andre07}, and we introduce the notion of a \emph{good formal structure} along $D$ to specify the properties of $(\cM,\nabla)$ which make it an isomonodromy deformation of its restriction to slices.

Eliminating the turning points is a new problem in the theory. A conjecture in \cite{Bibi97} (and proved in some cases there), asserting that this can be performed by a locally finite sequence of complex blowing-ups, was proved independently and very differently by K\ptbl Kedlya \cite{Kedlaya09,Kedlaya10} and T\ptbl Mochizuki \cite{Mochizuki07b,Mochizuki08}. However, the pull-back of the smooth divisor $D$ acquires singularities under this process, that one can assume to be of normal crossing type. The analysis of the corresponding situation will be the subject of \Chaptersname\ref{chap:goodformal}.

In this \chaptername, we restrict ourselves to the case of a smooth divisor in the absence of turning points, and prove a Riemann-Hilbert correspondence, analogous to that of \Chaptersname\ref{chap:RH}, with a parameter. In the approach used in this \chaptername, the new ingredient is the \emph{Stokes sheaf}, which is a locally constant sheaf of (possibly nonabelian) groups.

\medskip
We consider the following setting:
\begin{itemize}
\item
$X$ is a complex manifold and~$D$ is a smooth divisor in~$X$, $X^*\defin X\moins D$,
\item
$\varpi:\wt X\defin\wt X(D)\to X$ is the oriented real blowing-up of~$D$ in~$X$, so that $\varpi$ is a~$S^1$-fibration,
\item
$j:X^*\hto X$ and $\wtj:X^*\hto\wt X$ denote the open inclusions, and $i:D\hto X$ and $\wti:\partial\wt X\hto \wt X$ denote the closed inclusions,
\item
the ordered sheaf~$\ccI$ on $\partial\wt X$ is as in Definitions \ref{def:Idivisor} and \ref{def:orderI}.
\end{itemize}

Since $\ccIet$ is Hausdorff, the notion of $\ccI$-filtration that used in this \chaptersname is the notion introduced in \S\ref{subsec:Ifilt}.

\subsection{Good formal structure of a meromorphic connection}\label{subsec:goodformstructD}
We anticipate here the general definitions of \Chaptersname\ref{chap:goodformal}. Let $\cM$ be a meromorphic connection on~$X$ with poles along~$D$ (\cf\S\ref{subsec:modgrowth}) and let $x_o\in D$. We say that $\cM$ has a \emphb{good formal structure} at~$x_o$ if, after some ramification $\rho_d$ around~$D$ in some neighbourhood of~$x_o$, the pull-back connection $\rho_d^+\cM$ has a \emphb{good formal decomposition}, that is, denoting by~$\wh D_d$ the space $D_d=D$ equipped with the sheaf $\cO_{\wh D_d}\defin\varprojlim_k\cO_{X_d}/\cO_{X_d}(-kD_d)$,
\begin{equation}\label{eq:goodD}
\cO_{\wh D_d}\otimes_{\cO_{X_d}}\rho_d^+\cM\simeq\bigoplus_{\varphi\in \Phi_d}(\cE^\varphi\otimes\wh\cR_\varphi),
\end{equation}
where $\Phi_d$ is a \emph{good} subset of $\cO_{X_d,x_o}(*D_d)/\cO_{X_d,x_o}$ (\cf Definition \ref{def:localgoodness}), $\cE^\varphi=(\cO_{\wh D_d},d+\nobreak d\varphi)$ and $\wh\cR_\varphi$ is a free $\cO_{X_d,x_o}(*D_d)$-module equipped with a connection having regular singularities along $D_d$. We will usually make the abuse of identifying $\Phi_d\subset\cO_{X_d,x_o}(*D_d)/\cO_{X_d,x_o}$ with a set of representatives in $\Gamma(U,\cO_{X_d}(*D_d))/\Gamma(U,\cO_{X_d})$ for some open neighbourhood $U$ of~$x_o$ in $X_d$ (for instance choose $U$ Stein so that $H^1(U,\cO_{X_d})=0$).

\skpt
\begin{remarques}\ligne
\begin{enumerate}
\item
In the neighbourhood of~$x_o$, each $\wh\cR_\varphi$ is obtained, after tensoring with $\cO_{\wh D_d}$, from a meromorphic connection with regular singularity, hence is locally free over $\cO_{\wh D_d}(*D_d)$. As a consequence, if $\cM$ has a good formal structure, it is $\cO_X(*D)$-locally free.
\item
In \cite[Lemma 5.3.1]{Mochizuki08b}, T\ptbl Mochizuki gives a criterion for $\cM$ to have a good formal structure at~$x_o$: choose a local isomorphism $(X,x_o)\simeq (D,x_o)\times(\CC,0)$; if there exists a \emph{good} set $\Phi_d\subset\cO_{X_d,x_o}(*D_d)/\cO_{X_d,x_o}$ defined on some neighbourhood $U\subset D$ of~$x_o$ such that, for any $x\in U$, the set of exponential factors of $\rho_d^+\cM_{|\{x\}\times(\CC,0)}$ is $\Phi_{d|\{x\}\times(\CC,0)}$, then $\cM$ has a good formal structure at~$x_o$.

In \cite[Th\ptbl3.4.1]{Andre07}, Y\ptbl André gives a similar criterion in terms of the Newton polygon of $\cM_{|\{x\}\times(\CC,0)}$ and that of $\cEnd(\cM)_{|\{x\}\times(\CC,0)}$ (so with weaker conditions a priori than in Mochizuki's criterion). Lastly, in \cite{Kedlaya09}, K\ptbl Kedlaya gives another criterion in terms of an irregularity function.

\item
For any given $\cM$ with poles along~$D$, the good formal structure property holds generically on~$D$ (\cf \cite{Malgrange95}). Here, we assume that it holds all over~$D$.
\end{enumerate}
\end{remarques}

We now associate to a germ along~$D$ of meromorphic connection having a good formal structure a $\ccI$-covering $\wt\Sigma\subset\ccIet$ in an intrinsic way. Notice that, due to the goodness assumption, the decomposition \eqref{eq:goodD} is locally unique, as follows from the following lemma.

\begin{lemme}\label{lem:pasdemorphisme}
If $\varphi-\psi$ is nonzero and purely monomial, there is no nonzero morphism $\wh\cR_\psi\to\cE^{\varphi-\psi}\otimes\wh\cR_\varphi$.
\end{lemme}

\begin{proof}
Since $\wh\cR_\psi$ and $\wh\cR_\varphi$ are successive extensions of regular formal meromorphic flat bundles of rank one, one can reduce the proof of the assertion to the case where both have rank one, and then to proving that for such a rank one object $\wh\cR$, $\cE^{\varphi-\psi}\otimes\wh\cR$ has no nonzero section, which can be checked easily by considering the order of the pole of such a section.
\end{proof}

Recall now that we have locally a cartesian square
\[
\xymatrix{
\rho_d^{-1}\ccIet_d\ar[d]_{\mu_d}\ar[r]^{\rho_d^\et}&\ccIet_d\ar[d]^\mu\\
\partial\wt X_d\ar[r]^-{\rho_d}&\partial\wt X
}
\]
and $\rho_d^{-1}\ccI_d=\varpi_d^{-1}\cO_{X_d}(*D_d)/\cO_{X_d}$. The set $\Phi_d$ defines a finite (trivial) covering $\wt\Sigma_d\subset\rho_d^{-1}\ccIet_d$ locally on $D_d$, which is invariant under the Galois group of $\rho_d^\et$, and is therefore equal to $(\rho_d^{\et})^{-1}(\wt\Sigma)$ for some locally defined $\ccI$-covering $\wt\Sigma\subset\ccIet$. The uniqueness statement above implies that $\wt\Sigma_d$ is uniquely determined from $\cM$, and therefore so is $\wt\Sigma$, which thus glues globally as a $\ccI$-covering \index{$SZIGMAWTM$@$\wt\Sigma(\cM)$}$\wt\Sigma(\cM)$ all over~$D$, since the goodness assumption is made all over~$D$. We call $\wt\Sigma(\cM)$ the \emphb{good $\ccI$-covering} associated to $\cM$.

\subsection{The Riemann-Hilbert functor}\label{subsec:RHsmooth}

The sheaf $\cA_{\wt X}^\modD$ is defined in \S\ref{subsec:modgrowthfunct}. Its restriction to $\partial\wt X$ will be denoted by $\cA_{\partial\wt X}^\modD$. We define the sheaf $\cA_{\ccIet}^\modD$ exactly as in \S\ref{subsec:somebasic}. The Riemann-Hilbert functor will then be defined as a functor from the category of germs along~$D$ of meromorphic connections on $X$ with poles on~$D$, that is, germs along~$D$ of coherent $\cO_X(*D)$-modules with a flat connection, to the category of Stokes-filtered local systems on $\partial\wt X$.

Let $\cM$ be a meromorphic connection on~$X$ with poles along~$D$. We define $\DR_{\ccIet}^\modD(\cM)$ as in \S\ref{subsec:RHfunctorone}. The sheaf $\cL_\leq\defin\cH^0\DR_{\ccIet}^\modD(\cM)$ is naturally a subsheaf of $\mu^{-1}\cL$, with $\cL\defin\wti^{-1}\wtj_*\cH^0\DR(\cM_{|X^*})$. At this point, we do not even claim that $\cL_\leq$ is a pre-$\ccI$-filtration of $\cL$. Nevertheless, we have defined a correspondence $\RH$ from the category of germs along~$D$ of meromorphic connections on~$X$ with poles along~$D$ to the category of pairs $(\cL,\cL_\leq)$ consisting of a local system $\cL$ on~$\partial\wt X$ and a subsheaf $\cL_\leq$ of $\mu^{-1}\cL$. It is clear that this correspondence is functorial.

\begin{definitio}\label{def:RHsmooth}
The \index{Riemann-Hilbert functor (RH)}Riemann-Hilbert functor \index{$RH$@$\RH$}$\RH$ is the functor defined above.
\end{definitio}

In order to obtain an equivalence, it is however necessary to have a goodness assumption, that we fix by the choice of a \emph{good} $\ccI$-covering $\wt\Sigma$ of $\partial\wt X$, \ie a closed subset $\wt\Sigma\subset\ccIet$ such that $\mu$ induces a finite covering $\mu:\wt\Sigma\to\partial\wt X$ which is good (\cf Definition \ref{def:globalgoodness} with only one stratum). This choice will be made once and for all in this \chaptername. We now describe the categories involved in the correspondence.

On the one hand, the category of germs along~$D$ of good meromorphic connections with poles along~$D$ and with associated $\ccI$-covering $\wt\Sigma(\cM)$ contained in $\wt\Sigma$, as defined in \S\ref{subsec:goodformstructD} above.

On the other hand, the definition of the category of Stokes-filtered local systems on $\partial\wt X$ with associated stratified $\ccI$-covering contained in $\wt\Sigma$ has been given in Definitions \ref{def:Stokesfilt} and \ref{def:goodness}.

\begin{lemme}\label{lem:goodgoodsmooth}
If $\cM$ has a good formal structure along~$D$, then $\RH(\cM)=(\cL,\cL_\leq)$ is a good Stokes-filtered local system on $\partial\wt X$, that we denote by $(\cL,\cL_\bbullet)$.
\end{lemme}

\begin{proof}
The question is local on $\partial\wt X$, and we easily reduce to the case where $\cM$ has a good formal decomposition. The \index{Hukuhara-Turrittin-Sibuya theorem}Hukuhara-Turrittin theorem with a ``good holomorphic parameter'' is due to Sibuya \cite{Sibuya62,Sibuya74}. It implies that, near any $y\in\partial\wt X$,
\begin{equation}\label{eq:HukTurSib}
\cA_{\partial\wt X}^\modD\otimes\cM\simeq\cA_{\partial\wt X}^\modD\otimes\Big(\bigoplus_{\psi\in\Phi}(\cE^\psi\otimes\cR_\psi)\Big),
\end{equation}
where each $\cR_\psi$ is a locally free $\cO_{X,x}(*D)$-module ($x=\varpi(y)$) with a flat connection having a regular singularity along~$D$. So, by Hukuhara-Turrittin-Sibuya, we can assume that $\cM\simeq\bigoplus_{\psi\in\Phi}(\cE^\psi\otimes\cR_\psi)$, where $\Phi$ is good. Arguing on each summand and twisting by $\cE^{-\psi}$ reduces to the case where $\cM=\cR$ is regular. In such a case, we have to show that $\cL_\leq$ is a pre-$\ccI$-filtration of $\cL$, which moreover is the trivial graded $\ccI$-filtration. Since horizontal sections of $\cR$ have moderate growth in any meromorphic basis of $\cR$, we have, for any $y\in\partial\wt X$, $\cL_{\leq\varphi,y}=\cL_y$ if $\reel(\varphi)\leq0$ in some neighbourhood of~$y$, and $\cL_{\leq\varphi,y}=0$ otherwise. This defines the graded $\ccI$-filtration on~$\cL$ with $\Phi=\{0\}$, according to Example \ref{exem:triviaux}\eqref{exem:triviaux1}.
\end{proof}

\begin{remarque}
Unlike the dimension-one case, the complex $\DR^\modD_{\ccIet}(\cM)$ has cohomology in degrees $\neq0$ even if $\cM=\cR$ has regular singularities along~$D$, as shown by Example \ref{exem:degonecohom}.
\end{remarque}

The main result of this \chaptersname is:

\begin{theoreme}\label{th:RHmerosmooth}\index{Riemann-Hilbert correspondence!for germs of good meromorphic connections}
In the previous setting, the Riemann-Hilbert functor induces an equivalence between the category of good meromorphic connections with poles along~$D$ and associated $\ccI$-covering contained in $\wt\Sigma$, and the category of Stokes-filtered local systems on $\partial\wt X$ with associated $\ccI$-covering contained in $\wt\Sigma$.
\end{theoreme}

\subsection{\proofname\ of the full faithfulness in Theorem \ref{th:RHmerosmooth}}
Let us start with a statement analogous to Theorem \ref{th:H1nul}:\enlargethispage{\baselineskip}%

\begin{lemme}\label{lem:imdirLnegsmooth}
Let $\cM$ be a germ at $x_o\in D$ of meromorphic connection with poles along~$D$. Assume that, after some ramification $\rho_d$ of order $d$ around~$D$, the pull-back connection satisfies \eqref{eq:HukTurSib} near any $y\in\varpi^{-1}(x_o)$, where every $\psi$ entering in the decomposition is purely monomial (\cf Definition \ref{def:purmonom}). Then $\varpi_*\cH^0\DR^\modD(\cM)=\cH^0\DR(\cM)$.
\end{lemme}

\begin{proof}
The complex $\DR(\cM)$ is regarded as a complex on~$D$ (\ie we consider the sheaf restriction to~$D$ of the usual de~Rham complex, since $\cM$ is a germ along~$D$).

We first claim that $\DR^\modD(\cM)$ (\cf \S\ref{subsec:modgrowth}) has cohomology in degree~$0$ at most. This is a local statement on $\partial\wt X$. Assume first $d=1$. By the decomposition \eqref{eq:HukTurSib}, we are reduced to the case where $\cM=\cE^\psi\otimes\cR$, and by an argument similar to that used in \eqref{proof:xalpha} of the proof of Theorem \ref{th:H1nul}, we reduce to the case where $\cM=\cE^\psi$, where~$\psi$ is purely monomial. The assertion is then a consequence of Proposition \ref{prop:HkEphinul}. If $d\geq2$, as $\cM$ is locally stably free (\cf\cite{Malgrange95}), we can apply the projection formula
\[
\wt\rho{}_{d,*}\big(\cA_{\partial\wt X_d}^{\rmod D_d}\otimes\varpi_d^{-1}(\rho_d^+\cM)\big)=(\wt\rho{}_{d,*}\cA_{\partial\wt X_d}^{\rmod D_d})\otimes\varpi^{-1}\cM,
\]
and, as $\wt\rho{}_d$ is a covering or degree $d$, $\wt\rho{}_{d,*}\cA_{\partial\wt X_d}^{\rmod D_d}\simeq(\cA_{\partial\wt X}^\modD)^d$ locally on $\partial\wt X$. This isomorphism is compatible with connections, hence, applying this to the moderate de~Rham complex gives that, locally on $\partial\wt X$, $\DR^\modD(\cM)$ is a direct summand of $\bR\wt\rho{}_{d,*}\DR^{\rmod D_d}(\rho_d^+\cM)$. By the first part of the proof, and since $\wt\rho{}_d$ is finite, the latter term has cohomology in degree zero at most, hence the claim for general $d$.

As indicated in Remark \ref{rem:Rpimod}, we have
\begin{equation}\label{eq:varpiDRmod}
\bR\varpi_*\DR^\modD(\cM)=\DR(\cM)\quad\text{in $D^\rb(X)$}.
\end{equation}
Recall that this uses
\begin{align}
\bR\varpi_*\Big(\cA_{\partial\wt X}^\modD\ootimes_{\varpi^{-1}\cO_X(*D)}\varpi^{-1}\cM\Big)&=\bR\varpi_*\Big(\cA_{\partial\wt X}^\modD\ootimes_{\varpi^{-1}\cO_X(*D)}^{\bL}\varpi^{-1}\cM\Big)\label{eq:Rvarpi1}\\
&=\big(\bR\varpi_*\cA_{\partial\wt X}^\modD\big)\ootimes_{\cO_X(*D)}^{\bL}\cM\label{eq:Rvarpi2}\\
&=\cM,\label{eq:Rvarpi3}
\end{align}
where \eqref{eq:Rvarpi1} holds because $\cM$ is locally stably free, \eqref{eq:Rvarpi2} because of the projection formula, and \eqref{eq:Rvarpi3} because of Proposition \ref{prop:RpiA} and $\cM$ is locally stably free. Extending this to $\DR^\modD(\cM)$ as in the proof of Proposition \ref{prop:Rpimod} gives \eqref{eq:varpiDRmod}.

By the first part of the proof, the left-hand side of \eqref{eq:varpiDRmod} is $\bR\varpi_*\cH^0\DR^\modD(\cM)$. Taking $\cH^0$ on both sides gives the lemma.
\end{proof}

\begin{corollaire}
The Riemann-Hilbert functor in Theorem \ref{th:RHmerosmooth} is fully faithful.
\end{corollaire}

\begin{proof}
The proof is similar to that given in dimension one (\S\ref{subsec:RHholone}), as soon as we show that Lemma \ref{lem:imdirLnegsmooth} applies to $\cHom_{\cO_X}(\cM,\cM')$ and that the analogue of Lemma \ref{lem:comphom} (compatibility with $\cHom$) holds. The point here is that the \hbox{goodness} property for $\cM,\cM'$ does not imply the goodness property for $\cHom(\cM,\cM')$. But clearly, if Hukuhara-Turrittin-Sibuya's theorem applies to $\cM,\cM'$, it applies to $\cHom_{\cO_X}(\cM,\cM')$. Moreover, since $\wt\Sigma(\cM),\wt\Sigma(\cM')\subset\wt\Sigma$ and $\wt\Sigma$ is good, the assumption of Lemma \ref{lem:imdirLnegsmooth} holds for $\cHom_{\cO_X}(\cM,\cM')$. The argument is similar in order to prove the compatibility with $\cHom$.
\end{proof}

\subsection{Elementary and graded equivalences}\label{subsec:elemgraded}
Recall that $\wh D$ denotes the formal completion of~$X$ along~$D$, and $\cO_{\wh D}\defin\varprojlim_k\cO_X/\cO_X(-kD)$. Let $\wh\cM$ be a coherent $\cO_{\wh D}(*D)$-module with a flat connection\footnote{Be careful that the notation $\wh\cM$ will have a different meaning in \Chaptersname\ref{chap:goodformal}.}. We assume that it is \emph{good}, that is, \eqref{eq:goodD} holds for $\wh\cM$ near each point $x_o\in D$, with a good index set $\Phi_d$.

\skpt
\begin{proposition}\label{prop:isotypformel}\ligne
\begin{enumerate}
\item\label{prop:isotypformel1}
Locally at $x_o\in D$, any irreducible good coherent $\cO_{\wh D}(*D)$-module $\wh\cM$ with a flat connection takes the form $\rho_{d,+}(\cE^\varphi\otimes\cR_{\wh D_d})$, for some $d\geq1$, some purely monomial $\varphi\in\cO_{X_d,x_o}(*D_d)/\cO_{X_d,x_o}$, and some free rank-one $\cO_{X_d,x_o}(*D_d)$-module $\cR$ with a connection having regular singularities along~$D_d$.
\item\label{prop:isotypformel2}
Locally at $x_o\in D$, any good coherent $\cO_{\wh D}(*D)$-module $\wh\cM$ with a flat connection has a unique decomposition $\wh\cM=\bigoplus_\alpha\wh\cM_\alpha$, where
\begin{itemize}
\item
each $\wh\cM_\alpha$ is isotypical, that is, takes the form $\rho_{d_\alpha,+}(\cE^{\varphi_\alpha}\otimes\cR_{\alpha|\wh D_{d_\alpha}})$, for some $d_\alpha\geq1$, some purely monomial $\varphi_\alpha\in\cO_{X_{d_\alpha},x_o}(*D_{d_\alpha})/\cO_{X_{d_\alpha},x_o}$, and some free $\cO_{X_{d_\alpha},x_o}(*D_{d_\alpha})$-module $\cR_\alpha$ with a connection having regular singularities along~$D$,
\item
each $\rho_{d_\alpha,+}\cE^{\varphi_\alpha}$ is irreducible,
\item
if $\alpha\neq\beta$, $\rho_{d_\alpha,+}\cE^{\varphi_\alpha}\not\simeq\rho_{d_\beta,+}\cE^{\varphi_\beta}$.
\end{itemize}
\item\label{prop:isotypformel3}
Assume $D$ is connected. Then, any good coherent $\cO_{\wh D}(*D)$-module $\wh\cM$ with a flat connection and associated $\ccI$-covering $\wt\Sigma$ has a unique decomposition $\wh\cM=\bigoplus_{\wt\Sigma_i}\wh\cM_i$, where $\wt\Sigma_i$ runs among the connected components of $\wt\Sigma$ and each $\wh\cM_i$ is globally isotypical, \ie has associated $\ccI$-covering equal to $\wt\Sigma_i$.
\end{enumerate}
\end{proposition}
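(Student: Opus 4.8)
\textbf{Proof plan for Proposition \ref{prop:isotypformel}.}

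The plan is to treat the three statements in order, deriving each from the previous one together with the good formal decomposition \eqref{eq:goodD} and classical facts about regular connections. For \eqref{prop:isotypformel1}, I would start from the good formal decomposition of $\wh\cM$ after a ramification $\rho_d$, namely $\rho_d^+\wh\cM\simeq\bigoplus_{\varphi\in\Phi_d}(\cE^\varphi\otimes\wh\cR_\varphi)$ with $\Phi_d$ good and each $\wh\cR_\varphi$ regular. The point is that, by the goodness of $\Phi_d$, each difference $\varphi-\psi$ ($\varphi\neq\psi$) is purely monomial, hence $\cE^{\varphi-\psi}$ has no nonzero horizontal section and there is no nonzero flat morphism between distinct summands $\cE^\varphi\otimes\wh\cR_\varphi$ and $\cE^\psi\otimes\wh\cR_\psi$ (a morphism would give a flat section of $\cE^{\psi-\varphi}\otimes\cHom(\wh\cR_\varphi,\wh\cR_\psi)$, which is impossible because the $\cE$-factor dominates). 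Thus the decomposition is canonical, and the Galois group $G$ of $\rho_d$ acts on the index set $\Phi_d$, permuting the summands according to the action $\varphi\mapsto\sigma^*\varphi$. An irreducible $\wh\cM$ must then have $\rho_d^+\wh\cM$ a single $G$-orbit of summands $\bigoplus_{\sigma\in G/G_\varphi}\sigma^*(\cE^\varphi\otimes\wh\cR_\varphi)$, with $\wh\cR_\varphi$ itself irreducible regular; taking $d$ minimal so that $G_\varphi$ is trivial (replace $d$ by $d/\#G_\varphi$), descent along $\rho_d$ identifies $\wh\cM$ with $\rho_{d,+}(\cE^\varphi\otimes\cR)$, and $\cR$ is rank one because an irreducible regular connection on a curve-germ (here: along the smooth divisor, after restriction, or directly using that regular holonomic with a single exponential factor is determined by its monodromy representation which must be irreducible, hence one-dimensional up to the parameter) has the stated form. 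The purely monomial requirement on $\varphi$ is exactly the goodness of the one-element set $\{$non-trivial $G$-translates of $\varphi\}$, equivalently that the $\sigma^*\varphi-\varphi$ are purely monomial.

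For \eqref{prop:isotypformel2}, I would group the good formal decomposition of $\rho_d^+\wh\cM$ into $G$-orbits, which by descent gives $\wh\cM=\bigoplus_\alpha\wh\cM_\alpha$ with each $\wh\cM_\alpha=\rho_{d_\alpha,+}(\cE^{\varphi_\alpha}\otimes\cR_\alpha)$ after possibly increasing the ramification order on each block; choosing $d_\alpha$ minimal makes $\rho_{d_\alpha,+}\cE^{\varphi_\alpha}$ irreducible. The uniqueness and the non-isomorphism $\rho_{d_\alpha,+}\cE^{\varphi_\alpha}\not\simeq\rho_{d_\beta,+}\cE^{\varphi_\beta}$ for $\alpha\neq\beta$ follow from the no-morphism argument above applied across distinct $G$-orbits: two isotypical blocks with non-isomorphic "exponential parts" admit no nonzero flat morphism, so a good coherent $\cO_{\wh D}(*D)$-module with connection decomposes uniquely as a direct sum indexed by the isomorphism classes of the irreducible $\rho_{d,+}\cE^\varphi$ that occur. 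Concretely, $\wh\cM_\alpha$ is recovered as the $\varphi_\alpha$-isotypical part, i.e.\ the image of $\cHom(\rho_{d_\alpha,+}\cE^{\varphi_\alpha},\wh\cM)\otimes\rho_{d_\alpha,+}\cE^{\varphi_\alpha}\to\wh\cM$, which is intrinsic.

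Finally, \eqref{prop:isotypformel3} is the globalization over $D$ of \eqref{prop:isotypformel2}: the local isotypical decompositions are canonical (by the uniqueness just established), hence they glue on overlaps to a global decomposition $\wh\cM=\bigoplus_\alpha\wh\cM_\alpha$ by locally isotypical summands; here I use that goodness is assumed all over $D$, so the covering $\wt\Sigma(\wh\cM)\subset\ccIet$ is well-defined globally and its decomposition into "orbits of purely monomial sheets" is locally constant along the connected components, matching the index set $\{\alpha\}$. I expect the main obstacle to be the bookkeeping in the descent step of \eqref{prop:isotypformel1}--\eqref{prop:isotypformel2}: one must handle the interaction of the Galois action with the goodness of $\Phi_d$ carefully (the minimal $d_\alpha$ depends on the block, and one needs that $\rho_{d_\alpha,+}\cE^{\varphi_\alpha}$ stays irreducible, equivalently that $\varphi_\alpha$ is not itself a $\rho$-pullback), and to verify that the "no nonzero flat morphism between summands with distinct exponential parts" claim survives after descent, not merely before it. The regular-connection input (structure of rank-one regular $\cO(*D)$-modules and their classification by monodromy plus a residue datum) is standard and I would cite \cite{Malgrange91}, \cite{Deligne84cc}.
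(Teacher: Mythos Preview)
Your proposal is essentially correct and follows the approach the paper has in mind. The paper itself does not give a detailed argument for parts \eqref{prop:isotypformel1} and \eqref{prop:isotypformel2}: it simply says the proof is ``similar to the analogous statements when $\dim X=1$'' and refers to \cite{B-B-D-E05} and \cite[Prop.\,3.1 \& Cor.\,3.3]{Bibi07a}; for \eqref{prop:isotypformel3} it says only that it ``follows by local uniqueness'', which is exactly your gluing argument. The ingredients you invoke---the no-morphism lemma between summands with distinct (purely monomial) exponential factors, the resulting canonicity of the $\Phi_d$-decomposition, the Galois action on $\Phi_d$ and descent along $\rho_d$---are the standard ones underlying those references.

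One small point to tighten: your justification that $\cR$ has rank one in \eqref{prop:isotypformel1} is a bit tangled. The clean statement is that, locally at $x_o$ on the smooth divisor, a regular $\cO_{\wh D_d}(*D_d)$-connection is determined by a single monodromy operator (the parameter disc being simply connected), so irreducibility forces rank one over~$\CC$. Also, your remark that ``$\varphi$ purely monomial'' is equivalent to the $\sigma^*\varphi-\varphi$ being purely monomial is not literally correct; rather, the pure monomiality of $\varphi$ in the statement is part of the \emph{output} normal form (one shows the irreducible object can be so written), not a consequence of the goodness of $\Phi_d$ alone. These are bookkeeping issues of exactly the kind you flag at the end, not genuine gaps.
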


\begin{proof}
The proof of \eqref{prop:isotypformel1} and \eqref{prop:isotypformel2} is similar to the analogous statements when $\dim X=\nobreak1$, \cf the unpublished preprint \cite{B-B-D-E05}, see also \cite[Prop\ptbl3.1 \& Cor\ptbl3.3]{Bibi07a}. Then~\eqref{prop:isotypformel3} follows by local uniqueness.
\end{proof}

\begin{definitio}[Good elementary meromorphic connections]\label{def:goodelementary}
Let $\cM$ be a coherent $\cO_X(*D)$-module with a flat connection. We say that $\cM$ is a \emph{good elementary} connection if, locally near any~$x_o$ on~$D$ and after some ramification~$\rho_d$ around~$D$, $\rho_d^+\cM$ has a decomposition as $\bigoplus_{\varphi\in\Phi_d}(\cE^\varphi\otimes\cR_\varphi)$ with a good set $\Phi_d$, and where each $\cR_\varphi$ is a free $\cO_{X_d,x_o}(*D_d)$-module of finite rank with connection having regular singularities along~$D$. We then denote it by $\cM^\el$.
\end{definitio}

\begin{proposition}\label{prop:equivgoodformal}
The formalization functor $\cO_{\wh D}\otimes_{\cO_X|D}\cbbullet$ is an equivalence between the category of elementary germs of meromorphic connection along~$D$ with associated $\ccI$-covering contained in $\wt\Sigma$ (full subcategory of that of germs along~$D$ of meromorphic connections) and the category of formal connections along~$D$ with associated $\ccI$-covering contained in $\wt\Sigma$ .
\end{proposition}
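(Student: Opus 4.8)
The statement asserts that formalization gives an equivalence between elementary germs of meromorphic connections along $D$ (with $\ccI$-covering in $\wt\Sigma$) and formal connections along $D$ (with $\ccI$-covering in $\wt\Sigma$). The plan is to reduce both sides to the non-ramified case, then to isotypical summands, then to the rank-one regular case and finally to regular connections, where the assertion is the classical Deligne-Malgrange theorem on the equivalence between regular meromorphic connections and their formalizations along a smooth divisor. First I would observe that the functor is well-defined: the formalization of an elementary connection $\cM^\el$ is formal-elementary and, by the intrinsic construction of $\S\ref{subsec:goodformstructD}$, has the same associated $\ccI$-covering, so it maps the first category into the second.

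\textbf{Full faithfulness.} This is a local question on $D$ (both categories are stacks on $D$, objects being germs along $D$), so fix $x_o \in D$. Given elementary $\cM,\cM'$, the sheaf $\cHom_{\cO_X(*D)}(\cM,\cM')$ with its natural connection is again a meromorphic connection with poles along $D$, whose formalization is $\cHom_{\cO_{\wh D}(*D)}(\wh\cM,\wh\cM')$. Since $\wt\Sigma(\cM),\wt\Sigma(\cM')\subset\wt\Sigma$ and $\wt\Sigma$ is good, Hukuhara-Turrittin-Sibuya applies to $\cHom(\cM,\cM')$ and the purely monomial hypothesis of Lemma \ref{lem:imdirLnegsmooth} (applied to $\cHom$) is satisfied. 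Hence $\varpi_*\cH^0\DR^\modD(\cHom(\cM,\cM')) = \cH^0\DR(\cHom(\cM,\cM'))$. On the other hand, the same Hukuhara-Turrittin-Sibuya decomposition shows that $\cH^0\DR^\modD(\cHom(\cM,\cM'))$ agrees with $\cH^0\DR^\modD$ of the formal Hom, because the moderate de~Rham complex of a summand $\cE^\varphi\otimes\cR$ with $\varphi$ purely monomial is computed from the Stokes filtration alone, which is insensitive to the difference between a connection and its formalization along $D$. Comparing $\Hom_{\cD}(\cM,\cM')$ and $\Hom_{\cD_{\wh D}}(\wh\cM,\wh\cM')$ through $\cH^0\DR$ (both equal $\Gamma$ of the same sheaf $\cH^0\DR^\modD(\cHom)$ on $\partial\wt X$) yields the isomorphism on Hom-spaces. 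Functoriality is clear.

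\textbf{Essential surjectivity.} Let $\wh\cM$ be a good formal connection with $\wt\Sigma(\wh\cM)\subset\wt\Sigma$. By Proposition \ref{prop:isotypformel}\eqref{prop:isotypformel3} it has a canonical decomposition into locally isotypical summands, and by \eqref{prop:isotypformel2}, locally each summand is $\rho_{d,+}(\cE^{\varphi}\otimes\cR_{\wh D_d})$ with $\varphi$ purely monomial and $\cR_{\wh D_d}$ regular. By the classical Deligne-Malgrange equivalence for regular connections along a smooth divisor (equivalence between regular meromorphic connections and their formalizations), each regular formal factor $\cR_{\wh D_d}$ is the formalization of a unique regular meromorphic connection $\cR_\varphi$ on $X_d$. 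Setting $\cM^\el_{\mathrm{loc}} = \bigoplus \rho_{d,+}(\cE^\varphi\otimes\cR_\varphi)$ produces a local elementary lift whose formalization is the given isotypical summand. The main obstacle is then the \emph{gluing}: the local lifts obtained this way are defined up to canonical isomorphism (by the full faithfulness just established, applied over intersections of a covering of $D$), so the cocycle condition is automatic and the local $\cM^\el_{\mathrm{loc}}$ patch to a global germ $\cM^\el$ along $D$ with $\wh{\cM^\el}\simeq\wh\cM$ and $\wt\Sigma(\cM^\el) = \wt\Sigma(\wh\cM)\subset\wt\Sigma$. Here the goodness hypothesis, assumed all over $D$, is essential: it guarantees the local decompositions are unique (no nonzero morphism $\wh\cR_\psi\to\cE^{\varphi-\psi}\otimes\wh\cR_\varphi$ when $\varphi-\psi$ is purely monomial), hence the gluing isomorphisms are canonical. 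I expect the bookkeeping for this gluing, and the precise matching of $\ccI$-coverings under ramification when the isotypical decomposition involves different ramification orders $d_\alpha$ at different points, to be the delicate part; the rank-one and regular-connection inputs are classical.
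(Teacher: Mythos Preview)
Your overall architecture matches the paper's: reduce from germs along $D$ to germs at a point by the gluing argument you describe, reduce to the non-ramified case by Galois descent, decompose into isotypical summands, and invoke the classical regular-case equivalence. The essential surjectivity argument is essentially the paper's.

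For full faithfulness, however, you take an unnecessary detour through $\DR^\modD$ and Lemma~\ref{lem:imdirLnegsmooth}, and the key step is not justified. The sentence ``the moderate de~Rham complex of a summand $\cE^\varphi\otimes\cR$ \dots\ is insensitive to the difference between a connection and its formalization along $D$'' is vague: you have computed $\Hom_{\cD}(\cM,\cM')$ in terms of data on $\partial\wt X$, but you have not computed $\Hom_{\cD_{\wh D}}(\wh\cM,\wh\cM')$ at all, nor exhibited the map between them. The paper's argument is shorter and avoids the Riemann--Hilbert machinery entirely: since $\cM,\cM'$ are \emph{elementary}, they already carry decompositions $\bigoplus_\varphi(\cE^\varphi\otimes\cR_\varphi)$, and so do $\wh\cM,\wh\cM'$; by goodness, any morphism (convergent or formal) is block-diagonal with respect to these decompositions, because there is no nonzero morphism $\cE^\varphi\otimes\cR\to\cE^\psi\otimes\cR'$ when $\varphi-\psi$ is purely monomial and nonzero. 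Hence $\Hom_{\cD}(\cM,\cM')=\bigoplus_{\varphi}\Hom_{\cD}(\cR_\varphi,\cR'_\varphi)$ and likewise for the formal $\Hom$, and the comparison reduces termwise to the classical equivalence for regular connections. This is what you end up using implicitly anyway, so you may as well use it directly.
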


\begin{proposition}\label{prop:equivgoodgraded}
The $\RH$ functor induces an equivalence between the category of elementary germs of meromorphic connection along~$D$ with associated $\ccI$-covering contained in $\wt\Sigma$ and the category of graded~$\ccI$-filtered local systems on $\partial\wt X$ with associated $\ccI$-covering contained in $\wt\Sigma$.
\end{proposition}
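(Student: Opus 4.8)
The plan is to reduce Proposition \ref{prop:equivgoodgraded} to a combination of the local full faithfulness already established in Theorem \ref{th:RHmerosmooth} (whose proof of full faithfulness is complete at this stage) together with a careful local computation of the $\RH$ functor on good elementary connections, followed by a gluing argument. First I would prove that, when $\cM^\el$ is a good elementary meromorphic connection, $\RH(\cM^\el)$ is a \emph{graded} Stokes-filtered local system. This is local on $\partial\wt X$, so we may assume (after ramification $\rho_d$ and using the projection formula as in Lemma \ref{lem:imdirLnegsmooth}) that $\cM^\el=\bigoplus_{\varphi\in\Phi}(\cE^\varphi\otimes\cR_\varphi)$ with $\Phi$ good and each $\cR_\varphi$ regular. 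The functor $\DR^\modD_{\ccIet}$ commutes with direct sums, so $\cL_\leq=\bigoplus_\varphi\cH^0\DR^\modD_{\ccIet}(\cE^\varphi\otimes\cR_\varphi)$, and by the twist argument (multiplication by $e^{-\varphi}$, as in Corollary \ref{cor:RHStokes}) one is reduced to computing $\cH^0\DR^\modD_{\ccIet}(\cR_\varphi)$, which by the regularity of $\cR_\varphi$ and the moderate-growth characterization gives precisely $\beta_{\varphi\leq\cbbullet}\gr_\varphi\cL$. Summing over $\varphi$ yields the graded $\ccI$-filtration with set of exponential factors $\Phi$, in the sense of Definition \ref{def:gradedStokes}; since $\Phi$ is good, the associated $\ccI$-covering is contained in $\wt\Sigma$ provided $\wt\Sigma(\cM^\el)\subset\wt\Sigma$.

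Next I would address essential surjectivity. Given a graded $\ccI$-filtered local system $(\cL,\cL_\bbullet)$ on $\partial\wt X$ with associated $\ccI$-covering $\wt\Sigma(\cL)\subset\wt\Sigma$, its graded pieces $\gr_\varphi\cL$ are local systems supported on the components of $\wt\Sigma(\cL)$. The point is that each such local system, living on (a piece of) the étale space, descends — via the correspondence between regular meromorphic connections and local systems on $X^*$, i.e. the classical local Riemann–Hilbert correspondence for regular singularities — to a regular meromorphic connection $\cR_\varphi$ locally along $D$ after the appropriate ramification, and one forms $\cE^\varphi\otimes\cR_\varphi$ and takes the (uniquely defined, by Proposition \ref{prop:isotypformel}) direct image under $\rho_d$. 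By Lemma \ref{lem:goodgoodsmooth} (or rather its proof) applied in reverse, $\RH$ of this connection is the graded Stokes-filtered local system with the prescribed graded pieces, hence isomorphic to $(\cL,\cL_\bbullet)$. One must check that these local constructions glue: this is where the goodness of $\wt\Sigma$ is essential, since it guarantees that the set of exponential factors does not jump along $D$, so the locally-defined $\cR_\varphi$'s and the ramification orders are consistent on overlaps, and the local uniqueness in Proposition \ref{prop:isotypformel}\eqref{prop:isotypformel3} makes the gluing canonical. Fully faithfulness of $\RH$ on this subcategory is then inherited from the global full faithfulness part of Theorem \ref{th:RHmerosmooth}, once we know the image lands in graded objects.

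I would organize the argument in the order: (1) compute $\RH$ on $\cE^\varphi\otimes\cR$ locally, obtaining $\beta_{\varphi\leq\cbbullet}\gr_\varphi\cL$; (2) deduce that $\RH$ sends good elementary connections to graded Stokes-filtered local systems, and that the associated coverings match; (3) full faithfulness, by restriction of Theorem \ref{th:RHmerosmooth}; (4) essential surjectivity by descending each $\gr_\varphi\cL$ to a regular connection $\cR_\varphi$, forming $\bigoplus(\cE^\varphi\otimes\cR_\varphi)$ or the $\rho_{d,+}$ of it, and gluing using Proposition \ref{prop:isotypformel}\eqref{prop:isotypformel3} and the goodness of $\wt\Sigma$. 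The main obstacle I anticipate is step (4), specifically the global gluing of the locally-defined elementary models: one has to ensure not only that the regular parts glue (which follows from the classical regular Riemann–Hilbert correspondence and the fact that $\gr\cL$ is globally defined on $\wt\Sigma(\cL)$) but also that the exponential factors $\varphi$ and the ramification data are compatible, which is exactly guaranteed by the global goodness hypothesis on $\wt\Sigma$ and the local-uniqueness of isotypical decompositions; making this precise without circular use of Theorem \ref{th:RHmerosmooth} requires some care. A secondary (but routine) point is checking that the twist operation $[\varphi]$ and the $\rho_{d,+}$ push-forward are compatible with $\RH$ in the way required, which follows from functoriality of $\DR^\modD_{\ccIet}$ and the projection formula already used in Lemma \ref{lem:imdirLnegsmooth}.
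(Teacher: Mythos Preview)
Your proposal is correct and follows essentially the same route as the paper: reduce to germs at a point and to the non-ramified case, decompose both sides into direct sums indexed by $\Phi$ (using goodness for uniqueness), twist by $\cE^{-\varphi}$ to reduce to the regular case, and invoke the classical regular Riemann--Hilbert correspondence. The paper is slightly more streamlined in that it treats full faithfulness and essential surjectivity simultaneously via this single reduction (observing that morphisms too are diagonal with respect to the decomposition, by goodness), whereas you separate the two and import full faithfulness from the already-proved part of Theorem~\ref{th:RHmerosmooth}; but this is a matter of organization, not substance, and there is no circularity since full faithfulness in Theorem~\ref{th:RHmerosmooth} is established before this proposition.
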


\begin{proof}[\proofname\ of Propositions \ref{prop:equivgoodformal} and \ref{prop:equivgoodgraded}]
We notice first that, for both propositions, it is enough to prove the equivalence for the corresponding categories of germs at $x_o\in D$ for any $x_o\in D$, to get the equivalence for the categories of germs along~$D$. Indeed, if this is proved, then, given an object in the target category, one can present it as the result of gluing local objects for a suitable open cover of~$D$. By the local essential surjectivity, one can locally lift each of the local objects, and by the local full faithfulness, one can lift in a unique way the gluing isomorphisms, which satisfy the cocycle condition, also by the local full faithfulness. This gives the global essential surjectivity. The global full faithfulness is obtained in a similar way.

Similarly, it is enough to prove both propositions in the case where the covering~$\wt\Sigma$ is trivial: if the equivalence is proved after a cyclic covering around~$D$, then the essential surjectivity is obtained by using the full faithfulness after ramification to lift the Galois action, and the full faithfulness is proved similarly.

In each category, each object is decomposed, and the decomposition is unique (\cf Lemma \ref{lem:pasdemorphisme}). Moreover, each morphism is also decomposed, by the goodness property. One is then reduced to proving the equivalences asserted by both propositions in the case when $\wt\Sigma$ is a covering of degree one and, by twisting, in the regular case, where it is standard.
\end{proof}

\subsection{\proofname\ of the essential surjectivity in Theorem \ref{th:RHmerosmooth}}\label{subsec:proofessentialsurj}
The important arguments have already been explained in \cite[\S II.6]{Bibi00b}. They are an adaptation to the case with a good parameter, of the arguments given in \cite{Malgrange83b,Malgrange83d}. We will not recall all details. Firstly, as already remarked in the proof of Propositions \ref{prop:equivgoodformal} and \ref{prop:equivgoodgraded}, one can reduce to the case of germs at a point of~$D$ and it is enough to consider the case where $\wt\Sigma$ is a trivial covering. This is the setting considered below.

Let $(\cL,\cL_\bbullet)$ be a good Stokes-filtered local system on $\partial\wt X$ with associated $\ccI$\nobreakdash-covering contained in $\wt\Sigma$. Our goal is construct a germ $\cM_D$ of meromorphic connection along~$D$ with $\RH(\cM_D)\simeq(\cL,\cL_\bbullet)$.

\subsubsection*{Step one}
The good Stokes-filtered local system $(\cL,\cL_\bbullet)$ is determined in a unique way, up to isomorphism, by the graded Stokes filtration $\gr\cL$ and an element of $H^1(\partial\wt X,\cAut^{<0}(\mu_!\gr\cL))$ (\cf Proposition \ref{prop:classifhausdorff}). On the other hand, by Proposition \ref{prop:equivgoodgraded}, the graded Stokes filtration $\gr\cL$ is isomorphic to $\RH(\cM^\el)$ for some germ along~$D$ of good elementary meromorphic connection $\cM^\el$.

\begin{lemme}\label{lem:RHEnd}
Let $\cEnd^\modD(\cM^\el)$ be the sheaf of horizontal sections of $\cA_{\partial\wt X}^\modD\otimes\varpi^{-1}\cHom_{\cO_X}(\cM^\el,\cM^\el)$ and let $\cEnd^\rdD(\cM^\el)$ be the subsheaf of sections with coefficients in $\cA_{\partial\wt X}^\rdD$ of holomorphic functions on~$X^*$ having rapid decay along $\partial\wt X$. Then $\cEnd^\modD(\cM^\el)=\cEnd(\mu_!\gr\cL)_{\leq0}$ and $\cEnd^\rdD(\cM^\el)=\cEnd(\mu_!\gr\cL)_{<0}$.
\end{lemme}

\begin{proof}
This is a special case of the compatibility of the Riemann-Hilbert functor with $\cHom$, already used above, and similar to Lemma \ref{lem:comphom}.
\end{proof}

We denote by $\cAut^\rdD(\cM^\el)$ the sheaf $\id+\cEnd^\rdD(\cM^\el)$. Then $\cAut^\rdD(\cM^\el)=\cAut^{<0}(\mu_!\gr\cL)$.

\subsubsection*{Step two}
Consider the presheaf $\cH_D$ on~$D$ such that, for any open set~$U$ of~$D$, $\cH_D(U)$ consists of isomorphism classes of pairs $(\cM,\wh\lambda)$, where $\cM$ is a germ along $U$ of meromorphic connection on~$X$ and $\wh\lambda$ is an isomorphism $\cM_{\wh D}\isom\wh\cM^\el$.

\begin{lemme}\label{lem:HD}
The presheaf $\cH_D$ is a sheaf.
\end{lemme}

\begin{proof}
See \cite[Lemma II.6.2]{Bibi00b}.
\end{proof}

Similarly, let us fix a graded Stokes-filtered local system $\gr\cL^o$ with associated covering contained in $\wt\Sigma$, and let $\cSt_D$ be the presheaf on~$D$ such that, for any open set~$U$ of~$D$, $\cSt_D(U)$ consists of isomorphism classes of pairs $[(\cL,\cL_\bbullet),\wt\lambda]$, where $(\cL,\cL_\bbullet)$ is a Stokes-filtered local system on $\partial\wt X_{|U}$ and $\wt\lambda$ is an isomorphism $\gr\cL\simeq\gr\cL^o_{|U}$ of graded Stokes-filtered local systems. In particular, the associated covering $\wt\Sigma(\cL,\cL_\bbullet)$ is contained in $\wt\Sigma$.

\begin{lemme}\label{lem:cSt}
The presheaf $\cSt_D$ is a sheaf.
\end{lemme}

\begin{proof}
The point is to prove that, given two pairs $[(\cL,\cL_\bbullet),\wt\lambda]$ and $[(\cL',\cL'_\bbullet),\wt\lambda']$, there is at most one isomorphism between them. If there exists one isomorphism, we can assume that $(\cL',\cL'_\bbullet)=(\cL,\cL_\bbullet)$ and we are reduced to proving that an automorphism $\lambda$ of $(\cL,\cL_\bbullet)$ is completely determined by $\gr\lambda$. Arguing as in the proof of Theorem \ref{th:abelianwithout}, locally with respect to~$D$, there exists a local trivialization of $(\cL,\cL_\bbullet)$ such that $\lambda$ is equal to $\gr\lambda$ in this trivializations. It is thus uniquely determined by $\gr\lambda$.
\end{proof}

\begin{remarque}
Lemma \ref{lem:cSt} is not needed for the essential surjectivity, it is given by symmetry with Lemma \ref{lem:HD}. Note that the main argument in Lemma \ref{lem:HD} is the faithful flatness of $\cO_{\wh D}$ over $\cO_{X|D}$. The corresponding argument after applying $\RH$ comes from Theorem \ref{th:abelianwithout}.
\end{remarque}

\subsubsection*{Step three}
Arguing as in \cite{Malgrange83b}, we have a natural morphism of sheaves
\[
\cH_D\to\cSt_D.
\]
where the left-hand side is associated to $\cM^\el$ and the right-hand side to $\RH(\cM^\el)$.

\begin{theoreme}\label{th:isostokes}
This morphism is an isomorphism.
\end{theoreme}

\begin{proof}
See \cite[Th\ptbl II.6.3]{Bibi00b}.
\end{proof}

\begin{remarque}\label{rem:basechangeStokes}
The proof uses the \emphb{Malgrange-Sibuya theorem} on $\varpi^{-1}(x_o)\simeq S^1$, and a base change property for the Stokes sheaf $\cSt_D$ is needed for that purpose (\cf\cite[Prop\ptbl6.9]{Bibi00b}). Note also that it is proved in \loccit that the Stokes sheaf $\cSt_D$ is locally constant on~$D$.
\end{remarque}

\subsubsection*{Step four}
We can now end the proof. Given a good Stokes-filtered local system $(\cL,\cL_\bbullet)$, we construct $\cM^\el$ with $\RH(\cM^\el)\simeq\gr\cL$, according to Proposition \ref{prop:equivgoodgraded}. We are then left with a class in $H^1(\partial\wt X,\cAut^{<0}(\mu_!\gr\cL))$. This defines a global section in $\Gamma(D,\cSt_D)$ (in fact, Lemma \ref{lem:cSt} says that it \emph{is} a global section of $\cSt_D$ on~$D$). By Theorem \ref{th:isostokes}, this is also a class in $\Gamma(D,\cH_D)$, that is, it defines a pair $(\cM,\wh\lambda)$. It is now clear from the construction that $\RH(\cM)\simeq(\cL,\cL_\bbullet)$.\qed

\skpt
\begin{remarques}\ligne
\begin{enumerate}
\item
One can shorten the proof above in two ways: firstly, one can use directly a version of the Malgrange-Sibuya theorem with a parameter; secondly, one can avoid the introduction of the sheaves $\cH_D,\St_D$, and use the full faithfulness of the Riemann-Hilbert functor to glue the locally defined meromorphic connections obtained by applying the local Riemann-Hilbert functor. This will be the strategy in the proof of Theorem \ref{th:RHmeronc}. Nevertheless, it seemed interesting to emphasize these sheaves.
\item
One can deduce from the base change property mentioned in Remark \ref{rem:basechangeStokes} that, if we are given a holomorphic fibration $\pi:X\to D$ (such a fibration locally exists near any point of $D$) with $D$ (smooth and) simply connected, and for a fixed good $\ccI$-covering $\wt\Sigma$ of $\partial\wt X=\varpi^{-1}(D)$, the restriction functor from germs of meromorphic connections along $D$ with associated $\ccI$-covering contained in $\wt\Sigma$ to germs of meromorphic connections on $\pi^{-1}(x_o)$ with associated $\ccI$-covering contained in $\wt\Sigma_{|\varpi^{-1}(x_o)}$ is an equivalence of categories. A similar result holds for Stokes-filtered local systems.

Such a result has been generalized by T\ptbl Mochizuki to the case where $D$ has normal crossings (\cf\cite{Mochizuki10b}).
\item
The arguments of \S\ref{subsec:elemgraded} are useful to prove the analogue of Proposition \ref{prop:gr0com}, that is, the compatibility with the formal Riemann-Hilbert correspondence, which also holds in this case.
\end{enumerate}
\end{remarques}

\chapter{Good meromorphic~connections (formal~theory)}\label{chap:goodformal}

\begin{sommaire}
This \chaptersname is a prelude to the Riemann-Hilbert correspondence in higher dimensions, treated in \Chaptersname\ref{chap:RHgoodnc}. We explain the notion of a good formal structure for germs of meromorphic connections having poles along a divisor with normal crossings.
\end{sommaire}

\subsection{Introduction}
Understanding the notion of a good formal decomposition---which is the good analogue in higher dimensions of the Turrittin-Levelt decomposition in dimension one---leads to solving new questions, compared to the one-variable case or the smooth case away from turning points, when considering the formalization along a variety of codimension $\geq2$. For instance, there may be the question whether the set of exponential factors can be represented by convergent elements; or whether a given formal lattice can be lifted as a convergent lattice of a meromorphic connection. This \chaptersname is therefore dedicated to proving such lifting properties, from the formal neighbourhood of a point to an ordinary neighbourhood, when the meromorphic connection is assumed to have a good formal structure along a normally crossing divisor at the the given point.

The usefulness of the formal goodness property for a meromorphic connection lies in the consequences it provides in asymptotic analysis. This goodness property is strong enough to be used in the proof of the many-variable version of the Hukuhara-Turrittin theorem, that we have already encountered in the case of a smooth divisor (proof of Lemma \ref{lem:goodgoodsmooth}). This theorem will be proved in \S\ref{subsec:HTM}.

\subsection{Preliminary notation}\label{subsec:prelimform}
All along this \chaptername, we will use the following notation, compatible with that of \cite{Mochizuki08}:
\begin{itemize}
\item
$\Delta$ is the open disc centered at~$0$ in $\CC$ and of radius one, and $X=\Delta^n$, with coordinates $t_1,\dots,t_n$.
\item
$D$ is the divisor defined by $\prod_{i=1}^\ell t_i=0$.
\item
For any $I\subset\{1,\dots,\ell\}$, \index{$DI$@$D_I$, $\wh D_I$, $D(I)$}$D_I=\bigcap_{i\in I}\{t_i=0\}$, and \index{$DI$@$D_I$, $\wh D_I$, $D(I)$}$D(I^c)=\bigcup_{\substack{j\leq\ell\\j\not\in I}}D_j$. It will be convenient to set $L=\{1,\dots,\ell\}$, so that $D_L$ is the stratum of lowest dimension of~$D$.
\item
\index{$DI$@$D_I$, $\wh D_I$, $D(I)$}$\wh D_I$ is $D_I$ endowed with the sheaf $\varprojlim_k\cO_X/(t_{i\in I})^k$, that we denote $\cO_{\wh D_I}$ (it is sometimes denoted by $\cO_{\wh{X|D_I}}$). A germ $f\in\cO_{\wh D_I,0}$ is written as $\sum_{\nu\in\mathbb N^I}f_\nu t_I^\nu$ with $f_\nu\in \cO(U\cap D_I)$ for some open neighbourhood $U$ of~$0$ independent of $\nu$.
\item
We will also denote by $\wh0$ the origin endowed with the sheaf $\cO_{\wh0}\defin\CC\lcr t_1,\dots,t_n\rcr$.
\end{itemize}

\begin{exemple}
If $\ell=n=2$, we have $D_1=\{t_1=0\}$, $D_2=\{t_2=0\}$, $D_{12}=\{0\}$, $D(1^c)=D_2$, $D(2^c)=D_1$, $D(\{1,2\}^c)=D(\emptyset)=\emptyset$. Given a germ $f\in\CC\{t_1,t_2\}$, we can consider various formal expansions of $f$:
\begin{itemize}
\item
$f=\sum_{i\in\NN}f_i^{(1)}(t_2)t_1^i$ in $\cO_{\wh D_1}$, where all $f_i^{(1)}(t_2)$ are holomorphic in some common neighbourhood of $t_2=0$,
\item
$f=\sum_{j\in\NN}f_j^{(2)}(t_1)t_2^j$ in $\cO_{\wh D_2}$, where all $f_j^{(2)}(t_1)$ are holomorphic in some common neighbourhood of $t_1=0$,
\item
$f=\sum_{(i,j)\in\NN^2}f_{ij}t_1^it_2^j$, with $f_{ij}\in\CC$.
\end{itemize}
These expansions are of course related in a natural way. We will consider below the case of meromorphic functions with poles on~$D$.
\end{exemple}

Any $f\in\cO_{X,0}(*D)$ has a unique formal expansion $f=\sum_{\nug\in\ZZ^\ell}f_\nug t^\nug$ with $f_\nug\in\cO_{D_L,0}$. When it exists, the minimum (for the natural partial order) of the set $\{\nug\in\nobreak\ZZ^\ell\mid\nobreak f_\nug\neq\nobreak0\}\cup\{0_\ell\}$ is denoted by \index{$ORDL$@$\ord^L$}$\ord^L(f)$. It belongs to $(-\NN)^\ell$ and only depends on the class $f_L$ of $f$ in $\cO_{X,0}(*D)/\cO_{X,0}$. With this definition, we have $\ord^L(f)=0\in(-\NN)^\ell$ iff $f_L=0\in\cO_{X,0}(*D)/\cO_{X,0}$. As in Definition \ref{def:localgoodness}, we then set $\bmm(f_L)=-\ord^L(f)\in\NN^\ell$ for some (or any) lifting $f$ of $f_L$.

Similarly, for $I\subset L$ and $f\in\cO_{X,0}(*D)$, the minimum (for the natural partial order) of the set $(\{\nug\in\ZZ^\ell\mid\nobreak f_\nug\neq\nobreak0\}\moins\ZZ^{I^c})\cup \{0_\ell\}$, \emph{when it exists}, is denoted by $\ord^I(f)$. It belongs to $\big((-\NN)^\ell\moins\ZZ^{I^c}\big)\cup\{0_\ell\}$, and only depends on the class $f_I$ of $f$ in $\cO_{X,0}(*D)/\cO_X(*D(I^c))$. We have $f_I=0$ iff $\ord^I(f)=0_\ell$. We then set $\bmm(f_I)=-\ord^I(f)\in\NN^\ell$ for some (or any) lifting $f$ of $f_I$ in $\cO_{X,0}(*D)$. We will also denote by $\bmm_I\in\NN^I$ the $I$-component of $\bmm\in\NN^\ell$. Then, when $\ord^I(f_I)$ exists, $f_I=0$ iff $\bmm_I=0$.

As a $\cO_{D_L,0}$-module, $\cO_{X,0}(*D)/\cO_{X,0}$ is isomorphic to
\[
\bigoplus_{\emptyset\neq J\subset L}\Big(\cO_{D_J,0}\otimes_\CC(t^{-\bun_J}\CC[t_J^{-1}])\Big).
\]
Under such an identification, we have for any $I\subset L$:
\[
\cO_{X,0}(*D)/\cO_{X,0}(*D(I^c))=\bigoplus_{\substack{\emptyset\neq J\subset L\\ J\cap I\neq\emptyset}}\Big(\cO_{D_J,0}\otimes_\CC(t^{-\bun_J}\CC[t_J^{-1}])\Big).
\]

\begin{lemme}\label{lem:formelconv}
A germ $\wh f\in\cO_{\wh0}(*D)/\cO_{\wh0}$ belongs to $\cO_{X,0}(*D)/\cO_{X,0}$ iff for any $i\in L$, the class $\wh f_i$ of $\wh f$ in $\cO_{\wh0}(*D)/\cO_{\wh0}(*D(i ^c))$ belongs to $\cO_{X,0}(*D)/\cO_{X,0}(*D(i ^c))$.
\end{lemme}

\begin{proof}
We also have a decomposition of $\cO_{\wh0}(*D)/\cO_{\wh0}$ as a $\cO_{\wh0\cap D_L}$-module:
\[
\bigoplus_{\emptyset\neq J\subset L}\Big(\cO_{\wh0\cap D_J}\otimes_\CC(t^{-\bun_J}\CC[t_J^{-1}])\Big).
\]
Then, a germ $\wh f$ belongs to $\cO_{X,0}(*D)/\cO_{X,0}$ iff, for any $J\neq\emptyset$, its $J$-component has coefficients in $\cO_{D_J,0}$. The assumption of the lemma means that, for any $i\in L$ and for any $J\subset L$ with $J\neq\emptyset$ and $J\ni i$, the $J$-component of $\wh f$ has coefficients in $\cO_{D_J,0}$. This is clearly equivalent to the desired statement.
\end{proof}

\Subsection{Good formal decomposition}
\subsubsection*{Case of a smooth divisor}
Let us first revisit the notion of good formal decomposition in the case where~$D$ is smooth (\ie $\#L=1$), as defined by \eqref{eq:goodD}. We will have in mind possible generalizations to the normal crossing case, where formal completion along various strata will be needed. Let $\cM$ be a meromorphic connection on $X$ with poles along $D_1$. We wish to compare the following two properties:
\begin{enumerate}
\item\label{enum:gooddecsmooth1}
$\cM$ has a \emphb{good formal decomposition} along $D_1$ near the origin, that is, there exists a \emph{good set} $\Phi\subset\cO_{X,0}(*D_1)/\cO_{X,0}$ and a decomposition stable by the connection
\[
\cM_{\wh D_1}\defin\cO_{\wh D_1,0}\otimes_{\cO_{X,0}}\cM\simeq\bigoplus_{\varphi\in\Phi}\cM_{|\wh D_1,\varphi},
\]
where, for each $\varphi\in\Phi$, the $\cO_{\wh D_1,0}(*D)$-free module with integrable connection $\cM_{|\wh D_1,\varphi}$ is isomorphic to $(\cE^\varphi\otimes\cR_\varphi)_{\wh D_1}$ for some germ $\cR_\varphi$ of meromorphic connection with regular singularity along $D_1$, and $\cE^\varphi=(\cO_{X,0}(*D_1),d+d\varphi)$.
\item\label{enum:gooddecsmooth2}
The formal germ $\wh\cM=\cM_{\wh0}\defin\cO_{\wh0}\otimes_{\cO_{X,0}}\cM$ has a \emphb{good decomposition} along~ $D_1$, that is, there exists a \emph{good set} $\wh\Phi\subset\cO_{\wh0}(*D_1)/\cO_{\wh0}$ and a decomposition
\[
\wh\cM\simeq\bigoplus_{\wh\varphi\in\wh\Phi}\wh\cM_{\wh\varphi}\qquad\text{with } \wh\cM_{\wh\varphi}\simeq(\cE^{\wh\varphi}\otimes\wh\cR_{\wh\varphi}),
\]
where $\wh\cR_{\wh\varphi}$ is a regular $\cO_{\wh0}(*D_1)$-connection.
\end{enumerate}

\begin{probleme}\label{prob:MMhat}
Let $\cM$ be a meromorphic connection with poles along $D_1$. If the formalization $\cM_{\wh0}$ satisfies \eqref{enum:gooddecsmooth2}, is it true that $\wh\Phi\subset\cO_{X,0}(*D_1)/\cO_{X,0}$ (set then $\Phi=\wh\Phi$) and that $\cM$ satisfies~\eqref{enum:gooddecsmooth1}?
\end{probleme}

The problem reduces in fact, given a local basis $\wh e$ of the $\cO_{\wh0}(*D_1)$-module $\wh\cM$ adapted to the decomposition given by \eqref{enum:gooddecsmooth2}, to finding a local basis $e$ of the $\cO_{X,0}(*D_1)$-module $\cM$ such that the base change $e=\wh e\cdot\wh P$ is given by a matrix in $\GL(\cO_{\wh0})$ (while there is by definition a base change with matrix in $\GL(\cO_{\wh0}(*D_1))$). Notice also that, according to \cite[Prop\ptbl1.2]{Malgrange95}, it is enough to find a basis $e_{\wh D_1}$ of $\cM_{\wh D_1}$ with a similar property with respect to $\wh e$. The solution to the problem in such a case is given by Lemma \ref{lem:I} below, in the particular case where $I=L=\{1\}$.

The conclusion is that, even in the case of a smooth divisor, the two conditions may not be equivalent, and considerations of lattices make them equivalent.

\subsubsection*{General case}
Let $\Phi$ be a finite subset of $\cO_{X,0}(*D)/\cO_{X,0}$. There exists an open neighbourhood $U$ of~$0$ on which each element of $\Phi$ has a representative $\varphi\in\Gamma(U,\cO_X(*D))$ (take $U$ such that $H^1(U,\cO_X)=0$). Let $I$ be a subset of~$L$. Then the restriction of $\varphi$ to $U\moins D(I^c)$ only depends on the class $\varphi_I$ of $\varphi$ in $\cO_U(*D)/\cO_U(*D(I^c))$. We denote by $\Phi_I$ the image of $\Phi$ in $\cO_{X,0}(*D)/\cO_{X,0}(*D(I^c))$.

\begin{definitio}[Good decomposition and good formal decomposition]\label{def:goodformdec}
\begin{enumerate}
\item\label{def:goodformdec1}
Let $\wh\cM$ be a free $\cO_{\wh0}(*D)$-module equipped with a flat connection $\wh\nabla:\wh\cM\to\Omega_{\wh0}^1\otimes\wh\cM$. We say that $\wh\cM$ has a \emphb{good decomposition} if there exist a \emph{good} finite set $\wh\Phi\subset\cO_{\wh0}(*D)/\cO_{\wh0}$ (\cf Definition \ref{def:localgoodness}) and a decomposition
\bgroup\numstareq
\begin{equation}\label{eq:goodformdec*}
\wh\cM=\bigoplus_{\wh\varphi\in\wh\Phi}\wh\cM_{\wh\varphi}\qquad\text{with } \wh\cM_{\wh\varphi}\simeq(\cE^{\wh\varphi}\otimes\wh\cR_{\wh\varphi}),
\end{equation}
\egroup
where $\wh\cR_{\wh\varphi}$ is a free $\cO_{\wh0}(*D)$-module equipped with a flat \emph{regular} connection.
\item\label{def:goodformdec3}
Let $\cM$ be a free $\cO_{X,0}(*D)$-module equipped with a flat connection $\nabla:\cM\to\Omega_{X,0}^1\otimes\cM$. We say that $\cM$ has a \emphb{punctual good formal decomposition} near the origin if, for any $x\in D$ in some neighbourhood (still denoted by) $X$ of the origin, the formal connection $\cO_{\wh x}(*D)\otimes_{\cO_X}\cM$ has a good decomposition.
\item\label{def:goodformdec2}
Let $\cM$ be a free $\cO_{X,0}(*D)$-module equipped with a flat connection $\nabla:\cM\to\Omega_{X,0}^1\otimes\cM$. We say that $\cM$ has a \emphb{good formal decomposition} if there exists a \emph{good} finite set $\Phi\subset\cO_{X,0}(*D)/\cO_{X,0}$ and for any $I\subset L$ and any $\varphi_I\in \Phi_I$ a free $\cO_{\wh D_I,0}(*D)$-module $\IcRhat_{\varphi_I}$ equipped with a flat connection such that, on some neighbourhood $U$ of~$0$ where all objects are defined, $\IcRhat_{\varphi_I|D_I^\circ}$ is \emph{regular}, and there is a decomposition
\bgroup\numstarstareq
\begin{equation}\label{eq:goodformdec**}
\cM_{U\cap\wh D_I^\circ}\defin\cO_{U\cap\wh D_I^\circ}\otimes_{\cO_{U\moins D(I^c)}}\cM_{|U\moins D(I^c)}=\bigoplus_{\varphi_I\in \Phi_I}\cM_{U\cap\wh D_I^\circ,\varphi_I},
\end{equation}
\egroup
where $D_I^\circ\defin D_I\moins D(I^c)$, and $\cM_{U\cap\wh D_I^\circ,\varphi_I}\simeq(\cE^{\varphi_I}\otimes\IcRhat_{\varphi_I})_{|U\cap\wh D_I^\circ}
$.
\end{enumerate}
\end{definitio}

In the previous definition, for a germ $\cM$ of $\cO_{X,0}(*D)$-module with connection, we consider a representative in some neighbourhood of the origin, and a representative of the connection.

\skpt
\begin{remarques}\label{rem:openness}\ligne
\begin{enumerate}
\item\label{rem:openness1}
We note that the property in \ref{def:goodformdec}\eqref{def:goodformdec2} is stronger than the notion introduced in \cite{Bibi97}, as the \emph{same} set $\Phi$ is used for any stratum $D_I^\circ$ whose closure contains the stratum $D_L$ going through the base point~$0$.

Note also that this property is \emph{open}, that is, if $\cM$ has a good formal decomposition then, for any \hbox{$x\!\in\!D\cap U$} ($U$ small enough), $\cM_x$ has a good decomposition with data $\Phi_x\subset\cO_{X,x}(*D)/\cO_{X,x}$ and $\cR_{\varphi,x}$. That $\Phi_x$ is good has been checked in Remark \ref{rem:goodness}\eqref{rem:goodness2}, so $\cM$ satisfies \ref{def:goodformdec}\eqref{def:goodformdec3} near the origin.
\item
On the other hand, in \ref{def:goodformdec}\eqref{def:goodformdec3}, we do not insist on the relation between the various $\wh\Phi_x$. The point which will occupy us later is whether we can choose $\wh\Phi_x\subset\cO_{X,x}(*D)/\cO_{X,x}$ instead of $\wh\Phi_x\subset\cO_{\wh x}(*D)/\cO_{\wh x}$. This is one difference between \hbox{dimension} one and dimension $\geq2$. In dimension two, this has been positively solved in \cite[Prop\ptbl I.2.4.1]{Bibi97}. In general, this will be stated and proved in Theorem \ref{th:gooddec} below.

\item\label{rem:openness3}
These definitions are stable by a twist: for instance, $\wh\cM$ has a formal $\wh\Phi$\nobreakdash-decomposition iff for some $\wh\eta\in\cO_{\wh0}(*D)/\cO_{\wh0}$, $\wh\cE^{\wh\eta}\otimes\wh\cM$ has a $(\wh\Phi+\wh\eta)$-decomposition.
\item\label{rem:openness2}
In \cite{Mochizuki08}, T\ptbl Mochizuki uses a goodness condition which is slightly stronger than ours (\cf Definition \ref{def:localgoodness}). For our purpose, the one we use is enough.
\end{enumerate}
\end{remarques}

\begin{lemme}\label{lem:uniquegooddec}
If $\wh\cM$ has a good decomposition, this decomposition is unique and $\wh\Phi$ is unique. Moreover, this decomposition induces on any $\cO_{\wh0}(*D)$-submodule invariant by the connection a good decomposition.
\end{lemme}

\begin{proof}
For the first point, it is enough to prove that, if $\wh\varphi\neq\wh\psi\in\cO_{\wh0}(*D)/\cO_{\wh0}$, then there is no nonzero morphism $\cE^{\wh\varphi}\otimes\wh\cR_{\wh\varphi}\to\cE^{\wh\psi}\otimes\wh\cR_{\wh\psi}$. This is analogous, in the case of normal crossings, of Lemma \ref{lem:pasdemorphisme}, and  amounts to showing that $\cE^{\wh\psi-\wh\varphi}\otimes\wh\cR$ has no horizontal section, when $\wh\cR$ is regular. We can even assume that~$\wh\cR$ has rank one, since a general $\wh\cR$ is an extension of rank-one regular meromorphic connections. So, we are looking for horizontal sections of $(\cO_{\wh0}(*D),d+d\wh\eta+\omega)$ with $\wh\eta\in\cO_{\wh0}(*D)/\cO_{\wh0}$, $\wh\eta\neq0$, and $\omega=\sum_{i=1}^\ell\omega_idt_i/t_i$, $\omega_i\in\CC$.

Assume that $\wh f\in\cO_{\wh0}(*D)$ is a nonzero horizontal section. Then for any $i\in L$, it satisfies $t_i\partial_{t_i}(\wh f)+t_i\partial_{t_i}(\wh\eta)\cdot \wh f+\omega_i\wh f=0$. Let us consider its Newton polyhedron (in a sense slightly different from that used after Definition \ref{def:purmonom}), which is the convex hull of the union of octants $\nug+\RR_+^\ell$ for which $\wh f_\nug\neq0$ (\cf Notation in \S\ref{subsec:prelimform}). By assumption, there exists $i\in L$ for which the Newton polyhedron of $t_i\partial_{t_i}(\wh\eta)$ is not contained in~$\RR_+^\ell$. Then, on the one hand, the Newton polyhedron of $t_i\partial_{t_i}(\wh\eta)\cdot \wh f$ is \emph{equal} to the Minkowski sum of that of $\wh f$ and that of $t_i\partial_{t_i}(\wh\eta)$, hence is not contained in that of $\wh f$. On the other hand, the Newton polyhedron of $t_i\partial_{t_i}(\wh f)+\omega_i\wh f$ is contained in that of $\wh f$. Therefore, the sum of the two corresponding terms cannot be zero, a contradiction.

For the second point, we argue by induction on $\#\Phi$, the result being clear if $\#\Phi=1$, since regularity is stable by taking submodules. By twisting $\wh\cM$, we can assume that the regular part $\wh\cM_\reg$ of $\wh\cM$ is nonzero, and thus the inductive assumption applies to~$\wh\cM_\irr$. We have a decomposition $\wh\cM=\wh\cM_\reg\oplus\wh\cM_\irr$ Let $\wh\cN$ be a $\wh\cO_{\wh0}(*D)$-submodule of~$\wh\cM$ stable by the connection. Denoting by $\wh\cN^\irr$ (\resp $\wh\cN^\reg$) the image of $\wh\cN$ by the projection $\wh\cM\to\wh\cM_\irr$ (\resp $\wh\cM\to\wh\cM_\reg$), we have $\wh\cN\subset\wh\cN^\reg\oplus\wh\cN^\irr$ and, by the inductive assumption, $\wh\cN^\irr$ decomposes according to the decomposition of $\wh\cM_\irr$ (and $\wh\cN^\reg$ is regular). We can therefore assume that $\wh\cN^\reg=\wh\cM_\reg$ and $\wh\cN^\irr=\wh\cM_\irr$, that is, the two projections of $\wh\cN$ to $\wh\cM_\reg$ and $\wh\cM_\irr$ are onto. We wish to show that, in such a case, $\wh\cN=\wh\cM_\reg\oplus\wh\cM_\irr$ in a way compatible with the connections, and it is enough to show that $\rk\wh\cN=\rk\wh\cM$.

Assume first that $\wh\cM_\irr$ has rank one. If $\rk\wh\cN<\rk\wh\cM$, then $\rk\wh\cN=\rk\wh\cM-1=\rk\wh\cM_\reg$, and thus the surjective map $\wh\cN\to\wh\cM_\reg$ is an isomorphism. It follows that $\wh\cN\to\wh\cM_\irr$ is zero, according to the first part of the lemma, which contradicts the surjectivity assumption.

In general, due to the decomposition of $\wh\cM_\irr$ and the fact that any regular meromorphic connection has a rank-one sub-meromorphic connection, there exists a rank-one formal meromorphic sub-connection $\wh\cM_{\irr,1}\subset\wh\cM_\irr$. Let us denote by $\wh\cN_1$ its inverse image in $\wh\cN$ and by $\wh\cM_{\reg,1}$ the image of $\wh\cN_1$ by the projection to $\wh\cM_\reg$. We have a commutative diagram of exact sequences
\[
\xymatrix{
0\ar[r]&\wh\cN_1\ar[r]\ar[d]&\wh\cN\ar[r]\ar[d]&\wh\cN/\wh\cN_1\ar[r]\ar[d]&0\\
0\ar[r]&\wh\cM_{\reg,1}\ar[r]&\wh\cM_\reg\ar[r]&\wh\cM_\reg/\wh\cM_{\reg,1}\ar[r]&0
}
\]
where the first two vertical morphism are onto, hence so is the third one. But $\wh\cN/\wh\cN_1\simeq\wh\cM_\irr/\wh\cM_{\irr,1}$ is purely irregular, hence the first part of the lemma implies that $\wh\cM_\reg/\wh\cM_{\reg,1}=0$, \ie $\wh\cM_{\reg,1}=\wh\cM_\reg$. Applying the previous case to $\wh\cN_1$ shows that $\rk\wh\cN_1=\rk\wh\cM_\reg+1$, and thus $\rk\wh\cN=\rk\wh\cM_\reg+\rk\wh\cM_\irr$.
\end{proof}

With the previous definition of a (punctual) good formal decomposition, it is natural to set the analogue of Problem \ref{prob:MMhat}, namely, to ask whether, given $\cM$, the existence of a punctual good formal decomposition of $\cM$ (near $0$) comes from a good formal decomposition of $\cM$. This is a tautology if $\dim X=1$.

\begin{theoreme}[T\ptbl Mochizuki \cite{Mochizuki08,Mochizuki10b}]\label{th:gooddec}
Let $\cM$ be a free $\cO_{X,0}(*D)$-module equipped with a flat connection. If $\cM$ has a punctual good formal decomposition near the origin, then~$\cM$ has a good formal decomposition near~$0$, so in particular the sets $\wh\Phi_x\subset \cO_{\wh x}(*D)/\cO_{\wh x}$ ($x\in D$ near $0$) are induced by a single subset $\Phi\subset \cO_{X,0}(*D)/\cO_{X,0}$.
\end{theoreme}

\begin{remarque}
In dimension two, T\ptbl Mochizuki already proved in \cite{Mochizuki08} that the theorem holds with the a priori weaker condition of the existence of a formal good decomposition of~$\cM_{\wh0}$. However, due to known results on the generic existence of a good decomposition (\cf \eg \cite{Malgrange95}), the weaker condition is in fact equivalent to the stronger one.
\end{remarque}

The difficulty for solving Problem \ref{prob:MMhat} (under Condition \ref{def:goodformdec}\eqref{def:goodformdec3} in dimension~$\geq\nobreak3$) comes from the control of the formal coefficients of the polar parts of elements of~$\wh\Phi$, like $a(x_2)/x_1^k$. The original idea in \cite{Mochizuki08} consists in working with lattices (that is, locally free $\cO_X$-modules) instead of $\cO_X(*D)$-modules. The notion of a good lattice will be essential (\cf Definition \ref{def:goodformlattice} below). One of the main achievements in \cite{Mochizuki10b} is to prove the existence of good lattices under the assumption of punctual formal decomposition of $\cM$. This was already done in dimension two in the first version of \cite{Mochizuki08}.

The proof of Theorem \ref{th:gooddec} is done in two steps. In \cite{Mochizuki10b}, T\ptbl Mochizuki proves the existence of a good lattice (\cf Proposition \ref{prop:existgoodlattice}). The second step is given by Theorem \ref{th:goodlattice} below, which follows \cite{Mochizuki08}. We will only explain the second step.

\begin{definitio}[Good meromorphic connection]\label{def:goodmeroconn}
Let $\cM$ be a meromorphic connection with poles along a divisor~$D$ with normal crossings. We say that $\cM$ is a \emphb{good meromorphic connection} if, near any $x_o\in X$, there exists a ramification $\rho_{\bmd}:X_{\bmd}\to X$ around the components of~$D$ going through~$x_o$, such that $\rho_{\bmd}^+\cM$ has a good formal decomposition near $x_o$ (Definition \ref{def:goodformdec}\eqref{def:goodformdec2}).
\end{definitio}

\begin{remarque}
It follows from Theorem \ref{th:gooddec} and from the second part of Lemma \ref{lem:uniquegooddec} that any sub-meromorphic connection of a good meromorphic connection is also good. In particular, such a $\cO_X(*D)$-submodule is locally free.
\end{remarque}

\begin{remarque}[Existence after blowing-up]\label{rem:existblup}
As recalled in \S\ref{subsec:RHgoodsmoothintro}, the basic conjecture \cite[Conj\ptbl I.2.5.1]{Bibi97} asserted that, given any meromorphic bundle with connection on a complex surface, there exists a sequence of point blowing-ups such that the pull-back connection by this proper modification is good along the divisor of its poles. Fortunately, this conjecture has now been settled, in the algebraic setting by T\ptbl Mochizuki \cite{Mochizuki07b} and in general by K\ptbl Kedlaya \cite{Kedlaya09}. The natural extension of this conjecture in higher dimension is also settled in the algebraic case by T\ptbl Mochizuki \cite{Mochizuki08} (\cf also the survey~\cite{Mochizuki09}) and in the local analytic case by K\ptbl Kedlaya \cite{Kedlaya10}.
\end{remarque}

\begin{remarque}[The stratified $\ccI$-covering attached to a good meromorphic connection]\label{rem:I-stratcovmero}\index{stratified I-covering@stratified $\ccI$-covering}
Let $\cM$ be a germ at $0$ of good meromorphic connection with poles on~$D$ at most. Assume first that $\cM$ has a good formal decomposition indexed by a good finite set $\Phi\subset\cO_{X,0}(*D)/\cO_{X,0}$. If $U$ is small neighbourhood of $0$ in~$D$ on which each $\varphi\in\Phi$ is defined, the subset $\wt\Sigma_{|\varpi^{-1}(U)}\defin\bigcup_{\varphi\in\Phi}\varphi(\varpi^{-1}(U))\subset\ccIet_{\bun}$ defines a stratified $\ccI_{\bun}$-covering of $\varpi^{-1}(U)$ relative to the stratification induced by the $(Y_I)_{I\subset L}$ (\cf \S\ref{subsec:Inormcross} for the notation). If $\cM$ is only assumed to be good (Definition \ref{def:goodmeroconn}), \ie has a good formal decomposition after a ramification of order $\bmd$, one defines similarly a subset $\wt\Sigma_{|\varpi^{-1}(U)}$ of $\ccIet_{\bmd}$, hence of $\ccIet$, which is a stratified $\ccI$-covering with respect to the stratification $(Y_I)_{I\subset L}$.

By the uniqueness of the decomposition, this set is intrinsically attached to $\cM$, and therefore is globally defined along~$D$ when $\cM$ is so. We denote it \index{$SZIGMAWTM$@$\wt\Sigma(\cM)$}$\wt\Sigma(\cM)$.
\end{remarque}

\subsection{Good lattices}\label{subsec:goodlattices}
It will be implicit below that the poles of the meromorphic objects are contained in~$D$.

\skpt
\begin{definitio}[Good decomposition and good lattice, T\ptbl Mochizuki \cite{Mochizuki08}]\label{def:goodformlattice}\ligne
\begin{enumerate}
\item\label{def:goodformlattice1}
Let $\wh F$ be a free $\cO_{\wh0}$-module equipped with a flat meromorphic connection $\wh\nabla:\wh F\to\Omega_{\wh0}^1(*D)\otimes\wh F$. We say that $\wh F$ has a \emphb{good decomposition} if there exist a \emph{good} finite set $\wh\Phi\subset\cO_{\wh0}(*D)/\cO_{\wh0}$ and a decomposition
\[
(\wh F,\wh\nabla)=\bigoplus_{\wh\varphi\in \wh\Phi}(\wh F_{\wh\varphi}\wh\nabla)\qquad\text{with }(\wh F_{\wh\varphi}\wh\nabla)\simeq(\wh E^{\wh\varphi}\otimes \wh R_{\wh\varphi},\wh\nabla),
\]
where $\wh R_{\wh\varphi}$ is a free $\cO_{\wh0}$-module equipped with a flat meromorphic connection $\wh\nabla$ such that $\wh\nabla$ is logarithmic and $(\wh E^{\wh\varphi},\wh\nabla)=(\cO_{\wh0},d+d\wh\varphi)$.
\item\label{def:goodformlattice2}
Let $F$ be a free $\cO_{X,0}$-module equipped with a flat meromorphic connection $\nabla:F\to\Omega_{X,0}^1(*D)\otimes F$. We say that $(F,\nabla)$ is a (non-ramified) \emphb{good lattice} if $(F,\nabla)_{\wh0}\defin\cO_{\wh0}\otimes_{\cO_{X,0}}(F,\nabla)$ has a good decomposition indexed by some good finite set~$\wh\Phi$.
\end{enumerate}
\end{definitio}

In order to better understand the notion of a good lattice over $\cO_{X,0}$, we need supplementary notions, using the notation of \S\ref{subsec:prelimform}. Moreover, it will be useful later to distinguish between the pole set of $\Phi$ and that of the connection, and we will introduce a supplementary notation. Let $K\subset L$ a non-empty subset and set $D'=D(K)=\bigcup_{i\in K}D_i$. Similarly, for $I\subset K$, we still denote by $I^c$ the complement $K\moins I$ and we will use the notation $D'(I^c)$ for $\bigcup_{i\in I^c}D_i$. This should not lead to any confusion. In a first reading, one can assume that $K=L$ and set $D'=D$.

\begin{definitio}[$I$-goodness]
Let $I$ be a subset of~$K$. We say that a finite subset~$\Phi_I$ of $\cO_{X,0}(*D')/\cO_{X,0}(*D'(I^c))$ is \emph{good} if $\#\Phi_I=1$ or for any $\varphi_I\neq\psi_I$ in $\Phi_I$, the order $\ord^I(\varphi_I-\psi_I)$ exists (so belongs to $(-\NN)^K\moins\ZZ^{I^c}$) and the corresponding coefficient of $\varphi-\psi$, for some (or any) lifting of $\varphi_I-\psi_I$, does not vanish at~$0$ (in other words, $\varphi_I-\psi_I$ is \emph{purely $I$-monomial}). Setting $\bmm(\varphi_I-\psi_I)=-\ord^I(\varphi_I-\psi_I)$, for any \emph{fixed} $\psi_I\in\Phi_I$, the set $\{\bmm(\varphi_I-\psi_I)\mid\varphi_I\in\Phi_I,\,\varphi_I\neq\psi_I\}\subset\NN^K\moins\ZZ^{I^c}$ (more precisely, $(\NN^K\moins\ZZ^{I^c})\times\{0_{L\moins K}\}$) is totally ordered and its maximum, which does not depend on the choice of $\psi_I\in\Phi_I$, is denoted by $\bmm(\Phi_I)$. If $\#\Phi_I=1$, we set $\bmm(\Phi_I)=0$. We have similar definitions for a subset $\wh\Phi_I\subset\cO_{\wh0}(*D')/\cO_{\wh0}(*D'(I^c))$.
\end{definitio}

\skpt
\begin{definitio}[$\Phi$ and $\Phi_I$-decompositions, T\ptbl Mochizuki \cite{Mochizuki08}]\label{def:Phidecomp}\ligne
\begin{enumerate}
\item\label{def:Phidecomp1}
Let $I\subset K$. Let $(\IFhat,\Inablahat)$ be a free $\cO_{\wh D_I,0}$-module with flat meromorphic connection and let $\Phi_I$ be a finite subset of $\cO_{X,0}(*D')/\cO_{X,0}(*D'(I^c))$. We say that $(\IFhat,\Inablahat)$ has a $\Phi_I$-decomposition along~$D$ if there is a decomposition
\[
(\IFhat,\Inablahat)=\bigoplus_{\varphi_I\in\Phi_I}(\IFhat_{\varphi_I},\Inablahat)\qquad\text{with }(\IFhat_{\varphi_I},\Inablahat)\simeq(\wh E^{\wt\varphi_I}\otimes\IRhat_{\varphi_I},\Inablahat),
\]
with $(\wh E^{\wt\varphi_I},\Inablahat)=(\cO_{\wh D_I,0},d+d\wt\varphi_I)$ for some (or any) lifting $\wt\varphi_I$ of $\varphi_I$ in $\cO_{\wh D_I,0}(*D')$, and $(\IRhat_{\varphi_I},\Inablahat)$ is $I$-logarithmic, that is,
\[
\Inablahat(\IRhat_{\varphi_I})\subset\IRhat_{\varphi_I}\otimes\Big(\Omega^1_{\wh D_I,0}(\log D)+\Omega^1_{\wh D_I,0}(*D'(I^c))\Big),
\]
\ie is logarithmic only partially with respect to $D(I\cup(L\moins K))$, but can have poles of arbitrary order along $D'(I^c)$, although its residue along each $D_i$ ($i\in I\cup\nobreak(L\moins\nobreak K)$) does not have higher order poles. We have a similar definition for a subset $\wh\Phi_I\subset\cO_{\wh D_I,0}(*D')/\cO_{\wh D_I,0}(*D'(I^c))$. We say that $(\IFhat,\Inablahat)$ has a \emph{good $\Phi_I$-decomposition along~$D$} if $\Phi_I$ is good.

\item\label{def:Phidecomp2}
Let $(F,\nabla)$ be a free $\cO_{X,0}$-module with flat meromorphic connection. If there exists $\Phi\in\cO_{X,0}(*D')/\cO_{X,0}$ such that, denoting by $\Phi_I$ the image of $\Phi$ in $\cO_{X,0}(*D')/\cO_{X,0}(*D'(I^c))$, $(F,\nabla)_{|\wh D_I}$ has a good $\Phi_I$-decomposition for any $I\subset K$ in a compatible way with respect to the inclusions $I\subset I'\subset K$, we say that $(F,\nabla)$ has a good formal $\Phi$-decomposition.
\end{enumerate}
\end{definitio}

\begin{exemple}\label{ex:LEhat}
If $I=K$, Definition \ref{def:Phidecomp}\eqref{def:Phidecomp1} reads as follows: $(\KFhat,\Knablahat)$ is a free $\cO_{\wh D_K,0}$-module with flat meromorphic connection which has a decomposition indexed by $\Phi\subset\cO_{X,0}(*D')/\cO_{X,0}$ of the form
\[
(\KFhat,\Knablahat)=\bigoplus_{\varphi\in\Phi}(\KFhat_\varphi,\Knablahat)\qquad\text{with }(\KFhat_\varphi,\Knablahat)\simeq(\hat E^\varphi\otimes\KRhat_\varphi,\Knablahat)
\]
and $(\KRhat,\Knablahat)$ is $\cO_{\wh D_K,0}$-free and logarithmic with poles along $D$. Tensoring $(\KFhat,\Knablahat)$ with $\cO_{\wh0}$ produces $(\wh F,\wh\nabla)$ having a good decomposition as in Definition \ref{def:goodformlattice}\eqref{def:goodformlattice1}.

As a consequence, we see that if $(F,\nabla)$ has a good formal decomposition with good set $\Phi$ of exponential factors (in the sense of Definition \ref{def:Phidecomp}\eqref{def:Phidecomp2}), then $(F,\nabla)$ is a non-ramified good lattice in the sense of Definition \ref{def:goodformlattice}\eqref{def:goodformlattice2}. The converse is the content of Theorem \ref{th:goodlattice} below.
\end{exemple}

\begin{remarque}
We have stability by twist: Fix any $\eta_I\in\cO_{X,0}(*D')/\cO_{X,0}(*D'(I^c))$. Then for any lifting $\eta$ of $\eta_I$, $(\IFhat,\Inablahat+d\eta)$ has a $(\Phi_I+\eta_I)$-decomposition iff $(\IFhat,\Inablahat)$ has a $\Phi_I$-decomposition.

If $(\IFhat,\Inablahat)$ has a $\Phi_I$-decomposition at~$0$, it has such a decomposition on $D_I\cap U$ for some neighbourhood $U$ of~$0$. If the $\Phi_I$-decomposition is good at~$0$, it is good in some neighbourhood of~$0$. The same property holds for a formal $\Phi$-decomposition.
\end{remarque}

\begin{theoreme}[T\ptbl Mochizuki \cite{Mochizuki08}]\label{th:goodlattice}
Let $(F,\nabla)$ be a free $\cO_{X,0}$-module with flat meromorphic connection. If $(F,\nabla)$ is a (non-ramified) good lattice (\cf Definition \ref{def:goodformlattice}\eqref{def:goodformlattice2}) with formal exponential factors~$\wh\Phi\subset\cO_{\wh0}(*D')/\cO_{\wh0}$, then $\wh\Phi$ is equal to a subset $\Phi$ of $\cO_{X,0}(*D')/\cO_{X,0}$ and $(F,\nabla)$ has a good formal $\Phi$-decomposition~at~$0$ (\cf Definition \ref{def:Phidecomp}\eqref{def:Phidecomp2}).
\end{theoreme}

Let us emphasize the following result which will not be proved here.

\begin{proposition}[{\cite[Prop\ptbl2.18]{Mochizuki10b}}]\label{prop:existgoodlattice}
If $\cM$ satisfies the assumption of Theorem \ref{th:gooddec}, then any good lattice of $\cM_{\wh0}$ can be locally lifted as a good lattice of $\cM$ (near the origin).
\end{proposition}

\begin{remarque}\label{rem:lattices}
One can find more properties of good lattices in \cite{Mochizuki08}, in particular good Deligne-Malgrange lattices, which are essential for proving the local and global existence of (possibly ramified) good lattices, extending in this way the result of Malgrange \cite{Malgrange95, Malgrange04}, who shows the existence of a lattice which is generically good.
\end{remarque}

\subsection{\proofname\ of Theorem \ref{th:goodlattice}}
We first generalize to the present setting the classical decomposition with respect to the eigenvalues of the principal part of the connection matrix.

Let $U$ be some open neighbourhood of~$0$ in~$X$ and let $I\subset L$. Set $O_I=\cO(*D_I\cap\nobreak U)\lcr t_I\rcr$.

\begin{lemme}[Decomposition Lemma]\label{lem:decomposition}
Let $\IFhat$ be a free $O_I$-module with a flat connection $\Inablahat:\IFhat\to \IFhat\otimes\Omega^1_{O_I}(*D)$. Let $(\IFhat_{\wh0},\Inablahat_{\wh0})$ denote the formalization of $(\IFhat,\Inabla)$ at the origin. Assume that there exist $\bmm\in\NN^\ell-\{0\}$, $i\in I$ and a $\cO_{\wh0}$-basis of $\IFhat_{\wh0}$ such that, setting $I'=I\moins\{i\}$ and $O_{I'}=O_I/t_iO_I$,
\begin{enumerate}
\item\label{lem:decomposition1}
$m_i>0$,
\item\label{lem:decomposition2}
the matrix of $\Inablahat_{\wh0}$ in the given basis can be written as $t^{-\bmm}\wh\Omega$, where the entries of $\wh\Omega$ are in $\Omega^1_{\wh0}(\log D)$,
\item\label{lem:decomposition3}
if $\wh\Omega^{(i)}$ denotes the component of $\wh\Omega$ on $dt_i/t_i$, then $\wh\Omega^{(i)}(0)=\diag(\wh\Omega_1^{(i)}(0),\wh\Omega_2^{(i)}(0))$ where $\wh\Omega_1^{(i)}(0),\wh\Omega_2^{(i)}(0)$ have no common eigenvalues.
\end{enumerate}
Then, after possibly shrinking $U$, there exists a $\cO_I$-basis of $\IFhat$ such that the matrix of~$\Inablahat$ in the new basis is $t^{-\bmm}(\Omega+t_iB)$, where $\Omega$ has entries in $\Omega^1_{O_{I'}}(\log D)$, $B$ has entries in $\Omega^1_{O_I}(\log D)$, both are block-diagonal as $\wh\Omega^{(i)}(0)$ is, and $\Omega(0)=\wh\Omega(0)$.

Moreover, such a decomposition $(\IFhat,\Inablahat)=(\IFhat_1,\Inablahat)\oplus(\IFhat_2,\Inablahat)$ is unique.
\end{lemme}

\begin{proof}
Set $\tau_j=t_j\partial_{t_j}$ for $j\in L$ and $\tau_j=\partial_{t_j}$ for $j\notin L$ and let us denote by~$\wh\Omega^{(j)}$ the matrix of $\Inablahat_{\wh0,\tau_j}$ in the given basis of $\IFhat_{\wh0}$. Let us first notice that, due to the integrability condition and to the inequality $2\bmm>\bmm$, the matrices $\wh\Omega^{(i)}(0)$ and $\wh\Omega^{(j)}(0)$ commute, hence \eqref{lem:decomposition3} implies that $\wh\Omega(0)=\diag(\wh\Omega_1(0),\wh\Omega_2(0))$.

The first step consists in proving that the assumptions of the lemma hold for the matrix of $\Inablahat$ in any $O_I$-basis of $\IFhat$ which coincides with the given basis of $\IFhat_{\wh0}$ at the origin. Since the matrix of $t^{\bmm}\Inablahat_{\tau_j}$ in some basis of $\IFhat_{\wh0}$ has no pole, it has no pole in any basis of $\IFhat_{\wh0}$. Applying this to a basis of~$\IFhat_{\wh0}$ induced  by a basis of $\IFhat$, this implies that the matrix of~$t^{\bmm}\Inablahat_{\tau_j}$ in any basis of~$\IFhat$ has no pole, so \eqref{lem:decomposition2} holds. If we choose a $O_I$-basis which coincides with the given basis of $\IFhat_{\wh0}$ at the origin, then \eqref{lem:decomposition3} holds.

\subsubsection*{Second step}
It consists in proving that, up to shrinking $U$, there exists a $O_I$-basis of $\IFhat$ which coincides with the given basis when restricted to the origin, such that the matrix of $\Inablahat$ in this basis can be written as $t^{-\bmm}(\Omega+t_iA)$, with $\Omega$ in $\Omega^1_{O_{I'}}(\log D)$, $A\in\Omega^1_{O_I}(\log D)$,  $\Omega=\diag(\Omega_1,\Omega_2)$ and $\Omega_\alpha(0)=\wh\Omega_\alpha(0)$ ($\alpha=1,2$).

The operator $t^{\bmm}\Inablahat_{\tau_i}$ sends $t_i\IFhat$ to itself and its residual action on $\IFhat_{|D_i}$ defines a $O_{I'}$-linear operator $R_i$. By assumption, the characteristic polynomial $\chi_{R_i(0)}(T)$ decomposes as $\chi_1^0(T)\cdot\chi_2^0(T)$ with $a_1(0)\chi_1^0(T)+a_2(0)\chi_2^0(T)=1$ for some $a_1(0),a_2(0)\in\nobreak \CC$. It follows that there exist $a_\alpha\in O_{I'}$ and $\chi_\alpha\in O_{I'}[T]$ ($\alpha=1,2$) such that $\chi_{R_i}(T)=\chi_1(T)\cdot\chi_2(T)$ and $a_1\chi_1(T)+a_2\chi_2(T)=1$, up to shrinking $U$. Then $\IFhat_{|D_i}=\ker\chi_1(R_i)\oplus\ker\chi_2(R_i)$ and the matrix of $R_i$ decomposes into two block in a basis adapted to this decomposition. Moreover, we can realize this so that, at the origin, the matrix of $R_i(0)$ is $\wh\Omega{}^{(i)}(0)$.

For each $j\neq i$, let $\Omega^{(j)}$ be the matrix of $\Inablahat_{\tau_j}$ in the basis that we have obtained. The commutation relation $[\Inablahat_{\tau_i},\Inablahat_{\tau_j}]=0$ and the fact that $2\bmm>\bmm$ imply that $\Omega^{(j)}_{|D_i}$ commutes with $\Omega^{(i)}_{|D_i}$. Therefore the associated endomorphism preserves $\ker\chi_1(R_i)$ and $\ker\chi_2(R_i)$, that is, $\Omega^{(j)}_{|D_i}$ is block-diagonal. Hence, the matrix of $t^{\bmm}\Inablahat_{\tau_j}$ in a basis of $\IFhat$ lifting the previous one takes the desired form.

\subsubsection*{Third step}
Let $\Omega,A$ be as obtained in the second step. Let us start with the component $t^{-\bmm}(\Omega^{(i)}+t_iA^{(i)})$ of $t^{-\bmm}(\Omega+t_iA)$ on $dt_i/t_i$. We wish to find a change of basis $G=\id+\begin{smallpmatrix}0&t_iX\\t_iY&0\end{smallpmatrix}$, where $X,Y$ have entries in $O_I$, such that the matrix of $t^{\bmm}\Inablahat_{\tau_i}$ in the new basis is $\Omega^{(i)}+t_iB^{(i)}$, where $B^{(i)}$ has entries in $O_I$ and is block-diagonal as~$\Omega$. The matrix $G$ has to be a solution of the equation
\[
t^{\bmm}t_i\partial_{t_i}G=G\cdot\begin{pmatrix}\Omega_1^{(i)}+t_iB_1^{(i)}&0\\0&\Omega_2^{(i)}+t_iB_2^{(i)}\end{pmatrix}-\begin{pmatrix}\Omega_1^{(i)}+t_iA_{11}^{(i)}&t_iA_{12}^{(i)}\\t_iA_{21}^{(i)}&\Omega_2^{(i)}+t_iA_{22}^{(i)}\end{pmatrix}\cdot G,
\]
which reduces to
\[
B_1^{(i)}=A_{11}^{(i)}-t_iA_{12}^{(i)}Y,\quad B_2^{(i)}=A_{22}^{(i)}-t_iA_{21}^{(i)}X
\]
and
\[
\Omega_1^{(i)}X-X\Omega_2^{(i)}=A^{(i)}_{12}-t_i\big[A^{(i)}_{11}X-XA^{(i)}_{22}+t_i^2XA^{(i)}_{21}X+t^{\bmm-1_i}(t_i\partial_{t_i}+1)X\big]
\]
and a similar equation for $Y$. The assumption implies that the determinant of the endomorphism $Z\mto(\Omega_1^{(i)}(0)Z-Z\Omega_2^{(i)}(0))$ is not zero. Choose $U$ such that the determinant of $Z\mto(\Omega_1^{(i)}Z-Z\Omega_2^{(i)})$ does not vanish on $U$. We then find a (unique) solution of the equation for $X$ (\resp $Y$) by expanding $X$ (\resp $Y$) with respect to $t_i$. Let us write $t^{-\bmm}(\Omega+t_iB)$ the matrix of $\Inablahat$ after the base change induced by $G$.

The integrability condition is now enough to show that the matrix $B$ is block-diagonal. Indeed, let us denote by
\[
t^{-\bmm}\begin{pmatrix}\Omega_1^{(j)}+t_iB_{11}^{(j)}&t_iB_{12}^{(j)}\\ t_iB_{21}^{(j)}&\Omega_2^{(j)}+t_iB_{22}^{(j)}\end{pmatrix}
\]
the matrix of $\Inablahat_{\tau_j}$. Then the integrability condition $[\Inablahat_{\tau_i},\Inablahat_{\tau_j}]$ reads on the off-diagonal blocks as
\[
\Omega_1^{(i)}B_{12}^{(j)}-B_{12}^{(j)}\Omega_2^{(i)}=t_i\big[B_{12}^{(j)}B^{(i)}_2-B^{(i)}_1B_{12}^{(j)}-t^{\bmm-1_i}(\tau_i+1-m_i)B_{12}^{(j)}\big],
\]
whose unique solution is $B_{12}^{(j)}=0$, as seen by expanding $B_{12}^{(j)}$ with respect to $t_i$, since $Z\mto(\Omega_1^{(i)}Z-Z\Omega_2^{(i)})$ is invertible. Therefore, the matrix of $\Inablahat_{\tau_j}$ is also block-diagonal in the new basis.

For the uniqueness, we are reduced to proving that there is no nonzero morphism between $(\IFhat_1,\Inablahat)$ and $(\IFhat{}'_2,\Inablahat)$ (with obvious notation). The proof is similar.
\end{proof}

\begin{remarque}\label{rem:decomposition}
The previous lemma also holds when one replaces $O_I$ with $\cO_{\wh0}$, by forgetting the first step.
\end{remarque}

\begin{proof}[\proofname\ of Theorem \ref{th:goodlattice}]
We start with:

\begin{lemme}\label{lem:I}
Let $I\subset K$ and let $(\IFhat,\Inablahat)$ be a free $\cO_{\wh D_I,0}$-module with flat meromorphic connection. Assume that
there exists a good finite subset $\wh\Phi\subset\cO_{\wh0}(*D')/\cO_{\wh0}$ such that the formalization $(\IFhat,\Inablahat)_{\wh0}$ at $0$ satisfies:
\bgroup\numstareq
\begin{equation}\label{eq:I*}
(\IFhat,\Inablahat)_{\wh0}=\bigoplus_{\wh\varphi\in\wh\Phi}(\IFhat_{\wh0,\wh\varphi},\Inablahat)\qquad\text{with }(\IFhat_{\wh0,\wh\varphi},\Inablahat)\simeq(\wh E^{\wh\varphi}\otimes\wh R_{\wh\varphi},\wh\nabla)
\end{equation}
\egroup
where
\bgroup\numstarstareq
\begin{equation}\label{eq:I**}
\wh\nabla\wh R_{\wh\varphi}\subset\wh R_{\wh\varphi}\otimes\Omega^1_{\wh0}(\log D)
\end{equation}
\egroup
for any $\wh\varphi\in\wh\Phi$. If $\wh\Phi_I$ denotes the image of $\wh\Phi$ in $\cO_{\wh0}(*D')/\cO_{\wh0}(*D'(I^c))$, then $\wh\Phi_I$ is equal to a subset $\Phi_I$ of $\cO_{\wh D_I,0}(*D')/\cO_{\wh D_I,0}(*D'(I^c))$, and $(\IFhat,\Inablahat)$ has a $\Phi_I$-decomposition.
\end{lemme}

\begin{proof}
The proof is by induction on $\#\wh\Phi_I$. Assume first $\#\wh\Phi_I=1$, that is, $\wh\Phi_I=\{\wh\varphi_I\}$. Then for any lifting $\wh\varphi$ of $\wh\varphi_I$ in $\cO_{\wh0}(*D')/\cO_{\wh0}$ and any $\wh\psi\in\wh\Phi$, $d(\wh\psi-\wh\varphi)$ has poles in $D'(I^c)$ at most, hence we have
\[
(\Inablahat-d\wh\varphi)\IFhat_{\wh0}\subset\IFhat_{\wh0}\otimes\Big(\Omega^1_{\wh0}(\log D)+\Omega^1_{\wh0}(*D'(I^c))\Big).
\]
Writing this in some basis of $\IFhat_{\wh0}$ induced by a $\cO_{\wh D_I,0}$-basis of $\IFhat$ implies that $\wh\varphi_I$ belongs to the quotient $\cO_{\wh D_I,0}(*D')/\cO_{\wh D_I,0}(*D'(I^c))$ and we denote it by $\varphi_I$, hence the first statement. \hbox{After} a twist by $\cE^{-\wt\varphi_I}$ for some lifting $\wt\varphi_I$ of $\varphi_I$, we can then assume that $\wh\Phi_I=0$, and we choose $\wh\varphi=0$. Then
\[
\Inablahat\IFhat_{\wh0}\subset\IFhat_{\wh0}\otimes\Big(\Omega^1_{\wh0}(\log D)+\Omega^1_{\wh0}(*D'(I^c))\Big),
\]
and this implies
\[
\Inablahat\IFhat\subset\IFhat\otimes\Big(\Omega^1_{\wh D_I,0}(\log D)+\Omega^1_{\wh D_I,0}(*D'(I^c))\Big),
\]
which is the $\Phi_I$-decomposition with $\Phi_I=\{0\}$.

Assume now that $\#\wh\Phi_I\geq2$. Then $\#\wh\Phi\geq2$ and $\bmm\defin\bmm(\wh\Phi)\neq0$. Moreover, the $I$-component $\bmm_I$ is nonzero. Let $i\in I$ be such that $m_i>0$ and set $I'=I-\{i\}$.

Assume first that there exists $\wh\varphi\in\wh\Phi$ with $\ord^L(\wh\varphi)=-\bmm$. Set $C\defin(t^{\bmm}\wh\Phi)(0)\subset\CC$. Note that $\#C\geq\nobreak2$ since $\bmm\neq0$. Consider the decomposition coarser than \eqref{eq:I*} indexed by $C$: $(\IFhat,\Inablahat)_{\wh0}=\bigoplus_{c\in C}(\IFhat_{\wh0,c},\Inablahat_c)$. It satisfies the assumptions of the Decomposition Lemma \ref{lem:decomposition}. Applying its existence part, we find a decomposition $(\IFhat,\Inablahat)=\bigoplus_{c\in C}(\IFhat_c,\Inablahat_c)$, so the matrix of $\Inablahat$ takes the form $t^{-\bmm}(\Omega+t_iB)$ where~$B$ is now block-diagonal as $\Omega$. Let us set $\wh\Phi_c=\{\wh\varphi\in\wh\Phi\mid(t^{-\bmm}\wh\varphi)(0)=c\}$. Then $\wh\Phi=\sqcup_c\wh\Phi_c$ and, as $\bmm_I\neq0$, $\wh\Phi_I=\sqcup_c\wh\Phi_{c,I}$. Thus, for every $c\in C$, $\#\wh\Phi_{c,I}<\#\wh\Phi_I$. By the uniqueness in the Decomposition Lemma \ref{lem:decomposition} (in the variant of Remark \ref{rem:decomposition}), we have
\[
(\IFhat_c,\Inablahat_c)_{\wh0}=\bigoplus_{\wh\varphi\in\wh\Phi_c}(\wh E^{\wh\varphi}\otimes\IRhat_{\wh\varphi},\wh\nabla),
\]
so that each $(\IFhat_c,\Inablahat_c)$ satisfies the assumption of the lemma. We conclude by induction on $\#\wh\Phi_I$ to get that $\wh\Phi_I\subset\cO_{\wh D_I,0}(*D')/\cO_{\wh D_I,0}(*D'(I^c))$ and the $\Phi_I$-decomposition of $(\IFhat,\Inablahat)$.

Assume now that there is no $\wh\varphi\in\wh\Phi$ such that $\ord^L(\wh\varphi)=-\bmm$. Then there exists $\wh f\in\cO_{\wh0}(*D')/t^{-\bmm}\cO_{\wh0}$ such that $\ord^L(\wh\varphi-\wh f)\geq-\bmm$ for any $\wh\varphi\in\wh\Phi$ (and equality for some $\wh\varphi\in\wh\Phi$). It is thus enough to prove that $\wh f\in\cO_{\wh D_I,0}(*D')/t^{-\bmm}\cO_{\wh D_I,0}$ and (\cf Lemma \ref{lem:formelconv}) this is equivalent to showing that, for any $i\in K$, the class $\wh f^{(i)}$ of~$\wh f$ in $\cO_{\wh0}(*D')/t_i^{-m_i}\cO_{\wh0}(*D'(i^c))$ belongs to $\cO_{\wh D_I,0}(*D')/t_i^{-m_i}\cO_{\wh D_I,0}(*D'(i^c))$. It will then be denoted by $f^{(i)}$.

Let us fix $i\in K$ and set $\wh f^{(i)}=\sum_{\nu=m_i+1}^{\nu_i}\wh f^{(i)}_\nu t_i^{-\nu}$. Let us prove by induction on $k\in\{0,\dots,\nu_i-m_i-1\}$ that $\wh f^{(i)}_{\nu_i-k}\in\cO_{\wh D_I\cap D_i}(*D')/t_i^{-m_i}\cO_{\wh D_I\cap D_i}(*D'(i^c))$. For any $\wh\varphi\in\wh\Phi$ we can write $\wh\varphi=\wh f+t^{-\bmm}\wh\eta$ with $\wh\eta\in\cO_{\wh0}$. Because of \eqref{eq:I**}, for any $\wh\varphi\in\wh\Phi$ we have $(\Inablahat_{t_i\partial_{t_i}}-t_i\partial\wh\varphi/\partial t_i)\IFhat_{\wh0,\wh\varphi}\subset\IFhat_{\wh0,\wh\varphi}$ and thus, for any $k$ as above,
\[
\Big(t_i^{\nu_i}\Inablahat_{t_i\partial_{t_i}}-\sum_{\mu=0}^k(\mu-\nu_i)\wh f^{(i)}_{\nu_i-\mu}t_i^\mu\Big)\IFhat_{\wh0,\wh\varphi}\subset t_i^{k+1}\cdot\IFhat_{\wh0,\wh\varphi}(*D'(i^c)),
\]
hence also
\[
\Big(t_i^{\nu_i}\Inablahat_{t_i\partial_{t_i}}-\sum_{\mu=0}^k(\mu-\nu_i)\wh f^{(i)}_{\nu_i-\mu}t_i^\mu\Big)\IFhat_{\wh0}\subset t_i^{k+1}\cdot\IFhat_{\wh0}(*D'(i^c)).
\]

This implies (taking $k=0$) that $t_i^{\nu_i}\Inablahat_{t_i\partial_{t_i}}\IFhat\subset\IFhat(*D'(i^c))$, and by induction that the $\cO_{\wh D_I\cap D_i,0}(*D'(i^c))$-linear endomorphism that $\big(t_i^{\nu_i}\Inablahat_{t_i\partial_{t_i}}-\sum_{\mu=0}^{k-1}(\nu_i-\mu)f^{(i)}_{\nu_i-\mu}t_i^\mu\big)$ induced on $t_i^k\cdot\IFhat(*D'(i^c))/t_i^{k+1}\cdot\IFhat(*D'(i^c))$ acts as $(\nu_i-k)\wh f^{(i)}_{\nu_i-k}\id$. It follows that $\wh f^{(i)}_{\nu_i-k}=f^{(i)}_{\nu_i-k}\in\cO_{\wh D_I\cap D_i}(*D')/t_i^{-m_i}\cO_{\wh D_I\cap D_i}(*D'(i^c))$.
\end{proof}

\subsubsection*{End of the proof of Theorem \ref{th:goodlattice}}
The assumption of the theorem implies that \eqref{eq:I*} and \eqref{eq:I**} are satisfied by $(\IFhat,\Inablahat)\defin(F,\nabla)_{\wh D_I}$ for any $I\subset K$, so $\wh\Phi_I\subset\cO_{\wh D_I,0}(*D')/\cO_{\wh D_I,0}(*D'(I^c))$ for any such $I$, and in particular for any $I=\{i\}$. Lemma \ref{lem:formelconv} then implies that $\wh\Phi\subset\cO_{X,0}(*D')/\cO_{X,0}$. The existence of a $\Phi_I$-decomposition in the second part of the theorem follows then from Lemma \ref{lem:I} applied to any $I\subset K$ and the compatibility with respect to the inclusions $I'\supset I$ follows from the uniqueness in Lemma \ref{lem:decomposition}.
\end{proof}

\begin{corollaire}[Good Deligne-Malgrange lattices]\label{cor:goodDM}\index{good lattice!Deligne-Malgrange}
Let $\cM$ be as in Theorem \ref{th:gooddec}. Then there exists a good lattice $(F,\nabla)$ of $\cM$ satisfying the \index{non-resonance condition}non-resonance condition:
\begin{itemize}
\item
for each $i\in L\moins K$, the eigenvalues of the residue $R_i$ of $\nabla$ along $D_i$ do not differ by a nonzero integer.
\end{itemize}
\end{corollaire}

\begin{proof}
Fix $i\in L\moins K$. Let $(F,\nabla)$ be a good lattice of $\cM$. By definition, there exists a basis of $\wh F\defin F_{\wh0}$ in which the component of the matrix of $\wh\nabla$ on $dt_i/t_i$ has no pole. This holds then in any basis of $\wh F$. Choosing a basis induced by a basis of $(F,\nabla)$ implies that $\nabla$ is logarithmic along $D_i$. Its residue $R_i$ is an endomorphism of $F_{|D_i}$ which is horizontal with respect to the connection induced on $F_{|D_i}$. Its eigenvalues are then constant, and coincide with the eigenvalues of the residue $\wh R_i$ of $(\wh F,\wh\nabla)$.

Therefore, if we start from a good lattice $(\wh F,\wh\nabla)$ of $\cM_{\wh0}$ which is non-resonant with respect to $D_i$, that is, such that the eigenvalues of $\wh R_i$ do not differ by a nonzero integer, then the good lattice $(F,\nabla)$ given by Proposition \ref{prop:existgoodlattice} is also non-resonant with respect to $D_i$.
\end{proof}

\begin{remarque}[Very good formal decomposition]\label{rem:verygood}
For the purpose of asymptotic analysis (\cf Theorem \ref{th:HTM} below), it would be more convenient to know that~$\cM$ has a \emphb{very good formal decomposition} near the origin, as defined in \cite{Bibi93}, that is, considering the formal completion $\cO_{\wh D}\defin\varprojlim_k\cO_X/(x_1\cdots x_\ell)^k\cO_X$, that $(\cM_{\wh D},\nabla)$ decomposes in a way similar to \eqref{eq:goodformdec*}. This property is too strong in general, and cannot be achieved after blowing-ups (\cf Remark \ref{rem:existblup}).

Nevertheless, if $\cM$ has a punctual good formal decomposition near the origin, the existence of a good lattice allows one to prove a similar but weaker property. Let us fix $\varphi_o\in\Phi$ and let us set $\Phi-\varphi_o=\bigcup_c(\Phi_c-\varphi_o)$ where $c$ varies in $C\defin[t^{\bmm}(\Phi-\varphi_o)](0)\subset\CC$ and where we set $\bmm=\bmm(\Phi)$. We now choose $K$ to be \emph{minimal} with respect to $\Phi\subset\cO_{X,0}(*D)/\cO_{X,0}$, that is, $m_i>0$ iff $i\in K$. Then for each $I\subset K$, if we define $C_I$ as $C$, but starting from $\Phi_I-\varphi_{o,I}$, we find $C_I=C$, since none of the $c/t^{\bmm}$ becomes zero in $\cO_{X,0}(*D')/\cO_{X,0}(*D'(I^c))$.

\begin{corollaire}[{of Th\ptbl\ref{th:goodlattice}, \cf\cite[\S2.4.3]{Mochizuki08}}]\label{cor:pseudoverygood}
Let $(F,\nabla)$ be a good lattice with associated set $\Phi\subset\cO_{X,0}(*D)/\cO_{X,0}$ (according to Theorem \ref{th:goodlattice}). Assume that~$K\subset L$ is minimal with respect to $\Phi$. Then there exists a decomposition
\bgroup\numstareq
\begin{equation}\label{eq:goodformdecD*}
(F,\nabla)_{\wh D',0}\simeq\bigoplus_{c\in C}(\wh F_c,\wh\nabla_c),
\end{equation}
\egroup
where each $(\wh F_c,\wh\nabla_c)$ is a free $\cO_{\wh D',0}$-module with integrable meromorphic connection such that, for each $I\subset K$, we have compatible isomorphisms
\bgroup\numstarstareq
\begin{equation}\label{eq:goodformdecD**}
\cO_{\wh D_I,0}\otimes(\wh F_c,\wh\nabla_c)=\bigoplus_{\varphi_I\in(\Phi_c)_I}(\IFhat_{\varphi_I},\Inablahat_c)\quad\text{with }(\IFhat_{\varphi_I},\Inablahat_c)\simeq(\wh E^{\wt\varphi_I}\otimes\IRhat_{\varphi_I},\Inablahat),
\end{equation}
\egroup
where the right-hand term is as in Definition \ref{def:Phidecomp}\eqref{def:Phidecomp1}.
\end{corollaire}

\begin{proof}
Recall that the Mayer-Vietoris complex (where we always take $I\subset K$)
\begin{equation}\label{eq:MVO}\tag*{$\MV(\cO):$}
\{0\ra\bigoplus_{\#I=1}\cO_{\wh D_I,0}\ra\bigoplus_{\#I=2}\cO_{\wh D_I,0}\ra\cdots\ra\bigoplus_{\#I=\ell-1}\cO_{\wh D_I,0}\ra\cO_{\wh D_K,0}\ra0\}
\end{equation}
is a resolution of $\cO_{\wh D',0}$. By assumption, for each $I$ one can gather the terms in the $\Phi_I$-decomposition of $(F,\nabla)_{\wh D_I,0}$ so that the decomposition is indexed by $C=C_I$. It follows that there is a decomposition indexed by $C$ of each $\cO_{\wh D_I,0}\otimes F_{\wh D',0}$ compatible with the inclusions $I'\subset I$. As a consequence, for each $c\in C$, there is a Mayer-Vietoris complex $\MV(\wh F_c)$ whose terms are made with the $\IFhat_c$. Moreover, this complex is a direct summand of the Mayer-Vietoris complex $\MV(\cO)\otimes F_{\wh D',0}$. Therefore, $\MV(\wh F_c)$ is a resolution of some free $\cO_{\wh D',0}$-submodule $\wh F_c$ of $F_{\wh D',0}$, and $F_{\wh D',0}$ is the direct sum of these modules. Lastly, the components $(c,c')$ of the matrix of the connection $\nabla$ on $F_{\wh D',0}$ have a vanishing formal expansion along each $D_i$ ($i\in K$) if $c\neq c'$. It follows that they vanish, according to the injectivity of $\cO_{\wh D',0}\to\bigoplus_{\#I=1}\cO_{\wh D_I,0}$.
\end{proof}

\begin{remarque}\label{rem:pseudoverygood}
We keep the assumption of Corollary \ref{cor:pseudoverygood} and we fix $\varphi_o\in\Phi$. Let us denote by $\bell$ the predecessor of $\bmm(\Phi)$ in the totally ordered set $\{\bmm(\varphi-\varphi_o)\mid\varphi\in \Phi\}$. In order to simplify the notation, we will assume that $\varphi_o=0$.

For each $c\in C$ and each pair $\varphi,\psi\in\Phi_c$, we have $\varphi-\psi\in t^{-\bell}\cO_{X,0}$. Let us choose $\varphi_c\in\Phi_c$ for each $c$. Then the formalization at $0$ of $\wh\nabla_c$ satisfies
\[
(\wh\nabla_{c,\wh0}-d\varphi_c\id)(\wh F_{c,\wh0})\subset t^{-\bell}\Omega^1_{\wh 0}(\log D)\otimes \wh F_{c,\wh0},
\]
according to \eqref{eq:goodformdecD**} for $I=K$, and to \eqref{eq:I*} and \eqref{eq:I**}. Therefore, in any $\cO_{\wh0}$-basis of $\wh F_{c,\wh0}$, the matrix of $\wh\nabla_{c,\wh0}-d\varphi_c\id$ has entries in $t^{-\bell}\Omega^1_{\wh 0}(\log D)$. If one chooses a basis induced by a $\cO_{\wh D',0}$-basis of $\wh F_c$, one concludes that the matrix of $\wh\nabla_c-d\varphi_c\id$ in such a basis has entries in $t^{-\bell}\Omega^1_{\wh D',0}(\log D)$, and thus
\numstareq\bgroup
\begin{equation}\label{eq:pseudoverygood*}
\forall c\in C,\quad(\wh\nabla_c-d\varphi_c\id)(\wh F_c)\subset t^{-\bell}\Omega^1_{\wh D',0}(\log D)\otimes \wh F_c.
\end{equation}
In conclusion, in any $\cO_{\wh D',0}$-basis of $\wh F_c$, the matrix of  $\wh\nabla_c$ takes the form $d\varphi_c\id+t^{-\bell}\wh\Omega_c$, where $\wh\Omega_c$ has entries in $\Omega^1_{\wh D',0}(\log D)$.
\egroup
\end{remarque}

Going back to the notion of very good formal decomposition, we obtain:

\begin{corollaire}\label{cor:verygood}
If moreover, $m(\varphi-\psi)_i>0$ for each pair $\varphi\neq\psi\in\Phi$ and each $i\in L$, then $\cM$ has a very good formal decomposition.
\end{corollaire}

In dimension two, this has been proved in \cite[Th\ptbl I.2.2.4]{Bibi97}. This amounts to asking that the stratified $\ccI$-covering $\wt\Sigma(\cM)$ (\cf Remark \ref{rem:I-stratcovmero}) is a true covering of $\partial\wt X(D)$. This result will not be used but can be regarded as the simplest possible behaviour of a flat meromorphic connection at a crossing point of $D$.

\begin{proof}[Sketch of proof of Corollary \ref{cor:verygood}]
The assumption implies that $K=L$. One can argue by induction on $\#\Phi$, since the assumption implies that each~$\Phi_c$ satisfies $m_i(\Phi_c)>0$ for each $i$. The case where $\#\Phi=1$ reduces to proving that a free $\cO_{\wh D,0}$-module with an integrable meromorphic connection such that the connection is logarithmic in a suitable $\cO_{\wh D_L,0}$-basis, has a basis where the connection is logarithmic. This is standard.
\end{proof}
\end{remarque}

\subsection{Comments}
Although some results concerning the notion of a good formal structure have already been considered in dimension two in \cite{Bibi97}, the much more efficient presentation given here is due to T\ptbl Mochizuki \cite[Chap\ptbl2]{Mochizuki08}, \cite{Mochizuki10b}. The advantage of using lattices lies in the decomposition lemma \ref{lem:decomposition}.

\chapter[The Riemann-Hilbert correspondence]{Good meromorphic connections (analytic~theory) and\\ the Riemann-Hilbert correspondence}\label{chap:RHgoodnc}

\begin{sommaire}
This \chaptersname is similar to \Chaptersname\ref{chap:RHgoodsmooth}, but we now assume that~$D$ is a divisor with normal crossings. We start by proving the many-variable version of the Hukuhara-Turrittin theorem, that we have already encountered in the case of a smooth divisor. It will be instrumental for making the link between formal and holomorphic aspects of the theory. The new point in the proof of the Riemann-Hilbert correspondence is the presence of non-Hausdorff étale spaces, and we need to use the level structure to prove the local essential surjectivity of the Riemann-Hilbert functor. As an application of the Riemann-Hilbert correspondence in the good case and of  the fundamental results of K\ptbl Kedlaya  and T\ptbl Mochizuki on the elimination of turning points by complex blowing-ups, we prove a conjecture of M\ptbl Kashiwara asserting that the Hermitian dual of a holonomic $\cD$-module is holonomic, generalizing the original result of M\ptbl Kashiwara for regular holonomic $\cD$-modules to possibly irregular holonomic $\cD$-modules and the result of \Chaptersname\ref{chap:holdist} to higer dimensions.
\end{sommaire}

\subsection{Introduction}
This \chaptersname achieves the main goal of this series of \chaptername s, by proving the Riemann-Hilbert correspondence between good meromorphic connections with poles along a divisor with normal crossings and Stokes-filtered local systems on the corresponding real blow-up space. The correspondence is stated in a global way, by using the notion of good $\ccI$-stratified covering and that of $\ccI$-filtration introduced in \Chaptersname\ref{chap:Ifil}. For a general flat meromorphic connection, this correspondence can be used together with the theorem of K\ptbl Kedlaya and T\ptbl Mochizuki (\cf Remark \ref{rem:existblup}) proving the possibility of eliminating the turning points of any meromorphic connection.

Before proving this correspondence, we will complete the analysis of the formal properties of good meromorphic connections given in \Chaptersname\ref{chap:goodformal} by a generalization in higher dimensions of the Hukuhara-Turrittin theorem. The proof that we give is due to T\ptbl Mochizuki. As in dimension one, this result plays an essential role in the proof of the Riemann-Hilbert correspondence. Let us note however that we will not develop the analogues with precise order of growth, related to Gevrey formal power series. We will neither try to develop the relation with multi-summation, which is hidden behind the level structure of the associated Stokes-filtered local system, as considered in \S\ref{subsec:Stokesstr}.

We give an application of the Riemann-Hilbert correspondence to distributions solutions of holonomic $\cD$-modules, in order to show the powerfulness of this correspondence, and the need of the goodness condition as a new ingredient compared with the proof in dimension one (\cf \Chaptersname\ref{chap:holdist}).

In this \chaptername, we consider germs along~$D$ of meromorphic connections with poles on~$D$ at most, which are locally good in the sense of Definition \ref{def:goodmeroconn} (in particular, we assume the local existence of a good lattice, but we do not care of the global existence of such a lattice, which could be proved by using a good Deligne-Malgrange lattice, \cf \cite{Mochizuki08}).

\subsection{Notation}\label{subsec:notationRHgeq2}
We consider the following setting:
\begin{itemize}
\item
$X$ is a complex manifold and~$D$ is a divisor with normal crossings in~$X$, with smooth components $D_j$ ($j\in J$), and $X^*\defin X\moins D$,
\item
$j:X^*\hto X$ and $\wtj:X^*\hto\wt X(D_{j\in J})$ denote the open inclusions, and $i:D\hto X$ and $\wti:\partial\wt X(D_{j\in J})\hto \wt X(D_{j\in J})$ denote the closed inclusions,
\item
the ordered sheaf~$\ccI$ on $\partial\wt X(D_{j\in J})$ is as in Definitions \ref{def:Igeneral} and \ref{def:Igeneralorder}.
\end{itemize}

In the local setting, we keep the notation of \S\ref{subsec:prelimform}. For short, we will denote by~$\wt X$ the real blow-up space $\wt X(D_{i\in L})$, by $\varpi:\wt X\to X$ the real blowing-up map and by~$\cA_{\wt X}$ the sheaf on~$\wt X$ consisting, locally, of $C^\infty$ functions on~$\wt X$ which are annihilated by $\ov t_i\ov\partial_{t_i}$ ($i\in L$) and $\ov\partial_{t_i}$ ($i\not\in L$). We will set $\cA_{\partial\wt X}=\cA_{\wt X}{}_{|\partial\wt X}$. We refer to \cite{Majima84}, \cite{Bibi93} and \cite[Chap\ptbl2]{Bibi97} for the main properties of this sheaf. Notice that $\cA_{\wt X}$ is a subsheaf of the sheaf $\cA_{\wt X}^\modD$ introduced in \S\ref{subsec:modgrowthfunct}.

Recall also that for each $I\subset L$ one considers the formal completion $\cA_{\wh D_I}=\varprojlim_k\cA_{\wt X}/(t_{i\in I})^k\cA_{\wt X}$ which is a sheaf supported on $\varpi^{-1}D_I$ and is a $\varpi^{-1}\cO_{\wh D_I}$-module via the natural inclusion $\varpi^{-1}\cO_{\wh D_I}\hto\cA_{\wh D_I}$, which is seen as follows. If $\varpi_{D_I}:\wt D_I\to\nobreak D_I$ denotes the real blow-up space of $D_I$ along $D_I\cap D_{j\in I^c}$, then $\cA_{\wh D_I}$ is identified with $\varpi_I^{-1}\cA_{\wt D_I}\lcr t_I\rcr$, where $\varpi_I$ denotes here the projection $\wt X_{|D_I}\to\wt D_I$. The inclusion $\varpi_{D_I}^{-1}\cO_{\wh D_I}=\varpi_{D_I}^{-1}\cO_{D_I}\lcr t_I\rcr\hto\cA_{\wt D_I}\lcr t_I\rcr$ produces the desired inclusion.

One also considers $\cA_{\wh D}$ defined similarly and supported on $\partial\wt X(D)$. There is a Mayer-Vietoris complex $\MV(\cA_{\wt X})$ similar to the complex $\MV(\cO)$ considered in the proof of Corollary \ref{cor:pseudoverygood}, which is a resolution of $\cA_{\wh D}$ and the inclusion $\varpi^{-1}\MV(\cO)\hto\MV(\cA_{\wt X})$ induces an inclusion $\varpi^{-1}\cO_{\wh D}\hto\cA_{\wh D}$ which makes $\cA_{\wh D}$ a $\varpi^{-1}\cO_{\wh D}$-module.

Lastly, let us recall that there is an exact sequence
\begin{equation}\label{eq:Taylor}
0\to\cA_{\partial\wt X}^\rdD\to\cA_{\partial\wt X}\To{T_D}\cA_{\wh D}\to0.
\end{equation}

\subsection{The Malgrange-Sibuya theorem in higher dimension}\label{subsec:proofMalgrange-Sibuya}

We consider the local setting. As in the cases treated before (\S\ref{subsec:RHholone} and \ref{subsec:proofessentialsurj}), an important point in the proof of the Riemann-Hilbert correspondence is the \emphb{Malgrange-Sibuya theorem}. It will be also important for the proof of the higher dimensional  Hukuhara-Turrittin theorem.

\begin{theoreme}[Malgrange-Sibuya in dimension $\geq2$]\label{th:Malgrange-Sibuya}
The image of
\[
H^1\big((S^1)^\ell,\GL_d^\rdD(\cA_{\partial\wt X})\big)\to H^1\big((S^1)^\ell,\GL_d(\cA_{\partial\wt X})\big)
\]
is the identity.
\end{theoreme}

\begin{proof}[\proofname\ of Theorem \ref{th:Malgrange-Sibuya}]
Since the proof of Theorem \ref{th:Malgrange-Sibuya} given in \cite[\S II.1.2]{Bibi97} contains a small mistake\footnote{whose correction is available at \url{http://www.math.polytechnique.fr/~sabbah/sabbah_ast_263_err.pdf}}, we give a detailed proof here.

We denote by $\cE_{\wt X}$ the sheaf of $C^\infty$ functions on $\wt X$ and by $\cE_{|\partial\wt X}$ its sheaf-theoretic restriction to $\partial\wt X$. We can then define the subsheaf $\cE_{|\partial\wt X}^\rdD$ of $C^\infty$ functions with rapid decay and the quotient sheaf is that of $C^\infty$ formal functions along $\partial\wt X$. We note that a local section of $\rM_d(\cE_{|\partial\wt X})$ (matrices of size $d$ with entries in $\cE_{|\partial\wt X}$) is a section of $\GL_d(\cE_{|\partial\wt X})$ if and only if its image in $\rM_d(\cE_{|\partial\wt X}/\cE_{|\partial\wt X}^\rdD)$ belongs to $\GL_d(\cE_{|\partial\wt X}/\cE_{|\partial\wt X}^\rdD)$: indeed, given a matrix in $\rM_d(\cE_{|\partial\wt X})$, the image in $\cE_{|\partial\wt X}/\cE_{|\partial\wt X}^\rdD$ of its determinant is the determinant of its image in $\rM_d(\cE_{|\partial\wt X}/\cE_{|\partial\wt X}^\rdD)$; use now that the values of a local section of $\cE_{|\partial\wt X}$ at the points of $\partial\wt X$ are also the values of its image in $\cE_{|\partial\wt X}/\cE_{|\partial\wt X}^\rdD$.

\begin{lemme}
We have $H^1\big((S^1)^\ell,\GL_d^\rdD(\cE_{|\partial\wt X})\big)=\id$, where we have set as above $\GL_d^\rdD(\cE_{|\partial\wt X})\defin\id+\rM_d(\cE_{|\partial\wt X}^\rdD)$.
\end{lemme}

\begin{proof}
Since a matrix in $\rM_d(\cE_{|\partial\wt X})$ whose image in $\rM_d(\cE_{|\partial\wt X}/\cE_{|\partial\wt X}^\rdD)$ is the identity is invertible, we have an exact sequence of groups
\[
\id\to\GL_d^\rdD(\cE_{|\partial\wt X})\to\GL_d(\cE_{|\partial\wt X})\to\GL_d(\cE_{|\partial\wt X}/\cE_{|\partial\wt X}^\rdD)\to\id.
\]
We first show that $H^0\big((S^1)^\ell,\GL_d(\cE_{|\partial\wt X})\big)\to H^0\big((S^1)^\ell,\GL_d(\cE_{|\partial\wt X}/\cE_{|\partial\wt X}^\rdD)\big)$ is onto. Locally, a section of the right-hand term can be lifted. Using a partition of unity, we lift it globally as a section of $\rM_d(\cE_{|\partial\wt X})$, and by the remark above, it is a section of $\GL_d(\cE_{|\partial\wt X})$.

It remains thus to show that $H^1\big((S^1)^\ell,\GL_d(\cE_{|\partial\wt X})\big)\to H^1\big((S^1)^\ell,\GL_d(\cE_{|\partial\wt X}/\cE_{|\partial\wt X}^\rdD)\big)$ is injective, and since $\cE_{|\partial\wt X}^\rdD\subset t_1\cdots t_\ell\cE_{|\partial\wt X}$, it is enough to show a similar assertion of the restriction map $H^1\big((S^1)^\ell,\GL_d(\cE_{|\partial\wt X})\big)\to H^1\big((S^1)^\ell,\GL_d(\cE_{\partial\wt X})\big)$, where $\cE_{\partial\wt X}=\cE_{|\partial\wt X}/t_1\cdots t_d\cE_{|\partial\wt X}$ is the sheaf of $C^\infty$ functions on $\partial\wt X$.

Using the interpretation of an element of $H^1$ as giving an isomorphism class of vector bundle, we are reduced to showing that, given a $C^\infty$ vector bundle in the neighbourhood of $(S^1)^\ell$ whose restriction to $(S^1)^\ell$ is trivializable, it is trivializable in some (possibly smaller) neighbourhood of $(S^1)^\ell$. For that purpose, it is enough to prove that any global section of the restriction can be lifted to a global section of the original bundle in some neighbourhood of $(S^1)^\ell$, because a lift of a basis of global sections will remain a basis of sections in some neighbourhood of $(S^1)^\ell$. Now, such a lifting property for a global section can be done locally on $(S^1)^\ell$ and glued with a partition of unity.
\end{proof}

Let $\alpha$ be a class in $H^1\big((S^1)^\ell,\GL_d^\rdD(\cA_{\partial\wt X})\big)$ represented by a cocycle $(\alpha_{ij})$ on some open cover $\cU=(U_i)$ of $(S^1)^\ell$. According to the previous lemma, $H^1\big(\cU,\GL_{d}^\rdD(\cE_{|\partial\wt X})\big) =\id$ for any open cover $\cU$ of $(S^1)^\ell$ (\cf \cite[Prop\ptbl II.1.2.1]{B-V89}), and therefore $\alpha_{ij}=\beta_{i}^{-1}\beta_j$, where $\beta_i$ is a section over $U_i$ of $\GL_{d}^\rdD(\cE_{|\partial\wt X})$.

The operator $\ov\partial$ is well-defined on $\cE^\rdD_{|\partial\wt X}$. We set
\[
\gamma_i=\ov\partial \beta_i\cdot\beta_{i}^{-1}.
\]
Then $\gamma_i=\gamma_j$ on $U_i\cap U_j$ and the $\gamma_i$ glue together as a matrix $\gamma$ of $1$-forms with entries in $\cE^\rdD_{X|D}=\varpi_*\cE^\rdD_{|\partial\wt X}$, and of type $(0,1)$. Moreover, the $\gamma_i$ (hence $\gamma$) satisfy
\[
\ov\partial \gamma_i+\gamma_i\wedge\gamma_i=0
\]
because this equality is already satisfied away from $\partial\wt X$. For $\gamma$, this equality is read on $X$.

\begin{lemme}
There exists a neighbourhood of $0\in X$ on which the equation $\ov\partial\varphi=-\varphi\cdot\gamma$ has a solution $\varphi$ which is a section of $\GL_d(\cE_X)$.
\end{lemme}

\begin{proof}
This is Theorem 1 in \cite[Chap\ptbl X]{Malgrange58}.
\end{proof}

Then for each $i$ one has $\ov\partial (\varphi\beta_i)=0$, so that $\varphi\beta_i$ is a section on $U_i$ of $\GL_d(\cA_{\partial\wt X})$ and $\alpha_{ij}=(\varphi\beta_i)^{-1}\cdot(\varphi\beta_j)$, in other words, the image of $\alpha$ in $H^1\big(\cU,\GL_d(\cA_{\partial\wt X})\big)$ is the identity.
\end{proof}

\subsection{The higher dimensional Hukuhara-Turrittin theorem}\label{subsec:HTM}\index{Hukuhara-Turrittin theorem (higher dimension)}
We keep the setting of \S\ref{subsec:prelimform}.

\begin{theoreme}\label{th:HTM}
Let $\cM$ be a meromorphic connection with poles along~$D$. Assume that $\cM$ has a puntual good formal decomposition near the origin (\cf Definition \ref{def:goodformdec}\eqref{def:goodformdec3}) with set of exponential factors $\Phi\subset\cO_{X,0}(*D)/\cO_{X,0}$. Then, for any $\theta_o\in\varpi^{-1}(0)$, the decomposition \eqref{eq:goodformdec*} can be lifted as a decomposition
\[
\cA_{\wt X,\theta_o}\otimes_{\varpi^{-1}\cO_{X,0}}\varpi^{-1}\cM_0\simeq\cA_{\wt X,\theta_o}\otimes_{\varpi^{-1}\cO_{X,0}}\Big(\bigoplus_{\varphi\in\Phi}(\cE^\varphi\otimes\cR_{\varphi})_0\Big),
\]
where each $\cR_\varphi$ is a meromorphic connection with poles along~$D$, and regular singularity along~$D$.
\end{theoreme}

We have implicitly used Theorem \ref{th:gooddec} to ensure that $\wh\Phi\subset\cO_{X,0}(*D)/\cO_{X,0}$. We will also use the existence of a good lattice near the origin (Proposition \ref{prop:existgoodlattice}).

\begin{remarque}
This theorem has already been used in Lemma \ref{lem:goodgoodsmooth} when~$D$ is smooth, referring to Sibuya \cite{Sibuya62,Sibuya74} for its proof. In order to handle the case with normal crossings, an asymptotic theory similar to that developed by H\ptbl Majima \cite{Majima84} is needed. Notice also that previous approaches to this asymptotic theory can be found in \cite{G-S79}. Here, we will use the arguments given in \cite[Chap\ptbl20]{Mochizuki08}.

When $\dim X=2$, this theorem is proved in \cite{Bibi97} (\cf Th\ptbl2.1.1 in \loccit) by using Majima's arguments. However, the proof of \loccit does not seem to extend in arbitrary dimension. Here, we give an alternative proof, due to T\ptbl Mochizuki \cite{Mochizuki08}. The new idea in this proof, compared to that of \cite{Bibi97} in dimension two, is the use of the existence of a good lattice. See also \cite[Appendix]{Hien09} for a similar explanation of this proof.
\end{remarque}

\begin{corollaire}\label{cor:HTM}
Under the assumptions of Theorem \ref{th:HTM}, if moreover each $\varphi\in\nobreak\Phi$ is purely monomial (\cf Definition \ref{def:purmonom}), that is, if $\Phi\cup\{0\}$ is also good, then $\DR^\modD\cM$ is a sheaf.
\end{corollaire}

\begin{proof}
According to Theorem \ref{th:HTM}, it is enough to prove the result for $\cM=\cE^\varphi\otimes\cR_\varphi$, where $\varphi$ is purely monomial, since the statement is local on $\partial\wt X$. The proof is then similar to that indicated for Proposition \ref{prop:HkEphinul}.
\end{proof}

\begin{proof}[\proofname\ of Theorem \ref{th:HTM}]
According to Proposition \ref{prop:existgoodlattice}, there exists a good lattice $(F,\nabla)$ for $\cM$ near the origin. We will therefore prove a statement similar to that of Theorem \ref{th:HTM} where we start with a good lattice $(F,\nabla)$, that we can assume to be non-resonant, according to Corollary \ref{cor:goodDM} (this will be used in the end of the proof of Lemma \ref{lem:grnablanabla}).

The proof is by induction on $\#\Phi$. If $\#\Phi=1$, we can assume that $\Phi=\{0\}$ by twisting. Then the isomorphism of Theorem \ref{th:HTM} already exists for $\cO$-coefficients: indeed, by Proposition \ref{prop:existgoodlattice}, there exists a good lattice of $\cM$ lifting a good lattice of $\cM_{\wh0}$; the latter being logarithmic, so is the former.

We now assume that $\#\Phi\geq2$, so that $\bmm(\Phi)\neq0$. We will use the notation of \S\ref{subsec:goodlattices}. More specifically, we let $K\subset L$ be the minimal subset such that $\Phi\subset\cO_{X,0}(D(K))/\cO_{X,0}$ and we set $D'=D(K)$. We consider the formal decomposition \eqref{eq:goodformdecD*} indexed by the set $C\subset\CC$ of Remark \ref{rem:verygood}, which contains at least two distinct elements, where the connection satisfies \eqref{eq:pseudoverygood*}. The inductive step is given by the following proposition.

\begin{proposition}\label{prop:HTM}
There exist good lattices $(F_c,\nabla_c)_{c\in C}$ near the origin in $X$ such that, setting $\varpi':\wt X'=\wt X(D_{i\in K})\to X$,
\begin{enumerate}
\item
$\cO_{\wh D',0}\otimes_{\cO_{X,0}}(F_c,\nabla_c)=(\wh F_c,\wh\nabla_c)$ (\cf \eqref{eq:goodformdecD*}) for each $c\in C$,
\item
for each $\theta'_o\in\varpi^{\prime-1}(0)$, the decomposition \eqref{eq:goodformdecD*} locally lifts as an isomorphism
\numstareq\bgroup
\begin{equation}\label{eq:HTM*}
\cA_{\wt X',\theta'_o}\otimes(F,\nabla)\simeq\bigoplus_{c\in C}\cA_{\wt X',\theta'_o}\otimes(F_c,\nabla_c).
\end{equation}
\egroup
\end{enumerate}
\end{proposition}

Since $\wt X'$ is dominated by $\wt X$, \eqref{eq:HTM*} lifts to $\wt X$, and the inductive assumption can be applied to get the analogue of Theorem \ref{th:HTM} for good lattices, and hence Theorem \ref{th:HTM} itself.
\end{proof}

\begin{proof}[\proofname\ of Proposition \ref{prop:HTM}]
In order to simplify notation during the proof, we will use the notation $\wt X$ instead of $\wt X'$ and $\theta_o$ instead of $\theta'_o$, since we will not use the original notation $\wt X$ during the proof. Correspondingly, we will set $D=D(K)$ (instead of~$D'$) and denote by $D''$ the remaining components $D(L\moins K)$, since they will play no essential role.

The proof will be achieved in two steps:
\begin{itemize}
\item
we first construct, for each $\theta_o$, a lifting of \eqref{eq:goodformdecD*} for some $(F_c^{\theta_o},\nabla_c^{\theta_o})$ locally defined near $\theta_o$; this uses sectorial asymptotic analysis;
\item
we then glue the various constructions to show that they come from some good lattice $(F_c,\nabla_c)$; in order to do so, one would like to consider an open cover $\cU$ of $\varpi^{-1}(0)$ such that the previous lifting exists on each open set $U_i$, and consider the change of liftings on the intersections $U_i\cap U_j$; if $\#K=1$, so that $\varpi^{-1}(0)=S^1$, one can choose an open cover with empty triple intersections, so that we get in that way a cocycle in $H^1(\varpi^{-1}(0),\GL^\rdD(\cA_{\wt X}))$ and we can use the Malgrange-Sibuya theorem to construct $(F_c,\nabla_c)$; however, if $\#K\geq2$, these changes of liftings do not clearly form a cocycle, since the cocycle condition is not trivial to check; instead, it can be proved that the Stokes \emph{filtration} at the level $\geq\bell$ (where $\bell$ is the predecessor of $\bmm(\Phi)$ as in the proof of Proposition \ref{prop:abelianwithout}) is globally defined, and is locally split; then the graded object of the Stokes filtration is a locally free $\cA_{\wt X}$-module of finite rank, globally defined on $\varpi^{-1}(0)$, which will be proved to be of the form $\cA_{\wt X}\otimes\varpi^{-1}(F_c,\nabla_c)$ for some good lattice $(F_c,\nabla_c)$.
\end{itemize}

\subsubsection*{Step one: existence of $(F_c^{\theta_o},\nabla_c^{\theta_o})$ for each $\theta_o\in\varpi^{-1}(0)$}
Let us consider the decomposition \eqref{eq:goodformdecD*} for $(F,\nabla)$. It induces a decomposition
\[
\cA_{\wh D}\otimes_{\varpi^{-1}\cO_X}(F,\nabla)=\bigoplus_{c\in C}\big(\cA_{\wh D}\otimes_{\varpi^{-1}\cO_{\wh D}}(\wh F_c,\wh\nabla_c)\big).
\]
Let us now fix $\theta_o\in\varpi^{-1}(0)$ and a basis $\wh\bme$ of $\cA_{\wh D,\theta_o}\otimes_{\varpi^{-1}\cO_X}(F,\nabla)$ compatible with this decomposition. If $\epsilong$ is any $\cO_{X,0}$-basis of $F$, it defines a basis $\wh\epsilong$ of $\cA_{\wh D,\theta_o}\otimes_{\varpi^{-1}\cO_X}(F,\nabla)$. Let $\wh P\in\GL_d(\cA_{\wh D,\theta_o})$ be the change of basis, so that $\wh\bme=\wh\epsilon\cdot\wh P$. According to \eqref{eq:Taylor}, there exists $P\in\GL_d(\cA_{\wt X,\theta_o})$ be such that $T_D(P)=\wh P$. Let us set $\bme=\epsilong\cdot P$, so that the basis $\bme$ induces the basis $\wh\bme$ on $\cA_{\wh D,\theta_o}\otimes_{\varpi^{-1}\cO_X}F$. In this basis, the matrix of $\nabla^{\theta_o}$ can be written as
\begin{equation}\label{eq:Omegathetao}
\diag(d\varphi_c\id+t^{-\bell}\Omega_c^{\theta_o})_{c\in C}+\Omega^\rd,
\end{equation}
where, for each $c\in C$, $\varphi_c$ is a fixed element of $\Phi_c$, $\Omega_c^{\theta_o}$ is a $\cA_{\wt X,\theta_o}$-lift of $\wh\Omega_c$ defined in Remark \ref{rem:pseudoverygood}, and $\Omega^\rd$ has entries in $\cA_{\wt X,\theta_o}^\rdD$, according to the exact sequence \eqref{eq:Taylor}. We will show that there is a base change with entries in $\GL^\rdD_d(\cA_{\wt X,\theta_o})$ after which the matrix of $\nabla^{\theta_o}$ is block-diagonal with blocks indexed by $C$. This will be done in four steps:
\begin{itemize}
\item
One first finds a base change in $\GL^\rdD_d(\cA_{\wt X,\theta_o})$ so that, for some $i$ such that $m_i>0$, the component $\Omega^{(i)}$ of $\Omega$ on $dt_i/t_i$ is block-diagonal.
\item
One then shows that the matrix $\Omega$ is then triangular with respect to the order $\leqthetao$.
\item
One then block-diagonalizes all the components $\Omega$ by a base change in $\GL_d(\cO(S\times\nobreak V))$, where $S$ is a small open poly-sector in the variables $t_{i\in K}$ and~$V$ is a neighbourhood of $t_{j\notin K}=0$. If $K=L$, the proof is straightforward. However, if $K\neq L$, one has take care of the residues along the components $D_i$ for $i\in L\moins K$.
\item
By considering the relative differential equation (relative to $t_{i\in K}$) satisfied by the previous base change, one shows that it belongs to $\GL^\rdD_d(\cA_{\wt X,\theta_o})$.
\end{itemize}

Let us index $C$ by $\{1,\dots,r\}$, and write the matrix of $\nabla^{\theta_o}$ in block-diagonal terms $(\Omega_{\alpha\beta})_{\alpha,\beta=1,\dots,r}$, so that $T_D(\Omega_{\alpha\beta})=0$ for $\alpha\neq\beta$, and $T_D(\Omega_{\alpha\alpha})=d\varphi_{c_\alpha}\id+t^{-\bell}\wh\Omega_{c_\alpha}$. Our final goal is to obtain a base change $\id+Q=\id+(Q_{\alpha\beta})_{\alpha,\beta=1,\dots,r}$, where $Q$ has entries in $\cA_{\wt X,\theta_o}^\rdD$, $Q_{\alpha\alpha}=\id$ and $T_D(Q_{\alpha\beta})=0$ for $\alpha\neq\beta$, which splits $\Omega$, that is, such that $dQ+\Omega (\id+Q)=(\id+Q)\Omega'$ with $\Omega'$ block-diagonal and of the form \eqref{eq:Omegathetao}. This amounts to finding $Q$ as above such that, for $\alpha\neq \beta$,
\begin{equation}\label{eq:Q12}
dQ_{\alpha\beta}=-\Omega_{\alpha\beta}+(Q_{\alpha\beta}\Omega_{\beta\beta}-\Omega_{\alpha\alpha}Q_{\alpha\beta})-\sum_{\gamma\neq\alpha,\beta}\Omega_{\alpha\gamma}Q_{\gamma\beta}+\cQ_{\alpha,\beta}(Q,\Omega),
\end{equation}
where $\cQ_{\alpha,\beta}(Q,\Omega)=Q_{\alpha,\beta}\big(\sum_{\gamma\neq\beta}\Omega_{\beta\gamma}Q_{\gamma\beta}\big)$ is a quadratic expresssion in $Q$ with coeffi\-cients having entries in $\cA_{\wt X,\theta_o}^\rdD$. However, we will not solve directly the system \eqref{eq:Q12}.

\subsubsection*{First step}
Since $\bmm-\bell>0$, there exists $i\in K$ such that $m_i-\ell_i>0$. Assume for simplicity that $i=\nobreak1$, and set $t'=(t_2,\dots,t_n)$. Then each $\varphi_{c_\alpha}$ can be written as $t^{-\bmm}(u_\alpha(t')+t_1v_\alpha(t))$ with $u_\alpha(0)=c_\alpha$ and both $u_\alpha,v_\alpha$ holomorphic. Let us set $t_1\partial_{t_1}\varphi_{c_\alpha}=t^{-\bmm}(u_\alpha^{(1)}(t')+\nobreak t_1v_\alpha^{(1)}(t))$, with $u_\alpha^{(1)}(0)=-m_1c_\alpha$. Let us also set $u_{\alpha\beta}^{(1)}(t')=u_\beta^{(1)}(t')-u_\alpha^{(1)}(t')$, so that $u_{\alpha\beta}^{(1)}(0)=m_1(c_\alpha-c_\beta)\neq0$ for $\alpha\neq\beta$. The component of \eqref{eq:Q12} on $dt_1/t_1$ reads,
\begin{equation}
\tag*{$(\ref{eq:Q12})_1$}\label{eq:Q121}
t_1\partial_{t_1}Q_{\neq}=-\Omega_{\neq}^{(1)}+t^{-\bmm}u_{\alpha\beta}^{(1)}(t')Q_{\neq}+t^{-\bmm}t_1L(t)(Q_{\neq})+\cQ_{\neq}(Q_{\neq},\Omega_{\neq}^{(1)}),
\end{equation}
where the index $\neq$ means that we only consider non-diagonal blocks, $L(t)$ is a linear operator with entries in $\cA_{\wt X,\theta_o}$ on the space of matrices like~$Q_{\neq}$, and $\Omega_{\neq}^{(1)}$ has rapid decay along $D$. The existence of a solution with rapid decay to \ref{eq:Q121} is given by \cite[Prop\ptbl20.1.1]{Mochizuki08}. We fix such a solution $Q$ with $Q_{\alpha\alpha}=\id$ for all $\alpha$.

Let us set $\bme'=\bme\cdot (\id+Q)$ and let us denote by $F^{\theta_o}=\bigoplus_{c\in C} F_c^{\theta_o}$ the corresponding decomposition of $F^{\theta_o}$. Then this decomposition is $\nabla^{\theta_o}_{t_1\partial_{t_1}}$-horizontal, and the matrix (that we still denote by) $\Omega$ of $\nabla^{\theta_o}$ in the basis $\bme'$ has the same form as above, with the supplementary property that $\Omega^{(1)}_{\alpha\beta}=0$ for $\alpha\neq\beta$.

\subsubsection*{Second step}
As in the proof of Lemma \ref{lem:decomposition}, set $\tau_j=t_j\partial_{t_j}$ for $j\in L$ and $\tau_j=\partial_{t_j}$ for $j\notin L$ and let us denote by~$\Omega^{(j)}$ the matrix of $\nabla^{\theta_o}_{\tau_j}$ in the basis $\bme'$. The integrability property of~$\nabla^{\theta_o}$ implies that $[\nabla^{\theta_o}_{\tau_1},\nabla^{\theta_o}_{\tau_j}]=0$, which reads, setting $\psi_{\alpha\beta}=\varphi_{c_\beta}-\varphi_{c_\alpha}$ for $\alpha\neq\beta$,
\begin{equation}\label{eq:omegajalphabeta}
t_1\partial_{t_1}\Omega^{(j)}_{\alpha\beta}=\Omega^{(j)}_{\alpha\beta}\Omega^{(1)}_{\beta\beta}-\Omega^{(1)}_{\alpha\alpha}\Omega^{(j)}_{\alpha\beta}=t_1\partial_{t_1}(\psi_{\alpha\beta})\cdot\Omega^{(j)}_{\alpha\beta}+t^{-\bmm}t_1L(t)(\Omega^{(j)}_{\alpha\beta}),
\end{equation}
for some linear operator $L(t)$ as above.

We will now use the order $\leqthetao$ on $t^{-\bmm}\CC$ as defined by \eqref{eq:ordern}, and we forget the factor $t^{-\bmm}$ to simplify the notation.

\begin{lemme}\label{lem:subsheavesFleq}
For each $\eta\in\CC$, the subsheaves
\[
F^{\theta_o}_{\leqthetao \eta}\defin\bigoplus_{\substack{c\in C\\0\leqthetao \eta+c}}F^{\theta_o}_c
\]
are left invariant by the connection $\nabla^{\theta_o}$.
\end{lemme}

\begin{proof}
It is enough to consider those $\eta$ such that $-\eta\in C$. It amounts then to proving that the only solution of the linear equation \eqref{eq:omegajalphabeta} is zero if \hbox{$-c_\alpha\not\lethetao -c_\beta$} or equivalently if $\reel(\psi_{\alpha\beta})\not<0$ on some neighbourhood of $\theta_o$. Such a solution $\Omega^{(j)}_{\alpha\beta}$, if it exists, is defined on a domain \hbox{$S\times V=\{|t_i|<2\rho\mid i\in K\}\times\prod_{i\in K}{}]\theta_o^{(i)}-\epsilon,\theta_o^{(i)}+\epsilon[{}\times V$} for some $\rho,\epsilon>0$, and~some neighbourhood $V$ of $0$ in the variables $t_i$, $i\not\in K$. Note that $c_\beta\not\lethetao c_\alpha$ means that $\arg(c_\beta-c_\alpha)-\sum_{i\in K}m_i\theta_o^{(i)}\in[-\pi/2,\pi/2]\bmod2\pi$, so that we can find a sub-polysector $S'$ with the same radius as $S$ such that, on $S'\times V$, we have $\arg(c_\beta-c_\alpha)-\sum_{i\in K}m_i\theta^{(i)}\in{}]-\pi/2,\pi/2[{}\bmod2\pi$, and then, up to shrinking the radius, $\reel(\psi_{\alpha\beta})>\delta>0$. Restricting to $|t_i|>\rho/2$ for $i\in K$ and $i\neq1$, we find that such an inequality holds on an open set $S_1\times U'$, where $S_1$ is an open sector with respect to $t_1$ and $U'$ is an open set in the variables $t'$. Then, on $S_1\times U'$, we can apply the estimate of \cite[Cor\ptbl20.3.7]{Mochizuki08}, showing that, if nonzero, $\Omega^{(j)}_{\alpha\beta}$ should have exponential growth when $t_1\to0$. This contradicts the rapid decay property.
\end{proof}

\subsubsection*{Third step}
The matrix $\Omega=(\Omega_{\alpha\beta})$ of the connection $\nabla^{\theta_o}$ can now be assumed to have the same form as in \eqref{eq:Omegathetao} together with the property that $\Omega_{\alpha\beta}=0$ if $c_\alpha\not\geqthetao c_\beta$. The index set $\{1,\dots,r\}$ of $C$ can be written as a disjoint union of maximal subsets~$A$ which are totally ordered with respect to $\leqthetao$. We have $\Omega_{\alpha\beta}=0$ if $\alpha$ and $\beta$ do not belong to the same subset $A$. On the other hand, the integrability of $\nabla^{\theta_o}=d+\Omega$ implies the integrability of each graded connection $\gr_\alpha\nabla^{\theta_o}=d+\Omega_{\alpha\alpha}$: the integrability of $d+\Omega$ implies $d\Omega_{\alpha\alpha}+\sum_{\beta}\Omega_{\alpha\beta}\wedge\Omega_{\beta\alpha}=0$; but only one of both $\Omega_{\alpha\beta}$ and $\Omega_{\beta\alpha}$ can be nonzero if $\alpha\neq\beta$.

\begin{lemme}\label{lem:grnablanabla}
There exists a base change $P=(P_{\alpha\beta})\in\GL_d^\rdD(\cA_{\wt X,\theta_o})$ which transforms $d+\Omega$ to $d+\gr\Omega$.
\end{lemme}

\begin{proof}
We search for $P$ which is block-diagonal with respect to the partition by subsets $A$ of $\{1,\dots,r\}$, and we will consider the $A$-block $(P_{\alpha\beta})_{\alpha,\beta\in A}$, where we fix $P_{\alpha\alpha}=\id$ and $P_{\alpha\beta}=0$ for $\alpha<\beta$ (for the sake of simplicity, we will denote by $\leq$ the order on $A$ induced by $\leqthetao$ on $C$). Our final goal will therefore be to look for $P_{\alpha\beta}$ ($\alpha>\beta$) with entries in $\cA_{\wt X,\theta_o}^\rdD$, satisfying
\bgroup\numstareq
\begin{equation}\label{eq:dPalphabeta}
dP_{\alpha\beta}=P_{\alpha\beta}\Omega_{\beta\beta}-\Omega_{\alpha\alpha}P_{\alpha\beta}-\sum_{\beta\leq\gamma<\alpha} \Omega_{\alpha\gamma}P_{\gamma\beta}.
\end{equation}
\egroup
We will search for $(P_{\alpha\beta})_{\alpha,\beta\in A}$ as a product of terms $(\kP_{\alpha\beta})$ for $k\in\NN^*$ so that each $\kP$ satisfies $\kP_{\alpha\alpha}=\id$ and $\kP_{\alpha\beta}=0$ if $\alpha<\beta$ or $\alpha>\beta$ and $\#\{\gamma\in A\mid\beta\leq\gamma<\alpha\}\neq k$. We will assume that, after the succession of base changes $^j\!P$ for $j<k$ the matrix $^k\Omega$ of~$\nabla^{\theta_o}$ is as above and moreover $^k\Omega_{\alpha\beta}=0$ if $\alpha>\beta$ and $\#\{\gamma\in A\mid\beta\leq\gamma<\alpha\}<k$. Then \eqref{eq:dPalphabeta} for $\kP$ reads, if $\#\{\gamma\in A\mid\beta\leq\gamma<\alpha\}=k$:
\begin{equation}\tag*{$(\ref{eq:dPalphabeta})_k$}\label{eq:dkPalphabeta}
d\,\kP_{\alpha\beta}=\kP_{\alpha\beta}{}^k\Omega_{\beta\beta}-{}^k\Omega_{\alpha\alpha}\kP_{\alpha\beta}-{}^k\Omega_{\alpha\beta}.
\end{equation}
We will start by showing the existence of a holomorphic solution of \ref{eq:dkPalphabeta} on a domain $S\times V$ as defined in the proof of Lemma \ref{lem:subsheavesFleq}.

We first consider the case where $K=L$. Let us consider the connection $\nabla_{\alpha,\beta}$ on the space of matrices $R$ of the size of $\kP_{\alpha\beta}$ having matrix $R\mto dR+({}^k\Omega_{\alpha\alpha}R-R{}^k\Omega_{\beta\beta})$. Due to the integrability of $d+{}^k\Omega_{\alpha\alpha}$ and $d+{}^k\Omega_{\beta\beta}$, this connection is integrable. We claim that $\nabla_{\alpha\beta}({}^k\Omega_{\alpha\beta})=0$. Indeed, the integrability of $d+{}^k\Omega$ implies $d\,{}^k\Omega_{\alpha\beta}+\sum_\gamma{}^k\Omega_{\alpha\gamma}\wedge{}^k\Omega_{\gamma\beta}=0$, and the property of~${}^k\Omega$ implies that all terms in the sum are zero except maybe those for $\gamma=\alpha$ and $\gamma=\beta$.

Since $\nabla_{\alpha\beta}$ is integrable on $S\times V$ and since $\nabla_{\alpha\beta}({}^k\Omega_{\alpha\beta})=0$, the equation $\nabla_{\alpha\beta}\kP_{\alpha\beta}=-{}^k\Omega_{\alpha\beta}$ has a holomorphic solution~$\kP_{\alpha\beta}$ in an open domain $S\times V$, as wanted.

We now consider the case where $K\subsetneq L$. We denote by $V''$ the subset of $V$ defined by $t_i=0$ for all $i\in L\moins K$. We consider the connection $\knabla^{\theta_o}$ on the space of block-triangular matrices $(\kP_{\alpha\beta})_{\alpha\beta}$ given by \ref{eq:dkPalphabeta} where we do not assume for the moment that $\kP_{\alpha\alpha}=\id$. It is written as
\[
\knabla^{\theta_o}((\kP_{\alpha\beta}))_{\alpha\beta}=d\,\kP_{\alpha\beta}+{}^k\Omega_{\alpha\alpha}\kP_{\alpha\beta}-\kP_{\alpha\beta}{}^k\Omega_{\beta\beta}\,({}+{}^k\Omega_{\alpha\beta}\kP_{\beta\beta}),
\]
where the last term only exists if $\alpha\neq\beta$. This connection is integrable, has logarithmic poles along $D_i$ for $i\in L\moins K$ (\cf the proof of Corollary \ref{cor:goodDM}) and no other pole in~$S\times V$. The residue endomorphism $R_i$ along $D_i$ is given by
\[
R_i((\kP_{\alpha\beta}))_{\alpha\beta}= {}^kR_{i,\alpha\alpha}\kP_{\alpha\beta}-\kP_{\alpha\beta}{}^kR_{i,\beta\beta}\,({}+{}^kR_{i,\alpha\beta}\kP_{\beta\beta}),
\]
where ${}^kR_{i,\alpha\beta}$ is the residue of ${}^k\Omega_{\alpha\beta}$ along $D_i$. The eigenvalues of $R_i$ are the differences between eigenvalues of ${}^kR_{i,\alpha\alpha}$ and ${}^kR_{i,\beta\beta}$ for $\alpha,\beta$ varying in $A$. Since the original $(F,\nabla)$ satisfies the non-resonance property by assumption, and since the connection with matrix ${}^k\Omega$ has been obtained by holomorphic base changes from the original~$\Omega$, it also satisfies the non-resonance condition. As a consequence, the only integral eigenvalue of $R_i$ is zero.

Since the connections $d+{}^k\Omega$ and $d+\gr{}^k\Omega$ are logarithmic and their residues satisfy the non-resonance condition, there exist local frames in which these connections have constant matrix and the residues also satisfy the non-resonance condition. Then the following holds on $S\times V''$:
\begin{itemize}
\item
The sheaf $\cR\defin\bigcap_{i\in L\moins K}\ker R_i$ is a vector bundle.
\item
It is equipped with the residual integrable connection induced by ${}^k\nabla^{\theta_o}$.
\item
The matrix $(\kP_{\alpha\alpha}=\id_{\alpha\alpha},\kP_{\alpha\beta}=0\;(\alpha\neq\beta))$ is a section of $\cR$.
\item
There exists a horizontal section of $\cR$ (with respect to the residual connection) such that $\kP_{\alpha\alpha}=\id$ for each $\alpha\in A$. In order to get this point, we consider the horizontal section of the residual connection which takes the value $(\kP_{\alpha\alpha}=\nobreak\id,\kP_{\alpha\beta}=\nobreak0\;(\alpha\neq\nobreak\beta))$ at some point of $S\times V''$. The $\kP_{\alpha\alpha}$ component of this section is a horizontal section of the residual connection $d+{}^k\Omega''_{\alpha\alpha}\kP_{\alpha\alpha}-\kP_{\alpha\alpha}{}^k\Omega''_{\alpha\alpha}$. Since $\id_{\alpha\alpha}$ is clearly horizontal, it coincides with $\kP_{\alpha\alpha}$.
\item
Any horizontal section of $\cR$ extends in a unique way as a ${}^k\nabla^{\theta_o}$-horizontal section on $S\times\nb(V'')$: this follows from the computation in a basis where the matrix of the residual connection is constant and the fact that the only integral eigenvalue of $R_i$ is zero.
\item
Arguing as above, the $\kP_{\alpha\alpha}$ component of this section is $\id_{\alpha\alpha}$.
\end{itemize}

In conclusion, we have found a global solution of \ref{eq:dkPalphabeta}, as wanted.

\subsubsection*{Fourth step: End of the proof of Lemma \ref{lem:grnablanabla}}
We now denote by $\Omega'$ the component of $\Omega$ on the $dt_i/t_i$ with $i\in K$, and we denote correspondingly by $\nabla'_{\alpha,\beta}$ the relative differential. Then, for a solution $(\kP_{\alpha\beta}$ of \ref{eq:dkPalphabeta} we have $\nabla'_{\alpha\beta}\kP_{\alpha\beta}=-{}^k\Omega'_{\alpha\beta}$. Let us now fix a domain $S\times V$ small enough so that $\reel(\psi_{\alpha\beta})<\delta<0$ in $S\times V$ if $\alpha\neq\beta$ and $c_\beta\leqthetao c_\alpha$. We can then use \cite[Lem\ptbl20.3.1]{Mochizuki08}, which shows that any holomorphic solution $\kP_{\alpha\beta}$ on $S\times V$ satisfies $\lim_{|t'|\to0}\kP_{\alpha\beta}=0$ and, together with a Cauchy argument, that the same holds for all derivatives of $\kP_{\alpha\beta}$, so that $\kP_{\alpha\beta}$ has entries in $\cA_{\wt X,\theta_o}^\rdD$.
\end{proof}

\subsubsection*{Step two: globalization of the local $(F_c^{\theta_o},\nabla_c^{\theta_o})$}
Let us first start with a uniqueness statement.

\begin{lemme}\label{lem:subsheavesFlequnique}
The subsheaves $F^{\theta_o}_{\leqthetao\eta}$ obtained through $Q$ satisfying \ref{eq:Q121} do not depend on the choice of $Q$.
\end{lemme}

\begin{proof}
Let $P=(P_{\alpha\beta})$ be a base change between two bases in which the matrices $\Omega,\Omega'$ of $\nabla^{\theta_o}$ are of the form considered in Step one, with $\Omega^{(1)}_{\alpha\beta}=\Omega^{\prime(1)}_{\alpha\beta}$ for $\alpha\neq\beta$. Then, for $\alpha\neq\beta$, $P_{\alpha\beta}$ is a solution of
\[
t_1\partial_{t_1}P_{\alpha\beta}=\Omega^{(1)}_{\alpha\alpha}P_{\alpha\beta}-P_{\alpha\beta}\Omega^{\prime(1)}_{\beta\beta}.
\]
This equation has a form similar to that of \eqref{eq:omegajalphabeta} and the same proof as for Lemma \ref{lem:subsheavesFleq} shows that $P_{\alpha\beta}=0$ if $-c_\alpha\not\leqthetao -c_\beta$, that is, $P$ preserves the subsheaves $F^{\theta_o}_{\leqthetao\eta}$.
\end{proof}

The subsheaves $(F^{\theta_o}_{\leqthetao -c},\nabla^{\theta_o})$ as constructed above are uniquely determined, and thus can be glued as subsheaves $(\wt F_{\leq -c},\wt\nabla)$. Setting as usual $\wt F_{<-c}=\sum_{-c'<-c}\wt F_{\leq -c'}$, which is also left invariant by $\wt\nabla$, we consider the quotient sheaf $(\wt F_c,\wt\nabla_c)$ (that we should denote by $\wt F_{-c}$; see Remark \ref{rem:infvarphi}\eqref{rem:infvarphi3} for the relation with the Stokes filtration). By the local computation of Lemma \ref{lem:grnablanabla}, $\wt F_c$ is locally isomorphic to~$F_c^{\theta_o}$, hence is $\cA_{\wt X}$-locally free, and the local isomorphisms induces an isomorphism $\wh\lambda:\nobreak\cA_{\wh D}\otimes\nobreak(\wt F_c,\wt\nabla_c)\simeq\cA_{\wh D}\otimes_{\varpi^{-1}\cO_{\wh D}}\varpi^{-1}(\wh F_c,\wh\nabla_c)$. It is then enough to prove that $\wt F_c\simeq\cA_{\wt X}\otimes_{\varpi^{-1}\cO_X}\varpi^{-1}F_c$ for some locally free $\cO_X$-module $F_c$. Indeed, this would imply that $F_c=\varpi_*\wt F_c$ is equipped with an integrable connection $\nabla_c\defin\varpi_*\wt\nabla_c$, and $\lambda\defin\varpi_*(\wh\lambda)$ would induce an isomorphism $\cO_{\wh D}\otimes(F_c,\nabla_c)\simeq(\wh F_c,\wh\nabla_c)$.

Let $\cU=(U_i)$ be an open cover of $\varpi^{-1}(0)$ with a trivialization of $\wt F_c$ on each~$U_i$. This defines a cocycle $(f_{ij})$ of $\GL_d(\cA_{\wt X})$ with respect to $\cU$. Then $T_D(f_{ij})$ is a cocycle of $\GL_d(\cA_{\wh D})$ with respect to $\cU$ defining the pull-back $\cA_{\wh D}\otimes_{\varpi^{-1}\cO_{\wh D}}\varpi^{-1}\wh F_c$. It follows that $T_D(f_{ij})=\wh g_i^{-1}\wh g_j$ with $\wh g_i\in\Gamma(U_i,\GL_d(\cA_{\wh D}))$. According to the exact sequence \eqref{eq:Taylor}, up to refining the covering, one can lift each $\wh g_i$ as an element $g_i\in\Gamma(U_i,\GL_d(\cA_{\partial\wt X}))$, and the cocycle $h_{ij}\defin g_if_{ij}g_j^{-1}$ satisfies $T_D(h_{ij})=1$. According to Malgrange-Sibuya's theorem \ref{th:Malgrange-Sibuya}, $(h_{ij})$ is a coboundary of $\GL_d(\cA_{\partial\wt X})$, and therefore so is $(f_{ij})$, showing that $\wt F_c$ is globally trivial in the neighbourhood of $\varpi^{-1}(0)$.
\end{proof}

\subsection{The Riemann-Hilbert correspondence}\index{Riemann-Hilbert functor (RH)}
Since the arguments of \S\ref{subsec:RHsmooth} apply exactly in the same way here, Definition \ref{def:RHsmooth} will be used below for the Riemann-Hilbert functor. Similarly to Lemma \ref{lem:goodgoodsmooth} we have:\enlargethispage{\baselineskip}%

\begin{lemme}\label{lem:goodgood}
If $\cM$ is a good meromorphic connection with poles along~$D$ (\cf Definition \ref{def:goodmeroconn}) with associated stratified $\ccI$-covering $\wt\Sigma$, then $\RH(\cM)=(\cL,\cL_\leq)$ is a good Stokes-filtered local system on $\partial\wt X$ (\cf Definition \ref{def:goodness}) with associated stratified $\ccI$-covering $\wt\Sigma$, that we denote by $(\cL,\cL_\bbullet)$.
\end{lemme}

\begin{proof}
Same proof as for Lemma \ref{lem:goodgoodsmooth}, if we use Theorem \ref{th:HTM} instead of the Hukuhara-Turrittin-Sibuya theorem.
\end{proof}

The main result of this \chaptersname is the following theorem, which is the direct generalization of Proposition \ref{prop:RHmero} to higher dimensions, under the goodness assumption.

\begin{theoreme}\label{th:RHmeronc}
Let $\wt\Sigma$ be a good stratified $\ccI$-covering with respect to the (pull-back to $\partial\wt X(D)$ of the) natural stratification of~$D$. The Riemann-Hilbert functor induces an equivalence between the category of germs of good meromorphic connections along~$D$ with stratified $\ccI$-covering (\cf Remark \ref{rem:I-stratcovmero}) contained in $\wt\Sigma$ and the category of good Stokes-filtered $\CC$-local systems on $\partial\wt X$ with stratified $\ccI$-covering (\cf Definition \ref{def:goodness}) contained in $\wt\Sigma$.
\end{theoreme}

\subsubsection*{The Riemann-Hilbert correspondence: local theory}\index{Riemann-Hilbert correspondence!for germs of good meromorphic connections}
We go back to the local setting and the notation of \S\ref{subsec:notationRHgeq2}. In particular, $X=\Delta^\ell\times\Delta^{n-\ell}$ and we may shrink $X$ when necessary. We also set $D=\{t_1\cdots t_\ell=0\}$, and $\varpi:\wt X\defin\wt X(D)\to X$ denotes the real blowing-up of the components of~$D$ (\cf \S\ref{subsec:realblowup}), so that in particular $\varpi^{-1}(0)\simeq (S^1)^\ell$. Let $\Phi\subset\cO_{X,0}(*D)/\cO_{X,0}$ be a good finite set of exponential factors (\cf Definition \ref{def:localgoodness}). We will prove the theorem for germs at $0$ of good meromorphic connections and germs along $\varpi^{-1}(0)$ of good Stokes-filtered local systems.

As in the cases treated before (\S\ref{subsec:RHholone} and \S\ref{subsec:proofessentialsurj}), we will use the corresponding generalization of the Hukuhara-Turrittin theorem, which is Theorem \ref{th:HTM}. This gives the analogue of Lemma \ref{lem:imdirLnegsmooth}. It then remains to show the analogue of Lemma \ref{lem:comphom} (compatibility with $\cHom$) to conclude the proof of the full faithfulness of the Riemann-Hilbert functor. The proof is done similarly, and the goodness condition for $\Phi\cup\Phi'$ makes easy to show the property that (taking notation of the proof of Lemma \ref{lem:comphom}) $e^{\varphi-\varphi'}u_{\varphi,\varphi'}$ has moderate growth in some neighbourhood of $\theta_o\in\varpi^{-1}(0)$ if and only if $\varphi\leqthetao\varphi'$.

For the essential surjectivity, let us consider a Stokes-filtered local system $(\cL,\cL_\bbullet)$ on $\partial\wt X$ whose associated $\ccI$-covering over $\varpi^{-1}(0_\ell\times\Delta^{n-\ell})$ is trivial and contained in $\Phi\times\Delta^{n-\ell}$ (in particular, we consider the non-ramified case).

\begin{proposition}\label{prop:reconstr}
Under these assumptions, there exists a germ at $0\in X$ of good meromorphic connection $\cM$ (in the sense of Definition \ref{def:goodmeroconn}) such that, for any local section $\varphi$ of $\ccI$, $\DR(e^\varphi\cA_{\partial\wt X}^\modD\otimes\varpi^{-1}\cM)\simeq\cL_{\leq\varphi}$ in a way compatible with the filtration.
\end{proposition}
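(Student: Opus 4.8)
The strategy follows closely the pattern of the smooth-divisor case (\S\ref{subsec:proofessentialsurj}) and of dimension one (\S\ref{subsec:RHholone}), but with two new ingredients: the non-Hausdorff nature of $\ccIet$ forces us to work with the level structure of \S\ref{subsec:morelevel} and \S\ref{subsec:Stokesstr}, and the reconstruction of $\cM$ must use the Malgrange--Sibuya theorem on the torus $\varpi^{-1}(0)\simeq(S^1)^\ell$ together with a base change property for the Stokes sheaf. First I would reduce to the non-ramified case (by a cyclic ramification $\rho_{\bmd}$ around the components of~$D$, using the full faithfulness after ramification to descend the Galois action, exactly as in the proof of Propositions \ref{prop:equivgoodformal} and \ref{prop:equivgoodgraded}). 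So from now on $\Phi\subset\cO_{X,0}(*D)/\cO_{X,0}$ is good and $\wt\Sigma$ is the trivial covering $\Phi\times\varpi^{-1}(0_\ell\times\Delta^{n-\ell})$.

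\textbf{Step one: the elementary and graded models.} By Proposition \ref{prop:equivgoodgraded}'s analogue in the normal-crossing setting---which one proves exactly as there, reducing by the goodness of $\Phi$ and uniqueness of decompositions to the case $\#\Phi=1$ and then, by a twist (Remark \ref{rem:openness}\eqref{rem:openness3}), to the regular case---there is a germ of good elementary meromorphic connection $\cM^\el=\bigoplus_{\varphi\in\Phi}(\cE^\varphi\otimes\cR_\varphi)$ (Definition \ref{def:goodelementary}) with $\RH(\cM^\el)\simeq\gr\cL$, the graded Stokes-filtered local system attached to $\wt\Sigma$ and the local systems $\gr_\varphi\cL$. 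The compatibility of $\RH$ with $\cHom$ (the analogue of Lemma \ref{lem:comphom}, valid here because $\Phi$ good implies Hukuhara-Turrittin-Majima applies to $\cEnd(\cM^\el)$) gives, exactly as in Lemma \ref{lem:RHEnd},
\[
\cEnd^\modD(\cM^\el)=\cEnd(\mu_!\gr\cL)_{\leq0},\qquad
\cEnd^\rdD(\cM^\el)=\cEnd(\mu_!\gr\cL)_{<0},
\]
hence $\cAut^\rdD(\cM^\el)=\cAut^{<0}(\mu_!\gr\cL)$ as sheaves on $\varpi^{-1}(0)$.

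\textbf{Step two: classification and the Stokes sheaf.} On the topological side, by Proposition \ref{prop:classifhausdorff} the isomorphism class of $(\cL,\cL_\bbullet)$ with $\gr\cL$ fixed is given by a class in $H^1\big(\varpi^{-1}(0),\cAut^{<0}(\mu_!\gr\cL)\big)$. On the $\cD$-module side, one introduces the sheaf $\cH$ on $\varpi^{-1}(0)$ (or rather on $0_\ell\times\Delta^{n-\ell}$, then pulled back) of isomorphism classes of pairs $(\cM,\wh\lambda)$ where $\cM$ is a germ of meromorphic connection and $\wh\lambda$ a formal isomorphism $\cM_{\wh D_L}\isom\cM^\el_{\wh D_L}$; the faithful flatness of $\cO_{\wh D_L}$ over $\cO_X$ (as in Lemma \ref{lem:HD}) shows it is a sheaf. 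The key point is the analogue of Theorem \ref{th:isostokes}: the natural map $\cH\to\cSt$ to the sheaf of Stokes-filtered structures refining $\gr\cL$ is an isomorphism. Here the new difficulty appears, and I expect \textbf{this to be the main obstacle}: one must prove a Malgrange--Sibuya-type theorem on the torus $(S^1)^\ell$ rather than on $S^1$, which in turn requires the local constancy of the Stokes sheaf $\cSt$ along the strata and a base change property for it with respect to the fibration by tori. This is precisely where the level structure of \S\ref{subsec:morelevel} intervenes: one argues by decreasing induction on $\bmm(\Phi)$, using Proposition \ref{prop:leveln} and Corollary \ref{cor:leveln} to reduce the reconstruction problem level by level, at each step $\ell$ classifying liftings by $H^1\big((S^1)^\ell,\cAut^{<_{[\cbbullet]_\bell}0}(\gr_\bell\cL)\big)$ (Remark \ref{rem:stepconstrbell}); at the bottom level ($\bmm(\Phi)=0$) one is in the partially-regular situation of Proposition \ref{prop:partialreg} and the reconstruction is that of a regular meromorphic connection. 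Combining the abelianity/strictness results (Proposition \ref{prop:abelianwithout}, valid because $\wt\Sigma$ is good) with the asymptotic-analysis input of the Hukuhara-Turrittin-Majima theorem \ref{th:HTM} gives the required flabby/acyclicity statements to run the cohomological comparison.

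\textbf{Step three: conclusion.} Given $(\cL,\cL_\bbullet)$, Step one produces $\cM^\el$ with $\RH(\cM^\el)\simeq\gr\cL$; the class in $H^1\big(\varpi^{-1}(0),\cAut^{<0}(\mu_!\gr\cL)\big)$ giving $(\cL,\cL_\bbullet)$ is, via $\cH\simeq\cSt$ (Step two) and the cocycle description, a class in $H^1\big(\varpi^{-1}(0),\cAut^\rdD(\cM^\el)\big)$; fixing a $\cO_X(*D)$-basis of $\cM^\el$ and lifting the cocycle by the Malgrange--Sibuya theorem to a $0$-cochain of $\GL(\cA_{\wt X}^\modD)$, one twists the connection $\nabla^\el$ fiberwise and glues the local connections $\nabla_i=f_i^{-1}\nabla^\el f_i$ into a connection $\nabla$ on the free $\cA_{\wt X}^\modD$-module $\cA_{\wt X}^\modD\otimes\cM^\el$, whose matrix, being a global section of $\cEnd$ over a space with $\varpi_*\cA_{\wt X}^\modD=\cO_X(*D)$, defines a new meromorphic connection $\cM$ on the $\cO_X(*D)$-module underlying $\cM^\el$. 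By Hukuhara-Turrittin-Majima the formal decomposition of $\cM$ is governed by $\Phi$, so $\cM$ is good with associated $\ccI$-covering $\wt\Sigma$ (Lemma \ref{lem:goodgood}); and since $\cM$ and $\cM^\el$ correspond to the same cocycle, $\RH(\cM)\simeq(\cL,\cL_\bbullet)$, which also gives the identification $\DR(e^\varphi\cA_{\partial\wt X}^\modD\otimes\varpi^{-1}\cM)\simeq\cL_{\leq\varphi}$ compatibly with the filtration. Finally, the passage from germs at $0$ to germs along~$D$ (hence the global Theorem \ref{th:RHmeronc}) follows, as in the proof of Propositions \ref{prop:equivgoodformal} and \ref{prop:equivgoodgraded}, by gluing: local essential surjectivity plus local full faithfulness to lift objects and gluing isomorphisms uniquely, the cocycle condition being automatic by full faithfulness.
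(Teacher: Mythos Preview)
Your Step one is where the argument breaks. The equality
\[
\cEnd^\rdD(\cM^\el)=\cEnd(\mu_!\gr\cL)_{<0}
\]
is false in the normal-crossing case. Take $\varphi\neq\psi\in\Phi$ with $\varphi-\psi$ purely monomial of multi-order $\bmm\in\NN^\ell\moins\{0\}$ but with some $m_i=0$. Then at a point $\theta\in(S^1)^\ell$ with $\varphi\letheta\psi$, the $(\varphi,\psi)$-block of $\cEnd(\mu_!\gr\cL)_{<0,\theta}$ is nonzero, but $e^{\varphi-\psi}$ has no pole along $D_i$ and hence cannot have rapid decay along~$D$: the $(\varphi,\psi)$-block of $\cEnd^\rdD(\cM^\el)_\theta$ is zero. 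So only the inclusion $\cEnd^\rdD(\cM^\el)\subset\cEnd(\mu_!\gr\cL)_{<0}$ survives, and the wrong way round for your Step three: the class $\gamma\in H^1\big((S^1)^\ell,\cAut^{<0}(\mu_!\gr\cL)\big)$ need not land in $H^1\big((S^1)^\ell,\cAut^\rdD(\cM^\el)\big)$, so you cannot feed it to Malgrange--Sibuya.

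This is precisely why the paper does \emph{not} use the one-shot elementary model $\cM^\el$ but instead runs an induction on $(\ell,\bmm(\Phi))$ through the level structure. At each level one works with $(\gr_\bell\cL,(\gr_\bell\cL)_\bbullet)$ and the inductively constructed $\cM_\bell=\bigoplus_c\cM_c$; the lifting is classified by a class in $H^1\big((S^1)^\ell,\cAut^{<_{[\cbbullet]_\bell}0}(\gr_\bell\cL)\big)$ (Remark \ref{rem:stepconstrbell}). The point of Lemma \ref{lem:EndEnd} is that \emph{when all $m_i>0$} (and only then), the differences relevant at this level do have poles along every component of~$D$, so one gets the inclusion $\cEnd(\gr_\bell\cL)_{<_{[\cbbullet]_\bell}0}\subset\cEnd_\cD^\rdD(\cM_\bell)$ and Malgrange--Sibuya (Theorem \ref{th:Malgrange-Sibuya}, proved directly on the torus, with no Stokes-sheaf input) applies. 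When some $m_i=0$, the paper takes an entirely different route: this is the partially regular situation of Proposition \ref{prop:partialreg}, and one invokes the induction on~$\ell$ (not on $\bmm$) to build $(\cM',\nabla')$ on the smaller blow-up $\partial\wt X'$, then tensors in the regular factors via $\nabla=\nabla'+\sum_{k\in L''}C_k\,dt_k/t_k$ with $C_k$ a logarithm of the monodromy $T_k$. Your proposal misplaces both of these: you locate the need for level structure in proving Malgrange--Sibuya on the torus (it is not needed there), and you describe the partially regular case as ``the bottom level $\bmm(\Phi)=0$'' (that case is trivial: $\#\Phi=1$; the partially regular case arises at every inductive step where $\bmm(\Phi)$ has a zero component). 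Finally, the sheaves $\cH_D$, $\cSt_D$ and the base-change machinery of \S\ref{subsec:proofessentialsurj} are not used in the paper's proof here; the argument is the direct cocycle construction, but performed level by level.
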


\begin{proof}[\proofname\ of Proposition \ref{prop:reconstr}]
The proof of Proposition \ref{prop:reconstr} will proceed by induction on the pairs $(\ell,\bmm(\Phi))$ (\cf Remark \ref{rem:goodness}\eqref{rem:goodness4} for the definition of $\bmm(\Phi)$) through the level structure, where $\ell$ denotes the codimension of the stratum of $D$ to which belongs the origin.

For each $\ell\geq1$, the case $\bmm=0$ corresponds (up to a twist, \cf Remark \ref{rem:goodness}\eqref{rem:goodness4}), to the case where the Stokes filtration is trivial, and in such a case the regular meromorphic connection associated with the local system $\cL$ fulfills the conditions of Proposition \ref{prop:reconstr}.

We fix $\ell$ and $\bmm=\bmm(\Phi)$ and we assume that Proposition \ref{prop:reconstr} holds for any Stokes-filtered local system $(\cL',\cL'_\bbullet)$ with associated $\ccI$-covering over $\varpi^{-1}(0_\ell\times\Delta^{n-\ell})$ contained in $\Phi'\times\Delta^{n-\ell}$, with a good $\Phi'\subset\cO_{X,0}(*D)/\cO_{X,0}$ satisfying $\bmm(\Phi')<\bmm$ (with respect to the partial order of $\NN^\ell$), and also for any pair $(\ell',\bmm')$ with $\ell'<\ell$, and we will prove it for the pair $(\ell,\bmm)$. We will therefore assume that $\bmm>0$ (in $\NN^\ell$). We also fix some element $\varphi_o\in\Phi$ and we denote by $\bell_{\varphi_o}=\bell=\in\NN^\ell$ the submaximum of $\Phi-\varphi_o$. By twisting we may assume for simplicity that $\varphi_o=0$.

Let $(\cL,\cL_\bbullet)$ be a good Stokes-filtered local system on $(S^1)^\ell$ with set of exponential factors contained in $\Phi$. According to Proposition \ref{prop:leveln} (applied with $\bell=\bell_{\varphi_o}$) and to Remark \ref{rem:goodness}\eqref{rem:goodness5}, this Stokes-filtered local system induces a Stokes-filtered local system $(\cL,\cL_{[\cbbullet]_\bell})$ of level $\geq\bell$, such that each $\big(\gr_{[\varphi]_\bell}\cL,(\gr_{[\varphi]_\bell}\cL)_\bbullet\big)$ is a Stokes-filtered local system to which we can apply the inductive assumption.

If $[\varphi]_\bell\in\Phi(\bell)=\text{image}(\Phi\to\ccP_\ell(\bell))$, it takes the form $ct^{-\bmm}\bmod(t^{-\bell}\CC[t])$ for some $c\in\CC$.

By induction, we get germs of meromorphic connections $\cM_c$ ($c\in\CC$) corresponding to $\big(\gr_{[ct^{-\bmm}]_\bell}\cL,(\gr_{[ct^{-\bmm}]_\bell}\cL)_\bbullet\big)$. Each $\cM_c$ is good (by the inductive assumption) and its set of exponential factors is contained in the set of $\varphi\in\Phi$ of the form $\varphi\equiv ct^{-\bmm}\bmod(t^{-\bell}\CC[t])$.

We will now construct $\cM$ from $\cM_\bell\defin\bigoplus_{c\in\CC}\cM_c$, by considering the Stokes-filtered local system $(\cL,\cL_{[\cbbullet]_\bell})$ of level $\geq\bell$, whose corresponding graded object is the Stokes-filtered local system $(\gr_\bell\cL,(\gr_\bell\cL)_\bbullet)$ graded at the level $\geq\bell$.

\begin{lemme}\label{lem:EndEndmod}
We have a natural isomorphism
\[
\cEnd_\cD^\modD(\cM_\bell)\simeq\cEnd(\gr_\bell\cL)_{\leq0}.
\]
\end{lemme}

\begin{proof}
Since by definition $\cH^0\DR^\modD_{\ccIet}(\cM_\bell)\simeq(\gr_\bell\cL,(\gr_\bell\cL)_\bbullet)$, the assertion is proved as in Lemma \ref{lem:RHEnd}, using the compatibility of the Riemann-Hilbert functor with $\cHom$ mentioned above.
\end{proof}

\Subsubsection*{The case where $m_i>0$ for all $i=1,\dots,\ell$}

\begin{lemme}\label{lem:EndEnd}
In this case, the isomorphism of Lemma \ref{lem:EndEndmod} induces an inclusion $\cEnd(\gr_\bell\cL)_{<_{[\cbbullet]_\bell}0}\subset\cEnd_\cD^\rdD(\cM_\bell)$.
\end{lemme}

\begin{proof}
Recall that the left-hand side consists of those endomorphisms $\lambda$ which satisfy $\gr_{[\varphi]_\bell}\lambda=0$ for each $\varphi$. Note that the condition $\varphi<_{[\cbbullet]_\bell}\psi$ near $\theta\in(S^1)^\ell$ implies that the $\bmm$-dominant part of $\varphi$ and $\psi$ are distinct, and so, by our assumption on $\bmm$, $\varphi-\psi$ has a pole along each component of $D$ and $\varphi<\psi$ near $\theta$, so $e^{\varphi-\psi}$ has rapid decay. This applies to the $(\varphi,\psi)$ components of $\lambda$, expressed as in Lemma \ref{lem:comphom}.
\end{proof}

Let us finish the proof of Proposition \ref{prop:reconstr} in this case. According to Remark \ref{rem:stepconstrbell}, given the Stokes-filtered local system $(\gr_\bell\cL,(\gr_\bell\cL)_\bbullet)$ graded at the level $\geq\bell$, the Stokes-filtered local system $(\cL,\cL_\bbullet)$ determines (and is determined by) a class $\gamma$ in the pointed set $H^1\big((S^1)^\ell,\cAut^{<_{[\cbbullet]_\bell}0}(\gr_\bell\cL)\big)$, hence a class $\gamma$ in $H^1\big((S^1)^\ell,\cAut_\cD^\rdD(\cM_\bell)\big)$, after Lemma \ref{lem:EndEnd}.

The germ $\cM_\bell$ is a free $\cO_{X,0}(*D)$-module with connection $\nabla_\bell$. With respect to some basis of $\cM_\bell$, the class $\gamma$ becomes a class in $H^1\big((S^1)^\ell,\GL_d^\rdD(\cA_{\partial\wt X})\big)$. By the Malgrange-Sibuya theorem \ref{th:Malgrange-Sibuya}, this class is a coboundary of $\GL_d(\cA_{\partial\wt X})$ on $(S^1)^\ell$. Let $(U_i)$ be an open cover of $(S^1)^\ell$ on which this coboundary is defined by sections $g_i\in\nobreak\Gamma\big(U_i,\GL_d(\cA_{\partial\wt X})\big)$ with $\wh g_i=\wh g\in\GL_d(\cO_{\wh D,0})$ for all $i$. On~$U_i$ we twist the connection on $\cM_\bell$ by setting $\nabla_i=g_i^{-1}\nabla_\bell g_i$. Since $g_ig_j^{-1}=\gamma_{ij}$ is $\nabla_\bell$-flat on $U_i\cap U_j$, the~$\nabla_i$ glue together to define a new connection $\nabla$ on the free $\cO_{X,0}(*D)$-module $\cM_\bell$, that we now denote by $(\cM,\nabla)$. By construction, $\cH^0\DR^\modD_{\ccIet}(\cM)$ is the Stokes-filtered local system determined by $(\gr_\bell\cL,(\gr_\bell\cL)_\bbullet)$ and the class $\gamma$, hence is isomorphic to $(\cL,\cL_\bbullet)$.

\subsubsection*{The case where $m_i=0$ for some $i=1,\dots,\ell$}
In this case, $(\cL,\cL_\bbullet)$ is partially regular along $D$ and we shall use the equivalence of Proposition \ref{prop:partialreg}. We set $L=L'\cup L''$ with $m_i=0$ if and only if $i\in L''$, and we have $\Phi\subset \cO_{X,0}(*D(L'))/\cO_{X,0}$. By Proposition \ref{prop:partialreg}, giving $(\cL,\cL_\bbullet)$ is equivalent to giving a Stokes-filtered local system $(\cL',\cL'_\bbullet)$ on $\partial\wt X'$ together with commuting automorphisms $T_k$, $k\in L''$. By induction on $\ell$, there exists a free $\cO_{X,0}(*D(L'))$-module $\cM'$ with flat connection~$\nabla'$ whose associated Stokes-filtered local system is $(\cL',\cL'_\bbullet)$. Moreover, there exist commuting endomorphisms $C_k$ of $(\cL',\cL'_\bbullet)$ such that $\exp(-2\pi iC_k)=T_k$ (represent the local system $\cL'$ as a vector space $V'$ equipped with automorphisms $T'_j$, $j\in L'$; then~$T_k$ are automorphisms of $V'$ which commute with the $T'_j$, and any choice $C_k$ such that $-2\pi iC_k$ is a logarithm of $T_k$ also commutes with $T'_j$ and defines an endomorphism of~$\cL'$; similarly, this endomorphism is filtered with respect to the Stokes filtration~$\cL'_\bbullet$). By~the full faithfulness of the Riemann-Hilbert correspondence, the commuting endomorphisms~$C_k$ of $(\cL',\cL'_\bbullet)$ define commuting endomorphisms $C_k$ of $(\cM',\nabla')$. The free $\cO_{X,0}(*D)$-module $\cM\defin\cM'(*D'')$ can be equipped with the flat connection $\nabla\defin\nabla'+\sum_{k\in L''}C_k\,dt_k/t_k$. Then one checks that the Stokes-filtered local system associated to $(\cM,\nabla)$ on $\partial\wt X$ is isomorphic to $(\cL,\cL_\bbullet)$.
\end{proof}

\begin{proof}[\proofname\ of Theorem \ref{th:RHmeronc} for germs]
It now remains to treat the local reconstruction (Proposition \ref{prop:reconstr}) in the ramified case. The data of a possibly ramified $(\cL,\cL_\bbullet)$ is equivalent to that of a non-ramified one on a covering $(S^1)^\ell_{\bmd}$ which is stable with respect to the Galois action of the covering. The same property holds for germs of meromorphic connections. By the full faithfulness of the Riemann-Hilbert functor, the Galois action on a Stokes-filtered local system is lifted in a unique way as a Galois action on the reconstructed connection after ramification, giving rise to a meromorphic connection before ramification, whose associated Stokes-filtered local system is isomorphic to $(\cL,\cL_\bbullet)$.
\end{proof}

\begin{proof}[\proofname\ of Theorem \ref{th:RHmeronc} in the global setting]
Due to the local full faithfulness of the Riemann-Hilbert functor, one gets at the same time the global full faithfulness and the global essential surjectivity by lifting in a unique way the local gluing morphisms, which remain therefore gluing morphisms (\ie the cocycle condition remains satisfied after lifting).
\end{proof}

\Subsection{Application to Hermitian duality of holonomic $\cD$-modules}
The \index{Riemann-Hilbert correspondence!for germs of good meromorphic connections}Riemann-Hilbert correspondence for good meromorphic connections, as stated in Theorem \ref{th:RHmeronc}, together with the fundamental results of K\ptbl Kedlaya and T\ptbl Mochizuki, allows one to give a complete answer to a question asked by M\ptbl Kashiwara in \cite{Kashiwara86}, namely, to prove that the Hermitian dual $C_X\cM$ of a holonomic $\cD_X$-module is still holonomic. This application has also been considered in \cite{Mochizuki10b}.

Recall the notation of \Chaptersname\ref{chap:holdist}, but now in arbitrary dimension. We now denote by $\cD_X$ the sheaf of holomorphic linear differential operators on a complex manifold~$X$ and by \index{$DB$@$\Db_X$}$\Db_X$ the sheaf of distributions on the underlying $C^\infty$ manifold, which is a left $\cD_X\otimes_\CC\cD_{\ov X}$-module. The \emphb{Hermitian dual} \index{$CX$@$C_X\cM$ (Hermitian dual)}$C_X\cM$ of $\cM$ is the $\cD_X$-module $\cHom_{\cD_{\ov X}}(\ov\cM,\Db_X)$.

\begin{theoreme}
If $\cM$ is holonomic, then so is $C_X\cM$, and $\cExt^k_{\cD_{\ov X}}(\ov\cM,\Db_X)=0$ for $k>0$.
\end{theoreme}

\begin{proof}[Sketch of the proof]
It is done in many steps, and is very similar to that in dimension one (\cf Theorem \ref{th:Hermdualone}), except for the goodness property, which is now essential:
\begin{enumerate}
\item
One first reduces (\cf \cite{Kashiwara86} see also \cite{Bibi97}) to the case where $\cM$ is a meromorphic bundle with connection along a divisor $D$, and to proving that
\begin{itemize}
\item
$C_X^\modD\cM\defin\cHom_{\cD_{\ov X}}(\cM,\Db_X(*D))$ is a meromorphic bundle with connection,
\item
$\cExt^k_{\cD_{\ov X}}(\cM,\Db_X(*D))=0$ for $k>0$.
\end{itemize}
\item
The problem is local on $X$ and one can apply the resolution of singularities in the neighbourhood of a point of $D$ to assume that $D$ has normal crossings. The problem remains local, and one can apply the result of K\ptbl Kedlaya \cite{Kedlaya10} (when $\dim X=\nobreak2$, one refers to \cite{Kedlaya09}; in the algebraic setting and $\dim X=2$, one can use \cite{Mochizuki07b}, and \cite{Mochizuki08} for $\dim X\geq3$, \cf Remark \ref{rem:existblup}) to reduce to the case where $\cM$ is a good meromorphic bundle with connection. This reduction is of course essential. The question remains local, so we can also assume that it has no ramification.
\item
As in dimension one, one reduces to proving a similar result on the real blow-up space $\wt X(D)$, by replacing $\cM$ with $\wt\cM=\cA_{\wt X}^\modD\otimes_{\varpi^{-1}\cO_X}\varpi^{-1}\cM$ and $\Db_X^\modD$ with $\Db_{\wt X}^\modD$. Now, Theorem \ref{th:HTM} asserts that $\wt\cM$ is of good Hukuhara-Turrittin type (a definition analogous to Definition \ref{def:HTtype}, supplemented of the goodness assumption). Recall that an important point here is the existence of a good lattice proved in \cite{Mochizuki10b}, \cf Remark \ref{rem:lattices}.
\item
One now proves as in \cite[Prop\ptbl II.3.2.6]{Bibi97} the vanishing of the $\cExt^k$ for $k>0$, and that $C_{\wt X}^\modD(\wt\cM)$ is a locally free $\cA_{\wt X}^\modD$-module with flat connection of Hukuhara-Turrittin type.
\item
We can now repeat the arguments of Proposition \ref{prop:RHAmod} and Corollary \ref{cor:RHAmod}, by using Theorem \ref{th:RHmeronc} instead of Theorem \ref{th:RHequivgermmero}, to prove that $C_{\wt X}^\modD(\wt\cM)=\cA_{\wt X}^\modD\otimes_{\varpi^{-1}\cO_X}\varpi^{-1}\cN$ for some meromorphic bundle with connection $\cN$, and thus $\cN=C_X^\modD\cM$.\qedhere
\end{enumerate}
\end{proof}

\subsection{Comments}
The proof of Theorem \ref{th:HTM} presented here is due to T\ptbl Mochizuki \cite{Mochizuki08}. In dimension two, a proof was given in \cite{Bibi97}, relying on the work of H\ptbl Majima \cite{Majima84} (\cf also \cite{Bibi93}). The new approach of T\ptbl Mochizuki simplifies and generalizes previous proofs in many ways:
\begin{itemize}
\item
The use of good lattices brings a lot of facilities.
\item
While Majima tried to solve integrable \emph{systems} of quasi-linear differential equations of a certain kind, T\ptbl Mochizuki only uses the solution of a quasi-linear differential equation \ref{eq:Q121} and uses much more the properties of the linear differential system for which an normal form is search.
\item
Searching for a solution of an integrable quasi-linear system was motivated by the idea of finding a solution of \eqref{eq:Q12} in one step. Going step by step with respect to the level structure and using the filtration like in Lemma \ref{lem:subsheavesFleq} takes more into account the Stokes structure intrinsically attached to a flat meromorphic bundle.
\end{itemize}

The other results of this \chaptersname have also been obtained in \cite{Mochizuki08,Mochizuki10b}, although the language of $\ccI$-filtrations is not explicitely used.

\chapterspace{-2}
\addtocontents{toc}{\protect\enlargethispage{-2\protect\baselineskip}}
\chapter{Push-forward of Stokes-filtered~local~systems}\label{chap:pipes}

\begin{sommaire}
In this \chaptername, we consider the direct image functor for Stokes-filtered local systems. We experiment the compatibility of the Riemann-Hilbert correspondence with direct image on a simple but not trivial example. Compared with the computations in \Chaptersname\ref{chap:Laplace}, we go one step further.
\end{sommaire}

\subsection{Introduction}
We have introduced in \Chaptersname \ref{chap:Ifil} the notion of push-forward of a pre-$\ccI$-fitlration. For a Stokes-filtered local system, one expects that, through the Riemann-Hilbert correspondence, it corresponds to the direct image of meromorphic connections considered as $\cD$-modules. In order to simplify the problem, we will consider pairs $(X,D)$ where $D=\bigcup_{j\in J}D_j$ is a divisor with normal crossings and smooth components, and morphisms $\pi$ between such spaces preserve the divisors so that they can be lifted as morphisms $\wt\pi$ between the corresponding real blow-up space. Two different questions arise:
\begin{itemize}
\item
Given a meromorphic connection $\cM$ with poles on a divisor $D$ in $X$, to prove that the direct image by the lifting $\wt\pi:\wt X(D_{j\in J})\to\wt X'(D'_{j'\in J'})$ of a proper morphism $\pi:(X,D)\to(X',D')$ of the complex $\DR^\modD(\cM)$ is equal to $\DR^{\rmod D'}(\pi_+\cM)$, where $\pi_+\cM$ is the direct image complex of $\cM$ as a $\cD_X(*D)$-module. One can also ask the same question for the rapid decay complex. A positive answer would then be an analogue, at the level of real blow-up spaces, of the compatibility of the irregularity complex, as defined by Mebkhout (\cf\cite{Mebkhout89, Mebkhout90, Mebkhout04b}), with respect to proper direct images.
\item
On the other hand, if we are given a good Stokes-filtered local system on $\wt X(D_{j\in J})$, we are tempted to prove that its direct image as a pre-$\cI$-filtered sheaf is also a possibly good Stokes-filtered local system.
\end{itemize}

In the case where $\pi$ is a proper modification, the first question has been answered by Proposition \ref{prop:Rpimod}.

The second question is directly linked to the first one only in the case of good Stokes-filtered local systems, through the Riemann-Hilbert correspondence of \Chaptersname \ref{chap:RHgoodnc}.

We have solved the first question in the special case of Theorem \ref{th:RHL}, by solving the second one first. We will redo a similar exercise in \S\ref{subsec:leakypipes} below by a topological argument, which uses nevertheless an estimate of the exponential factors due to C\ptbl Roucairol.

Recently, T\ptbl Mochizuki has solved the first question directly (\cf\S\ref{subsec:recentadvances}), and this allows one to solve the second one by using the Riemann-Hilbert correspondence, at least for $\kk$-Stokes-filtered local systems when $\kk$ is a subfield of $\CC$. However, some questions for Stokes-filtered local systems remain open.

\subsection{Preliminaries}\label{subsec:settingg}
Let $\pi:X\to\nobreak X'$ be a holomorphic map between complex manifolds $X$ and $X'$. We assume that $X$ and $X'$ are equipped with normal crossing divisors $D$ and $D'$ with smooth components $D_{j\in J}$ and $D'_{j'\in J'}$, and that
\begin{enumerate}
\item\label{enum:g1}
$D=\pi^{-1}(D')$,
\item\label{enum:g2}
$\pi:X\moins D\to X'\moins D'$ is smooth.
\end{enumerate}
Let $\varpi:\wt X(D_{j\in J})\to X$ (\resp $\varpi':\wt X{}'(D'_{j'\in J'})\to X'$) be the real blowing-up of the components $D_{j\in J}$ in $X$ (\resp $D'_{j'\in J'}$ in $X'$). There exists a lifting $\wt \pi:\wt X\to\wt X{}'$ of~$\pi$ (\cf \S\ref{subsec:realblowup}) such that the following diagram commutes:
\[
\xymatrix{
\wt X\ar[d]_{\wt \pi}\ar[r]^-{\varpi}&X\ar[d]^\pi\\
\wt X{}'\ar[r]^-{\varpi'}&X'
}
\]
Notice that $\partial\wt X=\wt \pi^{-1}(\partial\wt X{}')$.

\begin{remarque}[Direct images of local systems]\label{rem:modif}\index{push-forward (direct image)!of local systems}
We set $X^*=X\moins D$, $X^{\prime*}=X'\moins D'$, and we denote by $\wtj$ (\resp $\wtj{}'$) the inclusion $X^*\hto \wt X$ (\resp $X^{\prime*}\hto\wt X{}'$), and by $\wti$ (\resp $\wti{}'$) the inclusion $\partial\wt X\hto \wt X$ (\resp $\partial\wt X'\hto\wt X{}'$). We assume here that $\pi$ is proper.

Let $\cF^*$ be a local system on $X^*$. Then, since $\pi:X^*\to X^{\prime*}$ is smooth and proper, each $\bR^k\pi_*\cF^*$ ($k\geq0$) is a local system on $X^{\prime*}$ and, after Corollary \ref{cor:jncd}, $\cL\defin\wti{}^{-1}\wtj_*\cF^*$ and $\cL^{\prime k}\defin\wti{}^{\prime-1}\wtj{}'_*\bR^k\pi_*\cF^*$ ($k\geq0$) are local systems on $\partial\wt X$ and $\partial\wt X{}'$ respectively. We claim that
\bgroup\numstareq
\begin{equation}\label{eq:imdirtilde}
\forall k\geq0,\quad\bR^k\wt \pi_*\cL=\cL^{\prime k}.
\end{equation}
\egroup
Indeed, we have
\begin{align*}
\bR\wt \pi_*\cL&=\bR\wt \pi_*\wti{}^{-1}\wtj_*\cF^*\\
&=\wti{}^{\prime-1}\bR\wt \pi_*\wtj_*\cF^*\quad\text{($\wt \pi$ proper)}\\
&=\wti{}^{\prime-1}\bR\wt \pi_*\bR\wtj_*\cF^*\quad\text{(Corollary \ref{cor:jncd})}\\
&=\wti{}^{\prime-1}\bR\wtj{}'_*\bR \pi_*\cF^*.
\end{align*}
Using Corollary \ref{cor:jncd} once more, we get \eqref{eq:imdirtilde} by taking the $k$-th cohomology of both terms.

In particular, let us consider the case where, in \eqref{enum:g2} above,~$\pi$ is an isomorphism, that is, $\pi:X\to X'$ is a proper modification (as in Proposition \ref{prop:Rpimod}). We then identify $X\moins D$ with $X'\moins D'$, so that $\wtj{}'=\wt \pi\circ\wtj$, and $\cF^{\prime*}\defin \pi_*\cF^*$ with $\cF^*$. Then \eqref{eq:imdirtilde} reads
\bgroup\numstarstareq
\begin{equation}\label{eq:modif}
\bR\wt \pi_*\cL=\cL',
\end{equation}
\egroup
where $\cL'=\cL^{\prime0}=\wti{}^{\prime-1}\wtj{}'_*\cF^*$.
\end{remarque}

\begin{definitio}[Stokes-filtered local system on $(X,D)$]\label{def:StXD}\index{Stokes-filtered local@Stokes-filtered local system}
Let $(X,D)$ and $\wt X$ be as above and $\ccI_{\wt X}$ as in Definition \ref{def:Igeneral}, and let $\cF_\leq$ be a pre-$\ccI_{\wt X}$-filtration, \ie an object of $\Mod(\kk_{\ccIet,\leq})$. We say that $\cF_\leq$ is a \emph{Stokes-filtered local system on $(X,D)$} if the following holds:
\begin{enumerate}
\item
$\cF^*\defin \wtj^{-1}\cF_\leq$ is a local system of finite dimensional $\kk$-vector spaces on $X^*$,
\item
$\cF_\leq$ has no subsheaf supported on $\partial\wt X$, that is, the natural morphism $\wti{}^{-1}\cF_\leq\to \wti{}^{-1}\wtj_*\cF^*$ is injective,
\item
this inclusion is a Stokes filtration $(\cL,\cL_\bbullet)$ of $\cL\defin \wti{}^{-1}\wtj_*\cF^*$.
\end{enumerate}

We say that $\cF_\leq$ is \emph{good} if the Stokes-filtered local system $(\cL,\cL_\bbullet)$ is good (\cf Definition \ref{def:goodness}).
\end{definitio}

This definition is similar to Definition \ref{def:CSt}, but we do not consider constructible objects $\cF^*$, only local systems. We also denote by the same letters $\wti,\wtj$ the natural inclusions either in $\wt X$ or in $\ccIet$, hoping that this abuse produces no confusion. We will also denote by $(\cF,\cF_\bbullet)$ such a Stokes-filtered local system, in order to emphasize the local system $\cF\defin\wtj_*\cF^*$ on $\wt X$.

\subsection{Adjunction}\index{adjunction}
Let $(\cF',\cF'_\bbullet)$ be a Stokes-filtered local system on $(X',D')$ and let $\pi:(X,D)\to(X',D')$ be a holomorphic map as in \S\ref{subsec:settingg}. The diagram of morphisms (as in \S\ref{subsec:pullback})
\[
\ccI_{\wt X}\From{\pi^*}\wt \pi^{-1}\ccI_{\wt X'}\To{\wt \pi}\ccI_{\wt X'}
\]
allows us to consider the \index{pull-back (inverse image)!of Stokes-filtered local systems}pull-back functor $\pi^+$ as in Lemma \ref{lem:stabpullbackstrat}. It is easy to check that goodness is preserved by pull-back. Below, we will make this property more precise. We will also use the \index{push-forward (direct image)!of pre-$\ccI$-filtrations}direct image functor $\pi_+:D^\rb(\kk_{\ccIet_{\wt X},\leq})\to D^\rb(\kk_{\ccIet_{\wt X{}'},\leq})$ of Definition \ref{def:RimdirpreIfilt}.

\begin{proposition}
Assume that $\pi:(X,D)\to(X',D')$ satisfies \ref{subsec:settingg}\eqref{enum:g1} and \eqref{enum:g2} and is proper, and let $(\cF',\cF'_\bbullet)$ be a good Stokes-filtered local system on $(X',D')$. Then $\pi_+\pi^+\cF'_\leq\simeq\bR\wt \pi_*\kk_{\wt X}\otimes\nobreak\cF'_\leq$ (where the tensor product is taken in the sense of Remark \ref{rem:tensprodder}) in $D^\rb(\kk_{\ccIet,\leq})$. In particular, each $\cH^k(\pi_+\pi^+\cF'_\leq)$ is a good Stokes-filtered local system, isomorphic to $\bR^k\wt \pi_*\kk_{\wt X}\otimes\cF'_\leq$ (where the tensor product is taken in the sense of Remark \ref{rem:tensprod}). If moreover~$\pi$ is a proper modification, then $\pi_+\pi^+\cF'_\leq\simeq\cF'_\leq$.
\end{proposition}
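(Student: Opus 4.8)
The strategy is to reduce the statement to a computation on the underlying sheaves on $\ccIet$, using the fact that the category of pre-$\ccI$-filtrations is abelian with exactness detected on underlying sheaves (Lemma \ref{lem:preIfiltab}), together with the explicit description of the push-forward functor $g_+=\bR\wt g_*\circ q_g^{-1}$ (Definition \ref{def:RimdirpreIfilt}) and of the pull-back $g^+=\Tr_{\leq q_g}(\wt g^{-1}\cbbullet,\dots)$. First I would set up the notation: let $\wt\ccI\defin\wt g^{-1}\ccI_{\wt X}$ with its pull-back order, so that we have the �tale maps $q_g:\wt\ccIet\to\ccIet_{\wt X'}$ and $\wt g:\wt\ccIet\to\ccIet_{\wt X}$ sitting in diagram \eqref{eq:diagramfqf}. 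The point is that $q_g^{-1}g^+\cF_\leq$, as a sheaf on $\wt\ccIet$, is naturally $\wt g^{-1}\cF_\leq$ (this is essentially the content of Definition \ref{def:pullbackpreI} combined with the fact that $q_g^{-1}\Tr_{\leq q_g}(\wt g^{-1}\cF_\leq,\cdot)$ recovers the saturation, which at the level of underlying sheaves on the �tale space over $X'^*$ is just $\wt g^{-1}\cF_\leq$ since the order is trivial there and the saturation is an isomorphism on stalks lying over $\partial\wt X'$ by the Hausdorff/goodness setup). Hence $g_+g^+\cF_\leq=\bR\wt g_*q_g^{-1}g^+\cF_\leq=\bR\wt g_*\wt g^{-1}\cF_\leq$ as objects of $D^b(\kk_{\ccIet_{\wt X},\leq})$, and the projection formula (in the form of Remark \ref{rem:tensprodder}, valid since $\wt g$ is proper hence $\bR\wt g_*$ commutes with the relevant tensor operations) gives $\bR\wt g_*\wt g^{-1}\cF_\leq\simeq\bR\wt g_*\kk_{\wt X'}\otimes\cF_\leq$.

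\textbf{Key steps, in order.} (1) Identify $q_g^{-1}g^+\cF_\leq$ with $\wt g^{-1}\cF_\leq$ in $\Mod(\kk_{\wt\ccIet,\leq})$; this is where one must check carefully that the ordered-trace construction $\Tr_{\leq q_g}$, after pulling back by the �tale map $q_g$, does not modify the sheaf beyond what $\wt g^{-1}$ already produces — here goodness of $(\cF,\cF_\bbullet)$ and the stratified Hausdorff property of $\ccI$ (Remark \ref{rem:IstrongHausdorff}) are used, so that the local models are graded $\ccI$-filtrations and the saturation is transparent. (2) Apply $\bR\wt g_*$ and use the projection formula to get the isomorphism $g_+g^+\cF_\leq\simeq\bR\wt g_*\kk_{\wt X'}\otimes\cF_\leq$. (3) Take $\cH^k$: since $\bR^k\wt g_*\kk_{\wt X'}$ is a \emph{local system} on $\wt X$ (by Remark \ref{rem:modif}, using \ref{subsec:settingg}\eqref{enum:g1}--\eqref{enum:g2} and properness — apply \eqref{eq:imdirtilde} with $\cF^{\prime*}=\kk_{X'^*}$), the tensor product of Remark \ref{rem:tensprod} of $\mu^{-1}\bR^k\wt g_*\kk_{\wt X'}$ with the Stokes-filtered local system $\cF_\leq$ is again a Stokes-filtered local system, and it is good since tensoring by a local system does not change the associated stratified $\ccI$-covering. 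Hence $\cH^k(g_+g^+\cF_\leq)\simeq\bR^k\wt g_*\kk_{\wt X'}\otimes\cF_\leq$ as good Stokes-filtered local systems. (4) If $g$ is a proper modification, then $\bR\wt g_*\kk_{\wt X'}=\kk_{\wt X}$ by \eqref{eq:modif} (applied with $\cF^{\prime*}=\kk_{X'^*}$, identified with $\kk_{X^*}$), so the unit $\kk_{\wt X}\to\bR\wt g_*\kk_{\wt X'}$ is an isomorphism and therefore $g_+g^+\cF_\leq\simeq\kk_{\wt X}\otimes\cF_\leq=\cF_\leq$.

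\textbf{Main obstacle.} The delicate point is step (1): one must verify that the \emph{filtered} structure — i.e. the morphism $\wt\beta_\leq\wt p_1^{-1}\to\wt p_2^{-1}$ on $q_g^{-1}g^+\cF_\leq$ produced via \eqref{eq:betaq} and \eqref{eq:wtfbeta} — agrees, under the identification of underlying sheaves with $\wt g^{-1}\cF_\leq$, with the pull-back filtered structure $\wt g^{-1}$ of that on $\cF_\leq$. This is a compatibility of two a priori different ways of equipping $\wt g^{-1}\cF$ with a $\wt\ccI$-filtration (one via $\Tr_{\leq q_g}$ followed by $q_g^{-1}$, the other via $\wt g^{-1}$ of the filtration on $\cF_\leq$), and it is exactly here that the goodness hypothesis is essential: without it, the saturation $\Tr_{\leq q_g}$ over points of $\partial\wt X'$ where the order among pulled-back exponential factors degenerates could differ from the naive pull-back. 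Granting this compatibility (which reduces, by working locally on $\partial\wt X$ and using Lemma \ref{lem:stabpullbackstrat} together with the explicit local graded models), the rest is a routine application of properness, the projection formula, and Remark \ref{rem:modif}. I would spend most of the write-up making step (1) precise, and treat steps (2)--(4) briefly.
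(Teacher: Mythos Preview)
Your approach is essentially the paper's: reduce to showing the adjunction $\wt g^{-1}\cF_\leq\to q_g^{-1}g^+\cF_\leq$ is an isomorphism (your step (1)), then apply the projection formula and Remark \ref{rem:modif}. You correctly identify step (1) as the crux and goodness as the essential hypothesis.

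One sharpening: the tool you need for step (1) is not Lemma \ref{lem:stabpullbackstrat} but Lemma \ref{lem:negimpliqueneg}. After localising and reducing to the graded model, the question becomes: for $\varphi_y,\psi_y\in\wt\Sigma_y$, does $g^*\varphi_y\leqyp g^*\psi_y$ force $\varphi_y\leqy\psi_y$? This is a \emph{strictness} statement for $g^*$ on the order, and since $\wt g$ is not assumed open you cannot invoke the general strictness of Proposition \ref{prop:f*inj}. What saves you is that goodness makes $\varphi_y-\psi_y$ purely monomial (after ramification), and then Lemma \ref{lem:negimpliqueneg} gives exactly the needed implication. Your phrase ``the saturation is transparent'' hides precisely this point.
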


The last assertions are a straightforward consequence of Remark \ref{rem:modif}.

\begin{proof}
Let us consider the diagram \eqref{eq:diagramfqf} with $q_\pi=\pi^*$. The point is to prove that the natural adjunction morphism $\wt \pi^{-1}\cF'_\leq\to q_\pi^{-1}(q_\pi\wt \pi^{-1}\cF')_\leq$ (notation of Definition \ref{def:pullbackpreI}, where we denote by $\wt \pi$ both maps $\wt X\to\wt X'$ and $\wt X\times_{\wt X'}\ccIet\to\ccIet$) is an isomorphism. This is clear on $X^*$, so it is enough to check this on $\partial\wt X$, and since the question is local, we can assume that $\cF'_\leq$ is graded. Let $\wt\Sigma\subset\ccIet$ be the stratified $\ccI$-covering attached to $\cF'_\leq$, let $y\in\partial\wt X$ and set $y'=\wt \pi(y)\in\partial\wt X'$. The subset $\wt\Sigma_{y'}\subset\ccIet_{y'}$ is good, that is, is identified with a good subset $\Phi_{\bmd}$ of $\cO_{X'_{\bmd},0}(*D')/\cO_{X'_{\bmd},0}$ after a local ramification of the germ $(X',0)$ around the germ $(D',0)$. According to the definition of $(q_\pi\wt \pi^{-1}\cF')_\leq$, we are thus reduced to proving that, for any two germs $\varphi_{y'},\psi_{y'}\in\wt\Sigma_{y'}$, we have $\pi^*(\varphi_{y'})\leqy \pi^*(\psi_{y'})$ only if $\varphi_{y'}\leqyp\psi_{y'}$. Since we do not assume that $\wt \pi$ is open, we cannot use the general argument of Proposition \ref{prop:f*inj}, but we can use Lemma \ref{lem:negimpliqueneg}, since $\varphi_{y'}-\psi_{y'}$ is purely monomial when considered in $\cO_{X'_{\bmd},0}(*D)/\cO_{X'_{\bmd},0}$, by the goodness assumption.

It is then enough to remark that $\varphi_{y'}\leqyp\psi_{y'}$ if and only if, for any $y'_{\bmd}\in\partial\wt X'_{\bmd}$ above~$y'$, we have $\varphi_{y'}\leq\psi_{y'}$ at $y'_{\bmd}$, when $\varphi,\psi$ are considered as elements of $\Phi_{\bmd}$.
\end{proof}

\subsection{Recent advances on push-forward and open questions}\label{subsec:recentadvances}

The following result, generalizing Proposition \ref{prop:Rpimod}, has recently been obtained by T\ptbl Mochizuki. We keep the notation of \S\ref{subsec:settingg} and we denote by $\DR^\modD\cM$ the moderate de~Rham complex as defined in \S\ref{subsec:modgrowth}.

\begin{theoreme}\label{th:Rpimod}
Let $\pi:(X,D)\to(X',D')$ be a proper morphism between complex manifolds satisfying \ref{subsec:settingg}\eqref{enum:g1} and \eqref{enum:g2}. Let $\cM$ be a meromorphic connection with poles along~$D$ at most. Let $\pi_+\cM$ the \index{push-forward (direct image)!of $\cD$-modules}direct image of $\cM$ (as a $\cD_X(*D)$-module). Then
\[
\DR^{\rmod D'}(\pi_+\cM)\simeq\bR\wt\pi_*\DR^\modD(\cM).
\]
\end{theoreme}

We have given an independent proof of this theorem in the special case of Theorem \ref{th:RHL}, and we will consider similarly another example in Theorem \ref{th:isomorphismleq0} below. On the other hand, Theorem \ref{th:Rpimod} opens the way, with the help of the Riemann-Hilbert correspondence \ref{th:RHmeronc}, to the analysis of \index{push-forward (direct image)!of good Stokes-filtered local systems}push-forward of good Stokes-filtered local systems.

Assume moreover that $\cM$ is good (\cf Definition \ref{def:goodmeroconn}) and that each $\cH^k\pi_+\cM$ is also good (the latter property is automatically satisfied if $\dim X'=1$, as in the case of the example of \S\ref{subsec:exempushforward} below). Let $(\cF,\cF_\bbullet)$ denote the Stokes-filtered local system associated with $\cM$ through the Riemann-Hilbert functor $\DR_{\ccIet}^\modD$.

\begin{corollaire}\label{cor:degenpi}
Under these assumptions, the natural morphism $\bR^k\wt\pi_*\cF_{\leq0}\to\bR^k\wt\pi_*\cF$ is injective for each $k$.
\end{corollaire}

\begin{proof}[Sketch of proof]
According to Theorem \ref{th:Rpimod}, one identifies the left-hand term with $\DR^{\rmod D'}\cH^k(\pi_+\cM)$, which has cohomology in degree zero at most, according to Corollary \ref{cor:HTM}, and this cohomology is a subsheaf of the right-hand term.
\end{proof}

Up to twisting $\cM$ by $\cE^{\pi^*\varphi'}$ for some section $\varphi'$ of $\cO_{X'}(*D')$, a similar result holds for $\cF_{\leq\pi^*\varphi'}$ over the domain where $\varphi'$ is defined (one can also choose a ramified $\varphi'$). We can regard this result as a kind of \index{Edegeneracy@$E_1$-degeneracy}$E_1$-degeneracy result for the Stokes filtration.

\begin{questions}
We keep the previous assumptions.
\begin{enumerate}
\item
Give a ``topological proof'' of Corollary \ref{cor:degenpi}.
\item
Let $\wt\Sigma$ be the stratified covering associated to the good meromorphic connection~$\cM$ or, equivalently, to its associated Stokes-filtered local system. Considering the direct image diagram \eqref{eq:pushdiag} (in the case $X'\neq X$), prove that the stratified covering associated to each $\cH^k\pi_+\cM$ is contained in $\wt\pi(q^{-1}(\wt\Sigma))$.
\item
Given a good stratified covering $\wt\Sigma$ of $\partial\wt X$, does the condition that $\wt\pi(q^{-1}(\wt\Sigma))$ is a good stratified covering of $\partial\wt X'$ imply that any Stokes-filtered local system $(\cF,\cF_\leq)$ with associated stratified covering contained in $\wt\Sigma$ has a good direct image? Similar question (probably easier) for a good meromorphic connection $\cM$.
\end{enumerate}
\end{questions}

\subsection{An example of push-forward computation}\label{subsec:exempushforward}\index{push-forward (direct image)!computation}
Let $\Delta$ be an open disc with coordinate $t$ and let $\Afu$ be the affine line with coordinate $x$. Let $S\subset\Afu\times\Delta$ be a complex curve with finitely many branches, all distinct from $\Afu\times\{0\}$. We will assume that $\Delta$ is small enough so that any irreducible component of the closure $\ov S$ of $S$ in $\PP^1\times\Delta$ cuts $\PP^1\times\{0\}$ and is smooth away from $\PP^1\times\{0\}$. We denote by $p:\PP^1\times\Delta\to \Delta$ the projection and by~$y$ the coordinate $1/x$ at $\infty$ on $\Afu$. 
\begin{figure}[htb]
\begin{center}
\setlength{\unitlength}{.7mm}
\begin{picture}(40,80)(0,0)
\put(0,0){\vector(1,0){40}}
\put(20,10){\vector(0,1){70}}
\put(0,20){\vector(1,0){40}}
\put(0,70){\vector(1,0){40}}
\put(10,15){\vector(0,-1){10}}
\qbezier(0,80)(15,79)(20,70)
\qbezier(40,80)(25,79)(20,70)
\qbezier(0,60)(20,80)(40,60)
\qbezier(0,50)(20,90)(40,50)
\qbezier(10,35)(15,36)(20,45)
\qbezier(30,35)(25,36)(20,45)
\qbezier(20,50)(25,51)(30,60)
\qbezier(20,50)(25,49)(30,40)
\put(0,2){$\Delta$}
\put(38,2){$t$}
\put(38,22){$t$}
\put(38,72){$t$}
\put(33,44){$\ov S$}
\put(22,78){$x$}
\put(14.5,65.5){$\infty$}
\put(16.5,15.5){$0$}
\put(22,10){$\PP^1$}
\put(6,9){$p$}
\put(19.1,-.5){$\bbullet$}
\put(16.5,-4){$0$}
\end{picture}
\end{center}
\caption{}\label{fig:setting}
\end{figure}

Let $f(x,t)$ be a multivalued solution on $(\AA^1\times\Delta)\moins S$ of a system of differential equations which is holonomic and has regular singularities along $S\cup(\{\infty\}\times\Delta)$. A~natural question\footnote{I cannot resist to quote \cite{Sch-Sh09}: ``\textbf{Definition.} Physics is a part of mathematics devoted to the calculation of \index{integrals}integrals of the form $\int g(x)e^{f(x)}\,dx$. Different branches of physics are distinguished by the range of the variable $x$ and by the names used for $f (x)$, $g(x)$ and for the integral [...].''} is to compute the sectorial asymptotic expansions of \index{integrals}integrals of the form
\begin{equation}\label{eq:intexp}
I(t)=\int_{\gamma_t}f(x,t)e^xdx,
\end{equation}
where $\gamma_t$ is a suitable family of cycles of the fibre $\Afu\times\{t\}$ parametrized by $t$. As noticed in \cite{H-R08}, the computation of the formal expansions of such integrals may be translated in an algebraic problem. In order to state it, we will refer to the literature for the basic notions of holonomic $\cD$-module and regularity (\cf\eg\cite{Borel87,Kashiwara03,Mebkhout87,Mebkhout04b}).

Let $M$ be a holonomic $\cD_\Delta[x]\langle\partial_x\rangle$-module with regular singularities (included along $x=\infty$) whose singular support consists of $S$ and possibly $\Afu\times\{0\}$. Away from $S\cup(\PP^1\times\{0\})$, $M$ is a holomorphic bundle with flat connection. By working in the analytic category with respect to $\PP^1$, we also regard $M$ as a holonomic $\cD_{\PP^1\times\Delta}$-module with regular singularities, and we have $\cO_{\PP^1\times\Delta}(*\infty)\otimes_{\cO_{\PP^1\times\Delta}}M=M$, where~$\infty$ stands for the divisor $\{\infty\}\times\Delta$ in $\PP^1\times\Delta$.

Let us set $\cE^x=(\cO_{\Delta}[x],d+dx)$. The $\cO_\Delta[x]\langle\partial_x\rangle$-module $\cE^x\otimes_{\cO_\Delta[x]}M$ has an irregular singularity along $x=\infty$. The direct image $p_+(\cE^x\otimes_{\cO_\Delta[x]}M)$ is a complex which satisfies $\cH^\ell p_+(\cE^x\otimes M)=0$ if $\ell\neq-1,0$. Moreover, $\cH^{-1}p_+(\cE^x\otimes M)$ is supported at the origin of $\Delta$: Indeed, if $\Delta$ is small enough, the restriction to $\Delta^*$ of $\cH^{-1}p_+(\cE^x\otimes M)$ is a vector bundle, whose fibre at $t=t_o\neq0$ can be computed as $\ker[\nabla_{\partial_x}:(\cE^x\otimes M_{t_o})\to(\cE^x\otimes M_{t_o})]$, where $M_{t_o}=M/(t-t_o)M$. Note that $M_{t_o}$ is a regular holonomic $\CC[x]\langle\partial_x\rangle$-module, and that the kernel is also $\ker[(\nabla_{\partial_x}+\id):M_{t_o}\to M_{t_o}]$. It is well-known that this kernel is $0$ for a regular holonomic $M_{t_o}$ (this can be checked directly on cyclic $\CC[x]\langle\partial_x\rangle$-modules and the general case follows by also considering modules supported on points). Therefore, the only interesting module is
\begin{equation}\label{eq:defN}
N\defin\cH^0p_+(\cE^x\otimes M).
\end{equation}

Information on the Levelt-Turrittin decomposition of $N$ has been given in \cite{Roucairol06a,Roucairol06b,Roucairol07}, leading to a good estimation of the possible form of the asymptotic expansions of the integrals \eqref{eq:intexp}. The next step aims at giving a more precise description of the behaviour of the sectorial asymptotic expansions.

\begin{probleme}\label{prob:imdir}
To compute the Stokes filtration of $N$ at the origin of $\Delta$ in terms of the analytic de~Rham complex $\DR^\an M$.
\end{probleme}

\Subsection{The topological computation of the Stokes filtration: Leaky pipes}\label{subsec:leakypipes}
We keep the notation above. Let $\varpi:\wt\Delta\to \Delta$ be the real blowing-up of the origin and set $S^1=\varpi^{-1}(0)$. On~$\wt\Delta$, we consider the sheaf $\cA_{\wt\Delta}^{\rmod0}$, and similarly, on $\PP^1\times\wt\Delta$, we consider the sheaf $\cA_{\PP^1\times\wt\Delta}^{\rmod0}$, where~$0$ now denotes the divisor $\PP^1\times\{0\}$. We denote by $\wt p:\PP^1\times\wt\Delta\to\wt\Delta$ the projection.

We will consider the moderate de~Rham complex $\DR^{\rmod0}(\cE^x\otimes M)$:
\begin{multline*}
0\to\cA_{\PP^1\times\wt\Delta}^{\rmod0}\otimes_{\varpi^{-1}\cO_{\PP^1\times\Delta}}M\To{\nabla+dx}\cA_{\PP^1\times\wt\Delta}^{\rmod0}\otimes_{\varpi^{-1}\cO_{\PP^1\times\Delta}}(\Omega^1_{\PP^1\times\Delta}\otimes M)\\
\To{\nabla+dx}\cA_{\PP^1\times\wt\Delta}^{\rmod0}\otimes_{\varpi^{-1}\cO_{\PP^1\times\Delta}}(\Omega^2_{\PP^1\times\Delta}\otimes M)\to0.
\end{multline*}
This is a complex on $\PP^1\times\wt\Delta$.

On the one hand, it is known that the complex $\DR^{\rmod0}N$ has cohomology in degree~$0$ at most (\cf Theorem \ref{th:H1nul}). On the other hand, the complex $\DR^{\rmod0}(\cE^x\otimes\nobreak M)$ only depends on the localized module $\cO_{\PP^1\times\Delta}[1/t]\otimes_{\cO_{\PP^1\times\Delta}}\nobreak M$.

\begin{lemme}\label{lem:morphismleq0}
There is a functorial morphism
\bgroup\def\theequation{\ref{lem:morphismleq0}$_{\leq0}$}\addtocounter{equation}{-1}
\begin{equation}\label{eq:morphismleq0}
\cH^0(\DR^{\rmod0}N)\to\cH^1\bR\wt p_*\DR^{\rmod0}(\cE^x\otimes M),
\end{equation}
\egroup
which is injective.
\end{lemme}

\begin{proof}
This is completely similar to Lemma \ref{lem:DRmod}.
\end{proof}

\begin{theoreme}\label{th:isomorphismleq0}
The morphism of Lemma \ref{lem:morphismleq0} is an isomorphism.
\end{theoreme}

The topological proof will be similar to that of Theorem \ref{th:RHL}, in the sense that~it will involve a better topological understanding of the right-hand side in terms of $\DR^\an M$, that is, a solution to Problem \ref{prob:imdir}, but the geometric situation is a little more complicated and uses more complex blowing-ups. This theorem gives a topological computation of the $\leq0$ part of the Stokes filtration attached to $N$. Since we know, by \cite{Roucairol07}, what are the possible exponential factors $\varphi(x)$ of $N$ at the origin, one can perturb the computation below by replacing $\cE^x$ we $\cE^{x-\varphi(x)}$ in order to compute the $\leq\varphi$ part of the Stokes filtration. We will not make precise this latter computation.

\begin{proof}
We can assume that $M=\cO_\Delta[1/t]\otimes_{\cO_\Delta}M$, as the computation of $\DR^{\rmod0}$ only uses the localized module (on $\PP^1\times\Delta$ or on $\Delta$). From the injectivity in Lemma \ref{lem:morphismleq0}, and as the theorem clearly holds away from $\mt=0$, it is enough to check that the germs at $\theta\in S^1=\partial\wt\Delta$ of both terms of \eqref{eq:morphismleq0} have the same dimension. It is then enough to prove the theorem after a ramification with respect to $t$ (coordinate of $\Delta$), so that we are reduced to assuming that, in the neighbourhood of $\PP^1\times\{0\}$, the irreducible components of the singular support $\ov S$ of $M$ are smooth and transverse to $\PP^1\times\{0\}$.

We can also localize $M$ along its singular support $\ov S$. The kernel and cokernel of the localization morphism are supported on $\ov S$, and the desired assertion is easy to check for these modules. We can therefore assume that $M$ is a meromorphic bundle along~$\ov S$ with a flat connection having regular singularities. In particular, $M$ is a locally free $\cO_{\PP^1\times\Delta}(*\ov S)$-module of finite rank (\cf\cite[Prop\ptbl I.1.2.1]{Bibi97}).

Let $e:X\to\PP^1\times\Delta$ be a sequence of \emphb{point blowing-ups} over $(\infty,0)$, with exceptional divisor $E\defin e^{-1}(0,\infty)$, such that the strict transform of~$\ov S$ intersects the pull-back of $(\PP^1\times\{0\})\cup(\{\infty\}\times\Delta)$ only at smooth points of this pull-back. We can choose for~$e$ a sequence of $n$ blowing-ups of the successive intersection points of the exceptional divisor with the strict transform of $\{\infty\}\times\Delta$. We set $D=e^{-1}(\PP^1\times\{0\})$, $D'=e^{-1}[(\PP^1\times\{0\})\cup(\{\infty\}\times\Delta)]$. This is illustrated on Figure \ref{fig:eclatement}.
\begin{figure}[htb]
\begin{center}
\setlength{\unitlength}{.7mm}
\begin{picture}(165,38)(-3,0)
\put(10,30){\line(0,-1){30}}
\put(10,30){\vector(0,-1){15}}
\put(2,10){\line(2,1){16}}
\put(2,4){\line(2,1){16}}
\qbezier(2,8)(10,4)(18,16)

\put(0,15){\line(2,1){40}}
\put(0,15){\vector(2,1){15}}
\put(40,35){\vector(-2,-1){15}}
\put(10,20){\circle*{1.5}}
\put(-5.5,15){\footnotesize $E_1$}
\put(5,15){\footnotesize $v_1$}
\put(15,20){\footnotesize $u_1$}
\put(21.7,19.6){\line(-1,2){5}}
\qbezier(24,22)(16,23)(21,30)

\put(20,35){\line(2,-1){40}}
\put(20,35){\vector(2,-1){15}}
\put(60,15){\vector(-2,1){15}}
\put(30,30){\circle*{1.5}}
\put(14.5,35){\footnotesize $E_2$}
\put(25,25){\footnotesize $v_2$}
\put(30,25){\footnotesize $u_2$}

\put(40,15){\line(2,1){40}}
\put(40,15){\vector(2,1){15}}
\put(80,35){\vector(-2,-1){15}}
\put(50,20){\circle*{1.5}}
\put(34.5,15){\footnotesize $E_3$}
\put(45,23.5){\footnotesize $v_3$}
\put(50,23.5){\footnotesize $u_3$}
\put(59,19){\line(-1,2){5}}
\put(63,20.5){\line(-1,2){5}}
\qbezier(62,20)(54.2,21.6)(57,29.8)

\put(60,35){\line(2,-1){20}}
\put(60,35){\vector(2,-1){15}}
\put(70,30){\circle*{1.5}}
\put(54.5,35){\footnotesize $E_4$}
\put(65,25){\footnotesize $v_4$}
\put(70,25){\footnotesize $u_4$}

\put(85,20){\circle*{1.5}}
\put(90,20){\circle*{1.5}}
\put(95,20){\circle*{1.5}}

\put(105,20){\line(2,1){30}}
\put(135,35){\vector(-2,-1){15}}
\put(97,15){\footnotesize $E_{n-1}$}
\put(115,19.5){\line(-1,2){5}}

\put(115,35){\line(2,-1){40}}
\put(115,35){\vector(2,-1){15}}
\put(155,15){\vector(-2,1){15}}
\put(125,30){\circle*{1.5}}
\put(109,35){\footnotesize $E_n$}
\put(120,25){\footnotesize $v_n$}
\put(125,25){\footnotesize $u_n$}
\put(130,22){\line(1,2){5}}
\put(133,20.5){\line(1,2){5}}

\put(135,20){\line(1,0){30}}
\put(135,20){\vector(1,0){16}}
\put(145,20){\circle*{1.5}}
\put(148,23){\footnotesize $u'_n$}
\put(141,23){\footnotesize $v'_n$}

\thicklines
\put(10,30){\line(0,-1){30}}
\put(0,15){\line(2,1){40}}
\put(20,35){\line(2,-1){40}}
\put(40,15){\line(2,1){40}}
\put(60,35){\line(2,-1){20}}
\put(105,20){\line(2,1){30}}
\put(115,35){\line(2,-1){40}}
\put(10,30){\line(0,-1){30}}
\put(0,15){\line(2,1){40}}
\put(20,35){\line(2,-1){40}}
\put(40,15){\line(2,1){40}}
\put(60,35){\line(2,-1){20}}
\put(105,20){\line(2,1){30}}
\put(115,35){\line(2,-1){40}}
\end{picture}
\end{center}
\caption{The divisor~$D$ is given by the thick lines. The vertical thick line is the strict transform of $\{t=0\}$, the horizontal line is the strict transform of $\{x=\infty\}$ and the other thin lines are the strict transforms of the $\ov S_i$. Each dot is the center of a chart with coordinates $(u_k,v_k)$ ($k=1,\dots,n$) with $t\circ e=u_kv_k$ and $y\circ e= u_k^{k-1}v_k^k=v_k\cdot(t\circ e)^{k-1}$. The chart centered at the last dot has coordinates $(u'_n,v'_n)$ and $t\circ e=u'_n$, $y\circ e=u_n^{\prime n}v'_n=v'_n\cdot(t\circ e)^n$.}\label{fig:eclatement}
\end{figure}

\begin{lemme}
The pull-back connection $e^+(\cE^x\otimes M)$ is good except possibly at the intersection points of the strict transform of $\ov S$ with $D'$.
\end{lemme}

\begin{proof}
Indeed, $e^+M$ has regular singularities along its polar locus, and $e^+\cE^x=\cE^{1/y\circ e}$ is purely monomial (\cf Definition \ref{def:purmonom}). At the intersection points of $\ov S$ with~$D'$, the polar locus $\ov S\cup D'$ is not assumed to be a normal crossing divisor, which explains the restriction in the lemma. Of course, blowing up these points sufficiently enough would lead to a good meromorphic connection, but we try to avoid these supplementary blowing-ups.
\end{proof}

Let $\wt X(D')$ be the real blow-up space of the irreducible components of~$D'$ (this notation is chosen to shorten the notation introduced in \S\ref{subsec:realblowup}, which should be $\wt X(0,\infty,E_{i\in\{1,\dots,n\}})$, where~$0$ (\resp $\infty$) denotes the strict transform of $\PP^1\times\{0\}$ (\resp $\{\infty\}\times\Delta$); this does not correspond to the real blow-up space of the divisor $D'$). We denote by $\cA_{\wt X(D')}^{\rmod D'}$ the corresponding sheaf of functions (\cf \S\ref{subsec:modgrowthfunct}). The morphism~$e$ lifts to a morphism $\wt e:\wt X(D')\to\PP^1\times\wt\Delta$ and we have
\[
\DR^{\rmod0}(\cE^x\otimes M)\simeq\bR\wt e_*\DR^{\rmod D'}_{\wt X(D')}[e^+(\cE^x\otimes M)].
\]
Indeed, this follows from Proposition \ref{prop:Rpimod} and from the isomorphism $e_+e^+(\cE^x\otimes M)=\cE^x\otimes M$, which is a consequence of our assumption that $M$ is localized along $\PP^1\times\{0\}$.

Let us set $\cF_{\leq0}\defin\DR^{\rmod D'}_{\wt X(D')}[e^+(\cE^x\otimes M)]_{|\partial\wt X(D')}$. The proof of the theorem reduces to proving that, for any $\theta\in S^1=\partial\wt\Delta$,
\begin{equation}
\dim\cH^0(\DR^{\rmod0}N)_\theta=\dim\HH^1\big((\wt p\circ\wt e)^{-1}(\theta),\cF_{\leq0}\big).
\end{equation}
Indeed, if we denote by $\cF_{\leq0,\theta}$ the sheaf-theoretic restriction of $\cF_{\leq0}$ to $\wt X(D')_\theta\defin(\wt p\circ\wt e)^{-1}(\theta)$, then, as $\wt p\circ\wt e$ is proper, we get
\[
[\bR^1\wt p_*\DR^{\rmod0}(\cE^x\otimes M)]_\theta\simeq\HH^1(\wt X(D')_\theta,\cF_{\leq0,\theta}).
\]

Let us describe the inverse image by $\wt p\circ\wt e:\wt X(D')\to\wt\Delta$ of $\theta\in\partial\wt\Delta=S^1$. At a crossing point of index $k$ ($k=1,\dots, n$), $\wt X(D')$ is the product $(S^1\times\RR_+)^2$, with coordinates $(\alpha_k,|u_k|,\beta_k,|v_k|)$, and $\arg(t\circ e)=\theta$ is written \hbox{$\alpha_k+\beta_k=\theta$}. At the crossing point with coordinates $(u'_n,v'_n)$, we have coordinates $(\alpha'_n,|u'_n|,\beta'_n,|v'_n|)$ and $\arg(t\circ e)=\theta$ is written $\alpha'_n=\theta$. More globally, $\partial\wt X(D')_\theta\defin(\wt p\circ\wt e)^{-1}(\theta)$ looks like a \emphb{leaky pipe} (\cf Figure \ref{fig:leakypipe}, which has to be seen as lying above Figure \ref{fig:eclatement}), where the punctures (small black dots on the pipe for the visible ones, small circles for the ones which are behind) correspond to the intersection with the strict transforms of the $\ov S_i$.
\begin{figure}[htb]
\begin{center}
\includegraphics[width=.95\textwidth]{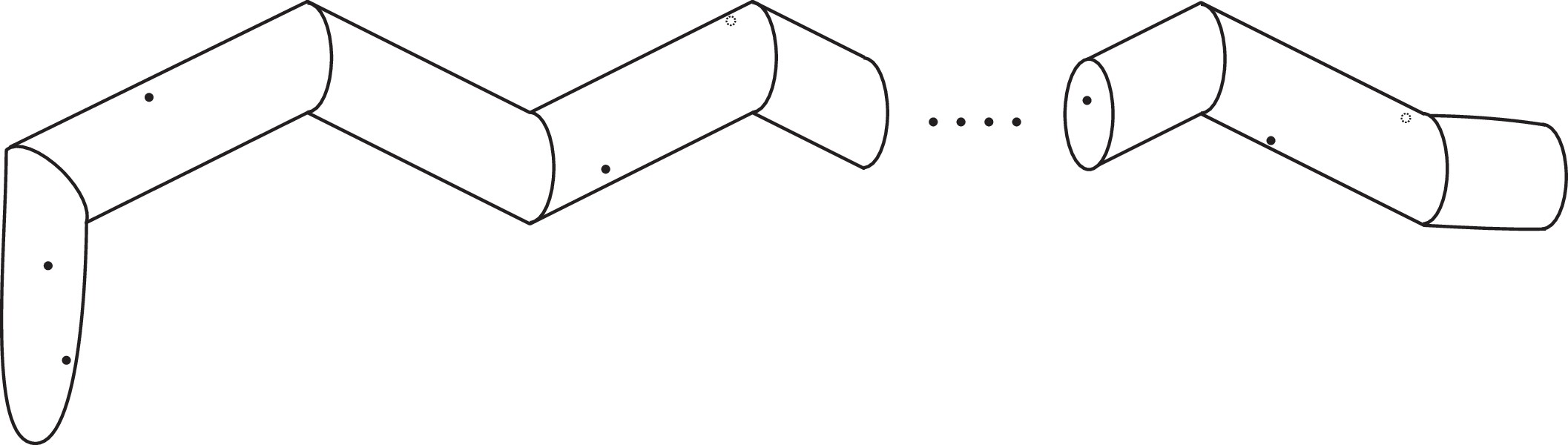}
\end{center}
\vspace*{-1cm}
\caption{}\label{fig:leakypipe}
\end{figure}

\Subsubsection*{First step: hypercohomology of $\cF_{\leq0,\theta}$}

\begin{lemme}
Away from the punctures, the complex $\cF_{\leq0,\theta}$ has cohomology in degree~$0$ at most.
\end{lemme}

\begin{proof}
This is Corollary \ref{cor:HTM}.
\end{proof}

\begin{figure}[htb]
\begin{center}
\includegraphics[width=.9\textwidth]{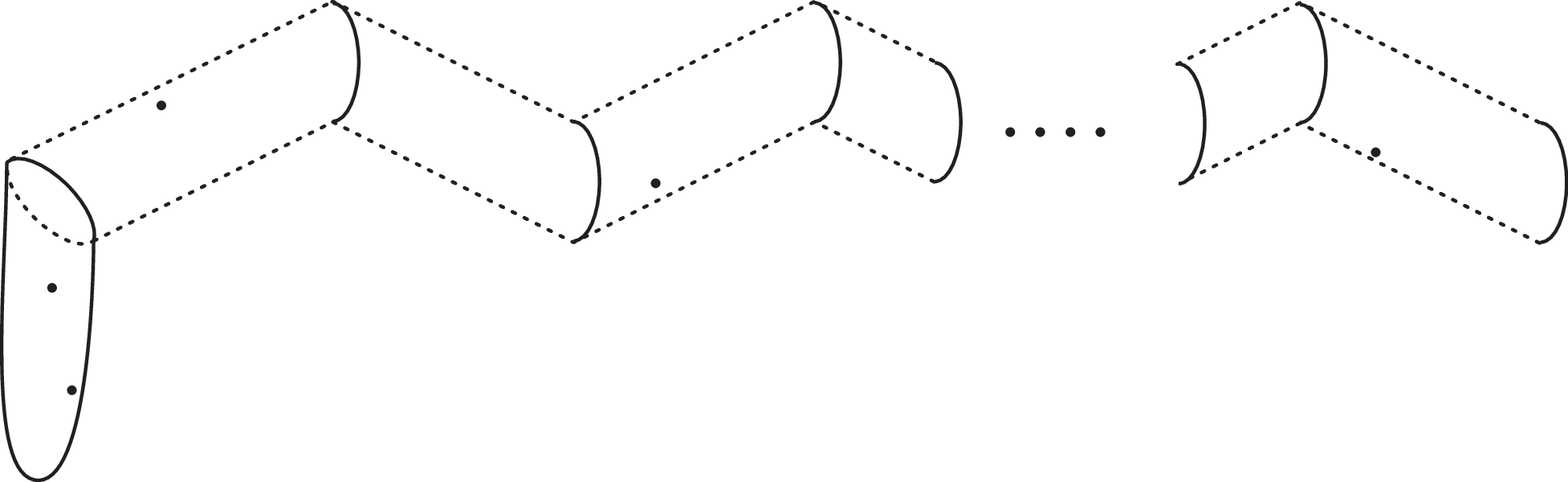}
\end{center}
\vspace*{-1cm}
\caption{}\label{fig:leakyhalfpipeb}
\end{figure}
Considering the growth of the functions $e^{1/u_k^{k-1}v_k^k}$ or $e^{1/u_n^{\prime n}v'_n}$, one obtains that, away from the punctures, the sheaf $\cF_{\leq0,\theta}$ is locally constant on a semi-open \emphb{leaky half-pipe} as in Figure \ref{fig:leakyhalfpipeb}, which is topologically like in Figure \ref{fig:leakyhalfcircleb}. Moreover, $\cF_{\leq0,\theta}$ is extended by~$0$ at the dashed boundary.
\begin{figure}[htb]
\begin{center}
\includegraphics[scale=.7,angle=3]{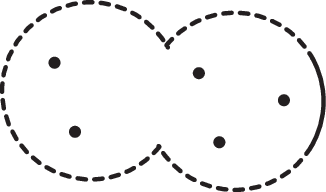}
\end{center}
\vspace*{-.5cm}
\caption{}\label{fig:leakyhalfcircleb}
\end{figure}

Let us denote by $\ov S_i$ ($i\in I$) the components of $\ov S$ which contain the point $(0,\infty)$ and by $\ov S_j$ ($j\in J$) the components which do not. Denoting as above by $y$ the coordinate on $\PP^1$ at~$\infty$ given by $y=1/x$, the local equation of each component~$\ov S_i$ near $(0,\infty)$ takes the form $\mu_i(t)y=t^{q_i}$, with~$\mu_i$ holomorphic and $\mu_i(0)\neq0$. Let us set $\varphi_i(t)=\mu_i(t)/t^{q_i}\bmod\CC\{t\}$. The punctures in the non-vertical part of Figure \ref{fig:leakyhalfpipeb} or in the right part of Figure \ref{fig:leakyhalfcircleb} correspond to the components $\ov S_i$ ($i\in I$) for which $\varphi_i\letheta0$. We denote by $I_\theta\subset I$ the corresponding subset of $I$.

On the other hand, the punctures on the vertical part of Figure \ref{fig:leakyhalfpipeb} or on the left part of Figure \ref{fig:leakyhalfcircleb} correspond to the components $\ov S_j$ ($j\in J$).

\begin{lemme}\label{lem:punctures}
Near the punctures, $\cF_{\leq0,\theta}[1]$ is a \index{perverse sheaf}perverse sheaf. It is zero if the puncture does not belong to the half-pipe of Figure \ref{fig:leakyhalfpipeb}, and the dimension of its \emphb{vanishing cycle} space at the puncture is equal to the number of curves $\ov S_i$ ($i\in I_\theta$ or $i\in J$) going through this puncture, multiplied by the rank of $M$.
\end{lemme}

\begin{proof}
One checks that, blowing up the puncture and then taking the real blow-up space of the components of the new normal crossing divisor, and then restricting to $\arg t=\theta$, amounts to change an neighbourhood of the puncture in Figure \ref{fig:leakyhalfcircleb} (say) with a disc where the the puncture has been replaced by as many punctures as distinct tangent lines of the curves $\ov S_i$ going through the original puncture, and the new sheaf $\wt\cF_{\leq0,\theta}$ remains locally constant away from the new punctures. By an easy induction, one reduces to the case where only one $\ov S_i$ goes through each puncture, and then the result is easy.
\end{proof}

\begin{corollaire}
$\HH^k(\wt X(D')_\theta,\cF_{\leq0,\theta})$ is zero if $k\neq1$ and $\dim\HH^1(\wt X(D'),\cF_{\leq0,\theta})=\rk M\cdot\#(J\cup I_\theta)$.
\end{corollaire}

\begin{proof}
According to Lemma \ref{lem:punctures}, this follows from Lemma \ref{lem:pervers-semidisque}.
\end{proof}

\subsubsection*{End of the proof}
According to \cite{Roucairol06b}, the possible exponential factors of~$N$ (defined by \eqref{eq:defN}) are the $\varphi_i$ with $i\in I$. Denoting by $\phi_i\DR M$ the local system on~$S_i$ of \index{vanishing cycle}vanishing cycles of $\DR M$ along the function $f_i(t,y)=\mu_i(t)y-t^{q_i}$, we have
\[
\dim\cH^0(\DR^{\rd0}N)_\theta=\sum_{i\,\mid\,\varphi_i\letheta0}\rk\phi_i\DR M.
\]
As $M$ is assumed to be a meromorphic bundle, we have $\rk\phi_i\DR M=\rk M$, so the previous formula reads
\[
\dim\cH^0(\DR^{\rd0}N)_\theta=\rk M\cdot\#I_\theta.
\]
We now use
\[
\dim\cH^0(\DR^{\rmod0}N)_\theta=\dim\cH^0(\DR^{\rd0}N)_\theta+\dim\psi_tN,
\]
where $\psi_t^\rmod N$ denote the \index{nearby cycle!moderate}moderate nearby cycles of $N$ (computed with the Kashiwara-Malgrange $V$\nobreakdash-filtration, \cf \cite{Malgrange91}, \cf \Chaptersname \ref{chap:irregnearby}). Arguing as in \cite{Roucairol06b}, we compute them as the direct image of $\psi_t^\rmod (\cE^x\otimes M)$ by $\PP^1\times\nobreak\{0\}\to\nobreak\{0\}$. By the same argument, this is computed as the direct image of $\psi_{t\circ e}^\rmod e^+(\cE^x\otimes M)$. Arguing as in \cite[Lemme III.4.5.10(2)]{Bibi97}, this is supported on the vertical part of~$D$. Still using the argument of \cite[Lemme III.4.5.10]{Bibi97}, this is finally the direct image of $\psi_t^\rmod (M)\otimes\cE^x$, whose dimension is that of the rank of the Fourier transform of $\psi_t^\rmod (M)$, regarded as a $\CC[x]\langle\partial_x\rangle$-module, that is, the dimension of the \index{vanishing cycle}vanishing cycles of $\psi_t^\rmod (M)$ (\cf \cite{Malgrange91} for such a formula, compare also with Proposition \ref{prop:Laplace0}).

Let us compute this dimension at a point $x_o\neq\infty$. Since $M$ is assumed to be a meromorphic connection, one checks that $\dim\phi_{x-x_o}\psi_t^\rmod (M)$ is the number of components of $\ov S$ going through~$x_o$. The sum $\sum_{x_o}\dim\phi_{x-x_o}\psi_t^\rmod (M)$ is then equal to $\#J$, by definition of~$J$.

Summarizing, we get
\[
\dim\psi_t^\rmod N=\rk M\cdot\# J.
\]
This concludes the proof of \eqref{eq:morphismleq0}.
\end{proof}

\chapterspace{-2}
\chapter{Irregular nearby cycles}\label{chap:irregnearby}

\begin{sommaire}
In this \chaptername, we review Deligne's definition of irregular nearby cycles for holonomic $\cD$-modules and recall Deligne's finiteness theorem in the algebraic case. We give a new proof of this theorem when the support of the holonomic $\cD$-module has dimension two, which holds in the complex analytic setting and which makes more precise the non-vanishing nearby cycles.
\end{sommaire}

\subsection{Introduction}
The \emph{moderate nearby cycle functor} (along a hypersurface) for holonomic $\cD$-modules has proved to be a very useful tool, as a replacement for the restriction functor to the hypersurface, since the latter may produce many cohomology $\cD$-modules, and the former is a functor on the category of holonomic $\cD$-module. The compatibility with direct images of $\cD$-modules allows one making explicit computations. This functor has been instrumental, for instance, in \cite{Roucairol06a,Roucairol06b,Roucairol07} for computing the formal decomposition of direct images, as used in \Chaptersname\ref{chap:pipes}.

Unfortunately, this functor may be useless for some holonomic $\cD$-modules and, in order to circumvent this bad behaviour, an enrichment of it has been proposed by P\ptbl Deligne in \cite{Deligne83b}, under the name of the \emph{irregular nearby cycle functor}.

The purpose of this \chaptername, which is a long introduction to \Chaptersname \ref{chap:nearby}, is to recall the definition and the behaviour of the moderate nearby cycle functor for holonomic $\cD$-modules, and that of the irregular nearby cycle functor. We will also give a new proof of the property that the functor $\psi^\Del_f$ preserves holonomy: this is the main result of \cite{Deligne83b}, which holds in arbitrary dimension and in the algebraic setting, while our proof holds in dimension two and in a local analytic setting. In order to do so, we use the reduction theorem of Kedlaya \cite{Kedlaya09} (analytic case) and Mochizuki \cite{Mochizuki07b} (algebraic case).

\Subsection{Moderate nearby cycles of holonomic $\cD$-modules}\label{subsec:moderatenearby}
\subsubsection*{Restriction and de~Rham complex of holonomic $\cD$-modules}

We refer to the book \cite{Kashiwara03} and to the introductory article \cite{Mebkhout04b} for the basic results we need \hbox{concerning} \index{holonomic $\cD$-modules} holonomic $\cD$-modules. Let $X$ be a complex manifold and let $Z$ be a closed analytic subset. We denote by $i_Z:Z\hto X$ the closed inclusion and by $j_Z:X\moins Z\hto X$ the complementary open inclusion. We will denote by $i_{Z,+}i_Z^+$ the functor of local algebraic cohomology, denoted by $\bR\Gamma_{[Z]}$ in \cite{Kashiwara03} and $\bR\text{alg}\Gamma_Z$ in \cite{Mebkhout04b}, and by $j_{Z,+}j_Z^+$ the localization functor denoted by $\bR\Gamma_{[X\moins Z]}$ in \cite{Kashiwara03} and $\bR\cbbullet(*Z)$ in \cite{Mebkhout04b}. There is a distinguished triangle in $D^\rb(\cD_X)$ (\cf \cite[Th\ptbl3.29(2)]{Kashiwara03}):
\[
i_{Z,+}i_Z^+\cM\to \cM\to j_{Z,+}j_Z^+\cM\To{+1}
\]
By a theorem of Kashiwara \cite{Kashiwara78}, these functors preserve the bounded derived category \index{$Dbdx$@$D^\rb(\cD_X)$, $D^\rb_\rh(\cD_X)$}$D^\rb_\rh(\cD_X)$ of complexes of $\cD_X$-modules with holonomic cohomology. If~$Z$ is an hypersurface and $\cM$ is a holonomic $\cD_X$-module, then so is $j_{Z,+}j_Z^+\cM=\cO_X(*Z)\otimes_{\cO_X}\cM$, and $i_{Z,+}i_Z^+\cM$ has holonomic cohomology in degrees $0$ and $1$. We denote by~$\bD$ the duality (contravariant) functor, from $D^\rb_\mathrm{coh}(\cD_X)$ to itself. It also preserves $D^\rb_\rh(\cD_X)$. In particular, $\bD \cM$ is a holonomic $\cD_X$-module if $\cM$ is so. We also have a functorial isomorphism $\id\to\bD\circ\bD$. We then set
\[
i_{Z,+}i_Z^\dag=\bD(i_{Z,+}i_Z^+)\bD,\qquad j_{Z,\dag}j_Z^+=\bD( j_{Z,+}j_Z^+)\bD.
\]
Let $\cO_{\wh Z}$ denote the formalization of $\cO_X$ along $Z$, \ie $\cO_{\wh Z}=\varprojlim_k\cO_X/\cI_Z^k$, where $\cI_Z$ is the ideal of $Z$ in $X$.

\begin{proposition}[{\cf\cite[Cor\ptbl2.7-2]{Mebkhout04b}}]\label{prop:meb}
There is a canonical functorial isomorphism in $D^\rb(\cD_X)$:
$$
\bR\cHom_{\cD_X}(i_{Z,+}i_Z^+\cbbullet,\cO_X)\simeq\bR\cHom_{\cD_X}(\cbbullet,\cO_{\wh Z}).\eqno\qed
$$
\end{proposition}

\begin{corollaire}
There is a canonical functorial isomorphism in $D^\rb_\rh(\cD_X)$:
\[
\DR(i_{Z,+}i_Z^\dag \cM)\simeq \DR(\cO_{\wh Z}\otimes_{\cO_X}\cM).
\]
\end{corollaire}

Recall that the de~Rham functor $\DR:D^\rb_\rh(\cD_X)\to D^b(\CC_X)$ is the functor $\cM\mto\bR\cHom_{\cD_X}(\cO_X,\cM)$, and that for a single holonomic $\cD_X$-module, $\DR\cM$ can be represented as the de~Rham complex $0\to\cM\To{\nabla}\Omega^1_X\otimes\cM\to\cdots$

\begin{proof}
Proposition \ref{prop:meb} applied to $\bD \cM$ gives an isomorphism
\[
\bR\cHom_{\cD_X}(\bD(i_{Z,+}i_Z^\dag \cM),\cO_X)\simeq\bR\cHom_{\cD_X}(\bD \cM,\cO_{\wh Z}).
\]
On the one hand, since $i_{Z,+}i_Z^\dag \cM$ has $\cD_X$-coherent cohomology (being holonomic), we have a canonical isomorphism from the left-hand term to $\bR\cHom_{\cD_X}(\cO_X,i_{Z,+}i_Z^\dag \cM)$, according to \cite[(3.14)]{Kashiwara03} and to the canonical isomorphism $\bD\cO_X\simeq\cO_X$.

On the other hand, if we set $\cD_{\wh Z}=\cO_{\wh Z}\otimes_{\cO_X}\cD_X$ and $\cM_{\wh Z}=\cO_{\wh Z}\otimes_{\cO_X}\cM$, we have, since $\cO_{\wh Z}$ is $\cO_X$-flat,
\[
\bR\cHom_{\cD_X}(\bD \cM,\cO_{\wh Z})\simeq\bR\cHom_{\cD_{\wh Z}}(\bD \cM_{\wh Z},\cO_{\wh Z})
\]
and applying the previous argument in $D^\rb_\mathrm{coh}(\cD_{\wh Z})$, the latter term is isomorphic to $\bR\cHom_{\cD_{\wh Z}}(\cO_{\wh Z},\cM_{\wh Z})$.
\end{proof}

\subsubsection*{Moderate nearby cycles for holonomic $\cD$-modules}
Let $f:X\to S=\CC$ be a holomorphic function and let~$\cM$ be a holonomic $\cD_X$-module. We will recall (\cf \eg \cite{M-S86b,M-M04b}) the construction of the $\cD_X$-module $\psi_f^\rmod \cM$ supported on $D\defin f^{-1}(0)$.

Let $t$ be the coordinate on $S=\CC$. For any nonzero complex number $\lambda$ and any integer $k\geq0$, denote by $\cN_{\lambda,k}$ the rank $k+1$ free $\cO_S(*0)$-module $\bigoplus_{j=0}^k\cO_S(*0)e_{\alpha,j}$, equipped with the $\cD_S$-action defined by $t\partial_t(e_{\alpha,j})=\alpha e_{\alpha,j}+e_{\alpha,j-1}$, for some choice $\alpha\in\nobreak\CC$ such that $\lambda=\exp(2\pi i\alpha)$, with the convention that $e_{\alpha,j}=0$ for $j<0$. Then $\cN_{\lambda,k}$ is equipped with a $\cD_S$-linear \hbox{nilpotent} endomorphism~$\rN$, defined by $\rN e_{\alpha,j}=e_{\alpha,j-1}$, and a natural $\cD_S$-linear inclusion $\iota_k:\cN_{\lambda,k}\hto\cN_{\lambda,k+1}$. There is a canonical isomorphism between the previous data corresponding to $\alpha$ and those corresponding to $\alpha+\ell$ for $\ell\in\ZZ$ by sending $e_{\alpha+\ell,j}$ to $t^\ell e_{\alpha,j}$.

Let us consider the pull-back meromorphic bundle with connection $f^+\cN_{\lambda,k}$ on~$X$. Given a holonomic $\cD_X$-module $\cM$, the $\cD_X$-module $\cM_{\lambda,k}\defin f^+\cN_{\lambda,k}\otimes_{\cO_X}\cM$ remains holonomic, and is equipped with a nilpotent endomorphism $\rN$. The $\cD_X$\nobreakdash-modules $\cH^j(i_{D,+}i_D^\dag\cM_{\lambda,k})$ ($j=0,1$) are also holonomic and supported on $D$. Moreover, the inductive system $[\cH^0(i_{D,+}i_D^\dag\cM_{\lambda,k})]_k$ is locally stationary, and does not depend on the choice of $\alpha$ up to a canonical isomorphism. We denote its limit by \index{nearby cycle!moderate}\index{$FZM$@$\psi_{f,\lambda}^\rmod\cM$}$\psi_{f,\lambda}^\rmod\cM$. It is equipped with a nilpotent endomorphism induced by $\rN$. Lastly, the inductive system $\cH^1(i_{D,+}i_D^\dag\cM_{\lambda,k})$ has limit zero (and the other $\cH^j$ are zero since~$D$ is a divisor).

\begin{remarque}\label{rem:psimodlambdaholom}
It follows from a theorem of Kashiwara (and Bernstein in the algebraic setting) that each $\psi_{f,\lambda}^\rmod\cM$ is $\cD_X$-holonomic.
\end{remarque}

\begin{corollaire}\label{cor:changfonction}
Let $f,h:X\to\CC$ be holomorphic functions and set $g=e^hf$. Then $(\psi_{g,\lambda}^\rmod\cM,\rN)\simeq(\psi_{f,\lambda}^\rmod\cM,\rN)$.
\end{corollaire}

\begin{proof}
It is enough to produce for each $k$ an isomorphism $g^+\cN_{\lambda,k}\isom f^+\cN_{\lambda,k}$ compatible with the inclusions $\iota_k$ and $\rN$. Note that $f^+\cN_{\lambda,k}$ is a free $\cO_X[1/f]$-module with basis $1\otimes e_{\alpha,j}$ ($j=0,\dots,k$) that we denote by $\text{``}f^\alpha(\log f)^k/k!\text{''}$, and similarly for~$g$. The base change with matrix
\[\arraycolsep4.5pt
e^{\alpha h}\cdot
\begin{pmatrix}
1&h&\dots&\dots&h^k/k!\\
0&1&h&\dots&\vdots\\
\vdots&0&\ddots&&\vdots\\
\vdots&\vdots&\ddots&\ddots&h\\
0&\dots&\dots&0&1
\end{pmatrix}
\]
transforms the previous basis for $f$ to that for $g$, and induces an isomorphism of $\cD_X$-modules.
\end{proof}

\begin{corollaire}\label{cor:drpsimod}
With the previous notation, we have in restriction to each compact set $K$ of $D$, an isomorphism (a priori depending on $K$)
\[
\DR\psi_{f,\lambda}^\rmod\cM_{|K}\simeq\varinjlim_k\DR(\cO_{\wh D}\otimes\cM_{\lambda,k})_{|K}.
\]
\end{corollaire}

\begin{proof}
The inductive limit above is meaningful, as it is defined in the category of complexes, but we will not try to give a meaning to the inductive limit of $\DR(i_{D,+}i_D^\dag\cM_{\lambda,k})$. On the other hand, the inductive limits of $\DR\big(\cH^j(i_{D,+}i_D^\dag\cM_{\lambda,k})\big)$ are well-defined and commute with taking cohomology of these complexes.

The distinguished triangle
\[
\cH^0(i_{D,+}i_D^\dag\cM_{\lambda,k})\to i_{D,+}i_D^\dag\cM_{\lambda,k}\to \cH^1(i_{D,+}i_D^\dag\cM_{\lambda,k})[-1]\To{+1}
\]
induces a distinguished triangle in $D^\rb(\CC_X)$:
\[
\DR\cH^0(i_{D,+}i_D^\dag\cM_{\lambda,k})\to\DR i_{D,+}i_D^\dag\cM_{\lambda,k}\to \DR\cH^1(i_{D,+}i_D^\dag\cM_{\lambda,k})[-1]\To{+1}
\]
and, together with the isomorphism $\DR i_{D,+}i_D^\dag\cM_{\lambda,k}\simeq\DR(\cO_{\wh D}\otimes\cM_{\lambda,k})$, we get a morphism
\[
\DR\cH^0(i_{D,+}i_D^\dag\cM_{\lambda,k})\to\DR(\cO_{\wh D}\otimes\cM_{\lambda,k}),
\]
and thus a morphism
\[
\DR\cH^0(i_{D,+}i_D^\dag\cM_{\lambda,k})\to\DR(\cO_{\wh D}\otimes\cM_{\lambda,\infty})=\varinjlim_k\DR(\cO_{\wh D}\otimes\cM_{\lambda,k}).
\]
When restricted to the compact set $K\subset D$, this morphism is an isomorphism for~$k$ big enough: indeed, because the inductive limit of $\cH^1(i_{D,+}i_D^\dag\cM_{\lambda,k})$ is zero, each cohomology sheaf of $\varinjlim_k\DR\cH^1(i_{D,+}i_D^\dag\cM_{\lambda,k})$ is zero; one uses then that $\cH^0(i_{D,+}i_D^\dag\cM_{\lambda,k})\to\cH^0(i_{D,+}i_D^\dag\cM_{\lambda,k+1})$ is an isomorphism when restricted to~$K$, when $k$ is big enough.
\end{proof}

\subsubsection*{Moderate nearby cycles and $V$-filtration}
We recall here the computation of $\psi_{f,\lambda}^\rmod\cM$ by using the $V$-filtration, according to Kashiwara and Malgrange. Let $i_f:X\hto X\times S$ denote the inclusion of the graph of $f$, and let $t$ denote a local coordinate on $S$ in the neighbourhood of the origin. The sheaf of differential operators $\cD_{X\times S}$ is equipped with a decresaing filtration $V^\cbbullet\cD_{X\times S}$ indexed by $\ZZ$, charaterized by the following properties:
\begin{itemize}
\item
$V^k\cD_{X\times S}\cdot V^\ell\cD_{X\times S}\subset V^{k+\ell}\cD_{X\times S}$, with equality if $k,\ell\geq0$;
\item
$V^0\cD_{X\times S}=\cD_{X\times S/S}\langle t\partial_t\rangle$,
\item
$V^k\cD_{X\times S}=\begin{cases}
t^kV^0\cD_{X\times S}&\text{if $k\geq0$},\\
V^{k+1}\cD_{X\times S}+\partial_tV^{k+1}\cD_{X\times S}&\text{if $k\leq-1$}.
\end{cases}$
\end{itemize}
The $\cD_{X\times S}$-module $i_{f,+}\cM$ is holonomic and it admits a unique decreasing \index{Vfiltration@$V$-filtration}filtration $V^\cbbullet(i_{f,+}\cM)$ indexed by $\ZZ$, which is good with respect to the filtration $V^\cbbullet \cD_{X\times S}$, and such that, for each $k\in\ZZ$, the endomorphism induced by $t\partial_t$ on $\gr^k_V(i_{f,+}\cM)$ has a minimal polynomial whose roots have a real part belonging to $[k,k+1)$. Moreover, given a compact subset $K$ of $f^{-1}(0)$, there exists a finite set $A\subset[0,1)\oplus i\RR\subset\CC$ such that, near each point of $K$, the roots are contained in $A+k$.

For $\alpha\in A$, let us set $\lambda=\exp(-2\pi i\alpha)$ and
\[
\psi_{t,\lambda}(i_{f,+}\cM)=\varinjlim_N\ker\big[(t\partial_t-\alpha)^N:\gr^0_V(i_{f,+}\cM)\to\gr^0_V(i_{f,+}\cM)\big].
\]
Then, for each $\lambda$, we have a canonical isomorphism of $\cD_X$-modules $\psi_{t,\lambda}(i_{f,+}\cM)\simeq\psi_{f,\lambda}^\rmod\cM$, such that the action of $\rN$ on the right-hand term correposnds to that of $t\partial_t-\alpha$ on the left-hand term. As a consequence, $\psi_{f,\lambda}^\rmod\cM$ is zero on $K$ except maybe for $\lambda$ in the finite subset $\exp(-2\pi iA)\subset\CC^*$. We will set $\psi_f^\rmod\cM=\bigoplus_{\lambda\in\CC^*}\psi_{f,\lambda}^\rmod\cM$. It is equipped with a semi-simple endomorphism induced by the multiplication by $\lambda$ on $\psi_{f,\lambda}^\rmod\cM$ and a unipotent one, induced by $\exp(-2\pi i\rN)$.

\begin{remarque}\label{rem:psimodholom}
By the finiteness result above and according to Remark \ref{rem:psimodlambdaholom}, $\psi_f^\rmod\cM$ is $\cD_X$-holonomic.
\end{remarque}

Let us notice that $\psi_{f,\lambda}^\rmod(\cM)=\psi_{f,1}^\rmod(f^+\cN_{\lambda,0}\otimes\cM)$. We conclude that, locally on~$D$, there exists a finite number of $\lambda\in\CC^*$ such that $\psi_{f,1}^\rmod(f^+\cN_{\lambda,0}\otimes\cM)\neq0$. One can check the vanishing of $\psi_{f,1}^\rmod$ in the following way.

\begin{proposition}
We have $\psi_{f,1}^\rmod\cM=0$ if and only if $j_{D,\dag}j_D^+\cM\to j_{D,+}j_D^+\cM$ is an isomorphism.
\end{proposition}

\begin{proof}[Sketch of proof]
Since $j_{D,\dag}j_D^+\cM=j_{D,\dag}j_D^+(j_{D,+}j_D^+\cM)$, we can assume that $\cM=j_{D,+}j_D^+\cM$. We will consider the \index{vanishing cycle!moderate}moderate vanishing cycle functor $\phi_{f,1}^\rmod\cM$. Then the \emph{variation morphism} $\phi_{f,1}^\rmod\cM\to\psi_{f,1}^\rmod\cM$ is an isomorphism (\cf \loccit). Hence, $\psi_{f,1}\cM=\nobreak0$ if and only if the \emph{canonical morphism} $\psi_{f,1}^\rmod\cM\to\phi_{f,1}^\rmod\cM$ is an isomorphism, because the composition $\mathrm{can}\circ\mathrm{var}$ is known to be nilpotent. On the other hand, $j_{D,\dag}j_D^+\cM\to\nobreak\cM$ has kernel and cokernel supported on $f=0$, so it is an isomorphism if and only if the natural morphism $\phi_{f,1}^\rmod(j_{D,\dag}j_D^+\cM)\to\phi_{f,1}^\rmod\cM$ is an isomorphism. The proof consists then in identifying the latter morphism to the canonical morphism $\psi_{f,1}^\rmod\cM\to\phi_{f,1}^\rmod\cM$ up to isomorphism, by using that the canonical morphism for $j_{D,\dag}j_D^+\cM$ is an isomorphism.
\end{proof}

\begin{proposition}[Behaviour by powers]\label{prop:psipuissances}
Let $m$ be a nonzero integer. Then for any~$\lambda$ one has a natural isomorphism $(\psi_{f^m,\lambda}^\rmod\cM,\rN)\simeq(\psi_{f,\lambda^m}^\rmod\cM,\rN/m)$. In particular, $\psi_{f^m}^\rmod\cM$ and $\psi_f^\rmod\cM$ have the same support.
\end{proposition}

\begin{proof}
See \cite[Prop\ptbl3.3.13]{Bibi01c} or simply compute $(t^m)^+\cN_{\lambda,k}$.
\end{proof}

\begin{proposition}[Behaviour by ramification]\label{prop:psiramif}
For $q\in\NN^*$, let $\rho_q:\CC\to\CC$ denote the ramification $t_q\mto t=t_q^q$, as well as the induced morphism $X\times\CC\to X\times\CC$. Let~$\cM$ be a holonomic $\cD_X$-module. Then $\psi_{t_q}^\rmod(\rho_q^+(i_{f,+}\cM))$ and $\psi_t^\rmod(i_{f,+}\cM)$ have the same support.
\end{proposition}

\begin{proof}
There is an explicit expression of $\psi_{t_q,\lambda}^\rmod(\rho_q^+(i_{f,+}\cM))$ in terms of various $\psi_{t,\mu}^\rmod(i_{f,+}\cM)$ (\cf \cite[Rem\ptbl2.3.3]{Bibi06b}), which immediately gives the result.
\end{proof}

\begin{proposition}[Behaviour by formalization]\label{prop:psiform}
Denote by $\cO_{\wh{x_o}}$ the formalization of~$\cO_X$ at $x_o\in X$. Then $(\cO_{\wh{x_o}}\otimes_{\cO_{X,x_o}}\psi_{t,\lambda}^\rmod(i_{f,+}\cM,\rN)\simeq\psi_{t,\lambda}^\rmod(i_{f,+}(\cO_{\wh{x_o}}\otimes_{\cO_{X,x_o}}\nobreak\cM),\rN)$.
\end{proposition}

\begin{proof}
The second term is computed via the theory of the $V$-filtration on the ring $\cD_{\wh{x_o}}$ of differential operators with coefficients in $\cO_{\wh{x_o}}$. By uniqueness of the $V$\nobreakdash-filtration, we have $V^k\big(i_{f,+}(\cO_{\wh{x_o}}\otimes_{\cO_{X,x_o}}\cM)\big)=\cO_{\wh{x_o}}\otimes_{\cO_{X,x_o}}V^k(i_{f,+}\nobreak\cM)$ for each $k$ (as the right-hand term is shown to satisfy the characteristic properties of the left-hand term). Since $\cO_{\wh{x_o}}$ is flat over $\cO_{X,x_o}$, this equality extends to the graded objects.
\end{proof}

\begin{proposition}[Behaviour by proper push-forward]\label{prop:psiimdir}\index{push-forward (direct image)!of moderate de~Rham complexes}
Let $\pi:X'\to X$ be a proper morphism and let $\cM'$ be a holonomic $\cD_{X'}$-module. Then there is a functorial isomorphism $\cH^k\pi_+\psi_{f\circ\pi}^\rmod\cM'\isom\psi_f^\rmod\cH^k\pi_+\cM'$ for each $k\in\ZZ$.
\end{proposition}

\begin{proof}
See \eg \cite[Th\ptbl4.8.1 p\ptbl226]{M-S86b}, \cite{L-M95}.
\end{proof}

We will use this proposition in the case where $\pi$ is a proper modification which is an isomorphism out of $f=0$, $\cM$ is a holonomic $\cD_X$-module on which $f$ is invertible, and $\cM'=\pi^+\cM[1/f\circ\pi]$. In such a case, $\pi_+\psi_{f\circ\pi}^\rmod\cM'=\cH^0\pi_+\psi_{f\circ\pi}^\rmod\cM'\simeq\psi_f^\rmod\cM$. More precisely, the following proposition, which is a straightforward consequence of Proposition \ref{prop:psiimdir}, will be important for us.

\begin{proposition}[Compatibility with push-forward by a proper modification]\label{prop:comppush}\nopagebreak
Let $\pi:X'\to X$ be a proper modification which is an isomorphism above $X\moins D$, and let us set $g=f\circ\pi$. Then, for each $j\in\ZZ$, for any holonomic $\cM$ such that $\cM=\cM(*D)$ and for any finite dimensional $\CC\lpb t\rpb$-vector space $\cN$ with connection,
\[
\pi_+\psi_{g,\lambda}^\rmod(g^+\cN\otimes\pi^+\cM)\simeq \psi_{f,\lambda}^\rmod(f^+\cN\otimes\cM).
\]
\end{proposition}

\Subsection{Irregular nearby cycles (after Deligne)}

By a formally irreducible $\CC\lpb t\rpb$-vector space with connection $\cN$ we mean a $\CC\lpb t\rpb$-vector space of the form $\rho_{q,+}(\cE^{\eta^{(q)}}\otimes \cL)$, where
\begin{itemize}
\item
$\rho_q=t_q\mto t=t_q^q$ is a ramification of order $q\geq1$ (here, $t_q$ is a ramified variable of order $q$ with respect to $t$; the notation is taken from \cite{Mochizuki07b}),
\item
$\eta$ is a ramified one-variable polar part, that we write as $\sum_{k\in\QQ_+^*}\eta_k/t^k$, where the sum is finite; for the smallest common denominator $q$ of the indices $k$ for which $\eta_k\neq0$, we set $\eta^{(q)}=\sum\eta_kt_q^{kq}$ (we will say that $\eta^{(q)}$ obtained in this way is \emph{$t$-irreducible}; for $\eta^{(q)}\in t_q^{-1}\CC[t_q^{-1}]$, being $t$-irreducible is equivalent to $\eta^{(q)}(\zeta t_q)\neq\eta(t_q)^{(q)}$ for any $q$th root of unity $\zeta\neq1$),
\item
$\cL$ is a rank-one $\CC\lpb t\rpb$-vector space with a connection having a regular singularity, that is, isomorphic to $\cN_{\lambda,0}$ for some $\lambda\in \CC^*$.
\end{itemize}
By the Levelt-Turrittin theorem in one variable, $\cN$ is formally irreducible if and only if $\CC\lpr t\rpr\otimes_{\CC\lpb t\rpb}\cN$ is irreducible as a $\CC\lpr t\rpr$-vector space with connection.

\begin{definitio}[Irregular nearby cycles]\label{def:irregnearby}
For holonomic $\cD_X$-modules $\cM$, the \index{nearby cycle!irregular}irregular nearby cycle functor \index{$FZZD$@$\psi^\Del_f$}$\cM\mto \psi_f^\Del \cM$ is defined as
\bgroup\numstareq
\begin{equation}\label{eq:irregnearby}
\psi_f^\Del\cM\defin\bigoplus_{\substack{\cN\text{ form.}\\ \text{irred.}}}\psi_f^\rmod(f^+\cN\otimes\cM).
\end{equation}
\egroup
\end{definitio}

Let us note that $\psi_f^\Del \cM$ only depends on the localized module $\cM(*D)$. In dimension one, the theorem of Levelt-Turrittin for $\cM(*D)$ can be restated by saying that giving the formalized module $\CC\lpr t\rpr\otimes_{\CC\lpb t\rpb}\cM(*D)$ is equivalent to giving $\psi_f^\Del\cM$, which is a \emph{finite dimensional} graded vector space (the grading indices being the formally irreducible $\cN$'s) equipped with an automorphism.

One can also use the following expression for $\psi_f^\Del\cM$ by using the notion of $t$\nobreakdash-irreducibility introduced above, \ie if $\rho_{q,+}\cE^{\eta^{(q)}}$ is irreducible. Then,
\begin{equation}\label{eq:Delmod}
\psi_f^\Del\cM=\bigoplus_{\eta^{(q)}\text{ $t$-irred.}}\bigoplus_{\lambda\in\CC^*}\psi_{f,\lambda}^\rmod(f^+\rho_{q,+}\cE^{\eta^{(q)}}\otimes\cM).
\end{equation}

\begin{theoreme}[Deligne \cite{Deligne83b}]\label{th:cyclesprochesirreg}
Assume that $X,f,S$ are algebraic. Then, if $\cM$ is $\cD_X$-holonomic, $\psi_f^\Del\cM$ is holonomic (\ie the sum \eqref{eq:Delmod} is finite).\qed
\end{theoreme}

\Subsection{Another proof of the finiteness theorem in dimension two}
We will revisit Theorem \ref{th:cyclesprochesirreg} from a different perspective, as suggested in \cite[Rem\ptbl2.1.5]{Bibi06b}. Moreover, we will work in the local analytic setting, but only when $\dim\Supp\cM=2$. Notice also that the same proof would be valid in the algebraic setting.

\begin{theoreme}\label{th:finitudelocalirreg}
Let $f:X\to S$ be a holomorphic function and let $\cM$ be a holonomic $\cD_X$-module whose support has dimension two. Then $\psi_f^\Del\cM$ is holonomic.
\end{theoreme}

\begin{proof}
We will work in the neighbourhood of a compact set $K\subset H=f^{-1}(0)$. It is enough to prove the theorem for those $\cM$ such that $\cO_X(*H)\otimes_{\cO_X}\cM=\cM$. Moreover, by a standard ``dévissage'', we can reduce to one of the following two cases:
\begin{enumerate}
\item
$\cM$ is supported on a curve $C$ and $f:C\to S$ is finite,
\item
$\dim X=2$, $\cM$ is a meromorphic bundle with a flat connection, whose poles are contained in a hypersurface (a curve) $D$ containing $H$.
\end{enumerate}

The first case easily reduces to the Levelt-Turrittin theorem, by using the normalization of the curve. We will only consider the second case. According to Proposition \ref{prop:comppush}, one can work on a suitable blow-up space~$X'$ of $X$, obtained by successively blowing up points over points in $K$. Let $e:X'\to X$ be the corresponding projective modification. Then one can replace $\cM$ with $e^+\cM$, $K$ with $e^{-1}(K)$ and~$f$ with $f\circ e$. One can therefore assume that $D$ is a divisor with normal crossing, and thus $H$ also. Moreover, according to Kedlaya's theorem (\cf \cite{Kedlaya09}), one can assume that~$\cM$ has a good formal structure at each point of~$K$. As in the proof of Theorem \ref{th:cyclesprochesirreg}, the point is to prove that, except for a finite number ramified polar parts $\eta$, we have $\psi_{f,\lambda}^\rmod(f^+\rho_{q,+}\cE^{\eta^{(q)}}\otimes\cM)=0$.

We first consider the question in the neighbourhood of each point of $K$, and we distinguish two cases:
\begin{enumeratea}
\item\label{enum:proofa}
a smooth point on $D$,
\item\label{enum:proofb}
a crossing point of $D$.
\end{enumeratea}

According to Proposition \ref{prop:psiform}, we can replace $\cM$ by its formalization (along $D$ in Case \eqref{enum:proofa} and at the given point in Case \eqref{enum:proofb}). Moreover, it is enough to prove the theorem after a fixed ramification around the components of $D$: this follows from Proposition \ref{prop:psiramif} for a ramification around components of $H$, which is enough for Cases \eqref{enum:proofa} and \eqref{enum:proofb} if $H=D$; if $H\subsetneq D$, one considers the supplementary ramification as a finite morphism and one applies Proposition \ref{prop:psiimdir}. We can therefore assume that~$\cM$ has a good formal decomposition at the given point, and, by replacing it with the formal module, that it takes the form $\cE^\omega\otimes\cR$, where $\cR$ has regular singularities along~$D$ and $\omega\in\cO(*D)/\cO$. According to \eqref{eq:Delmod}, it is enough to prove in each case the following statement.

\begin{proposition}\label{prop:finitudelocalirreg}
Given $\omega\in\cO(*D)/\cO$, there exists a finite set $Q\subset\NN^*$ and, for each $q\in Q$, a finite set of $\eta^{(q)}\in\cO_q(*D_q)/\cO_q$ such that $\psi_{f_q}(\cE^{\rho_q^*\omega-f_q^*\eta^{(q)}}\otimes\nobreak\rho_q^+\cR)_0\neq\nobreak0$.
\end{proposition}

Here, we have denoted by $\cO$ the ring $\CC\{x,y\}$, and $D$ is defined by $x=0$ (\resp $xy=\nobreak0$). The function $f$ is a monomial $x^m$ (\resp $x^ay^b$, $a\geq1$, $b\geq0$). The ramification $\rho_q$ is defined by $(x_q,y_q)\mto(x,y)=(x_q^q,y_q)$ (\resp $(x,y)=(x_q^q,y_q^q)$) and $f_q(x,y)=x_q^m$ (\resp $x_q^ay_q^b$).

\subsubsection*{Proof of Proposition \ref{prop:finitudelocalirreg} in Case \eqref{enum:proofa}}
Here, we have $H=D$. Let us start with some preliminary results. We choose local coordinates $x,y$ near a chosen point on the smooth part of $D$ such that $D=\{x=0\}$ and $f(x,y)=x^m$ for some $m\geq1$.

Let $\omega\in\CC\{x,y\}[1/x]/\CC\{x,y\}\moins\{0\}$, that we will usually write as
\begin{equation}\label{eq:omegak}
\frac{\omega_k(y)+\cdots+\omega_1(y)x^{k-1}}{x^k}\quad\text{with}\quad k\geq1,\ \omega_k(y)\not\equiv0,\ \omega_j(y)=\sum_{j'\geq0}\omega_{j,j'}y^{j'}.
\end{equation}

\begin{definitio}\label{def:supppsix}
We say that the point $y=0$ on $D$ is a \emph{singular point} for the pair $(\omega,D)$ if
\bgroup\numstareq
\begin{equation}\label{eq:supppsix*}
d\omega_k/dy(0)=0.
\end{equation}
\egroup
\end{definitio}

\begin{proposition}\label{prop:supppsix}
Let $\cR$ be a nonzero germ of meromorphic connection with regular singularities in the coordinates $x,y$, with poles along $D=\{x=0\}$ at most. Let~$f$ be such that the set $f=0$ is equal to $D$, \ie $f(x,y)=x^m$ for a suitable choice of the coordinates $x,y$ as above. A necessary condition for the germ at $y=0$ of the $\CC\{y\}\langle\partial_y\rangle$-module $\psi_f^\rmod(\cE^{\omega}\otimes\cR)$ to be nonzero is that $y=0$ is a singular point for the pair $(\omega,D)$, and $\omega_k(0)=0$. In particular, the support of $\psi_f^\rmod(\cE^{\omega}\otimes\cR)$ is discrete on $D$.
\end{proposition}

\begin{remarque}
The condition \eqref{eq:supppsix*} (together with $\omega_k(0)=0$) is not sufficient, however, to ensure the non-vanishing of $\psi_f^\rmod(\cE^{\omega}\otimes\cR)$. For example, set $\omega=y(y+\nobreak x)/x^k$ with $k\geq3$, $f(x,y)=x$ and $\cR=\cO_X[1/x]$. Then $\omega$ satisfies \eqref{eq:supppsix*}. However, one can show that $\psi_f^\rmod(\cE^{y(y+x)/x^k})=0$ in the neighbourhood of the origin (by blowing up the origin and by using \cite[Lemma 5.5(1)]{Bibi07a}, see also below, Lemma \ref{lem:basicvanishing}\eqref{lem:basicvanishing1}, together with Proposition \ref{prop:comppush}).
\end{remarque}

\begin{proof}[\proofname\ of Proposition \ref{prop:supppsix}]
By Proposition \ref{prop:psipuissances}, we are reduced to the case $f(x,y)=x$. We will prove that, if the condition \eqref{eq:supppsix*} is not fulfilled or if $\omega_k(0)\neq0$, then $\psi_x^\rmod(\cE^\omega\otimes\cR)$ is zero at $y=0$.

If $\omega_k(0)\neq0$, we apply Lemma \ref{lem:basicvanishing}\eqref{lem:basicvanishing0} below. Otherwise, if moreover $d\omega_k/dy(0)\neq\nobreak0$ then, up to changing the coordinate $y$, we have $\omega=y/x^k$ and we may apply Lemma \ref{lem:basicvanishing}\eqref{lem:basicvanishing1}.
\end{proof}

\skpt
\begin{lemme}\label{lem:basicvanishing}
\begin{enumerate}
\item\label{lem:basicvanishing0}
Let $\cR$ be a meromorphic connection with poles along $x=0$ (and possibly $y=0$) at most and regular singularities. Assume $\lambda(0,0)\neq0$ and $k>0$. Then $\psi_x^\rmod(\cE^{\lambda(x,y)/x^k}\otimes\nobreak\cR)=0$ in the neighbourhood of $y=0$.
\item\label{lem:basicvanishing1}
Let $\cR$ be a meromorphic connection with poles along $x=0$ at most and regular singularities. Assume $k>0$. Then $\psi_x^\rmod(\cE^{y/x^k}\otimes\cR)=0$ in the neighbourhood of $y=0$.
\end{enumerate}
\end{lemme}

\begin{remarque}
The assertion \ref{lem:basicvanishing}\eqref{lem:basicvanishing1} may not hold if we assume that $\cR$ has also poles along $y=0$.
\end{remarque}

\begin{proof}
In both cases, we prove that the $V$-filtration is constant by proving that a system of generators has a constant Bernstein polynomial. Since $\cR$ has regular singularities, it is a successive extension of rank-one objects of the same kind, so one can assume that $\cR$ has rank one.
\begin{enumerate}
\item
By a change of the variable $y$ and by using Corollary \ref{cor:changfonction} to keep $f=x$, we can assume that $\lambda$ is constant. Let $m$ be a local generator of $\cR$ satisfying $(x\partial_x-\alpha)m=0$ and $\partial_ym=0$ (if $\cR$ has no pole along $y=0$) or $(y\partial_y-\beta)m=0$ with $\beta\notin\NN$ (if $\cR$ has poles along $y=0$). Then one checks that $(1\otimes m)\in\cE^{\lambda/x^k}\otimes\cR$ generates $\cE^{\lambda/x^k}\otimes\cR$ as a $\cD$-module, and has constant Bernstein polynomial, as shown by the equation $(1\otimes m)=-x^k(x\partial_x-\alpha)(1\otimes m)/k\lambda$.
\item
The vanishing at any $y\neq0$ follows from \eqref{lem:basicvanishing0}. Let $m$ be  a generator of $\cR$ as above, satisfying $\partial_ym=0$. The equation $(1\otimes m)=x^k\partial_y(1\otimes m)$ shows that $(1\otimes m)$ has constant Bernstein polynomial and also that it generates $\cE^{y/x^k}\otimes\cR$ as a $\cD$-module.
\qedhere
\end{enumerate}
\end{proof}

For a meromorphic function $\omega=(x^\ell\lambda(x,y)+y^m\mu(x,y))/x^k$ we consider the condition
\begin{equation}\label{eq:1klm}
\lambda(0,0)\neq0,\quad\mu(0,0)\neq0,\quad m\geq2,\quad k\geq\ell+1\geq1.
\end{equation}
For a meromorphic function $(x^\ell\lambda(x,y)+y^m\mu(x,y))/x^ky^{k'}$, we also consider the condition
\begin{equation}\label{eq:2klm}
\lambda(0,0)\neq0,\quad\mu(0,0)\neq0,\quad (\ell,m)\in\NN^2\moins\{(0,0)\},\quad k\geq\ell+1\geq1,\quad k'>0.
\end{equation}

The following lemma is very similar to \cite[Lemma 5.5]{Bibi07a}.

\skpt
\begin{lemme}\label{lem:basicvanishingb}
\begin{enumerate}
\item\label{lem:basicvanishing2}
Let $\cR$ be a meromorphic connection with poles along $xy=0$ at most and regular singularities, let $(a,b)\in\NN^2\moins\{0\}$ and set $\omega=(x^\ell\lambda(x,y)+y^m\mu(x,y))/x^ky^{k'}$ and $f(x,y)=x^ay^b$.
\begin{enumerate}
\item\label{lem:basicvanishing2a}
If the numerator of $\omega$ is a unit near $x=y=0$ and $k,k'>0$,
\item\label{lem:basicvanishing2b}
or if $\omega$ satisfies \eqref{eq:2klm},
\end{enumerate}
then $\psi_f^\rmod(\cE^\omega\otimes\cR)=0$ in the neighbourhood of $y=0$.
\item\label{lem:basicvanishing3}
Let $\cR$ be a meromorphic connection with poles along $x=0$ (and possibly $y=0$) at most and regular singularities. Set $\omega=(x^\ell\lambda(x,y)+y^m\mu(x,y))/x^k$. If $\omega$ satisfies \eqref{eq:1klm}, then $\psi_x^\rmod(\cE^\omega\otimes\cR)=0$ in the neighbourhood of $y=0$.
\end{enumerate}
\end{lemme}

\skpt
\begin{proof}
\begin{enumerate}
\item
Away from $y=0$ we can apply \ref{lem:basicvanishing}\eqref{lem:basicvanishing0}, both for \eqref{lem:basicvanishing2a} and \eqref{lem:basicvanishing2b}.

For \eqref{lem:basicvanishing2a} at $y=0$, we apply an argument similar to that of \cite[Lemme III.4.5.10]{Bibi00} (that~$\lambda$ is a unit instead of being constant does not cause much trouble).

Let us now consider \eqref{lem:basicvanishing2b} at $y=0$. We will argue by induction on the pair $(\ell,m)\in\NN^2\moins\nobreak\{(0,0)\}$, the case $(1,0)$ or $(0,1)$ being given by \eqref{lem:basicvanishing2a}. Let $e$ denote the blowing-up at the origin. It is enough to prove the assertion after blowing up the origin, all along the exceptional divisor, for the map $f\circ e$, because we have
\[
\psi_f^\rmod(\cM)=e_+\psi^\rmod_{f\circ e}(e^+\cM)
\]
for any holonomic module localized along $f=0$. The total space of the blowing-up is covered by two charts:
\[
\text{Chart 1:\;} (u,v)\mto (x=u,y=uv),\qquad \text{Chart 2:\;} (u',v')\mto (x=u'v',y=u').
\]
We compute $\Psi\defin\psi_{f\circ e}^\rmod(\cE^{\omega\circ e}\otimes e^*\cR)$ at the origin of each chart:
\[
\omega\circ e=
\begin{cases}
(\text{Chart 1})
\begin{cases}
(\lambda\circ e+u^{m-\ell}v^\ell\mu\circ e)/u^{k-\ell+k'}v^k,&\text{if }\ell\leq m;\;\eqref{lem:basicvanishing2a}\implique \Psi=0,\\
(u^{\ell-m}\lambda\circ e+v^m\mu\circ e)/u^{k-m+k'}v^k,&\hspace*{-2.5mm}\text{if }\ell>m;\;\text{induction}\implique \Psi\!=\!0.
\end{cases}
\\[12pt]
(\text{Chart 2})
\begin{cases}
(v^{\prime\ell}\lambda\circ e+u^{\prime m-\ell}\mu\circ e)/u^{\prime k-\ell+k'}v^{\prime k},&\hspace*{-4mm}\text{if }\ell\leq m;\;\text{induction}\implique \Psi\!=\!0,\\
(\mu\circ e+u^{\prime\ell-m}v^{\prime\ell}\lambda\circ e)/u^{\prime k-m+k'}v^{\prime k},&\text{if }\ell>m;\;\eqref{lem:basicvanishing2a}\implique\Psi=0.
\end{cases}
\end{cases}
\]
It remains to show the vanishing at a general point of the exceptional divisor. Let us work in Chart 1 for instance. The exceptional divisor $u=0$ has coordinate $v$ and we compute $\Psi$ in the neighbourhood of $v_o\neq0$, so that $v$ is a local unit. Using the formulas above for Chart 1, we obtain the vanishing of $\Psi$ at $v_o$ according to Lemma \ref{lem:basicvanishing}\eqref{lem:basicvanishing0} in the case $\ell\leq m$, and to \eqref{lem:basicvanishing2a} in the case $\ell>m$.
\item
Away from $y=0$ we can apply Lemma \ref{lem:basicvanishing}\eqref{lem:basicvanishing0}. At $y=0$, we will argue by induction on $\ell$, the case $\ell=0$ being solved by \eqref{lem:basicvanishing0}. We consider the blowing-up $e$ as above, and argue similarly. Setting $f(x,y)=x$, we compute $\Psi\defin\psi_{f\circ e}^\rmod(\cE^{\omega\circ e}\otimes e^*\cR)$.
\[
\omega\circ e=
\begin{cases}
(\text{Chart 1})
\begin{cases}
(\lambda\circ e+u^{m-\ell}v^\ell\mu\circ e)/u^{k-\ell}&\text{if }\ell\leq m;\quad\ref{lem:basicvanishing}\eqref{lem:basicvanishing0}\implique \Psi=0,\\
(u^{\ell-m}\lambda\circ e+v^m\mu\circ e)/u^{k-m}&\text{if }\ell>m;\quad\text{induction}\implique \Psi=0.
\end{cases}
\\[12pt]
(\text{Chart 2})
\begin{cases}
(v^{\prime\ell}\lambda\circ e+u^{\prime m-\ell}\mu\circ e)/u^{\prime k-\ell}v^{\prime k}&\text{if }\ell\leq m;\quad\eqref{lem:basicvanishing2b}\implique \Psi=0,\\
(\mu\circ e+u^{\prime\ell-m}v^{\prime\ell}\lambda\circ e)/u^{\prime k-m}v^{\prime k}&\text{if }\ell>m;\quad\eqref{lem:basicvanishing2a}\implique\Psi=0.\qedhere
\end{cases}
\end{cases}
\]
\end{enumerate}
\end{proof}

\subsubsection*{End of the proof of Proposition \ref{prop:finitudelocalirreg} in Case \eqref{enum:proofa}}
According to Proposition \ref{prop:psipuissances}, it is enough to prove the theorem when $f(x,y)=x$. Let $h\geq1$ denote the valuation of $\omega_k-\omega_k(0)$ (\cf Notation \eqref{eq:omegak}). It is invariant by pull-back by a ramification $x_q\mto x_q^q$. The proof will be done by induction on $h$, the case $h=1$ corresponding to a non-singular point. Assume that $y=0$ is not a singular point of $\omega$ on $D$. This remains the case for $\rho_q^*\omega$ for any ramification $\rho_q:x_q\mto x=x_q^q$. Then, for any $\eta^{(q)}\in\CC\lpb x_q\rpb$, $y=0$ is not a singular point of $\rho_q^*\omega-f_q^*\eta^{(q)}$, where $f_q(x_q,y)=x_q$. Therefore, according to Proposition \ref{prop:supppsix}, $\psi_{f_q}(\cE^{\rho_q^*\omega-f_q^*\eta^{(q)}}\otimes\nobreak\rho_q^*\cR)_0=\nobreak0$.

Let us now assume that $h\geq2$. Let \index{$NP$@$\NP$}$\NP(\omega)$ be the Newton polygon of $\omega$ (in the given coordinate system), which is by definition the convex hull of the union of the quadrants $(j,v_y(\omega_j(y)))+(\RR_-\times\RR_+)$ in~$\RR^2$, where $v_y$ denotes the valuation (\cf Figure~\ref{fig:NPomega}). Note that $(k,h)$ is a point with nonzero coefficient on the vertical part of the boundary of $\NP(\omega)$ (it is a vertex if $\omega_k(0)=0$). It will be useful to set $\omega'=\omega-\omega_k(0)/x^k$, so that $(k,h)$ is a vertex of $\NP(\omega')$. Note that, as we will have to consider $\rho_q^*\omega-f_q^*\eta^{(q)}$ for any $q$ and $\eta^{(q)}$, we can shift $\omega$ by $f^*(\omega_k(0)/t^k)$ from the beginning, so that it is equivalent to work with $\omega$ or $\omega'$. In the following, we assume for simplicity that $\omega=\omega'$, \ie $\omega_k(0)=0$.

Firstly, the theorem (in Case \eqref{enum:proofa}) holds if $\NP(\omega)$ is a quadrant. In such a case, $\omega=\mu(x,y)y^h/x^k$, where $\mu$ is a local unit, and we can apply Lemma \ref{lem:basicvanishingb}\eqref{lem:basicvanishing3}, which shows that, after any ramification $\rho_q$, $\psi_{f_q}(\cE^{\rho_q^*\omega-f_q^*\eta^{(q)}}\otimes\cR)_0=0$ if $\eta^{(q)}\neq0$ and has a pole of order $<k$. On the other hand, if $\eta^{(q)}\neq0$ has a pole of order $\geq k$, then $y=0$ is not a singular point of $\rho_q^*\omega-f_q^*\eta^{(q)}$, and we apply Proposition \ref{prop:supppsix}.

We now assume that $\NP(\omega)$ is not a quadrant. We will say that the coordinate $y$ is \emph{adapted to $\omega$} if there is no point of the form $(j,h-1)$ on $\partial\NP(\omega)$.

\begin{lemme}
Under the previous assumption on $\omega$, there exists a coordinate $y$ which is adapted to $\omega$.
\end{lemme}

\begin{proof}
Assume that $y$ is not adapted to $\omega$. Let $\gamma_oj-\delta_oj'=\gamma_ok-\delta_oh$ (with $(\gamma_o,\delta_o)=1$) be the equation of the non-vertical edge of $\NP(\omega)$ having $(k,h)$ as a vertex. Since $\NP(\omega)$ is not a quadrant, we have $\gamma_o\neq0$. Then $(j,h-1)$ belongs to this edge for some $j$, and we thus have $\gamma_oj-\delta_o(h-1)=\gamma_ok-\delta_oh$, that is, $\gamma_o(k-j)=\delta_o$. Therefore, $\gamma_o=1$ and $j=k-\delta_o$. Our assumption is then that $\omega_{k-\delta_o,h-1}\neq0$. Let us now set $y=y'-(\omega_{k-\delta_o,h-1}/h)x^{\delta_o}$ and denote by $\NP'(\omega)$ the Newton polygon of $\omega$ in the new variables $(x,y')$. By construction, the point $(k-\delta_o,h-1)$ does not belong anymore to $\partial\NP'(\omega)$. Then,
\begin{itemize}
\item
either $\gamma_oj-\delta_oj'=\gamma_ok-\delta_oh$ remains the equation of the non-vertical side of $\NP'(\omega)$ with vertex $(k,h)$, and $y'$ is adapted to $\omega$,
\item
or the slope $\gamma'_o/\delta'_o$ of the corresponding side strictly decreases, \ie $\gamma'_o/\delta'_o<1/\delta_o$; then, if $y'$ is not adapted to $\omega$, $\gamma'_o=1$ and $\delta'_o\geq\delta_o+1$, so $k-\delta'_o\leq k-\delta_o-1$; since $\omega_{j,j'}\neq0\implique j\geq1$, we must have $k-\delta'_o\geq1$, and this process can continue only a finite number of times.\qedhere
\end{itemize}
\end{proof}

We can therefore assume that the coordinate $y$ is adapted to $\omega$, and still keep $f(x,y)=x$ by using Corollary \ref{cor:changfonction}. Note that adaptedness is preserved by any ramification $\rho_q$. Moreover, if $y$ is adapted to $\omega$, it is also adapted to $\rho_q^*\omega-f_q^*\eta^{(q)}$ for any $\eta^{(q)}\in\CC\lpb t_q\rpb/\CC\{t_q\}$: indeed, either the biggest slope of $\partial\NP(\rho_q^*\omega-f_q^*\eta^{(q)})$ is strictly bigger than that of $\partial\NP(\rho_q^*\omega)$, and the corresponding edge does not contain any point, except its vertices, corresponding to a monomial of $\rho_q^*\omega-f_q^*\eta^{(q)}$, or both slopes are equal, and the addition of $f_q^*\eta^{(q)}$ possibly changes only the point with $j'=0$ on this edge; since $h\geq2$, this does not affect a possible integral point of the form $(j,h-1)$.

We say that $\omega$ is \emph{admissible} (with respect to the given coordinate system) if $\NP(\omega)$ has a vertex $(\ell,0)$ with $\ell>0$ (and $\ell<k$). We call this vertex the \emph{admissibility vertex}. Note that admissibility is preserved by any ramification $\rho_q$. It is straightforward to check that there exists a finite set $S\subset\CC\lpb t\rpb$ such that, for each $q\in\NN^*$ and each $\eta^{(q)}\in\CC\lpb t_q\rpb\moins\rho_q^*S$, $\rho_q^*\omega-f_q^*\eta^{(q)}$ is admissible. In particular, we may assume from the beginning that $\omega$ is admissible (up to changing~$S$). This does not modify $h$, nor the adaptedness of $y$ to $\omega$. The Newton polygon takes the form like in Figure~\ref{fig:NPomega}.
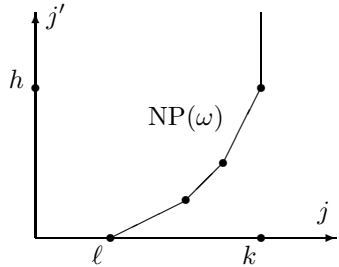
\begin{figure}[htb]
\setlength{\unitlength}{5mm}
\begin{center}
\begin{picture}(8,6)(0,0)
\put(5,2){\line(1,2){1}}
\put(5,2){\line(-1,-1){1}}
\put(5,2){\circle*{.2}}
\put(6,4){\line(0,1){2}}
\put(6,4){\circle*{.2}}
\put(4,1){\line(-2,-1){2}}
\put(0,0){\vector(1,0){8}}
\put(0,0){\vector(0,1){6}}
\put(2,0){\circle*{.2}}
\put(4,1){\circle*{.2}}
\put(6,0){\circle*{.2}}
\put(0,4){\circle*{.2}}
\put(-.7,4){$h$}
\put(.3,5.7){$j'$}
\put(1.5,-.7){$\ell$}
\put(5.5,-.7){$k$}
\put(7.5,.5){$j$}
\put(3,3){$\NP(\omega)$}
\end{picture}
\caption{The Newton polygon of $\omega$\label{fig:NPomega}}
\end{center}
\end{figure}

Let us consider a toric modification $e:(u,v)\mto(x,y)=(u^\alpha v^\gamma,u^\beta v^\delta)$ with $\alpha,\beta,\gamma,\delta\in\NN$, $\alpha\delta-\beta\gamma=-1$, such that $\min_{(j,j')\in\NP(\omega)} (\beta j'-\alpha j)$ and $\min_{(j,j')\in\NP(\omega)} (\delta j'-\gamma j)$ are both achieved on the same vertex $(j_o,j'_o)\neq(\ell,0)$ of $\NP(\omega)$. It follows that $\alpha\neq0$ and $e^{-1}(D)$ has two branches. We also have $j_o,j'_o>0$ and
\begin{equation}\label{eq:toricchart}
e^*\omega=\frac{c(u,v)u^{\beta j'_o}v^{\delta j'_o}}{u^{\alpha j_o}v^{\gamma j_o}}=\frac{c(u,v)}{u^{\alpha j_o-\beta j'_o}v^{\gamma j_o-\delta j'_o}},\quad\text{with }c(0,0)=\omega_{j_o,j'_o}\neq0.
\end{equation}
By assumption of admissibility, we have $\beta j'_o<\alpha j_o$ and $\delta j'_o<\gamma j_o$.

Let $e:X\to\CC^2$ be a toric modification covered by affine charts with coordinates $(u,v)$ as above, such that the slope $\alpha/\beta$ of each side of $\NP(\omega)$ occurs as an exponent $(\alpha,\beta)$ of some chart of $e$. To each chart corresponds then a vertex of $\NP(\omega)$. Moreover, \eqref{eq:toricchart} together with the admissibility condition implies that $e^*\omega$ has a pole along each component of $e^{-1}(D)$. Away from the crossing points, there is thus a finite number of singular points of $e^*\omega$. Let us describe these singular points.
\begin{enumeratea}
\item
Assume $\alpha\neq0$ and, in the chart as in \eqref{eq:toricchart}, assume that we work on the open set $v\neq0$ so that $e^{-1}(D)=\{u=0\}$. If $\alpha/\beta$ is not a slope of an edge of $\NP(\omega)$, then $e^*\omega$ has the form given in \eqref{eq:toricchart} and there is no singular point on this open smooth component of $e^{-1}(D)$.
\item
If on the other hand $\alpha/\beta$ is a slope of an edge of $\NP(\omega)$ (and still assuming $\alpha\neq0$), let us write $\sum_{\alpha j-\beta j'=k_1}\omega_{j,j'}y^{j'}/x^j$ the corresponding part of $\omega$ (at least two coefficients $\omega_{j,j'}$ are nonzero, namely those corresponding to the two vertices $(j_0,j'_0)$ and $(j_1,j'_1)$, $j_0<j_1$, of the edge). Then, in the corresponding chart, $e^*(\omega)$ has a pole of order $k_1$ along $u=0$ and the singular points are the points whose coordinate~$v$ is nonzero and $1/v$ is a multiple root of the polynomial $\sum_{\alpha j-\beta j'=k_1}\omega_{j,j'}w^{\gamma j-\delta j'}$. The height of such a singular point is its multiplicity as a root of this polynomial. The degree of the polynomial (once divided by the maximal power of $w$) is equal to $\gamma(j_1-j_0)-\delta(j'_1-j'_0)$. Since $\alpha(j_1-j_0)=\beta(j'_1-j'_0)$ and $\beta\gamma-\alpha\delta=1$, this degree can be written as $(j'_1-j'_0)/\alpha\leq h/\alpha$.

Let us check that the multiplicity of each root is strictly less than $h$. This is clear if the degree is $<h$. On the other hand, the degree is equal to $h$ if and only if $\NP(\omega)$ has only one edge, with vertices $(\ell,0)$ and $(k,h)$, and $\alpha=1$, $\beta=(k-\ell)/h\in\NN$. The polynomial is written as $\sum_{j'=0}^h\omega_{\ell+\beta j',j'}w^{j'}$. By assumption, the coefficients of~$1$ and~$w^h$ are nonzero, while the coefficient of $w^{h-1}$ is zero by adaptedness. This polynomial has therefore no root of multiplicity $h$.

\item
If $\alpha=0$, we have $\beta=\gamma=1$ and $e^{-1}(D)=\{v=0\}$. If $1/\delta$ is not the slope of the non-horizontal edge of $\NP(\omega)$ with vertex $(\ell,0)$, then $e^*\omega=c(u,v)/v^\ell$ with $c(u,0)\in\CC^*$. Then $e^*\omega$ has no singular point on $\{v=0\}$. On the other hand, if $1/\delta=1/\delta_o$ is the slope of this edge, $u_o\in\CC$ is a singular point of $e^*\omega$ if and only if it is a multiple root of $\sum_{j'\geq0}\omega_{\ell+\delta_oj',j'}u^{j'}$. The degree of this polynomial is $\leq h$, with equality only if $k=\ell+\delta_oh$, and an argument as above shows that the multiplicity of each root is $<h$.
\end{enumeratea}

\medskip
We now come back to the proof of the proposition in Case \eqref{enum:proofa} by induction on~$h$. By taking $\eta\in\CC\lpb t\rpb/\CC\lcr t\rcr$ out of the finite set $S$ introduced above, we can assume that $\omega-f^*\eta$ is admissible for each such $\eta$. Up to replacing $\omega$ with $\omega-\eta_o$ and~$S$ with $S-\eta_o$ for a suitable $\eta_o$, we may assume that $0\notin S$, \ie $\omega$ itself is admissible. Recall also that the adaptedness assumption of the coordinate $y$ with respect to $\omega$, and hence to any $\rho_q^*\omega-f_q^*\eta^{(q)}$, is still active. We can already perform (Proposition \ref{prop:psiramif}) a ramification in order that each side of the Newton polygon of $\omega$ has a slope whose inverse is an integer, or $\infty$. Let us then consider a proper toric modification $e$ adapted to $\NP(\omega)$ as above. Given $q\in\NN^*$, we can lift the ramification $\rho_q:t_q\to t=t_q^q$ in each chart of~$e$ by $\rho_q:(u_q,v_q)\mto(u,v)=(u_q^q,v_q^q)$, and we similarly define the finite map $e_q:(u_q,v_q)\mto(x_q,y)=(u_q^\alpha v_q^\gamma,u_q^{q\beta}v_q^{q\delta})$. This does not affect the multiplicity of the possible singular points away from the center of the chart. In such a chart, $(f\circ e)_q=f_q\circ e_q:(u_q,v_q)\mto u_q^\alpha v_q^\gamma$. (We will modify the definition of $\rho_q,e_q$ if $\alpha=0$, see \eqref{enum:alphanul} below.)

\begin{enumerate}\setcounter{enumi}{-1}
\item\label{enum:crossing}
At each crossing point of $e^{-1}(D)$ (which corresponds to a vertex $(j_o,j'_o)\neq(\ell,0)$ of $\NP(\omega)$), $e^*\omega$ takes the form \eqref{eq:toricchart}. On the other hand, given $\eta=\sum_{i=1}^m\eta_i/t^i$ (with $\eta_m\neq0$), we have $(f\circ e)^{-1}\eta=\sum_{i=1}^m\eta_i/u^{\alpha i}v^{\gamma i}$. Note that $(\alpha j_o-\nobreak\beta j'_o,\gamma j_o-\nobreak\delta j'_o)\neq(\alpha i,\gamma i)$ for any $i\geq1$. We conclude from Lemma \ref{lem:basicvanishingb} (\eqref{lem:basicvanishing2a} if $(\alpha m,\gamma m)$ and $(\alpha j_o-\nobreak\beta j'_o,\gamma j_o-\nobreak\delta j'_o)$ are comparable in $\NN^2$, and \eqref{lem:basicvanishing2b} if they are not comparable) that $\psi_{f\circ e}(e^+(\cE^{\omega-f^*\eta}\otimes\nobreak\cR)_{(0,0)}=0$ if $\eta\neq0$. A similar argument can be used for $\rho_q^*\omega-f_q^*\eta^{(q)}$ for any $q\geq1$.
\item\label{enum:notslope}
Assume that $\alpha\neq0$ and $\alpha/\beta$ is not a slope of an edge of $\NP(\omega)$ at a vertex $(j_o,j'_o)$ with $j'_o\neq0$. Then the coefficient of $1/u^{\alpha j_o-\beta j'_o}$ in $e^*\omega-(f\circ e)^*\eta$ is a Laurent polynomial in the variable $v$ with at most two monomials, namely $\omega_{j_o,j'_o}v^{\delta j'_o-\gamma j_o}$ and $\eta_iv^{-\gamma i}$ for $i$ such that $\alpha i=\alpha j_o-\beta j'_o$. Such a polynomial cannot have a multiple root which is nonzero. Hence $e^*\omega-(f\circ e)^*\eta$ has no singular point on $\{u=0\}\cap\{v\neq0\}$, and $\psi_{f\circ e}(e^+(\cE^{\omega-f^*\eta}\otimes\nobreak\cR)=0$ all along this set. The same property holds for $\rho_q^*\omega-f_q^*\eta^{(q)}$ for any $q\geq1$.
\item\label{enum:slope}
Assume that $\alpha\neq0$ and $\alpha/\beta$ is a slope of an edge of $\NP(\omega)$ at a vertex $(j_o,j'_o)$ with $j'_o\neq0$ (by our assumption, $\beta/\alpha$ is an integer). Let us fix $q\geq1$. For which $\eta^{(q)}$ does the set of singular points of $e_q^*\rho_q^*\omega-(f_q\circ e_q)^*\eta^{(q)}$ differs from that of $e_q^*\rho_q^*\omega$ on $\{u_q=0,v_q\neq0\}$? The order of the pole along $u_q=0$ of $e_q^*\rho_q^*\omega$ is $q(\alpha j_o-\beta j'_o)$ and the corresponding coefficient is $\sum_{(j,j')\mid\alpha j-\beta j'=\alpha j_o-\beta j'_o}\omega_{j,j'}/v_q^{q(\gamma j-\delta j')}$. On the other hand, if we set $\eta^{(q)}=\sum_{i=1}^m\eta^{(q)}_it_q^{-i}$, the dominant term of $(f_q\circ e_q)^*\eta^{(q)}$ is $\eta_mv_q^{-\gamma m}u_q^{-\alpha m}$.
\begin{itemize}
\item
If $\alpha m>q(\alpha j_o-\beta j'_o)$, $e_q^*\rho_q^*\omega-(f_q\circ e_q)^*\eta^{(q)}$ does not have any singular point.
\item
If $\alpha m<q(\alpha j_o-\beta j'_o)$, $e_q^*\rho_q^*\omega-(f_q\circ e_q)^*\eta^{(q)}$ has the same singular points as $e_q^*\rho_q^*\omega$, all of which have multiplicity $<h$ as already proved.
\item
If $\alpha m=q(\alpha j_o-\beta j'_o)$, the coefficient of $1/u_q^{q(\alpha j_o-\beta j'_o)}$ in $e_q^*\rho_q^*\omega-(f_q\circ e_q)^*\eta^{(q)}$ is equal to the Laurent polynomial
\[
P_{q,\eta_m}(w_q)\defin\Big(\sum_{(j,j')\mid\alpha j-\beta j'=\alpha j_o-\beta j'_o}\hspace*{-4mm}\omega_{j,j'}w_q^{q(\gamma j-\delta j')}\Big)-\eta_mw_q^{\gamma m}\qquad (w_q=1/v_q).
\]
Set $m_o=j_o-\beta j'_o/\alpha$ and assume $m_o>0$ (hence $m_o\in\NN^*$, due to our assumption). Then there exists a finite number of $c\in\CC$ such that $P_{1,c}(w)$ has a multiple root. Moreover, the multiple roots of $P_{q,c}(w)=P_{1,c}(w^q)$ are the $q$th powers of those of $P_{1,c}$, and keep the same multiplicity. The set of all multiple roots of all $P_{1,c}$ are therefore the possible singular points if $q=1$, and the possible singular points for $q\geq2$ are obtained from the previous ones by a ramification $w_q\mto w=w_q^q$. Note also that the multiplicity of such a multiple root is $<h$. This is seen as above by using the adaptedness assumption.
\end{itemize}
For each multiple root $w_o$ of $P_{1,c}$, we apply the inductive assumption to $e^*\omega-(f\circ\nobreak e)^*(ct^{-m_o})$ and get a finite set $S'_{c,w_o}$. By translating by $(f\circ e)^*(ct^{-m_o})$, we obtain a finite set $S'_{w_o}$ associated to $e^*\omega$ at $w_o$. When $w_o$ varies in the finite set of all possible singular points in $\CC^*$, we obtain a finite set $S^{\alpha,\beta}$. For every $q\geq1$ and every $\eta^{(q)}\notin \rho_q^*(S\cup S^{\alpha,\beta})$, we have $\psi_{f\circ e}\big(\rho_{q,+}\cE^{-(f_q\circ e_q)^*\eta^{(q)}}\otimes\cE^{e^*\omega}\otimes e^+\cR\big)=0$ all along $v\neq0$.

\item\label{enum:alphanul}
We now consider the case where $\alpha=0$, so that $\beta=\gamma=1$. The case where $1/\delta$ is not the slope of the non-horizontal edge having $(\ell,0)$ as a vertex is treated as in~\eqref{enum:notslope} above, so we only consider the case where it is equal to the slope $1/\delta_o$. In such a chart, we have $f\circ e(u,v)=v$, and we can define the ramification $\rho_q$ on the variable~$v$ only, \ie $\rho_q(u,v_q)=(u,v_q^q)$, and the map $e_q$ is defined by $e_q(u,v_q)=(v_q,uv_q^{q\delta})$. We can then argue exactly as in \eqref{enum:slope} above, with the only difference that the variable $w$, which is equal to $u$, can achieve the value~$0$. We notice that the ramification $\rho_q$ does not affect the multiplicity of the possible singular points all over the chart, including at $u=0$.
\end{enumerate}

Let us now denote by $S^\omega$ the finite set $S\cup S^e$, where $S^e$ is the union of all $S^{\alpha,\beta}$ as above for which $\alpha/\beta$ is a slope of $\NP(\omega)$ (with the previous assumptions that $y$ is adapted to $\omega$, $\omega$ is admissible and all $\beta/\alpha$ are integers). Let us fix $q\in\NN^*$ and $\eta^{(q)}\notin \rho_q^*(S^\omega)$. We will show that $\psi_f((\rho_{q,+}\cE^{\eta^{(q)}})\otimes\cE^\omega\otimes\cR)_0=0$. According to Proposition \ref{prop:psiimdir}, it is enough to check that $\psi_{f\circ e}((e^+\rho_{q,+}\cE^{f_q^*\eta^{(q)}})\otimes\cE^{e^*\omega}\otimes e^+\cR)_{e^{-1}(0)}=0$. This is a local problem on $e^{-1}(0)$. In each chart, we may use a ramification $\rho_q$ at the level of the variables $u,v$ (\resp the variable $v$ if $\alpha=0$). The computation \eqref{enum:crossing}--\eqref{enum:alphanul} above shows that, in a given chart, $\psi_{f\circ e}((\rho_{q,+}e_q^+\cE^{f_q^*\eta^{(q)}})\otimes\cE^{e^*\omega}\otimes e^+\cR)=0$, and this is equivalent to the desired vanishing in this chart.\qed

\subsubsection*{Proof of Proposition \ref{prop:finitudelocalirreg} in Case \eqref{enum:proofb}}
We will need a criterion similar to that of Proposition \ref{prop:supppsix} ensuring the vanishing of~$\psi_f^\rmod$. Let $\omega(x,y)$ be any nonzero germ at the origin of a class (modulo holomorphic functions) of a meromorphic function, and let us denote its Newton polygon by $\NP(\omega)$. By definition, it is the convex hull of the quadrants $(j,j')-\NN^2$ for the pairs $(j,j')$ such that the coefficient $\omega_{j,j'}/x^jy^{j'}$ of $\omega$ is nonzero. Note that $j$ and $j'$ are not both $\leq0$.
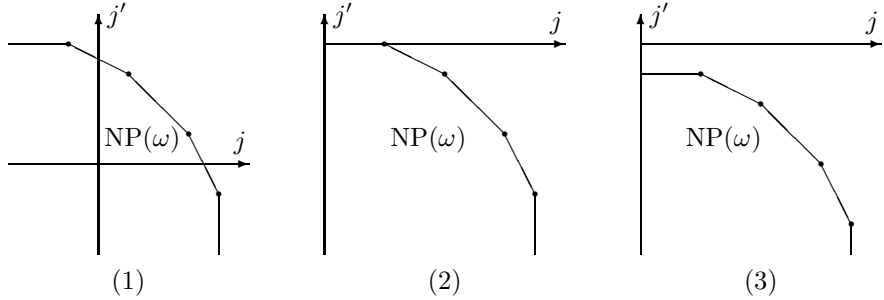
\begin{figure}[htb]
\setlength{\unitlength}{4mm}
\begin{center}
\begin{tabular}{ccccc}
\begin{picture}(8,8)(0,0)
\put(6,4){\line(1,-2){1}}
\put(6,4){\line(-1,1){2}}
\put(7,2){\line(0,-1){2}}
\put(4,6){\line(-2,1){2}}
\put(2,7){\line(-1,0){2}}
\put(0,3){\vector(1,0){8}}
\put(3,0){\vector(0,1){8}}
\put(2,7){\circle*{.2}}
\put(4,6){\circle*{.2}}
\put(6,4){\circle*{.2}}
\put(7,2){\circle*{.2}}
\put(3.3,7.7){$j'$}
\put(7.5,3.5){$j$}
\put(3.2,3.6){$\NP(\omega)$}
\end{picture}
&\hspace*{3mm}&
\begin{picture}(8,8)(0,0)
\put(6,4){\line(1,-2){1}}
\put(6,4){\line(-1,1){2}}
\put(7,2){\line(0,-1){2}}
\put(4,6){\line(-2,1){2}}
\put(2,7){\line(-1,0){2}}
\put(0,7){\vector(1,0){8}}
\put(0,0){\vector(0,1){8}}
\put(2,7){\circle*{.2}}
\put(4,6){\circle*{.2}}
\put(6,4){\circle*{.2}}
\put(7,2){\circle*{.2}}
\put(.3,7.7){$j'$}
\put(7.5,7.5){$j$}
\put(2.2,3.6){$\NP(\omega)$}
\end{picture}
&\hspace*{3mm}&
\begin{picture}(8,8)(0,0)
\put(6,3){\line(1,-2){1}}
\put(6,3){\line(-1,1){2}}
\put(7,1){\line(0,-1){1}}
\put(4,5){\line(-2,1){2}}
\put(2,6){\line(-1,0){2}}
\put(0,7){\vector(1,0){8}}
\put(0,0){\vector(0,1){8}}
\put(2,6){\circle*{.2}}
\put(4,5){\circle*{.2}}
\put(6,3){\circle*{.2}}
\put(7,1){\circle*{.2}}
\put(.3,7.7){$j'$}
\put(7.5,7.5){$j$}
\put(1.5,3.6){$\NP(\omega)$}
\end{picture}\\
(1)&&(2)&&(3)
\end{tabular}
\caption{Examples of Newton polygons of $\omega$\label{fig:NPomega2}}
\end{center}
\end{figure}

We say that $\omega$ is \emph{admissible} if $\partial\NP(\omega)$ is not contained in
\begin{itemize}
\item
a ``closed semi-negative quadrant'', that is, one of the quadrants $(-\NN)\times\NN$ or $\NN\times(-\NN)$ (\eg as in Figure \ref{fig:NPomega2}$(1)$ above, but not (2) or (3)), if $a,b>0$,
\item
the ``open semi-negative quadrant'' $\NN\times(-\NN^*)$ if $b=0$ (\ie Figure \ref{fig:NPomega2}$(3)$ is excluded, but Figure \ref{fig:NPomega2}$(2)$ is accepted).
\end{itemize}
Since we are interested in $\omega$ modulo $f^*\CC\lpb t\rpb$, we can assume from the beginning that~$\omega$ is admissible. More precisely:

\begin{lemme}\label{lem:nonadmissiblefini}
Given $\omega$, there exists a finite number of ramified polar parts $\eta$ such that $\rho_q^*\omega-f_q^*\eta^{(q)}$ is not admissible.\qed
\end{lemme}

\begin{definitio}\label{def:singomegaD}
Assume that $\omega$ is admissible and let $(\alpha,\beta)\in(\NN^*)^2$ be coprime. The subset $\Sing_{\alpha,\beta}(\omega,D)\subset\CC^*$ consists of the nonzero complex numbers $v_o$ such that there exists $(\gamma,\delta)\in\NN^2$ satisfying
\begin{enumerate}
\item
$\alpha\delta-\beta\gamma=\pm1$,
\item
$v_o$ is a singular point of the pair $\big(\omega(v_o^\gamma u^\alpha,v_o^\delta u^\beta),\{u=0\}\big)$.
\end{enumerate}
We set
\[
\Sing(\omega,D)=\bigcup_{(\alpha,\beta)}\Sing_{\alpha,\beta}(\omega,D).
\]
\end{definitio}

\begin{lemme}\label{lem:singfinite}
If $\omega$ is admissible, the set $\Sing(\omega,D)$ is finite.
\end{lemme}

\begin{proof}
If $\mu=\max\{\alpha j+\beta j'\mid(j,j')\in\NP(\omega)\}$ is achieved at a single vertex $(j_o,j'_o)$ of $\partial\NP(\omega)$ then, for each $v_o\in\CC^*$, $\omega(v_o^\gamma u^\alpha,v_o^\delta u^\beta)=c v_o^{-(\gamma j_o+\delta j'_o)}/u^\mu+\text{lower order poles}$, with $c=\omega_{j_o,j'_o}\neq0$, (that $\mu$ is $\geq1$ follows from the assumption of admissibility). Thus $\Sing_{\alpha,\beta}(\omega,D)=\emptyset$ if $(\alpha,\beta)$ is not the direction of an edge of $\partial\NP(\omega)$. On the other hand, each $\Sing_{\alpha,\beta}(\omega,D)$ is finite, because the dependence with respect to $v$ of the coefficient $c(v)$ of $1/u^\mu$ is algebraic.
\end{proof}

Let $e:X'\to X$ be a smooth toric modification of a neighbourhood $X$ of \hbox{$(x=0,y=0)$}. We say that $e$ is \emph{adapted to $\NP(\omega)$} if $X'$ is covered by charts with coordinates $(u,v)$ such that $e(u,v)=(u^\alpha v^\gamma,u^\beta v^\delta)$ with $\alpha\delta-\beta\gamma=\pm1$ and, for each such chart, there is a unique vertex $(j_o,j'_o)$ of $\partial\NP(\omega)$ such that
\begin{align*}
\alpha j_o+\beta j'_o&=\mu\defin\max\{\alpha j+\beta j'\mid(j,j')\in\NP(\omega)\},\\
\gamma j_o+\delta j'_o&=\mu'\defin\max\{\gamma j+ \delta j'\mid(j,j')\in\NP(\omega)\}.
\end{align*}

\begin{lemme}\label{lem:psisomdir}
Assume that $\omega$ is admissible and $e$ is adapted to $\NP(\omega)$. Then $\psi_f(\cE^\omega\otimes\cR)=\bigoplus_{v_o\in\Sing(\omega,D)}\psi_{f\circ e}(e^+(\cE^\omega\otimes\cR))_{v_o}$.
\end{lemme}

\begin{proof}
We first note that all singular points of $(\omega,D)$ appear on some exceptional component of $e$, due to adaptedness. Since $\omega$ is admissible, we have $(\mu,\mu')\in\NN^2\moins\nobreak\{0\}$. At the center of a chart corresponding to a vertex $(j_o,j'_o)$, we have $e^*\omega(u,v)=c(u,v)/u^\mu v^{\mu'}$ with $c(0,0)=\omega_{j_o,j'_o}$ and $f\circ e(u,v)=u^{\alpha a+\beta b}v^{\gamma a+\delta b}$. Therefore, if $\mu,\mu'>0$, $\psi_{f\circ e}^\rmod(\cE^{e^*\omega}\otimes e^+\cR)_0=0$, according to Lemma \ref{lem:basicvanishingb}\eqref{lem:basicvanishing2b}. If $\mu'=0$ and $\mu>0$, then by admissibility we have $j'_o=0$, $\gamma=0$, $\alpha=\delta=1$ and, by admissibility, $b=0$, $f(u,v)=u^a$, hence $\psi_{f\circ e}^\rmod(\cE^{e^*\omega}\otimes e^+\cR)_0=0$ according to Lemma \ref{lem:basicvanishing}\eqref{lem:basicvanishing0}. The case $\mu=0$ and $\mu'>0$ is obtained by inverting the roles of $\alpha,\beta$ and $\gamma,\delta$.

Admissibility implies that $e^*\omega$ has a pole along each irreducible component of $(f\circ e)^{-1}(0)$. From Proposition \ref{prop:supppsix} we obtain that $\psi_{f\circ e}(e^+(\cE^\omega\otimes\cR))$ is supported on $\Sing(\omega,D)$. The result follows from Proposition \ref{prop:comppush}, since the set of singular points of $e^*\omega$ is finite, by Lemma \ref{lem:singfinite}.
\end{proof}

\begin{remarque}\label{rem:usecasea}
Note that the result of Case \eqref{enum:proofa} implies that there exists a finite set of ramified polar parts $\eta$ such that, for any $\eta$ not belonging to this set, and any singular point $v_o\in\Sing(\omega,D)$, $\psi_{f\circ e}\big(e^+[(\rho_{q,+}\cE^{f_q^*\eta^{(q)}})\otimes\cE^\omega\otimes\cR]\big)_{v_o}=0$.
\end{remarque}

The point in proving Case \eqref{enum:proofb} is that the singular points of $\rho_q^*\omega-f_q^*\eta^{(q)}$ may depend on~$\eta$. We will therefore take into account the variation with $\eta$ of this set of singular points.

For any $\omega\neq0$, let us set $\omega'=\omega-\sum_{k\in\QQ_+}\omega_{ka,kb}/x^{ka}y^{kb}$.We say that an edge of $\NP(\omega')$ is \emph{admissible} if the line which supports it cuts the open quadrant $j,j'>0$. When $\eta$ varies among ramified polar parts, the set of $\rho_q^*\omega-f_q^*\eta^{(q)}$ is equal to that of $\rho_q^*\omega'-f_q^*\eta^{(q)}$, and any element of the latter set is admissible if $\eta\neq0$.

Let us denote by $K(\omega')\subset\QQ_+^*$ the finite set of positive rational numbers $k$ such that $k\cdot(a,b)$ belongs to a line containing an (admissible) edge of $\NP(\omega')$ (\cf Figure~\ref{fig:Komega}; recall that $f(x,y)=x^ay^b$).

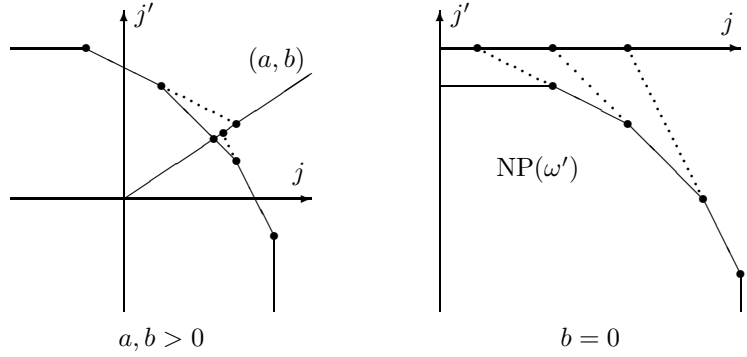
\begin{figure}[htb]
\setlength{\unitlength}{5mm}
\begin{center}
\begin{tabular}{ccc}
\begin{picture}(8,8)(0,0)
\put(6,4){\line(1,-2){1}}
\put(6,4){\line(-1,1){2}}
\put(7,2){\line(0,-1){2}}
\put(4,6){\line(-2,1){2}}
\put(2,7){\line(-1,0){2}}
\put(3,3){\line(3,2){5}}
\put(0,3){\vector(1,0){8}}
\put(3,0){\vector(0,1){8}}
\put(2,7){\circle*{.2}}
\put(4,6){\circle*{.2}}
\put(6,4){\circle*{.2}}
\put(7,2){\circle*{.2}}
\put(6,5){\circle*{.2}}
\put(5.4,4.6){\circle*{.2}}
\put(5.65,4.75){\circle*{.2}}
\multiput(4,6)(.2,-.1){10}{\circle*{.1}}
\multiput(6,4)(-.1,.2){4}{\circle*{.1}}
\put(3.3,7.7){$j'$}
\put(7.5,3.5){$j$}
\put(6.3,6.5){$(a,b)$}
\end{picture}&\hspace*{10mm}&
\begin{picture}(8,8)(0,0)
\put(7,3){\line(1,-2){1}}
\multiput(7,3)(-.1,.2){20}{\circle*{.1}}
\put(7,3){\line(-1,1){2}}
\multiput(5,5)(-.2,.2){10}{\circle*{.1}}
\put(8,1){\line(0,-1){1}}
\put(5,5){\line(-2,1){2}}
\multiput(3,6)(-.2,.1){10}{\circle*{.1}}
\put(3,6){\line(-1,0){3}}
\put(0,7){\vector(1,0){8}}
\put(0,0){\vector(0,1){8}}
\put(3,6){\circle*{.2}}
\put(5,5){\circle*{.2}}
\put(7,3){\circle*{.2}}
\put(8,1){\circle*{.2}}
\put(1,7){\circle*{.2}}
\put(3,7){\circle*{.2}}
\put(5,7){\circle*{.2}}
\put(.3,7.7){$j'$}
\put(7.5,7.5){$j$}
\put(1.5,3.6){$\NP(\omega')$}
\end{picture}\\
$a,b>0$&&$b=0$
\end{tabular}
\end{center}
\caption{Examples of sets $K(\omega)\cdot(a,b)$}\label{fig:Komega}
\end{figure}

\begin{lemme}\label{lem:kogen}
There exists at most a finite number of nonzero ramified polar parts $\eta=\sum_{k\in\QQ_+^*}\eta_kt^k$ such that, if $k_o\defin\max\{k\mid\eta_k\neq0\}$ does not belong to $K(\omega')$, then $\psi_{f_q}(\cE^{\rho_q^*\omega'-f_q^*\eta^{(q)}}\otimes\nobreak\cR)_0\neq\nobreak0$.
\end{lemme}

\begin{proof}
We denote by $\Sing'(\omega',D)$ the set of singular points of $\omega'$ corresponding to admissible edges of $\NP(\omega')$. Let $\eta\neq0$ be as in the lemma and let $q$ be associated with $\eta$ as above. We claim that $\Sing(\rho_q^*\omega'-f_q^*\eta^{(q)},D_q)\subset\Sing'(\rho_q^*\omega',D_q)$. Indeed, let us consider an edge $E$ of $\partial\NP(\rho_q^*\omega'-f_q^*\eta^{(q)})$. By assumption on $k_o$, either $E$ is an admissible edge of $\partial\NP(\rho_q^*\omega')$, and then the corresponding singular points of $\rho_q^*\omega'-f_q^*\eta^{(q)}$ are those of $\rho_q^*\omega'$, or it is not an edge of $\partial\NP(\rho_q^*\omega')$, and then it supports only two monomials of $\rho_q^*\omega'-f_q^*\eta^{(q)}$, one at each vertex, one of them being $k_o\cdot(a,b)$. In the latter case, it produces no singular point of $\rho_q^*\omega'-f_q^*\eta^{(q)}$, since a Laurent polynomial with exactly two terms has only simple roots in $\CC^*$.

If moreover $\eta$ does not belong to the finite set considered in Remark \ref{rem:usecasea}, we have $\psi_{f\circ e}\big(e^+[(\rho_{q,+}\cE^{f_q^*\eta^{(q)}})\otimes\cE^\omega\otimes\cR]\big)_{v_o}=0$ for each $v_o\in\Sing'(\omega',D)$. Applying now Lemma \ref{lem:psisomdir} to $\rho_q^*\omega'-f_q^*\eta^{(q)}$, we obtained the desired vanishing.
\end{proof}

Let now $\eta\neq0$ be such that $k_o=k_o(\eta)\in K(\omega')$.

\begin{lemme}
There exists a finite set $F_{k_o}\subset\CC$ such that
\[
\Sing(\rho_q^*\omega'-f_q^*(\eta_{k_o}/t^{k_o}),D_q)\not\subset\Sing'(\rho_q^*\omega',D_q)\implique \eta_{k_o}\in F_{k_o}.
\]
\end{lemme}

\begin{proof}
It is analogous to the proof given in Part \eqref{enum:slope} of the proof of Case \eqref{enum:proofa}.
\end{proof}

Once this lemma is proved, one may apply the same argument as in Lemma \ref{lem:kogen} if $\eta_{k_o}$ does not belong to $F_{k_o}$.
For each $\eta_{k_o}\in F_{k_o}$, the singular set $\Sing(\rho_q^*\omega'-f_q^*\eta^{(q)})$ does not depend on $\eta=\eta_{k_o}/t^{k_o}+\sum_{k<k_o}\eta_k/t^k$, and one can apply the same argument as in Lemma \ref{lem:kogen}.
\end{proof}

\chapterspace{-2}
\chapter{Nearby cycles of Stokes-filtered~local~systems}\label{chap:nearby}

\begin{sommaire}
In this \chaptername, we define a nearby cycle functor for a good Stokes-filtered local system on $(X,D)$, relative to a holomorphic function whose zero set is contained in the normal crossing divisor $D$. We then show that the Riemann-Hilbert correspondence of \Chaptersname\ref{chap:RHgoodnc} is compatible with taking nearby cycles, either in the sense of irregular nearby cycles for meromorphic flat bundles as defined in \Chaptersname\ref{chap:irregnearby}, or as defined for Stokes-filtered local systems in this \chaptername.
\end{sommaire}

\subsection{Introduction}
The sheaf-theoretic definition of the nearby cycle functor by P\ptbl Deligne in \cite{Deligne73} has led to the definition of the moderate nearby cycle functor for holonomic $\cD$-modules in order that the Riemann-Hilbert correspondence for regular holonomic $\cD$-modules is compatible with both functors. Following P\ptbl Deligne, we have extended the moderate nearby cycle functor to the irregular nearby cycle functor in \Chaptersname\ref{chap:irregnearby}. Going now the way back compared to the case of holonomic $\cD$-modules with regular singularities, we will define a nearby cycle functor for Stokes-filtered local systems and we will prove, in the good case, the compatibility with the irregular nearby cycle functor via the Riemann-Hilbert correspondence of \Chaptersname\ref{chap:RHgoodnc}.

As we will see, the proof of some properties, like the compatibility with proper push-forward, is much easier in the case of holonomic $\cD$-modules, where we can use the strength of the algebraic machinery, and we will give few proofs for Stokes-filtered local systems, where the behaviour of the topology of the real blow-up spaces with respect to complex blowing-ups is difficult to understand in general. This is why we will mainly restrict to dimension two.

\subsection{Nearby cycles along a function (the good case)}\label{subsec:nearbygood}
We denote by~$S$ a neighbourhood of the origin in $\CC$ and we consider a holomorphic function $f:X\to\nobreak S$ such that the divisor $D=f^{-1}(0)$ has normal crossings with smooth irreducible components $D_{j\in J}$.

Let $\varpi:\wt X=\wt X(D_{j\in J})\to X$ be the real blow-up space of $X$ along the components of $D$ and let $\wt S$ be the real blow-up space of $S$ at the origin. There is a lifting $\wt f:\wt X\to \wt S$ of $f$. We have a diagram of sheaves of ordered abelian groups
\[
\ccI_{\wt X}\From{q_f=f^*}\wt f^{-1}\ccI_{\wt S}\To{\wt f}\ccI_{\wt S}.
\]
Because $S$ is one-dimensional, the morphism $f^*$ is injective (\cf Proposition \ref{prop:f*inj}). Let us also notice that any local section $\varphi$ of $\ccI_{\partial\wt S}$ determines a finite covering $\wt\Sigma_\varphi\subset\ccIet_{|\partial\wt S}$ of $\partial\wt S$: indeed, this is clear if $\varphi$ is non-ramified, \ie is a local section of $\ccI_{\partial\wt S,1}$; if it is ramified of order $d$, then one argues by using a ramified covering $\rho_d$ of $(S,0)$. Now, if $\wt\Sigma_\varphi$ is such a covering, then its pull-back $\partial\wt X\times_{\partial\wt S}\wt\Sigma_\varphi$ is a finite covering of $\partial\wt X$, and its image by the inclusion $f^*$ is a finite $\ccI_{\partial\wt X}$-covering of $\partial\wt X$ and we denote it by $\wt\Sigma_{f^*\varphi}$.

Notice that $D\times\partial\wt S$ is the boundary of the real blow-up space of $X$ along $f^{-1}(0)$ (\cf Lemma \ref{lem:eclatereel}) and we have a natural map $(\varpi,\wt f):\partial\wt X\to D\times\partial\wt S$. For $x$ belonging to a stratum of $D$ of codimension $\ell$ in $X$, and for $\theta\in\partial\wt S\simeq S^1$, the fibre $(\varpi,\wt f)^{-1}(x,\theta)$ is a union of a finite number of copies of $(S^1)^{\ell-1}$, since $f$ is locally monomial. The natural map $\wt\Sigma_{f^*\varphi}\to D\times\partial\wt S$ has a similar property.

\begin{definitio}
Let $\wt\Sigma_\varphi\subset\ccIet_{\partial\wt S}$ be the covering associated to a local section $\varphi$ of $\ccI_{\partial\wt S}$, and let $\wt\Sigma\subset\ccIet_{\partial\wt X}$ be a good $\ccIet_{\partial\wt X}$-stratified covering of $\partial\wt X$. We say that the pair $(\wt\Sigma,\varphi)$ is good if $\wt\Sigma\cup \wt\Sigma_{f^*\varphi}$ (which is a stratified covering of $\partial\wt X$) is \emph{good}.
\end{definitio}

Let $(\cL,\cL_\bbullet)$ be a $\ccI_{\partial\wt X}$-filtered local system on $\partial\wt X$. Since $\ccI_{\partial\wt X}$ is not Hausdorff over the crossing points of $D$, the graded sheaf $\gr\cL$ has to be taken on each stratum of~$D$. On the other hand, $\wt f^{-1}\ccI_{\partial\wt S}$ is Hausdorff, since $\ccI_{\partial\wt S}$ is so (\cf Remark \ref{exem:separe}\eqref{exem:separe3}) and, since $q_f^{-1}\cL_\leq$ defines a pre-$\wt f^{-1}\ccI_{\partial\wt S}$-filtration of $\cL$, it is meaningful to consider $\gr\cL$ as a subsheaf on $(\wt f^{-1}\ccI_{\partial\wt S})^\et=\partial\wt X\times_{\partial\wt S}\ccIet_{\partial\wt S}$. We will denote this sheaf as $\gr^f\cL$ in order to avoid any confusion. For any local section $\varphi$ of $\ccI_{\partial\wt S}$, we denote by $\gr^f_{\wt f^{-1}\varphi}\cL$ the restriction of $\gr^f\cL$ to $\partial\wt X\times_{\partial\wt S}\wt\Sigma_\varphi$ and we still denote by $\varpi$ the projection $\partial\wt X\times_{\partial\wt S}\wt\Sigma_\varphi\to D$.

\begin{proposition}\label{prop:grf}
Assume that $(\cL,\cL_\bbullet)$ is a $\ccI_{\partial\wt X}$-filtered local system with associated stratified covering $\wt\Sigma$. Then, for any local section $\varphi$ of $\ccI_{\partial\wt S}$ we have, over each stratum of $D$, a surjective morphism
\[
\gr^f_{\wt f^{-1}\varphi}\cL\to q_f^{-1}\gr_{f^*\varphi}\cL.
\]
Assume moreover that the pair $(\wt\Sigma,\varphi)$ is good. Then the restriction of $\gr^f_{\wt f^{-1}\varphi}\cL$ over each stratum of $D$ is a local system, and $\gr^f_{\wt f^{-1}\varphi}\cL$ is constructible on $\partial\wt X\times_{\partial\wt S}\wt\Sigma_\varphi$ with respect to the stratification obtained by pull-back from that of $D$.
\end{proposition}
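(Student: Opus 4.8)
The statement has two parts. For the first part (the surjective morphism $\gr^f_{\wt f^{-1}\varphi}\cL\to q_f^{-1}\gr_{f^*\varphi}\cL$ over each stratum of $D$), the point is to compare two grading operations on the same pre-filtered local system $\cL$: grading with respect to the pulled-back order coming from $\ccI_{\partial\wt S}$, versus grading with respect to the finer order of $\ccI_{\partial\wt X}$. I would work locally over a point $y\in\partial\wt X$ lying over a codimension-$\ell$ stratum $D_I^\circ$, and use that $f^*:\wt f^{-1}\ccI_{\partial\wt S,y}\hto\ccI_{\partial\wt X,y}$ is injective and compatible with order (Proposition \ref{prop:f*inj}, using $\dim S=1$). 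By definition $\cL_{<[\varphi]}^f=\sum_{\psi\mid f^*\psi<f^*\varphi}\cL_{\leq f^*\psi}$ is contained in $\cL_{<f^*\varphi}=\sum_{\chi<f^*\varphi}\cL_{\leq\chi}$, where in the latter sum $\chi$ ranges over \emph{all} local sections of $\ccI_{\partial\wt X}$, not only those in the image of $f^*$; since also $\cL^f_{\leq[\varphi]}\supset\cL_{\leq f^*\varphi}$, passing to quotients gives the natural map $\gr^f_{\wt f^{-1}\varphi}\cL=\cL^f_{\leq[\varphi]}/\cL^f_{<[\varphi]}\to\cL_{\leq f^*\varphi}/(\cL_{<f^*\varphi}\cap\cL_{\leq f^*\varphi})$. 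Surjectivity onto $q_f^{-1}\gr_{f^*\varphi}\cL=\cL_{\leq f^*\varphi}/\cL_{<f^*\varphi,\leq f^*\varphi}$ is then immediate from the inclusion $\cL_{\leq f^*\varphi}\subset\cL^f_{\leq[\varphi]}$. (One checks the construction is sheaf-theoretic over each stratum, so it glues.)

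For the second part, I would invoke the abelianity-and-strictness machinery of \Chaptersname\ref{chap:StokesfilteredNCD}, which is available precisely because of the goodness hypothesis. Fix a stratum $D_I^\circ$ and a point $y$ over it; after a local ramification we are in the non-ramified setting, and the good finite set $\Phi\subset\cO_{X,0}(*D)/\cO_{X,0}$ attached to $(\cL,\cL_\bbullet)$ together with the set of (pulled-back representatives of) $\varphi$ form a good set $\Phi''$, by the hypothesis that $\varphi$ is good with respect to $\wt\Sigma$. The key local statement to use is Proposition \ref{prop:abelianwithout}: a good Stokes-filtered local system is, locally, a direct sum of graded pieces, and the pre-$\wt f^{-1}\ccI_{\partial\wt S}$-filtration $q_f^{-1}\cL_\leq$ is obtained from the $\ccI_{\partial\wt X}$-filtration by a level-type operation (grading with respect to a coarser ordered sheaf), exactly as in Proposition \ref{prop:leveln}. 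Applying the local decomposition over $D_I^\circ$, the sheaf $\gr^f_{\wt f^{-1}\varphi}\cL$ restricted to $\partial\wt X\times_{\partial\wt S}\wt\Sigma_\varphi$ over $D_I^\circ$ becomes a direct sum $\bigoplus_{\psi\in\Phi,\,f^*\psi\equiv f^*\varphi}\gr_\psi\cL$ of the graded pieces $\gr_\psi\cL$, each of which is a local system on that stratum by definition of a good Stokes-filtered local system (Definition \ref{def:Ifiltstrat}). This gives local constancy on each stratum.

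The constructibility statement on the whole space $\partial\wt X\times_{\partial\wt S}\wt\Sigma_\varphi$ with respect to the pull-back stratification of $D$ then follows by combining the stratum-wise local constancy just established with the stratified $\ccI$-covering structure: the family $(\gr^f_{\wt f^{-1}\varphi}\cL)$ over the various strata satisfies the gluing conditions \ref{def:Ifiltstrat}\eqref{def:Ifiltstrat2}--\eqref{def:Ifiltstrat3} because $\wt\Sigma\cup\wt\Sigma_{f^*\varphi}$ is a good stratified $\ccI$-covering, and a sheaf on a stratified space which is locally constant on each stratum (with the stratification being locally finite and satisfying \eqref{eq:propstrat}) is constructible. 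The step I expect to be the main obstacle is controlling, uniformly in the stratum, the passage from the $\ccI_{\partial\wt X}$-filtration to the coarser $\wt f^{-1}\ccI_{\partial\wt S}$-filtration near the crossings of $D$ (where $\ccIet_{\partial\wt X}$ fails to be Hausdorff), i.e.\ checking that the local decompositions provided by Proposition \ref{prop:abelianwithout} over the various strata are compatible with the closure relations — which is precisely what the goodness of $\wt\Sigma\cup\wt\Sigma_{f^*\varphi}$ is designed to guarantee, and mirrors the argument of Lemma \ref{lem:Sigmagoodouvert}.
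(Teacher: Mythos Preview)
For the first part (the surjection), your argument is essentially the paper's: both come down to the inclusion $(q_f^{-1}\cL)_{<\wt f^{-1}\varphi,y}\subset\cL_{<f^*\varphi,y}$, using that $f^*$ is injective and strictly order-compatible (openness of $\wt f$, $\dim S=1$). One slip: the defining index in $(q_f^{-1}\cL)_{<\wt f^{-1}\varphi}$ is $\{\psi:\psi\letheta\varphi\text{ in }\ccI_{\partial\wt S}\}$, not $\{\psi:f^*\psi<f^*\varphi\}$; the latter is what you \emph{deduce} from the former via Proposition~\ref{prop:f*inj}. The paper writes this out through the local graded decompositions \eqref{eq:grf*}--\eqref{eq:grf**}.

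For the second part there is a genuine gap. Your level-structure analogy is wrong in kind: in Proposition~\ref{prop:leveln} the map $\ccI\to\ccI(\bell)$ is a \emph{surjection} and $\cL_{\leq[\cbbullet]_\bell}$ is obtained by saturation along its fibres, so $\gr_{[\varphi]_\bell}\cL$ collects those $\gr_\psi\cL$ with $[\psi]_\bell=[\varphi]_\bell$, a $y$-independent condition. Here $q_f=f^*:\wt f^{-1}\ccI_{\partial\wt S}\hookrightarrow\ccI_{\partial\wt X}$ is an \emph{injection}, and $q_f^{-1}\cL_\leq$ is the \emph{restriction} of the filtration to that sub-index-sheaf. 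There is no equivalence relation ``$f^*\psi\equiv f^*\varphi$'' on $\Phi$; the germ $\gr^f_{\wt f^{-1}\varphi}\cL_y$ is the sum of those $\gr_\eta\cL_y$ with $\eta\leqy f^*\varphi$ and $\nexists\,\psi\letheta\varphi$ with $\eta\leqy f^*\psi$, and the first condition visibly depends on $y$. Proposition~\ref{prop:abelianwithout} provides local splittings but says nothing about the constancy of this index set, which is precisely what is at stake.

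The paper's route is different: rather than describe $\gr^f$ directly, it studies the \emph{kernel} of the surjection from Part~1 and proves a separate lemma (Lemma~\ref{lem:f*le}) characterising, for $\eta-f^*\varphi$ purely monomial and $\eta\ley f^*\varphi$, the condition ``$\exists\,\psi\letheta\varphi$ with $\eta\leqy f^*\psi$'' as equivalent to $\eta-f^*\varphi$ having poles along \emph{every} component of $D$ through $\varpi(y)$. That pole condition depends only on the stratum, and the paper uses it together with the fact that $\gr_{f^*\varphi}\cL$ is a local system to conclude. This lemma is the missing ingredient; nothing in your proposal replaces it.
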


\begin{proof}
By definition, we have $(q_f^{-1}\cL_\bbullet)_{\leq\wt f^{-1}\varphi}=\cL_{\leq f^*\varphi}$. Let us fix $y\in\partial\wt X$ and let us set $\theta=\wt f(y)\in\partial\wt S$. The point is to show that $(q_f^{-1}\cL_\bbullet)_{<\wt f^{-1}\varphi,y}\subset\cL_{<f^*\varphi,y}$. Let $\wt\Sigma\subset\ccIet_{\partial\wt X}$ be the stratified $\ccI$-covering attached to $(\cL,\cL_\bbullet)$. On the one hand,
\bgroup\numstareq
\begin{equation}\label{eq:grf*}
(q_f^{-1}\cL_\bbullet)_{<\wt f^{-1}\varphi,y}=\sum_{\psi\letheta\varphi}\cL_{\leq f^*\psi,y}=\sum_{\psi\letheta\varphi}\bigoplus_{\substack{\eta\in\wt\Sigma_y\\\eta\leqy f^*\psi}}\gr_\eta\cL_y=\hspace*{-4mm}\bigoplus_{\substack{\eta\in\wt\Sigma_y\\\exists\psi\letheta\varphi,\,\eta\leqy f^*\psi}}\hspace*{-4mm}\gr_\eta\cL_y.
\end{equation}
\egroup
On the other hand,
\bgroup\numstarstareq
\begin{equation}\label{eq:grf**}
\cL_{<f^*\varphi,y}=\bigoplus_{\substack{\eta\in\wt\Sigma_y\\\eta\ley f^*\varphi}}\gr_\eta\cL_y.
\end{equation}
\egroup

Since $f^*$ is compatible with the order, we have $\psi\letheta\varphi\implique f^*\psi\leqy f^*\varphi$ and, according to the second part of Proposition \ref{prop:f*inj} since $\wt f:\wt X\to \wt S$ is open, we moreover have $f^*\psi\neq f^*\varphi$, that is, $f^*\psi\ley f^*\varphi$. This gives the inclusion $\eqref{eq:grf*}\subset\eqref{eq:grf**}$.

For the second part of the proposition, we will need a lemma.

\begin{lemme}\label{lem:f*le}
Let $\eta\in\ccI_{\partial\wt X,y}$ be such that the associated stratified covering $\wt\Sigma_\eta\cup\nobreak\{0\}\subset\ccIet_{\partial\wt X}$ of $\partial\wt X$ in some neighbourhood of $y$ is good (\ie after some finite ramification around $D$ near $x=\varpi(y)$, $\rho_{\bmd}^*\eta$ is purely monomial). Assume that $\eta\ley0$. Then, setting $\theta=\wt f(y)\in\partial\wt S$, the property
\bgroup\numstareq
\begin{equation}\label{eq:f*le}
\exists\psi\letheta0\in\ccI_{\partial\wt S,\theta},\quad \eta\leqy f^*\psi
\end{equation}
\egroup
holds if and only if $\eta$ has poles along all the local components of $D$ at $x=\varpi(y)$.
\end{lemme}

\begin{proof}
Assume first that $\eta$ has poles along all the local components of $D$. We will prove that, if $\psi\in\ccI_{\partial\wt S,\theta}$ has a pole of order $1/d$ with~$d$ big enough (\ie $\rho_d^{-1}\psi$ has a pole of order one), and if $\psi\letheta0$ (such a $\psi$ clearly exists), then $\eta\leqy f^*\psi$. As in Proposition \ref{prop:f*inj}, one can reduce the statement to the case where~$\eta$ is non-ramified, and so $\eta$ is purely monomial. In local coordinates adapted to $D$, we have $D=\{x_1\cdots x_\ell=0\}$, $f(x_1,\dots,x_n)=x^{\bmk}$ with $\bmk\in(\NN^*)^\ell$, $\eta=u(x_1,\dots,x_n)/x^{\bmm}$ for some $\bmm\in\NN^\ell$ and $u$ is a unit, and our assumption means that $\bmm\in(\NN^*)^\ell$. It is then enough to choose $d$ such that $\bmk<d\bmm$ with respect to the natural partial ordering of $\NN^\ell$. In such a case, $\eta-f^*\psi$ remains purely monomial with the same leading term as $\eta$.

Let us now assume that some $m_j$ is zero, with $j\in\{1,\dots,\ell\}$. Let $\psi$ be such that $\psi\letheta0$. Both $\eta$ and $f^*\psi$ are purely monomial, with leading monomial $x^{-\bmm}$ and $x^{-r\bmk}$ respectively, if $\psi$ has leading monomial $t^{-r}$, $r\in\QQ_+^*$. By our assumption on $\bmm$, there exists a local modification $\epsilon:(X',D')\to(X,D)$ near $x\in D$ such that the leading term of $\epsilon^*(\eta-f^*\psi)$ is $-f^*\psi$. If we had $\eta- f^*\psi\leqy0$ and $\psi\letheta0$, we would also have $-\epsilon^*f^*\psi\leqyp0$ for any $y'\in\wt\epsilon^{-1}(y)$ and $\epsilon^*f^*\psi\leyp0$, a contradiction.
\end{proof}

According to \eqref{eq:grf*} and \eqref{eq:grf**} and to the previous lemma, the kernel of $\gr^f_{\wt f^{-1}\varphi}\cL_y\to q_f^{-1}\gr_{f^*\varphi}\cL_y$ is equal to the sum of $\gr_\eta\cL_y$, where $\eta\in\wt\Sigma_y$ is such that $\eta-f^*\varphi$ has no pole along some irreducible component of $D$ going through $x=\varpi(y)$. This condition does not depend on $y$, but only on the stratum of $D$ which $x$ belongs to. This shows that this kernel is a local system over each stratum of $D$. On the other hand, $\gr_{f^*\varphi}\cL$ is also a local system over each stratum of $D$. Therefore, so is $\gr_{\wt f^{-1}\varphi}^f\cL$.
\end{proof}

\begin{definitio}[Nearby cycles, the good case]\label{def:nearbygood}\index{nearby cycle!of Stokes filtered local systems}
Let $(\cF,\cF_\bbullet)$ be a \emph{good} Stokes-filtered local system on $(X,D)$ (\cf Definition \ref{def:StXD}) with associated stratified $\ccI$-covering $\wt\Sigma\subset\ccIet_{\partial\wt X}$, and let~$\varphi$ be a local section of $\ccI_{\partial\wt S}$ defining a finite covering $\wt\Sigma_\varphi\subset\ccIet_{\partial\wt S}$ of $\partial\wt S\simeq S^1$. Let us assume that the pair $(\wt\Sigma,\varphi)$ is \emph{good}. We then set\index{$FZFW$@$\wt\psi_f^\varphi(\cF,\cF_\bbullet)$}
\begin{align*}
\wt\psi_f^\varphi(\cF,\cF_\bbullet)&=\gr^f_{\wt f^{-1}\varphi}\cL,\\
\index{$FZF$@$\psi_f^\varphi(\cF,\cF_\bbullet)$}\psi_f^\varphi(\cF,\cF_\bbullet)&=\bR(\varpi,\wt f)_*\gr^f_{\wt f^{-1}\varphi}\cL.
\end{align*}
\end{definitio}

We have seen that $\wt\psi_f^\varphi(\cF,\cF_\bbullet)$ is locally constant with respect to the pull-back stratification of $D$. Since the map $(\varpi,\wt f)$ is a topological fibration when restricted above each stratum of $D$ with fibre homeomorphic to a finite number of copies of $(S^1)^{\ell-1}$ when the stratum has codimension $\ell$, it follows that the cohomology sheaves of $\psi_f^\varphi(\cF,\cF_\bbullet)$ are locally constant on $D_I\times\partial\wt S$ for each stratum $D_I$ of $D$. According to Lemma \ref{lem:constauto}, they can be regarded as $\CC$-constructible sheaves on $D$ (constructible with respect to the natural stratification) equipped with an automorphism (the monodromy around $f=0$). We will denote by $(\psi_f^\varphi(\cF,\cF_\bbullet),T)$ the corresponding object of $D^\rb_c(D)$ equipped with its automorphism $T$ (we implicitly extend the equivalence of Lemma \ref{lem:constauto} to the derived category).

More precisely, let us denote by $D^\rb_{D\text{-c}}(D\times\partial\wt S)$ the full subcategory of $D^\rb(D\times\partial\wt S)$ whose objects are constructible with respect to the natural stratification $(D_I\times\partial\wt S)_I$. For each $\lambda\in\CC^*$, we denote by $L_{\lambda^{-1},\infty}$ the local system on $\partial\wt S$ whose fibre is the polynomial ring $\CC[x]$ and the monodromy is the automorphism $\lambda^{-1}\cdot\rU_\infty$, where $\rU_\infty=\exp(2\pi i\rN_\infty)$ and $\rN_\infty:\CC[x]\to\CC[x]$ is defined by $\rN_\infty(x^k)=x^{k-1}$ if $k\geq1$ and $\rN_\infty(1)=0$. We denote similarly the pull-back of $L_{\lambda^{-1},\infty}$ to $D\times\partial\wt S$, and by $p:D\times\partial\wt S\to D$ the projection. We define the functor $\psi_\lambda$ from $D^\rb_{D\text{-c}}(D\times\partial\wt S)$ to $D^\rb(D)$ by
\[\index{$FZA$@$\psi_{f,\lambda}$}
\psi_\lambda(\cG)=\bR p_*(L_{\lambda^{-1},\infty}\otimes_\CC\cG).
\]
This functor takes values in the derived category of bounded constructible complexes on $D$ (constructible with respect to the natural stratification). Moreover, $\psi_\lambda(\cG)$ is equipped functorially with an automorphism $T=\lambda\cdot(\bR p_*(\rU_\infty\otimes\id))$.

\begin{lemme}
The functor $\bigoplus_{\lambda\in\CC^*}\psi_\lambda$ induces an equivalence between $D^\rb_{D\text{-c}}(D\times\partial\wt S)$ and the category $(D^\rb_c(D),T)$ whose objects are pairs of an object of $D^\rb_c(D)$ (constructibility with respect to the natural stratification is understood) and an automorphism $T$ of this object.\qed
\end{lemme}

This lemma is a natural extension to derived categories of Lemma \ref{lem:constauto}, if one uses the finite determination functor of \cite[Lemme 1.5]{Brylinski86}. In particular, for a given object~$\cG$ of $D^\rb_{D\text{-c}}(D\times\partial\wt S)$, all $\psi_\lambda\cG$ but a finite number are isomorphic to zero locally on $D$. In the following, we will have to consider $(\psi_{f,\lambda}^\varphi(\cF,\cF_\bbullet),T)$ for each $\lambda\in\CC^*$.

\subsection{Nearby cycles along a function (dimension two)}\label{subsec:nearbytwo}
We keep the setting of \S\ref{subsec:nearbygood}. Our aim is to define the nearby cycles functors $\wt\psi_f^\varphi$ and $\psi_f^\varphi$ for \emph{good} Stokes-filtered local systems $(\cF,\cF_\bbullet)$ on $(X,D)$, \emph{without assuming that $\varphi$ is good} with respect to the stratified $\ccI$-covering $\wt\Sigma$ associated to $(\cF,\cF_\bbullet)$. We will restrict to the case where~$X$ has dimension two from now on.

The following proposition will be essential to define nearby cycles when the goodness condition on $\varphi$ is not fulfilled.

\begin{proposition}\label{prop:psigood}
Let us keep the assumptions of Definition \ref{def:nearbygood} with $\dim X=2$, and let $\epsilon:(X',D')\to(X,D)$ be a proper modification, where $D'=\epsilon^{-1}(D)$ is a divisor with normal crossings and $\epsilon:X'\moins D'\to X\moins D$ is an isomorphism. Let us set $f'=f\circ\epsilon:(X',D')\to(S,0)$. We have
\bgroup\def\theequation{\ref{prop:psigood}\,\fnsymbol{toto}}\addtocounter{equation}{-2}\refstepcounter{toto}
\begin{align}\label{eq:psigood*}
\wt\psi_f^\varphi(\cF,\cF_\bbullet)&=\bR\wt\epsilon_*\wt\psi_{f'}^\varphi\epsilon^+(\cF,\cF_\bbullet),\\
\psi_f^\varphi(\cF,\cF_\bbullet)&=\bR(\epsilon,\id_{\partial\wt S})_*\psi_{f'}^\varphi\epsilon^+(\cF,\cF_\bbullet).\label{eq:psigood**}\refstepcounter{toto}
\end{align}
\egroup
\end{proposition}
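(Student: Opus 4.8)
Since $\epsilon$ is a proper modification restricting to an isomorphism over $X\moins D$, the results of the adjunction section apply: we have $\epsilon^+(\cF,\cF_\bbullet)$ a good Stokes-filtered local system on $(X',D')$ (goodness of the pull-back was recorded earlier, using that the goodness condition is preserved by pull-back because the relevant differences $\varphi-\psi$ remain purely monomial after a proper modification), and moreover $\epsilon_+\epsilon^+\cF_\leq\simeq\cF_\leq$ by the last assertion of the adjunction proposition (Remark \ref{rem:modif}, Formula \eqref{eq:modif}). The plan is to reduce \eqref{eq:psigood*} to \eqref{eq:modif} applied not on $\wt X$ but on the fibre product $\partial\wt X\times_{\partial\wt S}\wt\Sigma_\varphi$, and then obtain \eqref{eq:psigood**} from \eqref{eq:psigood*} by pushing forward along $(\varpi,\wt f)$, after checking that the relevant maps commute.

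First I would set up the commutative diagram of real blow-up spaces. There is a lift $\wt\epsilon:\wt X'\to\wt X$ with $\partial\wt X'=\wt\epsilon^{-1}(\partial\wt X)$, and $\wt f'=\wt f\circ\wt\epsilon$. Pulling back the section $\varphi$ of $\ccI_{\partial\wt S}$ we get the same covering $\wt\Sigma_\varphi\subset\ccIet_{\partial\wt S}$ for both $f$ and $f'$, and a cartesian square
\[
\xymatrix{
\partial\wt X'\times_{\partial\wt S}\wt\Sigma_\varphi\ar[r]^-{\wt\epsilon'}\ar[d]&\partial\wt X\times_{\partial\wt S}\wt\Sigma_\varphi\ar[d]\\
\partial\wt X'\ar[r]^-{\wt\epsilon}&\partial\wt X
}
\]
where $\wt\epsilon'$ is again a proper modification (it is an isomorphism over $X^*$, which is dense). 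Now $\wt\psi_f^\varphi(\cF,\cF_\bbullet)=\gr^f_{\wt f^{-1}\varphi}\cL$ is a subquotient sheaf of the pull-back of $\cL$ to $\partial\wt X\times_{\partial\wt S}\wt\Sigma_\varphi$, formed entirely out of sheaf-theoretic operations ($q_f^{-1}$, sums, quotients) that commute with the proper-modification push-forward in the appropriate sense because, over $X^*$, $\wt\epsilon'$ is an isomorphism and the graded object $\gr^f$ is unchanged. More precisely, I would argue that $\epsilon^+(\cF,\cF_\bbullet)$ has the \emph{same} associated local system $\cL$ (identified via $\wt\epsilon$) and that $\wt\psi_{f'}^\varphi\epsilon^+(\cF,\cF_\bbullet)=\gr^{f'}_{\wt f'^{-1}\varphi}\cL$ is the pull-back by $\wt\epsilon'$ of $\gr^f_{\wt f^{-1}\varphi}\cL$: indeed the defining subsheaves $\cL_{\leq f^*\psi}$ pull back to $\cL_{\leq f'^*\psi}$, and taking the associated graded with respect to the Hausdorff sheaf $\wt f^{-1}\ccI_{\partial\wt S}$ commutes with pulling back by the local homeomorphism-on-a-dense-open $\wt\epsilon'$ — here the key point, to be checked carefully, is that no new subsheaf supported on $\partial\wt X'$ is introduced, which follows from Definition \ref{def:StXD}(2) for $\epsilon^+(\cF,\cF_\bbullet)$, itself a consequence of $\epsilon$ being a proper modification and $\cF^*$ a local system. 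Then \eqref{eq:modif} applied to the local system $\gr^f_{\wt f^{-1}\varphi}\cL$ (constructible on $\partial\wt X\times_{\partial\wt S}\wt\Sigma_\varphi$, as proved in Proposition \ref{prop:grf}, hence $\bR\wt\epsilon'_*$ of its pull-back recovers it) gives \eqref{eq:psigood*}.

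For \eqref{eq:psigood**} I would apply $\bR(\varpi,\wt f)_*$ to \eqref{eq:psigood*} and use the commutative square relating $(\varpi,\wt f)$, $(\varpi',\wt f')$, $\wt\epsilon$ and $(\epsilon,\id_{\partial\wt S})$: since $\varpi\circ\wt\epsilon=\epsilon\circ\varpi'$ and $\wt f\circ\wt\epsilon=\wt f'$, we get $(\varpi,\wt f)\circ\wt\epsilon=(\epsilon,\id_{\partial\wt S})\circ(\varpi',\wt f')$ on the relevant fibre products, hence $\bR(\varpi,\wt f)_*\bR\wt\epsilon'_*=\bR(\epsilon,\id_{\partial\wt S})_*\bR(\varpi',\wt f')_*$, which is exactly \eqref{eq:psigood**}. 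The main obstacle I anticipate is the careful verification in the previous paragraph that forming $\gr^f$ commutes with $\bR\wt\epsilon'_*$ (equivalently, that pulling back by $\wt\epsilon'$ and then pushing back recovers the graded object): this requires knowing that $\gr^f_{\wt f^{-1}\varphi}\cL$ is genuinely a local system on each stratum of $D$ (Proposition \ref{prop:grf}, which uses the goodness of $\varphi$ with respect to $\wt\Sigma$ — and goodness is preserved under the modification $\epsilon$ precisely because purely monomial differences stay purely monomial, by the argument in the proof of Lemma \ref{lem:0implique0}), together with the fact that a proper modification which is an isomorphism over a dense open set induces an isomorphism on $\bR\Gamma$ of a sheaf pulled back from the base when that sheaf is constructible — a form of the invariance statements used repeatedly in Remark \ref{rem:modif}.
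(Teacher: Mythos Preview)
There is a genuine gap: your key claim that $\wt\psi_{f'}^\varphi\epsilon^+(\cF,\cF_\bbullet)$ equals the pull-back $\wt\epsilon^{-1}\wt\psi_f^\varphi(\cF,\cF_\bbullet)$ is false in general. The $\leq$-sheaves do pull back correctly, i.e., $(q_{f'}^{-1}\epsilon^+\cL)_{\leq\wt f^{\prime-1}\varphi}=\wt\epsilon^{-1}(q_f^{-1}\cL)_{\leq\wt f^{-1}\varphi}$, and this uses Lemma~\ref{lem:negimpliqueneg} together with the goodness of $\varphi$ with respect to $\wt\Sigma$, so that $\eta-f^*\varphi$ is purely monomial for each $\eta\in\wt\Sigma_y$. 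But for the $<$-sheaves one only has the inclusion $\wt\epsilon^{-1}(q_f^{-1}\cL)_{<\wt f^{-1}\varphi}\subset(q_{f'}^{-1}\epsilon^+\cL)_{<\wt f^{\prime-1}\varphi}$. Indeed, by \eqref{eq:grf*} the condition for $\gr_\eta\cL_y$ to lie in the $<$-part is the existence of some $\psi\letheta\varphi$ with $\eta\leqy f^*\psi$ (resp.\ $\epsilon^*\eta\leqyp f^{\prime*}\psi$); for such arbitrary $\psi$ the difference $\eta-f^*\psi$ is \emph{not} assumed purely monomial, so Lemma~\ref{lem:negimpliqueneg} does not apply and the backward implication fails. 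Concretely, via Lemma~\ref{lem:f*le}: if $\eta-f^*\varphi$ has no pole along some component $D_j$ through $x$, then $\gr_\eta\cL_y$ survives in $\gr^f_{\wt f^{-1}\varphi}$; but at $y'\in\wt\epsilon^{-1}(y)$ with $\varpi'(y')$ off the strict transform $D'_j$, the pull-back $\epsilon^*(\eta-f^*\varphi)$ \emph{does} have poles along every local component of $D'$, so $\gr_\eta\cL_y$ falls into the $<$-part upstairs and is killed in $\gr^{f'}$. Thus $\gr^{f'}\neq\wt\epsilon^{-1}\gr^f$, and your appeal to \eqref{eq:modif} for a sheaf pulled back from the base does not apply.

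The paper's proof accepts this: one only obtains a \emph{surjection} $\lambda:\wt\epsilon^{-1}\wt\psi_f^\varphi(\cF,\cF_\bbullet)\to\wt\psi_{f'}^\varphi\epsilon^+(\cF,\cF_\bbullet)$, and identifies its kernel near each $y$ as a sum of pieces of the form $\gr_\eta\cL_y$ tensored with the constant sheaf on $\partial\wt X'\moins\varpi^{\prime-1}(D'(I))$ extended by zero, where $D(I)$ is the union of components of $D$ through $x$ along which $\eta-f^*\varphi$ has no pole. The substantive input is then the preliminary Lemma~\ref{lem:imdirnulle}, which shows that $\bR\wt\epsilon_*$ of such an extension-by-zero vanishes over $\varpi^{-1}(D(I))$; since $y\in\varpi^{-1}(D(I))$, the projection formula gives $\bR\wt\epsilon_*(\ker\lambda)=0$ near $y$, and then \eqref{eq:modif} yields \eqref{eq:psigood*}. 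In short, your plan is missing this vanishing lemma for the kernel; the hedge about ``no new subsheaf supported on $\partial\wt X'$'' addresses a different issue and does not rule out the enlargement of the $<$-filtration. Your derivation of \eqref{eq:psigood**} from \eqref{eq:psigood*} via the commutative square is, on the other hand, exactly what the paper does.
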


\begin{proof}
We will consider the following diagram:
\[
\xymatrix{
&\wt f^{\prime-1}\ccI_{\wt S}\ar[rrr]^-{\wt\epsilon}\ar[dr]^(.6){\wt\epsilon^{-1}q_f}\ar[dl]_(.6){q_{f'}}&&&\wt f^{-1}\ccI_{\wt S}\ar[rr]^-{\wt f}\ar[d]^{q_f}&&\ccI_{\wt S}\ar[dd]\\
\ccI_{\wt X'}\ar[dr]&&\ar[ll]_-{q_\epsilon}\wt\epsilon^{-1}\ccI_{\wt X}\ar[dl]\ar[rr]&&\ccI_{\wt X}\ar[d]&&\\
&\wt X'\ar[rrr]^-{\wt\epsilon}&&&\wt X\ar[rr]^-{\wt f}&&\wt S
}
\]
where we recall that, for a map $g$, $q_g$ is a notation for $g^*$.

On the one hand, we have by definition $(q_{f'}^{-1}\epsilon^+\cL)_{\leq\wt f^{\prime-1}\varphi}=(\epsilon^+\cL)_{\leq f^{\prime*}\varphi}$. Recall also (\cf Definition \ref{def:pullbackpreI}) that $(\epsilon^+\cL)_\leq=\Tr_{\leq q_\epsilon}(\wt\epsilon^{-1}\cL_\leq)$. Therefore,
\[
(q_{f'}^{-1}\epsilon^+\cL)_{\leq\wt f^{\prime-1}\varphi}=\sum_{\eta,\,\epsilon^*\eta\leq f^{\prime*}\varphi}\wt\epsilon^{-1}\cL_{\leq\eta}.
\]
On the other hand, $\wt\epsilon^{-1}(q_f^{-1}\cL)_{\leq\wt f^{-1}\varphi}=\sum_{\eta\leq f^*\varphi}\wt\epsilon^{-1}\cL_{\leq\eta}$. We thus have a natural inclusion $\wt\epsilon^{-1}(q_f^{-1}\cL)_{\leq\wt f^{-1}\varphi}\subset(q_{f'}^{-1}\epsilon^+\cL)_{\leq\wt f^{\prime-1}\varphi}$ (as subsheaves of the pull-back of the local system $\cL$). We will show that this is an equality by checking this property at each $y'\in\partial\wt X'$. Let us set $y=\wt\epsilon(y')$ and $\theta=\wt f(y)$. A computation similar to \eqref{eq:grf*} gives
\begin{equation}\label{eq:comparaisonpsitildevarphi}
\begin{aligned}
(q_{f'}^{-1}\epsilon^+\cL)_{\leq\wt f^{\prime-1}\varphi,y'}&=\bigoplus_{\substack{\eta\in\wt\Sigma_y\\ \epsilon^*\eta\leqyp f^{\prime*}\varphi}}\gr_\eta\cL_y,\\
\big(\wt\epsilon^{-1}(q_f^{-1}\cL)_{\leq\wt f^{-1}\varphi}\big)_{y'}&=(q_f^{-1}\cL)_{\leq\wt f^{-1}\varphi,y}=\bigoplus_{\substack{\eta\in\wt\Sigma_y\\ \eta\leqy f^*\varphi}}\gr_\eta\cL_y,
\end{aligned}
\end{equation}
so the desired equality is a consequence of Lemma \ref{lem:negimpliqueneg} applied to $\eta-f^*\varphi$ (due to the assumption of goodness, $\eta-f^*\varphi$ is purely monomial) and the map~$\epsilon$.

We also have an inclusion $\wt\epsilon^{-1}(q_f^{-1}\cL)_{<\wt f^{-1}\varphi}\subset(q_{f'}^{-1}\epsilon^+\cL)_{<\wt f^{\prime-1}\varphi}$, by using the same argument as for $\leq$, but we do not claim that it is an equality (the argument for $\leq$ used the goodness property of $\varphi$ with respect to $\wt\Sigma$, a property that one cannot use for $\psi\letheta\varphi$). In any case, we conclude that there is a surjective morphism
\[
\lambda:\wt\epsilon^{-1}\wt\psi_f^\varphi(\cF,\cF_\bbullet)\to\wt\psi_{f'}^\varphi\epsilon^+(\cF,\cF_\bbullet)
\]
and we will compute the kernel in the neighbourhood of $\wt\epsilon^{-1}(y)$ for each $y\in\partial\wt X$. According to Lemma \ref{lem:f*le} and the computation \eqref{eq:comparaisonpsitildevarphi}, the kernel at $y'\in\wt\epsilon^{-1}(y)$ is identified with the sum of $\gr_\eta\cL_y$ for those $\eta\in\wt\Sigma_y$ such that $\epsilon^*(\eta-f^*\varphi)$ has poles along all the components of $D'$ going through $x'=\varpi'(y')$, but $\eta-f^*\varphi$ has no poles along some component of $D$ going through $x=\varpi(y)$. We therefore only need to consider the local case where $D$ has two components $D_1,D_2$ and $\eta-f^*\varphi$ has poles along $D_2$ only. Since $\eta-f^*\varphi$ is purely monomial, $\epsilon^*(\eta-f^*\varphi)$ has no poles exactly along the strict transform $D'_1$ of $D_1$. Therefore, the $\eta$-component of the kernel of $\lambda_{|\wt\epsilon^{-1}(y)}$ is equal to the pull-back of $\gr_\eta\cL_y$ on $\wt\epsilon^{-1}(y)\moins\nobreak\varpi^{\prime-1}(D'_1)$ and is zero on $\wt\epsilon^{-1}(y)\cap \varpi^{\prime-1}(D'_1)$. This holds in some small neighbourhood of $y$. We will denote by $D'_1$ and $D'_{j'\in J'}$ the components of $D'$, so that $2\in J'$.

\begin{lemme}\label{lem:imdirnulle}
Let $\cF$ denote the constant sheaf on $\partial\wt X'\moins\varpi^{\prime-1}(D'_1)$ extended by $0$ on $\varpi^{\prime-1}(D'_1)$. Then $\bR\wt\epsilon_*\cF$ is zero on $\varpi^{-1}(D_1)$.
\end{lemme}

\begin{proof}
It is equivalent to proving that $\bR\epsilon_*\CC_{\wt X'_{|D'_1}}=\CC_{\wt X_{|D_1}}$ since, according to \eqref{eq:modif}, we have $\bR\wt\epsilon_*\CC_{\partial\wt X'}=\CC_{\partial\wt X}$. By definition (\cf Lemma \ref{lem:ncdrealblup}), we have a cartesian square
\[
\xymatrix{
\wt X_{|D_1}=\wt X(D_1,D_2)_{|D_1}\ar[r]\ar[d]&\wt X(D_1)_{|D_1}=\partial\wt X(D_1)\ar[d]\\
\wt X(D_2)_{|D_1}\ar[r]&D_1
}
\]
and $\wt X(D_2)_{|D_1}$ is nothing but the real blow-up space of $D_1$ along $D_1\cap D_2$, that we will denote by $\wt D_1$ for short. We denote similarly by $\wt D'_1$ the real blow-up space of~$D'_1$ along $D'_1\cap D'_{j'\in J'}$, that we identify to $\wt X'(D'_{j'\in J'})_{|D'_1}$. Then the map $\wt\epsilon:\wt X'_{|D'_1}=\wt X'(D'_{j'\in(J'\cup\{1\})})_{|D'_1}\to\wt X_{|D_1}$ factorizes through
\[
\wt{\epsilon_{|D'_1}}\times\id:\wt D'_1\times_{D_1}\partial\wt X(D_1)\to\wt D_1\times_{D_1}\partial\wt X(D_1).
\]
Note that $\epsilon:D'_1\to D_1$ is an isomorphism, hence so is $\wt\epsilon:\wt D'_1\to\wt D_1$. We will show that the map $\wt X'_{|D'_1}\to\wt D'_1\times_{D_1}\partial\wt X(D_1)$ is an isomorphism. Assume that this is proved. Then we have an equality of maps
\begin{equation}\label{eq:eqmaps}
\begin{array}{c}
\xymatrix{
\wt X'_{|D'_1}\ar[d]_{\wt\epsilon}\ar@{=}[r]&\wt D'_1\times_{D_1}\partial\wt X(D_1)\ar[d]^{\wt{\epsilon_{|D'_1}}\times\id}\\
\wt X_{|D_1}\ar@{=}[r]&\wt D_1\times_{D_1}\partial\wt X(D_1)
}
\end{array}
\end{equation}
hence the left-hand map is an isomorphism too, so that $\bR\wt\epsilon_*\CC_{\wt X'_{|D'_1}}=\CC_{\wt X_{|D_1}}$.

To prove the assertion is a local question on $D'_1$. There are local coordinates $(x'_1,x'_2)$ on $X'$ and $(x_1,x_2)$ on $X$ such that $D_1$ is locally defined by $x_1=0$, $D'_1$ by $x'_1=0$, and $\epsilon(x'_1,x'_2)=(x_1^{\prime k}x_2^{\prime\ell},x'_2)$ with $k,\ell>0$. As in \eqref{eq:formefXD}, the map $\wt X'_{|D'_1}\to\wt X_{|D_1}$ is written
\[
(\rho'_1=0,\rho'_2,\theta'_1,\theta'_2)\mto(\rho_1=0,\rho_2=\rho'_2,\theta_1=k\theta'_1+\ell\theta'_2,\theta_2=\theta'_2)
\]
and the assertion means that $\theta'_1$ is uniquely determined from $\theta_1,\theta_2,\rho_2$, which is now clear.
\end{proof}

\subsubsection*{End of the proof of Proposition \ref{prop:psigood}}
Applying Lemma \ref{lem:imdirnulle} together with the projection formula for the proper morphism $\wt\epsilon$ implies that $\bR\epsilon_*$ of this $\eta$-component is zero. Since this holds for any $\eta\in\wt\Sigma_y$, we conclude that we have an isomorphism in the neighbourhood of $y$:
\[
\bR\epsilon_*\lambda:\bR\epsilon_*\wt\epsilon^{-1}\wt\psi_f^\varphi(\cF,\cF_\bbullet)\to\bR\epsilon_*\wt\psi_{f'}^\varphi\epsilon^+(\cF,\cF_\bbullet),
\]
and since $y$ was arbitrary, this is an isomorphism all over $\partial\wt X$. On the other hand, applying once more the projection formula and \eqref{eq:modif} we have $\bR\epsilon_*\wt\epsilon^{-1}\wt\psi_f^\varphi(\cF,\cF_\bbullet)=\wt\psi_f^\varphi(\cF,\cF_\bbullet)$. This ends the proof of \eqref{eq:psigood*}.

By using the commutative diagram
\[
\xymatrix@C=1.2cm{
\partial\wt X'\ar[d]_{(\varpi',\wt f')}\ar[r]^-{\wt\epsilon}&\partial\wt X\ar[d]^{(\varpi,\wt f)}\\
D'\times\partial\wt S\ar[r]_-{\epsilon\times\id}&D\times\partial\wt S
}
\]
one obtains \eqref{eq:psigood**}.
\end{proof}

Going back to nearby cycles, we note that Definition \ref{def:nearbygood} cannot be used in general, since Proposition \ref{prop:psigood} does not hold in general without the goodness property of $\varphi$, and we would expect that the property proved in this proposition to be satisfied by nearby cycles. The idea is then to define nearby cycles $\wt\psi_f^\varphi$ by choosing a modification $\epsilon:(X',D')\to(X,D)$ so that $\varphi$ becomes good with respect to $\epsilon^+(\cF,\cF_\bbullet)$ and take the formulas of Proposition \ref{prop:psigood} as a definition. This is similar to the notion of \emph{good cell} introduced in \cite{Mochizuki10}. The proposition itself is useful to prove that this definition does not depend on the choice of $\epsilon$, provided that the goodness property is fulfilled.

\begin{proposition}\label{prop:goodafterblup}
Let us fix $x\in D$ and let $\eta$ be a local section of $\ccI_{\wt X}$ in some neighbourhood of $y\in\varpi^{-1}(x)$. Then there exists a finite sequence of point blowing-ups $\epsilon:(X',D')\to(X,D)$ with centers projecting to~$x$ such that $\epsilon^*\eta$ (\cf Proposition \ref{prop:f*inj}) is good on $\epsilon^{-1}(U)$, where $U$ is some open neighbourhood of $x$ in $X$.
\end{proposition}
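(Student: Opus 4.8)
\textbf{Proof strategy for Proposition \ref{prop:goodafterblup}.} The goal is, given a single local section $\eta$ of $\ccI$ near a point $y$ over $x\in D$ in a two-dimensional manifold, to find a sequence of point blowing-ups making $\eta$ good (in the sense that the stratified $\ccI$-covering $\wt\Sigma_{f^*\eta}$ glues well with the given one, which in dimension one reduces essentially to $\eta$ being purely monomial, or constant, along the pulled-back divisor). The plan is to combine two ingredients already available in the text: first, Lemma \ref{lem:averifier} (and the toric construction in the sketch of proof of Proposition \ref{prop:suban}), which says that, up to a projective modification of a neighbourhood $U$ of $x$ that is an isomorphism away from $D$, the pull-back $\eta\circ\epsilon$ becomes \emph{locally purely monomial} along the (reduced normal crossing) pull-back divisor $D'=|\epsilon^{-1}(D)|$; second, the fact that in dimension two a projective modification of a surface which is an isomorphism away from a point is a finite composition of point blowing-ups (Zariski's theorem, which I would invoke as classical). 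Combining these, the modification produced by Lemma \ref{lem:averifier} can be taken to be a finite sequence of point blowing-ups with centers over $x$.

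First I would set up notation: choose coordinates $(t_1,t_2)$ at $x$ adapted to $D$, so that $D$ is either $\{t_1=0\}$ or $\{t_1 t_2=0\}$ near $x$, and lift $\eta$ to a representative $\eta\in\cO_{X,x}(*D)$; after a local ramification $\rho_{\bmd}$ around the components of $D$ we may assume $\eta$ is non-ramified, and it suffices to make $\eta$ purely monomial after blowing up, since purely monomial implies good with respect to any good stratified $\ccI$-covering (this is exactly the content of Definition \ref{def:localgoodness} combined with Remark \ref{rem:goodness}\eqref{rem:goodness2} and the discussion preceding Definition \ref{def:globalgoodness}). Then I would apply Lemma \ref{lem:averifier} to the germ $\eta$ at $x$: it provides a projective modification $\epsilon:U'\to U$ of a neighbourhood $U$ of $x$, isomorphic away from $D$, with $D'=|\epsilon^{-1}(D)|$ a reduced normal crossing divisor with smooth components, and $\eta\circ\epsilon$ locally purely monomial on $D'$. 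The next step is to upgrade ``projective modification, isomorphism away from $D$'' to ``finite sequence of point blowing-ups with centers over $x$'': since $\dim X=2$ and $\epsilon$ is an isomorphism away from the discrete set $D\cap U$, after shrinking $U$ it is an isomorphism away from $x$ alone (or away from a finite set of points of $D$, handled one at a time), and a proper birational morphism of smooth surfaces which is an isomorphism outside a point is a composition of point blowing-ups centered over that point.

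The remaining step is to transfer ``$\eta\circ\epsilon$ purely monomial'' into ``$f^*\eta$ good'' in the precise sense used in \S\ref{subsec:nearbygood}. Here I would use that goodness of $\varphi=\eta$ with respect to the stratified $\ccI$-covering $\wt\Sigma$ of $(\cF,\cF_\bbullet)$ means that $\wt\Sigma\cup\wt\Sigma_{f^*\varphi}$ is good, and this is a statement about pairwise differences of local sections being purely monomial; since we have made $\eta\circ\epsilon$ purely monomial on $D'$, and $\wt\Sigma$ pulls back to a good covering (Proposition \ref{prop:pullbackgood}), the union $\epsilon^+\wt\Sigma\cup\wt\Sigma_{(f\circ\epsilon)^*\eta}$ is good on $\epsilon^{-1}(U)$ — one checks pairwise differences, invoking Remark \ref{rem:goodness}\eqref{rem:goodness1} (a set is good iff its pull-back by a ramification is good, and subsets of good sets are good) and the fact that a sum of a purely monomial function and an element of a good set stays in a good set after the blowing-up (Lemma \ref{lem:averifier} applied jointly to all the relevant differences, which is legitimate since there are finitely many and one can resolve them simultaneously). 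I would therefore run Lemma \ref{lem:averifier} not just for $\eta$ but for the finite family $\{\eta-\psi,\ \psi-\psi'\}$ of differences appearing in $\wt\Sigma\cup\wt\Sigma_{f^*\eta}$, obtaining a single modification making all of them purely monomial at once.

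\textbf{Main obstacle.} The delicate point is not the existence of a resolving modification (Lemma \ref{lem:averifier} gives it essentially for free), but twofold: (i) ensuring the modification can be chosen to be a \emph{finite sequence of point blowing-ups} rather than an arbitrary projective modification — this is where the hypothesis $\dim X=2$ is essential and where I invoke the structure theory of birational morphisms of surfaces; in higher dimension this step genuinely fails and one must allow blow-ups along smooth centers, which is why the statement is restricted to dimension two; and (ii) making sure that after the modification one really has goodness of the \emph{pair} $\wt\Sigma\cup\wt\Sigma_{f^*\eta}$, not merely pure monomiality of $\eta$ alone — handling the finitely many pairwise differences simultaneously requires applying the toric/resolution argument of Lemma \ref{lem:averifier} to a finite family at once, which is routine (resolve the product of all the relevant monomial data) but must be stated carefully. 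I expect step (i), i.e.\ the reduction of the abstract modification of Lemma \ref{lem:averifier} to a tower of point blow-ups, to be the part requiring the most care to phrase correctly, even though it rests on a classical theorem.
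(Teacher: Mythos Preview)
Your non-ramified case is correct and matches the paper: Lemma~\ref{lem:averifier} gives a projective modification making $\eta\circ\epsilon$ purely monomial, and since $\dim X=2$, any such modification is dominated by a finite tower of point blowing-ups. This is exactly what the paper does.

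The gap is in your treatment of the ramified case. You write ``after a local ramification $\rho_{\bmd}$ around the components of $D$ we may assume $\eta$ is non-ramified,'' and then proceed as if this reduction were free. It is not. The statement requires the blowing-ups $\epsilon$ to be performed on $(X,D)$ itself, with centers projecting to $x\in X$. If you pass to the ramified cover $U_{\bmd}$ and blow up there, the resulting modification does not in general descend to a sequence of point blowing-ups of $X$: a Galois-equivariant blow-up on $U_{\bmd}$ need not be the pull-back of a blow-up on $U$. So you have to find a non-ramified object \emph{on $X$} to which Lemma~\ref{lem:averifier} can be applied, and then check that the resulting $\epsilon$ on $X$ also resolves the original ramified $\eta$.

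The paper handles this with a norm argument: form $\xi\defin\prod_\sigma\sigma^*\rho_{\bmd}^*\eta$, where $\sigma$ ranges over the Galois group of $\rho_{\bmd}$. This product is Galois-invariant, hence descends to a non-ramified germ $\xi\in\cO_{U,0}(*D)/\cO_{U,0}$. Apply the non-ramified case to $\xi$ to get $\epsilon:(X',D')\to(X,D)$. The key observation is then that a product of germs in $\cO_{U_{\bmd},0}(*D)/\cO_{U_{\bmd},0}$ \emph{with the same polar divisor} is purely monomial if and only if each factor is; since all the $\sigma^*\rho_{\bmd}^*\eta$ share the same polar locus, pure monomiality of $\epsilon^*\xi$ (after passing to a suitable local ramification $\rho_{\bmd'}$ near any $x'\in\epsilon^{-1}(x)$) forces pure monomiality of each factor, in particular of $\epsilon^*\eta$ itself. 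Your proposal is missing this descent step entirely.

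A secondary remark: most of your proposal (the material on $\wt\Sigma\cup\wt\Sigma_{f^*\eta}$ and simultaneous resolution of finitely many differences) addresses the \emph{Corollary} following Proposition~\ref{prop:goodafterblup}, not the Proposition itself. The Proposition is only about a single section $\eta$ being good (i.e., purely monomial after ramification); the compatibility with a fixed $\wt\Sigma$ is deduced afterwards by applying the Proposition to the finitely many differences $\eta-\psi$ with $\psi$ a local branch of $\wt\Sigma$.
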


\begin{proof}
If $\eta$ is not ramified, that is, $\eta\in\cO_{X,x}(*D)/\cO_{X,x}$, goodness means pure monomiality, and the assertion is that of Lemma \ref{lem:averifier}, according to the well-known property that any proper modification of a complex surface is dominated by a sequence of point blowing-ups.

In general, by definition of $\ccI$, there exists a local ramification $\rho_{\bmd}:U_{\bmd}\to U$ such that $\rho_{\bmd}^*\eta$ is not ramified, \ie belongs to $\cO_{U_{\bmd},0}(*D)/\cO_{U_{\bmd},0}$. For any automorphism~$\sigma$ of $\rho_{\bmd}$, $\sigma^*\rho_{\bmd}^*\eta$ is also not ramified and has the same polar locus (with the same multiplicities) as $\rho_{\bmd}^*\eta$. The product of all $\sigma^*\rho_{\bmd}^*\eta$ when $\sigma$ varies in the Galois group of $\rho_{\bmd}$ can be written $\rho_{\bmd}^*\xi$, where $\xi$ belongs to $\cO_{U,0}(*D)/\cO_{U,0}$.

Note now that the product of elements of $\cO_{U_{\bmd},0}(*D)/\cO_{U_{\bmd},0}$ having the same polar divisor is purely monomial if and only if each term is purely monomial.

We can apply the first part of the proof to $\xi$ and get a sequence $\epsilon$ of \emphb{point blowing-ups} such that $\epsilon^*\xi$ is purely monomial. Let $x'\in\epsilon^{-1}(x)$ and let $D'=\epsilon^{-1}(D)$. One can choose a local ramification $\rho_{\bmd'}$ in the neighbourhood of $x'$ which dominates $U_{\bmd}$ by a generically finite map $\epsilon'$. We can apply the first property above to $\rho_{\bmd'}^*\epsilon^*\xi=\prod_\sigma\epsilon^{\prime*}\sigma^*\rho_{\bmd}^*\eta$ to conclude that $\epsilon^*\eta$ is purely monomial after the local ramification~$\rho_{\bmd'}$.
\end{proof}

\begin{corollaire}
Let $\wt\Sigma\subset\ccIet$ be a good stratified $\ccI$-covering. Then, for any local section $\varphi$ of $\ccI_{\partial\wt S}$, there exists, over any compact set $K$ of $D$, a finite sequence of point blowing-ups $\epsilon:(X',D')\to(X,D)$ such that $\varphi$ is good with respect to $\epsilon^*\wt\Sigma$ in some neighbourhood of $\epsilon^{-1}(K)$.
\end{corollaire}

\begin{definitio}\label{def:psifvarphi}
Let $f:(X,D)\to(S,0)$ be a proper holomorphic map to a disc~$S$, where $D=f^{-1}(0)$ is a divisor with normal crossings and smooth components. Let $(\cF,\cF_\bbullet)$ be a good Stokes-filtered local system on $(X,D)$ with associated stratified $\ccI$-covering $\wt\Sigma$. For any local section $\varphi$ of $\ccI_{\partial\wt S}$, we \index{nearby cycle!of Stokes filtered local systems}\emph{define}
\bgroup\numstareq
\begin{equation}\label{eq:psifvarphi*}
\wt\psi_f^\varphi(\cF,\cF_\bbullet)\defin\bR\wt\epsilon_*\wt\psi_{f'}^\varphi\epsilon^+(\cF,\cF_\bbullet),
\end{equation}
\egroup
where $f'=f\circ\epsilon$ and $\epsilon:(X',D')\to(X,D)$ is any finite sequence of point blowing-ups such that $\varphi$ is good with respect to $\epsilon^*\wt\Sigma$, and we set, as above,
\[
\psi_f^\varphi(\cF,\cF_\bbullet)=\bR(\varpi,\wt f)_*\wt\psi_f^\varphi(\cF,\cF_\bbullet)
\]
\end{definitio}

\begin{remarque}
That the choice of $\epsilon$ is irrelevant follows from Proposition \ref{prop:psigood}. We could also avoid the properness assumption on $f$, by working on an exhaustive sequence of compact subsets of $D$. Lastly, notice the formula
\bgroup\numstareq
\begin{equation}\label{eq:psifvarphi**}
\psi_f^\varphi(\cF,\cF_\bbullet)=\bR(\epsilon,\id_{\partial\wt S})_*\psi_{f'}^\varphi\epsilon^+(\cF,\cF_\bbullet).
\end{equation}
\egroup
\end{remarque}

\subsection{Comparison}\label{subsec:comparison1}
We now go back to the setting of \S\ref{subsec:nearbygood}. Let $\cM$ be a \emph{good} meromorphic bundle with flat connection on $X$ with poles along a divisor with normal crossings $D$, with associated stratified $\ccI$-covering denoted by $\wt\Sigma$. Let $(\cF,\cF_\bbullet)$ be the associated Stokes-filtered local system on $\wt X(D_{j\in J})$, and let $(\cL,\cL_\bbullet)$ denote its restriction to $\partial\wt X(D_{j\in J})$. Let us also assume that $\wt\Sigma\cup\{0\}$ is good (\ie each local section of $\wt\Sigma$ is purely monomial). Then the complex $\DR^\modD\cM$ has cohomology in degree $0$ at most, and $\cL_{\leq0}\defin\cH^0\DR^\modD\cM$ is a subsheaf of $\cL=\cH^0\wt\DR\cM$ (\cf Corollary \ref{cor:HTM}).

Let us now consider the rapid decay de~Rham complex of $\cM$. This is the complex defined similarly to $\DR^\modD\cM$ (\cf\S\ref{subsec:modgrowth}) by replacing the coefficient sheaf $\cA^\modD_{\wt X}$ with the sheaf $\cA^\rdD_{\wt X}$ as defined in Remark \ref{rem:rapiddecay}. This complex has already been \hbox{considered} in dimension one for the full Riemann-Hilbert correspondence (\cf Theorem~\ref{th:H1nul}), but not in dimension bigger than one because the grading has not been analyzed. Our purpose (Corollary \ref{cor:comparaison} and Remark \ref{rem:comparaison}\eqref{rem:comparaison1}) is to compare this grading process to the nearby cycle functor $\wt\psi_f^0$ applied to the Stokes-filtered local system attached to $\cM$, when $f:X\to\CC$ has zero set equal to~$D$. We will first prove the natural analogue of Theorem \ref{th:H1nul}.

\begin{proposition}\label{prop:H1nuldimplus}
For any germ $\cM$ along $D$ of good meromorphic connection such that $\wt\Sigma\cup\{0\}$ is also good, the complexes $\DR^\modD\cM$, $\DR^\rdD\cM$ and $\DR^\grD\cM$ have cohomology in degree~$0$ at most. The natural morphisms $\DR^\rdD\cM\to\DR^\modD\cM\to\wt\DR\cM$ induce inclusions $\cH^0\DR^\rdD\cM\hto\cH^0\DR^\modD\cM\hto\cH^0\wt\DR\cM$, and $\cH^0\DR^\grD\cM$ is equal to $\cH^0\DR^\modD\cM/\cH^0\DR^\rdD\cM$.
\end{proposition}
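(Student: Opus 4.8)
The statement is the exact analogue of Theorem~\ref{th:H1nul} in higher dimension, under the extra goodness hypothesis that $\wt\Sigma\cup\{0\}$ is good (equivalently, each exponential factor of $\cM$ is purely monomial). The plan is to reduce to the local model and invoke the Hukuhara-Turrittin-Majima theorem together with the vanishing statement for purely monomial $\cE^\varphi$. First I would note that all assertions are local on $\partial\wt X$, so we may fix $\theta_o\in\partial\wt X$, set $x_o=\varpi(\theta_o)$, and work with germs at $\theta_o$. Since the hypothesis holds, $\cM$ has a good formal decomposition near $x_o$ (Theorem~\ref{th:gooddec}), and by Theorem~\ref{th:HTM} (Hukuhara-Turrittin-Majima), after possibly a ramification $\rho_{\bmd}$ around the components of $D$ through $x_o$, we have a local isomorphism
\[
\cA_{\wt X,\theta_o}\otimes_{\varpi^{-1}\cO_{X,x_o}}\varpi^{-1}\cM_{x_o}\simeq\cA_{\wt X,\theta_o}\otimes_{\varpi^{-1}\cO_{X,x_o}}\Big(\tbigoplus_{\varphi\in\Phi}(\cE^\varphi\otimes\cR_\varphi)_{x_o}\Big),
\]
where each $\varphi$ is purely monomial (using that $\wt\Sigma\cup\{0\}$ is good) and each $\cR_\varphi$ has regular singularities along $D$.

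\textbf{Key steps.} The argument then splits along the lines of the proof of Theorem~\ref{th:H1nul}, adapted to the many-variable setting. (1) Treat first the case $\cM=\cE^\varphi$ with $\varphi$ purely monomial (or $\varphi=0$): here Proposition~\ref{prop:HkEphinul} gives directly that $\DR^\modD(\cE^\varphi)$ has cohomology in degree~$0$ only, and the same statement for $\DR^\rdD(\cE^\varphi)$ and $\DR^\grD(\cE^\varphi)$ follows by the same asymptotic-analysis input (or is stated as Corollary~\ref{cor:HTM} for the moderate part); one then reads off which of the three complexes is nonzero according to whether $e^{-\varphi}$ has moderate growth, rapid decay, or neither in the relevant direction. (2) Pass from $\cE^\varphi$ to $\cE^\varphi\otimes\cR_\varphi$ with $\cR_\varphi$ regular: as in step~\eqref{proof:xalpha}-\eqref{proof:directsum} of the proof of Theorem~\ref{th:H1nul}, the local branches of $x^\alpha(\log x)^k$ are invertible sections of $\cA_{\wt X}^\modD$ (and, twisted by $e^\varphi$ when $\varphi\neq0$, the growth type is unchanged), so goodness is preserved and the vanishing of cohomology in degrees $\geq1$ persists; then take direct sums over $\Phi$. (3) Handle the ramification: if the Hukuhara-Turrittin-Majima decomposition requires $\rho_{\bmd}^+\cM$, use that $\cA_{\wt X_{\bmd}}^{\rmod D_{\bmd}}=\wt\rho_{\bmd}^{-1}\cA_{\wt X}^\modD$ and $\wt\rho_{\bmd,*}\cA_{\wt X_{\bmd}}^{\rmod D_{\bmd}}\simeq(\cA_{\wt X}^\modD)^{\deg\rho_{\bmd}}$ locally, compatibly with connections, exactly as in step~\eqref{proof:rho}; since $\cM$ is $\cO_{X}(*D)$-locally (stably) free, the projection formula shows $\DR^\modD\cM$ is locally a direct summand of $\bR\wt\rho_{\bmd,*}\DR^{\rmod D_{\bmd}}(\rho_{\bmd}^+\cM)$, and a finite map preserves concentration in degree~$0$; similarly for $\DR^\rdD$ and $\DR^\grD$. (4) Finally, the exact sequence $0\to\cA_{\wt X}^\rdD\to\cA_{\wt X}^\modD\to\cA_{\wt X}^\grD\to0$ (tensored with the locally free $\varpi^{-1}\cM$ and its de~Rham differential) yields a short exact sequence of complexes; since all three have cohomology in degree~$0$ only, the long exact sequence degenerates to $0\to\cH^0\DR^\rdD\cM\to\cH^0\DR^\modD\cM\to\cH^0\DR^\grD\cM\to0$, the first two maps being injective because they factor through $\cH^0\wt\DR\cM$, which gives the last assertion.

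\textbf{Main obstacle.} The genuinely new content compared with the smooth-divisor case (Lemma~\ref{lem:goodgoodsmooth}) or the one-variable case (Theorem~\ref{th:H1nul}) is the concentration in degree~$0$ for $\DR^\rdD$ and $\DR^\grD$ in the purely monomial case, i.e.\ extending Proposition~\ref{prop:HkEphinul}/Corollary~\ref{cor:HTM} from the moderate to the rapid-decay and graded coefficient sheaves. This rests on Majima's asymptotic existence theorems (\cite{Majima84}), which in the goodness-compatible purely-monomial situation apply verbatim with the rapid-decay sheaf in place of the moderate one; I expect this to be the one step requiring care, but it is of the same nature as the inputs already used and quoted in Corollary~\ref{cor:HTM} and in the proof of Theorem~\ref{th:HTM}. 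Once that is in hand, the rest (ramification descent via the projection formula, the three-term exact sequence) is routine homological bookkeeping.
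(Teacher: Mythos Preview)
Your proposal is correct and follows essentially the same route as the paper: localize on $\partial\wt X$, use the Hukuhara-Turrittin-Majima decomposition (Theorem~\ref{th:HTM}) to reduce to the elementary model $\cE^\varphi\otimes\cR_\varphi$ with $\varphi$ purely monomial, invoke the Majima-type vanishing (Proposition~\ref{prop:HkEphinul} and its rapid-decay analogue), then handle ramification by the projection-formula argument of Lemma~\ref{lem:imdirLnegsmooth}, and conclude with the short exact sequence of coefficient sheaves. The paper's proof is simply terser---it cites Corollary~\ref{cor:HTM} for the moderate case, \cite[\S7]{Bibi93} for the rapid-decay analogue, and Lemma~\ref{lem:imdirLnegsmooth} for the ramified case---but the substance is the same, and you have correctly identified the one step requiring genuine input (the rapid-decay vanishing in the purely monomial case).
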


\begin{proof}
The question is local. Assume first that $\cM$ has a good decomposition. That $\cH^k(\DR^\modD\cM)=0$ for $k\neq0$ is Corollary \ref{cor:HTM}. For $\DR^\rdD\cM$, the similar assertion is proved with the same arguments (\cf \cite[\S7]{Bibi93} when $\dim X=2$). The remaining part of the proposition follows easily.

In order to treat the ramified case, one uses the same argument as in the proof of Lemma \ref{lem:imdirLnegsmooth}.
\end{proof}

\begin{proposition}
With the same assumptions as in Proposition \ref{prop:H1nuldimplus}, the subsheaf $(q_f^{-1}\cL_\leq)_{<0}$ of $\cL$ is identified with $\cH^0\DR^\rdD\cM$, so that there is a natural isomorphism
\[
\cH^0\DR^\grD\cM\isom\wt\psi_f^0(\cF,\cF_\bbullet).
\]
\end{proposition}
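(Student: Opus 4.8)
The strategy is to relate the rapid-decay de~Rham complex, computed on $\wt X(D_{j\in J})$, with the subsheaf $(q_f^{-1}\cL_\leq)_{<0}$ of $\cL$ attached to the Stokes-filtered local system $(\cL,\cL_\bbullet)=\RH(\cM)$ via the one-dimensional target of $f$, and then to invoke Definition \ref{def:nearbygood} of $\wt\psi_f^0$. The first reduction is that, since the statement is local on $\partial\wt X(D_{j\in J})$ and both sides are compatible with the operations used in the proof of Proposition \ref{prop:H1nuldimplus}, we may assume $\cM$ has a good decomposition indexed by a good finite set $\Phi\subset\cO_{X,0}(*D)/\cO_{X,0}$ with $\Phi\cup\{0\}$ good as well (the ramified case being handled exactly as in the proof of Lemma \ref{lem:imdirLnegsmooth}, by descent along a cyclic cover around~$D$ using the projection formula). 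By the Hukuhara-Turrittin-Majima theorem \ref{th:HTM}, we may then assume $\cM=\bigoplus_{\varphi\in\Phi}(\cE^\varphi\otimes\cR_\varphi)$ near any $\theta_o\in\varpi^{-1}(0)$, and by additivity it suffices to treat $\cM=\cE^\varphi\otimes\cR_\varphi$ with $\varphi$ purely monomial.

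\textbf{Identification of the rapid-decay cohomology.} For $\cM=\cE^\varphi\otimes\cR_\varphi$, multiplication by $e^{-\varphi}$ identifies $\DR^\rdD(\cM)$ with the twisted complex $\DR(e^\varphi\cA_{\partial\wt X}^\rdD\otimes\cR_\varphi)$ (compare Corollary \ref{cor:RHStokes}); since $\cR_\varphi$ has regular singularities, its horizontal sections have moderate growth in a meromorphic basis, so the germ at $y$ of $\cH^0\DR^\rdD(\cM)$ is $\cL_y$ if $e^{-\varphi}$ has rapid decay near $y$, i.e.\ if $\varphi\ley0$ with a pole along \emph{every} local component of $D$ through $\varpi(y)$, and is $0$ otherwise (for the last, delicate point, that pure monomiality of $\varphi$ forces the rapid decay condition to collapse onto components of $D$, this is exactly the content of Lemma \ref{lem:f*le}). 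On the other side, writing $S$ for the target disc of $f$ and $0$ for the section of $\ccI_{\partial\wt S}$ (so $q_f^{-1}\cL_\leq$ is the pre-$\wt f^{-1}\ccI_{\wt S}$-filtration with $(q_f^{-1}\cL_\leq)_{\leq\wt f^{-1}0}=\cL_{\leq f^*0}=\cL_{\leq0}$), the formulas \eqref{eq:grf*} and \eqref{eq:grf**} together with Lemma \ref{lem:f*le} give that the germ of $(q_f^{-1}\cL_\leq)_{<0}$ at $y$ is the sum of $\gr_\eta\cL_y$ over those $\eta\in\wt\Sigma_y$ such that $\eta\ley0$ with poles along all local components of $D$ through $\varpi(y)$. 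In the graded model $\cM=\cE^\varphi\otimes\cR_\varphi$ this is precisely the condition isolated in the previous sentence, so the two subsheaves of $\cL$ coincide; one then checks that the identification is compatible with the restriction morphisms (hence glues over all of $\partial\wt X$) and is natural in $\cM$, so it descends from the graded model to a general good $\cM$ by the decomposition of both sides and Proposition \ref{prop:H1nuldimplus}.

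\textbf{Conclusion.} Granting $(q_f^{-1}\cL_\leq)_{<0}=\cH^0\DR^\rdD\cM$, Proposition \ref{prop:H1nuldimplus} gives $\cH^0\DR^\grD\cM=\cH^0\DR^\modD\cM/\cH^0\DR^\rdD\cM=(q_f^{-1}\cL_\leq)_{\leq 0}/(q_f^{-1}\cL_\leq)_{<0}=\gr^f_{\wt f^{-1}0}\cL$, which is by Definition \ref{def:nearbygood} exactly $\wt\psi_f^0(\cF,\cF_\bbullet)$ (the section $0$ of $\ccI_{\partial\wt S}$ being trivially good with respect to any $\wt\Sigma$, so Definition \ref{def:nearbygood} applies without passing to a modification). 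The main obstacle is the local computation of $\cH^0\DR^\rdD(\cE^\varphi\otimes\cR_\varphi)$ at a point $y$ lying over a crossing of~$D$: one must show that pure monomiality of $\varphi$ (guaranteed by goodness of $\wt\Sigma\cup\{0\}$) is exactly what makes the rapid-decay condition equivalent to "$\varphi\ley0$ and $\varphi$ has poles along every local branch of $D$ through $\varpi(y)$", which is the role played by Lemma \ref{lem:f*le}; this is also where the hypothesis $\wt\Sigma\cup\{0\}$ good (not merely $\wt\Sigma$ good) is essential, and it is the only place where a genuinely new argument beyond the citations is needed.
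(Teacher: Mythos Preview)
Your proof is correct and follows essentially the same approach as the paper: reduce locally via the Hukuhara-Turrittin-Majima theorem to $\cE^\eta$ with $\eta$ purely monomial, compute $\cH^0\DR^\rdD\cE^\eta$ directly (it equals $\cH^0\DR^\modD\cE^\eta$ when $\eta$ has poles along every local component of $D$, and vanishes otherwise), then invoke Lemma~\ref{lem:f*le} to match this with the description \eqref{eq:grf*} of $(q_f^{-1}\cL_\leq)_{<0}$. Two minor corrections: the rapid-decay computation for $\cE^\eta$ is an elementary direct check (an exponential missing a pole along some $D_j$ is bounded in the $t_j$-direction, hence never rapidly decaying along $D$), not a consequence of Lemma~\ref{lem:f*le}, which enters only on the Stokes-filtration side; and ``$0$ is trivially good with respect to $\wt\Sigma$'' is not a triviality but exactly the standing hypothesis that $\wt\Sigma\cup\{0\}$ is good.
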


\begin{proof}
Since both sheaves $(q_f^{-1}\cL_\leq)_{<0}$ and $\cH^0\DR^\rdD\cM$ are subsheaves of $\cL_{\leq0}=\cH^0\DR^\modD\cM$, the comparison can be done locally on $\partial\wt X$. Working with $\cA_{\wt X}\otimes\cM$ and using Theorem \ref{th:HTM}, we then reduce to proving the assertion for $\cM=\cE^\eta$, where $\eta$ is a purely monomial local section of $\ccI$. It is not difficult to show that, in such a case,
\[
\cH^0\DR^\rdD\cE^\eta=\begin{cases}
0&\text{if $\eta$ does not have a pole}\\[-5pt]
&\text{along each components of $D$},\\
\cH^0\DR^\modD\cE^\eta&\text{otherwise}.
\end{cases}
\]
According to Lemma \ref{lem:f*le}, $\cH^0\DR^\rdD\cE^\eta$ coincides with the corresponding $(q_f^{-1}\cL_\leq)_{<0}$.
\end{proof}

We now compare the previous construction to that of moderate nearby cycles as recalled in \S\ref{subsec:moderatenearby}.

\begin{corollaire}\label{cor:comparaison}
With the same assumptions as in Proposition \ref{prop:H1nuldimplus}, we have for each $\lambda\in\CC^*$ and on each compact set $K\subset D$ a functorial isomorphism of objects of $D_c^b(D)$ equipped with an automorphism
\[
(\psi_{f,\lambda}^0(\cF,\cF_\bbullet),T)_{|K}\isom(\DR\psi_{f,\lambda}^\rmod\cM,T)_{|K}.
\]
\end{corollaire}

\begin{proof}
Since $\cM$ is $\cO_X(*D)$-locally free, we can apply \cite[Cor\ptbl II.1.1.19, p\ptbl45]{Bibi97} to compute $\bR\varpi_*\DR^\rdD\cM$, and conclude that $\bR\varpi_*\DR^\grD\cM$ is isomorphic to $\DR(\cO_{\wh D}\otimes\cM)$. Let us now replace $\cM$ with $\cM_{\lambda,k}$.

We denote by $L_{\lambda^{-1},k}$ the local system
\[
\ker\big[\partial_t:\cA^{\rmod0}_{\partial\wt S}\otimes\cN_{\lambda,k}\to\cA^{\rmod0}_{\partial\wt S}\otimes\cN_{\lambda,k}\big]
\]
on $\partial\wt S$. This is a rank $k+1$ local system with monodromy $\lambda^{-1}\id+\rN_{k+1}$, where $\rN_{k+1}$ is a Jordan block of size $k+1$.

Similarly, we have
\[
\wt f^{-1}L_{\lambda^{-1},k}=\cH^0\DR^\modD(f^+\cN_{\lambda,k}),
\]
and the $\cH^j$ vanish for $j>0$, according to Proposition \ref{prop:H1nuldimplus}. Moreover, it is immediate to check that $\cH^0\DR^\rdD(f^+\cN_{\lambda,k})=0$, and Proposition \ref{prop:H1nuldimplus} also implies
\[
\wt f^{-1}L_{\lambda^{-1},k}=\cH^0\DR^\grD(f^+\cN_{\lambda,k}).
\]
We conclude that there is a natural morphism of complexes
\[
\wt f^{-1}L_{\lambda^{-1},k}\otimes_\CC\DR^\grD\cM\to\DR^\grD\cM_{\lambda,k},
\]
and one checks by a local computation on $\partial\wt X$, by using Theorem \ref{th:HTM}, that this morphism is a quasi-isomorphism. Taking its inductive limit we get a quasi-isomorphism
\[
\wt f^{-1}L_{\lambda^{-1},\infty}\otimes_\CC\DR^\grD\cM\to\varinjlim_k\DR^\grD\cM_{\lambda,k},
\]
We therefore get an isomorphism
\[
\bR\varpi_*\big(\wt f^{-1}L_{\lambda^{-1},\infty}\otimes_\CC\DR^\grD\cM\big)\to\varinjlim_k\DR(\cO_{\wh D}\otimes\cM_{\lambda,k}).
\]
According to the projection formula, the left-hand term is $\psi_{f,\lambda}^0(\cF,\cF_\bbullet)$ and the right-hand term, restricted to the compact set $K$, is identified with $\DR\psi_{f,\lambda}^\rmod\cM$ (\cf Corollary \ref{cor:drpsimod}). The comparison of monodromies is straightforward.
\end{proof}

\skpt
\begin{remarques}\label{rem:comparaison}\ligne
\begin{enumerate}
\item\label{rem:comparaison1}
If $\dim X=2$, we can define $(\psi_f^0(\cF,\cF_\bbullet),T)$ with the only assumption that $(\cF,\cF_\bbullet)$ is good by the procedure of \S\ref{subsec:nearbytwo}. On the other hand, $\psi_f^\rmod$ commutes with direct images of $\cD$-modules (\cf \eg \cite{M-S86b}), and $\DR$ commutes with direct images by~$\epsilon$ (since $X'$ and $X$ have the same dimension, the shifts in the de~Rham complexes cancel). Therefore, in such a case, we get a comparison isomorphism as in Corollary \ref{cor:comparaison}.
\item\label{rem:comparaison2}
One can extend in a straightforward way the comparison of Corollary \ref{cor:comparaison} to the various $\psi_f^\varphi(\cF,\cF_\bbullet)$ provided the pair $(\wt\Sigma,\varphi)$ is good. If $\varphi$ is not ramified, the right-hand side is replaced with $(\DR\psi_f^\rmod(\cM\otimes\cE^{-f^*\varphi}),T)$. Similarly, in dimension two, one can relax the goodness assumption on $\varphi$ with respect to $\wt\Sigma$.
\end{enumerate}
\end{remarques}

\backmatter
\newcommand{\SortNoop}[1]{}\def\cprime{$'$}
\providecommand{\bysame}{\leavevmode ---\ }
\providecommand{\og}{``}
\providecommand{\fg}{''}
\providecommand{\smfandname}{\&}
\providecommand{\smfedsname}{\'eds.}
\providecommand{\smfedname}{\'ed.}
\providecommand{\smfmastersthesisname}{M\'emoire}
\providecommand{\smfphdthesisname}{Th\`ese}

\renewcommand{\indexname}{Index of notation}
\chapterspace{-2}
\printindex
\end{document}